\documentclass[11pt]{article}

\usepackage[english]{babel}
\usepackage{amsmath,amsthm,amssymb}
\usepackage[all]{xy}
\usepackage{setspace}
\usepackage{color}
\definecolor{grau}{rgb}{0.3,0.3,0.3}
\usepackage[colorlinks, linkcolor=grau, citecolor=grau, urlcolor=grau]{hyperref}
\usepackage{breakurl}
\urlstyle{same}
\usepackage{bibspacing}
\usepackage[top=1.3in, bottom=1.6in, left=1.3in, right=1.3in]{geometry}
\frenchspacing
\sloppy
\usepackage{booktabs}
\usepackage[ruled,vlined]{algorithm2e}
\usepackage{enumerate}

\newtheorem{theorem}{Theorem}[section]

\newtheorem{corollary}[theorem]{Corollary}
\newtheorem{lemma}[theorem]{Lemma}

\newtheorem{proposition}[theorem]{Proposition}

\theoremstyle{definition}

\newtheorem{remark}[theorem]{Remark}

\numberwithin{equation}{section}

\title{Integral points on coarse Hilbert moduli schemes} 

\author{  Rafael von K\"anel \ \ and \ \ Arno Kret 
}
\newcommand{\OL}{\mathcal O}

\newcommand{\Pic}{\textup{Pic}}
\newcommand{\uM}{\textup{M}}

\newcommand{\isomto}{\overset \sim \to}
\newcommand{\GL}{{\textup {GL}}}

\newcommand{\ZZ}{\mathbb Z}
\newcommand{\Z}{\mathbb Z}
\newcommand{\Aut}{\textup{Aut}}

\newcommand{\Q}{\mathbb Q}

\newcommand{\Hom}{\textup{Hom}}

\newcommand{\spec}{\textnormal{Spec}}

\newcommand{\QQ}{\mathbb Q}

\newcommand{\End}{\textup{End}}

\newcommand{\Disc}{{\textup{Disc}}}

\newcommand{\lhk}{\left(}
\newcommand{\rhk}{\right)}
\newcommand{\ces}{(ES)}

\newcommand{\absg}{\underline{A}_g}
\newcommand{\hilbmod}{M}
\newcommand{\abomult}{\underline{M}}
\newcommand{\order}{\Gamma}
\newcommand{\sch}{\textnormal{(Sch)}}
\newcommand{\sets}{\textnormal{(Sets)}}
\newcommand{\SL}{\textnormal{SL}}

\newcommand{\CC}{\mathbb C}

\newcommand{\RR}{\mathbb R}
\newcommand{\cmp}{\mathcal M_{\mathcal P}}

\newcommand{\smp}{M_{\mathcal P}}

\newcommand{\disc}{\textnormal{disc}}
\newcommand{\rad}{\textnormal{rad}}
\newcommand{\cO}{\mathcal O}
\newcommand{\cI}{\mathcal I}
\newcommand{\torm} {{\overline{\mathcal M}_2}}
\newcommand{\minm} {{M_2^*}}
\newcommand{\minmq} {{M^*_{2,\mathbb Q}}}
\def\sl2{\textnormal{SL}_2}

\def\proj{\textnormal{Proj}}
\def\gl2{\textnormal{GL}_2}

\def\mgl2{M_{\textnormal{GL}_2,g}}
\def\ces{(ES)}
\newcommand {\OK}  {{\mathcal O_{K}}}

\newcommand{\DarkGreenempty}[1]{}

\definecolor{darkgreen}{rgb}{0.0, 0.4, 0.26}







\onehalfspacing

\usepackage[continuous, page]{pagenote}
\makepagenote

\begin{document}
\date{}
\maketitle

\newpage

{\scriptsize
\begin{abstract}
We continue our study of integral points on moduli schemes by combining the method of Faltings (Arakelov, Par{\v{s}}in, Szpiro) with modularity results and Masser--W\"ustholz isogeny estimates. In this work we explicitly bound the height and the number of integral points on coarse Hilbert moduli schemes outside the branch locus.     

In the first part we define and study coarse Hilbert moduli schemes with their heights and branch loci. Building on the foundational works of Rapoport, Katz--Mazur, Deligne--Pappas and using the Faltings height, we first develop the basic definitions. Then we prove some geometric results for the height and the branch locus. In particular we relate the branch locus to automorphism groups of the involved moduli stack which allows us to compute the branch locus in some relevant cases. 

In part two we establish the effective Shafarevich conjecture for abelian varieties $A$  over a number field $K$ such that $A_{\bar{K}}$ has CM or $A_{\bar{K}}$ is of $\gl2$-type and isogenous to all its $\textnormal{Aut}(\bar{K}/\QQ)$-conjugates. If $A_{\bar{K}}$ has no CM, then we use isogeny estimates and results of Ribet and Wu to reduce via Weil restriction to the known case $K=\QQ$ proven by one of us via Faltings' method and modularity.  We also use isogeny estimates to reduce the CM case to effectively bounding the height $h(\Phi)$ of simple CM types in terms of the discriminant. To bound $h(\Phi)$ we follow Tsimerman's strategy and we combine the averaged Colmez conjecture with explicit analytic estimates for L-functions.

In the third part we continue our explicit study of the Par{\v{s}}in construction given by the forgetful morphism of Hilbert moduli schemes. We now work out our strategy for arbitrary number fields $K$ and we explicitly bound the number of polarizations and module structures on abelian varieties over $K$ with real multiplications. 

In the last part we illustrate our results by applying them to two classical surfaces first studied by Clebsch (1871) and Klein (1873): We explicitly bound the Weil height and the number of their integral points. Hirzebruch proved that both surfaces are models over $\CC$ of a Hilbert modular surface $Y_\CC$.
To show that they are coarse Hilbert moduli schemes over $\ZZ[\tfrac{1}{30}]$, we go into the construction of Rapoport and Faltings--Chai of the integral minimal compactification of $Y_\CC$ and we make it explicit via Hirzebruch's work. Here we use geometric results for integral Hirzebruch--Zagier divisors of Bruinier-Burgos-K\"uhn and Yang, and we compute the Fourier expansion of certain Eisenstein series via analytic results of Klingen, Siegel and Zagier. To explicitly relate the height to the Weil height, we combine Pazuki's height comparison with formulas for Hilbert theta functions due to G\"otzky, Gundlach and Lauter--Naehrig--Yang. 
\end{abstract}
}

\newpage

\tableofcontents
\vspace{0.5cm}

\newpage

\section{Introduction}

\noindent In \cite{vkkr:hms} we explicitly studied  integral points on Hilbert moduli schemes generalizing the results for moduli schemes of elliptic curves in \cite{rvk:intpointsmodell}. In this paper we continue our  study  and we consider more generally coarse Hilbert moduli schemes. In particular we combine the method of Faltings (Arakelov, Par{\v{s}}in, Szpiro) with modularity results and Masser--W\"ustholz isogeny estimates in order to explicitly bound the height and the number of integral points on coarse Hilbert moduli schemes outside the branch locus. 

\subsection{Integral points on coarse Hilbert moduli schemes}\label{sec:introintegralchms}
To provide some motivation for the explicit study of integral points on coarse Hilbert moduli schemes, we discuss two underlying Diophantine problems. 
\subsubsection{The Clebsch--Klein surfaces}\label{sec:introck}
Let $S$ be a finite set of rational primes and let $\ZZ_S^{\times}$ be the unit group of  $\ZZ_S=\ZZ[1/N_S]$ where $N_S=\prod_{p\in S}p$ with $N_S=1$ if $S$ is empty.  We first consider the cubic equation
\begin{equation}\label{eq:ck}
x_1^3+x_2^3+x_3^3+x_4^3=1=x_1+x_2+x_3+x_4,  \quad x_i\in \ZZ_S^\times.
\end{equation}
Let $\sigma_i$ be the $i$-th elementary symmetric polynomial. It turns out ($\mathsection$\ref{sec:diophaeq}) that solving \eqref{eq:ck} is equivalent to a special case of the following substantially more difficult Diophantine problem: Determine the set $Y(\ZZ_S)$ of $\ZZ_S$-points of the surface $Y/\ZZ$ given by
\begin{equation}\label{def:sigma24surface}
Y=X\setminus Z, \quad X\subset \mathbb P^4_\ZZ: \ \sigma_2=0=\sigma_4.
\end{equation}   
Here $Z\subset X$ is the union of the five $\ZZ$-points defined by the five permutations $e_i$ of $(1,0,\dotsc,0)$. More explicitly, let $\Sigma$ be the set of $x\in\ZZ^5$ with coprime coordinates such that for all rational primes $p\notin S$  the image of $x$ in $(\ZZ/p\ZZ)^5$ does not lie in some line $(\ZZ/p\ZZ )e_i$. Then Lemma~\ref{lem:solutionssigma24} identifies $Y(\ZZ_S)$ with the set of solutions (modulo $\pm1$) of  
\begin{equation}\label{eq:sigma24}
\sigma_2(x)=0=\sigma_4(x), \quad x\in\Sigma. 
\end{equation}
In the 1870s, Clebsch~\cite{clebsch:clebschsurface} and Klein~\cite{klein:cubicsurfaces} initiated the study of geometric properties of the surfaces  \eqref{eq:ck} and \eqref{def:sigma24surface}. These surfaces play a fundamental role in geometry and over the last 150 years many authors showed that they have many striking properties. For example, Hirzebruch~\cite{hirzebruch:ck} identified $
Y_\CC$ with a Hilbert modular surface for $\QQ(\sqrt{5})$.

In this paper we study the Diophantine equations \eqref{eq:ck} and \eqref{eq:sigma24}. They are non-trivial in the sense that they have infinitely many solutions over $\QQ$. Diophantine approximation leads to very strong results for \eqref{eq:ck}:  Evertse--Schlickewei--Schmidt~\cite{evscsc:uniteq} bounded  the number of solutions of \eqref{eq:ck} only in terms of $|S|$, and Corvaja--Zannier~\cite{coza:intpointssurfaces,coza:intpointscertainsurfaces,coza:intpointsdivisibility} established the degeneration of an even larger set of solutions of \eqref{eq:ck}; see \eqref{degeneration}.

An application of our main results for integral points (Theorem~\ref{thm:mainint}) with the surface $Y/\ZZ$ gives explicit finiteness results for \eqref{eq:ck} and \eqref{eq:sigma24}. To discuss these results, we consider again the set $Y(\ZZ_S)$ of $S$-integral points of $Y$ and we denote by $h$ the usual logarithmic Weil height (\cite[p.16]{bogu:diophantinegeometry}). Define $e=10^{12}$ and $c=10^e$. We obtain the following:

\vspace{0.3cm}
\noindent{\bf Corollary.}
\emph{Any $P\in Y(\ZZ_S)$ satisfies $h(P)\leq cN_S^{24}$ and it holds  $|Y(\ZZ_S)|\leq (cN_S)^e$.}
\vspace{0.3cm}

More explicitly, we deduce in Corollaries~\ref{cor:sigma24} and \ref{cor:ck} that any solution $x$ of \eqref{eq:ck} or \eqref{eq:sigma24} satisfies $h(x)\leq cN_S^{24}$ and that  \eqref{eq:ck} and \eqref{eq:sigma24} have at most $(cN_S)^{e}$ solutions. Here the height bound assures that for any given $S$ one can in principle determine all solutions of \eqref{eq:ck} and \eqref{eq:sigma24}. Moreover, if $S$ is small then our height bound is sufficiently strong for completely solving the equations in practice when combined with efficient sieves; we are currently trying to construct such efficient sieves for \eqref{eq:ck} and \eqref{eq:sigma24}. Additional discussions of the Diophantine problems \eqref{eq:ck} and \eqref{eq:sigma24} can be found in Sections~\ref{sec:ckmainresults} and \ref{sec:diophaeq}.


\subsubsection{Coarse Hilbert moduli schemes}\label{sec:chmsintro}
Let $L/\QQ$ be a totally real number field of degree $g$ with ring of integers $\OL$. Before we give the formal definition, we try to briefly explain and motivate the notion of coarse Hilbert moduli schemes. Intuitively one can think of a coarse Hilbert moduli scheme $Y$ as a variety whose geometric points parametrize pairs $(A,\alpha)$, where $A$ is a `polarized' abelian variety of dimension $g$ with $\OL$-multiplication and $\alpha$ lies in a certain set $\mathcal P(A)$ associated to $A$. If here in addition all points of $Y$ parametrize pairs $(A,\alpha)$ then $Y$ is a Hilbert moduli scheme. For instance, for any $n\in \ZZ_{
\geq 1}$ consider the set $\mathcal P(n)(A)=\{(\OL/n\OL)^2\cong A_n\}$ of principal level $n$-structures on $A$. If $n\geq 3$ then there is a Hilbert moduli  scheme $Y(n)$ for $\mathcal P(n)$ such that a connected component of $Y(n)(\CC)$ identifies with $\mathbb H^g/\Gamma(n)$, where the group $\Gamma(n)=\ker\bigl(\SL_2(\OL)\to \SL_2(\OL/n\OL)\bigl)$ acts on $\mathbb H^g$ via the $g$ distinct real embeddings of $L$ and via the action of $\SL_2(\RR)$ on $\mathbb H=\{z\in\CC; \, \textnormal{im}(z)>0\}$. In the cases $n=1$ and $n=2$, there is no Hilbert moduli scheme for $\mathcal P(n)$ but there still exists a coarse Hilbert moduli scheme $Y(n)$. The Diophantine equations \eqref{eq:ck} and \eqref{eq:sigma24} come from the coarse Hilbert moduli scheme $Y(2)$ for $L=\QQ(\sqrt{5})$. 
In fact the literature contains many more interesting examples of varieties which are (birationally equivalent to) coarse Hilbert moduli schemes: For example one can find in \cite{hiva:hmsclass,hiza:hmsclass,vandergeer:hilbertmodular}  many surfaces of general type but also various surfaces which are rational, blown-up K3, or elliptic over $\mathbb P^1$. Moreover, as demonstrated in \cite{rvk:intpointsmodell} for $g=1$, the moduli  formalism allows to explicitly study  a class of Diophantine equations of interest that is substantially more general  than just defining equations of $\mathbb H^g/\Gamma$ for finite index subgroups $\Gamma\subseteq \SL_2(\OL)$. This motivates to work with the following general notions developed in detail in Section \ref{sec:chmsdefs}. 

\paragraph{Definitions.}  We build on the formalism of moduli problems of Katz--Mazur~\cite{kama:moduli} which is very suitable for our purpose.  Let $\mathcal M$ be the Hilbert moduli stack associated to $L$ by Rapoport~\cite{rapoport:hilbertmodular} and Deligne--Pappas~\cite{depa:hilbertmodular}, and let $\mathcal P$ be a presheaf on $\mathcal M$. We call $\mathcal P$ a moduli problem. Further we say that $\mathcal P$ is algebraic if the category $\cmp$ of pairs $(x,\alpha)$ with $x\in \mathcal M$ and $\alpha\in\mathcal P(x)$ is an algebraic stack (\cite{sp}). Let $Y$ be a scheme. We say that $Y$ is a coarse Hilbert moduli scheme of an algebraic moduli problem $\mathcal P$ on $\mathcal M$, and we write $Y=M_{\mathcal P}$, if there exists a coarse moduli space $\pi:\cmp\to Y$ in the usual sense ($\mathsection$\ref{sec:coarsehmsdef}). The branch locus $B_{\mathcal P}\subseteq Y$ of $\mathcal P$ is defined as the complement of the union of all open subschemes $U\subseteq Y$ such that $\pi_U$ is \'etale.  Further we say that  $Y$ is a Hilbert moduli scheme of $\mathcal P$ if it represents $\cmp$. A discussion of the notion of Hilbert moduli schemes can be found in \cite[$\mathsection$3]{vkkr:hms}. If $Y$ is a Hilbert moduli scheme of $\mathcal P$, then $\mathcal P$ is algebraic and $Y$ is a coarse Hilbert moduli scheme of $\mathcal P$ with empty branch locus $B_{\mathcal P}$. Finally, we say that $\mathcal P$ is arithmetic if $\cmp$ is a separated finite type DM stack~(\cite{sp}) and we say that a scheme $Y$ is variety over $\ZZ$ if it is separated finite type over $\ZZ$. 

\subsubsection{Main result for integral points}

We continue our notation. Let $Y$ be a variety over $\ZZ$, let $Z\subseteq Y$ be a closed subscheme and let $\Delta$ be the absolute value of the discriminant of the totally real number field $L$ of degree $g$. Our main result  for integral points (Theorem~\ref{thm:mainint}) is a slightly improved version of the following theorem in which $e=4^{8g}$ and $c=9^{9^{9g}}$.

\vspace{0.3cm}
\noindent{\bf Theorem A.}
\emph{Suppose that $Y$ becomes over a ring $\ZZ[1/\nu]$, $\nu\in \ZZ_{\geq 1}$, a coarse moduli scheme of some arithmetic moduli problem $\mathcal P$ on $\mathcal M$ with branch locus $B_{\mathcal P}\subseteq Z_{\ZZ[1/\nu]}$.
\begin{itemize}
\item[(i)] Then any point $P\in (Y\setminus Z)(\ZZ_S)$ satisfies $h_\phi(P)\leq c(\nu N_S)^{24g},$
\item[(ii)] and the cardinality of $(Y\setminus Z)(\ZZ_S)$ is at most $c|\mathcal P|_{\CC}(\nu N_{S})^{e}\Delta\log(3\Delta)^{2g-1}$.
\end{itemize}}
\vspace{0.2cm}
Here $h_\phi: Y(\bar{\QQ})\to \mathbb R$ denotes the height ($\mathsection$\ref{sec:heightdef}) given by the pullback of the stable Faltings height \cite{faltings:finiteness} along the forgetful morphism $\phi$ induced by forgetting $\mathcal P$-level structures, and  $|\mathcal P|_\CC=\sup_{x\in \mathcal M(\CC)}|\mathcal P(x)|$ denotes the maximal number of $\mathcal P$-level structures over $\CC$ defined in \eqref{def:maxlvl}. We now discuss various aspects of Theorem~A.

\paragraph{Finiteness.}If $|\mathcal P|_\CC<\infty$ then (ii) gives that $(Y\setminus Z)(\ZZ_S)$ is finite. This finiteness result is essentially due to Faltings~\cite{faltings:finiteness} as explained in \eqref{eq:finitenessviafalt}. To this end, for large classes of representable moduli schemes $Y$ of abelian varieties,  Deligne--Szpiro~\cite[$\mathsection$5.3]{szpiro:faltings} and Ullmo~\cite{ullmo:ratpoints} deduced from Faltings~\cite{faltings:finiteness} the finiteness of $S$-integral points  and also of rational points if $Y$ is projective. For example, Ullmo deduced this for subvarieties of adjoint Shimura varieties of abelian type with neat level structure (\cite[Thm 3.2]{ullmo:ratpoints}); for ball quotients see also Dimitrov--Ramakrishnan~\cite{dira:mordellic} using Mordell--Lang~\cite{faltings:ml}. 

\paragraph{Known effective bounds.} For many coarse modular curves $Y$ with $Y(\CC)=\mathbb H/\Gamma$ for some $\Gamma\subset\textnormal{SL}_2(\ZZ)$, Bilu~\cite{bilu:israel,bilu:modular}, Sha~\cite{sha:intmodimrn,sha:intmod2} and Cai~\cite{cai:intx0p,cai:int} used the theory of logarithmic forms to prove strong explicit height bounds for all points with $S$-integral $j$-invariant. Suppose  now that $\mathcal P$ is representable. Then Theorem~A was proven in \cite{vkkr:hms}, and in \cite{rvk:intpointsmodell} when $g=1$. The case $g=1$ contains various classical Diophantine problems \cite[$\mathsection$4.2.6]{vkma:computation}, including the case of Mordell equations obtained in \cite[Cor 7.4]{rvk:intpointsmodell} and the case of $S$-unit equations (see Frey~\cite{frey:ternary}) obtained independently and simultaneously by K.~\cite[Cor 7.2]{rvk:intpointsmodell} and by Murty--Pasten~\cite[Thm 1.1]{mupa:modular}. For all these classical Diophantine problems and many others, effective bounds were first established via  Diophantine approximations or the theory of logarithmic forms; see \cite{bawu:logarithmicforms,evgy:bookuniteq,evgy:bookdiscreq} for an overview. Moreover, the latter methods usually can deal with any number field $K$, while substantially new ideas (see $\mathsection$\ref{sec:introdgeneralizations}) are required to generalize Theorem~A to any $K$.


\paragraph{Branch locus.}For both statements of Theorem~A it is necessary to restrict to $\ZZ_S$-points of $Y$ outside the branch locus $B_{\mathcal P}$. Indeed there are many coarse Hilbert moduli schemes $Y$ of arithmetic $\mathcal P$ with $|\mathcal P|_\CC<\infty$ and $|Y(\ZZ_S)|=\infty$. For example, if $L=\QQ$ and $\mathcal P=\{*\}$ is constant, then the coarse Hilbert moduli scheme $Y=\mathbb A^1_\ZZ$ satisfies $Y(\ZZ_S)=\ZZ_S$. 

Motivated by Theorem~A, we study in Section~\ref{sec:branchauto} the branch locus $B_{\mathcal P}$ of coarse Hilbert moduli schemes. We prove Proposition~\ref{prop:auto} which relates $B_{\mathcal P}$ to jumps of the sizes of the automorphism groups of the underlying stack $\cmp$. This result allows to compute the branch locus in certain cases of interest. For instance, we deduce in Corollary~\ref{cor:y2emptybranchlocus} that the branch locus of $Y(2)$ is empty  and thus Theorem~A holds for all $\ZZ_S$-points of $Y(2)$.

\paragraph{Applications.}
As illustrated in the Corollary, Theorem~A~(i) can provide (after a height comparison) an explicit bound for the Weil height which in principle allows to solve the underlying Diophantine equations. See also the analogous discussion in \cite[$\mathsection$1.1]{vkkr:hms}. 

Theorem~A applies to a class of varieties which is substantially more general than just integral models of complex Hilbert modular varieties $\mathbb H^g/\Gamma$ with $\Gamma\subseteq\SL_2(\OL)$ of finite index. Also, the class of (coarse) Hilbert moduli schemes is stable under covers.  For example, let $X$ be a variety over $\ZZ$ which admits a morphism of schemes 
\begin{equation}\label{eq:finitecover}
f:X\to Y
\end{equation}
to a Hilbert moduli scheme $Y=M_{\mathcal P}$ with height $h_\phi$.  Then $X=M_{\mathcal P'}$ is again a Hilbert moduli scheme with height $h_{\phi'}=f^*h_\phi$,  and it holds that $|\mathcal P'|_{\CC}<\infty$ if $f$ is quasi-finite and $|\mathcal P|_{\CC}<\infty$.   
The construction \eqref{eq:finitecover} was for instance used in \cite[$\mathsection$7.4]{rvk:intpointsmodell} to study cubic Thue equations. Other explicit (relative) curves $X$ are discussed in \cite[$\mathsection$4.2.6]{vkma:computation}, and we hope to find in addition some interesting higher dimensional $X$. 

Theorem~A also applies to $\QQ$-rational points if $X_\QQ$ is proper and thus $X(\QQ)=X(\ZZ_S)$ for some $\ZZ_S$.  In this context, the problem of constructing proper curves $X_\QQ$ with a quasi-finite morphism \eqref{eq:finitecover} was investigated by Alp\"oge in his work \cite{alpoge:thesis,alpoge:modularitymordell} on algorithms for rational points of curves. In particular he observed that for higher dimensional Hilbert modular varieties $Y_\QQ$ there are many such proper curves $X_\QQ$; this was also observed independently by Baldi--Grossi~\cite[$\mathsection$1.2]{bagr:obstructionhmv} in the context of existence of rational points.

\paragraph{Bounds.} We conducted some effort (see $\mathsection$\ref{sec:shapeofbounds}) to obtain bounds in Theorem~A which are polynomial in terms of $\nu N_S$ and to include optimizations which exponentially improved the constant $c$ and the exponents $e$ and $24g$. However, we did not try to fully optimize the constants which we simplified according to Baker's philosophy.  If $\nu N_S$ has reasonable size then our simplified bounds are already strong enough for solving equations when combined with efficient sieves such as for example those in \cite{deweger:lllred,vkma:computation,ghsi:tm}. On the other hand, for large $\nu N_S$ it is often beneficial for solving equations to  work out optimized bounds which might be complicated but can be computed instantly; we leave this for the future. Theorem~A crucially depends on a relevant special case (Theorem~B) of the effective Shafarevich conjecture for abelian varieties over number fields. After discussing Theorem~B in the next section, we give an outline of the proof of Theorem~A in $\mathsection$\ref{sec:proofintro} which contains a discussion  ($\mathsection$\ref{sec:shapeofbounds}) of the shape of our bounds.


\subsection{Effective Shafarevich conjecture}\label{sec:esintro}

Let $K$ be a number field with ring of integers $\OK$,  let $S$ be a nonempty open subscheme of $\spec(\OK)$ and let $g\in \ZZ_{\geq 1}$. Faltings~\cite{faltings:finiteness} proved the Shafarevich conjecture which asserts that the set of isomorphism classes of (polarized) abelian schemes over $S$ of relative dimension $g$ is finite. Let $h_F$ be the stable Faltings height ($\mathsection$\ref{sec:heightdef}) 
introduced by Faltings in \cite{faltings:finiteness}. It is known that effective versions of the Shafarevich conjecture would have many striking Diophantine applications. For example, the following Conjecture $\ces$ implies via Kodaira's construction an effective version of the Mordell conjecture; see  R\'emond~\cite{remond:construction}. 

\vspace{0.3cm}
\noindent{\bf Conjecture~$\ces$.}
\emph{There exists an effective constant $c$, depending only on $K$, $S$ and $g$, such that any abelian scheme $A$  over $S$ of relative dimension $g$ satisfies $h_F(A)\leq c.$}
\vspace{0.3cm}

\noindent The case $g=1$ of this conjecture was proven in \cite{coates:shafarevich,fuvkwu:elliptic,jore:shafarevich} via the theory of logarithmic forms~\cite{bawu:logarithmicforms}. In the case $g\geq 2$ the following is known: (1) when $K=\QQ$ and $A$ is of (product) $\gl2$-type, and (2) when $A_{\bar{K}}$ is a simple non-CM abelian variety of $\gl2$-type with $G_\QQ$-isogenies as in \eqref{def:Gisogintro} below.   Here (1) was established in 2013  by the first named author \cite{rvk:gl2} by combining Faltings' method \cite{faltings:finiteness} with modularity and isogeny estimates, while (2) was obtained  in \cite[Prop 9.9]{rvk:modular} by reducing to (1). Also, building on  \cite{rvk:gl2}, Alp\"oge~\cite{alpoge:modularitymordell} obtained an algorithmic version of Conjecture~$\ces$ in the case when $A$ is of $\gl2$-type and $K$ is totally real of odd degree; see $\mathsection$\ref{sec:introdgeneralizations}.




\paragraph{$\gl2$-type with $G_\QQ$-isogenies.} We now define a class of abelian schemes with the following properties: In the case $K=\QQ$ they are the usual abelian schemes over $S$ of $\gl2$-type (\cite{ribet:gl2}), and in the case $g=1$ they are the elliptic curves $E$ over $S$ with geometric generic fiber $E_{\bar{K}}$ isogenous to all its $G_\QQ$-conjugates; such elliptic curves $E_{\bar{K}}$ were studied among others by Gross~\cite{gross:cmell}, Ribet~\cite{ribet:gl2} and Elkies~\cite{elkies:kcurves} who called them $\QQ$-curves. 

Let $A$ be an abelian scheme over $S$ of relative dimension $g$ and denote by $\End(A)$ the ring of $S$-group scheme morphisms $A\to A$. We say that $A$ is of $\gl2$-type with $G_\QQ$-isogenies if the $\QQ$-algebra $\End^0(A)=\End(A)\otimes_\ZZ\QQ$ contains a number field $F$ of degree $g$ and if for each $\sigma$ in $G_\QQ=\Aut(\bar{K}/\QQ)$ there is an isogeny $\mu_\sigma:\sigma^*A_{\bar{K}}\to A_{\bar{K}}$ such that 
\begin{equation}\label{def:Gisogintro}
\mu_\sigma \circ \sigma^*(f)=f \circ \mu_\sigma, \quad f\in F. 
\end{equation}
Here $\sigma^*$ denotes the pullback via $\sigma$ and we identified $F$ with its image in the endomorphism algebra of  the base change $A_{\bar{K}}$ of $A$ to an algebraic closure $\bar{K}$ of the function field $K$ of $S$.  As explained in $\mathsection$\ref{sec:Gisogdef}, the notion of $\gl2$-type with $G_\QQ$-isogenies generalizes various definitions in the literature such as for example Ribet's notion of $\QQ$-HBAV in \cite{ribet:fieldsofdef}.

More generally, we say that $A$ is of product $\gl2$-type with $G_\QQ$-isogenies  if $A$ is isogenous to a product $\prod A_i$ of abelian schemes $A_i$ over $S$ of $\gl2$-type with $G_\QQ$-isogenies. 

\paragraph{Results.}To measure $K$ and $S$ we use the quantities $d=[K:\QQ]$, $D_K=|\textnormal{Disc}{(K/\QQ)}|$ and $N_S=\prod_{v\in S^c} N_v$, where $N_v$ is the norm of the prime ideal $v$ of $\OK$ and $S^c$ is the complement of $S$. The first part of the next theorem establishes the effective Shafarevich Conjecture~$\ces$ for abelian schemes of product $\gl2$-type with $G_\QQ$-isogenies.

\vspace{0.3cm}
\noindent{\bf Theorem B.}
\emph{
The following statements hold with $l=(d3^{4g^2})!$.
\begin{itemize}
\item[(i)]Let $A$ be an abelian scheme over $S$ of relative dimension $g$. If $A$ is of product $\gl2$-type with $G_\QQ$-isogenies, then  $h_F(A)\leq (4gl)^{144gl}\textnormal{rad}(D_KN_S)^{24g}.$
\item[(ii)] The number of isomorphism classes of abelian schemes over $S$ of relative dimension $g$ and of product $\gl2$-type with $G_\QQ$-isogenies is at most $ (4gl)^{(5g)^7l}\textnormal{rad}(D_KN_S)^{(5g)^7}.$
\end{itemize}
}
Here $\rad(n)=\prod_{p\mid n}n$ denotes the radical of $n\in\ZZ_{\geq 1}$. As explained in $\mathsection$\ref{sec:esproofdiscussion} below, statements (i) and (ii) are essentially \cite[Thm A and Thm B]{rvk:gl2} respectively when restricted to the case $K=\QQ$. In Section~\ref{sec:es} we also obtain other results in the context of Conjecture~$\ces$. In particular we prove Conjecture~$\ces$ in the CM case and we give more precise versions of Theorem~B in certain situations of interest.  Discussions of various aspects of our bounds in (i) and (ii) can be found in $\mathsection$\ref{sec:shapeofbounds} below.

\subsubsection{Ideas of the proofs}\label{sec:esproofdiscussion}
We continue our notation introduced above. Let $A$ be an abelian scheme over an open $S\subseteq \spec(\OK)$ such that $A$ is of $\gl2$-type with $G_\QQ$-isogenies.
\paragraph{Strategy.} Building on \cite{rvk:gl2}, the main idea of the proof of Conjecture~$\ces$ for $A$ is to combine Faltings' method~\cite{faltings:finiteness} with modularity as follows. By the known effective isogeny estimates ($\mathsection$\ref{sec:hfprop}) based on the method of Faltings \cite{faltings:finiteness} or Masser--W\"ustholz~\cite{mawu:periods,mawu:abelianisogenies}, it suffices to show that each isogeny class contains an abelian scheme with effectively bounded height. Then one constructs such an abelian scheme by combining inter alia the Tate conjecture~\cite{faltings:finiteness}, modularity results and effective analytic estimates. We now describe more precisely the steps carried out in the current work.

\paragraph{Key case.}In the first part of the proof we assume that $A_{\bar{K}}$ has no CM. After possibly enlarging $K$ in a controlled way, we can reduce in Proposition~\ref{prop:Gisog} the problem to the case $S\subseteq \spec(\ZZ)$ as follows. On using and generalizing arguments of Ribet~\cite{ribet:gl2,ribet:fieldsofdef}, Pyle~\cite{pyle:gl2} and Wu~\cite{wu:virtualgl2}, we show  that $A$ is isogenous to a power of an abelian scheme $B$ over $S$ of $\gl2$-type with $G_\QQ$-isogenies $\mu_\sigma$ satisfying $\mu_\sigma\circ \sigma^*(f)=f\circ \mu_\sigma$ for all $f\in \End^0(B_{\bar{K}})$. Then we go into the proof of \cite[Prop 9.9]{rvk:modular}: A result of Ribet~\cite{ribet:gl2} and Wu~\cite{wu:virtualgl2}, which relies on a Galois cohomology computation of Tate, gives that $B$ is geometrically a factor of an abelian variety $C_\QQ$ over $\QQ$ of $\gl2$-type. Moreover,   we show that $C_\QQ$ extends here to an abelian scheme $C$ over some explicit open $T\subseteq \spec(\ZZ)$. Now, over $T$ we can apply the case of Conjecture~$\ces$ proven in \cite[Thm A]{rvk:gl2} via the above described strategy using Serre's modularity conjecture~\cite{khwi:serre}. This explicitly bounds $h_F(C)$ and then isogeny estimates allow us to deduce Conjecture~$\ces$ for $A$.

\paragraph{Abelian varieties with CM.}In the second part we prove Conjecture~$\ces$ for abelian varieties with CM. First, we use isogeny estimates and a result of Colmez \cite{colmez:conjecture} to reduce the problem to the case of abelian varieties $A_\Phi$ associated to some CM type $\Phi$ of a CM field $E$. Moreover, after possibly replacing $K$ by a controlled field extension, we can use CM theory to relate the ramification loci of $E/\QQ$ and $K/\QQ$. This allows then to further reduce the CM case of Conjecture~$\ces$ to effectively bounding $h_F(A_\Phi)$ in terms of $D_E$.  The height $h_F(A_\Phi)$ is related to certain L-values associated to $E$ by the  averaged Colmez conjecture \cite{colmez:conjecture}. This conjecture was established independently and simultaneously by Yuan--Zhang~\cite{yuzh:avcolmez} and by  Andreatta--Goren--Howard--Madapusi-Pera~\cite{aghm:avcolmez}. Tsimerman~\cite[Cor 3.3]{tsimerman:ao} deduced from the averaged Colmez conjecture the asymptotic bound $h_F(A_\Phi)\leq D_E^{o_g(1)}$; see also the discussions in $\mathsection$\ref{sec:avcolmez}. On working out explicitly (via Rademacher's arguments~\cite{rademacher:lindeloef}) some of the analytic estimates for L-values used in \cite[Cor 3.3]{tsimerman:ao}, we deduce from the averaged Colmez conjecture a much weaker but fully explicit bound for $h_F(A_\Phi)$ in terms of $D_E$ and $g$ which is sufficient for our purpose.   
 
\paragraph{Number of isomorphism classes.}To prove the bound in Theorem~B~(ii) for the number of isomorphism classes, we follow Faltings~\cite{faltings:finiteness}. After passing to a controlled finite field extension $L$ of $K$, we divide the proof into two parts: (a) finiteness of the number of isogeny classes and (b) finiteness of each isogeny class. Part (a) is done as follows: We apply basic CM theory in the CM case, while in the non-CM case we use the above described arguments to reduce again to the situation $S\subseteq \spec(\ZZ)$ and then we apply \cite[Thm B]{rvk:gl2}. To deal with part (b), we combine as in the proof of \cite[Thm B]{rvk:gl2} the height bound in Theorem A with
the minimal isogeny degree estimate of Masser--W\"ustholz~\cite{mawu:abelianisogenies} based on the theory of logarithmic forms. In fact throughout this work we use the most recent versions, due to Gaudron--R\'emond~\cite{gare:newisogenies}, of the isogeny and endomorphism results of Masser--W\"ustholz~\cite{mawu:periods,mawu:abelianisogenies,mawu:factorization}.  Finally we count via an explicit version of the Hermite--Minkowski theorem the number of possible field extensions $L$ of $K$ and we explicitly control  the number of (unpolarized) twists over $L$ by using a trick (Lemma~\ref{lem:counttwists}) involving Weil restriction and isogeny estimates.

\subsection{Proof of Theorem~A}\label{sec:proofintro}
We continue the notation of $\mathsection$\ref{sec:introintegralchms}. Suppose that $Z\subseteq Y$ and $\mathcal P$ are as in Theorem A, and put $U=Y\setminus Z$. To prove Theorem~A, we use and generalize the strategy of \cite{rvk:intpointsmodell,vkkr:hms} and we apply the method of Faltings~\cite{faltings:finiteness}~(Arakelov, Par{\v{s}}in, Szpiro) as follows.  
\begin{itemize}
\item[(a)] Par{\v{s}}in construction:   We construct a finite map $\phi:U(\ZZ_S)\to \absg(T)$ for some controlled open $T\subset \spec(\OL_K)$ with $K$ a number field, where $\absg$ classifies isomorphism classes of  abelian schemes of relative dimension $g$. Moreover, we compute that $h_\phi$ is the pullback $\phi^*h_F$ of $h_F$ by $\phi$, we show that each abelian scheme in $\phi(U(\ZZ_S))$ is of $\gl2$-type with $G_\QQ$-isogenies and we explicitly bound the fibers of $\phi$. 
\item[(b)] Effective Shafarevich conjecture: As the abelian schemes in $\phi(U(\ZZ_S))$ are all of $\gl2$-type with $G_\QQ$-isogenies by part (a), we can apply Theorem~B. This gives us explicit bounds for the cardinality $|\phi(U(\ZZ_S))|$ and for the height $h_F(A)$ of all $A\in \phi(U(\ZZ_S))$ in terms of $K$, $T$ and $g$ and then in terms of $\nu$, $N_S$ and $g$.  
\end{itemize}
Combining (a) and (b) leads to the estimate for $h_\phi=\phi^*h_F$ in (i) and to the bound for $|U(\ZZ_S)|$ in (ii) since $U(\ZZ_S)=\phi^{-1}\bigl(\phi(U(\ZZ_S))\bigl)$.  We discussed in $\mathsection$\ref{sec:esintro} the main ingredients for Theorem~B used in part (b). Now, we explain the principal ideas of part (a).

\subsubsection{Construction of $\phi$ and basic properties}\label{sec:phiconstructionintro}
We continue our notation. Unfortunately the forgetful morphism of a coarse Hilbert moduli scheme is only defined on geometric points. To obtain a map $\phi:U(\ZZ_S)\to \absg(T)$ with the desired properties for $\ZZ_S$-points, we work over the base $\ZZ_S$ (after possibly enlarging $S$ in a controlled way) and we study in  Section~\ref{sec:coverconstruction} the finite cover $Y'\to Y$ given by
$$Y'=\cmp\times_{\mathcal M}\mathcal M_{\mathcal P(n)}\to \cmp\to^\pi Y. 
$$
We show that $Y'$ has the following key properties: It is a Hilbert moduli scheme of the product presheaf $\mathcal P'=\mathcal P\times \mathcal P(n)$ on $\mathcal M$ for a suitable $n\geq 3$ and the pull back of $Y'\to Y$ via $P\in U(\ZZ_S)$ is a finite \'etale cover of $\spec(\ZZ_S)$ of bounded degree.  This allows us to construct a controlled open $T\subseteq \spec(\OK)$ with $K$ a number field and a map $$\phi:U(\ZZ_S)\to \absg(T)$$ factoring through the forgetful morphism $\phi':Y'(T)\to \absg(T)$ of the Hilbert moduli scheme $Y'$. Moreover, we show that $\phi$ extends on $\bar{\QQ}$-points to the forgetful morphism
$Y(\bar{\QQ})\to  \absg(\bar{\QQ})$ of the coarse moduli scheme $Y=M_{\mathcal P}$.
Then it is a formal consequence of the construction of $\phi$ and geometric properties of the stable Faltings height $h_F$ that 
$$h_\phi=\phi^*h_F.$$ 
Similarly, it is a formal consequence of the construction of $\phi$ that each abelian scheme in $\phi(U(\ZZ_S))$ is of $\gl2$-type with $G_\QQ$-isogenies. Here the key observation is that $U(\ZZ_S)$ is contained in $Y(\QQ)$ which are the points of $Y(\bar{\QQ})\cong \cmp(\bar{\QQ})$ fixed by all $\sigma\in G_\QQ$ and hence there exist $\OL$-compatible isomorphisms $\sigma^*A_{\bar{\QQ}}\isomto A_{\bar{\QQ}}$ where $A=\phi(P)$ and $P\in U(\ZZ_S)$.

Further, we show that to control the fibers of $\phi$ it suffices to suitably bound the degree of the forgetful morphism $\phi':Y'(T)\to \absg(T)$ of the Hilbert moduli scheme $Y'$ of $\mathcal P'$.
\subsubsection{Bounding the degree of the forgetful morphism}Let $Y$ be a Hilbert moduli scheme of an arithmetic moduli problem $\mathcal P$ on $\mathcal M$, where $\mathcal M$ is the Hilbert moduli stack associated to a totally real field $L$ of degree $g$ with ring of integers $\OL$. Let $K$ be a number field and let $T\subseteq\spec(\OK)$ be open. In \cite{vkkr:hms} we developed an approach to bound the degree of the forgetful morphism $\phi:Y(T)\to \absg(T)$. The key steps of this approach (outlined in \cite[$\mathsection$1.2]{vkkr:hms}) go through for arbitrary $K$. However, in some steps substantial new ideas are required to deal with problems not appearing in the case $K=\QQ$ worked out in \cite{vkkr:hms}. To explain this, we recall the decomposition 
$$
\phi:Y(T)\to^{\phi_\alpha} \hilbmod(T)\to^{\phi_\varphi} \abomult(T)\to^{\phi_\iota} \absg(T)
$$
into forgetful morphisms.
Here $\hilbmod(T)$ and $\abomult(T)$ are the sets of isomorphism classes of triples $(A,\iota,\varphi)$  and pairs $(A,\iota)$ respectively, where $[A]$ lies in $\absg(T)$, $\iota:\OL\to \End(A)$ is a ring morphism and $\varphi$ is a polarization ($\mathsection$\ref{sec:hilbmodstackdef}) of $(A,\iota)$. To bound $\deg(\phi)$  it suffices to bound the degrees of $\phi_\alpha$, $\phi_\varphi$ and $\phi_\iota$, where we define $\deg(f)=\sup_{x\in X}|f^{-1}(x)|$ for any map of sets $f:X'\to X$. In \cite[Lem 8.2]{vkkr:hms} we proved  that $\deg(\phi_\alpha)\leq |\mathcal P|_\CC$. 

\paragraph{The map $\phi_\varphi$.}  The fiber of the forgetful morphism $\phi_\varphi:\hilbmod(T)\to\abomult(T)$ over $(A,\iota)$ in $\abomult(T)$ identifies with the set $\textnormal{Pol}(A,\iota)$ of equivalence classes of polarizations on $(A,\iota)$. Thus $\deg(\phi_\varphi)$ is bounded by the following result, proven in \cite[Prop 9.1]{vkkr:hms} when $K=\QQ$.

\vspace{0.3cm}
\noindent{\bf Proposition C.}
\emph{
If $A$ is an abelian scheme over $T$ of relative dimension $g$ with a ring morphism $\iota:\OL\to\End(A)$, then $|\textnormal{Pol}(A,\iota)|\leq 8^{g}$.}
\vspace{0.3cm}

\noindent  We shall obtain in Proposition~\ref{prop:polbound} a more general result with a sharper bound. To prove Proposition~\ref{prop:polbound} in Section~\ref{sec:polarizations}, we use the approach of \cite[$\mathsection$9]{vkkr:hms} where we worked out the case $K=\QQ$. Unfortunately  a crucial Lie algebra argument  breaks down for arbitrary $K$ since Lie$(A)$ can become too large. However, we can still reduce the problem to controlling a quotient $\Gamma^\times/U$ where $U$ is a subgroup of the units $\Gamma^\times$ of a certain order $\Gamma$ in a field $F\supseteq L$ of degree at most $2g$. This reduction (see Lemma~\ref{lem:polembedding})  combines the formal results of \cite[$\mathsection$9]{vkkr:hms} with the computation of the endomorphism algebra of $(A,\iota)$ in Lemma~\ref{lem:divalgcompchar0} which uses $H_1(A(\CC),\QQ)$ instead of Lie$(A)$. If $K\neq \QQ$ then $F/L$ is not always totally real and the discriminant $D_{F/L}$ can be unbounded. Hence additional arguments are required to control $|\Gamma^\times/U|$. In $\mathsection$\ref{sec:polda}, we use algebraic number theory to compute and relate various unit groups in extensions of number fields. This leads to an estimate for $|\Gamma^\times/U|$ in terms of a regulator ratio $\tfrac{R_F}{R_{F/L}R_L}$ which is uniformly bounded  even when $D_{F/L}$ is unbounded.

\paragraph{The map $\phi_\iota$.} The map $\phi_\iota:\abomult(T)\to \absg(T)$ is induced by forgetting the ring morphism $\iota:\OL\to\End(A)$. As the fiber of $\phi_\iota$ over some $A$ in $\absg(T)$ identifies with the set of isomorphism classes of $\OL$-module structures on $A$, Theorem~D\,(i) below gives $$\deg(\phi_\iota)\leq c(d\|h_F\|)^{e}\Delta\log(3\Delta)^{2g-1}.$$ Here $c=c(g)$ and $e=e(g)$ are as in Theorem~D, $d=[K:\QQ]$, $\Delta=|\Disc(L/\QQ)|$, and $\|h_F\|$ is the supremum of the function $\max(1,h_F)$ on $\absg(T)$ which satisfies $\|h_F\|<\infty$ by Faltings~\cite[Thm 6]{faltings:finiteness}. As already discussed in Section~\ref{sec:esintro},  it is currently out of reach to explicitly bound $\|h_F\|$ only in terms of $K$, $T$ and $g$. However, in the situation ($\mathsection$\ref{sec:phiconstructionintro}) appearing in the proof of Theorem~A, we can apply here (see Theorem~D\,(ii) below)  the bound for $h_F$ in Theorem~B since the abelian schemes in $\phi(U(\ZZ_S))$ are all of $\gl2$-type with $G_\QQ$-isogenies. This together with the above results for $\phi_\alpha$ and $\phi_\varphi$ lead to an explicit bound for the degree of the Par{\v{s}}in construction $\phi:U(\ZZ_S)\to \absg(T)$ in $\mathsection$\ref{sec:phiconstructionintro}.

\paragraph{Bounding $\OL$-structures.}Let $\OL$ be the ring of integers of an arbitrary totally real field $L$ of degree $g$ and let $A$ be an abelian scheme over $S$ of relative dimension $g$, where $S$ is any connected Dedekind scheme whose function field is a number field $K$.  In Section~\ref{sec:endo} we continue our study of $\OL$-module structures on $A$ initiated in \cite[$\mathsection$10]{vkkr:hms}. In particular we obtain the following result which holds more generally for any order $\Gamma$ of $L$ by Theorem~\ref{thm:endobound}.

 \vspace{0.3cm}
\noindent{\bf Theorem D.}
\emph{The following statements hold with $d=[K:\QQ]$ and $\Delta=|\Disc(L/\QQ)|$.
\begin{itemize}
\item[(i)] The number $n_A$ of isomorphism classes of $\OL$-module structures on $A$ satisfies  $$n_A\leq c\cdot h(A)^{e}\Delta\log(3\Delta)^{2g-1}$$
for $h(A)=d\max(h_F(A),1)$, $e=(2g)^9$ and $c=(3g)^{(3g)^{11}}$.
\item[(ii)]If $S\subseteq \spec(\OK)$ is open and $A$ is of product $\gl2$-type with $G_\QQ$-isogenies, then $$n_A \leq k\cdot\textnormal{rad}(N_SD_K)^{24ge}\Delta\log(3\Delta)^{2g-1}.$$
Here $N_S$ and $D_K$ are as in Theorem~B, and $k=3^{3^a}$ for $a=7^{4g^2}d^2$.
\end{itemize}
}
\noindent In \cite[Thm 10.1]{vkkr:hms} we proved Theorem~D when $K=\QQ$, and the Jordan--Zassenhaus theorem (JZ) leads to a finiteness result (\cite[Lem 10.13]{vkkr:hms}) which is more general than Theorem~D but not explicit. Theorem~B and (i) directly imply (ii).  To show (i) we use the approach developed in \cite[$\mathsection$10]{vkkr:hms}. It avoids (JZ) and has two parts: A reduction to the case when $R=\End(A)$ has a special form, and a bound for such special $R$. After some modifications, the reduction in \cite{vkkr:hms} works for arbitrary $K$. It consists of several steps and  uses the Masser--W\"ustholz isogeny theorem~\cite{mawu:abelianisogenies} based on transcendence.  This allows us to reduce to the situation when $\OL$ is replaced by an order $\Gamma$ of $L$ and when $R=\uM_n(\OL_D)$ with $\OL_D$ a maximal order of $D=\OL_D\otimes_\ZZ\QQ$ such that either 
\begin{itemize}
\item[(1)] the $\QQ$-algebra $D$ identifies with a subfield $Z$ of $L$ of relative degree $[L:Z]=n$,
\item[(2)] or $D$ is a non-split quaternion algebra and the center of $D$ identifies with a subfield $Z$ of $L$ of relative degree $[L:Z]=2n$,
\item[(3)] or $D$ is a CM field of degree $[D:\QQ]=2g/n$.
\end{itemize}
In \cite{vkkr:hms} we settled the case (1). However (2) and (3) both require substantial new ideas since a crucial dimension argument  breaks down. We refer to $\mathsection$\ref{sec:quatcase} and $\mathsection$\ref{sec:FirstCMcase} for an outline of our strategies to deal with (2) and (3) respectively. Roughly speaking the main steps are as follows: For (2), after decomposing (similar to \cite[Lem 10.4]{vkkr:hms}) into compatible morphisms,  we can reduce (as in \cite[$\mathsection$10.4]{vkkr:hms}) to the problem of suitably bounding the set $X_0/{\sim}$ of isomorphism classes of $R\otimes\Gamma$-lattice structures on $R$. Then we construct an explicit finite map from $X_0/{\sim}$ to the class group of $L$.  For (3), after decomposing again into compatible morphisms, we tensor with $\OL_D$ to double the degree. This allows us to reduce to the situation treated in \cite[Prop 10.3]{vkkr:hms} and then we use Diophantine analysis to bound everything in terms of $D$ and $\Gamma$. In each of the two cases (2) and (3) we obtain bounds in terms of a discriminant of $D$ which we control via   the Masser--W\"ustholz endomorphism estimates~\cite{mawu:factorization} based on transcendence.

\subsubsection{Shape of the bounds}\label{sec:shapeofbounds}

The bounds in Theorems A, B and D depend polynomially on $\nu N_S$ and $\rad(D_KN_S)$. This crucially exploits the polynomial dependence on $N_S$ in \cite[Thm A and B]{rvk:gl2} and the strong shape of the Masser--W\"ustholz isogeny estimates~\cite{mawu:abelianisogenies,mawu:periods} based on transcendence, see also the related discussions in \cite[Rem 1)]{rvk:gl2}. Further, there are quite a few steps in the proofs where we had to modify or completely replace standard finiteness arguments in order to obtain effective polynomial bounds.  For example, while it is conceptually satisfying in $\mathsection$\ref{sec:phiconstructionintro} to work with a single controlled $T\subseteq\spec(\OL_K)$, this requires a huge number field $K$  whose discriminant might not be polynomial in terms of $N_S$. To avoid such huge  fields $K$, we actually work with many distinct $T\subseteq \spec(\OL_{K})$ coming from smaller fields $K$ and then we control the (number of) possible $T$.

We did not try to fully optimize the constants and exponents in our polynomial bounds. However, we conducted some effort to include a few optimizations providing exponential improvements. For example, in terms of $g$ we exponentially improve $c$ and $e$ in Theorem~A by working out stronger versions of the effective Shafarevich conjecture in Theorem~\ref{thm:es} in certain relevant situations (Propositions~\ref{prop:esgl2} and \ref{prop:cm}). Further, to assure that the exponent $24g$ in Theorems~A and B is not exponential in terms of $g$, we crucially exploit that the exponent $24$ in \cite[Thm A]{rvk:gl2} does not depend on the dimension; in fact this was the main motivation for making the effort in \cite[p.18-19]{rvk:gl2} to remove the dimension  in the exponent of \cite[Thm A]{rvk:gl2}. We also emphasize that our bounds directly benefit from the many important refinements of the Masser--W\"ustholz isogeny results obtained by Bost--David~\cite{bost:abelianisogenies}, Viada~\cite{viada:abeliansubvariety} and Gaudron--R\'emond~\cite{gare:periods,gare:isogenies,gare:newisogenies}.

We now compare the bounds in Theorem~A with known results.  The polynomial bounds for Hilbert modular schemes in \cite[Thm A]{vkkr:hms} and for moduli schemes of elliptic curves in \cite[Thm A]{rvk:intpointsmodell} have sharper constants and exponents than Theorem~A. The reasons are that some steps (which blow up the bounds) in the proof of Theorem~A can be omitted for Hilbert moduli schemes and that more tools are available for elliptic curves. For example, the proof of \cite[Thm A]{rvk:intpointsmodell} uses a height-conductor inequality (see Frey~\cite{frey:ternary}) for elliptic curves which was proven independently and simultaneously by K.~\cite{rvk:intpointsmodell} and by Murty--Pasten~\cite{mupa:modular} via Frey's modular degree approach; see K.--Matschke~\cite{vkma:computation} and Pasten~\cite{pasten:shimabc} for some refinements of the height-conductor inequality. Further, it is quite difficult to compare Theorem~A~(i) with the height bound of Bilu, Cai and Sha (see \cite{cai:int}) for congruence modular curves  since the bounds have different shapes: Their bound which in addition holds for any number field is better than Theorem~A~(i) in most situations, except when the level is large since Theorem~A~(i) depends polynomially on $\nu$.

\subsubsection{Conditional generalizations and algorithmic versions}\label{sec:introdgeneralizations}
The above described proof shows that the key for generalizing Theorem~A is the effective Shafarevich Conjecture~$\ces$. An optimal form of Conjecture~$\ces$ would follow from  the generalized Szpiro conjecture (\cite{szpiro:seminaire88,frey:linksulm}) with effective constants, see for example \cite[$\mathsection$4.1]{gakamo:alpbach}. Going beyond (modular) abelian varieties,  Venkatesh developed in \cite{beseve:torsiongrowth,venkatesh:icm,venkatesh:heightconjecture} a far-reaching height conjecture for motives of automorphic forms:  It relates an automorphic height to an arithmetic height, where the arithmetic height is defined in the spirit of Kato's height of motives~\cite{kato:heights} which in turn generalizes the Faltings height.  Venkatesh showed in \cite{venkatesh:heightconjecture} that the height conjecture is closely related to the Bloch--Kato conjecture for the adjoint L-function and the motivic conjecture of \cite{gave:deriveddeform,venkatesh:derivedhecke,prve:asterisque}; in particular these two conjectures imply the height conjecture. 

To generalize Theorem~A to any number field $K$, it suffices by the above described proof to prove the $\gl2$-case of Conjecture~$\ces$. The strategy of \cite{rvk:gl2} for $K=\QQ$, combining Faltings' method with modularity and isogeny estimates, works for any $K$ except the part using a geometric version of modularity (which is currently not available for all $K$). Introducing new ideas, Alp\"oge~\cite{alpoge:modularitymordell}  showed that one can replace in this strategy modularity by potential modularity which allowed him to prove an algorithmic version of the $\gl2$-case of Conjecture~$\ces$ for totally real $K$ of odd degree. Conditional on modularity and/or other standard conjectures, he similarly deduced in \cite{alpoge:thesis} algorithmic versions of other relevant cases of Conjecture~$\ces$, including the fundamental case of principally polarized $A$ jointly with Lawrence (see \cite[$\mathsection$7]{alpoge:thesis}). Unconditionally, he obtained in \cite{alpoge:unpeu} an algorithmic bound for the number of isomorphism classes of $A$ as in Conjecture~$\ces$ of $\gl2$-type by proving `height-free' isogeny estimates.

Finally we mention that (potential) automorphy/modularity is established  in more and more general situations, see \cite{aletal:cmmodularity,neth:ihes1,neth:ihes2,boetal:absurfacesmodular} for recent breakthroughs.

\subsection{Clebsch--Klein surfaces as coarse moduli schemes}\label{sec:introckcoarse}   

In Section~\ref{sec:ck} we study geometric properties of the Clebsch--Klein surfaces. In particular we show that these surfaces are coarse Hilbert moduli schemes. To describe our results in more detail, we use the notation and terminology introduced in Section~\ref{sec:introintegralchms}. Let $Y\subset X$ be the relative surfaces over $\ZZ$ defined in \eqref{def:sigma24surface}. We define the closed subscheme
\begin{equation}\label{def:cubicsurface}
V\subset\mathbb P^4_\ZZ: \sum z_i=0=\sum z_i^3, \quad \textnormal{and}\quad U=V\setminus \cup V_+(z_i)
\end{equation}
with the union taken over the five coordinate functions $z_i$ of $\mathbb P^4_\ZZ=\textnormal{Proj}\bigl(\ZZ[z_i]\bigl)$. Clebsch~\cite{clebsch:clebschsurface} and Klein~\cite{klein:cubicsurfaces} initiated the study of the geometry of the surfaces defined in \eqref{def:sigma24surface} and \eqref{def:cubicsurface}. Building on the work of Hirzebruch~\cite{hirzebruch:ck} in which he proved $Y(\CC)\cong\mathbb H^2/\Gamma(2)$ for $L=\QQ(\sqrt{5})$, we shall obtain in Theorem~\ref{thm:cksurfaces} the following result.

\vspace{0.2cm}
\noindent{\bf Theorem E.}
\emph{The surface $Y/\ZZ$ becomes over $\ZZ[\tfrac{1}{30}]$ a coarse moduli scheme of an arithmetic moduli problem $\mathcal P$ on $\mathcal M$ with the following properties.} 
\begin{itemize}
\item[(i)] The branch locus $B_{\mathcal P}$ is empty and $|\mathcal P|_{\bar{\CC}}\leq 16$.
\item[(ii)] Any $P\in Y(\bar{\QQ})$ satisfies $h(P)\leq 2h_\phi(P)+8^8\log(h_\phi(P)+8)$.
\end{itemize}
\vspace{0.2cm}
 
Here $\mathcal M$ is the Hilbert moduli stack associated to $L=\QQ(\sqrt{5})$ and  $h$ is the logarithmic Weil height on $Y\subset \mathbb P^4_\ZZ$. The moduli problem $\mathcal P$ in Theorem~E is closely related to the moduli problem $\mathcal P(2)$ of principal level 2-structures; see \eqref{def:symplecticlvl} and \eqref{def:morphismckthm}.  

Combining Theorem~E with the open immersion $U\hookrightarrow Y$ over $\ZZ[1/3]$ constructed in Lemma~\ref{lem:openimmersion}, one obtains an analogue of Theorem~E for $U$. In particular, for any algebraically closed field $k$ of characteristic not in $\{2,3,5\}$ we obtain natural moduli interpretations of the points in $Y(k)$ and $U(k)$ in terms of abelian varieties $A$ over $k$ with a ring morphism $\ZZ[\tfrac{1+\sqrt{5}}{2}]\hookrightarrow \End(A)$; the moduli interpretations are given in \eqref{eq:moduliintY} and \eqref{eq:moduliintU}.  

The height bound in Theorem~E~(ii) is linear in terms of $h_\phi$ which is best possible in the sense that for any exponent $e<1$ the bound $h\ll h^{e}_\phi$ can not hold on $Y(\bar{\QQ})$.  Furthermore the factor $2$ in front of $h_\phi$ might be optimal, but the constants $8^8$ and $8$ can be improved up to a certain extent; see the discussions at the end of Section~\ref{sec:heightcomparison}.


\paragraph{Moduli interpretation of $X$.} Define $B=\ZZ[\tfrac{1}{30}]$ and let $M_2^*$ be the minimal compactification of Faltings--Chai~\cite{fach:deg,chai:hilbmod} of the coarse moduli scheme $M_2$ of the stack $\mathcal M_2=\cmp$ over $B$ constructed by Rapoport~\cite[Thm 1.22]{rapoport:hilbertmodular}; see Section~\ref{sec:moduliinterpretation}. In course of the proof of Theorem~E, we obtain in Proposition~\ref{prop:moduliinter} the following result. 

\vspace{0.2cm}
\noindent{\bf Proposition F.}
\emph{There is an isomorphism $\minm\isomto X_B$ which restricts to $M_2\isomto Y_B$.} 
\vspace{0.2cm}

\noindent Explicitly the isomorphism $\minm\isomto X_B$ is given by the five Eisenstein series $E_i$ of $\mathsection$\ref{sec:eisenstein} which correspond to the five cusps of $M_2$. Hirzebruch~\cite{hirzebruch:ck} essentially proved Proposition~F over $\QQ$. Indeed his arguments  over $\CC$, which were modified to work over $\QQ$ by Shepherd-Barron--Taylor~\cite{sbta:ck}, show that the $E_i$ define isomorphisms over $\QQ$: 
\begin{equation}\label{eq:introgenfiberiso}
M_{2,\QQ}^*\isomto X_\QQ \quad \textnormal{and}\quad M_{2,\QQ}\isomto Y_\QQ.
\end{equation}
This shows that $Y$ becomes a coarse Hilbert moduli scheme over $\QQ$, and hence over some open $T\subseteq\spec(\ZZ)$. Unfortunately we have here no control over  $T$ since $M_2$ is only defined abstractly. Also, unlike in the case of relative curves, for relative surfaces an isomorphism over the generic fiber does not automatically extend to an isomorphism over the smooth locus. In fact the five $E_i$ can not define a morphism over $\ZZ[\tfrac{1}{2\Delta}]$ for $\Delta=5$ the discriminant of $L$, since one needs to invert the prime $3$ by Lemma~\ref{lem:eisensteinfourierexpansion} and the $q$-expansion principle.

\subsubsection{Principal ideas of Proposition F and Theorem~E~(i)}\label{sec:outlineproofofmoduliint}

To extend the isomorphisms \eqref{eq:introgenfiberiso} over an explicit open $T\subseteq \spec(\ZZ)$, it seems hopeless to directly carry through Hirzebruch's arguments~\cite{hirzebruch:ck} over $T$. The main problem is that his arguments rely on results for algebraic surfaces whose arithmetic analogues over $T$ are not available. Thus we use a different strategy which combines Hirzebruch's arguments with the construction of Faltings--Chai~\cite{fach:deg,chai:hilbmod} of the minimal compactification $M_2^*$.  A key point in the latter construction is the Stein factorization
\begin{equation}\label{eq:introsteinfact}
\torm\to^{\bar{\pi}} \minm\to \mathbb P_{B}^n
\end{equation}
of the morphism $\torm\to \mathbb P^n_B$ induced by the fact (\cite[Thm~(iii)]{chai:hilbmod}) that a positive power of the Hodge bundle $\omega$ on $\torm$ is generated by its global sections, where $\torm$ is a smooth toroidal compactification of $\mathcal M_2$. More precisely, as the Eisenstein series $E_i$ have Fourier expansions over $B$ by Lemma~\ref{lem:eisensteinfourierexpansion}, they correspond to global sections $s_i$ of $\omega^{\otimes 2}$ via the $q$-expansion principle \cite{rapoport:hilbertmodular} and we obtain in Proposition~\ref{prop:fingen} the following result.

\vspace{0.2cm}
\noindent{\bf Proposition G.}
\emph{The invertible sheaf $\omega^{\otimes 2}$ on $\torm$ is globally generated by the five $s_i$.}
\vspace{0.2cm}

\noindent An application of \eqref{eq:introsteinfact} with the morphism $\torm\to \mathbb P^4_B$ induced by Proposition~G gives $\torm\to^{\bar{\pi}}\minm\to^f \mathbb P^4_B$ for some finite morphism $f$ which extends the isomorphism in \eqref{eq:introgenfiberiso}. Then we use  Zariski's main theorem to conclude that $f:\minm\to X_B$ is an isomorphism which restricts to $M_2\isomto Y_B$. In particular $Y$ becomes over $B$ a coarse Hilbert moduli scheme  with initial morphism $\pi:\mathcal M_2\to Y_B$ given by $f\bar{\pi}|_{\mathcal M_2}$. Our study of automorphism groups in Section~\ref{sec:branchauto} implies that the branch locus $B_{\mathcal P}$ of $\pi$ is empty and then we deduce Theorem~E~(i). We next describe the ideas used in the proof of Proposition~G.

\subsubsection{Outline of the proof of Proposition G}\label{sec:outlinefingenproof}

 On using fundamental results of Rapoport~\cite{rapoport:hilbertmodular} and Faltings--Chai~\cite{fach:deg,chai:hilbmod} and also the $q$-expansion of $E_i$ computed in Lemma~\ref{lem:eisensteinfourierexpansion}, we first reduce Proposition~G to the analogous problem for $\pi_*\omega^{\otimes 2}$ on $M_2$ for $\omega=\omega|_{\mathcal M_2}$. Here $\pi_*\omega^{\otimes 2}$ is an invertible sheaf on $M_2$ by Lemma~\ref{lem:descendingomega} which we deduce from a result of Olsson~\cite{olsson:gtorsors} after computing the automorphism groups of $\mathcal M_2$. Thus we are reduced to show that
\begin{equation}\label{eq:introddivisorintersection}
\cap D_i=\emptyset
\end{equation}
for $D_i$ the divisor on $M_2$ defined by the global section $s_i$ of $\omega^{\otimes 2}$ corresponding to $E_i$. To prove \eqref{eq:introddivisorintersection} we use a strategy which is inspired by Hirzebruch's approach \cite{hirzebruch:ck} over $\CC$: We  first compute $D_i$ in terms of certain modular curves $gC\subset M_2$ and then we exploit the moduli interpretation of $gC$. We now explain the key steps of the strategy in more detail.

\paragraph{Eisenstein series $E_i$.}In $\mathsection$\ref{sec:eisenstein} we consider the Eisenstein series $E_i$ of weight two for the principal congruence subgroup $\Gamma(2)\subset \sl2(\OL)$ which vanishes at all cusps except the cusp $i$, where we label the five cusps of $M_2$ by $0,\dotsc,4$ and where $\OL$ is the ring of integers of $L=\QQ(\sqrt{5})$. The Fourier expansion of $E_i$ at the cusp $j$ is of the form  
\begin{equation}\label{eq:introfourierexpansion}
\sum a_\nu e^{2\pi i\textnormal{Tr}(\nu\tau)}, \quad a_\nu\in \ZZ, \quad \gcd(a_\nu)=1, \quad a_0=\begin{cases}
3  & \textnormal{if } j=i,\\
0 & \textnormal{if } j\neq i.
\end{cases}
\end{equation}
We shall prove this in Lemma~\ref{lem:eisensteinfourierexpansion} by computing explicitly the quantities in Klingen's formulas~\cite{klingen:eisensteinfourier} for the Fourier coefficients of $E_i$ which he obtained via analytic methods. Here the computation of the constant term $a_0$ relies on the result $\zeta_{L}(-1)=\tfrac{1}{30}$ of Siegel~\cite{siegel:zetavalues} and  Zagier~\cite{zagier:zetavalues}, where $\zeta_{L}$ is the Dedekind zeta function of $L=\QQ(\sqrt{5})$.

\paragraph{Modular curves inside $M_2$.} Hirzebruch, Zagier and van der Geer  studied the modular curve $F_1=T_1$ on complex Hilbert modular varieties (\cite{hirzebruch:hmsenseignement,hiza:hmsintersection,geza:hms13}).   Moreover, Bruinier--Burgos--K\"uhn~\cite{brbuku:hilbmod} and Yang~\cite{yang:hmsurfaceamerican} studied an integral model of the Hirzebruch--Zagier divisor $F_1$ on $\mathbb H^2/\sl2(\mathcal O)$. Building on their works, we define in $\mathsection$\ref{sec:modularcurves} modular curves inside $M_2$ and we prove some geometric properties. The modular curve $C\subset M_2$ is defined as the image of the proper morphism $\pi\phi:\mathcal E_2\to M_2$, where $$\phi:\mathcal E_2\to \mathcal M_2, \quad (E,\alpha)\mapsto (E,\alpha)\otimes \OL,$$
is defined in \eqref{def:stackimmersion} for $\mathcal E_2$ the moduli stack over $B$ of elliptic curves with symplectic level 2-structure. More generally, as $G=\sl2(\OL/2\OL)$ acts on $\mathcal M_2$, we can define analogously a modular curve $gC\subset M_2$ for each $g\in G$. We show the following: The curves $gC\subset M_2$ are disjoint in the sense that $gC\cap g'C$ is empty if $gC\neq g'C$,  and any connected component of the curve $F_1\subset \mathbb H^2/\Gamma(2)$ is given by $gC(\CC)$ for some $g\in G$.   In particular the curves $gC$ are integral models of the connected components of $F_1\subset \mathbb H^2/\Gamma(2)$.

\paragraph{Computing the divisor $D_i$.}To compute the divisor $D_i$ on $M_2$ associated to the global section $s_i$ of $\pi_*\omega^{\otimes 2}$ corresponding to $E_i$, we first show in Lemma~\ref{lem:divhorizontal} that each prime divisor of $D_i$ is the closure of its generic fiber. For this we use results of Rapoport~\cite{rapoport:hilbertmodular} and Faltings--Chai~\cite{fach:deg,chai:hilbmod} and we work with the normal scheme $\minm$ which is proper over $B$ with irreducible fibers: In particular we combine the Fourier expansion \eqref{eq:introfourierexpansion} with the $q$-expansion principle to show that $s_i$ can not vanish on any vertical divisor of $\minm$. Then we compute in Lemma~\ref{lem:divisorintersection} the closure in $M_2$ of the generic fiber $D_{i,\QQ}$ of $D_i$: This closure, and hence $D_i$, is given by four disjoint $gC$. Here we combine our geometric results for $gC$  with  Hirzebruch's computation~\cite{hirzebruch:ck} of $D_{i,\QQ}$ in terms of the connected components of $F_1\subset \mathbb H^2/\Gamma(2)$. Moreover, we can determine which four $gC$ appear inside $D_i$ via the intersection behaviour (\cite{hirzebruch:ck}) with the cusp resolutions in the minimal desingularization of $M_2^*(\CC)$. Then the disjointness of the $gC\subset M_2$ leads to $\cap D_i=\emptyset$.

\subsubsection{Ideas of the proof of the height bound in Theorem~E~(ii)}

To bound the logarithmic Weil height $h$ on $Y(\bar{\QQ})$ in terms of the height $h_\phi$, we may and do work over $\bar{\QQ}$ and we write $Y$ for $Y_{\bar{\QQ}}$ and $\mathbb P^4$ for $\mathbb P^4_{\bar{\QQ}}$.  The proof consists of two parts.

\paragraph{Relate $h$ to $h_\theta$.}In the first part of the proof, we relate $h$ to a Theta height on $Y$ which we pull back from the coarse moduli space $A_{2,2}$ over $\bar{\QQ}$ of principally polarized abelian surfaces with symplectic level 2-structure. More precisely, Lemma~\ref{lem:thetaheight} provides
\begin{equation}\label{eq:introthetaheightbound}
h(P)\leq h_\theta(P), \quad P\in Y(\bar{\QQ}),
\end{equation}
 where $h_\theta$ is the pull back along the forgetful morphism $Y\to A_{2,2}$ of the theta height on $A_{2,2}$ given by the fourth powers of the 10 even Theta nullwerte $\theta_m$. To prove \eqref{eq:introthetaheightbound} we identify  the five coordinate functions on $Y\subset \mathbb P^4$ with the five Eisenstein series $E_i$ and we compute $E_i$ in terms of G\"otzky's~\cite{gotzki:theta} classical Theta functions $\theta_{ij}$ as follows:
\begin{equation}\label{eq:introeisensteinthetacomp}
E_i^4=\pm \prod_{j\neq i}\theta_{ij}^4.
\end{equation}
Here we crucially exploit Hirzebruch's work~\cite{hirzebruch:ck} and we use a transformation formula for $\theta_{ij}^8$ due to G\"otzky~\cite{gotzki:theta}. Then we apply results for Hilbert theta functions obtained by Gundlach~\cite{gundlach:theta} and Lauter--Naehrig--Yang~\cite{lanaya:theta} to show that $\theta_{ij}$ is (up to $\pm 1$) the pullback along $Y\to A_{2,2}$ of some even $\theta_m$. This together with \eqref{eq:introeisensteinthetacomp} leads to \eqref{eq:introthetaheightbound}.

\paragraph{Relate $h_\theta$ to $h_\phi$.}In the second part of the proof, we relate the Theta height $h_\theta$ on $Y$ to some Theta height $h_\Theta$ on the coarse moduli space $A_2$ over $\bar{\QQ}$ of principally polarized abelian varieties.  Faltings' work~\cite{faltings:finiteness} allows to compare $h_\Theta$ with the stable Faltings height $h_F$ on $A_2(\bar{\QQ})$.  Moreover, based on ideas of Bost--David, Pazuki~\cite{pazuki:heights} obtained an explicit comparison for $h_F$ and $h_\Theta$ on $A_2(\bar{\QQ})$. This allows us to explicitly relate $h_\Theta$, and thus $h_\theta$, to the height $h_\phi$ and then we deduce from \eqref{eq:introthetaheightbound} the bound for $h$  in Theorem~E~(ii). 

\subsection{Acknowledgements}

We would like to thank Peter Sarnak, Umberto Zannier and Shouwu Zhang for useful comments, and we are grateful to Shijie Fan for many discussions related to this paper.

This paper is the second in a series of papers containing the results obtained in our long
term project on integral points of certain higher dimensional varieties. While working on
this project over the last years, we were supported by the IH\'ES, IAS Princeton, MSRI,
MPIM Bonn, Princeton University, University of Amsterdam, and IAS Tsinghua university. We would like to thank all of these institutions for their support. Further we were
supported by an EPDI fellowship, NSF (No. 300.154591), Veni grant, NSFC(No. 12050410238) and  Tsinghua University Dushi Program(No. 53120300122).

This version of the paper is identical with the version from 2021, up to the new Section~\ref{sec:ck}, some additional discussions and references, and an updated introduction.


\newpage

\section{Conventions and notation}

Unless mentioned otherwise, we shall use throughout this work the following conventions and notation. Let $M$ be a set. We denote by $\lvert M\rvert$ the number of distinct elements of $M$ and we write $N^c$ for the complement in $M$ of a subset $N\subseteq M$.  Let $f:M'\to M$ be a map of sets. We define the set-theoretic degree of $f$ by $\deg(f) = \sup_{m \in M} |f^{-1}(m)|$, and we say that $f$ is finite if for each $m\in M$ the fiber $f^{-1}(m)$ of $f$ over $m$ is finite. If $G$ is a group acting on $M$, then we denote by $M^G=\{m\in M; gm=m \textnormal{ for all }g\in G\}$ the set of fixed points. Further, by $\log$ we mean the principal value of the natural logarithm and the product taken over the empty set is defined as $1$.

\paragraph{Algebra.}  Let $K$ be a number field. We denote by $\OK$ the ring of integers of $K$, we identify a finite place $v$ of $K$ with a nonzero prime ideal of $\OK$ and vice versa, and  we write $N_v$ for the cardinality of the residue field of $v$. The absolute value of the discriminant of $K/\QQ$ will be denoted by $D_K$, and we write $N(I)$ for the absolute norm $|\OK/I|$ of an ideal $I\subseteq \OK$.  We say that a number field $L\supseteq K$ is unramified over a subset $T\subseteq \spec(\OK)$ if the field extension $L/K$ is unramified at each nonzero $v\in T$. Let $n\in \ZZ_{\geq 1}$. For any unique factorization domain $A$ and $x\in A^n$, we write $\gcd(x_i)$ for $\gcd(x_1,\dotsc,x_n)$.  The radical $\textnormal{rad}(m)$ of a nonzero $m\in\ZZ$ is defined as the product $\prod_{p\mid m} p$ of all the rational primes $p$ which divide $m$.  We denote by $R^\times$ the group of units of a (not necessarily commutative) ring $R$ and we write $\uM_n(R)$ for the endomorphism ring of the free $R$-module $R^n$; the unit group of $\uM_n(R)$ will be denoted by $\textnormal{GL}_n(R)$  and $\textnormal{SL}_n(R)\subseteq \textnormal{GL}_n(R)$ is the kernel of a determinant morphism. 
For any field $k$ we denote by $\bar{k}$ an algebraic closure of $k$. 

\paragraph{Schemes.} We denote by $\sets$ and $\sch$ the categories of sets and schemes respectively. Let $\mathcal C$ be a category. We shall often omit the subscript $\mathcal C$ of $\Hom_{\mathcal C}$  when it is clear from the context in which category we are working. A contravariant functor from $\mathcal C$ to $\sets$ is called a presheaf on $\mathcal C$.    
Let $S$ be a scheme. We write $\Hom_S$ for $\Hom_{\mathcal C}$  when $\mathcal C$ is the category of $S$-schemes. If $T$ and $Y$ are $S$-schemes, then we define $Y(T)=\Hom_S(T,Y)$  and we write $Y_T=Y\times_S T$ for the base change of $Y$ from $S$ to $T$. We shall often identify an affine scheme $S=\spec(R)$ with the ring $R$. For example, if $T=\spec(R)$ is  affine then we write $Y_R$ for $Y_T$ and $Y(R)$ for $Y(T)$.  A variety $Y$ over $S$ is an $S$-scheme $Y$ whose structure morphism $Y\to S$ is separated and of finite type. 
If $S$ is integral then we denote by $k(S)$ the function field of $S$. Following \cite{bolura:neronmodels}, we say that $S$ is a Dedekind scheme if $S$ is a normal noetherian scheme of dimension 0 or 1. Unless mentioned otherwise, we equip a closed subset of a scheme with the unique reduced closed subscheme structure. 
If $G$ is a group then we denote by $G_S$ the constant group scheme over $S$.


\paragraph{Abelian schemes and orders}Let $A$ and $B$ be abelian schemes over an arbitrary scheme $S$. We denote by $\Hom(A,B)$ the set of $S$-group scheme morphisms $A\to B$, and we write $\Hom^0(A,B)$ for $\Hom(A,B)\otimes_\ZZ\QQ$ and $\End^0(A)$ for $\End(A)\otimes_\ZZ\QQ$. 

Let $\mathcal O$ be a not necessarily commutative ring. We shall freely use the results, notation and terminology in \cite[$\mathsection$2]{vkkr:hms} on abelian schemes, $\OL$-abelian schemes and orders. A ring morphism $\iota:\OL\to \End(A)$ is  called an $\OL$-module structure on $A$, and two $\OL$-module structures $\iota,\iota'$ on $A$ are isomorphic if there exists an automorphism $\tau$ of $A$ such that $\iota'(x)=\tau\iota(x)\tau^{-1}$ for all $x\in \OL$.  Further $(A,\iota)$ denotes an object in the category of $\OL$-abelian schemes and $A\otimes_\OL I$ denotes an abelian scheme which represents the tensor product  of $(A,\iota)$ with a finite projective right $\OL$-module $I$.  In situations where the specific choice of a ring morphism $\iota:\OL\to \End(A)$ is not relevant, we usually omit $\iota$ from the notation and we simply write $A$ for an $\OL$-abelian scheme $(A,\iota)$. 

Further $A^\vee=\textnormal{Pic}^0(A)$ denotes the dual of $A$, $\textnormal{Hom}(A,A^\vee)^{\textnormal{sym}}$ is the set of symmetric morphisms and  $\textnormal{Pol}(A)$ denotes the set of polarizations of $A$. For any nonzero $n\in \ZZ$ we denote by $A_n$  the kernel of  the morphism $[n]:A\to A$ defined by multiplication with $n$. If $A$ is an abelian scheme over $S$ of relative dimension $g$, then we say that $A$ has CM (resp. that $A$ is of $\gl2$-type) if the $\QQ$-algebra $\End^0(A)$ contains a commutative semisimple  $\QQ$-algebra of rank $2g$ (resp. a number field of degree $g$). 

\paragraph{Stacks.}

Unless mentioned otherwise, we shall use the terminology and definitions of the Stacks project~\cite{sp}. Let $S$ be a scheme.  By an algebraic stack over $S$ we mean an algebraic stack over $(\textnormal{Sch}/S)_{\textnormal{fppf}}$ in the sense of \cite{sp}.  We usually identify schemes over $S$ and algebraic spaces over $S$ with the associated algebraic stacks over $S$.   For any category $\mathcal X$ fibered in groupoids over $(\textnormal{Sch}/S)$, we denote by $[\mathcal X]$ the presheaf on $(\textnormal{Sch}/S)$ sending an $S$-scheme $T$ to the set $[\mathcal X(T)]$ of isomorphism classes of the objects in the fiber $\mathcal X(T)$ of $\mathcal X$ over $T$. Throughout we shall freely use well-known standard results for algebraic stacks or DM stacks which all can be conveniently found in \cite{sp}.

\newpage

\section{Coarse Hilbert moduli schemes}\label{sec:chmsdefs}

In this section we introduce some terminology. After recalling the definition of Hilbert moduli stacks, we define and discuss coarse Hilbert moduli schemes and their branch loci. We also construct a height on coarse Hilbert moduli schemes. 

\subsection{Hilbert moduli stacks}\label{sec:hilbmodstackdef}

Throughout this work we shall use the notation and terminology of \cite[$\mathsection$3.1]{vkkr:hms} on Hilbert moduli stacks. These stacks were constructed by Rapoport~\cite{rapoport:hilbertmodular} and Deligne--Pappas~\cite{depa:hilbertmodular}. For later use we now briefly recall their definition in \cite[$\mathsection$2]{depa:hilbertmodular}. Let $L$ be a totally real number field of degree $g$ with ring of integers $\OL$.  For any nonzero finitely generated $\OL$-submodule $I$ of $L$ with a positivity notion $I_+$, one obtains an algebraic stack $\mathcal M^I$ whose objects  over an arbitrary scheme $S$ are  triples $(A,\iota,\varphi)$ given by:   \begin{itemize}
\item[(i)] An abelian scheme $A$ over $S$ of relative dimension $g$.
\item[(ii)] A ring morphism $\iota:\OL\to \End(A)$.
\item[(iii)] A morphism $\varphi:I\to \Hom_\OL(A,A^\vee)^\textnormal{sym}$ of $\OL$-modules such that the induced morphism $I\otimes_{\mathcal O}A \to A^\vee$ is an isomorphism and such that $\varphi(I_+)\subseteq \textnormal{Pol}(A)$.
\end{itemize}
Here $\Hom_\OL(A,A^\vee)^\textnormal{sym}$ denotes the $\OL$-module of symmetric morphisms $(A,\iota)\to(A^\vee,\iota^\vee)$, where $\iota^\vee:\OL\to \End(A^\vee)$ is the ring morphism obtained by sending $x\in \OL$ to the endomorphism $\textnormal{Pic}^0(\iota(x))$ of $A^\vee$. A morphism $\varphi$ as in (iii) is called an $I$-polarization of $(A,\iota)$. We say that an algebraic stack   is a Hilbert moduli stack  if it is isomorphic to $\mathcal M^I$ associated to some $L$, $I$ and $I_+$ as above.  Further, if $I=\mathfrak d^{-1}$ is the $\ZZ$-dual of $\OL$ and $I_+$ is the (standard) positivity notion $(+,\dotsc,+)$ on $I$, then we call an $I$-polarization simply a polarization and we call $\mathcal M^I$ the Hilbert moduli stack associated to $L$. 
In what follows, we usually identify a Hilbert moduli stack $\mathcal M$ with some $\mathcal M^I$ via an isomorphism $\mathcal M\cong \mathcal M^I$ and we shall freely use the geometric results on Hilbert moduli stacks discussed in \cite[$\mathsection$3.1]{vkkr:hms}.

\subsection{Moduli problems}

Katz--Mazur developed in \cite{kama:moduli} the moduli problem formalism for $g=1$ using Mumford's language of relative representability~\cite{mumford:picardmoduli}. Building on \cite{kama:moduli} we now develop the moduli problem formalism for all $g\geq 1$ using the language of algebraic stacks. This allows us to freely apply in our proofs the theory of algebraic stacks which was substantially extended and improved over the last decades by many authors \cite{sp}.

Let $\mathcal M$ be a Hilbert moduli stack and let $\mathcal P$ be a presheaf on $\mathcal M$. We call $\mathcal P$ a moduli problem on $\mathcal M$. Further, for each object $x$ of $\mathcal M$, the elements of the set $\mathcal P(x)$ are called $\mathcal P$-level structures on $x$. The category $\cmp$, whose objects are pairs $(x,\alpha)$ with $x$ an object of $\mathcal M$ and $\alpha\in\mathcal P(x)$, is fibered in groupoids over $\mathcal M$ via the forgetful morphism  $$\cmp\to\mathcal M, \quad (x,\alpha)\mapsto x.$$
Then $\cmp$ is fibered in groupoids over $\sch$ via the structure morphism of $\mathcal M$. We say that the moduli problem $\mathcal P$ is algebraic if  $\cmp$ is an algebraic stack. For example any representable $\mathcal P$ is algebraic, and $\mathcal P$ is algebraic if the forgetful morphism $\cmp\to \mathcal M$ is representable in algebraic spaces.  
Further we say that the moduli problem $\mathcal P$ is arithmetic if $\cmp$ is a DM stack which is separated and of finite type over $\ZZ$.


Let $S$ be a scheme. As in \cite{rvk:intpointsmodell,vkkr:hms} we use the quantity $|\mathcal P|_S$ in order to control the number of distinct $\mathcal P$-level structures on any object in $\mathcal M(S)$. It is defined by
\begin{equation}\label{def:maxlvl}
|\mathcal P|_S=\sup |\mathcal P(x)|
\end{equation}
with the supremum taken over all objects $x$ of $\mathcal M(S)$. 
We say that $\mathcal P$ is finite over $\mathcal M(S)$ if  $\mathcal P(x)$ is finite for all $x\in \mathcal M(S)$. In particular  $\mathcal P$ is finite over $\mathcal M(S)$ if $|\mathcal P|_S<\infty$.

\subsection{Coarse Hilbert moduli schemes}\label{sec:coarsehmsdef}

Let $\mathcal M$ be a Hilbert moduli stack, let $Y$ be a scheme and let $\mathcal P$ be an algebraic moduli problem on $\mathcal M$. We say that $Y$ is a Hilbert moduli scheme of $\mathcal P$ if there exists an object in $\mathcal M(Y)$ which represents the presheaf $\mathcal P$. Then $Y$ is a Hilbert moduli scheme of $\mathcal P$ if and only if there exists an equivalence $\cmp\isomto Y$. More generally,  we say that $Y$ is a coarse Hilbert moduli scheme of $\mathcal P$, and we write $Y=\smp$, if there is a morphism
\begin{equation*}
\pi:\cmp\to Y
\end{equation*}
with the following properties: It induces a bijection $[\cmp(k)]\isomto Y(k)$ for each algebraically closed field $k$, and it is initial in the sense that 
any morphism from  $\cmp$ to an algebraic space factors uniquely through $\pi$.
In other words, $Y$ is a coarse moduli scheme of $\mathcal P$ if and only if there is a coarse moduli space $\cmp\to Y$ for $\cmp$ in the usual sense (\cite[$\mathsection $11]{olsson:stacks}). As $\pi$ is initial, it is unique up to unique isomorphism. In Section~\ref{sec:cms} we further discuss some geometric properties of $\pi$ in the case when $\mathcal P$ is arithmetic.

\paragraph{Terminology.} Unless mentioned otherwise, by a (coarse) moduli scheme we mean a (coarse) Hilbert moduli scheme and by an initial morphism $\cmp\to Y$ we mean a morphism which is initial among all morphisms to an algebraic space. Further, we simply say that $Y=M_{\mathcal P}$ is a (coarse) moduli scheme when $Y$ is a (coarse) moduli scheme of an algebraic moduli problem $\mathcal P$ on $\mathcal M$.
When working with (coarse) moduli schemes it is usually important to specify the involved moduli problem $\mathcal P$ on $\mathcal M$, since $Y$ can be a (coarse) moduli scheme of moduli problems on $\mathcal M$ which are geometrically very different. However, we often do not need to specify the morphism $\pi:\cmp\to Y$ since  it is unique up to isomorphism.

\paragraph{Branch locus.}Let $Y$ be a coarse moduli scheme of an algebraic moduli problem $\mathcal P$ on $\mathcal M$. We define the branch locus $B_{\mathcal P}$ of $\mathcal P$ as the complement in $Y$ of the union of all open subschemes $U\subseteq Y$ such that $\pi_U$ is \'etale where $\pi:\cmp\to Y$ is an initial morphism.   For example $B_{\mathcal P}$ is empty when $Y$ is a moduli scheme of $\mathcal P$, and for given $\mathcal P$ the branch locus $B_{\mathcal P}\subseteq Y$ does not depend on the specific choice of an initial morphism $\pi$.

\subsection{Height on coarse moduli schemes}\label{sec:heightdef} 
Let $\mathcal M$ be a Hilbert moduli stack and let $S$ be a connected Dedekind scheme whose function field $k$ is algebraic over $\QQ$. We now generalize to coarse moduli schemes the height which was defined in \cite[(3.3)]{rvk:intpointsmodell} and \cite[(3.4)]{vkkr:hms} in the case of moduli schemes. 

\paragraph{Faltings height.} For any abelian scheme $A$ over $S$ we denote by 
$
h_F(A)
$
the stable Faltings height $h_F$ of the generic fiber of $A$ introduced by Faltings~\cite{faltings:finiteness}. Here we use Faltings' original normalization~\cite[p.354]{faltings:finiteness} of the involved metric; see for instance \cite[$\mathsection$2.1]{rvk:gl2}. In fact for each $g\in\ZZ_{\geq 1}$ we obtain a height function $$h_F:\absg(S)\to \mathbb R$$ since isomorphic abelian schemes over $S$ have the same stable Faltings height. Here $\absg$ denotes the presheaf on $\sch$ which sends a scheme $S$ to the set of isomorphism classes of abelian schemes over $S$ of relative dimension $g$. This underlined $\absg$ should not be confused with the usual $A_g=A_{g,1}$ which classifies principally polarized abelian schemes $(A,\psi)$.  

\paragraph{Height on coarse moduli schemes.}Suppose that $\mathcal M\cong\mathcal M^I$ is associated to some $(L,I,I_+)$ and write $g=[L:\QQ]$. Let $Y$ be a coarse moduli scheme of some algebraic moduli problem $\mathcal P$ on $\mathcal M$. Now, we  define the height function
\begin{equation}\label{def:cheight}
h_\phi: Y(S)\to \mathbb R
\end{equation}
as the pullback of $h_F:\absg(\bar{k})\to\RR$ along the map $\phi_{\bar{k}}:Y(S)\to \absg(\bar{k})$. Here $\phi_{\bar{k}}$ is the composition of the following three maps: The map $Y(S)\to Y(\bar{k})$ induced by choosing an algebraic closure $\bar{k}$ of $k$, the bijection $\pi^{-1}:Y(\bar{k})\isomto [\cmp(\bar{k})]$ induced by an initial morphism $\pi:\cmp\to Y$, and the forgetful map defined by $[(x,\alpha)]\mapsto [x]\mapsto [A]$ where we identify via an equivalence $\mathcal M\isomto\mathcal M^I$ an object $x$ of $\mathcal M$ with its image $(A,\iota,\varphi)$ in $\mathcal M^I$. 

On using that $\phi_{\bar{k}}$ is induced by morphisms of presheaves and that the stable Faltings height $h_F$ is invariant under geometric isomorphisms, we deduce the following:
\begin{itemize}
\item[(i)] 
The function $h_\phi:Y(S)\to \RR$ is compatible with any dominant base change $S'\to S$, where $S'$ is a connected Dedekind scheme whose function field is algebraic over $\QQ$.
\item[(ii)] If $Y=M_{\mathcal P}$ is a moduli scheme, then there is an equivalence $\pi:\cmp\isomto Y$ such that $h_\phi$ in \eqref{def:cheight} coincides with the height $h_\phi$ on $Y(S)$ defined in \cite[(3.4)]{vkkr:hms}.  
\end{itemize}
Further, if $S$ and $T$ are nonempty open subschemes of $\spec(\ZZ)$, and if $Y$ is a scheme such that $Y_T$ is a coarse moduli scheme of some algebraic moduli problem $\mathcal P$ on $\mathcal M$, then we define the height $h_\phi$ on $Y(S)$ as follows:  For any $P\in Y(S)$ we put $$h_{\phi}(P)=h_{\phi}(P'),$$ where $P'$ is the canonical image of $P$ in $Y(U)\cong Y_T(U)$ for $U=S\cap T$ and  $h_\phi:Y_T(U)\to \RR$ is the height defined in \eqref{def:cheight} with respect to an initial morphism $\pi:\cmp\to Y_T$.  We point out that the height $h_\phi$ on $Y(S)$ depends on the  (unique up to isomorphism) initial morphism $\pi$ which in turn depends on the involved moduli problem $\mathcal P$.

\subsection{Coarse moduli schemes over $\ZZ[1/n]$}\label{sec:overt}

Let $\mathcal M$ be a Hilbert moduli stack and let $T\subseteq \spec(\ZZ)$ be a nonempty open subscheme. Sometimes we shall work over the open substack $\mathcal M_T$ of $\mathcal M$: On replacing in the above definitions and constructions $\mathcal M$ and $\sch$ by $\mathcal M_T$ and $(\textnormal{Sch}/T)$ respectively, we directly obtain the notion of an algebraic or arithmetic  moduli problem on $\mathcal M_T$ and the notion of a (coarse) moduli scheme $Y$ over $T$ of an algebraic moduli problem $\mathcal P$ on $\mathcal M_T$. Here $\pi:(\mathcal M_T)_{\mathcal P}\to Y$ is initial among all morphisms to an algebraic space over $T$.  

For any algebraic moduli problem $\mathcal P$ on $\mathcal M_{T}$ and any $T$-scheme $S$, we define analogously as in \eqref{def:maxlvl} the quantity $|\mathcal P|_S$.  Let $Y$ be a coarse moduli scheme  over $T$ of an algebraic moduli problem $\mathcal P$ on $\mathcal M_T$ with an initial morphism $\pi:(\mathcal M_T)_{\mathcal P}\to Y$. Then we define analogously as in Sections~\ref{sec:coarsehmsdef} and \ref{sec:heightdef} the height $h_\phi$ and the branch locus $B_{\mathcal P}$. 

While it is usually difficult to `correctly' extend the coarse moduli scheme $Y$ over $T$ to a coarse moduli scheme over $\ZZ$, one can always naively extend $Y$ over $\ZZ$ via \eqref{eq:naiveextension}. On the other hand, if $U\subseteq T$ is nonempty open then the projection $Y_U\to Y$ is flat and $Y_U$ is a coarse moduli scheme over $U$; see the discussion surrounding \eqref{eq:heightrestriction}.

\paragraph{Principal level structures.}To discuss an example we suppose that $\mathcal M\cong M^I$ is associated to some $(L,I,I_+)$ and we take $n\in \ZZ_{\geq 1}$. Now, we consider the moduli problem $\mathcal P(n)$  of principal level $n$-structures:  For any scheme $S$ the presheaf $\mathcal P(n)$ sends an object $x=(A,\iota,\varphi)$ of $\mathcal M(S)\cong \mathcal M^I(S)$ to the set of isomorphisms $(\OL/n\OL)_S^2\isomto A_n$ which are compatible with the $\OL$-actions, where the ring of integers $\OL$ of $L$ acts on $(\OL/n\OL)_S^2$ via multiplication and on $A_n$ via $\iota$. We also denote by $\mathcal P(n)$ the restriction of $\mathcal P(n)$ to the open substack $\mathcal M_T\subseteq \mathcal M$ for $T=\spec(\ZZ[1/n])$.  The forgetful morphism $(\mathcal M_T)_{\mathcal P(n)}\to \mathcal M_T$ is finite \'etale and $\mathcal M_T$ is a finite type separated DM stack over $T$. Thus $\mathcal P(n)$ is an arithmetic moduli problem on $\mathcal M_T$ and there exists a coarse moduli scheme $Y(n)$ over $T$ of $\mathcal P(n)$.   
\newpage

\section{Integral points on coarse moduli schemes: Main results}\label{sec:results}

\noindent To state our results, we use the terminology introduced above. Let $Y$ be a variety over $\ZZ$, let $Z\subseteq Y$ be a closed subscheme, let $S\subseteq\spec(\ZZ)$ be a nonempty open subscheme and let $\mathcal M$ be a Hilbert moduli stack. Suppose that $\mathcal M\cong \mathcal M^I$ is associated to some $(L,I,I_+)$. We write $g=[L:\QQ]$ and $\Delta=|\textnormal{Disc}(L/\QQ)|$. For any nonempty open subscheme $U\subseteq \spec(\ZZ)$ we denote by $N_U=\prod p$  the product of all rational primes $p$ not in $U$.

\begin{theorem}\label{thm:mainint}   Suppose that there exists a nonempty open $T\subseteq\spec(\ZZ)$ such that $Y_T$ is a coarse moduli scheme of some arithmetic moduli problem $\mathcal P$ on $\mathcal M$ with $B_\mathcal P\subseteq Z_T$. 
\begin{itemize}
\item[(i)] If $U=T\cap S$ then any point $P\in (Y\setminus Z)(S)$ satisfies $h_\phi(P)\leq c_1N_U^{e_1}$.
\item[(ii)] The cardinality of $(Y\setminus Z)(S)$ is at most $c_2|\mathcal P|_{\bar{\QQ}}N_{U}^{e_2}\Delta\log(3\Delta)^{2g-1}$.
\end{itemize}
\end{theorem}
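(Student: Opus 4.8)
The plan is to run the Paršin construction of Section~\ref{sec:proofintro} for the coarse moduli scheme $Y_T$ and then feed the resulting abelian schemes into the effective Shafarevich bound (Theorem~B), the polarization bound (Proposition~C) and the bound on $\OL$-module structures (Theorem~D). Throughout, $c(g),c_1,c_2,\dots$ denote positive quantities depending only on $g$ (and, where indicated, on $\Delta$). First I would reduce to the base $U=T\cap S$, a nonempty open of $\spec(\ZZ)$: since $Y$ is separated and $U\subseteq S$ is dense open, restriction is an injection $(Y\setminus Z)(S)\hookrightarrow(Y_U\setminus Z_U)(U)$, and as $h_\phi$ on $Y(S)$ is by definition computed after passing to $Y_T$ and is compatible with the dominant base change $U\to T$, it suffices to bound $h_\phi$ on, and to count the elements of, $(Y_U\setminus Z_U)(U)$. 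Choosing $n\geq 3$ depending only on $g$ and deleting from $U$ the finitely many primes dividing $n$ — which alters $N_U$ only by a factor $c(g)$ — we may assume the moduli problem $\mathcal P(n)$ of principal level $n$-structures is defined and rigidifying over $U$. Since $B_{\mathcal P}\subseteq Z_T$, every $P\in(Y_U\setminus Z_U)(U)$ lands in the locus where the initial morphism $\pi:\cmp\to Y$ is \'etale.

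Next I would construct the Paršin map as in Section~\ref{sec:phiconstructionintro}. Form the cover $Y'=\cmp\times_{\mathcal M}\mathcal M_{\mathcal P(n)}$, mapping to $Y$ through $\cmp$ and $\pi$. The product presheaf $\mathcal P'=\mathcal P\times\mathcal P(n)$ is representable, so $Y'$ is a Hilbert moduli scheme of $\mathcal P'$; since $\mathcal M_{\mathcal P'}\to\cmp$ is finite \'etale and $\cmp\to Y$ is proper and \'etale away from $B_{\mathcal P}$, the map $Y'\to Y$ is finite \'etale of degree $\leq c(g)$ over $Y_U\setminus(B_{\mathcal P})_U$. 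Hence for each $P$ the pullback $Y'\times_Y U\to U$ is a finite \'etale $U$-scheme of degree $\leq c(g)$, so it is $\coprod_i T_i$ with each $T_i$ the localization of $\spec(\mathcal O_{K_i})$ at the primes above $U$, for number fields $K_i$ of degree $\leq c(g)$ unramified outside the primes deleted from $U$; the induced $T_i$-points of the Hilbert moduli scheme $Y'$, composed with its forgetful morphism $\phi':Y'(T_i)\to\absg(T_i)$, give abelian schemes $A_i$ over $T_i$. This defines $\phi$, which on $\bar\QQ$-points extends the forgetful map $Y(\bar\QQ)\cong[\cmp(\bar\QQ)]\to\absg(\bar\QQ)$, and three facts then follow formally as in Section~\ref{sec:phiconstructionintro}: $(a)$ $h_\phi(P)=h_F(A_i)$ for every $i$, from the construction together with the base-change invariance of $h_F$; $(b)$ each $A_i$ is of $\gl2$-type with $G_\QQ$-isogenies, because $P\in Y(\QQ)$ is fixed by $G_\QQ$ acting on $Y(\bar\QQ)$, yielding $\OL$-compatible isomorphisms $\sigma^*A_{i,\bar\QQ}\isomto A_{i,\bar\QQ}$ for all $\sigma$; $(c)$ each $[K_i:\QQ]\leq c(g)$, one has $\rad(D_{K_i}N_{T_i})\mid N_U$, and by an explicit Hermite--Minkowski estimate the pairs $(K_i,T_i)$ that occur (over all $P$) number at most $c(g)\,N_U^{c(g)}$.

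Finally I would assemble the bounds. For (i): by $(b)$, Theorem~B(i) applies to $A_i/T_i$, and since $[K_i:\QQ]\leq c(g)$ the auxiliary factorial $l$ there is $\leq c(g)$, so $h_F(A_i)\leq(4gl)^{144gl}\rad(D_{K_i}N_{T_i})^{24g}\leq c_1N_U^{24g}$ by $(c)$; with $(a)$ this gives $h_\phi(P)\leq c_1N_U^{24g}$, i.e.\ (i) with $e_1=24g$. For (ii): up to a factor $c(g)$ the fibers of $\phi$ are bounded by the fibers of $\phi'=\phi'_\iota\circ\phi'_\varphi\circ\phi'_\alpha$ of $Y'$, where $\deg(\phi'_\alpha)\leq|\mathcal P'|_\CC\leq c(g)|\mathcal P|_{\bar\QQ}$ by the analogue of \cite[Lemma~8.2]{vkkr:hms}, $\deg(\phi'_\varphi)\leq 8^g$ by Proposition~C, and $\deg(\phi'_\iota)\leq c(g)\,N_U^{c(g)}\Delta\log(3\Delta)^{2g-1}$ by Theorem~D(ii) (applicable by $(b)$ and $(c)$); moreover the image of $\phi$ consists of abelian schemes of product $\gl2$-type with $G_\QQ$-isogenies over the controlled bases $T_i$, so Theorem~B(ii) together with $(c)$ bounds it by $c(g)\,N_U^{c(g)}$. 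Since $(Y\setminus Z)(S)\hookrightarrow(Y_U\setminus Z_U)(U)$ and the latter is the $\phi$-preimage of its image, multiplying these bounds yields (ii).

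The main obstacle is the passage from fine to coarse moduli: the forgetful morphism of $Y_T$ exists only on geometric points, so one must interpose the Hilbert moduli scheme $Y'$ and verify that $Y'\to Y$ is finite \'etale of controlled degree over $Y_U\setminus(B_{\mathcal P})_U$ — this is exactly where the hypothesis $B_{\mathcal P}\subseteq Z_T$ is used, and it is also what forces the restriction to points outside $Z$. A second, arithmetic, difficulty is keeping every constant polynomial in $N_U$: rather than a single huge field $K$, whose discriminant need not be polynomial in $N_U$, one works with the smaller $K_i$ and controls their number, degrees and ramification. The genuinely deep inputs — Theorem~B, Proposition~C and Theorem~D — are used as black boxes.
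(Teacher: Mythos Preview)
Your proposal is correct and follows the same architecture as the paper's proof: reduce to the base $U$, pass to the Hilbert moduli scheme $Y'=\cmp\times_{\mathcal M}\mathcal M_{\mathcal P(n)}$ via Lemmas~\ref{lem:y'rep} and~\ref{lem:etcover}, extract the abelian scheme $A$ over a finite \'etale $T\to U$, verify it is of $\gl2$-type with $G_\QQ$-isogenies from $G_\QQ$-invariance of the point, and then combine the effective Shafarevich bound with the polarization and $\OL$-structure bounds to control the fibers of the forgetful map.

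The one difference worth noting concerns the constants. You apply Theorem~B and Theorem~D(ii) as black boxes, which is legitimate and gives $e_1=24g$ together with some admissible $e_2$. The paper does not do this: in step~3 of the proof of Theorem~\ref{thm:specialmain} it shows in addition that the field $K=k(T)$ is \emph{normal} over $\QQ$ (by exhibiting $\GL_2(\OL/n\OL)$ as acting transitively on the geometric fiber of $Y'\to Y$ via \eqref{eq:transitivegroupmorphisms}--\eqref{eq:fibercomputation-action}) and that $K=K(A_n)$ (from the level-$n$ structure $\beta$), so that condition~$(*)$ holds. This lets the paper bypass the factorial $l=(d3^{4g^2})!$ in Theorem~B by invoking the sharper Propositions~\ref{prop:esgl2}, \ref{prop:cm} and~\ref{prop:esnumber} directly, which is what produces the stated exponents $e_1=\max(24,5g)$ and $e_2=6\cdot 3^{8g}$ rather than $24g$ and a larger $e_2$. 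Your route is cleaner to write down; the paper's extra Galois argument buys exponentially better constants.
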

Here $e_1=\max(24,5g)$ and  $c_1=7^{7^{7g}}$,  while $e_2=6\cdot 3^{8g}$ and $c_2=9^{9^{9g}}$.
Notice that $N_U=\textnormal{rad}(N_T N_S)$, and assertion (ii) holds with $|\mathcal P|_\CC$ in place of $|\mathcal P|_{\bar{\QQ}}$.  
Then Theorem~A in the introduction follows by applying Theorem~\ref{thm:mainint} with $I$ the inverse different of the ring of integers of $L$ and $I_+$ the standard positivity notion. To see this take $T=\spec(\ZZ[1/\nu])$ with $\nu$ as in Theorem~A and consider the complement of $S\subseteq \spec(\ZZ)$ which is a finite set of rational primes.  
The proof of Theorem~\ref{thm:mainint}  can be found in Section~\ref{sec:proofmainresults}, and we refer to the introduction for an outline of the main ideas used in the proof.

\paragraph{The variety $Y(2)$.}We next give a first application of Theorem~\ref{thm:mainint}. Consider the coarse moduli scheme $Y(2)$ over $\ZZ[1/2]$ of the moduli problem $\mathcal P(2)$ of principal level 2-structures on $\mathcal M_{\ZZ[1/2]}$. We shall deduce in Section~\ref{sec:proofcorollaries} the following result.  
\begin{corollary}\label{cor:y2}
Let $Y$ be a variety over $\ZZ$. Suppose that there exists a nonempty open $T\subseteq \spec(\ZZ[1/2])$ such that $Y_T\cong Y(2)_T$ and put $U=S\cap T$. Then the following holds.
\begin{itemize}
\item[(i)]  Any point $P\in Y(S)$ satisfies $h_\phi(P)\leq c_1N_U^{e_1}.$
\item[(ii)] The cardinality of  
$Y(S)$ is at most $c_2N_U^{e_2}\Delta\log(3\Delta)^{2g-1}.$
\end{itemize}
\end{corollary}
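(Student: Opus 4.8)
The plan is to deduce Corollary~\ref{cor:y2} from Theorem~\ref{thm:mainint} by checking that the hypotheses of the latter are met for the moduli problem $\mathcal P(2)$, taking $Z=\emptyset$. First I would recall from $\mathsection$\ref{sec:overt} that $\mathcal P(2)$ is an arithmetic moduli problem on $\mathcal M_{\ZZ[1/2]}$: the forgetful morphism $(\mathcal M_{\ZZ[1/2]})_{\mathcal P(2)}\to\mathcal M_{\ZZ[1/2]}$ is finite \'etale and $\mathcal M_{\ZZ[1/2]}$ is a finite type separated DM stack over $\spec(\ZZ[1/2])$, so $(\mathcal M_{\ZZ[1/2]})_{\mathcal P(2)}$ is again a separated finite type DM stack, and a coarse moduli scheme $Y(2)$ over $\ZZ[1/2]$ exists. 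Since $Y_T\cong Y(2)_T$ by hypothesis and $T\subseteq\spec(\ZZ[1/2])$ is nonempty open, the restriction of $\mathcal P(2)$ to $\mathcal M_T$ is an arithmetic moduli problem for which $Y_T$ is a coarse moduli scheme.

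The second point I would establish is that the branch locus $B_{\mathcal P(2)}$ is empty, so that the condition $B_{\mathcal P}\subseteq Z_T$ in Theorem~\ref{thm:mainint} holds with $Z=\emptyset$. This is exactly the content referred to in the introduction as Corollary~\ref{cor:y2emptybranchlocus}, which follows from the study of automorphism groups in Section~\ref{sec:branchauto} (Proposition~\ref{prop:auto}): since a principal level $2$-structure rigidifies enough that the automorphism groups of $(\mathcal M_{\ZZ[1/2]})_{\mathcal P(2)}$ do not jump, the initial morphism $\pi$ is \'etale everywhere. Here one should also note that the branch locus and the height $h_\phi$ for $Y$ over $S$ are, by the conventions of $\mathsection$\ref{sec:heightdef}–$\mathsection$\ref{sec:overt}, computed after restricting to $U=S\cap T$ where the two isomorphic pictures $Y_U\cong Y(2)_U$ agree, so the hypotheses really do transfer.

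With these two facts in hand, I would apply Theorem~\ref{thm:mainint} directly: part~(i) gives $h_\phi(P)\leq c_1 N_U^{e_1}$ for every $P\in(Y\setminus Z)(S)=Y(S)$, which is statement~(i). For part~(ii) the theorem yields $|Y(S)|\leq c_2|\mathcal P(2)|_{\bar{\QQ}}N_U^{e_2}\Delta\log(3\Delta)^{2g-1}$, so it remains to absorb the factor $|\mathcal P(2)|_{\bar{\QQ}}$ into the stated bound. One computes $|\mathcal P(2)|_{\bar{\QQ}}=\sup_x|\mathcal P(2)(x)|$ over $x\in\mathcal M(\bar{\QQ})$: a principal level $2$-structure is an $\OL$-compatible isomorphism $(\OL/2\OL)^2\isomto A_2$, and the set of such (when nonempty) is a torsor under $\GL_2(\OL/2\OL)$, whose order is bounded purely in terms of $g=[L:\QQ]$ via $\#\GL_2(\OL/2\OL)\le \#\uM_2(\OL/2\OL)^\times\le 4^{4g}$. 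Since $c_2=9^{9^{9g}}$ dominates $c_2\cdot 4^{4g}$ after the harmless simplification of the constant in Baker's style, the factor can be swallowed, giving statement~(ii).

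I expect the only genuine subtlety to be the emptiness of the branch locus, i.e.\ invoking Corollary~\ref{cor:y2emptybranchlocus} / Proposition~\ref{prop:auto} correctly, together with the bookkeeping that the hypotheses of Theorem~\ref{thm:mainint} (stated for a variety $Y$ over $\ZZ$ with $Y_T$ a coarse moduli scheme) are met when we only know $Y_T\cong Y(2)_T$ rather than $Y_T=Y(2)_T$ on the nose; this is resolved by transporting the arithmetic moduli problem $\mathcal P(2)|_{\mathcal M_T}$ across the isomorphism. Everything else is a direct substitution into Theorem~\ref{thm:mainint} plus the elementary bound on $|\mathcal P(2)|_{\bar{\QQ}}$.
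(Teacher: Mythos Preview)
Your proposal is correct and follows the same route as the paper: invoke Corollary~\ref{cor:y2emptybranchlocus} for the empty branch locus, apply Theorem~\ref{thm:mainint} with $Z=\emptyset$ (after transporting $\mathcal P(2)$ along $Y_T\cong Y(2)_T$ via the naive extension of \eqref{eq:naiveextension}), and absorb $|\mathcal P(2)|_{\bar{\QQ}}\leq 2^{4g}$ into $c_2$. The only imprecision is your assertion that ``$c_2$ dominates $c_2\cdot 4^{4g}$'', which is literally false; the paper resolves this by observing that the proof of Theorem~\ref{thm:mainint} actually delivers the sharper constant $c_2=3^{9^{9g}}$, leaving room to absorb the factor $2^{4g}$ and still arrive at the stated $c_2=9^{9^{9g}}$.
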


In applications of Corollary~\ref{cor:y2}, such as for example the Corollary stated in the introduction, one usually chooses $T$ such that $Y(2)$ is smooth over $T$. Corollary~\ref{cor:y2emptybranchlocus} gives that $Y(2)$ is smooth over $\spec(\ZZ[\tfrac{1}{2\Delta}])$, and  if we take $T=\spec(\ZZ[\tfrac{1}{2\Delta}])$ then (the proof of) Corollary~\ref{cor:y2} provides that each point $P\in Y(S)$ satisfies $$h_\phi(P)\leq c_1(\Delta N_S)^{e_1} \quad \textnormal{and}\quad |Y(S)|\leq c_2(\Delta N_S)^{e_2}.$$
We now consider the coarse Siegel moduli space $A_{2,2}$ of principally polarized abelian surfaces with symplectic level two structure.  For certain $S$, Le Fourn~\cite{lefourn:A2rungeparis,lefourn:A2runge,lefourn:tubularbaker} and Box--Le Fourn~\cite{bofo:siegelmodulisintegral} proved strong explicit height bounds for the $S$-integral points of $A_{2,2}$ outside the divisor $Z$ of products of elliptic curves.  Their results hold for arbitrary number fields, and their proofs use and refine Levin's approaches via Runge's method \cite{levin:intpointsrunge,levin:extendrunge} or via the theory of logarithmic forms \cite{bawu:logarithmicforms,levin:intpointslogforms}. There are many coarse Hilbert moduli schemes $Y$ which admit a finite morphism $Y\to A_{2,2}$. 
However, usually the image of $Y\to  A_{2,2}$ intersects the divisor $Z$ and one can not directly deduce Diophantine results for $Y$ via the variety $A_{2,2}\setminus Z$. For example, for certain $Y$ related to $Y(2)$, forgetting $\iota:\OL\to \End(A)$ induces a canonical finite morphism $Y\to A_{2,2}$ whose image intersects the divisor $Z$.  On the other hand, Corollary~\ref{cor:y2} gives no information for $S$-integral points of $A_{2,2}\setminus Z$ defining abelian varieties $A$ with $\End(A)=\ZZ$.

\newpage

\section{Moduli problems and coarse moduli spaces}\label{sec:cms}

Let $\mathcal M$ be a Hilbert moduli stack. Throughout this section we work over an arbitrary nonempty open subscheme $B\subseteq \spec(\ZZ)$, and   we write throughout $\mathcal M$ for $\mathcal M_B$ to simplify notation. We now begin to prove some basic properties of moduli problems on $\mathcal M$ and their coarse moduli spaces over $B$ which shall be used in our proofs below.

\paragraph{Products.} Let $\mathcal P$ and $\mathcal Q$ be moduli problems on $\mathcal M$, and let $\mathcal P\times\mathcal Q$ be the product presheaf on $\mathcal M$. For several computations we shall need to explicitly identify $\cmp\times_{\mathcal M}\mathcal M_{\mathcal Q}$ with $\mathcal M_{\mathcal P\times\mathcal Q}$.  On using the description of the fiber product in \cite[0040]{sp}, we see that one obtains an equivalence of categories fibered in groupoids over $(\textnormal{Sch}/B)$
\begin{equation}\label{eq:productequivalence}
\cmp\times_{\mathcal M}\mathcal M_{\mathcal Q}\isomto \mathcal M_{\mathcal P\times\mathcal Q}
\end{equation}
by sending an object $(U,(x,\alpha),(y,\beta),f)$ to the object $(x,\alpha,\mathcal Q(f)\beta)$ and by sending a morphism $(a,b)$ to the morphism determined by $a$.
It turns out that one obtains an inverse $\mathcal M_{\mathcal P\times\mathcal Q}\isomto\cmp\times_{\mathcal M}\mathcal M_{\mathcal Q}$ to \eqref{eq:productequivalence} by sending an object $(x,\alpha,\beta)$ to the object $(U,(x,\alpha),(x,\beta),\textnormal{id})$ and by sending a morphism $f$ to the morphism determined by $f$. This inverse and \eqref{eq:productequivalence} are both morphisms of categories over $\cmp$, where $\cmp\times_{\mathcal M}\mathcal M_{\mathcal Q}\to \cmp$ is the first projection and  $\mathcal M_{\mathcal P\times \mathcal Q}\to \cmp$ is the forgetful morphism $(x,\alpha,\beta)\mapsto (x,\alpha)$.

\paragraph{Group action on a moduli problem.}Let $\mathcal P$ be a moduli problem on $\mathcal M$ and let $G$ be a group. An action of $G$ on $\mathcal P$ is a morphism of presheaves $G\times \mathcal P\to \mathcal P$ such that $G\times \mathcal P(x)\to \mathcal P(x)$ defines a $G$-action on $\mathcal P(x)$ for each $x\in \mathcal M$. Here we view $G$ as a constant presheaf on $\mathcal M$. 
Suppose that $G$ acts on $\mathcal P$. Then for each $g\in G$ one obtains an automorphism $\tau_g$ of $\cmp$ by  sending an object $(x,\alpha)$ to the object $(x,g\alpha)$ and by sending a morphism $(x,\alpha)\to (x',\alpha')$ to the morphism $(x,g\alpha)\to (x',g\alpha')$  which lies over the same morphism $x\to x'$. Furthermore, the automorphism $\tau_g$ of $\cmp$ is compatible with the forgetful morphism $\cmp\to \mathcal M$ and we observe that
\begin{equation}\label{def:gactionmp}
G\to \Aut(\cmp/\mathcal M), \quad g\mapsto \tau_g,
\end{equation}
is compatible with the group structures on $G$ and $\Aut(\cmp/\mathcal M)$ 
where $\Aut(\cdot)$ is taken in the (2,1)-category of categories fibered in groupoids over $\mathcal M$. 
We now consider the case when $\mathcal M=\mathcal M^I$ is associated to $(L,I,I_+)$,  $\mathcal P=\mathcal P(n)$ is the moduli problem of principal level $n$-structures for some $n\in\ZZ_{\geq 1}$, and $G=\GL_2(\OL/n\OL)$ for $\OL$ the ring of integers  of $L$. In this case we obtain an action of $G$ on $\mathcal P$ by defining
\begin{equation}\label{def:gactionpn}
G\times \mathcal P(x)\to\mathcal P(x), \quad (g,\alpha)\mapsto \alpha g^{-1}.
\end{equation}
Here $x=(A,\iota,\varphi)$ lies in $\mathcal M(S)$ for some $B$-scheme $S$ and $\alpha g^{-1}$ denotes the composition of $\alpha:(\OL/n\OL)^2_S\isomto A_n$ with the automorphism of $(\OL/n\OL)^2_S$ defined by $g^{-1}\in \GL_2(\OL/n\OL)$. In what follows, the action of $\GL_2(\OL/n\OL)$ on $\mathcal P(n)$ always refers to \eqref{def:gactionpn}.

\paragraph{Some geometric properties.} Let $Y$ be a coarse moduli scheme over $B$ of an arithmetic moduli problem $\mathcal P$ on $\mathcal M$. Suppose that $\pi: \cmp\to Y$ is an initial morphism. Then $\pi$ has the following properties:
It is a universal homeomorphism, it is proper and quasi-finite, and its formation is compatible with flat base change $Y'\to Y$ of algebraic spaces over $B$.
See for example Conrad's approach to the Keel--Mori theorem~\cite{kemo:coarse} via stacks~\cite{conrad:coarse}. 
To this end, we notice that the separated finite type DM stack $\cmp$ over $B$ satisfies all assumptions made in \cite[Thm 1.1]{conrad:coarse}: The diagonal of $\cmp$ over $B$ is finite since the separated stack $\cmp$ over $B$ is quasi-DM, 
and therefore the inertia stack of $\cmp$ is finite over $\cmp$. 
(Here we used that a morphism of algebraic spaces is finite if it is proper and locally quasi-finite: This holds for schemes and hence for algebraic spaces by \cite[03XX]{sp}.)
Furthermore, as $\cmp$ is separated finite type over $B$ and $\pi$ is surjective,  it follows from \cite[Thm 1.1]{conrad:coarse} that $Y$ is a variety over $B$.


\paragraph{Galois action on algebraic points.} Let $Y$ be a coarse moduli scheme over $B$ of an arithmetic moduli problem $\mathcal P$ on $\mathcal M$. Then $Y$ is automatically a variety over $B$. The absolute Galois group $G=\textnormal{Aut}(\bar{\QQ}/\QQ)$ acts on the $\bar{\QQ}$-points of $Y$ by precomposition. 
This induces an action of $G$ on the set of isomorphism classes $[\cmp(\bar{\QQ})]$ via transport of structure involving the bijection $[\cmp(\bar{\QQ})]\isomto Y(\bar{\QQ})$ defined by an initial morphism $\pi:\cmp\to Y$. Then, on using the canonical identification $Y(\QQ)\isomto Y(\bar{\QQ})^G$, we obtain 
  \begin{equation}\label{eq:ginvarianceisoclasses}
Y(\QQ)\isomto [\cmp(\bar{\QQ})]^{G}.
\end{equation}
Further it turns out that $[\cmp(\bar{\QQ})]^G$ is the set of isomorphism classes of $(x,\alpha)$ in $\cmp(\bar{\QQ})$ such that for each $\sigma\in G$ the object $(x,\alpha)$ is isomorphic to its pullback  $\sigma^*(x,\alpha)$ by the automorphism $\sigma$ of $\bar{\QQ}$. Indeed this follows from a formal computation using that the bijection $[\cmp(\bar{\QQ})]\isomto Y(\bar{\QQ})$ is induced by $\pi$ which is a morphism of categories over $(\textnormal{Sch}/B)$ and that the pullback of an object is unique up to isomorphism. 

\paragraph{Compatibility with base change.} The formation of the height of coarse moduli schemes is compatible with base change to a nonempty open $T\subseteq B$. More precisely let $Y$ be a coarse moduli scheme over $B$ of an arithmetic moduli problem $\mathcal P$ on $\mathcal M$ with an initial morphism $\pi:\cmp\to Y$ and associated height $h_\phi$, and denote by $\mathcal P'$ the restriction of $\mathcal P$ to the open substack $\mathcal M_T$ of $\mathcal M$.  Then $Y_T$ is a coarse moduli scheme over $T$ of $\mathcal P'$ with initial morphism $\pi_T: (\mathcal M_T)_{\mathcal P'}\to Y_T$ and associated height $h_{\phi'}$ such that   
\begin{equation}\label{eq:heightrestriction}
h_{\phi'}=\iota^*h_\phi.
\end{equation}
  
\noindent Here $\iota:Y_T\hookrightarrow Y$ is the projection and we identified $(\mathcal M_T)_{\mathcal P'}$ with $\cmp\times_{B} T$ by sending an object $(x,\alpha)$ to the object $(U,(x,\alpha),U\to T,\textnormal{id})$ and by sending a morphism $f$ to the morphism $(f,p(f))$ where $U=p(x,\alpha)$ and $p:\cmp\to B$. We now prove the statements in \eqref{eq:heightrestriction}. The formation of coarse moduli spaces commutes with flat base change and $\iota$ is an open immersion. Thus the base change $\pi_T$ of $\pi$ is indeed an initial morphism. Then we see that \eqref{eq:heightrestriction} follows from the (functorial) definitions of $h_\phi$ and $h_{\phi'}$, since the above described identification $(\mathcal M_T)_{\mathcal P'}\isomto\cmp\times_{B} T$ is an isomorphism of categories over $\mathcal M$.

\paragraph{Extension.}Let $T\subseteq B$ be nonempty open  and let $Y$ be a coarse moduli scheme over $T$ of an algebraic moduli problem $\mathcal P$ on $\mathcal M_T$ with initial morphism $\pi:(\mathcal M_T)_{\mathcal P}\to Y$. One can always naively extend $Y$ to a coarse moduli scheme over $B$ as follows:  Let $\mathcal P'$ be the presheaf on $\mathcal M$ which sends an object $x$ to the set $\mathcal P(x)$ if $x$ lies in the open substack $\mathcal M_T\subseteq \mathcal M$ and to the empty set otherwise.  Then the composition
\begin{equation}\label{eq:naiveextension}
\pi':\mathcal M_{\mathcal P'}\isomto (\mathcal M_T)_{\mathcal P}\to^\pi Y
\end{equation}
is initial among all morphisms to an algebraic space $Z$ over $B$, since $\pi$ is initial over $T$ and since any morphism $\mathcal M_{\mathcal P'}\to Z$ factors uniquely through the open immersion $Z_T\hookrightarrow Z$.   Here  $\mathcal M_{\mathcal P'}\isomto (\mathcal M_T)_{\mathcal P}$ is the isomorphism of categories over $(\textnormal{Sch}/B)$ given by the identity functor.  In particular $\mathcal P'$ is algebraic, and $Y$ is a coarse moduli scheme of $\mathcal P'$ with initial morphism $\pi'$ and branch locus $B_{\mathcal P'}=B_{\mathcal P}$.  Furthermore $\mathcal P'$ is arithmetic if $\mathcal P$ is arithmetic.

\subsection{Construction of covers}\label{sec:coverconstruction}

We continue our notation and terminology. In particular we continue to work over an arbitrary nonempty open subscheme $B\subseteq \spec(\ZZ)$ and we write $\mathcal M=\mathcal M_B$. 

Let $Y$ be a coarse moduli scheme over $B$ of an arithmetic moduli problem $\mathcal P$ on  $\mathcal M$. 
To construct various covers,  let $\pi:\cmp\to Y$ be an initial morphism and take a $B$-scheme $Z$ with a surjective morphism $Z\to \mathcal M$. Now, we consider the algebraic stack over $B$ 
$$Y'=\cmp\times_{\mathcal M}Z, \quad \textnormal{ and } \quad Y'\to\cmp\to^\pi Y$$ where $Y'\to\cmp$ is the projection. The composition $Y'\to Y$ is surjective and $Y'$ is an algebraic space over $B$  as we shall see in the proof of the following lemma. 

\begin{lemma}\label{lem:y'rep}If $Z\to \mathcal M$ is finite, then $Y'\to Y$ is finite surjective and $Y'$ is a scheme.
\end{lemma}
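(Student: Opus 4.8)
The plan is to deduce everything from the geometric properties of the initial morphism $\pi:\cmp\to Y$ recorded just above, together with base change and descent. First I would observe that $Y'=\cmp\times_{\mathcal M}Z$ is the base change of $Z\to\mathcal M$ along the forgetful morphism $\cmp\to\mathcal M$; since $\cmp\to\mathcal M$ is representable in algebraic spaces (indeed $\mathcal P$ being arithmetic forces the forgetful morphism to be representable, as its fibers are the discrete sets $\mathcal P(x)$, cf.\ the discussion of algebraic moduli problems) and $Z$ is a scheme, $Y'$ is an algebraic space over $B$. Moreover, because $Z\to\mathcal M$ is assumed finite, its base change $Y'\to\cmp$ is finite as well.

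Next I would analyse the composition $Y'\to\cmp\to^{\pi}Y$. The morphism $\pi$ is proper and quasi-finite by the Keel--Mori/Conrad properties quoted above, hence finite (a proper locally quasi-finite morphism of algebraic spaces is finite, by \cite[03XX]{sp}, which is already invoked in the excerpt). Since $Y'\to\cmp$ is finite and $\pi$ is finite, the composition $Y'\to Y$ is finite; and it is surjective because $Z\to\mathcal M$ is surjective (so $Y'\to\cmp$ is surjective) and $\pi$ is surjective. This gives the first assertion, that $Y'\to Y$ is finite surjective.

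It remains to check that $Y'$ is a scheme rather than merely an algebraic space. Here I would use that $Y$ is a scheme (it is a variety over $B$ by hypothesis) and that $Y'\to Y$ is finite, in particular affine; an algebraic space which is affine — more generally, quasi-affine — over a scheme is itself a scheme. Concretely, $Y'\cong \underline{\Spec}_{Y}(\mathcal A)$ for the quasi-coherent sheaf of $\mathcal O_Y$-algebras $\mathcal A=(f_*\mathcal O_{Y'})$ with $f:Y'\to Y$, and relative $\Spec$ of a quasi-coherent algebra over a scheme is a scheme. I expect this final step to be essentially formal; the one point requiring a little care is the reduction that makes $Y'\to Y$ affine, i.e.\ verifying that a finite morphism of algebraic spaces is affine, which is again standard (\cite{sp}).

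\textbf{Main obstacle.} The only genuine subtlety is the very first reduction: arguing cleanly that $Y'=\cmp\times_{\mathcal M}Z$ is an algebraic space (not just a stack) and that finiteness of $Z\to\mathcal M$ transfers to $Y'\to\cmp$. This rests on the forgetful morphism $\cmp\to\mathcal M$ being representable in algebraic spaces — which holds here since $\cmp$ is a DM stack separated and of finite type over $B$ (so the diagonal is already handled) and the level structures form a relatively discrete datum — and then one simply transports the finiteness along the fiber-product square. Once that is in place, the passage through $\pi$ and the descent of $Y'$ to a scheme are routine applications of the properties of $\pi$ already established in this section.
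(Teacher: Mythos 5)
Your proposal runs into a genuine obstruction at the step where you assert that $\pi:\cmp\to Y$ is finite. Finiteness for a morphism of algebraic stacks requires representability by algebraic spaces, and $\pi$ is \emph{not} representable: the stack $\cmp$ has nontrivial geometric automorphism groups (Lemma~\ref{lem:autofield} shows that $[-1]$ always acts, so every geometric object of $\cmp$ has automorphism group of order at least two), so the fiber of $\pi$ over a geometric point of $Y$ is a stack with nontrivial automorphisms, not an algebraic space. The tag \cite[03XX]{sp} you invoke concerns morphisms of \emph{algebraic spaces}; the paper uses it in a different way — not to claim $\pi$ is finite, but to conclude that $Y'$, already known to be an algebraic space, is a scheme because the composite $Y'\to\cmp\to Y$ is separated and locally quasi-finite and $Y$ is a scheme. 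Your route ``finite $\circ$ finite $=$ finite'' therefore does not get off the ground; the correct replacement is ``(finite)$\circ$(proper, quasi-finite) is proper, separated and locally quasi-finite,'' which suffices for [03XX] and then, once $Y'$ is known to be a scheme, gives finiteness of $Y'\to Y$ by EGA for schemes.

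The earlier step — that $Y'$ is an algebraic space because $\cmp\to\mathcal M$ is representable in algebraic spaces — also needs more care. The paper explicitly says this morphism ``might not be representable in algebraic spaces'' and instead shows directly, by a computation of automorphisms of objects of the fiber product $\cmp\times_{\mathcal M}Z$ and an appeal to \cite[04SZ]{sp}, that $Y'$ is an algebraic space. Your justification (``its fibers are the discrete sets $\mathcal P(x)$'' and ``the diagonal is already handled'') conflates the naive fibers of the presheaf $\mathcal P$ with the $2$-categorical fibers and does not engage with the relevant criterion (injectivity of the relative inertia, i.e., that automorphisms of $(x,\alpha)$ inject into automorphisms of $x$). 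Even granting the representability claim, you would still need to pass through the separated/locally quasi-finite argument rather than the ``finite $\circ$ finite'' shortcut, because of the $\pi$ issue above.
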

\begin{proof}
We first observe that the morphism $Y'\to Y$ is always surjective. Indeed it factors as $Y'\to \cmp\to^\pi Y$ where the projection $Y'\to \cmp$  is a base change of the surjective $Z\to \mathcal M$ and the initial morphism $\pi:\cmp \to Y$ is a homeomorphism. 

The forgetful morphism $\cmp\to \mathcal M$ might not be representable in algebraic spaces. To prove that the algebraic stack $Y'$ is an algebraic space, it suffices by \cite[04SZ]{sp} to show that any object of $Y'$ has trivial automorphism group. 
This is a direct computation: Let $U$ be a scheme,  let $y=(U,(x,\alpha),z,f)$ be an object of the fiber product $Y'=\cmp\times_{\mathcal M} Z$ and let $g=(a,b)$ be in $\Aut_{Y'(U)}(y)$. We obtain that $b=1$ since any object of $Z(U)$ has trivial automorphism group. This together with $fa'=b'f$ implies that $a'=f^{-1}b'f=1$, where $a'$ and $b'$ are the images of $a$ and $b$ under $\cmp\to \mathcal M$ and $Z\to \mathcal M$ respectively. It follows that $a=1$ since any automorphism of $(x,\alpha)$ is determined by its underlying automorphism of $x$, and thus $g=(a,b)=1$. We conclude that $Y'$ is an algebraic space. 

Now, we assume that the morphism $Z\to \mathcal M$ is finite. Then its base change $Y'\to \cmp$ is finite. 
Further, the moduli problem $\mathcal P$ is arithmetic by assumption and therefore the initial morphism $\pi:\cmp\to Y$ is proper and quasi-finite. Thus the composition $Y'\to \cmp\to^\pi Y$ is separated and locally quasi-finite, and then \cite[03XX]{sp} implies that the algebraic space $Y'$ is a scheme since $Y$ is a scheme. This completes the proof.\end{proof}

\paragraph{Finite \'etale cover of $Y\setminus B_{\mathcal P}$.} We now consider the open subscheme $U=Y\setminus B_{\mathcal P}$ of $Y$ where $B_{\mathcal P}$ is the branch locus of $\mathcal P$. Let $S$ be a connected Dedekind scheme with function field $k$. For any $P\in U(S)$ we obtain  cartesian squares
$$
\xymatrix@R=4em@C=4em{
S\times_Y Y' \ar[r] \ar[d] & Y'_U \ar@{^{(}->}[r] \ar[d] & Y'\ar[d] \\
S \ar[r]^P &  U \ar@{^{(}->}[r] &  Y.} 
$$
If the surjective morphism $Z\to \mathcal M$ is finite, then we get here a diagram of noetherian schemes with all vertical arrows finite surjective. This follows from Lemma~\ref{lem:y'rep} and our assumption that $\mathcal P$ is arithmetic which assures that $Y=M_{\mathcal P}$ is noetherian.

\begin{lemma}\label{lem:etcover}
Suppose that the surjective morphism $Z\to \mathcal M$ is finite \'etale, and let $T$ be a connected component of $S\times_{Y} Y'$. Then the following statements hold.
\begin{itemize}
\item[(i)] The morphisms $Y'_U\to U$ and  $T\to  S$ are finite \'etale.
\item[(ii)] The scheme $T$ is a connected Dedekind scheme.
\item[(iii)] If in addition $Z$ is a moduli scheme over $B$ of some moduli problem $\mathcal Q$ on $\mathcal M$ and $Z\to \mathcal M$ is up to an equivalence $Z\isomto \mathcal M_{\mathcal Q}$ the forgetful morphism, then $$[k(T):k]\leq |\mathcal Q|_{\bar{k}}.$$
\end{itemize}

\end{lemma}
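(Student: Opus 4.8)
The three statements build on each other, so I would prove them in order. For part~(i), the starting point is that $Z\to\mathcal M$ is finite \'etale by hypothesis; since \'etale and finite morphisms are stable under base change and $Y'=\cmp\times_{\mathcal M}Z\to\cmp$ is the base change of $Z\to\mathcal M$, the projection $Y'\to\cmp$ is finite \'etale. Now the key point is to descend this \'etaleness along $\pi:\cmp\to Y$ over the locus where $\pi$ is \'etale. By definition of the branch locus, $\pi_U:\cmp\times_Y U\to U$ is \'etale, hence the composite $Y'_U\to\cmp\times_Y U\to^{\pi_U} U$ is \'etale as a composite of \'etale morphisms, and it is finite because $Y'\to\cmp$ is finite (being a base change of the finite $Z\to\mathcal M$) and $\pi$ is proper quasi-finite hence finite by the geometric properties recalled in Section~\ref{sec:cms} (proper + locally quasi-finite $\Rightarrow$ finite, \cite[03XX]{sp}). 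The assertion for $T\to S$ then follows by pulling back along $P\colon S\to U$: the square exhibits $S\times_Y Y'\to S$ as the base change of the finite \'etale morphism $Y'_U\to U$, hence finite \'etale, and restricting to the connected component $T$ keeps it finite \'etale.

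For part~(ii), I would argue that $T$, being finite \'etale over the connected Dedekind scheme $S$, is itself a Dedekind scheme: \'etale morphisms preserve regularity and dimension, so $S\times_Y Y'$ is a disjoint union of regular noetherian schemes of dimension $\le 1$ (using that $S$ is normal noetherian of dimension $0$ or $1$ and that $T\to S$ is finite, hence of finite type and affine over the affine pieces of $S$). A connected component of a normal scheme is irreducible, so $T$ is integral; thus $T$ is a connected, normal, noetherian scheme of dimension $0$ or $1$, i.e.\ a connected Dedekind scheme in the sense of \cite{bolura:neronmodels} adopted in the conventions section.

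For part~(iii), suppose $Z\isomto\mathcal M_{\mathcal Q}$ identifies $Z\to\mathcal M$ with the forgetful morphism of a moduli problem $\mathcal Q$. Then $Y'=\cmp\times_{\mathcal M}\mathcal M_{\mathcal Q}$, which by the product identification \eqref{eq:productequivalence} is equivalent to $\mathcal M_{\mathcal P\times\mathcal Q}$, a Hilbert moduli scheme $M_{\mathcal P\times\mathcal Q}$ whose forgetful map to $Y=M_{\mathcal P}$ is, fiberwise, the assignment of a $\mathcal Q$-level structure. I would pass to the generic point: $k(T)/k$ is a finite separable extension of degree equal to the number of points of $S\times_Y Y'$ lying in the component $T$ over the generic point $\spec(k)\to S\to Y$, and this number is bounded by the cardinality of the fiber of $Y'(k')\to Y(k')$ for $k'$ a suitable finite extension of $k$, which in turn — using that $Y'=M_{\mathcal P\times\mathcal Q}$ represents $\mathcal P\times\mathcal Q$ — is the number of $\mathcal Q$-level structures on the object $x\in\mathcal M(k')$ underlying the image point, hence at most $|\mathcal Q|_{\bar{k}}$ by the definition \eqref{def:maxlvl} of $|\mathcal Q|_{\bar k}$ and the fact that $x$ is defined over a subfield of $\bar k$.

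The main obstacle I anticipate is the bookkeeping in part~(iii): one must be careful that the fiber of $Y'\to Y$ over a $k$-point of $Y$ really is computed by $\mathcal Q$-level structures on the \emph{lift} of that point to $\cmp$, and that the passage to $\bar k$ (needed to invoke $|\mathcal Q|_{\bar k}$) does not lose the bound — the clean way is to note $[k(T):k]\le \#\bigl(Y'\times_Y\spec k(T)\bigr)(\overline{k(T)})\big/\text{(orbit issues)}$ and then identify $Y'\times_Y\spec\Omega$ for an algebraically closed $\Omega$ with the set of $\mathcal Q$-level structures on the object of $\mathcal M(\Omega)$ corresponding to the given $\Omega$-point of $Y$, using \eqref{eq:productequivalence} together with the fact that $Y'=M_{\mathcal P\times\mathcal Q}$ represents $\mathcal P\times\mathcal Q$. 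Parts~(i) and~(ii) are routine stability arguments and should be short.
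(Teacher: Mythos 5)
Your overall plan follows the paper's approach, but there is one genuine error in part~(i) that you should be aware of, and the details in part~(iii) are rougher than they need to be.

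The error: you write that ``$\pi$ is proper quasi-finite hence finite by the geometric properties recalled in Section~\ref{sec:cms} (proper + locally quasi-finite $\Rightarrow$ finite)''. This is false, and it misreads the paper. The coarse moduli morphism $\pi\colon\cmp\to Y$ is proper and quasi-finite, but it is \emph{not} finite in general: a finite morphism of stacks must be representable by algebraic spaces, and $\pi$ is not representable whenever $\cmp$ carries nontrivial automorphism groups (which it always does, by Lemma~\ref{lem:autofield}). The parenthetical ``proper + locally quasi-finite $\Rightarrow$ finite'' remark in Section~\ref{sec:cms} is applied there to a morphism of algebraic spaces (the inertia stack question), not to $\pi$. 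So the ``composite of two finite morphisms'' reasoning breaks. What saves the conclusion is that the composite $Y'_U\to\cmp_U\to U$ is a morphism of \emph{schemes} ($Y'$ is a scheme by Lemma~\ref{lem:y'rep}), and a proper, quasi-finite, separated morphism of schemes is finite; more simply, Lemma~\ref{lem:y'rep} already tells you $Y'\to Y$ is finite, so $Y'_U\to U$ is finite by base change, and you should just cite that. The rest of your part~(i) argument — that $\pi_U$ is \'etale directly from the definition of $B_{\mathcal P}$ (using that \'etale is local on the target), composed with the \'etale base change $Y'_U\to\cmp_U$ of $Z\to\mathcal M$ — is correct and a bit more direct than the paper's route, which first proves the equality $B_{\mathcal P}=B_f$ in \eqref{eq:branchlocuscompgeneral} and then deduces $f^{-1}(U)\subseteq U'$; the paper's detour is partly motivated by wanting \eqref{eq:branchlocuscompgeneral} as a standalone fact (it is reused in Proposition~\ref{prop:auto}). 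You should also make explicit, as the paper does, that the connected component $T\hookrightarrow S'$ of the locally noetherian $S'$ is an open \emph{and} closed immersion, so restricting to $T$ preserves both finiteness and \'etaleness.

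Part~(ii) is fine and matches the paper. In part~(iii), the ``clean way'' you sketch at the end is the paper's argument, and it is the right one; but the main body of your paragraph is imprecise in two places. First, you speak of ``the fiber of $Y'(k')\to Y(k')$'' for a finite extension $k'$ of $k$, but the correct object is the geometric fiber over $\spec(\bar k)\to S\to^P U$: the paper identifies $[k(T):k]$ with the degree of $T\to S$, bounds this by the degree $n$ of $Y'_U\to U$ over the connected component of $U$ containing the image of $P$ (using that $S$ is connected, so $P$ factors through that component), and shows $n=[Y'_u:\bar k]$ is the number of sections of $Y'_u\to\spec(\bar k)$. Working over $\bar k$ from the start is needed to invoke $|\mathcal Q|_{\bar k}$ cleanly. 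Second, you say the fiber ``is the number of $\mathcal Q$-level structures on $x$''; it is only $\leq|\mathcal Q(x)|$, as the diagram \eqref{diag:fibercomp} identifies the fiber with isomorphism classes $[(x,\alpha,\beta)]$ with fixed $[(x,\alpha)]$, and automorphisms of $(x,\alpha)$ can identify distinct $\beta$'s — the paper explicitly notes this. Your ``orbit issues'' flag shows you anticipated this, but it should be stated as an inequality, not an identification, from the outset.
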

\begin{proof}
The surjective morphism $Z\to \mathcal M$ is finite by assumption. Thus, as already explained, the above displayed cartesian squares form a diagram of noetherian schemes with all vertical arrows finite surjective. In particular the scheme $S'=S\times_Y Y'$ is locally noetherian and hence the flat closed immersion $T\hookrightarrow S'$, induced by the canonical scheme structure on the connected component $T$ of $S'$, is an open immersion.

We now show (i). The above discussion shows that the morphisms $Y'_U\to U$ and $T\hookrightarrow S'\to S$ are all finite. To see that they are \'etale, we denote by $f$ the morphism $Y'\to Y$ and we let $U'\subseteq Y'$ be the \'etale locus of $f$. As $Z\to\mathcal M$ is finite \'etale by assumption, we can apply \eqref{eq:branchlocuscompgeneral} below which gives that $B_{\mathcal P}$ is precisely the branch locus $f(Y'\setminus U')$ of $f$. This together with $U=Y\setminus B_{\mathcal P}$ shows that  $f^{-1}(U)\subseteq U'$ 
and hence $Y'_U\to U$ is \'etale. 
Thus the base change $S'\to S$ of $Y'_U\to U$ is also \'etale and then the composition $T\hookrightarrow S'\to S$ is \'etale since $T\hookrightarrow S'$ is an open immersion. This proves (i).

We next show (ii). An application of (i) gives that $T$ is finite \'etale, and thus smooth of relative dimension zero, over the regular noetherian scheme $S$ of dimension one. This implies that $T$ is again a regular noetherian scheme of dimension one. In other words the connected component $T$ is a connected Dedekind scheme as claimed in (ii).

To show (iii) we relate the degree of various morphisms to the number of points in the fiber $Y'_{\bar{s}}$ of the morphism $Y'_U\to U$ over the geometric point $u:\spec(\bar{k})\to S\to^P U$. For this purpose we use that $Y'_U\to U$ and $T\to S$ are finite \'etale by (i) and that the (locally constant) degree of a finite and locally free morphism is stable under arbitrary base change. 
Now, the degree $[k(T):k]$ of the function field $k(T)$ of $T$ over the function field $k=k(S)$ of $S$ coincides with the degree of $T\to S$ 
which is at most the degree of $S'\to S$. 
Let $n$ be the degree of $Y'_U\to U$ over the connected component $V$ of $U$ containing $u$.  The morphism $P:S\to U$ factors as $S\to V\hookrightarrow U$, since $S$ is connected and $V$ is open in the noetherian $U\subseteq Y$. 
Then, on using that the degree is stable under base change, we obtain that the degree of $S'\to S$ equals $n$  and that $n$ coincides with the degree $[Y'_{u}:\bar{k}]$ of the projection  $Y'_{u}\to \spec(\bar{k})$. 
Combining everything leads to
\begin{equation}\label{eq:degreeboundintermsoflvlstructures}
[k(T):k]\leq n=[Y'_{u}:\bar{k}].
\end{equation}
We now assume in addition that $Z$ is a moduli scheme over $B$ of some moduli problem $\mathcal Q$ on $\mathcal M$ and that $Z\to \mathcal M$ is the composition of an equivalence $Z\isomto \mathcal M_{\mathcal Q}$ with the forgetful morphism $\mathcal M_{\mathcal Q}\to \mathcal M$. To bound $[Y'_{u}:\bar{k}]$ in terms of $\mathcal Q$,  we use that the degree  $[Y'_{u}:\bar{k}]$ coincides with the cardinality of the set of sections of the finite \'etale morphism $Y'_{u}\to\spec(\bar{k})$. This set of sections can be computed as follows. By construction the equivalence $\mathcal M_{\mathcal P\times \mathcal Q}\isomto Y'$ defined right after \eqref{eq:productequivalence} is a morphism of categories over $\cmp$. Then, after recalling that $Y'\to Y$ factors as $Y'\to \cmp \to^\pi Y$, we see that taking isomorphism classes of objects over $\bar{k}$ gives a commutative diagram 
\begin{equation}\label{diag:fibercomp}
\xymatrix{
[\mathcal M_{\mathcal P\times\mathcal Q}(\bar{k})] \ar[r]^{\ \ \ \, \sim} \ar[d] & Y'(\bar{k}) \ar[d]\\
[\cmp(\bar{k})] \ar[r]^{ \ \, \sim} & Y(\bar{k}). 
}
\end{equation}
Now, the set of sections of $Y'_{u}\to \spec(\bar{k})$ identifies with the fiber of $Y'(\bar{k})\to Y(\bar{k})$ over $u\in Y(\bar{k})$ 
which in turn identifies via the above diagram with the set of isomorphism classes $[(x,\alpha,\beta)]$ in $[\mathcal M_{\mathcal P\times\mathcal Q}(\bar{k})]$ such that $\pi([(x,\alpha)])=u$. The number of these isomorphism classes is at most
$|\mathcal Q(x)|$ since they are generated by $(x,\alpha,\beta)$ with $\beta\in \mathcal Q(x)$. (Here the number of these isomorphism classes can be strictly smaller than $|\mathcal Q(x)|$; for example this is the case when $(x,\alpha)$ has a non-trivial automorphism.)  
Finally, on putting everything together, we conclude that the degree $[Y'_{u}:\bar{k}]$ is at most $\sup_{x\in \mathcal M(\bar{k})}|\mathcal Q(x)|=|\mathcal Q|_{\bar{k}}$ and then  \eqref{eq:degreeboundintermsoflvlstructures} implies (iii). This completes the proof of the lemma. \end{proof}

\paragraph{Branch locus.}Suppose that the surjective morphism $Z\to \mathcal M$ is finite \'etale. Then we can compute the branch locus $B_{\mathcal P}$ of $\mathcal P$ in terms of the branch locus of $Y'\to Y$ which is defined as  $B_f=f(Y'\setminus U')$, where $f$ denotes the morphism $Y'\to Y$ and  $U'$ is the union of all open subspaces of $Y'$ over which $f$ is \'etale. It turns out that
\begin{equation}\label{eq:branchlocuscompgeneral}
B_{\mathcal P}=B_f.
\end{equation}
To prove this equality, we denote by $U_{\mathcal P}$ and $U$ the complement in $Y$ of $B_{\mathcal P}$ and $B_f$ respectively. The base change $Y'\to \cmp$ of $Z\to \mathcal M$ is again \'etale surjective, and Lemma~\ref{lem:y'rep} gives that $f$ is a finite surjective morphism of schemes. In particular  $U\subseteq Y$ is open and $f_{U}$ is \'etale since $f^{-1}(U)\subseteq U'$. 
Therefore, as $f_U$ factors as $Y'_U\to (\cmp)_U\to^{\pi_U} U$ with $Y'_U\to (\cmp)_U$ \'etale surjective, we obtain that $\pi_{U}$ is \'etale and thus $U\subseteq U_{\mathcal P}$. 
To prove the converse inclusion, we take an open $V\subseteq Y$ such that $\pi_V$ is \'etale. Thus $f_V$, which is the composition of $\pi_V$ with the \'etale $Y'_V\to (\cmp)_V$, is also \'etale and hence $f^{-1}(V)\subseteq U'$. 
Then, on writing  $V=Y\setminus f(Y'\setminus f^{-1}(V))$ which exploits that $f$ is surjective,  we deduce that $V\subseteq Y\setminus f(Y'\setminus U')=U$ and thus $U_{\mathcal P}\subseteq U$. This completes the proof of \eqref{eq:branchlocuscompgeneral}.

Further, we remark here that the formation of the branch locus is compatible with the base change to a nonempty open subscheme $T\subseteq B$: The branch locus $B_{\mathcal P'}$ of the coarse moduli scheme $Y_T$ over $T$ of the restriction $\mathcal P'$ of $\mathcal P$ to $\mathcal M_T$ is given by 
\begin{equation}\label{eq:branchlocusrestriction}
B_{\mathcal P'}=(B_{\mathcal P})_T.
\end{equation} 
This follows for example from the arguments of the above proof of \eqref{eq:branchlocuscompgeneral}  after recalling from \eqref{eq:heightrestriction} that the flat base change $\pi_T$ of $\pi$ is an initial morphism $(\mathcal M_T)_{\mathcal P'}\to Y_T$.

\paragraph{Height on $Y'$.} Suppose that $Z$ is a moduli scheme over $B$ of some moduli problem $\mathcal Q$ on $\mathcal M$, and assume that the surjective morphism $Z\to \mathcal M$ is finite and is the composition of an equivalence $Z\isomto \mathcal M_{\mathcal Q}$ with the forgetful morphism $\mathcal M_{\mathcal Q}\to \mathcal M$. Then Lemma~\ref{lem:y'rep} and \eqref{eq:productequivalence} give that $Y'$ is a moduli scheme over $B$ of the product moduli problem $\mathcal P\times \mathcal Q$ on $\mathcal M$. Now, let $h_{\phi'}$ be the height on $Y'$ defined in \eqref{def:cheight} with respect to the inverse of the equivalence $Y'\isomto \mathcal M_{\mathcal P\times \mathcal Q}$ described right after \eqref{eq:productequivalence}. 
A formal computation shows
\begin{equation}\label{eq:heightcomp}
h_{\phi'}=f^*h_\phi
\end{equation}
where $f$ denotes the morphism $Y'\to Y$ and $h_\phi$ is the height on  $Y$ defined in \eqref{def:cheight} with respect to $\pi:\cmp\to Y$. Here we used that the equivalence $Y'\isomto\mathcal M_{\mathcal P\times \mathcal Q}$ given in \eqref{eq:productequivalence} is by construction a morphism of categories over $\cmp$. 

\subsection{Automorphism groups of objects of Hilbert moduli stacks}

Let $\mathcal M^I$ be the Hilbert moduli stack associated to some $(L,I,I_+)$ and let $S$ be a connected scheme. We denote by $\mathcal C$ the category of $\OL$-abelian schemes over $S$ for $\OL$ the ring of integers of the totally real field $L$. For any $\OL$-abelian scheme $B$ over $S$ we write $\End^0_{\mathcal C}(B)=\End_{\mathcal C}(B)\otimes_\ZZ\QQ$. The goal of this section is to prove the following lemma.

\begin{lemma}\label{lem:autofield}
Let $\sigma$ be an automorphism of an object  $(A,\iota,\varphi)$ of $\mathcal M^I(S)$, and denote by $L(\sigma)$ the $\QQ$-subalgebra of $\End^0_{\mathcal C}(A)$ generated by $\sigma$ and $\iota(\OL)$. Then $L(\sigma)$ is a field which is isomorphic either to $L$ or to a CM field with maximal totally real subfield $L$.  
\end{lemma}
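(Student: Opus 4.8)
The plan is to reduce to the case where $S=\spec(k)$ is the spectrum of a field, to identify $L(\sigma)$ as a commutative étale $\QQ$-algebra of dimension at most $2$ over $L$, and then to rule out the split possibility $L\times L$ using the polarization; that last step is the crux. First I would pass to a fibre: since $S$ is connected, rigidity of abelian schemes makes the restriction $\End^0_{\mathcal C}(A)\to\End^0_{\mathcal C}(A_s)$ injective for any point $s\in S$, and it carries $\sigma$ to an automorphism $\sigma_s$ of $(A_s,\iota_s,\varphi_s)$, identifying $L(\sigma)$ with the corresponding subalgebra $L(\sigma_s)$ of $\End^0_{\mathcal C}(A_s)$; so we may assume $S=\spec(k)$. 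Over a field the group $\Aut(A,\iota,\varphi)$ is finite, so $\sigma^n=1$ for some $n$, whence $\sigma$ is a root of unity in $\End^0(A)^\times$ and $\QQ[\sigma]$ is an étale $\QQ$-algebra (a quotient of $\QQ[x]/(x^n-1)$). As $\sigma$ commutes with the commutative ring $\iota(\OL)$, the algebra $L(\sigma)=\iota(L)[\sigma]$ is commutative, and being a quotient of the étale $\QQ$-algebra $\iota(L)\otimes_\QQ\QQ[\sigma]$ it is étale; thus $L(\sigma)=\prod_jF_j$ is a finite product of number fields, each containing $L:=\iota(L)$.

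Next I would bound the degree. Fix a prime $\ell\neq\textnormal{char}(k)$; then $L(\sigma)\otimes_\QQ\QQ_\ell=\prod_j(F_j\otimes_\QQ\QQ_\ell)$, a product of fields, acts faithfully on the $2g$-dimensional $\QQ_\ell$-space $V_\ell A$ (faithfully, because $\End^0(A)$ does). A product of fields acting faithfully on a $2g$-dimensional vector space has $\QQ_\ell$-dimension at most $2g$, so $[L(\sigma):\QQ]=g\sum_j[F_j:L]\le2g$, i.e. $\sum_j[F_j:L]\le2$. Hence $L(\sigma)$ is either $L$, or a field $F$ with $[F:L]=2$, or $L\times L$.

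The main obstacle is to exclude $L\times L$. Suppose it occurs, with orthogonal idempotents $e_1,e_2\in\End^0_{\mathcal C}(A)$, and write $\sigma=a_1e_1+a_2e_2$ with $a_i\in L$. From $\sigma^n=1$ we get $a_i^n=1$, so $a_i\in\{\pm1\}$, the only roots of unity in the totally real field $L$; moreover $a_1\neq a_2$, as otherwise $\sigma\in L$ and $L(\sigma)=L$. So, after relabelling, $\sigma=e_1-e_2$ is an involution. Choosing $\lambda\in I_+$ and denoting by $f\mapsto f^\dagger$ the Rosati involution of $\varphi(\lambda)$, the relation $\sigma^\dagger=\sigma^{-1}=\sigma$ gives $e_i^\dagger=\tfrac12(1\pm\sigma)^\dagger=e_i$. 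For the $\QQ_\ell$-valued Weil pairing $\langle\,,\,\rangle$ of $\varphi(\lambda)$ on $V_\ell A$ this yields $\langle e_1x,e_2y\rangle=\langle x,e_1e_2y\rangle=0$, so $e_1V_\ell A\perp e_2V_\ell A$; since $V_\ell A=e_1V_\ell A\oplus e_2V_\ell A$ and $\langle\,,\,\rangle$ is nondegenerate, it restricts to a nondegenerate pairing on each $e_iV_\ell A$. This restriction is alternating and $\iota(\OL)$-invariant (the Rosati involution is the identity on $\iota(\OL)$, because $L$ is totally real and $\varphi(\lambda)$ is $\OL$-linear), so it is the trace of a nondegenerate alternating $L_\ell$-bilinear form on $e_iV_\ell A$, where $L_\ell=L\otimes_\QQ\QQ_\ell$. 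On the other hand each $e_iV_\ell A$ is a faithful $L_\ell$-module, and $\dim_{\QQ_\ell}e_1V_\ell A+\dim_{\QQ_\ell}e_2V_\ell A=2g=2\dim_{\QQ_\ell}L_\ell$; as a faithful $L_\ell$-module has $\QQ_\ell$-dimension at least $\dim_{\QQ_\ell}L_\ell$, both $e_iV_\ell A$ are free of rank $1$ over $L_\ell$, and an alternating form on a free rank-$1$ module vanishes --- contradicting nondegeneracy. Hence $L(\sigma)\neq L\times L$.

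It remains to treat $L(\sigma)=F$ with $[F:L]=2$. Here $\sigma$ is a root of unity generating $F$ over $L$ with $\sigma\notin L$, so $\sigma$ has order at least $3$; for every embedding $\tau\colon F\hookrightarrow\CC$ the restriction $\tau|_L$ is real while $\tau(\sigma)$ is a root of unity of order $\ge3$, hence $\tau(F)\not\subseteq\RR$. Thus $F$ is totally imaginary, so it is a CM field, and its maximal totally real subfield --- of index $2$ and containing $L$ --- equals $L$. Since $L(\sigma)\in\{L,F\}$ in all cases, $L(\sigma)$ is a field of one of the two asserted types, which proves the lemma.
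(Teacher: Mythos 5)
Your proof is correct, but it takes a genuinely different route from the paper. The paper derives the lemma as a corollary of Lemma~\ref{lem:divalg}, which shows via the result \cite[1.3.1]{lang:cm} (a number field $F\subseteq\End^0(C)$ of degree $>\dim C$ is totally imaginary of degree $2\dim C$) that $\End^0_{\mathcal C}(A)$ is a finite division algebra; hence $L(\sigma)$ is a commutative subalgebra of a division algebra, thus a field, and a second invocation of the same result from \cite{lang:cm} at a fibre $A_s$ gives immediately ``degree $\le g$, or degree $2g$ and totally imaginary'', which settles the dichotomy in one stroke without ever touching the polarization. Your argument instead uses the polarization twice: once via finiteness of $\Aut(A,\iota,\varphi)$ to make $\sigma$ a root of unity and $L(\sigma)$ étale, and once via the Rosati involution and the alternating Weil pairing on $V_\ell A$ to exclude the product case $L\times L$. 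Your degree bound from the faithful action of $L(\sigma)\otimes\QQ_\ell$ on the $2g$-dimensional $V_\ell A$ is weaker than what Lang's result gives (it only yields $\le 2g$ without the totally-imaginary conclusion), which is precisely why you need the extra Weil-pairing step. Both approaches are sound; the paper's is shorter here because Lemma~\ref{lem:divalg} is proved anyway for use elsewhere, while yours has the virtue of being self-contained and of making visible how the polarization forces the dichotomy. One stylistic remark: in the exclusion of $L\times L$ the trace-form language is more than you need — on a free rank-one $L_\ell$-module $L_\ell v$, the identity $\langle av,bv\rangle=\langle v,abv\rangle$ already shows the pairing is symmetric in $a,b$, and symmetric plus alternating in characteristic $0$ forces it to vanish, contradicting nondegeneracy directly.
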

At the end of this section, we shall deduce this result from the next Lemma~\ref{lem:divalg}. We remark that in the case when $S$ has a point whose residue field is of characteristic zero, a refinement of Lemma~\ref{lem:divalg}~(ii) will be given in Lemma~\ref{lem:divalgcompchar0} below.

\begin{lemma}\label{lem:divalg}
Suppose that $A$ and $A'$ are $\OL$-abelian schemes over $S$ which are both of relative dimension $g=[L:\QQ]$. Then the following statements hold. 
\begin{itemize}
\item[(i)] Any nonzero element of $\Hom_{\mathcal C}(A,A')$ is an isogeny.
\item[(ii)] The $\QQ$-algebra $\End^0_{\mathcal C}(A)$ is finite and a division algebra.
\end{itemize}

\end{lemma}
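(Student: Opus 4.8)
~The plan is to prove both statements simultaneously, deducing (ii) from (i) together with standard facts about endomorphism algebras of abelian varieties. The key point is that $\mathcal{O}_L$-linearity forces every nonzero homomorphism to have full rank, and this is most cleanly checked on geometric fibers where we can use the theory over a field.

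\textbf{Step 1: Reduce (i) to the case where $S$ is the spectrum of an algebraically closed field.} Let $f \in \Hom_{\mathcal{C}}(A,A')$ be nonzero. Since $A$ and $A'$ are abelian schemes over the connected base $S$, the function $s \mapsto \dim \ker(f_s)$ (equivalently, the rank of $f$ on fibers) is locally constant on $S$, so it suffices to show that $f_{\bar{s}}$ is an isogeny for one geometric point $\bar{s}$. After this reduction we may assume $S = \spec(k)$ with $k$ algebraically closed, $A, A'$ abelian varieties over $k$ of dimension $g$, and $f\colon A \to A'$ a nonzero $\mathcal{O}_L$-linear homomorphism. It is worth noting that $f$ nonzero does need checking on an appropriate fiber; one picks a geometric point over which $f$ does not vanish, which exists because $f \neq 0$ and $A$ is flat over $S$ with geometrically connected fibers.

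\textbf{Step 2: Use $\mathcal{O}_L$-linearity to show $f_{\bar s}$ is an isogeny.} Over the field $k$, consider $B = \im(f) \subseteq A'$ and $C = \ker(f)^0$, the connected component of the kernel. Both are abelian subvarieties stable under the $\mathcal{O}_L$-action (since $f$ is $\mathcal{O}_L$-linear, $\mathcal{O}_L$ preserves $\ker f$ and hence its identity component, and likewise preserves the image). Passing to the associated $L$-vector spaces via $H_1(-,\mathbb{Q})$ (or rational Tate modules), $V = H_1(A,\mathbb{Q})$ is a module over $L$ of $\mathbb{Q}$-dimension $2g = 2[L:\mathbb{Q}]$, hence free of rank $2$ over $L$; similarly for $A'$. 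The subspaces $H_1(C,\mathbb{Q})$ and the kernel of $H_1(f)$ are $L$-submodules of $V$, hence have even $\mathbb{Q}$-dimension (being $L$-vector spaces, their dimensions are multiples of $g$). Since $\dim A = g$ and $\dim A' = g$, the only possibilities for $\dim_{\mathbb Q} H_1(C,\mathbb Q) \in \{0, g, 2g\}$ that are compatible with $f \neq 0$ (so $\dim C < g$... more precisely $\dim C \le g$ with $C \ne A$) is $\dim C = 0$; similarly $\im(H_1(f))$ must be all of $H_1(A',\mathbb Q)$. Hence $f$ has finite kernel and is surjective, i.e.\ an isogeny. (One must be slightly careful: $\dim C$ is a multiple of $g$ only because $H_1(C,\mathbb Q)$ is an $L$-subspace; and $\dim C \neq g$ because that would force $C = A$ and $f = 0$.)

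\textbf{Step 3: Deduce (ii).} By (i), every nonzero element of $\End_{\mathcal{C}}(A)$ is an isogeny, hence invertible in $\End^0_{\mathcal{C}}(A) = \End_{\mathcal{C}}(A)\otimes_{\mathbb{Z}}\mathbb{Q}$; so this $\mathbb{Q}$-algebra is a division algebra. Finiteness (finite-dimensionality over $\mathbb{Q}$) follows because $\End_{\mathcal{C}}(A) \subseteq \End(A_{\bar s})$ for a geometric point, and the endomorphism ring of an abelian variety over a field is a finitely generated $\mathbb{Z}$-module (e.g.\ it embeds into $\End(T_\ell A_{\bar s})$ which is free of finite rank, or one invokes the standard structure theory in \cite{sp} / Mumford). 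This gives (ii).

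\textbf{Main obstacle.} The genuinely delicate point is Step 2: one must correctly handle the bookkeeping that an $L$-submodule of a $2$-dimensional $L$-vector space has $\mathbb{Q}$-dimension in $\{0, g, 2g\}$, and then rule out the middle case. Ruling out $\dim C = g$ requires knowing that an $\mathcal O_L$-stable abelian subvariety $C \subseteq A$ with $\dim C = \dim A$ must equal $A$ (true since $C$ is then an abelian subvariety of the same dimension in a connected group), so $f = 0$ — contradiction. The reduction in Step 1 to geometric fibers, using local constancy of fiber rank over a connected base, is routine but should be stated carefully since $\Hom_{\mathcal C}$ is a priori only defined over $S$. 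Everything else is standard.
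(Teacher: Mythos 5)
Your Step 1 reduction to geometric fibers and your Step 3 (deducing (ii) from (i) plus finiteness) are fine and essentially match the paper's structure. But Step 2 has a genuine gap in the dimension bookkeeping, and it is precisely at the point where the totally real hypothesis on $L$ must be used — which your argument never invokes.

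You correctly observe that $H_1(C,\QQ)$ is an $L$-subspace of the $2$-dimensional $L$-vector space $V$, so its $\QQ$-dimension lies in $\{0, g, 2g\}$. Since $\dim_{\QQ} H_1(C,\QQ) = 2\dim C$, this gives $\dim C \in \{0, g/2, g\}$. You correctly rule out $\dim C = g$ (it forces $C = A$, hence $f = 0$), but the middle case is $\dim C = g/2$, not $\dim C = g$ as your parenthetical remark suggests. When $g$ is even this case is not excluded by anything you have said: the kernel and image could each be abelian subvarieties of dimension $g/2$, and the $L$-module structure on $H_1$ is perfectly consistent with that. Ruling it out requires a further input. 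The paper does this by looking at the image $B = \textnormal{Im}(f)$, noting $L \hookrightarrow \End^0(B)$, and invoking the standard CM fact (their cited (1), from Lang's CM book 1.3.1): a number field $F \subseteq \End^0(B)$ with $[F:\QQ] > \dim B$ must be totally imaginary of degree $2\dim B$. Since $L$ is \emph{totally real}, this forces $\dim B \geq g$, hence $B = A'$, hence $f$ surjective and an isogeny. Without appealing to the real embedding of $L$, the middle case survives — which is exactly why the paper remarks that their proof "exploits that $L$ has a real embedding." You need that ingredient.

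A secondary issue: the fallback to "rational Tate modules" does not rescue the multiple-of-$g$ claim in positive characteristic, because $V_\ell(A)$ is a module over $L \otimes_\QQ \QQ_\ell \cong \prod_{\lambda \mid \ell} L_\lambda$, not over a field, and $L\otimes\QQ_\ell$-submodules can have $\QQ_\ell$-dimensions that are not multiples of $g$ (e.g.\ when $\ell$ splits completely in $L$). The paper's route via the image $B$ and the cited CM lemma sidesteps this entirely and works over any field.
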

In the case when $S$ is in addition of finite type over $\ZZ$, Lemma~\ref{lem:divalg}~(i) can be found in van der Geer's book~\cite[X.1.6]{vandergeer:hilbertmodular}. His arguments  rely on a trick of Drinfeld and they are completely different to our proof which exploits that $L$ has a real embedding.

\begin{proof}[Proof of Lemma~\ref{lem:divalg}]
 To prove (i) we take a nonzero $f\in \Hom_{\mathcal C}(A,A')$. For each point $s$ of the connected scheme $S$, the base change $f_s:A_s\to A'_s$ is a morphism of $\OL$-abelian varieties of dimension $g$ which is again nonzero by rigidity.  Thus, to show that $f$ is an isogeny, we may and do assume that $S$ is the spectrum of a field $k$. Then the image $B=\textnormal{Im}(f)$ is an abelian subvariety of $A$. As $f$ is compatible with the $\OL$-multiplications, we obtain a ring morphism $\OL\to \End(B)$ which induces $L\hookrightarrow\End^0(B)$.  
 Next we use the following basic result 
 (\cite[1.3.1]{lang:cm}): (1) Let $C$ be an abelian variety over $k$  and let $F$ be a number field contained in $\End^0(C)$. If $[F:\QQ]>\dim(C)$ then $F$ is totally imaginary with $[F:\QQ]=2\dim(C)$.    Applying this with $C=B$ and $F=L$ gives that $\dim(A')=g=[L:\QQ]$ is at most $\dim(B)$ since $L$ is totally real. Thus the abelian subvariety  $B$ of $A'$ has the same dimension as $A'$. This implies that $A'=B=\textnormal{Im}(f)$. As $A$ and $A'$ have the same dimension, it follows that $f$ is an isogeny. This proves (i).

We now show (ii). To reduce to the case when $S$ is the spectrum of a field, we take $s\in S$ and we denote by $\mathcal C_s$ the category of $\OL$-abelian varieties over $k(s)$. Base change from our connected scheme $S$ to $k(s)$  induces a ring morphism $\End^0_{\mathcal C}(A) \to \End^0_{\mathcal C_s}(A_{s})$ which is injective by rigidity.  Thus the $\QQ$-algebras $\End^0_{\mathcal C}(A)$ and $\End^0_{\mathcal C_s}(A_{s})$ are finite, since they embed into the finite $\QQ$-algebra $\End^0(A_s)$. Next we observe: (2) If $R$ is a not necessarily commutative $\QQ$-algebra which is finite and a division algebra, then any $\QQ$-subalgebra $R'$ of $R$ is a division algebra. (Indeed multiplication with a nonzero $r\in R'$ defines a $\QQ$-module morphism $m_r:R'\to R'$ which is injective since $R$ is a division algebra and hence $m_r$ is surjective since $R'$ is finite over $\QQ$.)   An application of (2) with $R'$ the image of $\End^0_{\mathcal C}(A)$ in $R=\End^0_{\mathcal C_s}(A_{s})$ shows that we may and do assume that $S$ is the spectrum of a field $k$. Then any nonzero $f \in \End_{\mathcal C}(A)$ is an isogeny of abelian varieties over $k$ by (i). Hence there is an isogeny $f':A\to A$ such that $ff'=[d]=f'f$ for $d$ the degree of $f$.  It follows that $f'$ lies in $\End_{\mathcal C}(A)$ and $g=f'\otimes d^{-1}$ is an inverse of $f\otimes 1$ in $\End^0_{\mathcal C}(A)$. As any element of $\End^0_{\mathcal C}(A)$ is of the form $f\otimes q$ with $f\in \End_{\mathcal C}(A)$ and $q\in \QQ$, we deduce that $\End^0_{\mathcal C}(A)$ is a division algebra as claimed in (ii). This completes the proof.
\end{proof}

\begin{proof}[Proof of Lemma~\ref{lem:autofield}]
Let $\sigma$ be an automorphism of an object $(A,\iota,\varphi)$ of $\mathcal M^I(S)$ and recall that  $L(\sigma)$ denotes the $\QQ$-subalgebra of $\End^0_{\mathcal C}(A)$ generated by $\sigma$ and $\iota(\OL)$. We first show that $L(\sigma)$ is a number field.  Lemma~\ref{lem:divalg}~(ii) gives that the $\QQ$-algebra  $\End^0_{\mathcal C}(A)$ is finite and a division algebra.  Thus its $\QQ$-subalgebra $L(\sigma)$ is also a division algebra by (2) from the proof of Lemma~\ref{lem:divalg}~(ii). Furthermore, on using that  $\iota(\OL)$ is contained in the center of $\End_{\mathcal C}(A)$, we see that  $L(\sigma)$  is commutative. Hence $L(\sigma)$ is a number field. 

To compute the field $L(\sigma)$, we write $g=[L:\QQ]$ and we take $s\in S$. Now we  apply (1) from the proof of Lemma~\ref{lem:divalg}~(i) with $C=A_s$ and $F$ the image of $L(\sigma)\subseteq \End_{\mathcal C}^0(A)\subseteq \End^0(A)$ inside $\End^0(A_s)$. This shows that $L(\sigma)$ either has degree at most $\dim(A_s)=g$ or $L(\sigma)$ is totally imaginary of degree $2g$.  Therefore, as $L(\sigma)$ contains the totally real field $L'=\iota(\OL)\otimes_\ZZ\QQ$ of degree $[L':\QQ]=g$, we find that either $L(\sigma)=L'$ or $L(\sigma)$ is a CM field with maximal totally real subfield $L'$. Then we deduce Lemma~\ref{lem:autofield}.   
\end{proof}

\subsection{The branch locus and automorphism groups}\label{sec:branchauto}

We now use the above covers to study automorphism groups of points in the branch locus of coarse moduli schemes. Let $\mathcal M$ be a Hilbert moduli stack.  We continue to work over an arbitrary nonempty open subscheme $B\subseteq \spec(\ZZ)$ and we write again $\mathcal M=\mathcal M_B$. 

Let $Y$ be a coarse moduli scheme over $B$ of an arithmetic moduli problem $\mathcal P$ on $\mathcal M$, and let $\pi:\cmp\to Y$ be an initial morphism.   We now introduce some terminology. For any point $y\in Y$, we denote by $\Aut(y)$ the automorphism group algebraic space (\cite[0DTR]{sp}) of the point $\pi^{-1}(y)$ of $\cmp$  and the separable rank of $\Aut(y)$ is denoted by $$|\Aut(y)|.$$ This rank coincides with the number of geometric points of $\Aut(y)$ which is a finite group scheme over a field since the inertia stack of $\cmp$ is finite over $\cmp$. In particular $|\Aut(y)|$ is an invariant of the point $\pi^{-1}(y)$ of $\cmp$. Further, a geometric automorphism group of $\cmp$ is the automorphism group of an object defined by a geometric point of $\cmp$.

The main goal of this subsection is to prove the following result which says that the points in the branch locus $B_{\mathcal P}\subseteq Y$ of $\mathcal P$ have `large' automorphism groups if a global congruence condition on the geometric automorphism groups is satisfied.

\begin{proposition}\label{prop:auto}
Suppose that the forgetful morphism $\cmp\to \mathcal M$ is finite and that $\cmp$ is reduced. Further assume that each geometric automorphism group of $\cmp$ has even order. Then any point  $y\in B_{\mathcal P}$ satisfies $|\Aut(y)|\geq 4$.
\end{proposition}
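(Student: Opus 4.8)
The plan is to reduce the statement to an elementary fact about $\ZZ/2$-quotients of reduced schemes, using a local quotient presentation of $\pi$ furnished by the covers of $\mathsection$\ref{sec:coverconstruction}.

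\emph{Parity reduction.} Fix $y\in B_{\mathcal P}$ and put $x=\pi^{-1}(y)$. First I record that $|\Aut(y)|$ is even. By the discussion preceding the proposition, $|\Aut(y)|$ is the number of geometric points of the finite group scheme $\Aut(\pi^{-1}(y))$ over $\kappa(x)$; since $\cmp$ is a DM stack its inertia is unramified, so this group scheme is \'etale, whence $|\Aut(y)|$ equals the order of the automorphism group $\Aut(\bar x)$ of a geometric point $\bar x$ of $\cmp$ lying over $x$. This order is even by hypothesis, so $|\Aut(y)|\ge 2$, and it suffices to rule out $|\Aut(y)|=2$.

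\emph{Local presentation.} Assume $|\Aut(y)|=2$. Replacing $B$ by $B\cap\spec\ZZ[1/n]$ for some $n\in\{3,4\}$ prime to the residue characteristic of the image of $y$ in $\spec\ZZ$ — which by the base–change compatibilities of $\mathsection$\ref{sec:cms} (see \eqref{eq:heightrestriction}, \eqref{eq:branchlocusrestriction}) changes neither $y\in B_{\mathcal P}$ nor $|\Aut(y)|$, since $\mathcal M_{B\cap\spec\ZZ[1/n]}$ is an open substack of $\mathcal M$ — we may assume $n$ is invertible on $B$. Then $Z=\mathcal M_{\mathcal P(n)}$ is a scheme, $Z\to\mathcal M$ is finite \'etale and is a $G$-torsor for $G=\GL_2(\OL/n\OL)$ (the action \eqref{def:gactionpn} is simply transitive on the nonempty geometric fibers $\Isom_{\OL}((\OL/n\OL)^2,A[n])$), and by Lemma~\ref{lem:y'rep} and \eqref{eq:productequivalence} the cover $Y'=\cmp\times_{\mathcal M}Z=\mathcal M_{\mathcal P\times\mathcal P(n)}$ is a scheme, finite \'etale over $\cmp$, with $\cmp=[Y'/G]$, coarse space $Y=Y'/G$, quotient map $f\colon Y'\to Y$, and $B_{\mathcal P}=B_f$ by \eqref{eq:branchlocuscompgeneral}; as $\cmp$ is reduced, $Y'$ is reduced. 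Picking $y'\in f^{-1}(y)$, its decomposition group $H=\{g\in G:gy'=y'\}$, viewed as a constant group scheme over $\kappa(x)$, is $\Aut(x)$, so $|H|=2$, say $H=\langle\tau\rangle$. A standard slice argument for the finite–group quotient $[Y'/G]$ (equivalently, the characteristic–free local structure theorem for DM stacks) then yields an \'etale neighbourhood $(V,y_0)\to(Y,y)$, an affine scheme $W$ with $H$-action that is \'etale over $\cmp$ — hence reduced — and an $H$-fixed point $w_0\in W$ over $y_0$, with $\cmp\times_Y V\cong[W/H]$, $V=W/H$, and $\pi\times_Y V$ identified with the coarse map $p\colon[W/H]\to W/H$.

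\emph{The contradiction.} Since $W\to[W/H]$ is an \'etale $\ZZ/2$-torsor, in particular \'etale and surjective, \'etale descent gives that $p$ is \'etale at $[w_0]$ iff $W\to W/H$ is \'etale at $w_0$, the unique point of $W$ over $[w_0]$. But $y\in B_{\mathcal P}=B_f$ means $\pi$, hence $p$, is not \'etale at $[w_0]$; so $W\to W/H$ is not \'etale at $w_0$. If $\tau=\mathrm{id}_W$ this map would be the identity, a contradiction, so $\tau\ne\mathrm{id}_W$. As $W$ is reduced, an automorphism of $W$ fixing every geometric point is the identity (it would be the identity on the underlying space and on all residue fields, so $\tau(a)-a$ would lie in every prime, hence in the nilradical $=0$); thus there is a geometric point $\bar w$ of $W$ with $\tau(\bar w)\ne\bar w$. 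The corresponding geometric point of $[W/H]\cong\cmp\times_Y V$ has automorphism group $\Stab_{H}(\bar w)=\{1\}$, and since $\cmp\times_Y V\to\cmp$ is representable \'etale this produces a geometric point of $\cmp$ with trivial automorphism group — contradicting the hypothesis that every geometric automorphism group of $\cmp$ has even order. Hence $|\Aut(y)|\ne 2$, and together with the parity reduction we conclude $|\Aut(y)|\ge 4$.

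The part needing the most care is the local quotient presentation together with the failure of tameness: in residue characteristic $2$ the group $\ZZ/2$ is wild, so the slice theorem for tame stacks is unavailable and one must work either with the local structure theorem for DM stacks or with the explicit presentation $[Y'/G]$ as above; one then has to verify that ``coarse map \'etale'' is genuinely equivalent to ``$W\to W/H$ \'etale'' even in characteristic $2$ (it is, because $\ZZ/2$ is still an \'etale group scheme and $W\to[W/H]$ is \'etale, so \'etale descent applies), and it is exactly the reducedness of $W$ — inherited from $\cmp$ — that excludes a nontrivial $\tau$ having no free geometric orbit.
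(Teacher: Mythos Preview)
Your proof is correct and follows a genuinely different route from the paper's. Both arguments set up the presentation $\cmp=[Y'/G]$ with $G=\GL_2(\OL/n\OL)$, identify $B_{\mathcal P}$ with the branch locus of $Y'\to Y$, and reduce to a stabilizer $H\cong\ZZ/2$ at a point over $y$. The divergence is in how the contradiction is obtained. The paper argues \emph{globally}: via Lemma~\ref{lem:autofield} (specific to Hilbert moduli stacks) any order-$2$ automorphism of an object of $\mathcal M$ is $[-1]$, and the even-order hypothesis then forces $[-1]$ to lie in \emph{every} geometric automorphism group, so the element $-1\in G$ acts trivially on all of $Y'$; hence $D=\{\pm1\}$ acts trivially and $Y'\to Y'/D$ is \'etale. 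You argue \emph{locally}: passing to a slice $[W/H]$ with $W$ affine reduced and $H=\langle\tau\rangle$, you run a dichotomy---either $\tau=\mathrm{id}_W$ (coarse map \'etale, contradicting $y\in B_{\mathcal P}$) or reducedness of $W$ forces $\tau$ to move some geometric point, producing a geometric point of $\cmp$ with trivial automorphism group (contradicting even order). Your argument never invokes Lemma~\ref{lem:autofield} and in fact never uses the finiteness of $\cmp\to\mathcal M$ (which the paper needs only to make $Y'$ quasi-projective so that the geometric quotient $Y'/G$ exists as a scheme, cf.\ Remark~\ref{rem:assumptionsautogroupprop}); it therefore proves the proposition for any reduced separated finite-type DM stack whose geometric automorphism groups all have even order. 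The paper's approach, in exchange, is self-contained (no black-box local structure theorem) and makes explicit the role of $[-1]$, which is precisely what is exploited in Corollary~\ref{cor:y2emptybranchlocus}.
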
  
The first two assumptions are usually satisfied in situations of interest in arithmetic such as for example when $\cmp\to \mathcal M$ is finite \'etale,  while the assumption on the geometric automorphism groups is more restrictive. 
The proof given below shows that the assumptions can be relaxed up to a certain extent, see Remark~\ref{rem:assumptionsautogroupprop}.


The information provided by Proposition~\ref{prop:auto} can be useful to compute the branch locus $B_{\mathcal P}$. To illustrate this, we consider the coarse moduli scheme $Y(2)$ over $\ZZ[1/2]$ of the moduli problem $\mathcal P(2)$ of principal level 2-structures on $\mathcal M_{\ZZ[1/2]}$. Suppose that $\mathcal M\cong \mathcal M^I$ is associated to $(L,I,I_+)$ and write $\Delta$ for the discriminant of $L/\QQ$.

\begin{corollary}\label{cor:y2emptybranchlocus}
 The branch locus of $Y(2)$ is empty, and $Y(2)$ is smooth over $\ZZ[\tfrac{1}{2\Delta}]$.
\end{corollary}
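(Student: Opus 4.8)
The plan is to prove Corollary~\ref{cor:y2emptybranchlocus} by combining Proposition~\ref{prop:auto} with the standard fact that a level-$2$ structure kills the hyperelliptic involution $-1$, together with a deformation-theoretic smoothness criterion for coarse moduli spaces. First I would verify the hypotheses of Proposition~\ref{prop:auto} for $\mathcal P = \mathcal P(2)$ on $\mathcal M_{\ZZ[1/2]}$: the forgetful morphism $\cmp \to \mathcal M$ is finite \'etale over $\ZZ[1/2]$ (as recalled in $\mathsection$\ref{sec:overt}), hence finite, and $\cmp$ is reduced (indeed regular) since it is \'etale over the regular stack $\mathcal M_{\ZZ[1/2]}$. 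The remaining hypothesis, that every geometric automorphism group of $\cmp$ has even order, needs the most care: an object of $\cmp$ over an algebraically closed field $k$ is a triple $(A,\iota,\varphi)$ with a principal level-$2$ structure $\alpha$, and its automorphism group is $\{\sigma \in \Aut(A,\iota,\varphi) : \sigma \text{ acts trivially on } A_2\}$. By Lemma~\ref{lem:autofield} the subalgebra $L(\sigma)$ generated by any automorphism $\sigma$ and $\iota(\OL)$ is a field, so $-1 = \iota(-1)$ lies in $\Aut(A,\iota,\varphi)$, and it acts trivially on $A_2$ (since $-1 \equiv 1 \bmod 2$); thus $-1$ is a nontrivial element of order $2$ in every geometric automorphism group of $\cmp$, so all these groups have even order.

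Granting this, Proposition~\ref{prop:auto} would show that any $y \in B_{\mathcal P(2)}$ has $|\Aut(y)| \geq 4$, so it suffices to prove that every geometric point of $\cmp$ has automorphism group of order exactly $2$, namely $\{\pm 1\}$. This is the heart of the argument. Over a geometric point the automorphism group injects (via $\sigma \mapsto$ action on $H_1$ or on the Tate module) into a group where the level-$2$ structure forces congruence to $1$ modulo $2$; by Lemma~\ref{lem:autofield} any automorphism generates, together with $\iota(\OL)$, either $L$ itself or a CM field with maximal totally real subfield $L$. In the first case the automorphism lies in $\iota(\OL)^\times$ and, being a root of unity congruent to $1 \bmod 2$ in a totally real field, must be $\pm 1$; but $+1$ acts trivially modulo $2$ only if $1 \equiv 1$, which is fine, so actually $\{\pm 1\}$ is already the full contribution. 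In the CM case one has an element $\zeta$ of a CM field $F \supseteq L$ with $\zeta$ a root of unity and $\zeta \equiv 1$ in $\mathcal O_F/2\mathcal O_F$ acting on the $2$-torsion; a short argument with the possible torsion units in CM fields containing $L$ — using that $\zeta - 1$ is divisible by $2$ forces $\zeta = 1$ unless the residue characteristic intervenes, which it does not since we work over $\ZZ[1/2]$ — rules out anything beyond $\pm 1$. Hence $|\Aut(y)| = 2 < 4$ for all $y$, so $B_{\mathcal P(2)} = \emptyset$.

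For the smoothness statement over $\ZZ[\tfrac{1}{2\Delta}]$, I would argue as follows. Over $\ZZ[\tfrac{1}{2\Delta}]$ the Hilbert moduli stack $\mathcal M$ is smooth (this is the Rapoport / Deligne--Pappas result: $\mathcal M^I$ is smooth over $\ZZ[\tfrac{1}{\Delta}]$), hence $\mathcal M_{\mathcal P(2)}$, being finite \'etale over $\mathcal M$, is smooth over $\ZZ[\tfrac{1}{2\Delta}]$. Since the branch locus is empty, the initial morphism $\pi : \mathcal M_{\mathcal P(2)} \to Y(2)$ is \'etale over all of $Y(2)_{\ZZ[1/(2\Delta)]}$; but $\pi$ is also surjective, so $Y(2)$ is covered \'etale-locally by the smooth stack $\mathcal M_{\mathcal P(2)}$, and an algebraic space that receives a surjective \'etale morphism from a smooth $\ZZ[\tfrac{1}{2\Delta}]$-stack is itself smooth over $\ZZ[\tfrac{1}{2\Delta}]$. (Concretely: \'etale descent of smoothness, using that smoothness can be checked after a smooth surjective base change.) This yields smoothness of $Y(2)$ over $\ZZ[\tfrac{1}{2\Delta}]$.

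The main obstacle I expect is the precise computation that \emph{every} geometric automorphism group of $\mathcal M_{\mathcal P(2)}$ is exactly $\{\pm 1\}$, i.e.\ ruling out extra automorphisms coming from CM points: one must control torsion units $\zeta$ in CM fields $F$ with maximal real subfield $L$ subject to the congruence $\zeta \equiv 1 \bmod 2$, and argue this forces $\zeta = 1$. The clean way is to note $\zeta$ has finite order and $\zeta - 1 \in 2\mathcal O_F$; writing $\zeta$ as a product of a root of unity of $2$-power order and one of odd order and examining both factors modulo primes above $2$ (which are available since we inverted $2$, so $\mathcal O_F/2$ is a product of finite fields of odd characteristic only in the relevant sense — more precisely $\zeta \bmod \mathfrak p = 1$ in a field of characteristic $2$ forces the odd part to be $1$ and the $2$-power part to be $1$ as well once one uses that $\zeta$ is a unit), one concludes $\zeta = 1$. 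Alternatively one can invoke Proposition~\ref{prop:auto} directly once even order is established and only needs the cheaper bound $|\Aut(y)| \le 3$ to derive the contradiction, but since $|\Aut(y)|$ is a power of $2$ times an odd part and contains $\pm 1$, the inequality $|\Aut(y)| < 4$ is equivalent to $|\Aut(y)| = 2$, so there is no shortcut around this computation.
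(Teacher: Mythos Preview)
Your overall strategy matches the paper's: verify the hypotheses of Proposition~\ref{prop:auto}, show every geometric automorphism group of $\mathcal M_{\mathcal P(2)}$ is exactly $\{\pm 1\}$, and then descend smoothness along the \'etale cover $\pi$. The smoothness paragraph is essentially identical to the paper's.

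The difference lies in how the automorphism groups are computed. The paper does not split into the two cases of Lemma~\ref{lem:autofield}. Instead, once one knows that $\sigma$ restricts to the identity on $A_2$, it invokes Serre's lemma in its level-$2$ form \cite[Lem 4.7.1]{grra:neronmodels}: an automorphism of a polarized abelian variety acting trivially on the $2$-torsion has order dividing $2$. Then Lemma~\ref{lem:autofield} says $\sigma$ lies in a field, and the only elements of order dividing $2$ in a field are $\pm 1$. This handles the totally real and CM cases in one stroke.

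Your CM-case argument, as written, contains a genuine confusion. You say that since $2$ is inverted in the base, ``$\mathcal O_F/2$ is a product of finite fields of odd characteristic only in the relevant sense''. This conflates two unrelated rings: inverting $2$ in $B=\ZZ[1/2]$ controls the residue characteristic of the geometric point $\spec(k)\to B$, but $\mathcal O_F/2\mathcal O_F$ is always an $\mathbb F_2$-algebra, irrespective of $k$. The congruence $\zeta\equiv 1\pmod{2\mathcal O_F}$ arises because $A_2\subseteq\ker(\sigma-1)$ forces $\sigma-1=2\tau$ in $\End(A)$ (here one does use $\mathrm{char}(k)\neq 2$, so that $[2]$ is an isogeny), hence $(\zeta-1)/2$ is an algebraic integer. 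The claim you actually need---a root of unity $\zeta$ with $(\zeta-1)/2$ integral must be $\pm 1$---is correct and can be checked by taking norms down to $\QQ$; but this is precisely the content of Serre's lemma at level $2$, so you may as well quote it and drop the case split.
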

\begin{proof}
We now take $B=\spec(\ZZ[1/2])$ and $\mathcal P=\mathcal P(2)$. Then $\cmp$ satisfies the first assumption in Proposition~\ref{prop:auto}. Indeed the forgetful morphism $\cmp\to \mathcal M$ is finite \'etale, which implies that $\cmp$ is reduced since $\mathcal M$ is reduced.  We now compute the geometric automorphism groups of $\cmp$. Let $\sigma$ be an automorphism of an object $(x,\alpha)\in \cmp(k)$ where $\spec(k)\to B$ is a geometric point. After recalling that $\mathcal M\cong \mathcal M^I$ is associated to $(L,I,I_+)$, we identify $x$ with a triple $(A,\iota,\varphi)\in \mathcal M^I(k)$. Then $\sigma$ is induced by an automorphism of the abelian variety $A$ over $k$ whose restriction $\sigma_2$ to $A_2$ satisfies $\sigma_2\alpha=\alpha$ as morphisms $(\OL/2\OL)_k^2\to A_2$,  where $\OL$ is the ring of integers of $L$. It follows that $\sigma_2$ is the identity since $\alpha$ is an isomorphism. Thus Serre's lemma \cite[Lem 4.7.1]{grra:neronmodels} shows that either $\sigma$ is the identity or $\sigma$ has order two. Hence Lemma~\ref{lem:autofield} assures that $\sigma$ is given by $[\pm 1]$. Thus the automorphism group of $(x,\alpha)\in\cmp(k)$ has order two, since $[-1]$ defines an automorphism of $(x,\alpha)$. Now, as all geometric automorphism groups of $\cmp$ have (even) order two, we can apply Proposition~\ref{prop:auto} which implies that $B_{\mathcal P}$ is empty.

We now deduce the second part. The stack $\cmp$ has an \'etale scheme cover,  the initial morphism  $\pi:\cmp\to Y(2)$ is an \'etale cover since $B_{\mathcal P}$ is empty, and $\cmp$ is smooth over $\ZZ[\tfrac{1}{2\Delta}]$ since the forgetful morphism $\cmp\to \mathcal M$ is \'etale and since $\mathcal M$  identifies over $\ZZ[\tfrac{1}{2\Delta}]$ with the smooth stack of Rapoport~\cite[Thm 1.20]{rapoport:hilbertmodular}.  As being smooth is \'etale local on source and target, we then deduce that $Y(2)$ is smooth over $\ZZ[\tfrac{1}{2\Delta}]$.  
\end{proof}
In the remaining of this section we prove Proposition~\ref{prop:auto}. The main ideas are as follows: After identifying $B_{\mathcal P}$ with the branch locus of a cover $Y'\to Y$ where $Y'=\cmp\times_{\mathcal M}\mathcal M_{\mathcal P(n)}$ with $n\in\ZZ_{\geq 3}$, we show that $Y'\to Y=Y'/G$ is a categorical quotient for the action of $G=\GL_2(\OL/n\OL)$ on $Y'$. To understand the \'etale locus of $Y'\to Y'/G$, we then compute in \eqref{eq:isodecompgroup} the stabilizer groups in terms of automorphism groups of $\cmp$ and we exploit our assumption on the geometric automorphism groups, see also Remark~\ref{rem:assumptionsautogroupprop}.   

\begin{proof}[Proof of Proposition~\ref{prop:auto}]
Suppose that the forgetful morphism $\cmp\to \mathcal M$ is finite, that $\cmp$ is reduced and that each geometric automorphism group of $\cmp$ has even order. Then Proposition~\ref{prop:auto} is equivalent to the statement that any point $y\in Y$ with $|\Aut(y)|\leq 3$ does not lie in the branch locus $B_{\mathcal P}\subseteq Y$ of $\mathcal P$. We now prove this statement.  
\paragraph{1.}In the first step we view $B_{\mathcal P}$ as the branch locus of a suitable scheme cover of $Y$ by applying the results of Section~\ref{sec:coverconstruction}. To get an \'etale scheme cover of $\mathcal M$ with a simple moduli interpretation, we now reduce to the situation in which $B$ is a scheme over $\ZZ[1/n]$ for some $n\in\ZZ_{\geq 3}$. This is possible since all involved constructions are compatible with localization on the base. The details of this reduction are as follows. There exist $n\in\ZZ_{\geq 3}$ and a point $y_T\in Y_{T}$ which becomes $y\in Y$ via the projection $Y_T\hookrightarrow Y$ for $T=B\times_\ZZ \ZZ[1/n]$. Further the discussion surrounding \eqref{eq:heightrestriction} shows that $Y_{T}$ is a coarse moduli scheme over $T$ of the arithmetic moduli problem $\mathcal P'$ on $\mathcal M_T$ given by the restriction  of $\mathcal P$ to the open substack $\mathcal M_{T}$ of $\mathcal M$. The flat base change $\pi_{T}:(\mathcal M_T)_{\mathcal P'}\to Y_T$ of $\pi$ is again an initial morphism, where we identified $(\mathcal M_T)_{\mathcal P'}$ with $\cmp\times_B T$. Hence we deduce that $|\Aut(y_T)|=|\Aut(y)|$ and that $(B_{\mathcal P})_T\subseteq B_{\mathcal P'}$ which assures that the point $y\in Y$ does not lie in $B_{\mathcal P}$ if the point $y_T\in Y_T$ does not lie in $B_{\mathcal P'}$. Therefore we may and do replace $B$ by the nonempty open subscheme $T\subseteq \spec(\ZZ)$ which is a scheme over $\ZZ[1/n]$ for some $n\in \ZZ_{\geq 3}$.

Now,  as   $\mathcal M$ is a stack over a $\ZZ[1/n]$-scheme $B$, the forgetful morphism $\mathcal M_{\mathcal Q}\to \mathcal M$ is a finite \'etale cover, where $\mathcal Q$ is the moduli problem of principal level $n$-structures on $\mathcal M$. Further the algebraic stack $\mathcal M_{\mathcal Q}$ is a scheme since $n\geq 3$. Then $Y'=\cmp\times_{\mathcal M}\mathcal M_{\mathcal Q}$ is a scheme and we obtain a finite surjective morphism of schemes $$f:Y'\to Y$$
which factors as $Y'\to \cmp\to^\pi Y$ with $Y'\to \cmp$ the projection. Indeed this follows from an application of Lemma~\ref{lem:y'rep} with the scheme $Z=\mathcal M_{\mathcal Q}$ and the forgetful morphism $Z\to \mathcal M$. Furthermore, as the forgetful morphism $\mathcal M_{\mathcal Q}\to \mathcal M$ is a finite \'etale cover, the branch locus $B_f\subseteq Y$ of the morphism $f$ satisfies
$B_{\mathcal P}=B_f$ by \eqref{eq:branchlocuscompgeneral}.

\paragraph{2.}In the next step we view the morphism $f:Y'\to Y$ as a categorical quotient. The scheme $Y'$ is quasi-projective over $B$, since $Z=\mathcal M_{\mathcal Q}$ is quasi-projective over $B$ and since $\cmp\to  \mathcal M$ is finite by assumption.
Further the projection $Y'\to \cmp$ is a finite locally free cover since $\mathcal M_{\mathcal Q}\to \mathcal M$ is a finite \'etale cover, and the fiber product $R=Y'\times_{\cmp}Y'$ is a scheme since it is a base change of $\mathcal M_{\mathcal Q}\to\mathcal M$ which is representable in schemes. Then it follows from \cite[Thm 3.1]{conrad:coarse} and the universal property of $\pi$ that $f:Y'\to Y$ is a categorical quotient in schemes over $B$ of the pre-equivalence relation $$R\to Y'\times_{B}Y'$$ defined by the groupoid in algebraic spaces $(Y',R,s,t,c)$ over $B$, where $s,t: R\to Y'$ correspond to the two projections and $c:R\times_{Y'}R\to R$ corresponds to the projection which comes from the canonical equivalence $R\times_{Y'}R\cong Y'\times_{\cmp} Y'\times_{\cmp} Y'$.

To describe the categorical quotient $f$ more explicitly, we may and do assume that the Hilbert moduli stack $\mathcal M=\mathcal M^I$ is associated to some $(L,I,I_+)$.  
Let $\OL$ be the ring of integers of $L$, and let $G$ be the constant group scheme over $B$ defined by the finite group $\GL_2(\OL/n\OL)$ which acts on $\mathcal M_{\mathcal P\times\mathcal Q}$ via \eqref{def:gactionmp} and \eqref{def:gactionpn}. Then we obtain an action of $G$ on the $B$-scheme $Y'$ by transport of structure using the equivalence $Y'\isomto \mathcal M_{\mathcal P\times\mathcal Q}$ in \eqref{eq:productequivalence} and the inverse defined right after \eqref{eq:productequivalence}. We now show that $f$ is a categorical quotient in schemes over $B$ for the induced morphism

  \begin{equation}\label{eq:gcategoricalquot}
G\times_B Y'\to Y'\times_B Y'.
\end{equation}
As $f$ is a categorical quotient in schemes over $B$ for $R\to Y'\times_B Y'$, it suffices  to prove for any morphism of $B$-schemes $Y'\to X$ that it is $R$-invariant if and only if it is $G$-invariant. It follows from (1) below that this is equivalent to the statement that for any geometric point $\spec(k)\to B$ the map $ Y'(k)\to X(k)$ is constant on weak $R$-orbits if and only if it is constant on weak $G$-orbits.  But this statement holds since the weak $R$-orbits are precisely the weak $G$-orbits by (2) below. Hence we conclude \eqref{eq:gcategoricalquot}. 

(1) Let $Y'\to X$ be a morphism of schemes over $B$. Then $Y'\to X$ is $R$-invariant (resp. $G$-invariant) if and only if for any geometric point $\spec(k)\to B$ the map $ Y'(k)\to X(k)$ is constant on weak $R$-orbits (resp.  weak $G$-orbits). This follows for example from the results in \cite[048M]{sp} which  require that the schemes $R$ and $G\times_B Y'$ are reduced with $R\rightrightarrows Y'$ and $G\times_B Y'\rightrightarrows Y'$ locally of finite type. These conditions are satisfied. Indeed, on using that $Y'\to \cmp$ is a base change of the finite \'etale $\mathcal M_{\mathcal Q}\to \mathcal M$ and that $G$ is finite \'etale over $B$, we see that $R$ and $G\times_B Y'$ are both finite  \'etale over $Y'$ which is finite \'etale over $\cmp$ and  by assumption $\cmp$ is reduced and of finite type over $B$.

(2) For any geometric point $\spec(k)\to B$, two $k$-points $u,u'\in Y'(k)$ are weakly $R$-equivalent if and only if they are weakly $G$-equivalent. This can be verified by a direct computation. Let $\phi:Y'\isomto \mathcal M_{\mathcal P\times\mathcal Q}$ be the equivalence in \eqref{eq:productequivalence}, let $\psi$ be the inverse defined right after \eqref{eq:productequivalence}, and write $\phi(u)=(x,\alpha,\beta)$ and $\phi(u')=(x',\alpha',\beta')$. The following computations crucially exploit  that $\phi$ and $\psi$ are morphisms of categories over $\cmp$. We first suppose that $u$ is weakly $R$-equivalent to $u'$. Then there exists $r\in R(k)$ with $s(r)=u$ and $t(r)=u'$. Hence $r=(u,u',\tau)$ for some isomorphism $\tau:(x,\alpha)\isomto (x',\alpha')$ in $\cmp(k)$ which induces an isomorphism $(x,\alpha,\beta^*)\isomto (x',\alpha',\beta')$ where $\beta^*\in \mathcal Q(x)$.  Then we compute that $g\cdot u=\psi(x,\alpha,\beta^*)=u'$ where $g\in G(k)$ is defined by $(\beta^*)^{-1}\beta$. Hence $u$ is weakly $G$-equivalent to $u'$ as desired. To prove the converse, we now suppose that $u$ is weakly $G$-equivalent to $u'$. Then there exists $g\in G(k)$ such that $g\cdot u=u'$ and hence $(x,\alpha,g\cdot\beta)$ is isomorphic to $(x',\alpha',\beta')$ in $\mathcal M_{\mathcal P\times\mathcal Q}(k)$. Thus there is an isomorphism $\tau:(x,\alpha)\isomto (x',\alpha')$ in $\cmp(k)$ such that $r=(u,u',\tau)\in R(k)$ satisfies $s(r)=u$ and $t(r)=u'$, which means that $u$ is weakly $R$-equivalent to $u'$ as desired. This completes the proof of (2). 

\paragraph{3.} Now we are ready to show that the morphism $f$ is \'etale over any point $y\in Y$ with $|\Aut(y)|\leq 3$. For this purpose we may assume that $f$ is the geometric quotient morphism $q:Y'\to Y'/G$ which sends a geometric point to its $G$-orbit coming from the action of  $G$ on the quasi-projective $B$-scheme $Y'$. Indeed \eqref{eq:gcategoricalquot} combined with the universal property of categorical quotients gives a unique isomorphism $\chi:Y\isomto Y'/G$ of schemes over $B$ such that $q=\chi f$. In particular $q$ is \'etale over $\chi(y)$ if and only if $f$ is \'etale over $y$, and the separable rank $|\Aut(\chi(y))|$ defined with respect to the initial morphism $\chi \pi:\cmp\to Y'/G$ satisfies $|\Aut(y)|=|\Aut(\chi(y))|$. In what follows we assume that $f=q$.

Let $y\in Y$ be a point with $|\Aut(y)|\leq 3$. We consider a point $y'\in Y'$ with $f(y')=y$ and we denote by $D\subseteq \GL_2(\OL/n\OL)$ the stabilizer group of $y'$ with respect to the action of $\GL_2(\OL/n\OL)$ on $Y'$. Then $f:Y'\to Y'/G$ is \'etale at $y'$ if and only if the geometric quotient morphism $Y'\to Y'/D$ is \'etale at $y'$. In particular $f$ is \'etale at $y'$ in the situation when $D$ acts trivially on all points of $Y'$. We now show that we are in this situation. To understand the action of $D$ on $Y'$, we shall construct in step 4. below a group $\Aut(y)(k)$ with the following properties: Its cardinality equals $|\Aut(y)|$, it has at most one element $\sigma$ of order two and there exist an isomorphism of groups
  \begin{equation}\label{eq:isodecompgroup}
\Aut(y)(k)\isomto D
\end{equation}
which sends this $\sigma$ to $-1$.  
Then we see that $|\Aut(y)(k)|=|\Aut(y)|=2$, since by assumption each geometric automorphism group of $\cmp$ has even order and  $|\Aut(y)|\leq 3$. 
Thus the above mentioned properties of $\Aut(y)(k)$ imply that this group is generated by $\sigma$. Hence we obtain that $D=\{1,-1\}$ since the isomorphism \eqref{eq:isodecompgroup} sends $\sigma$ to $-1$. To understand the action of $-1\in D$ on the points of $Y'$, we consider a geometric point $\spec(k')\to B$. Let $(x,\alpha)$ be in  $\cmp(k')$. The forgetful morphism $\cmp\to \mathcal M$ identifies $\Aut(x,\alpha)$ with a subgroup of $\Aut(x)$. Further, as the inertia stack $\mathcal I_{\cmp}\to \cmp$ is finite, the group $\Aut(x,\alpha)$ is finite. Hence Cauchy's theorem and our assumption on the geometric automorphism groups of $\cmp$ imply that $\Aut(x,\alpha)$ contains an element of order two. Then it follows from Lemma~\ref{lem:autofield} and $\Aut(x,\alpha)\subseteq \Aut(x)$ that this element is given by multiplication with $[-1]$.  Thus for each object $(x,\alpha,\beta)$ of $\mathcal M_{\mathcal P\times \mathcal Q}(k')$ there exists an isomorphism $(x,\alpha,\beta)\isomto (x,\alpha,-1\cdot \beta)$ defined by the element in $\Aut(x,\alpha)$ given by $[-1]$. This implies that $-1$, and hence the whole group $D=\{1,-1\}$, acts trivially on all points of $Y'$.  We conclude that $f$ is \'etale at each $y'\in Y'$ with $f(y')=y$, that is $f$ is \'etale over $y$. This shows that $y$ does not lie in $B_f$ and then the equality $B_f=B_{\mathcal P}$ obtained in step 1. assures that our point $y$ does not lie in $B_{\mathcal P}$ as desired.

\paragraph{4.}It remains to construct the group $\Aut(y)(k)$ in \eqref{eq:isodecompgroup} with all the desired properties.  For this purpose we use again the equivalence $\phi:Y'\isomto \mathcal M_{\mathcal P\times\mathcal Q}$. Let $y':\spec(k)\to Y'$ be a geometric point over $y'\in Y'$ and denote by $\phi(y')=(x,\alpha,\beta)$ its image in $\mathcal M_{\mathcal P\times\mathcal Q}(k)$. Then $(x,\alpha)$ lies in $\cmp(k)$ and we consider the group algebraic space over $k$ given by $$\Aut(y)=\textnormal{Isom}_{\cmp}((x,\alpha),(x,\alpha)).$$ 
Notice that the group $\Aut(y)(k)$ identifies with $\Aut(x,\alpha)$. As $f(y')=y$ and  $\phi$ is a morphism over $\cmp$, we find that $\pi(x,\alpha)=y$. This implies that $|\Aut(y)|=|\Aut(y)(k)|$ since the separable rank $|\Aut(y)|$ is an invariant of the point $\pi^{-1}(y)$ of $\cmp$ and since $k$ is an algebraically closed field. The forgetful morphism $\cmp\to \mathcal M$ identifies $\Aut(y)(k)=\Aut(x,\alpha)$ with a subgroup of $\Aut(x)$, and it follows from Lemma~\ref{lem:autofield} that any element of order two in $\Aut(x)$ is given by multiplication with $[-1]:A\to A$ where $x=(A,\iota,\varphi)$. Thus $\Aut(y)(k)$ has at most one element of order two. 
Next, we construct an isomorphism $\Aut(x,\alpha)\isomto D$. For each $\sigma\in \Aut(x,\alpha)$ we denote by $\sigma_n$ the restriction to $A_n$ of the automorphism of $A$ underlying $\sigma$. Then we obtain a group morphism
$$\Aut(x,\alpha)\isomto D, \quad \sigma\mapsto  \beta^{-1}\sigma_n\beta.$$
Indeed $g=\beta^{-1}\sigma_n\beta$ lies in the decomposition group $D$ of $y'$ since $\sigma$ defines an isomorphism $(x,\alpha,\beta)\isomto (x,\alpha,g\cdot\beta)$ and hence $g\cdot y'=y'$, where as before we identify an automorphism of the constant group scheme $(\OL/n\OL)^2_k$ over the algebraically closed field $k$ with the corresponding element in $G(k)=\GL_2(\OL/n\OL)$. To see that the displayed morphism is injective, we take $\lambda\in I_+$ where $I_+$ are the totally positive elements of the $\OL$-submodule $I$ of $L$ which appears in the definition of $\mathcal M$. Now we consider the automorphism group $\Aut(A,\varphi(\lambda))$ of the polarized abelian variety $(A,\varphi(\lambda))$ over $k$.  The forgetful morphisms induce an embedding $\Aut(x,\alpha)\hookrightarrow \Aut(A,\varphi(\lambda))$, and the restriction morphism $\Aut(A,\varphi(\lambda))\to \Aut(A_n)$ is injective since $n\geq 3$. Therefore the composition $\Aut(x,\alpha)\to \Aut(A_n)$ given by $\sigma\mapsto \sigma_n$ is injective, which implies that the displayed morphism is injective. On the other hand, the displayed morphism is surjective since for any $g\in D$ there exists an isomorphism $(x,\alpha,\beta)\isomto (x,\alpha,g\cdot \beta)$ which defines an automorphism $\sigma$ of $(x,\alpha)$ with $g=\beta^{-1}\sigma_n\beta$. Further, we notice that $\beta^{-1}[-1]\beta=-1$ and we recall that $\Aut(y)(k)=\Aut(x,\alpha)$. Therefore we see that the displayed group morphism, and hence the group $\Aut(y)(k)$, has all the desired properties. This completes the proof of Proposition~\ref{prop:auto}. \end{proof}

We now briefly discuss where we used the assumptions appearing in Proposition~\ref{prop:auto}.

\begin{remark}\label{rem:assumptionsautogroupprop}
The assumption that $\cmp\to \mathcal M$ is finite assures in 2. that the scheme $Y'=\cmp\times_{\mathcal M}\mathcal M_{\mathcal Q}$ is quasi-projective over $B$ which we use to view $Y'\to Y$ as a geometric quotient $Y'\to Y'/G$ induced by the action of $G=\GL_2(\OL/n\OL)$. The assumption that $\cmp$ is reduced assures that $Y'$ is reduced which we use to relate $R$-invariance with $G$-invariance, see (1). Finally, the assumption on the geometric automorphism groups assures that certain decomposition groups $D\subseteq G$ act trivially on all points of $Y'$ and hence $Y'\to Y'/D$ is \'etale. This assumption can be relaxed by studying the \'etale locus of $Y'\to Y'/D$ in terms of the automorphism groups of $\cmp$; such a study could be the subject of a future work.
\end{remark}

\newpage

\section{Morphims of abelian schemes over Dedekind schemes}\label{sec:dedmorphisms}

Let $S$ be a connected Dedekind scheme. In this section we introduce some notation and terminology, and we collect basic results for morphisms of abelian schemes over $S$ which we shall use freely in what follows. We refer to \cite[$\mathsection$5]{vkkr:hms} for detailed proofs or precise references for those statements in this section which are not further explained.



\paragraph{Basic properties.}Let $A$, $A'$ and $A''$ be abelian schemes over $S$. Composition induces a pairing $\circ$ from $\Hom^0(A,A')\times\Hom^0(A',A'')$ to $\Hom^0(A,A'')$. Further  $f\mapsto f\otimes 1$ embeds the finitely generated free abelian group $\Hom(A,A')$  into $\Hom^0(A,A')$. In what follows we shall often identify a morphism $f:A\to A'$ with its image $f\otimes 1$ in $\Hom^0(A,A')$. For each nonzero $n\in\ZZ$ and any $f\in\Hom(A,A')$ we denote by $f/n$ the element $f\otimes n^{-1}$ of $\Hom^0(A,A')$. Let $k$ be the function field of the connected Dedekind scheme $S$. Base change from $S$ to the spectrum of $k$ induces canonical isomorphisms 
\begin{equation}\label{eq:dedequivalence}
\Hom(A,A')\isomto \Hom(A_k,A_{k}') \quad \textnormal{and} \quad \End(A)\isomto \End(A_k). 
\end{equation}
A direct computation shows that any element in $\Hom^0(A,A')$ takes the form $f\otimes q$ with $f\in \Hom(A,A')$ and $q\in \QQ$. In particular, we see that the center of the  $\QQ$-algebra $\End^0(A)$ identifies with $Z\otimes_\ZZ\QQ$ where $Z$ denotes the center of the ring $\End(A)$.  We say that $A$ is simple if $A_k$ is simple. Further we say that $A'$ is a factor of $A$ if there exists a surjective morphism $A\to A'$ of abelian schemes over $S$.  The proof of Poincar\'e's reducibility theorem together with \eqref{eq:dedequivalence}  shows the following: The abelian scheme  $A$ is simple if and only if $\End^0(A)$ is a division algebra,  and
   $A'$ is a factor of $A$ if and only if there exists an abelian scheme $B$ over $S$ such that $A'\times_S B$ is isogenous to $A$.  We say that $A$ is isotypic if $A$ is isogenous to a power of a simple abelian scheme over $S$.

\paragraph{Isogenies.} Let $\varphi \colon A\to A'$ be a morphism of abelian schemes over $S$. Then $\varphi$ is an isogeny  if and only if $\varphi_k:A_k\to A'_k$ is an isogeny of abelian varieties over $k$. Suppose now that $\varphi$ is an isogeny. Then $\varphi$ is flat and the rank $d=\deg(\varphi)$ of its kernel $\ker(\varphi)$ is constant on the connected scheme $S$. Consider the `inverse' isogeny (\cite[(5.1)]{vkkr:hms}) 
\begin{equation}\label{def:inverseisog}
\varphi':A'\to A, \quad \textnormal{ and } \quad \varphi^* \colon \End^0(A') \isomto \End^0(A), \quad f \mapsto \varphi'/d \circ f \circ \varphi.
\end{equation}
Here $\varphi^*$ is an isomorphism of $\QQ$-algebras  and  \cite[Lem 5.1]{vkkr:hms} gives that $\varphi'\varphi = [d]$ and $\varphi\varphi' = [d]$ where $[n]$ denotes the multiplication by $n\in\ZZ$ morphism. In particular any isogeny $A\to A'$ becomes invertible inside $\Hom^0(A,A')$.
If $A'$ is simple and if $A$ and $A'$ are of the same relative dimension over $S$, then any nonzero element $\varphi$ of $\Hom(A,A')$ is an isogeny. Indeed  $\varphi$ corresponds via \eqref{eq:dedequivalence} to a nonzero $\varphi_k:A_k\to A_k'$ which is an isogeny since $A_k'$ is simple and $\dim(A_k)=\dim(A'_k)$.

\paragraph{Conjugate abelian variety.}Let $A$ be an abelian variety defined over an arbitrary field $k$, let $\sigma$ be an automorphism of $k$ and write $\sigma^*A$ for the base change of $A$ via $\sigma:k\to k$. 
The abelian variety $\sigma^*A$ is called the conjugate of $A$ by $\sigma$. We denote by
$$\sigma^*:\Hom(A,A')\isomto \Hom(\sigma^*A,\sigma^*A')$$
the isomorphism of abelian groups induced by base change via $\sigma$, 
where $A'$ is an abelian variety over $k$. We also denote by $\sigma^*$ the isomorphism of $\QQ$-vector spaces given by $\sigma^*\otimes_\ZZ\QQ$. 
Base change properties of (group) schemes imply the following: The map $\sigma^*$ is a morphism of rings if $A=A'$, and $\varphi:A\to A'$ is an isogeny if and only if $\sigma^*(\varphi)$ is an isogeny.  

\newpage

\section{Effective Shafarevich conjecture}\label{sec:es}

Let $K$ be a number field. In this section we prove new cases of the effective Shafarevich conjecture for abelian varieties over $K$. In particular, we establish this conjecture for abelian varieties of product $\gl2$-type with $G_\QQ$-isogenies and for CM abelian varieties. 

Let $g\geq 1$ be a rational integer, let $S$ be a nonempty open subscheme of $\spec(\OK)$, and let $A$ be an abelian scheme over $S$ of relative dimension $g$. We recall that $A$ is of product $\gl2$-type with $G_\QQ$-isogenies if $A$ is isogenous to a product $\prod A_i$ of abelian schemes $A_i$ over $S$ of $\gl2$-type with $G_\QQ$-isogenies  as in \eqref{def:Gisogintro}. A discussion of the notion of an abelian scheme over $S$ of $\gl2$-type with $G_\QQ$-isogenies can be found in $\mathsection$\ref{sec:Gisogdef} below.

\paragraph{Height bound.}As before, we use the quantities $d=[K:\QQ]$, $D_K$ and $N_S=\prod_{v\in S^c} N_v$. We define $l=(d3^{4g^2})!$ and we denote by $h_F$ the stable Faltings height ($\mathsection$\ref{sec:heightdef}) of an abelian scheme over $S$. The main goal of this section is to prove the following result.

\begin{theorem}\label{thm:es}Let $A$ be an abelian scheme over $S$ of relative dimension $g$. 
\begin{itemize}
\item[(i)]If $A$ is of product $\gl2$-type with $G_\QQ$-isogenies then $$h_F(A)\leq (4gl)^{144gl}\textnormal{rad}(N_SD_K)^{24g}.$$
\item[(ii)]If $A_{\bar{K}}$ has CM  then $h_F(A)\leq (3g)^{(5g)^2}\rad(N_SD_K)^{5g}$.
\end{itemize}
\end{theorem}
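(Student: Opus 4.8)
The plan is to follow the strategy of \cite{rvk:gl2}, combining Faltings' method with modularity and isogeny estimates, and to treat the CM case via the averaged Colmez conjecture. Throughout I will use the effective isogeny and endomorphism estimates of Masser--W\"ustholz and Gaudron--R\'emond ($\mathsection$\ref{sec:hfprop}): since $h_F$ varies in an explicitly controlled way along isogenies, for either statement it suffices to produce, in the isogeny class of $A_{\bar K}$, one abelian scheme over a controlled base with explicitly bounded height and then transfer the bound back. Moreover $h_F$ of an abelian scheme is the sum of the Faltings heights of its simple isotypic factors, and $g\mapsto(4gl)^{144gl}$ with $l=(d3^{4g^2})!$ is monotone in $g$; hence for (i) I may and do reduce to the case where $A$ itself is of $\gl2$-type with $G_\QQ$-isogenies, the factorisation costing only a loss recorded by the isogeny estimates.

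Assume first, for (i), that $A_{\bar K}$ has no CM. Generalising arguments of Ribet, Pyle and Wu, Proposition~\ref{prop:Gisog} shows that after a controlled extension of $K$ the scheme $A$ is isogenous to a power of an abelian scheme $B$ over $S$ of $\gl2$-type with $G_\QQ$-isogenies $\mu_\sigma$ satisfying $\mu_\sigma\circ\sigma^*(f)=f\circ\mu_\sigma$ for all $f\in\End^0(B_{\bar K})$. A Galois-cohomology computation of Tate, in the form due to Ribet and Wu, then yields that $B_{\bar K}$ is a factor of the base change of an abelian variety $C_\QQ$ over $\QQ$ of $\gl2$-type; I will check that $C_\QQ$ spreads out to an abelian scheme $C$ over an explicit open $T\subseteq\spec(\ZZ)$ with $N_T$ polynomially controlled in terms of $N_S$ and $D_K$. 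Over $T$ the bound $h_F(C)\ll\rad(N_T)^{24}$ of \cite[Thm A]{rvk:gl2}, which rests on Serre's modularity conjecture \cite{khwi:serre}, applies; transferring along the isogenies via $\mathsection$\ref{sec:hfprop} gives the claimed bound in (i), the factorial $l$ and the remaining constants entering through the controlled auxiliary field extensions and through the isogeny degrees.

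For the CM case of (i) and for (ii), isogeny estimates together with a result of Colmez \cite{colmez:conjecture} reduce the problem to the abelian varieties $A_\Phi$ attached to a CM type $\Phi$ of a CM field $E$, and CM theory relates the ramification of $E/\QQ$ to that of $K/\QQ$ after a controlled extension of $K$; this reduces everything to bounding $h_F(A_\Phi)$ in terms of $D_E$ and $g$. Here I invoke the averaged Colmez conjecture (Yuan--Zhang \cite{yuzh:avcolmez}, Andreatta--Goren--Howard--Madapusi-Pera \cite{aghm:avcolmez}), which expresses the average of $h_F(A_\Phi)$ over the CM types of $E$ through logarithmic derivatives of Artin $L$-functions attached to $E/\QQ$; following Tsimerman \cite[Cor 3.3]{tsimerman:ao} but replacing the analytic input by explicit estimates in the critical strip in the spirit of Rademacher \cite{rademacher:lindeloef}, I obtain an explicit (far from optimal) bound $h_F(A_\Phi)\leq(3g)^{(5g)^2}D_E^{5g}$, which after the above reductions yields (ii) and the CM part of (i).

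The main obstacle is the CM case: Tsimerman's deduction of $h_F(A_\Phi)=D_E^{o_g(1)}$ from averaged Colmez is asymptotic and ineffective, so the substance is to make every analytic ingredient explicit --- lower bounds for $L(1,\chi)$, Rademacher-type convexity bounds for $L(s,\chi)$ and for its logarithmic derivative near $s=1$, explicit control of conductors and of the number of CM types --- while arranging that the final exponent stays linear in $g$. On the non-CM side the analogous difficulty is to carry out the descent of Proposition~\ref{prop:Gisog} and the spreading-out of $C_\QQ$ with fully explicit, polynomial control of the auxiliary field and of $N_T$, so that \cite[Thm A]{rvk:gl2} can be applied without losing the polynomial dependence on $\rad(N_SD_K)$.
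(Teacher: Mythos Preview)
Your plan is correct and follows essentially the same route as the paper: pass to a controlled extension so that condition~$(*)$ holds (Lemma~\ref{lem:cond*}), apply the Ribet--Wu descent encapsulated in Proposition~\ref{prop:Gisog} to reach a simple $\gl2$-type abelian scheme over an explicit $T\subseteq\spec(\ZZ)$, invoke \cite[Thm~A]{rvk:gl2} there, and transfer back via the isogeny estimates of $\mathsection$\ref{sec:hfprop}; the CM case is handled exactly as you describe, by making the analytic input to Tsimerman's deduction from averaged Colmez fully explicit via Rademacher-type bounds (Lemmas~\ref{lem:hphibound} and \ref{lem:analyticestimates}). One small bookkeeping point: the exponent $5g$ on $\rad(N_SD_K)$ in (ii) does not arise from a bound $h_F(A_\Phi)\leq (3g)^{(5g)^2}D_E^{5g}$ as you wrote, but rather from combining a bound for $h(\Phi)$ that is essentially linear in $D_E$ (Lemma~\ref{lem:hphibound}) with the estimate $D_E\leq \rad(D_K)^{5g}(2g)^{24g^2}$ coming from Lemma~\ref{lem:cmram} and Dedekind's discriminant theorem.
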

The proof shows that one can replace in (i) the exponent $24g$  by $\max(24,5g)$, while Proposition~\ref{prop:esgl2} and \eqref{eq:esgl2nocm} provide sharper versions of (i) in relevant special situations. 
Further Proposition~\ref{prop:cm} and Lemma~\ref{lem:hphibound} contain more precise versions of (ii)  in certain cases of interest, see also the discussions in $\mathsection$\ref{sec:avcolmez} for known asymptotic versions of (ii). 

\paragraph{Number of classes.} Another goal of this section is to prove Theorem~B~(ii)  on the number of isomorphism classes of abelian schemes over $S$ of relative dimension $g$ which are of product $\gl2$-type with $G_\QQ$-isogenies. In course of the proof given in Section~\ref{sec:numberisoclass}, we shall obtain various intermediate results which improve Theorem~B~(ii) in certain cases of interest; see for example Proposition~\ref{prop:esnumber} and Lemma~\ref{lem:numbisogclasses}. 

\paragraph{Outline of the section.} In Section~\ref{sec:Gisogenies} we study abelian schemes of $\gl2$-type with $G_\QQ$-isogenies. After motivating the definition ($\mathsection$\ref{sec:Gisogdef}), we discuss and prove Proposition~\ref{prop:Gisog} on the factors of such abelian schemes. This proposition involves a technical condition $(*)$ which is further studied in $\mathsection$\ref{sec:cond*}. We also collect in this section some basic results which are used throughout this work; in particular we consider abelian schemes of $\gl2$-type (Lemma~\ref{lem:gl2endostructure}) and isogenies between conjugate abelian varieties ($\mathsection$\ref{sec:centerisogenies}). 

In Section~\ref{sec:cmav} we begin to bound the height of abelian varieties with CM. We discuss the averaged Colmez conjecture in $\mathsection$\ref{sec:avcolmez} and  we work out in $\mathsection$\ref{sec:analyticest}  explicit analytic estimates for certain L-values attached to CM fields. Then we prove Lemma~\ref{lem:cmram} on the ramification of endomorphism algebras of simple abelian varieties with CM. 

In Sections~\ref{sec:esproofs} and \ref{sec:numberisoclass} we put everything together. After reviewing known results for the Faltings height and discussing isogeny estimates ($\mathsection$\ref{sec:hfprop}),  we prove Theorem~B~(ii) and Theorem~\ref{thm:es} by using the  strategy and the ideas explained in $\mathsection$\ref{sec:esproofdiscussion}.

\subsection{Abelian schemes of $\gl2$-type with $G$-isogenies}\label{sec:Gisogenies}
  
Let $k\subseteq K$ be fields which are algebraic over $\QQ$ and write $G=\Aut(\bar{k}/k)$. In this section we discuss a class of abelian schemes which we call of $\gl2$-type with $G$-isogenies. After motivating our definition, we show in Proposition~\ref{prop:Gisog} that the study of this class of abelian schemes over certain base schemes $S$ with $k(S)=K$ can be reduced to the study of simple abelian schemes of $\gl2$-type over certain base schemes $T$ with $k(T)=k$.

\subsubsection{The notion of abelian schemes of $\gl2$-type with $G$-isogenies}\label{sec:Gisogdef}
Let $k\subseteq K$ and $G$ be as above. Choose $\bar{k}$ with $K\subseteq \bar{k}$ 
and let $S$ be a connected Dedekind scheme with function field $K$. We say that an abelian scheme $A$ over $S$ of relative dimension $g$ is of $\gl2$-type with $G$-isogenies if the $\QQ$-algebra $\End^0(A)$ contains a number field $F$ of degree $g$ and if for each $\sigma\in G$ there is an isogeny $\mu_\sigma:\sigma^*A_{\bar{k}}\to A_{\bar{k}}$ such that 
\begin{equation}\label{def:Gisog}
\mu_\sigma \circ \sigma^*(f)=f \circ \mu_\sigma, \quad f\in F. 
\end{equation}
Here we identified $F$ with its image in $\End^0(A_{\bar{k}})$. More generally, we say that an abelian scheme $A$ over $S$ is of product $\gl2$-type with $G$-isogenies if $A$ is isogenous to a product $\prod A_i$ such that each $A_i$ is an abelian scheme over $S$ of $\gl2$-type with $G$-isogenies.  If $A$ is an abelian scheme over $S$ of $\gl2$-type with $G$-isogenies  then any abelian scheme $A'$ over $S$ which is isogenous to $A$  is again of $\gl2$-type with $G$-isogenies. Indeed one can take here for example $F'=\varphi\circ F\circ\varphi'/d$ and the isogenies $\mu'_\sigma=\varphi_{\bar{k}} \mu_\sigma \sigma^*(\varphi'_{\bar{k}})$ where $\varphi:A\to A'$ is an isogeny of degree $d$ and $\varphi':A'\to A$ is an inverse as in \eqref{def:inverseisog}.

\paragraph{Related notions.} The notion of abelian schemes over $S$ of $\gl2$-type with $G$-isogenies generalizes various definitions in the literature. To explain this, we take $K=\bar{k}$ and we assume that $A$ is an abelian variety over $K$ of $\gl2$-type. In the case $\dim(A)=1$, we observe that $A$ is of $\gl2$-type with $G$-isogenies if and only if $A$ is a $k$-curve\footnote{A $k$-curve is an elliptic curve $E$ over $K$ which is isogenous to $\sigma^*E$ for all $\sigma\in G$.} in the sense of Gross~\cite{gross:cmell}, Ribet~\cite{ribet:gl2} and Elkies~\cite{elkies:kcurves}.  Further, Ribet~\cite{ribet:fieldsofdef} introduced the notion of `$k$-HBAV' which was generalized by Wu~\cite[Def 1.9]{wu:virtualgl2}. She defined $A$ to be $k$-virtual if for each $\sigma\in G$ there is an isogeny $\mu_\sigma:\sigma^*A\to A$ such that 
\begin{equation}\label{def:kvirt}
\mu_\sigma \circ \sigma^*(f)=f \circ \mu_\sigma, \quad f\in \End^0(A). 
\end{equation}
On comparing \eqref{def:kvirt} with \eqref{def:Gisog}, we see that the notion of $\gl2$-type with $G$-isogenies generalizes the definition of $k$-virtual abelian varieties $A$ over $K$ and thus the notion  of `$k$-HBAV'. The additional generality is useful in certain situations of interest in arithmetic such as for example in $\mathsection$\ref{sec:proofspecialmain} where we can directly verify our weaker condition~\eqref{def:Gisog}. However, if $A$ has no CM then our condition \eqref{def:Gisog} is in fact equivalent to the stronger $k$-virtual condition \eqref{def:kvirt}. To prove this claim, we suppose that $A$ has no CM and that there exist $F\subseteq \End^0(A)$ and isogenies $\mu_\sigma$ as in \eqref{def:Gisog}. Then Lemma~\ref{lem:gl2endostructure} gives that $A$ is isotypic and that $F$ contains the center of $\End^0(A)$. Thus the isogenies $\mu_\sigma$ are center compatible, and hence $A$ is $k$-virtual by Lemma~\ref{lem:isotypfullcomp}. This proves the claim.

\subsubsection{Factors of abelian schemes of $\gl2$-type with $G$-isogenies}\label{sec:factorsGisogenies} We continue our notation. Further, we now assume that $k\subseteq K$ are number fields and we suppose that $S\subseteq \spec(\OK)$ is open with complement $S^c$. Then we define  
\begin{equation}\label{def:UTGisog}
U=S\setminus p^{-1}(T^c) \quad \textnormal{and}\quad T=p(U)
\end{equation}
where $p:\spec(\OK)\to\spec(\mathcal O_k)$ is the projection and $T^c$ is the union of $p(S^c)$ with the branch locus of $p$. 
 Weil restriction reduces the study of some abelian schemes over $S$ of $\gl2$-type with $G$-isogenies to the study of abelian schemes over $T$ of $\gl2$-type.

\begin{proposition}\label{prop:Gisog}
Let $A$ be an abelian scheme over $S$ with no CM. Suppose that $A$ is of $\gl2$-type with  $G$-isogenies and assume $(*)$. Then there is a simple abelian scheme $C$ over $T$ of $\gl2$-type such that $A_{U}$ is isogenous to a power of a simple factor of  $C_U$.  
\end{proposition}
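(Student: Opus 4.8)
The plan is to combine standard descent arguments (following Ribet, Pyle and Wu) with Weil restriction, so that the $G$-isogeny structure on $A$ over the larger field $K$ becomes genuine $\gl2$-type structure over the smaller field $k$. First I would unwind the hypotheses: since $A$ has no CM and is of $\gl2$-type with $G$-isogenies, Lemma~\ref{lem:gl2endostructure} (cited in the excerpt) tells us that $A$ is isotypic and that the field $F\subseteq \End^0(A)$ contains the center of $\End^0(A)$; hence the isogenies $\mu_\sigma$ are center-compatible and, by Lemma~\ref{lem:isotypfullcomp}, $A_{\bar{k}}$ is actually $k$-virtual, i.e.\ the stronger condition \eqref{def:kvirt} holds. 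Next I would invoke the Galois-cohomology result of Ribet--Wu (the computation of Tate attributed to them in the introduction): a $k$-virtual abelian variety $A_{\bar{k}}$ with no CM is, up to isogeny, a factor of the base change to $\bar{k}$ of an abelian variety $C_0$ over $k$ of $\gl2$-type. The role of the technical condition $(*)$ is precisely to guarantee the inputs needed for this descent (control of the relevant cohomology class / splitting field, so that $C_0$ can be taken over $k$ itself rather than a further extension), and I would state at this point exactly which part of $(*)$ is being used.

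Having produced $C_0$ over $k$, the second half of the argument is to spread it out to an abelian \emph{scheme} $C$ over the open $T\subseteq\spec(\mathcal O_k)$ defined in \eqref{def:UTGisog}, and to check the compatibility with $A_U$. Here I would argue as follows. The abelian scheme $A$ over $S$ has good reduction away from $S^c$; since $A_U$ and a power of a simple factor of $(C_0)_{\bar k}$ become isogenous over $\bar k$, and an isogeny is defined over a finite extension, one gets (after the isotypic reduction) that the simple factor $B_0$ of $C_0$ over $k$ has good reduction at every prime of $k$ lying below a prime of good reduction of $A$ and unramified in $K/k$ — that is, precisely over $T$, by the definition $T^c = p(S^c)\cup(\text{branch locus of }p)$. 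The subtlety is that good reduction of $A_U$ need not literally descend to good reduction of $B_0$ over $T$ on the nose; the clean way is to use the N\'eron--Ogg--Shafarevich criterion together with the fact that $A_U\to U$ has good reduction and $B_0$ is, up to isogeny, a factor of $A_U\otimes_U \bar k$, so the inertia at primes of $T$ acts trivially on the Tate module of $B_0$ and hence $B_0$ extends to an abelian scheme over $T$. One then lets $C$ be an abelian scheme over $T$ in the isogeny class of $C_0$ (one may take $C = \mathrm{Res}$-type construction or simply a power of the extended $B_0$, according to how $C_0$ decomposes) which is of $\gl2$-type, and takes its simple factor; since $A_U$ is isotypic, it is isogenous over $U$ to a power of that simple factor of $C_U$, which is the assertion.

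The main obstacle, I expect, is the bookkeeping around the technical condition $(*)$ and the precise descent of the $\gl2$-structure (not just of the variety) from $\bar k$ to $k$: one must ensure that the field $F$ of degree $g=\dim A$ descends to a field of the same degree acting on $C_0$, so that $C_0$ is genuinely of $\gl2$-type and not merely of larger-dimensional $\gl2$-type; this is exactly the point where the Ribet--Pyle--Wu machinery (building a ``building block'' with a cocycle valued in a suitable field, trivialized after adjusting by a quadratic character, cf.\ \eqref{def:Gisogintro}) must be applied carefully, and where $(*)$ enters. A secondary, more routine obstacle is checking that the model $C$ really lives over the explicit $T$ of \eqref{def:UTGisog} rather than over some uncontrolled smaller open set; this is handled by the N\'eron--Ogg--Shafarevich argument above, using that the only primes one must remove are those already in $S^c$ together with those ramifying in $K/k$. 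I would also remark that the passage from ``factor'' to ``power of a simple factor'' uses isotypicity of $A$ (hence of its descent) together with the fact, recorded in Section~\ref{sec:dedmorphisms}, that $A$ is isotypic iff $\End^0(A)$ has the appropriate division-algebra structure, so that the simple factor is unique up to isogeny.
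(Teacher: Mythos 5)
The overall skeleton you propose — (1) upgrade the $G$-isogeny datum to a genuine $k$-virtuality via the center-compatibility lemmas, (2) invoke the Ribet--Pyle--Wu descent to obtain a simple abelian variety over $k$ of $\gl2$-type, (3) spread it out over $T$ — is the paper's strategy. The reduction in step (1) and the use of $(*)$ to control the field of definition of the $\mu_\sigma$ and of $\End(A_{\bar k})$ match the paper closely. But the spreading-out in step (3) has a genuine gap.

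The issue is your extension argument via N\'eron--Ogg--Shafarevich. You argue that $B_0$ (the simple $\gl2$-type abelian variety produced over $k$) has unramified Tate module over $T$ ``because $B_0$ is, up to isogeny, a factor of $A_U\otimes_U\bar k$.'' That relation only lives over $\bar k$: it carries no Galois-equivariant content, so one cannot directly conclude that $I_v\subset G_k$ (or even $I_w\subset G_K$) acts trivially on $T_\ell(B_0)$. Worse, the relevant factor relation does not descend to $K$ in the way you need: if you write $A\sim B^n$ over $S$ and $(C_0)_K\sim B_K^m$ (both isotypic with simple piece $B_K$), then the Weil-restriction construction allows $m$ to be as large as $[K:k]$, so in general $m>n$ and $(C_0)_K$ is \emph{not} a factor of $A_K$. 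What is true is that $C_0=C_k$ is a factor of the Weil restriction $\mathrm{Res}_{K/k}(B_K)$; the Tate module of that Weil restriction is the induced representation $\mathrm{Ind}^{G_k}_{G_K}T_\ell(B_K)$, and it is only by going through that induced structure (as in Milne's result on good reduction of Weil restrictions, which the paper cites) that one sees $I_v$ acts trivially at each $v\in T$. Your ``the clean way is NOS plus the $\bar k$-factor relation'' skips precisely this step, and as stated it does not prove good reduction of $C_0$ over $T$.

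A secondary problem: you suggest one ``may take $C$ to be a power of the extended $B_0$,'' but the proposition requires $C$ to be \emph{simple} over $T$, so a power is not allowed. In the correct version, $C_k$ is already simple over $k$ (that is what Wu's construction gives), one extends $C_k$ itself to $C$ over $T$ via Milne, and then one checks that $B_U$ — the isotypic piece of $A_U$ — is a simple factor of $C_U$; the final ``power of a simple factor'' conclusion comes from $A\sim B^n$ over $S$, not from taking a power of $C$.
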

Here condition $(*)$ is as follows: The field extension $K/k$ is normal, and the endomorphisms of $A_{\bar{k}}$ and the isogenies $\mu_\sigma$ in \eqref{def:Gisog} are all defined over  $K\subset\bar{k}$. In fact $(*)$ can always be satisfied ($\mathsection$\ref{sec:cond*}) when passing to a controlled field extension of $K$. The proof of Proposition~\ref{prop:Gisog} shows in addition that any simple factor $B$ of $A$ satisfies
\begin{equation}\label{eq:reldimweilres}
\dim(C)\leq \dim(B)[K:k]
\end{equation}
for $\dim$ the relative dimension. Further we observe that $T$ is open in $\spec(\mathcal O_k)$ with complement $T^c$,
and Dedekind's discriminant theorem shows that $T^c$ is the set of finite places $v$ of $k$ such that $v$ lies under $S^c$ or $v$ divides the discriminant ideal of $K/k$. 

\subsubsection{Proof of Proposition~\ref{prop:Gisog}}\label{sec:proofgisogprop}

Let $k\subseteq K\subseteq \bar{k}$, $S$ and $G$ be as in Section~\ref{sec:Gisogdef}, and let $A$ be an abelian scheme over $S$. Throughout this subsection we assume that $A$ is of $\gl2$-type with $G$-isogenies, that $A$ has no CM and that $(*)$ holds. In a first step we show the following result.
\begin{lemma}\label{lem:Gisog1}
There is an isogeny from $A$ to a power of an abelian scheme $B$ over $S$ of $\gl2$-type such that $B_{\bar{k}}$ is a simple $k$-virtual abelian variety of $\gl2$-type with no CM.
\end{lemma}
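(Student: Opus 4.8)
The plan is to start from the abelian scheme $A$ over $S$ of $\gl2$-type with $G$-isogenies, with fixed data $F\subseteq\End^0(A)$ of degree $g$ and isogenies $\mu_\sigma\colon\sigma^*A_{\bar k}\to A_{\bar k}$ satisfying \eqref{def:Gisog}, all defined over $K$ by $(*)$. First I would decompose $A$ up to isogeny over $S$. By Poincar\'e reducibility over the connected Dedekind scheme $S$ (as recalled in Section~\ref{sec:dedmorphisms}), $A$ is isogenous over $S$ to a product $\prod_i B_i^{n_i}$ of powers of pairwise non-isogenous simple abelian schemes $B_i$ over $S$. Passing to $A_{\bar k}$, the number field $F$ acts on $\End^0(A_{\bar k})$, and since $A$ has no CM I would invoke Lemma~\ref{lem:gl2endostructure} (the structure result for $\gl2$-type abelian schemes cited repeatedly in the excerpt) to conclude that $A$ is isotypic, i.e. there is a single simple $B$ over $S$ with $A$ isogenous to $B^n$; moreover $F$ contains the center of $\End^0(A)$, so $B$ is itself of $\gl2$-type (the field $F$ maps into $\End^0(B)$ with $[F:\QQ]=\dim B$ after the reduction).

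The second step is to transport the $G$-isogeny structure from $A$ to $B$ and upgrade it to the $k$-virtual condition \eqref{def:kvirt}. Since $A\sim B^n$ over $S$, base-changing to $\bar k$ gives $\sigma^*A_{\bar k}\sim(\sigma^*B_{\bar k})^n$, and composing the chosen isogeny $A_{\bar k}\to B_{\bar k}^n$, its inverse (in the sense of \eqref{def:inverseisog}), and $\mu_\sigma$, one extracts an isogeny $\nu_\sigma\colon\sigma^*B_{\bar k}\to B_{\bar k}$; this is essentially the computation already indicated in $\mathsection$\ref{sec:Gisogdef} showing that the class of abelian schemes of $\gl2$-type with $G$-isogenies is stable under isogeny. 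Then I would run exactly the argument flagged at the end of $\mathsection$\ref{sec:Gisogdef}: because $B$ has no CM (it is a simple factor of the CM-free $A$), Lemma~\ref{lem:gl2endostructure} gives that $B$ is isotypic — hence simple here — with $F$ containing the center $Z$ of $\End^0(B)$, so the $\nu_\sigma$ are \emph{center compatible}; then Lemma~\ref{lem:isotypfullcomp} promotes center compatibility to full $\End^0(B_{\bar k})$-compatibility, i.e. $B_{\bar k}$ is $k$-virtual in the sense of \eqref{def:kvirt}. Thus $B$ is an abelian scheme over $S$ of $\gl2$-type with $A$ isogenous to a power of $B$ and $B_{\bar k}$ a simple $k$-virtual abelian variety of $\gl2$-type with no CM, which is the assertion.

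The main obstacle I anticipate is bookkeeping rather than any deep input: one must be careful that the simple factor $B$ produced by Poincar\'e reducibility over $S$ really carries a full embedding of $F$ (a priori $F$ only acts on $\End^0(A_{\bar k})$, and one needs that its image lands in $\End^0(B)$ compatibly and has the right degree), and that the isogenies $\mu_\sigma$ descend to isogenies $\nu_\sigma$ between conjugates of $B$ that are genuinely defined over $K$ — here $(*)$ and the fact that $\sigma^*$ preserves isogenies (Section~\ref{sec:dedmorphisms}) are what make the transport legitimate. The only conceptually substantive steps are the two invocations of the cited structure lemmas (Lemma~\ref{lem:gl2endostructure} for isotypicity and the center-containment of $F$, and Lemma~\ref{lem:isotypfullcomp} for passing from center-compatible to fully compatible isogenies), both of which are stated as available; everything else is a diagram chase with isogenies and their inverses of the sort routinely performed in $\mathsection$\ref{sec:dedmorphisms} and $\mathsection$\ref{sec:Gisogdef}.
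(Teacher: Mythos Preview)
Your first step matches the paper: Lemma~\ref{lem:gl2endostructure}(i) gives $A$ isogenous over $S$ to $B^n$ with $B$ simple of $\gl2$-type. But two genuine gaps appear in your second step. First, you never establish that $B_{\bar k}$ is \emph{simple}; simplicity of $B$ over $S$ does not suffice. The paper uses $(*)$ to get $\End(A_{\bar k})\cong\End(A)$, hence $A_{\bar k}$ has no CM and (by Poincar\'e reducibility applied to the factor $B$) $\End^0(B_{\bar k})\cong\End^0(B)$ is a division algebra, so $B_{\bar k}$ is simple. You invoke $(*)$ only in passing and never for this purpose.

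Second, your extraction of $\nu_\sigma$ and its center compatibility do not work as stated. The stability argument in \S\ref{sec:Gisogdef} that you cite transports the $G$-isogeny structure along an isogeny $A\to A'$; applied here it yields an $F$-compatible isogeny $\sigma^*(B^n)_{\bar k}\to(B^n)_{\bar k}$, not one for $B_{\bar k}$. Your subsequent claim that ``$F$ maps into $\End^0(B)$ with $[F:\QQ]=\dim B$'' is false when $n>1$, since $[F:\QQ]=g=n\dim B$; the field exhibiting $\gl2$-type for $B$ is a different field, and there is no reason an extracted $\nu_\sigma$ is compatible with it. The paper avoids this by staying at the level of $A_{\bar k}$: Lemma~\ref{lem:gl2endostructure}(ii) applied to $A_{\bar k}$ gives that $F$ contains the center of $\End^0(A_{\bar k})$, so the original $\mu_\sigma$ are center compatible for $A_{\bar k}$; then Lemma~\ref{lem:centcompsimple} (which you omit) descends center compatibility from the isotypic $A_{\bar k}$ to its simple factor $B_{\bar k}$, and Lemma~\ref{lem:fullcomp} upgrades to full compatibility. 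Your invocation of Lemma~\ref{lem:isotypfullcomp} for $B$ presupposes exactly the center-compatible $\nu_\sigma$ that Lemma~\ref{lem:centcompsimple} is there to produce.
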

In particular  $B_{\bar{k}}$ in Lemma~\ref{lem:Gisog1} satisfies all assumptions in Wu~\cite[Prop 1.12]{wu:virtualgl2}. Hence there is a simple abelian variety $C_k$ over $k$ of $\gl2$-type such that $B_{\bar{k}}$ is a factor of $C_{\bar{k}}$. Moreover \cite[$\mathsection$9.4]{rvk:modular}, which crucially relies on the arguments of Ribet~\cite[$\mathsection$6]{ribet:gl2} and Wu~\cite[Prop 1.12]{wu:virtualgl2}, leads to the following result.

\begin{lemma}\label{lem:Gisog2}
Suppose that $K$ is a number field with $S\subseteq\spec(\OK)$ open, and let $B$ be the abelian scheme of Lemma~\ref{lem:Gisog1}. Then there is a simple abelian scheme $C$ over $T$ of $\gl2$-type such that $B_U$ is a simple factor of $C_U$, where $U$ and $T$ are as in \eqref{def:UTGisog}.
\end{lemma}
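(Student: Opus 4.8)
The discussion preceding the lemma already furnishes, by applying Wu~\cite[Prop~1.12]{wu:virtualgl2} to the simple $k$-virtual abelian variety $B_{\bar k}$ of $\gl2$-type with no CM provided by Lemma~\ref{lem:Gisog1}, a simple abelian variety $C_k$ over $k$ of $\gl2$-type with $B_{\bar k}$ a factor of $C_{\bar k}$. The only remaining content of the lemma is therefore to descend $C_k$ to an abelian scheme $C$ over the \emph{explicit} base $T$ of \eqref{def:UTGisog} and to extend the factorisation over $U$. My plan has three steps: reinterpret $T$ and $U$; realise $C_k$ over $T$ via a Weil restriction; and spread out the factor $B$ over $U$.

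First I would unwind \eqref{def:UTGisog}. Because $T^c$ contains $p(S^c)$, every place $v\in T$ of $k$ has all places of $K$ above it contained in $S$, so $p^{-1}(T)\subseteq S$ and hence $U=S\setminus p^{-1}(T^c)=p^{-1}(T)$; and because $T^c$ contains the branch locus of $p$, the map $p\colon U\to T$ is finite \'etale of degree $[K:k]$. In particular $B|_U$ is an abelian scheme over $U$. This is exactly the point at which the shape of $T^c$ in \eqref{def:UTGisog} is used. Now form the Weil restriction $W:=\mathrm{Res}_{U/T}(B|_U)$, which is an abelian scheme over $T$ (Weil restriction along a finite \'etale morphism preserves abelian schemes) with generic fibre $\mathrm{Res}_{K/k}(B_K)$. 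Since $B_{\bar k}$ is $k$-virtual (Lemma~\ref{lem:Gisog1}) and, by condition $(*)$, the isogenies witnessing this are defined over $K$, the base change $W_{\bar k}$ is isogenous to a power of $B_{\bar k}$; moreover, going into the construction of Ribet~\cite[\S6]{ribet:gl2}, Wu~\cite[Prop~1.12]{wu:virtualgl2} and \cite[\S9.4]{rvk:modular}, the simple $\gl2$-type abelian variety $C_k$ produced there is an isogeny factor of $W_k$ with the given $K$-model $B_K$ occurring as a $K$-rational factor of $C_K$. A factor of an abelian scheme over a normal base is again an abelian scheme: spreading the isogeny $W_k\sim C_k\times D_k$ to $T$ by the N\'eron mapping property gives an isogeny of abelian schemes (its inverse isogeny also spreads out), and a quotient of an abelian scheme by a finite locally free subgroup scheme is an abelian scheme. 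Hence $C_k$ extends to an abelian scheme $C$ over $T$, which is simple and of $\gl2$-type since $\End^0(C)\isomto\End^0(C_k)$ by \eqref{eq:dedequivalence}; the estimate \eqref{eq:reldimweilres} follows from $\dim(C)\le\dim(\mathrm{Res}_{K/k}B_K)=[K:k]\dim(B)$ and the fact that every simple factor of $A\sim B^{n}$ has the same relative dimension as $B$.

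It then remains to descend the factorisation over $U$. The surjection $C_K\to B_K$ extends, via the canonical isomorphism $\Hom(C_U,B_U)\isomto\Hom(C_K,B_K)$ of \eqref{eq:dedequivalence}, to a homomorphism $\psi\colon C_U\to B_U$; the identity component of its scheme-theoretic image is an abelian subscheme of $B_U$ with full generic fibre, hence all of $B_U$, so $\psi$ is surjective. Thus $B_U$ is a factor of $C_U$, and it is simple in the sense of Section~\ref{sec:dedmorphisms} because $B_{\bar k}$ is. Combined with $A_U\sim (B|_U)^{n}$ from Lemma~\ref{lem:Gisog1}, this is what Proposition~\ref{prop:Gisog} then uses.

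I expect the main obstacle to be the middle step: one must verify that the abstract simple $\gl2$-type abelian variety $C_k$ coming out of Wu's construction has the \emph{given} $K$-model $B_K$ — carrying the good-reduction and Galois data inherited from $A$ — as a $K$-rational factor, rather than merely a $\bar k$-factor or a twist thereof, since otherwise the base change to $U$ in the last step would produce the wrong factor. This forces one to go into the construction of \cite[\S9.4]{rvk:modular} rather than invoke it as a black box, and it is also where condition $(*)$ (which by Section~\ref{sec:cond*} can always be arranged after a controlled extension of $K$) is essential: it makes the $k$-virtual structure of $B$ and the splitting of the Weil restriction already visible over $K$.
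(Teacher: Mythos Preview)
Your strategy matches the paper's, and your use of the integral Weil restriction $W=\mathrm{Res}_{U/T}(B|_U)$ along the finite \'etale $U=p^{-1}(T)\to T$ is a clean alternative to the paper's route, which forms $D_k=\mathrm{Res}_{K/k}(B_K)$ at the generic fibre, invokes Milne~\cite[Prop~1]{milne:arithmetic} to get good reduction of $D_k$ over $T$, and then extends the factor $C_k$.

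There is, however, a genuine gap. Your claim ``by condition~$(*)$, the isogenies witnessing [that $B_{\bar k}$ is $k$-virtual] are defined over $K$'' is not immediate: condition~$(*)$ concerns the isogenies $\mu_\sigma$ for $A$, whereas the $k$-virtual isogenies for $B_{\bar k}$ are manufactured over $\bar k$ in Lemma~\ref{lem:Gisog1} via Lemmas~\ref{lem:centcompsimple}--\ref{lem:fullcomp}, and there is no a~priori reason they descend. The paper supplies the argument you are missing: from $(*)$ one has $A_K\sim\sigma^*A_K$ over $K$, and with $A\sim B^n$ this forces the simple $\sigma^*B_K$ to be a simple factor of $A_K$, hence isogenous to $B_K$ \emph{over $K$}; then $\Hom^0(\sigma^*B_K,B_K)\cong\End^0(B_K)\cong\End^0(B_{\bar k})\cong\Hom^0(\sigma^*B_{\bar k},B_{\bar k})$ (using that $B$, as a factor of $A$, inherits~\eqref{eq:prooflemGisog1}) shows every such isogeny already lives over $K$. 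This is exactly what is needed to verify that $K$ can play the role of Wu's auxiliary field when one goes into~\cite[Prop~1.12]{wu:virtualgl2}. It also dissolves the ``main obstacle'' you flag: once each $\sigma^*B_K\sim B_K$ over $K$, the paper obtains $B_K$ as a $K$-rational simple factor of $C_K$ directly from the decomposition $D_k\times_kK\cong\prod_\sigma\sigma^*B_K$ and the surjection $\prod_\sigma\sigma^*B_K\to C_K$, with no further excavation of~\cite[\S9.4]{rvk:modular} required.
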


Now, on combining this result with Lemma~\ref{lem:Gisog1} we deduce Proposition~\ref{prop:Gisog}. The remaining of this section is devoted to the proofs of the above two lemmas.

\begin{proof}[Proof of Lemma~\ref{lem:Gisog1}]
By assumption $S$ is a connected Dedekind scheme and $A$ is an abelian scheme over $S$ of $\gl2$-type with no CM. Hence \eqref{eq:prooflemGisog1} and Lemma~\ref{lem:gl2endostructure} give a simple abelian scheme $B$ over $S$ of $\gl2$-type and $n\in \ZZ_{\geq 1}$ such that $A$ is isogenous to $B^n$. 

We next prove that $B_{\bar{k}}$ is a simple abelian variety over $\bar{k}$ of $\gl2$-type with no CM. Firstly $B_{\bar{k}}$ is of $\gl2$-type, since it is a base change of  $B$ which is of $\gl2$-type. Further, all endomorphisms of $A_{\bar{k}}$ are defined over $K$ by $(*)$. Therefore \eqref{eq:dedequivalence} gives 
\begin{equation}\label{eq:prooflemGisog1}
\End(A_{\bar{k}})\cong \End(A).
\end{equation}
This shows that $A_{\bar{k}}$ has no CM since $A$ has no CM. Then on using that $A_{\bar{k}}$ is isogenous to $B_{\bar{k}}^n$, we deduce that the abelian varieties $B_{\bar{k}}^n$ and $B_{\bar{k}}$ both have no CM. 
Poincar\'e reducibility shows that  any factor of $A$ also has property \eqref{eq:prooflemGisog1}. 
Thus $\End^0(B_{\bar{k}})\cong\End^0(B)$ is a division algebra since $B$ is a simple factor of $A$. This implies that $B_{\bar{k}}$ is simple.

To show that $B_{\bar{k}}$ is $k$-virtual, we now use our assumption that $A$ is of $\gl2$-type with $G$-isogenies. Hence there is a number field $F$ inside $\End^0(A_{\bar{k}})$ of degree $\dim(A_{\bar{k}})$ and for each $\sigma\in G$ there is an isogeny $\mu_\sigma:\sigma^*A_{\bar{k}}\to A_{\bar{k}}$ satisfying \eqref{def:Gisog}.
As $A_{\bar{k}}$ is an abelian variety over $\bar{k}$ of $\gl2$-type with no CM, an application of Lemma~\ref{lem:gl2endostructure} with $A_{\bar{k}}$ gives that the center $Z^0$ of $\End^0(A_{\bar{k}})$ is contained in $F$. Then, on using that $\Hom(\sigma^*A_{\bar{k}},A_{\bar{k}})$ is torsion-free and that $Z^0=Z\otimes_\ZZ \QQ$ for $Z$ the center of $\End(A_{\bar{k}})$, we see that \eqref{def:Gisog} implies $$\mu_\sigma\sigma^*(z)=z\mu_\sigma,\quad z\in Z.$$ In other words the isogeny $\mu_\sigma:\sigma^*A_{\bar{k}}\to A_{\bar{k}}$ is center compatible in the sense of $\mathsection$\ref{sec:centerisogenies}. Now, an application of Lemma~\ref{lem:centcompsimple} with the isotypic abelian variety $A_{\bar{k}}$ and the automorphism $\sigma$ of $\bar{k}$ gives a center compatible isogeny  $\sigma^*B_{\bar{k}}\to B_{\bar{k}}$.  Thus an application of Lemma~\ref{lem:fullcomp} with the simple abelian variety $B_{\bar{k}}$ provides an isogeny $\sigma^*B_{\bar{k}}\to B_{\bar{k}}$ which (is fully compatible and hence) satisfies \eqref{def:kvirt}. 
Therefore we conclude that $B_{\bar{k}}$ is a simple $k$-virtual abelian variety of $\gl2$-type with no CM. This shows that $B$ has the desired properties.
\end{proof}

In the following proof, we assume in addition that $k\subseteq K$ are number fields and that $S\subseteq \spec(\OK)$ is open. Further, we let $p:U\to T$ be as in Section~\ref{sec:factorsGisogenies}. 

\begin{proof}[Proof of Lemma~\ref{lem:Gisog2}]Let $B$ be the abelian scheme over $S$ of Lemma~\ref{lem:Gisog1}. Then $B_{\bar{k}}$ is a simple $k$-virtual abelian variety of $\gl2$-type with no CM. In particular, the $k$-virtual condition assures that  for each $\sigma\in G$ there is an isogeny $\mu_\sigma:\sigma^*B_{\bar{k}}\to B_{\bar{k}}$. 

To show that each $\mu_\sigma$ is defined over $K$, we use condition $(*)$. It assures that $K/k$ is normal and the restriction of $\sigma\in G$ to the subfield $K\subset \bar{k}$ defines an automorphism of $K/k$ which we also denote by $\sigma$. Then $\sigma^*B_K$ is an abelian variety over $K$. It is simple, since it is geometrically isogenous via $\mu_\sigma$ to the simple $B_{\bar{k}}$. 
Moreover, on using that $A_K$ is isogenous to $\sigma^*A_K$ by $(*)$ and that $A$ is isogenous to $B^n$, we see that $\sigma^*B_K$ is a simple factor of $A_K$. Hence $\sigma^*B_K$ has to be isogenous to the simple factor $B_K$, and 
then $\Hom^0(\sigma^*B_K,B_K)\cong \End^0(B_{K})$ identifies with $\Hom^0(\sigma^*B_{\bar{k}},B_{\bar{k}})\cong \End^0(B_{\bar{k}})$ since $B_K$ has property  \eqref{eq:prooflemGisog1}. 
We conclude that each isogeny $\mu_\sigma$ is defined over $K$.    

We now go into Wu's proof of \cite[Prop 1.12]{wu:virtualgl2}. The above statements show that $B_{\bar{k}}$ satisfies all assumptions of \cite[Prop 1.12]{wu:virtualgl2} and that our $K$ has all the properties of the field $F$ appearing in Wu's proof (\cite[$\mathsection$1.2]{wu:virtualgl2arxiv}).   Then Wu's proof, which uses and generalizes Ribet's arguments in \cite[$\mathsection$6]{ribet:gl2} for $\QQ$-curves, shows in addition 
that the Weil restriction $D_k=\textnormal{Res}_{K/k}(B_K)$ of $B_K$ has a simple factor $C_k$ of $\gl2$-type. 

To prove that $C_k$ extends to an abelian scheme $C$ over $T$, we use our assumption that $K$ is a number field. Hence Milne~\cite[Prop 1]{milne:arithmetic} gives that $D_k$ has good reduction at a finite place $v$ of $k$ if  $v$ does not divide the discriminant ideal of $K/k$ and  if $A_K$ has good reduction at all points in $p^{-1}(v)$. Further the generic fiber $A_K$ of the abelian scheme $A$ over $S$ has good reduction at each point in $S$, and Dedekind's discriminant theorem shows that $v$ divides the discriminant ideal of $K/k$ if and only if $v$ lies in the branch locus $B_p$ of $p$. We deduce that $D_k$ extends to an abelian scheme over  $T=(p(S^c)\cup B_p)^c$, 
and then also the factor $C_k$ of $D_k$ extends to an abelian scheme $C$ over $T$. Furthermore \eqref{eq:dedequivalence} gives that $C$ is simple and of $\gl2$-type, since $C_k$ has these properties.

We next show that $B_U$ is a factor of $C_U$. The generic fiber $C_K$ of $C_U$  identifies with the abelian variety $C_k\times_k K$ over $K$, since the composition of $\spec(K)\to U$ with $p:U\to T$ factors as $\spec(K)\to\spec(k)\to T$. Further, as $C_k$ is a factor of $D_k$, we obtain a surjective morphism  $D_k\to C_k$ of abelian varieties over $k$ which induces a surjective morphism
$$
\prod \sigma^*B_K\isomto D_k\times_k K\to C_k\times_k K\isomto C_K
$$
of abelian varieties over $K$
with the product taken over all $\sigma\in \textnormal{Aut}(K/k)$. Here we used that the field extension $K/k$ is Galois by $(*)$ which assures (\cite[p.5]{weil:alggroups}) that the abelian variety $D_k\times_k K$ over $K$ is isomorphic to $\prod \sigma^*B_K$. 
We showed above that $B_K$ is isogenous to each $\sigma^*B_K$. Therefore, on exploiting that the displayed morphism is surjective and hence nonzero, we see that $B_K$ has to be a simple factor of $C_K$. This implies that $B_U$ is a simple factor of $C_U$. We conclude that $C$ has the desired properties.\end{proof}
We conclude this subsection by collecting basic results for abelian schemes of $\gl2$-type which are essentially due to Ribet~\cite{ribet:semistable,ribet:gl2} and Wu~\cite{wu:virtualgl2}. In the above proofs we used (i) and (ii) of the following lemma, while (i)\,-\,(iii) will be used in later sections.

\begin{lemma}\label{lem:gl2endostructure}
Let $S$ be a connected Dedekind scheme whose function field $k$ embeds into $\CC$, and let $A$ be an abelian scheme over $S$ of $\gl2$-type such that $A$ has no CM. 
\begin{itemize}
\item[(i)] The abelian scheme $A$ is  isogenous to a power $B^n$ of a simple abelian scheme $B$ over $S$ of $\gl2$-type for some $n\in \ZZ_{\geq 1}$. In particular the $\QQ$-algebra $\End^0(A)$ is isomorphic to $\textnormal{M}_n(D)$ where $D=\End^0(B)$ is a division algebra.
\item[(ii)] Let $F$ be a subfield of the $\QQ$-algebra $\End^0(A)$ such that $[F:\QQ]$ equals the relative dimension of $A$ over $S$. Then $F$ is the centralizer of $F$ in $\End^0(A)$. 
\item[(iii)] Either $D$ identifies with a subfield $Z$ of $F$ with $[F:Z]=n$ or $D$ is a quaternion algebra whose center identifies with a subfield $Z$ of $F$ with $[F:Z]=2n$.   
\end{itemize}
\end{lemma}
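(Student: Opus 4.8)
The plan is to reduce all three assertions to statements about an abelian variety over $\CC$ and then exploit that the rational homology is a two-dimensional $F$-vector space together with the no-CM hypothesis; the argument is in the spirit of Ribet~\cite{ribet:semistable,ribet:gl2}. First we would reduce to $\CC$: the isomorphisms \eqref{eq:dedequivalence} give $\End^0(A)\cong\End^0(A_k)$ as $\QQ$-algebras, and since $k$ embeds into $\CC$ we may replace $A$ by $A_\CC$, noting that $A$ has no CM precisely when $A_\CC$ does. Any simple factor $B_k$ of $A_k$ has good reduction everywhere on $S$ (good reduction being an isogeny invariant), hence extends to a simple abelian scheme $B$ over $S$, and an isogeny $A_k\sim B_k^n$ extends to $S$ by the results of Section~\ref{sec:dedmorphisms}; so it suffices to prove (i)--(iii) over $\CC$, with $g=\dim A$ and $V:=H_1(A(\CC),\QQ)$. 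Since $F\subseteq\End^0(A)$ acts faithfully on $V$ and $\dim_\QQ V=2g=2[F:\QQ]$, the space $V$ is two-dimensional over $F$.

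For (i) we apply Poincar\'e reducibility to write $A\sim\prod_i B_i^{n_i}$ with $B_i$ simple and pairwise non-isogenous. If there were at least two distinct factors we would group them as $A\sim A_1\times A_2$ with $\Hom^0(A_1,A_2)=0=\Hom^0(A_2,A_1)$, so that $\End^0(A)=\End^0(A_1)\times\End^0(A_2)$ and the central idempotents $e_j$ give unital embeddings $f\mapsto fe_j$ of the field $F$ into each $\End^0(A_j)$; then $H_1(A_j(\CC),\QQ)$ is an $F$-vector space, so $[F:\QQ]$ divides $2\dim A_j$, and together with $\dim A_1+\dim A_2=g$ this forces $\dim A_j=g/2$ and $[F:\QQ]=2\dim A_j$. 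Hence the image of $F$ in $\End^0(A_j)$ is a commutative semisimple $\QQ$-subalgebra of rank $2\dim A_j$, so $\End^0(A)$ contains a copy of $F\times F$ of rank $2g$ and $A$ has CM, a contradiction. Therefore $A\sim B^n$ with $B$ simple and $\End^0(A)\cong\uM_n(D)$ for the division algebra $D=\End^0(B)$; the claim that $B$ is of $\gl2$-type will drop out of (iii).

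For (ii) let $C$ be the centralizer of $F$ in $\End^0(A)$. Then $F\subseteq C$ and $F$ lies in the center of $C$, so $C$ acts $F$-linearly and faithfully on $V\cong F^2$ and embeds as an $F$-subalgebra of $\End_F(V)\cong\uM_2(F)$; moreover $C$ is semisimple, being the centralizer of a semisimple subalgebra of a semisimple algebra. The only semisimple $F$-subalgebras of $\uM_2(F)$ are $F$, $F\times F$, a quadratic field extension $F'/F$, and $\uM_2(F)$, and in each of the last three $\End^0(A)$ would acquire a commutative semisimple $\QQ$-subalgebra of rank $2g$ (namely $F\times F$, $F'$, or again $F\times F$) and $A$ would have CM. Hence $C=F$.

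For (iii) we write $\End^0(A)=\uM_n(D)$ as above and set $Z=Z(D)$, $g'=\dim B=g/n$, and $e^2=[D:Z]$. Since $Z$ is central it commutes with $F$, so $Z\subseteq C=F$ by (ii); thus $\uM_n(D)$ is a central simple $Z$-algebra in which the subfield $F$ is its own centralizer, so $F$ is a maximal subfield and the double centralizer theorem yields $[F:Z]^2=[\uM_n(D):Z]=n^2e^2$, i.e. $[F:Z]=ne$. Then $[Z:\QQ]=[F:\QQ]/[F:Z]=g'/e$ and $[D:\QQ]=e^2[Z:\QQ]=eg'$, and since $[D:\QQ]=[\End^0(B):\QQ]$ divides $2\dim B=2g'$ (Albert's classification) we conclude $e\in\{1,2\}$. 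If $e=1$ then $D=Z$ is a subfield of $F$ with $[F:Z]=n$; if $e=2$ then $D$ is a quaternion algebra with center $Z\subseteq F$ and $[F:Z]=2n$. In the first case $D$ itself is a field of degree $[Z:\QQ]=g'=\dim B$, and in the second a maximal subfield of $D$ has degree $2[Z:\QQ]=g'=\dim B$; either way $\End^0(B)$ contains a field of degree $\dim B$, so $B$ is of $\gl2$-type, which completes (i). The parts I expect to need the most care are the verification that $Z\subseteq F$, which is exactly what makes the double centralizer theorem applicable, and the routine-but-necessary bookkeeping in passing between abelian schemes over $S$ and abelian varieties over $\CC$; the substantive external inputs are Poincar\'e reducibility, the double centralizer theorem, and Albert's bound $[\End^0(B):\QQ]\mid 2\dim B$.
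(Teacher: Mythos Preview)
Your overall approach—Poincar\'e reducibility, the homology module $V\cong F^2$, the double centralizer theorem, and the divisibility $[D:\QQ]\mid 2\dim B$ coming from the action on $H_1(B(\CC),\QQ)$—is exactly Ribet's argument and is what the paper invokes as well. However, your reduction step contains a genuine gap: the claim ``$A$ has no CM precisely when $A_\CC$ does'' is false in the direction you need. The hypothesis is that $\End^0(A)$ (over $S$, equivalently over $k$) contains no commutative semisimple subalgebra of rank $2g$; nothing prevents $\End^0(A_\CC)$ from being strictly larger and acquiring such a subalgebra. A CM elliptic curve $E/\QQ$ already illustrates this: $\End^0(E)=\QQ$ so $E$ has no CM in the paper's sense, yet $\End^0(E_\CC)$ is an imaginary quadratic field and $E_\CC$ has CM. In that example your argument for (ii) breaks outright, since the centralizer of $F=\QQ$ in $\End^0(E_\CC)$ is a quadratic field $F'$, and you can no longer derive a contradiction from ``$A$ has CM''. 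Likewise, doing Poincar\'e reducibility over $\CC$ in (i) only yields $F\times F\subseteq\End^0(A_\CC)$, which does not contradict the hypothesis.

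The fix is simple and is precisely what the paper does: reduce only to $k$ via $\End^0(A)\cong\End^0(A_k)$, and use the embedding $k\hookrightarrow\CC$ solely to let $\End^0(A_k)$ act faithfully on $V=H_1(A(\CC),\QQ)$ through $\End^0(A_k)\hookrightarrow\End^0(A_\CC)\hookrightarrow\End_\QQ(V)$. Then your Poincar\'e decomposition, centralizer $C$, and division algebra $D$ all live inside $\End^0(A_k)$, the dimension constraints $[F:\QQ]\mid 2\dim A_j$ and $[D:\QQ]\mid 2\dim B$ still follow from the $\QQ$-linear action on homology, and the contradictions you derive from ``contains a commutative semisimple subalgebra of rank $2g$'' are now contradictions to the actual hypothesis on $A$. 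With this correction the rest of your argument goes through unchanged.
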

\begin{proof}
To prove (i) we reduce to the case $S=\spec(k)$ which was treated in Wu~\cite[Prop 1.5~(i)]{wu:virtualgl2} when $k$ is an arbitrary field of characteristic zero. Notice that Wu stated the result under the assumption that $A_{\bar{k}}$ has no CM. However, it turns out that her proof can be copied word by word when making the weaker assumption that the abelian variety over $k$ has no CM. Now, the reduction goes as follows:  As $\End^0(A)\isomto \End^0(A_k)$ by \eqref{eq:dedequivalence} we obtain that $A_k$ is of $\gl2$-type with no CM. Then an application of \cite[Prop 1.5]{wu:virtualgl2} with $A_k$ gives an isogeny $A_k\to B_k^n$ where $B_k$ is a simple abelian variety of $\gl2$-type and $n\in \ZZ_{\geq 1}$. In particular $B_k$ is a factor of $A_k$ which extends to the abelian scheme $A$ over $S$. Thus $B_k$ extends to an abelian scheme $B$ over $S$, and then the isogeny $A_k\to B^n_k$ extends to a morphism $A\to B^n$ of abelian schemes over $S$ which is again an isogeny; see for example \cite[$\mathsection$5.1]{vkkr:hms}. This together with $\End^0(B)\cong \End^0(B_k)$ implies (i).

To show (ii) and (iii) we may and do assume that $S=\spec(k)$ since $\End^0(A)\cong \End^0(A_{k})$ and $D\cong \End^0(B_k)$. Our $A$ is isotypic by (i) and $A$ has no CM. Thus no factor of $A$ has CM.    Then we deduce (ii) and (iii) by using precisely the same arguments as Ribet in his proof of \cite[Prop 5.2]{ribet:gl2}; these arguments exploit a trick (attributed by Ribet to Tunnel)  which exploits that the field $k$ embeds into $\CC$ and hence $[D:\QQ]$ divides $2\dim(B)$ since the division algebra $D$ acts $\QQ$-linearly on the $2\dim(B)$-dimensional homology $H_1(B(\CC),\QQ)$. This completes the proof of Lemma~\ref{lem:gl2endostructure}.
\end{proof}

  \subsubsection{Condition $(*)$ and controlled field extensions}\label{sec:cond*}
In this section we investigate the technical condition $(*)$  appearing in Proposition~\ref{prop:Gisog}. For this purpose we let $k\subseteq K\subset \bar{k}$, $G$, $S$ and $T$ be as in Section~\ref{sec:factorsGisogenies}.

Let $A$ be an abelian scheme over $S$ of relative dimension $g$ such that $A_{\bar{k}}$ is of $\gl2$-type with $G$-isogenies. We say that condition $(*)$ holds over a number field $L\supseteq K$  if $L/k$ is normal and if the endomorphisms of $A_{\bar{k}}$ and the isogenies $\mu_\sigma$ in \eqref{def:Gisog} are all defined over $L\subset \bar{k}$. On using Silverberg's work \cite{silverberg:fieldsofdef} and the criterion of N\'eron--Ogg--Shafarevich~\cite{seta:goodreduction}, we obtain that condition $(*)$ holds over a controlled field $L$.

\begin{lemma}\label{lem:cond*}
 Condition $(*)$ holds over a field $L\supseteq K$ such that $L$ is contained in the normal closure of $K(A_n)/k$  for each $n\in\ZZ_{\geq 3}$ and such that $L/k$ is unramified over $T$. 
\end{lemma}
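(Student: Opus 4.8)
The strategy is to first take $L$ to be a field over which all the relevant geometric data becomes defined, then control its ramification using the criterion of N\'eron--Ogg--Shafarevich and Silverberg's results on fields of definition. First I would recall that by Silverberg~\cite{silverberg:fieldsofdef}, for any $n\in\ZZ_{\geq 3}$ every endomorphism of $A_{\bar k}$ is defined over $K(A_n)$; indeed an endomorphism is determined by its action on the $n$-torsion once $n\geq 3$ (since $\Aut$ of a polarized abelian variety injects into $\Aut(A_n)$ for $n\geq 3$), so the field of definition of $\End(A_{\bar k})$ is contained in $K(A_n)$. The same applies to each isogeny $\mu_\sigma:\sigma^*A_{\bar k}\to A_{\bar k}$: since $\Hom(\sigma^*A_{\bar k},A_{\bar k})$ also embeds into $\Hom$ of the $n$-torsion schemes for $n\geq 3$ (using a polarization to rigidify), $\mu_\sigma$ is defined over $\sigma(K(A_n))\cdot K(A_n)$, which lies in the normal closure of $K(A_n)/k$. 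Taking $L$ to be this normal closure (or rather a field containing it that also contains $K$), we get that $L/k$ is normal and all endomorphisms and all $\mu_\sigma$ are defined over $L$; this is exactly condition $(*)$.

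\textbf{Controlling the ramification.} The key point is then that $L/k$ is unramified over $T=(p(S^c)\cup B_p)^c$. For this I would argue as follows: $A$ is an abelian scheme over $S\subseteq\spec(\OK)$, so $A_K$ has good reduction at every place of $K$ lying over $T$ (such places lie over $S$ by definition of $T$). By the criterion of N\'eron--Ogg--Shafarevich~\cite{seta:goodreduction}, for $n\geq 3$ coprime to the residue characteristic the extension $K(A_n)/K$ is unramified at every place of $K$ where $A_K$ has good reduction; in particular $K(A_n)/K$ is unramified over the preimage of $T$. Combined with the fact that $K/k$ is unramified over $T$ (this is built into the definition of $T$, since $B_p$ is removed), we get that $K(A_n)/k$ is unramified over $T$, and hence so is its normal closure. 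A small subtlety: one must handle all $n\geq 3$ simultaneously or observe that it suffices to work with a fixed $n$ coprime to all residue characteristics in $S^c$ — but actually the statement only asserts that $L$ is contained in the normal closure of $K(A_n)/k$ for \emph{each} $n\geq 3$, so the cleanest reading is that $L$ works for every such $n$, which forces $L$ to lie in the intersection over all $n\geq 3$ of these normal closures; one then checks that this intersection still contains enough to realize $(*)$, using that the field of definition of the finitely many $\mu_\sigma$ and of $\End(A_{\bar k})$ is contained in $K(A_n)$ for any single $n\geq 3$, hence in the intersection.

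\textbf{Main obstacle.} The technically delicate step is the ramification bound at the primes dividing $n$: N\'eron--Ogg--Shafarevich only gives unramifiedness of $K(A_n)/K$ at primes of good reduction away from $n$, so to get an extension unramified over all of $T$ one either restricts $n$ to be coprime to the relevant residue characteristics, or one invokes that $B_p$ and $p(S^c)$ have already been removed from $T$ and chooses $n$ appropriately — I expect the paper handles this by the phrasing "for each $n\in\ZZ_{\geq 3}$", i.e. by producing a single $L$ that simultaneously works, whose ramification is then controlled by combining good reduction of $A$ over $S$ with unramifiedness of $K/k$ over $T$. The rest is bookkeeping: verifying $L\supseteq K$, that $L/k$ is normal (take normal closures), and that the finitely many pieces of data ($\End(A_{\bar k})$ and the $\mu_\sigma$) are all captured. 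No deep input beyond Silverberg and N\'eron--Ogg--Shafarevich is needed.
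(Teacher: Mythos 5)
Your proposal matches the paper's proof in all essentials: the paper also takes $L=\bigcap_{n\ge 3}K^{\textnormal{cl}}(A_n)$ (the intersection over all $n\ge 3$ of the normal closures of $K(A_n)/k$), applies Silverberg's Theorem~4.2 to $\sigma^*A_L$ and $A_L$ over $L$ to get all endomorphisms and all $\mu_\sigma$ defined over $L$, and controls ramification via N\'eron--Ogg--Shafarevich applied to two distinct rational primes $p,q\ge 3$ so that $T[\tfrac1p]\cup T[\tfrac1q]=T$ absorbs the residue characteristics dividing $n$. You correctly anticipated all three moves, including the subtlety about primes dividing $n$.

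One place where your sketch is a bit looser than the paper: the heuristic "$\Aut$ of a polarized abelian variety injects into $\Aut(A_n)$" is the wrong mechanism here (that is a rigidity statement about automorphisms, not directly a field-of-definition statement about $\Hom$), whereas the genuine input is Silverberg's theorem, which you do cite. The paper also makes explicit the small but necessary bookkeeping step $L=\bigcap_{n\ge 3}L(A_n)$, deduced from $L(\sigma^*A_n)=L(A_n)$ and $L(A_n)=K(A_n)\cdot L\subseteq K^{\textnormal{cl}}(A_n)$; your "one then checks that this intersection still contains enough" elides this, but the identification is straightforward once Silverberg is in place. No genuine gap.
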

Here and in what follows, for any $n\in \ZZ_{\geq 1}$, we denote by $K(A_n)$ the field of definition of the $n$-torsion points of $A(\bar{k})$. The degree of $K(A_n)$ over $K$ is at most $n^{4g^2}$, and Lemma~\ref{lem:cond*} combined with Dedekind's discriminant theorem allows to explicitly control the discriminant of $L/k$ in terms of $g$, the discriminant of $K/k$, and $N_S=\prod_{v\in S^c}N_v$.
\begin{proof}[Proof of Lemma~\ref{lem:cond*}]
Let $n\in \ZZ_{\geq 1}$ and denote by $K^{\textnormal{cl}}(A_n)$ the normal closure of $K(A_n)$ over $k$. 
To show that $L=\cap_{n\geq 3}K^{\textnormal{cl}}(A_n)$ has the desired properties, we take $\sigma\in  G$. The field $L$ is normal over $k$. 
Thus pulling back $A_L$ via the restriction of $\sigma$ to $L$ gives an abelian variety $\sigma^*A_L$ over $L$ which identifies over $\bar{k}$ with $\sigma^*A_{\bar{k}}$. 
We denote by $L(\sigma^*A_n)$ and $L(A_n)$  the field of definition of the $n$-torsion points of $\sigma^*A_L(\bar{k})$ and  $A_L(\bar{k})$ respectively. On using that $\sigma$ is invertible and that $L(\sigma^*A_n)$ and $L(A_n)$ are normal over $L$, we deduce 
 $$L(\sigma^*A_n)=L(A_n).$$ 
Then an application of Silverberg~\cite[Thm 4.2]{silverberg:fieldsofdef} with $\sigma^*A_L$ and $A_L$ implies that any morphism in $\Hom(\sigma^*A_{\bar{k}},A_{\bar{k}})$ is defined over the subfield $\cap_{n\geq 3}L(A_n)$ of $\bar{k}$. Furthermore we obtain that $L=\cap_{n\geq 3}L(A_n)$, since it holds that $L(A_n)=K(A_n)L$ 
and that $K(A_n)L\subseteq K^{\textnormal{cl}}(A_n)$ for each $n\geq 3$. 
Thus we deduce that condition $(*)$ holds over $L$. 

 To prove that $L/k$ is unramified over $T$, we take a rational prime $p\geq 3$ and for any scheme $S'$ we write $S'[\tfrac{1}{p}]=S'\times_\ZZ \ZZ[\tfrac{1}{p}]$. 
The abelian variety $A_K$ has good reduction at each closed point of $S$, since $A_K$ extends to the abelian scheme $A$ over $S$. Thus $K(A_p)/K$ is unramified over $S[\tfrac{1}{p}]$ 
by the criterion of N\'eron--Ogg--Shafarevich \cite[Thm 1]{seta:goodreduction}. Then the construction of $T$ in \eqref{def:UTGisog} shows that $K(A_p)/k$ is unramified over $T[\tfrac{1}{p}]$ 
and hence its normal closure $K^{\textnormal{cl}}(A_p)/k$ is also unramified over $T[\tfrac{1}{p}]$. 
Therefore, for any rational prime $q\geq 3$ with $q\neq p$, we deduce that the subfield $L$ of $K^{\textnormal{cl}}(A_p)\cap K^{\textnormal{cl}}(A_q)$ is unramified over $T=T[\tfrac{1}{p}]\cup T[\tfrac{1}{q}]$. Finally, as the field $L=\cap_{n\geq 3}K^{\textnormal{cl}}(A_n)$ is contained in $K^{\textnormal{cl}}(A_n)$ for each $n\geq 3$, we conclude that $L/k$ has all the desired properties.
\end{proof}


\subsubsection{Center compatible isogenies between conjugate abelian varieties}\label{sec:centerisogenies}

In this section we collect basic lemmas which we used in $\mathsection$\ref{sec:Gisogdef} and $\mathsection$\ref{sec:proofgisogprop} above. These lemmas allow in situations where isogenies of conjugate abelian varieties are center compatible to reduce to the case where the isogenies are compatible with the whole endomorphism ring. The results should be all well-known. However, we are not aware of suitable references and for the sake of completeness we will thus give the proofs which use and generalize arguments of Ribet~\cite{ribet:fieldsofdef} and Pyle~\cite{pyle:gl2}.

Let $A$ be an abelian variety defined over an arbitrary field $k$, let $\sigma$ be an automorphism of the field $k$ and write $\sigma^*A$ for the base change of $A$ via $\sigma:k\to k$.

\paragraph{Center compatible isogenies.}We say that an isogeny $\mu:\sigma^*A\to A$ is center compatible if any $z$ in the center of $\End(A)$ satisfies $\mu\sigma^*(z)=z\mu$. The existence of such an isogeny does not necessarily guarantee a center compatible isogeny $\sigma^*B\to B$ for the simple factors $B$ of $A$.  
However, in the isotypic case we obtain the following result. 

\begin{lemma}\label{lem:centcompsimple}
Suppose that $A$ is an isotypic abelian variety over $k$. If there exists a center compatible isogeny $\sigma^*A\to A$, then there exists a center compatible isogeny $\sigma^*B\to B$. 
\end{lemma}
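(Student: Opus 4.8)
The plan is to reduce from the isotypic abelian variety $A$ to one of its simple factors $B$ by writing $A$ up to isogeny as $B^n$, expressing $\End^0(A)\cong \uM_n(D)$ with $D=\End^0(B)$, identifying the two centers, and then transporting a given center compatible isogeny $\sigma^*A\to A$ along the isogeny $A\sim B^n$ to produce one between $\sigma^*B$ and $B$. First I would fix a simple abelian variety $B$ over $k$ and an isogeny $\psi\colon A\to B^n$; conjugating by $\sigma$ gives $\sigma^*(\psi)\colon \sigma^*A\to \sigma^*(B^n)\cong (\sigma^*B)^n$, which is again an isogeny by the base-change properties of isogenies recalled in Section~\ref{sec:dedmorphisms}. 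Then $\psi$ induces an isomorphism of $\QQ$-algebras $\psi_*\colon \End^0(A)\isomto \End^0(B^n)\cong \uM_n(D)$, and I would note that the center $Z^0$ of $\End^0(A)$ corresponds under $\psi_*$ to the center of $\uM_n(D)$, which is the center of $D$ embedded diagonally; in particular $Z^0$ is a field and $\psi_*$ restricts to an isomorphism from $Z^0$ onto the center of $D$ (placed on the diagonal). The same holds for $\sigma^*$, and $\sigma^*(\psi)$ intertwines the $\sigma^*$-conjugated structures, so a center compatible isogeny $\mu\colon\sigma^*A\to A$ transports to an isogeny $\mu'=\psi\circ\mu\circ\sigma^*(\psi)^{-1}\cdot(\text{clear denominators})\colon (\sigma^*B)^n\to B^n$ satisfying $\mu'\sigma^*(M)=M\mu'$ for every scalar matrix $M$ corresponding to an element of the center of $D$.

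Next I would extract from $\mu'\colon (\sigma^*B)^n\to B^n$ a single center compatible isogeny $\sigma^*B\to B$. Write $\mu'=(\mu'_{ij})$ with $\mu'_{ij}\colon \sigma^*B\to B$; since $\mu'$ is an isogeny (in particular nonzero) and $B$ is simple with $\sigma^*B$ simple of the same dimension, some entry $\mu'_{i_0 j_0}$ is nonzero, hence an isogeny $\sigma^*B\to B$ by the simplicity criterion recalled in Section~\ref{sec:dedmorphisms}. The identity $\mu'\sigma^*(M)=M\mu'$ applied to the diagonal scalar matrices $M=\mathrm{diag}(z,\dots,z)$ with $z$ in the center of $D$ reads, entrywise, $\mu'_{ij}\sigma^*(z)=z\mu'_{ij}$ for all $i,j$; taking $i=i_0$, $j=j_0$ gives that $\mu'_{i_0 j_0}$ is center compatible (the center of $\End(B)$ is contained in $D=\End^0(B)$ and its elements are, up to denominators, endomorphisms of $B$, so the identity among morphisms of abelian varieties over $k$ makes sense after clearing denominators). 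This furnishes the desired center compatible isogeny $\sigma^*B\to B$.

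The routine but slightly delicate points are the bookkeeping with denominators (the relation $\mu\sigma^*(z)=z\mu$ a priori holds in $\Hom^0$, and one must clear denominators to land in $\Hom$, but this is harmless since the center compatibility passes to integer multiples) and pinning down exactly how $\psi_*$ identifies $Z^0$ with the center of $\uM_n(D)$ — here one uses that an isogeny-induced isomorphism of endomorphism algebras carries centers to centers and that $Z(\uM_n(D))=Z(D)$ embedded diagonally. The main obstacle I anticipate is not any single deep fact but rather setting up the matrix/conjugation bookkeeping cleanly enough that the transport of $\mu$ through $\psi$ and $\sigma^*(\psi)$ visibly preserves center compatibility; once the diagonal-scalar-matrix trick is in place the extraction of a good entry $\mu'_{i_0 j_0}$ is immediate from simplicity. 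An alternative, perhaps cleaner, route would be to argue directly with $\Hom^0(\sigma^*B,B)$ as a module over $Z(D)$ and use that $\Hom^0(\sigma^*A,A)\cong \uM_n\bigl(\Hom^0(\sigma^*B,B)\bigr)$ compatibly with the $Z(D)$-actions, but the entrywise argument above is the most transparent.
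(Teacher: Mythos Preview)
Your proof is correct and follows essentially the same approach as the paper: transport the center compatible isogeny through an isogeny $A\sim B^n$, extract a nonzero component $\sigma^*B\to B$, and verify center compatibility using that central elements act as scalar diagonals. The only cosmetic difference is that the paper composes with the diagonal embedding $\iota\colon B\to B^n$ and then a single projection $p\colon B^n\to B$, whereas you write $\mu'$ as a matrix and pick a nonzero entry; since central elements commute with both $\iota$ and the coordinate inclusions $\iota_j$, the two extractions yield the same verification.
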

It is crucial for our applications in $\mathsection$\ref{sec:proofgisogprop} that the statement of Lemma~\ref{lem:centcompsimple} does not depend on how  $\sigma$ acts on an isogeny $A\to B^n$ or on $\End(A)$. Luckily, these dependences `cancel out' as we shall see in the proof given at the end of this section.

\paragraph{Fully compatible isogenies.}We say that an isogeny $\mu:\sigma^*A\to A$ is fully compatible if any $f\in \End(A)$ satisfies
$\mu\sigma^*(f)=f\mu.$ In the isotypic case, one can always modify center compatible isogenies by an inner automorphism (obtained from the Skolem--Noether theorem) to get fully compatible isogenies. We first consider the simple case. 

\begin{lemma}\label{lem:fullcomp}
Let $B$ be a simple abelian variety over $k$. If there exists a center compatible isogeny $\sigma^*B\to B$, then there exists a fully compatible isogeny $\sigma^*B\to B$.
\end{lemma}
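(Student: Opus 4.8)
\textbf{Proof plan for Lemma~\ref{lem:fullcomp}.}
Suppose $\mu_0:\sigma^*B\to B$ is a center compatible isogeny, so that $\mu_0\sigma^*(z)=z\mu_0$ for every $z$ in the center $Z$ of $\End(B)$. Since $B$ is simple, $D=\End^0(B)$ is a division algebra with center $Z^0=Z\otimes_\ZZ\QQ$. The idea is to correct $\mu_0$ by an element of $D^\times$. First I would note that $\sigma^*$ induces a ring isomorphism $\End^0(\sigma^*B)\isomto \sigma(\End^0(B))$, but more usefully, conjugation by $\mu_0$ gives a map $c_{\mu_0}:\End^0(\sigma^*B)\to \End^0(B)$, $f\mapsto \mu_0 f \mu_0^{-1}$ (inverse taken in $\Hom^0$, which makes sense by \eqref{def:inverseisog}). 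Composing with the canonical identification $\sigma^*:\End^0(B)\isomto \End^0(\sigma^*B)$ yields an automorphism $\psi$ of the $\QQ$-algebra $D=\End^0(B)$, namely $\psi(f)=\mu_0\sigma^*(f)\mu_0^{-1}$.

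The center compatibility of $\mu_0$ says precisely that $\psi$ restricts to the identity on $Z^0$. Hence $\psi$ is a $Z^0$-algebra automorphism of the central simple $Z^0$-algebra $D$. By the Skolem--Noether theorem, $\psi$ is inner: there exists $u\in D^\times$ with $\psi(f)=u f u^{-1}$ for all $f\in D$. Now set $\mu = u^{-1}\mu_0$, which is again an isogeny $\sigma^*B\to B$ up to clearing denominators (replace $u$ by an integer multiple so that $\mu$ is an honest isogeny; this does not affect the compatibility identity below). Then for all $f\in \End(B)$ we have
\[
\mu\,\sigma^*(f) = u^{-1}\mu_0\,\sigma^*(f) = u^{-1}\psi(f)\,\mu_0 = u^{-1}\,u f u^{-1}\,\mu_0 = f\,u^{-1}\mu_0 = f\,\mu,
\]
so $\mu$ is fully compatible as required. (One should double-check that all manipulations take place legitimately inside $\Hom^0(\sigma^*B,B)$, which is a one-dimensional right $\End^0(\sigma^*B)$- and left $\End^0(B)$-module since $B$ is simple, so left multiplication by $D^\times$ and the conjugation $c_{\mu_0}$ are well-defined.)

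The main subtlety — rather than a genuine obstacle — is bookkeeping: ensuring that $\psi$ is genuinely a $Z^0$-\emph{algebra} automorphism (not merely $\QQ$-linear), so that Skolem--Noether applies with $Z^0$ as the base field; and ensuring that after multiplying by $u^{-1}\in D^\times$ one still has an isogeny, which is handled by scaling $u$ by a positive integer and using that $\Hom^0$ contains $\Hom$ with the same compatibility identity. I expect no serious difficulty beyond this, since simplicity of $B$ makes all the relevant $\Hom$- and $\End$-spaces behave like one-dimensional modules over the division algebra $D$.
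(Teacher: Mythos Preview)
Your argument is correct and is essentially the same as the paper's: both use that center compatibility makes the conjugation map a $Z^0$-algebra automorphism of the central simple algebra $\End^0(B)$, then apply Skolem--Noether to obtain an inner element which corrects the isogeny to a fully compatible one. The only cosmetic difference is that the paper defines the automorphism as $\phi(r)=(\sigma^{-1})^*(\tau^{-1}\circ r\circ\tau)$ (your $\psi^{-1}$) and corrects on the right via $\mu\otimes q=\tau\circ\sigma^*(s)$, whereas you correct on the left via $\mu=u^{-1}\mu_0$; these are equivalent formulations.
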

\begin{proof}
We may and do assume that $B\neq 0$. To modifiy a center compatible isogeny $\tau:\sigma^*B\to B$, we use a trick which we learned from Ribet~\cite{ribet:gl2} and Pyle~\cite{pyle:gl2}. In fact their trick works exactly the same in our setting. The details are as follows:

Our $B$ is simple. Thus $R=\End^0(B)$ is a central simple algebra over its center $F$. 
Further  $r\mapsto(\sigma^{-1})^*(\tau^{-1}\circ r\circ\tau)$ defines an $F$-algebra morphism $\phi:R\to R$, since  $\tau$ is center compatible, $F=Z\otimes_\ZZ\QQ$ for $Z$ the center of $\End(B)$, $(\sigma^{-1})^*$ is a ring morphism and $\circ$ is a pairing. 
Hence Skolem--Noether gives $s\in R$ such that $\phi(r)=s\circ r\circ s^{-1}$, and as $B$ is simple we can take any $\mu:\sigma^*B\to B$ with $\mu\otimes q=\tau\circ \sigma^*(s)$ for some $q\in \QQ$. 
\end{proof}

We now use this result to strengthen the statement of Lemma~\ref{lem:centcompsimple} in the sense that we can moreover construct an isogeny $\sigma^*A\to A$ which is fully compatible.  

\begin{lemma}\label{lem:isotypfullcomp}
Let $A$ be an isotypic abelian variety over $k$. If there exists a center compatible isogeny $\sigma^*A\to A$, then there exists a fully compatible isogeny $\sigma^*A\to A$.
\end{lemma}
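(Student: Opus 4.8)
The plan is to deduce Lemma~\ref{lem:isotypfullcomp} from Lemma~\ref{lem:centcompsimple} and Lemma~\ref{lem:fullcomp} by bookkeeping across an isogeny decomposition. Since $A$ is isotypic, fix a simple abelian variety $B$ over $k$ together with an isogeny $\psi\colon A\to B^n$; we may assume $A\neq 0$, and hence $B\neq 0$. By hypothesis there is a center compatible isogeny $\sigma^*A\to A$, so Lemma~\ref{lem:centcompsimple} yields a center compatible isogeny $\sigma^*B\to B$, and then Lemma~\ref{lem:fullcomp} upgrades this to a \emph{fully} compatible isogeny $\nu\colon \sigma^*B\to B$, meaning $\nu\circ\sigma^*(f)=f\circ\nu$ for all $f\in\End(B)$ (equivalently in $\End^0(B)$ after tensoring with $\QQ$).

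First I would build from $\nu$ a fully compatible isogeny on $B^n$: taking $\nu^{\oplus n}\colon \sigma^*(B^n)=(\sigma^*B)^n\to B^n$ (using that $\sigma^*$ commutes with finite products), one checks directly that $\nu^{\oplus n}$ is fully compatible with $\End(B^n)\cong \uM_n(\End(B))$, since for a matrix $(f_{ij})$ of endomorphisms of $B$ the intertwining relation follows entrywise from $\nu\circ\sigma^*(f_{ij})=f_{ij}\circ\nu$. Then I would transport this along $\psi$: set
\[
\mu = \psi'/d \circ \nu^{\oplus n}\circ \sigma^*(\psi),
\]
where $\psi'\colon B^n\to A$ is the inverse isogeny from \eqref{def:inverseisog} with $d=\deg(\psi)$, so that $\psi'\circ\psi=[d]$ and $\psi\circ\psi'=[d]$, and where I use that $\sigma^*(\psi)\colon \sigma^*A\to \sigma^*(B^n)$ is an isogeny (base change of an isogeny is an isogeny) and that $\sigma^*$ is a ring isomorphism on endomorphism algebras. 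This $\mu$ is an isogeny $\sigma^*A\to A$ (a composite of isogenies, a priori in $\Hom^0$, but it can be represented by an actual isogeny by clearing the denominator $d$, exactly as in the proof of Lemma~\ref{lem:fullcomp}).

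The one genuine point to verify is that $\mu$ is fully compatible for $\End^0(A)$. Given $f\in\End^0(A)$, conjugating by the isogeny $\psi$ gives $\bar f=\psi\circ f\circ\psi'/d\in\End^0(B^n)$, and the key identity $\psi\circ f=\bar f\circ\psi$ holds in $\Hom^0(A,B^n)$. Then
\[
\mu\circ\sigma^*(f)=\psi'/d\circ\nu^{\oplus n}\circ\sigma^*(\psi)\circ\sigma^*(f)
=\psi'/d\circ\nu^{\oplus n}\circ\sigma^*(\psi\circ f)
=\psi'/d\circ\nu^{\oplus n}\circ\sigma^*(\bar f\circ\psi),
\]
and using that $\sigma^*$ is a ring homomorphism and that $\nu^{\oplus n}$ is fully compatible (so $\nu^{\oplus n}\circ\sigma^*(\bar f)=\bar f\circ\nu^{\oplus n}$), this equals $\psi'/d\circ\bar f\circ\nu^{\oplus n}\circ\sigma^*(\psi)=\psi'/d\circ\bar f\circ\psi\circ\mu/\!\!\ldots$; more cleanly, $\psi'/d\circ\bar f\circ\psi = \psi'/d\circ\psi\circ f = f$ by the conjugation identity and $\psi'\circ\psi=[d]$, giving $\mu\circ\sigma^*(f)=f\circ\mu$. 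I expect the main obstacle to be purely notational: keeping the $\QQ$-denominators from \eqref{def:inverseisog} under control and making sure every intertwining identity is asserted in the correct $\Hom^0$-space, together with the (routine) check that $\nu^{\oplus n}$ inherits full compatibility from $\nu$ over the matrix algebra $\uM_n(\End(B))$. No new geometric input beyond Lemmas~\ref{lem:centcompsimple} and \ref{lem:fullcomp} and the formalism of Section~\ref{sec:dedmorphisms} is needed.
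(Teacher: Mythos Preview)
Your proposal is correct and follows essentially the same route as the paper: reduce to a fully compatible isogeny on the simple factor $B$ via Lemmas~\ref{lem:centcompsimple} and \ref{lem:fullcomp}, take the diagonal $\nu^{\oplus n}$ on $B^n$, and transport back along the isogeny $\psi\colon A\to B^n$. The only cosmetic difference is that you work in $\Hom^0$ with $\psi'/d$ and clear denominators at the end, whereas the paper defines $\tau=\psi'\circ\nu^{n}\circ\sigma^*(\psi)$ as an honest isogeny, checks $\tau\sigma^*(f)[d]=f\tau[d]$, and cancels $[d]$ using that $\Hom(\sigma^*A,A)$ is torsion-free; these are equivalent bookkeeping choices.
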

\begin{proof}
We may and do assume that $A\neq 0$ is isotypic. Thus there is an isogeny $\varphi:A\to B^n$  with $n\in\ZZ_{\geq 1}$ and $B$ a simple abelian variety over $k$. Let $\varphi':B^n\to A$ be its inverse isogeny as in \eqref{def:inverseisog}. Further there is a center compatible isogeny $\sigma^*A\to A$ by assumption. Hence Lemma~\ref{lem:centcompsimple} provides a center compatible isogeny $\sigma^*B\to B$, and then Lemma~\ref{lem:fullcomp} gives a fully compatible isogeny $\mu:\sigma^*B\to B$ since $B$ is simple. After identifying $\sigma^*B^n$ with $(\sigma^*B)^n$, we see that $\mu$ defines a fully compatible isogeny $\mu^n:\sigma^*B^n\to B^n$. 
Now we consider
$$
\tau:\sigma^*A\to^{\sigma^*(\varphi)} \sigma^*B^n\to^{\mu^n} B^n\to^{\varphi'} A.
$$
Then $\tau$ is an isogeny, since it is a composition of isogenies. Further  $\varphi'\varphi$ and $\varphi\varphi'$ are both multiplication with $d=\deg(\varphi)$, and $[d]$ commutes with any morphism of group schemes. Thus we deduce that $\tau\sigma^*(f)[d]=[d]f\tau=f\tau[d]$ for all $f\in\End(A)$, since $\sigma^*$ is a functor and since $\mu^n$ is fully compatible. Then, on using that $\Hom(\sigma^*A,A)$ is torsion-free, we conclude that the isogeny $\tau:\sigma^*A\to A$ is fully compatible as desired.
\end{proof}

It remains to verify the statement of Lemma~\ref{lem:centcompsimple}. The following proof consists of checking various compatibilities (using standard arguments) in order to assure that a certain explicitly constructed isogeny $\tau:\sigma^*B\to B$ is indeed center compatible. 

\begin{proof}[Proof of Lemma~\ref{lem:centcompsimple}]
We may and do assume that $A$ is nonzero. By assumption there exists a center compatible isogeny $\mu:\sigma^*A\to A$ and there is an isogeny $\varphi:A\to B^n$ where $B$ is a simple abelian variety over $k$ and $n\in\ZZ_{\geq 1}$. Let $\varphi':B^n\to A$ be the `inverse isogeny' of $\varphi$ as in \eqref{def:inverseisog} and let $\iota:B\to B^n$ be the diagonal embedding. Now, we define
$$
t:\sigma^*B\to^{\sigma^*(\iota)}\sigma^*B^n\to^{\sigma^*(\varphi')}\sigma^*A\to^{\mu}A\to^\varphi B^n.
$$
The base change $\sigma^*(\iota)$ of $\iota$ is injective, since  $\iota$ is universally injective. 
Then, on using that $\sigma^*(\varphi')$, $\mu$ and $\varphi$ are all isogenies, we deduce that $t$ has finite kernel and thus is nonzero since $\sigma^*B\neq 0$; here we used  our assumption $A\neq 0$ to assure that $B\neq 0$. Hence there is a projection $p:B^n\to B$ such that $t$ composed with $p$ is a nonzero morphism  
$$
\tau:\sigma^*B\to B
$$
of abelian varieties over $k$. Moreover $\tau\neq 0$ is surjective since $B$ is simple. Then, on using that $\sigma^*B$ and $B$ have the same dimension, we conclude that $\tau$ is an isogeny. 

To verify that $\tau$ is center compatible, we take $z$ in the center $Z$ of $\End(B)$. We view $\End(B)$ as a subring of $\End(B^n)$ via the diagonal embedding $\iota$ and we observe that this identifies $Z$ with the center of $\End(B^n)$. A direct computation gives the identities 
\begin{equation}\label{eq:centcomp}
\varphi\circ \varphi^*(z)=z\circ \varphi  \quad \textnormal{ and } \quad \varphi' \circ z=\varphi^*(z)\circ\varphi'
\end{equation} 
inside $\Hom^0(A,B^n)$ and inside $\Hom^0(B^n,A)$ respectively.  Here we used the definition of the $\QQ$-algebra isomorphism $\varphi^*:\End^0(B^n)\isomto \End^0(A)$  in \eqref{def:inverseisog}. Pulling back by $\sigma$ is a  functor. 
Hence  \eqref{eq:centcomp} shows that $\sigma^*(\varphi')\circ\sigma^*(z)$ equals $\sigma^*(\varphi^*(z))\circ\sigma^*(\varphi')$, and $\sigma^*(\iota)$ commutes with $\sigma^*(z)$ since  $\iota z=z\iota$. 
Therefore, on using that $\mu$ is center compatible and that $\varphi^*(z)$ lies in the center of $\End^0(A)$, 
we deduce from \eqref{eq:centcomp} the identity  $$t\circ \sigma^*(z)=z\circ t$$ inside $\Hom^0(\sigma^*B,B^n)$. 
This identity then also holds inside the torsion-free abelian group $\Hom(\sigma^*B,B^n)$, and we observe that $pz=zp$. 
It follows that $\tau \sigma^*(z)=z\tau$ for all $z\in Z$, which means that $\tau$ is center compatible. This completes the proof.
\end{proof}

\subsection{Abelian varieties with CM}\label{sec:cmav}

In this section we prove explicit upper bounds for the Faltings height of certain abelian varieties with CM. For this purpose we first recall the averaged Colmez conjecture in $\mathsection$\ref{sec:avcolmez}. Then we work out in $\mathsection$\ref{sec:analyticest}  explicit analytic estimates for certain L-values and we control in $\mathsection$\ref{sec:cmram} the ramification of CM endomorphism algebras.

\paragraph{CM types and abelian varieties.} We now introduce some terminology which will be used throughout this section. A CM field is a number field which has no real embeddings but which is
 quadratic over a totally real subfield. Let $E$ be a CM field and let $F\subset E$ be the maximal totally real subfield.   A CM type $\Phi$ of $E$ is a subset $\Phi\subset \Hom(E,\mathbb C)$ such that $\Phi\sqcup \Phi \tau=\Hom(E,\mathbb C)$ where
$\tau\in \Aut(E/F)$ is not trivial.   We say that a CM type of $E$ is simple if it is not the full preimage under restriction of a CM type of a proper CM subfield of $E$. For any $\varphi:E\to\CC$, we write $\CC_\varphi$ for $\CC$ viewed as an $E$-algebra via $\varphi$. 

Let $A$ be an abelian variety defined over a subfield $k$ of $\CC$. Suppose that $2\dim(A)=[E:\QQ]$ and assume that there exists a ring morphism $\iota:E\to \End^0(A)$. Then $\iota$ induces an action of $E\otimes_\QQ\CC\cong\prod_{\varphi}\CC_\varphi $ on the Lie algebra $\textnormal{Lie}(A_\CC)$ of $A_\CC$.  This action decomposes  $\textnormal{Lie}(A_\CC)$ into a corresponding product of eigenspaces $\textnormal{Lie}(A_\CC)_\varphi$ over $\CC$. 
Let $\Phi_{A}$ be the set of $\varphi\in \Hom(E,\CC)$ such that the eigenspace $\textnormal{Lie}(A_\CC)_\varphi$ has dimension one. Then $\Phi_A$ is a CM type of $E$ which is simple if and only if $A_\CC$ is a simple abelian variety; see for example \cite[$\mathsection$1.5]{chcooo:cmbook}.   We call $\Phi_A$ the CM type of $E$ associated to $(A,\iota)$.

For any CM type $\Phi$ of $E$, we denote by $A_\Phi$ the usual complex abelian variety associated to $\Phi$ which is defined for example in \cite[1.5.3]{chcooo:cmbook}. The complex abelian variety $A_\Phi$ has the following properties: There exists a ring morphism $m:\mathcal O_E\hookrightarrow \End(A_\Phi)$ such that $\Phi$ is the CM type of $E$ associated to $(A_\Phi,\iota)$ for $\iota=m\otimes \QQ$ 
and  $A_\Phi(\CC)\cong\CC^g/\Phi(\mathcal O_E).$ Here $g=\dim(A_\Phi)$ and $\Phi(\mathcal O_E)$ denotes the lattice in $\mathbb C^g$ given by the image of $\mathcal O_E$ under the embedding $E\hookrightarrow \CC^g$
defined by the $g$ distinct embeddings $E\hookrightarrow \CC$ in $\Phi$. 

\subsubsection{Averaged Colmez conjecture}\label{sec:avcolmez}

To state the averaged Colmez conjecture,  let $E$ be a CM field with maximal totally real subfield $F\subset E$  and let $\Phi$ be a CM type of $E$. We write $g=[F:\QQ]$.

\paragraph{The height $h(\Phi)$.}Let $A_\Phi$ be the complex abelian variety associated to $\Phi$. Any complex abelian variety with CM is defined over a number field and then we define 
$h(\Phi)=h_F(A_\Phi).$
Here the invariance of the stable Faltings height $h_F$ under geometric isomorphisms assures that $h(\Phi)$ does not depend on the choice of a model of $A_\Phi$ over a number field.  Moreover, this invariance of $h_F$ together with Colmez'~\cite[Thm 0.3~(ii)]{colmez:conjecture}  implies that 
\begin{equation}\label{eq:colmezheight}
h(\Phi)=h_F(A)
\end{equation}
for any abelian variety $A$ of dimension $g$ which is defined over a number field $K\hookrightarrow \CC$ and which has the following property: There is a ring morphism $j:\mathcal O_E\to \End(A_\CC)$ such that $\Phi$ is the CM type of $E$ associated to $(A_\CC,j\otimes \QQ)$. Yuan--Zhang obtained in \cite[$\mathsection$2]{yuzh:avcolmez} an alternative proof of \eqref{eq:colmezheight} as a by-product of their height decomposition. 

\paragraph{The formula.} Define $\mathfrak c=D_E/D_F$ and let $L'(s)$ be the derivative of the L-function $L(s)$    of the quadratic character of $\mathbb A_F^\times$ corresponding to $E/F$; here $L(s)$ is without the infinite part.  Now we can state the averaged Colmez conjecture (\cite{colmez:conjecture}) which in the case $g=1$ is a reformulation of the classical Lerch--Chowla--Selberg formula:
\begin{equation}\label{eq:avcolmez}
\tfrac{1}{2^g}\sum_\Phi h(\Phi)=-\tfrac{1}{2}\tfrac{L'(0)}{L(0)}-\tfrac{1}{4}\log \mathfrak c-\tfrac{g}{2}\log \pi
\end{equation}
with the sum taken over all distinct CM types $\Phi$ of $E$. Here the term $-\tfrac{g}{2}\log \pi$  takes into account that we use Faltings' normalization of the height $h_F$.   Colmez~\cite{colmez:conjecture}, Obus~\cite{obus:colmez} and Yang~\cite{yang:hmsurfaceamerican,yang:hmsurfaceasian} proved important special cases of the exact version \cite[Conj 0.4]{colmez:conjecture} of \eqref{eq:avcolmez}. In general  \eqref{eq:avcolmez} was established by Yuan--Zhang~\cite{yuzh:avcolmez} using the method of Yuan--Zhang--Zhang~\cite{yuzhzh:gzbook} and independently by Andreatta--Goren--Howard--Madapusi-Pera~\cite{aghm:avcolmez} using the method of Yang~\cite{yang:hmsurfaceamerican,yang:hmsurfaceasian}.

\paragraph{An explicit upper bound.}Colmez' conjectures give useful bounds for the height $h(\Phi)$ of any CM type $\Phi$ of $E$. For example Colmez~\cite[$\mathsection$3]{colmez:conjecturecompositio} bounded $h(\Phi)$ conditional on the exact version of \eqref{eq:avcolmez} and other conjectures. Further Colmez~\cite[p.365]{colmez:conjecturecompositio}  for $g=1$ and Tsimerman~\cite[Cor 3.3]{tsimerman:ao} in general  deduced from \eqref{eq:avcolmez} the asymptotic bound $$h(\Phi)\leq D_E^{o_g(1)}.$$ 
The proof uses analytic estimates for $L'(0)/L(0)$ via Brauer--Siegel and  the lower bound $h(\Phi)\geq c_g$ which is contained in Faltings' work  (\cite[p.356]{faltings:finiteness}, \cite[1.22]{deligne:faltings}). In fact one can take here $c_g=-\tfrac{g}{2}\log(2\pi^2)$ by Bost's lower bound \eqref{eq:hflower}, 
 and  the analytic estimates (except Brauer--Siegel) applied in \cite[Cor 3.3]{tsimerman:ao} can be worked out explicitly by using Rademacher's arguments in \cite{rademacher:lindeloef}. This leads to the following result. 

\begin{lemma}\label{lem:hphibound}
Let $h_E$ and $h_F$ be the class number of $E$ and $F$ respectively. If $\epsilon>0$ is a real number with $\epsilon\leq 1/2$, then any CM type $\Phi$ of $E$ satisfies $$\tfrac{1}{2^g}h(\Phi)\leq a\tfrac{h_F}{h_E}\mathfrak c^{1/2}\max\bigl((\tfrac{2\pi}{2+\epsilon})^g,\mathfrak c\bigl)^\epsilon+ \tfrac{1}{4}\log \mathfrak c-\tfrac{\gamma}{2}g.$$
Here $a=\tfrac{wQ}{2\epsilon}\zeta(1+\epsilon)\tfrac{(2+\epsilon)^{g\epsilon}}{(2\pi)^{g(\epsilon+1)}}$ where $\zeta(s)$ is the Riemann zeta function, $\gamma$ is the Euler--Mascheroni constant, $w$ is the cardinality of the group $\mu$ of roots of unity in $E$ and $Q=[\mathcal O_E^\times:\mu\OL_F^\times]\in\{1,2\}$ is the Hasse unit index of $E/F$.
\end{lemma}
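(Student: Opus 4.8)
The plan is to start from the averaged Colmez formula \eqref{eq:avcolmez}, which gives $\tfrac{1}{2^g}\sum_\Phi h(\Phi) = -\tfrac{1}{2}\tfrac{L'(0)}{L(0)} - \tfrac{1}{4}\log\mathfrak c - \tfrac{g}{2}\log\pi$, and to extract from it a bound on a single $h(\Phi)$. Since $h(\Phi) \geq c_g$ for every CM type $\Phi$ with $c_g = -\tfrac{g}{2}\log(2\pi^2)$ (Bost's lower bound, recorded in the excerpt as \eqref{eq:hflower}), one has for any fixed $\Phi_0$
\[
\tfrac{1}{2^g}h(\Phi_0) \leq \tfrac{1}{2^g}\sum_\Phi h(\Phi) - \tfrac{2^g-1}{2^g}c_g = -\tfrac{1}{2}\tfrac{L'(0)}{L(0)} - \tfrac{1}{4}\log\mathfrak c - \tfrac{g}{2}\log\pi - \tfrac{2^g-1}{2^g}c_g.
\]
Plugging in $c_g = -\tfrac{g}{2}\log(2\pi^2)$ and simplifying the elementary logarithmic terms should produce a main term $-\tfrac12\tfrac{L'(0)}{L(0)} + \tfrac14\log\mathfrak c$ plus a term linear in $g$; here I expect the $-\tfrac{\gamma}{2}g$ in the statement to come out once the constant $a$ and the analytic estimate are assembled (the $\gamma$ surfaces naturally from the Laurent expansion of $L(s)$ near $s=0$, see below), so the bookkeeping of constants must be done carefully to land exactly on the claimed shape.

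The substantive step is to bound $-\tfrac12\tfrac{L'(0)}{L(0)}$ explicitly. Here $L(s)$ is the (finite part of the) Hecke $L$-function of the quadratic character $\chi$ of $\mathbb{A}_F^\times$ attached to $E/F$, so $L(s) = \zeta_E(s)/\zeta_F(s)$ up to the archimedean factors. The plan is to follow Rademacher's arguments in \cite{rademacher:lindeloef}: write $\tfrac{L'(0)}{L(0)}$ via the functional equation in terms of $\tfrac{L'(1)}{L(1)}$ (or rather in terms of the value and logarithmic derivative at the edge $s=1+\epsilon$), using that the completed $L$-function has conductor $\mathfrak c = D_E/D_F$ and $g$ archimedean places each contributing a $\Gamma$-factor. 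The functional equation converts $L'(0)/L(0)$ into $\log(\text{conductor})$-type terms plus $\Gamma$-factor logarithmic derivatives (evaluated at points where $\Gamma'/\Gamma$ is essentially $-\gamma$, which is the source of the $\gamma g$) plus $L'(1)/L(1)$ type data. Then one estimates $L(1+\epsilon)$ from below and $|L'(1+\epsilon)|$ from above by their Dirichlet series: $|L'(1+\epsilon)| \leq$ (something like) $\zeta_F(1+\epsilon)$-type sums, giving the $\zeta(1+\epsilon)$ factor in $a$, and the ratio $L(0)$ is evaluated exactly by the class number formula, giving the $\tfrac{wQ}{h_E/h_F}$ factors in $a$ — this is where $\tfrac{h_F}{h_E}$, $w$, and the Hasse unit index $Q$ enter. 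The factors $(2+\epsilon)^{g\epsilon}$, $(2\pi)^{g(\epsilon+1)}$ and $\max((\tfrac{2\pi}{2+\epsilon})^g,\mathfrak c)^\epsilon$ should emerge from bounding the $\Gamma$-factors and the conductor-to-the-$\epsilon/2$ in the approximate functional equation / convexity estimate at height $\epsilon$ above the edge; the restriction $\epsilon \leq 1/2$ is needed so that $1+\epsilon$ stays in the region of absolute convergence with good control and so that $\zeta(1+\epsilon)$ and the $\Gamma$-estimates behave.

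I would organize the write-up as: (1) recall \eqref{eq:avcolmez} and \eqref{eq:hflower}, deduce the displayed reduction of $h(\Phi)$ to $-\tfrac12 L'(0)/L(0)$ plus explicit elementary terms; (2) express $L(0)$ exactly via the analytic class number formula for $E/F$, introducing $h_E$, $h_F$, $w$, $Q$, and the regulators (which cancel in the ratio for the relative class number formula, so no regulator survives); (3) run Rademacher's convexity/functional-equation estimate to bound $L'(0)/L(0)$ in terms of a Dirichlet series at $1+\epsilon$, the conductor $\mathfrak c$, and $\Gamma$-factors, collecting all constants into $a$ and the linear-in-$g$ term into $-\tfrac{\gamma}{2}g$; (4) combine. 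The main obstacle I anticipate is step (3): making Rademacher's Phragmén–Lindelöf / approximate-functional-equation argument fully explicit over a general totally real base field $F$ (rather than over $\mathbb{Q}$), keeping every archimedean $\Gamma$-factor tracked so that the $g$-dependence is exactly $\max(\cdots)^\epsilon$ and $-\tfrac{\gamma}{2}g$ and not something slightly off; secondary difficulties are getting the exact normalization of $L(0)$ right (the distinction between $\zeta_E/\zeta_F$ and the primitive character $L$, and the $2$-power discrepancies from the unit index $Q$) and ensuring the final constant matches the precise form of $a$ claimed, which is really a matter of disciplined bookkeeping rather than a new idea.
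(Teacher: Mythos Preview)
Your proposal is correct and follows essentially the same route as the paper: combine the averaged Colmez formula \eqref{eq:avcolmez} with Bost's lower bound \eqref{eq:hflower} to isolate a single $h(\Phi)$, then bound $L'(0)/L(0)$ via the functional equation, the class number formula, and a Rademacher-type convexity estimate. Two small clarifications relative to the paper's execution: first, the class number formula is applied at $s=1$ (giving $L(1)=\tfrac{(2\pi)^g}{wQ}\tfrac{h_E}{h_F}\mathfrak c^{-1/2}$) rather than at $s=0$; second, no approximate functional equation is needed --- the paper simply bounds $|L'(1)|$ by Cauchy's integral formula on a circle of radius $\epsilon$ about $s=1$, using the Phragm\'en--Lindel\"of bound $|L(s)|\leq \zeta(1+\epsilon)\bigl(\tfrac{\mathfrak c}{(2\pi)^g}|1+s|^g\bigr)^{(1+\epsilon-\sigma)/2}$ on that circle, which is exactly Rademacher's \cite[Thm~2]{rademacher:lindeloef} applied to the single entire Artin $L$-function $L(s)=\zeta_E(s)/\zeta_F(s)$ of the quadratic character of $E/F$ (so the ``general totally real base field'' issue you flag does not actually arise).
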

\begin{proof}
On combining $h(\Phi)\geq -\tfrac{g}{2}\log(2\pi^2)$ in \eqref{eq:hflower} with the explicit analytic estimates worked out in (the proof of) Lemma~\ref{lem:analyticestimates}, we see that \eqref{eq:avcolmez} implies Lemma~\ref{lem:hphibound}.
\end{proof}

  Class field theory gives that $h_F$ divides $h_E$ 
and then Lemma~\ref{lem:hphibound} provides an explicit upper bound for $h(\Phi)$ in terms of $\epsilon$, $D_E$ and $g$. This bound is sufficient for our purpose, but it is asymptotically substantially weaker than Tsimerman's bound $h(\Phi)\leq D_E^{o_g(1)}$. 

\subsubsection{Explicit analytic estimates}\label{sec:analyticest}

We now work out the explicit analytic estimate which we used in Lemma~\ref{lem:hphibound} above. As before, $\zeta(s)$ is the Riemann zeta function and  $\gamma$ is the Euler--Mascheroni constant.  

\begin{lemma}\label{lem:analyticestimates}Let $\epsilon>0$ be a real number with $\epsilon\leq 1/2$, let $E$ be a CM field with maximal totally real subfield $F$, and let $L'(s)$ be the derivative of the L-function $L(s)$ of the  character of $\mathbb A_F^\times$ corresponding to $E/F$. Put $g=[F:\QQ]$ and $\mathfrak c=D_E/D_F$. 
\begin{itemize}
\item[(i)] If the real part $\sigma$ of $s\in\CC$ satisfies $-\epsilon\leq \sigma\leq 1+\epsilon$ then $$|L(s)|\leq \zeta(1+\epsilon)\left(\tfrac{\mathfrak c}{(2\pi)^g}|1+s|^g\right)^{\tfrac{1+\epsilon-\sigma}{2}}.$$
\item[(ii)] Let $h_E$ and $h_F$ be the class number of $E$ and $F$ respectively, let $w$ be the number of roots of unity in $E$ and let  $Q$ be the Hasse unit index of $E/F$. It holds $$ \left|\tfrac{L'(0)}{L(0)}\right|\leq 2a \tfrac{h_F}{h_E}\mathfrak c^{1/2}\max\bigl((\tfrac{2\pi}{2+\epsilon})^g,\mathfrak c\bigl)^\epsilon+\log \mathfrak c+b,$$
where $a=\tfrac{wQ}{2\epsilon}\zeta(1+\epsilon)\tfrac{(2+\epsilon)^{g\epsilon}}{(2\pi)^{g(\epsilon+1)}}$ and $b=g\log (2\pi e^{\gamma})$.
\end{itemize} 
\end{lemma}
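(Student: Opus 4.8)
\textbf{Proof plan for Lemma~\ref{lem:analyticestimates}.}

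The plan is to prove (i) by a standard Phragm\'en--Lindel\"of convexity argument applied to the Hecke $L$-function $L(s)$ of the quadratic character $\chi$ of $\mathbb A_F^\times$ attached to $E/F$, and then to deduce (ii) by combining (i) with the class number formula for $E/F$ and an explicit contour estimate for the logarithmic derivative at $s=0$. For (i): on the line $\sigma = 1+\epsilon$, the Euler product gives $|L(s)|\le \zeta(1+\epsilon)^{[F:\QQ]}$ --- wait, more carefully, $|L(s)|\le \prod_{\mathfrak p}(1-N\mathfrak p^{-1-\epsilon})^{-1}\le \zeta_F(1+\epsilon)\le \zeta(1+\epsilon)^g$; I would instead use the cruder but sufficient bound $|L(s)|\le \zeta(1+\epsilon)$ only if that is what the functional equation forces, so I will track constants by comparing with $\zeta_F$ and the stated shape. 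On the line $\sigma=-\epsilon$, I apply the functional equation of $L(s)$: the completed $L$-function $\Lambda(s) = \mathfrak c^{s/2}\gamma_\infty(s)L(s)$ satisfies $\Lambda(s)=\Lambda(1-s)$, where $\gamma_\infty(s)$ is a product of $g$ Gamma factors (one $\Gamma_\RR$ factor per real place of $F$, each of type $\Gamma_\RR(s)$ or $\Gamma_\RR(s+1)$ according to the local sign). Stirling's formula then bounds the ratio of Gamma factors on the symmetric lines by $(|1+s|/(2\pi))^{g(1+\epsilon-\sigma)/2}$ up to controlled error, which together with the $\mathfrak c^{s/2}$ factor yields the claimed bound $|L(s)|\le \zeta(1+\epsilon)\bigl(\tfrac{\mathfrak c}{(2\pi)^g}|1+s|^g\bigr)^{(1+\epsilon-\sigma)/2}$ on $\sigma=-\epsilon$. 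Since $L(s)$ is entire (as $\chi$ is a nontrivial character) and of finite order in the strip, Phragm\'en--Lindel\"of interpolates the bound to the whole strip $-\epsilon\le\sigma\le 1+\epsilon$. Here one must be careful to use Rademacher's precise version of the convexity bound (as in \cite{rademacher:lindeloef}) so that the constant is exactly $\zeta(1+\epsilon)$ and the exponents are exactly as stated.

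For (ii): I would write $\dfrac{L'(0)}{L(0)}$ via the functional equation in the form $\dfrac{L'(0)}{L(0)} = \log\sqrt{\mathfrak c} + \dfrac{\gamma_\infty'}{\gamma_\infty}(0) - \dfrac{L'(1)}{L(1)}$, using $\Lambda(s)=\Lambda(1-s)$ and logarithmic differentiation at $s=0$. The archimedean term $\gamma_\infty'/\gamma_\infty(0)$ is an explicit sum of $g$ values of $\Gamma'/\Gamma$ at $0$ or $1/2$, all bounded in absolute value by a constant times $g$; this contributes the $b = g\log(2\pi e^{\gamma})$ term (with $\gamma$ the Euler--Mascheroni constant entering through $\Gamma'/\Gamma(1)=-\gamma$ and $\Gamma'/\Gamma(1/2)=-\gamma-2\log 2$). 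The term $\log\sqrt{\mathfrak c}$ gives $\tfrac12\log\mathfrak c$, but the stated bound has $\log\mathfrak c$, so the remaining $\tfrac12\log\mathfrak c$ must come from bounding $L'(1)/L(1)$; alternatively I would estimate $L'(1)/L(1)$ directly by a Cauchy-integral representation $\dfrac{L'(1)}{L(1)} = \dfrac{1}{2\pi i}\oint \dfrac{L'(s)}{L(s)}\,ds$ over a small circle, reducing to bounding $\log|L(s)|$ from above (via (i)) and from below (via the class number formula, which forces $L(1) = \dfrac{2^{\,r_1+r_2} h_E R_E}{w\, h_F R_F \sqrt{\mathfrak c}}\cdot(\text{archimedean normalization})$, i.e. $L(1)$ is not too small because $h_E/h_F$, the regulator ratio $R_E/R_F$, and $\mathfrak c^{-1/2}$ are controlled --- here $w$ and the Hasse unit index $Q$ enter precisely through the exact class-number-formula constant $L(1)=\tfrac{(2\pi)^g\,}{\ }\cdot\tfrac{h_E}{Q\,h_F}\cdot\mathfrak c^{-1/2}$ up to the regulator ratio, which one shows equals $1$). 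Feeding the convexity bound (i) into this contour integral, optimizing the radius, and collecting the $\mathfrak c^{1/2}$, the $\max\bigl((\tfrac{2\pi}{2+\epsilon})^g,\mathfrak c\bigr)^\epsilon$ from the $|1+s|^{g\epsilon/2}$ term near $s=1$, and the $\zeta(1+\epsilon)(2+\epsilon)^{g\epsilon}/(2\pi)^{g(\epsilon+1)}$ prefactor, produces exactly the constant $a = \tfrac{wQ}{2\epsilon}\zeta(1+\epsilon)\tfrac{(2+\epsilon)^{g\epsilon}}{(2\pi)^{g(\epsilon+1)}}$ and the bound in (ii).

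The main obstacle I anticipate is \emph{tracking the constants explicitly}: the analytic content (Phragm\'en--Lindel\"of, Stirling, contour estimate of a logarithmic derivative, class number formula) is classical, but the lemma demands a fully effective inequality with the precise constants $a$ and $b$ and the precise exponent structure, so the delicate part is the bookkeeping --- in particular, correctly normalizing the Gamma factors (there are $g$ of them, with signs depending on the splitting behaviour of the infinite places, but since $E/F$ is a CM extension all $g$ real places of $F$ are complexified, so all local signs are $+1$ and the Gamma factor is $\Gamma_\RR(s+1)^g$ or similar --- this uniformity is what makes the archimedean term clean), and correctly showing that the regulator ratio $R_E/R_F$ in the class number formula equals $1/Q$ or $1$ so that only $w$, $Q$, $h_E$, $h_F$ survive. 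I would carry this out by following Rademacher's paper \cite{rademacher:lindeloef} line by line for the convexity step, substituting the Hecke $L$-function of $E/F$ for the Dedekind zeta function, and then doing the $s=0$ logarithmic-derivative estimate as in Tsimerman's \cite[Cor~3.3]{tsimerman:ao}, but recording every inequality rather than absorbing it into $o_g(1)$.
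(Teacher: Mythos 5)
Your overall strategy (Phragm\'en--Lindel\"of convexity for (i), then a contour estimate plus the class number formula for (ii)) matches the paper's, but there are two concrete gaps.

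First, in (i) you hedge between the bound $\zeta(1+\epsilon)^g$ (via $\zeta_F$) and the stated $\zeta(1+\epsilon)$ on the line $\sigma=1+\epsilon$, and ultimately don't commit to an argument that gives the latter. But the stated bound requires it: the paper observes (and this is the non-obvious step) that because $\chi$ is quadratic, $\log L(s)=\sum_p\sum_n \frac{1}{n}\chi(p^n)p^{-ns}$ has integer coefficients with $|\chi(p^n)|\leq 1$, so $|\log L(s)|\leq\log\zeta(\sigma)$. Without this, plain Rademacher gives only $\zeta(1+\epsilon)^g$, and the constant $a$ in (ii) would be off by a factor $\zeta(1+\epsilon)^{g-1}$. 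The remark just after the lemma in the paper flags exactly this improvement.

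Second, and more seriously, your functional-equation manipulation in (ii) is wrong. Differentiating $\Lambda(s)=W\Lambda(1-s)$ at $s=0$ picks up \emph{both} the conductor and the Gamma-factor contributions at $s=0$ \emph{and} at $s=1$: one gets $-\tfrac{L'(0)}{L(0)}=\tfrac{L'(1)}{L(1)}+\log\mathfrak c-g\log(2\pi e^\gamma)$, where the full $\log\mathfrak c$ comes from adding $\tfrac12\log\mathfrak c$ contributions on each side, not from $L'(1)/L(1)$. Your formula $\tfrac{L'(0)}{L(0)}=\log\sqrt{\mathfrak c}+\tfrac{\gamma_\infty'}{\gamma_\infty}(0)-\tfrac{L'(1)}{L(1)}$ drops the symmetric half, and your proposed fix (extracting the missing $\tfrac12\log\mathfrak c$ from the $L'(1)/L(1)$ estimate) does not work because that term is $O(\mathfrak c^{1/2+\epsilon})$, not $O(\log\mathfrak c)$. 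Relatedly, your plan to estimate $L'(1)/L(1)$ by a Cauchy integral of $L'/L$ requires a \emph{lower} bound on $|L|$ around the circle, which is considerably harder than what is needed. The paper instead bounds $|L'(1)|$ via $|L'(1)|\epsilon\leq\sup_{|z-1|=\epsilon}|L(z)|$ directly from (i), and then simply \emph{divides} by the exact value $L(1)=\tfrac{(2\pi)^g}{wQ}\tfrac{h_E}{h_F}\mathfrak c^{-1/2}$ supplied by the class number formula; no lower bound on $|L|$ on the circle is needed.

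Minor: your uncertainty about whether $R_E/R_F$ reduces to $Q$ can be resolved by citing the standard identity (Washington, Prop.\ 4.16) directly, as the paper does.
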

  
Here  (ii) is a standard consequence of (i) and the analytic class number formula, while the bound in (i) is up to a factor $\zeta(1+\epsilon)^{g-1}$ due to Rademacher~\cite[Thm 5]{rademacher:lindeloef}. In fact we shall obtain (i) by applying his arguments with our entire Artin L-function $L(s)$.

\begin{proof}[Proof of Lemma~\ref{lem:analyticestimates}]
Let $\epsilon>0$ be a real number with $\epsilon\leq 1/2$. We first introduce some notation. For each $s\in \CC$ we write $s=\sigma+it$ with $\sigma,t\in\RR$ and we denote by $S(a,b)$ the strip $a\leq \sigma\leq b$ for $a,b\in \RR$ with $a<b$. Let $\chi:G\to \CC$ be the non-trivial irreducible character of $G=\Aut(E/F)\cong \ZZ/2\ZZ$. In particular $\chi$ has degree one with image $\chi(G)=\{-1,1\}$, and $L(s)$ coincides with the Artin L-function of $\chi$. 

We now prove (i).  In the case $\sigma\geq 1+\epsilon$,  it holds $\log L(s)=\sum_p\sum_{n\geq 1} \tfrac{1}{n}\chi(p^n)p^{-ns}$ with $\chi(p^n)\in\ZZ$ satisfying $|\chi(p^n)|\leq 1$ and hence $|\log L(s)|\leq \log \zeta(\sigma)$ which implies
  \begin{equation}\label{eq:zetabound}
|L(s)|\leq \zeta(1+\epsilon).
\end{equation}
Let $\Gamma(s)$ be the Gamma function. The field $E$ is CM with maximal totally real subfield $F$.   Thus the conductor $\mathfrak c$ of the non-trivial character $\chi$ of $G$ is given by $\mathfrak c=D_E/D_F$ and the completed Artin L-function $\Lambda(s)$ takes the following form 
  \begin{equation*}
\Lambda(s)=\mathfrak c^{s/2} L_\infty(s) L(s), \quad L_\infty(s)=\pi^{-g(s+1)/2}\Gamma(\tfrac{s+1}{2})^g.
\end{equation*}
As the character $\chi$ is fixed by complex conjugation, the functional equation becomes $\Lambda(s)=W(\chi)\Lambda(1-s)$ for some $W(\chi)\in \CC$ with absolute value 1. Therefore we obtain
\begin{equation*}
L(s)=W(\chi) \left(\frac{\mathfrak c}{\pi^g}\right)^{1/2-s}\left(\frac{\Gamma(1-\tfrac{s}{2})}{\Gamma(\tfrac{s+1}{2})}\right)^gL(1-s).
\end{equation*}
Next, an application of Rademacher's \cite[Lem 2]{rademacher:lindeloef} with $Q=1$ gives for all $s$ in the strip $S(-\tfrac{1}{2},\tfrac{1}{2})$ that $|\Gamma(1-\tfrac{s}{2})/\Gamma(\tfrac{s+1}{2})|$ is at most $\left(\tfrac{1}{2}|1+s|\right)^{1/2-\sigma}$. Then, as $0<\epsilon\leq 1/2$ by assumption, the above displayed expression for $L(s)$ together with \eqref{eq:zetabound} shows 
\begin{equation*}
|L(-\epsilon+it)|\leq A|1-\epsilon+it|^{(1/2+\epsilon)g}, \quad A=\bigl(\tfrac{\mathfrak c}{(2\pi)^g}\bigl)^{1/2+\epsilon}\zeta(1+\epsilon).
\end{equation*}
Further \eqref{eq:zetabound} provides $|L(1+\epsilon+it)|\leq \zeta(1+\epsilon)$, and the Artin $L$-function $L(s)$ of our non-trivial $\chi$ is entire since $G$ is abelian. Then an application of Rademacher's version \cite[Thm 2]{rademacher:lindeloef} of the Phragm\'en--Lindel\"of theorem  with $Q=1$, $\alpha=(1/2+\epsilon)g$ and $\beta=0$ gives for all $s$ in the strip $S(-\epsilon,1+\epsilon)$ an upper bound for $|L(s)|$ as claimed in (i).

We now prove (ii). For this purpose we take the radius $r=\epsilon$ and  the entire function $f(z)=L(z)$ in the basic inequality $|f'(1)|r\leq \sup_{|z-1|=r}|f(z)|$ which follows from Cauchy's integral formula.   This together with (i) leads to 
  $$|L'(1)|\leq \tfrac{1}{\epsilon} \zeta(1+\epsilon)\max\bigl(1,\tfrac{\mathfrak c}{(2\pi)^g}(2+\epsilon)^{g}\bigl)^{\epsilon}.$$
We next express $L(1)$ in terms of class numbers. As $G\cong\ZZ/2\ZZ$, the Artin L-function $L(s)$ of the non-trivial $\chi$ is the quotient $L(s)=\zeta_E(s)/\zeta_F(s)$ of the Dedekind zeta functions of $E$ and $F$. Then the analytic class number formula and \cite[Prop 4.16]{washington:cyclotomic} imply  $$L(1)=\tfrac{(2\pi)^{g}}{wQ}\tfrac{h_E}{h_F}\mathfrak c^{-1/2}.$$  To relate the values $L'(0)/L(0)$ and $L'(1)/L(1)$, we differentiate the functional equation $\Lambda(s)=W(\chi) \Lambda(1-s)$ and we use that the Digamma function $\psi(z)=\Gamma'(z)/\Gamma(z)$ has special values  $\psi(1/2)=-2\log 2-\gamma$ and $\psi(1)=-\gamma$  respectively. Then we obtain   $$-\tfrac{L'(0)}{L(0)}=\tfrac{L'(1)}{L(1)}+\log \mathfrak c-g\log (2\pi e^{\gamma}).$$
Finally, on combining the above displayed results we deduce an upper bound for $|L'(0)/L(0)|$ as claimed in (ii). This completes the proof of Lemma~\ref{lem:analyticestimates}.
\end{proof}

\subsubsection{Ramification of CM endomorphism algebras}\label{sec:cmram}

The endomorphism algebra of a simple complex abelian variety with CM is a CM field whose ramification over $\QQ$ can be controlled by CM theory. The goal of this subsection is to prove the following result in which $N_S=\prod_{v\in S^c}N_v$ is as above.

\begin{lemma}\label{lem:cmram}
Let $K$ be a number field, let $A$ be an abelian scheme over a nonempty open subscheme $S\subseteq\spec(\OK)$ and let $B$ be a simple factor of $A_{\bar{K}}$ such that $B$ has CM. 
\begin{itemize}
\item[(i)] The CM field $E=\End^0(B)$ is unramified at each rational prime $p\nmid D_KN_S$.
\item[(ii)] If $B$ and all its endomorphisms are defined over $K\subset \bar{K}$, then the CM field $E$ is unramified at each rational prime $p\nmid D_K$.
\end{itemize}
\end{lemma}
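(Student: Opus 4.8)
The plan is to exploit the theory of complex multiplication, specifically the fact that a simple CM abelian variety over a number field acquires good reduction everywhere after a finite extension whose ramification is controlled by the CM data, together with the main theorem of CM which pins down the field of definition of torsion and of the endomorphisms. First I would reduce to a clean setting: after enlarging $K$ (which only changes things by factors of $D_K$ and $N_S$ in a controlled way, by Dedekind's discriminant theorem as used already in $\mathsection$\ref{sec:cond*}), I may assume that $B$ and all its endomorphisms are defined over $K$ and that $B$ is the base change of an abelian scheme over an open $S' \subseteq \spec(\OK)$ with $N_{S'}$ still controlled by $D_K N_S$ — this is exactly the mechanism of \eqref{def:UTGisog} and Lemma~\ref{lem:cond*}, and it is how (i) will follow from (the proof of) (ii). So the heart of the matter is statement (ii).

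For (ii): let $E = \End^0(B)$, a CM field of degree $2g$ with $g = \dim B$, with ring of integers acting (after possibly shrinking $S$, absorbing finitely many primes into $D_K$) by an order, and let $\Phi = \Phi_B$ be the associated CM type. I would pass to the reflex field $E^*$ with reflex type $\Phi^*$; the main theorem of complex multiplication (Shimura--Taniyama) gives that $B$ is isogenous over a finite abelian extension of $K E^*$ to an abelian variety with CM by $\mathcal O_E$, and more to the point the Néron--Ogg--Shafarevich criterion combined with the CM reciprocity law shows that the $\ell$-adic Galois representation on $B$ is, up to the abelian character cutting out the field of definition of torsion, a Hecke character of $K$ whose conductor divides the conductor of the CM type times the primes of bad reduction of $B$. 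Concretely: since $B$ has good reduction over $S'$ and CM by $E$, a prime $p$ that is unramified in $K$ and not below any prime of $S'^c$ is a prime where $B$ has good reduction and where the Frobenius acts through the CM; the reduction of $B$ is an ordinary or supersingular CM abelian variety, and its Frobenius eigenvalues are Weil numbers in $E$. The key point is that a rational prime $p$ ramifies in $E$ only if it ramifies in $E^*$ (reflex fields have the same ramified primes as subfields of the Galois closure) and that $p$ ramifies in the Galois closure $\tilde E$ of $E$ only if $p$ divides the different of the Hecke character governing $B$ — but over a prime $\mathfrak p$ of $K$ of good reduction the local representation is unramified away from $p$ itself, forcing any ramification of $E/\QQ$ at $p$ to come from bad reduction or from ramification in $K$. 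I would make this precise by invoking the explicit description (e.g. in \cite[$\mathsection$1.5]{chcooo:cmbook} or Lang~\cite{lang:cm}) that the primes ramified in a CM field $E$ all lie above primes of bad reduction of $A_\Phi$ once torsion is rational — alternatively, one argues directly that $E = \QQ(\text{Frobenius eigenvalues})$ and these eigenvalues, for good ordinary reduction at $\mathfrak p \mid p$, generate an extension unramified at $p$ unless $p$ ramifies in $K$.

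The cleanest route, and the one I would actually write, uses the criterion of Néron--Ogg--Shafarevich \cite{seta:goodreduction} (already cited in the excerpt) directly: fix a rational prime $\ell$; the field $K(B_{\ell^n})$ is unramified over $S'[\tfrac1\ell]$ since $B$ has good reduction there, exactly as in the proof of Lemma~\ref{lem:cond*}. By the main theorem of CM the compositum $\bigcup_n K(B_{\ell^n})$ is an abelian extension of $KE^*$ whose ramification is supported on the conductor of the associated Hecke character, which in turn is supported on the bad primes of $B$ and the ramified primes of $E^*/\QQ$; but $E^*$ and $E$ have the same set of ramified rational primes (both being determined by the ramified primes of the Galois closure $\tilde E$). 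Running this for two distinct auxiliary primes $\ell \ne \ell'$ and intersecting, as in Lemma~\ref{lem:cond*}, removes the dependence on $\ell$, so any rational prime ramified in $E$ must divide $D_K$ (once we have arranged, in the reduction step, that the bad primes of $B$ and the primes ramifying in the extension of $K$ needed to define $B$ and $\End(B)$ are all absorbed into $D_K$ for part (ii), or into $D_K N_S$ for part (i)).

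The main obstacle I expect is the bookkeeping in the reduction step together with correctly citing the precise form of the main theorem of CM that controls the conductor of the CM Hecke character — one must be careful that ``unramified away from bad reduction and $D_K$'' genuinely holds for the Galois closure $\tilde E$ (so that it descends to $E$), rather than only for $E$ in some weaker sense, and that passing to the reflex field and back does not introduce new ramified primes. This is standard CM theory but the statements in the literature are phrased in several inequivalent ways, so pinning down the exact reference (Shimura--Taniyama, or \cite[Ch.~4]{lang:cm}, or \cite{chcooo:cmbook}) and matching hypotheses is where the real work lies; the Galois-theoretic fact that a number field, its Galois closure, and its reflex field all share the same set of ramified rational primes is elementary and I would state it as a one-line lemma.
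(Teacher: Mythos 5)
Your general direction is right, but there is a gap at the key step, and your ``cleanest route'' is considerably heavier than what is actually needed.

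The paper's proof of (ii) is a two-line Galois-theoretic argument with no Hecke characters, no N\'eron--Ogg--Shafarevich, and no auxiliary primes $\ell,\ell'$: since $B$ is simple, the CM type $\Phi$ of $E$ associated to $(B,\mathrm{id})$ is simple; because $B$ and all its endomorphisms are defined over $K$, the Shimura--Taniyama main theorem (\cite[Thm 3.1.1]{lang:cm}) gives an embedding $E^*\hookrightarrow K$, so the Galois closure $(E^*)^{\mathrm{cl}}$ is unramified at every $p\nmid D_K$; and then the reflex-of-reflex theorem for \emph{simple} CM types (\cite[Thm 1.5.2]{lang:cm}) identifies $E$ with the reflex of $(E^*,\Phi^*)$, hence $E\hookrightarrow (E^*)^{\mathrm{cl}}$ and $E$ is unramified at $p\nmid D_K$.

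The gap in your write-up is exactly at the parenthetical ``reflex fields have the same ramified primes as subfields of the Galois closure'' and the later assertion that a number field, its Galois closure, and its reflex field ``all share the same set of ramified rational primes'' is ``elementary.'' It is elementary that $F$ and its Galois closure $\tilde F$ share ramified primes, and it is automatic that $E^*\subset\tilde E$, giving $\mathrm{ram}(E^*)\subseteq\mathrm{ram}(E)$. But the reverse inclusion — the one you actually need, namely $\mathrm{ram}(E)\subseteq\mathrm{ram}(E^*)$ — is \emph{not} automatic: if $\Phi$ were induced from a proper CM subfield $F\subsetneq E$, the reflex field $E^*$ would coincide with the reflex of $(F,\Phi|_F)$ and could miss primes ramified in $E$ but not in $F$. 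The statement you need is precisely that $E$ embeds into the Galois closure of $E^*$, which is the reflex-of-reflex theorem and requires $\Phi$ to be simple; simplicity of $\Phi$ comes from simplicity of $B$ (this is why the lemma's hypothesis ``$B$ is a simple factor'' matters). You should state this as a cited theorem, not an elementary lemma, and make the dependence on simplicity explicit.

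Two smaller remarks. First, your proposed Hecke-character/NOS route would, if carried out, give a correct but longer proof; it is replacing two clean Galois-theoretic citations with an analysis of conductors that you would still have to tie back to ramification of $E$ itself, so it buys nothing here. Second, for the reduction of (i) to (ii): the relevant input is not Lemma~\ref{lem:cond*} or \eqref{def:UTGisog} (those concern isogenies $\mu_\sigma$ and Weil restriction in the $\gl2$ setting), but Silverberg's theorem \cite[Thm 4.1]{silverberg:fieldsofdef} applied to $A$: the field $L$ of definition of $\End(A_{\bar K})$ is unramified over $S$, so $\mathrm{rad}(D_L)\mid D_K N_S$, and one replaces $B$ by an isogenous abelian subvariety $B'$ of $A_{\bar K}$ defined with all endomorphisms over $L$ (Poincar\'e reducibility) before invoking (ii) over $L$.
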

\begin{proof}
Let $p$ be a rational prime. To show that (ii) implies (i), we assume that (ii) holds. Let $L$ be the field of definition of the endomorphisms of $A_{\bar{K}}$. Poincar\'e's reducibility theorem implies that $B$ is isogenous to an abelian subvariety $B'$ of $A_{\bar{K}}$ and that $B'$  and all its endomorphisms are defined over $L$. Further $B'$ is a simple factor of $A_{\bar{K}}$ and $B'$ has CM. Then an application of (ii) with $B'$ gives that $E\cong \End^0(B')$ is unramified at $p$ if $p\nmid D_L$. 
Further \cite[Thm~4.1]{silverberg:fieldsofdef} implies that $L/K$ is unramified over $S$, since the generic fiber of the abelian scheme $A$ over $S$ is an abelian variety over $K$ with good reduction over $S$. Thus $\textnormal{rad}(D_L)$ divides $D_KN_S$ by Dedekind's discriminant theorem and therefore we conclude that $E$ is unramified at $p$ if $p\nmid N_SD_K$ as claimed in (i).

We now prove (ii). By assumption $B$ is a simple abelian variety with CM. Hence the CM type $\Phi$ of $E$ associated to $(B,\textnormal{id})$ is simple. The reflex field $E^*$ of $(E, \Phi)$ is defined by $E^*=\QQ(\sum_{\varphi\in \Phi}\varphi(e);e\in E)$, and \cite[Thm 3.1.1]{lang:cm} gives $E^*\hookrightarrow K$  since $B$ and all its endomorphisms are defined over $K$ by assumption. Thus the Galois closure $(E^*)^{\textnormal{cl}}$ of $E^*$ is  unramified at $p$ if $p\nmid D_K$.   Further, an application of \cite[Thm 1.5.2]{lang:cm} with our simple $\Phi$ identifies $E$ with the reflex field of $(E^*,\Phi^*)$  for some CM type $\Phi^*$ of $E^*$ and thus $E\hookrightarrow (E^*)^{\textnormal{cl}}$. Hence
 $E$ is unramified at $p$ if $p\nmid D_K$ as claimed in (ii).\end{proof}


\subsection{Proof of the effective Shafarevich conjecture}\label{sec:esproofs}

Let $K$ be a number field, let  $S$ be a nonempty open subscheme of $\spec(\OK)$ and let $g\geq 1$ be a rational integer. Following the strategy of proof  outlined in the introduction, we prove in this section the effective Shafarevich conjecture (see Theorem~\ref{thm:es}) for abelian varieties over $K$  which are of $\gl2$-type with $G_\QQ$-isogenies or which have CM.

\paragraph{No CM case.}In the first part of the proof of Theorem~\ref{thm:es}, we consider abelian varieties which have no CM. In Section~\ref{sec:proofesgl2} we prove  the following result by reducing the problem via Proposition~\ref{prop:Gisog} to the known case $K=\QQ$ stated in \eqref{eq:esgl2}. 

\begin{proposition}\label{prop:esgl2}
Let $A$ be an abelian scheme over $S$ of relative dimension $g$ with no CM. Suppose that $A$ is of $\gl2$-type with $G_\QQ$-isogenies and assume $(*)$. Then    $$h_F(A)\leq (4gd)^{144gd}\textnormal{rad}(D_KN_S)^{24}.$$ 
\end{proposition}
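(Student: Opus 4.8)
The plan is to reduce Proposition~\ref{prop:esgl2} to the already-known case $K=\QQ$, invoked above as \eqref{eq:esgl2}, by means of the Weil-restriction reduction supplied by Proposition~\ref{prop:Gisog} together with the isogeny and height estimates recalled in $\mathsection$\ref{sec:hfprop}. First I would apply Proposition~\ref{prop:Gisog}: since $A$ is an abelian scheme over $S$ with no CM which is of $\gl2$-type with $G_\QQ$-isogenies and satisfies $(*)$, there is a simple abelian scheme $C$ over $T$ of $\gl2$-type such that $A_U$ is isogenous to a power of a simple factor $B$ of $C_U$, where $T\subseteq\spec(\ZZ)$ and $U\subseteq S$ are the explicit open subschemes defined in \eqref{def:UTGisog}. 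Here $k=\QQ$ and $G=G_\QQ$, so $T$ is an open subscheme of $\spec(\ZZ)$ whose complement consists of the rational primes lying under $S^c$ together with the primes dividing the discriminant of $K/\QQ$; in particular $N_T$ divides $\rad(D_KN_S)$ (this uses Dedekind's discriminant theorem, as noted after Proposition~\ref{prop:Gisog}). The relative dimension bound \eqref{eq:reldimweilres} gives $\dim(C)\le g[K:\QQ]=gd$, so $C$ is an abelian scheme over $T\subseteq\spec(\ZZ)$ of relative dimension at most $gd$ and of $\gl2$-type.

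Next I would apply the known case $K=\QQ$ of the effective Shafarevich conjecture, i.e. \cite[Thm A]{rvk:gl2} as recorded in \eqref{eq:esgl2}, to the abelian scheme $C$ over $T$: this bounds $h_F(C)$ polynomially in $N_T$ with an exponent $24$ that, crucially, does not depend on the dimension, and with a constant depending only on $\dim(C)\le gd$. Concretely this yields a bound of the shape $h_F(C)\le c(gd)\,N_T^{24}\le c(gd)\,\rad(D_KN_S)^{24}$ for an explicit constant $c(gd)$. Then I would pass from $C$ back to $A$ using the isogeny height estimates of $\mathsection$\ref{sec:hfprop} (Masser--W\"ustholz / Gaudron--R\'emond): since $A_U$ is isogenous to a power of a simple factor $B$ of $C_U$, the stable Faltings heights of $A$, of $B$, and of the relevant factor of $C$ differ by a quantity controlled effectively in terms of $h_F(C)$ and $\dim(C)$ — the Faltings height is invariant under base change of the ground field of a fixed abelian variety over a number field, is a sum over simple factors up to bounded error, and changes by an explicitly bounded amount under an isogeny of bounded degree. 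Keeping careful track of the combinatorics of these constants — the passage through the power, through the simple factor, and through Poincar\'e reducibility — while respecting the target exponent and the target constant $(4gd)^{144gd}$, is the main bookkeeping task; the dependence on $d$ enters precisely because $\dim(C)$ can be as large as $gd$.

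The main obstacle I expect is not a conceptual one but rather the quantitative control of the isogeny-degree estimate and its interaction with the Faltings height: one needs the minimal isogeny connecting $A_U$ to the power of $B$ to have degree bounded polynomially (or at worst by a controlled function) in the relevant data, and one needs the resulting change in $h_F$ to be absorbable into the stated constant without spoiling the exponent $24$. This is exactly where the strong shape of the Masser--W\"ustholz estimates based on transcendence (in the refined form of Gaudron--R\'emond~\cite{gare:newisogenies}) is used, together with the fact — emphasized in $\mathsection$\ref{sec:shapeofbounds} — that the exponent $24$ in \cite[Thm A]{rvk:gl2} is dimension-independent. A secondary point requiring care is that Proposition~\ref{prop:Gisog} only gives the conclusion over the open subscheme $U\subseteq S$ rather than over all of $S$; but since $A$ is an abelian scheme over $S$ and $U$ is nonempty open, $h_F(A)=h_F(A_U)$ because the stable Faltings height depends only on the generic fibre, so this causes no loss. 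Once these estimates are assembled, collecting the constants gives the bound $h_F(A)\le (4gd)^{144gd}\rad(D_KN_S)^{24}$, and the (slightly sharper) exponent remark quoted after Theorem~\ref{thm:es} follows from the same computation without simplification of the exponent.
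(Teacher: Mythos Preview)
Your proposal is correct and follows essentially the same route as the paper: apply Proposition~\ref{prop:Gisog} with $k=\QQ$ to obtain the simple $C$ over $T\subseteq\spec(\ZZ)$, bound $h_F(C)$ via \cite[Thm~A]{rvk:gl2} using $\dim(C)\le gd$ from \eqref{eq:reldimweilres} and $N_T=\rad(D_KN_S)$, and then transfer the bound back to $h_F(A)=h_F(A_U)$ through the isogeny $A_U\to B^m$ and the factorization $B\times_U D\to C_U$ (Poincar\'e reducibility) using the variation estimates \eqref{eq:isogvartransc} together with the lower bound \eqref{eq:hflower} for $h_F(D)$. The paper carries out exactly this bookkeeping, recording intermediate inequalities such as $h_F(B)\le v_{C_U}+h_F(C)+\tfrac{g}{2}\log(2\pi^2)$ and $h_F(A)\le v_{A'}+mh_F(B)$ before collecting constants.
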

Here $h_F$ is the stable Faltings height ($\mathsection$\ref{sec:heightdef}), while $D_K$, $d=[K:\QQ]$ and $N_S=\prod_{v\in S^c}N_v$ are as before. Further, we recall that condition $(*)$ means that $K/\QQ$ is normal and that all endomorphisms of $A_{\bar{K}}$ and all isogenies $\mu_\sigma$ in \eqref{def:Gisog} are defined over $K\subset \bar{K}=\bar{\QQ}$. 

\paragraph{CM case.}In the second part of the proof of Theorem~\ref{thm:es}, we deal with the general CM case by reducing to the situation  $A=A_\Phi$ which was treated in Lemma~\ref{lem:hphibound}. In course of the proof ($\mathsection$\ref{sec:proofpropcm}) we shall obtain the following bound which only depends on $K$ and $g$.
\begin{proposition}\label{prop:cm}
Let $A$ be an abelian scheme over $S$ of relative dimension $g$ such that $A_{\bar{K}}$ has CM. If all endomorphisms of $A_{\bar{K}}$ are defined over $K\subset \bar{K}$, then $$h_F(A)\leq (3g)^{(5g)^2}\rad(D_K)^{5g}.$$ 
\end{proposition}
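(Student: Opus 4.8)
The plan is to reduce, via isogeny estimates and Colmez's theorem, to the case of the standard CM abelian variety $A_\Phi$ attached to a simple CM type $\Phi$ of a CM field $E$ whose ramification is controlled by that of $K$, and then to feed $E$ into the explicit bound of Lemma~\ref{lem:hphibound}.

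First I would decompose $A$ up to isogeny. The hypothesis that all endomorphisms of $A_{\bar K}$ are defined over $K$, together with \eqref{eq:dedequivalence}, gives $\End^0(A)\isomto\End^0(A_{\bar K})$; Poincar\'e reducibility then yields an isogeny \emph{over $K$} from $A$ to a product $\prod_i B_i^{n_i}$, where the $B_i$ are pairwise non-isogenous simple abelian schemes over some nonempty open $S'\subseteq S$, each with CM, with $\sum_i n_i\dim(B_i)=g$ and with all endomorphisms of each $B_i$ defined over $K$. Replacing each $B_i$ by a $K$-isogenous abelian scheme (saturating the relevant $E_i$-lattice, where $E_i:=\End^0(B_i)$), I may assume the maximal order $\mathcal O_{E_i}$ acts on $B_i$ by honest endomorphisms. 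Using the isogeny estimates of $\mathsection$\ref{sec:hfprop} (Gaudron--R\'emond), the inequality $\lvert h_F(A)-h_F(A')\rvert\leq\tfrac12\log\deg(\phi)$ for an isogeny $\phi\colon A\to A'$, and Minkowski's bound $D_K\geq c_0^{[K:\QQ]}$ (with $c_0>1$ absolute, used to absorb every $[K:\QQ]$-dependence), I reduce the statement to bounding $h_F\bigl(\prod_i B_i^{n_i}\bigr)=\sum_i n_i h_F(B_i)$.

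Next I would control the fields $E_i$ and invoke Colmez. Each $E_i$ is a CM field with $[E_i:\QQ]=2\dim(B_i)\leq 2g$, and by Lemma~\ref{lem:cmram}(ii) (applied after a controlled enlargement of $K$ that only involves primes already dividing $D_K$, e.g.\ inside the Hilbert class field of the normal closure of the reflex field) it is unramified at every prime $p\nmid D_K$. Since for $p>2g$ ramification in $E_i$ is tame, $v_p(D_{E_i})\leq[E_i:\QQ]-1$, while for the finitely many $p\leq 2g$ one has the crude bound $v_p(D_{E_i})\leq c_1(g)$; hence $\mathfrak c_i:=D_{E_i}/D_{F_i}\leq D_{E_i}\leq c_2(g)\rad(D_K)^{2g}$, where $F_i\subset E_i$ is the maximal totally real subfield. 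Let $\Phi_i$ be the CM type of $E_i$ associated to $B_{i,\CC}$; it is simple and $\dim(B_i)=[F_i:\QQ]$, and since $B_i$ is defined over $K\hookrightarrow\CC$ with $\mathcal O_{E_i}$ acting by endomorphisms, Colmez's relation \eqref{eq:colmezheight} gives $h_F(B_i)=h(\Phi_i)$.

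Now I would apply Lemma~\ref{lem:hphibound} to $(E_i,\Phi_i)$ with $\epsilon=\tfrac12$, using that $h_{F_i}$ divides $h_{E_i}$ (so the class-number ratio is $\leq 1$), that $Q_i\in\{1,2\}$, that $E_i$ containing the $w_i$-th roots of unity forces $w_i\leq c_3(g)$, and that the constant $a$ and the remaining factors in the lemma are $\leq c_4(g)$: this gives $h(\Phi_i)\leq 2^{\dim(B_i)}c_5(g)\,\mathfrak c_i\leq c_6(g)\rad(D_K)^{2g}$. Since $\dim(B_i)\geq 1$ we have $\sum_i n_i\leq\sum_i n_i\dim(B_i)=g$, so $\sum_i n_i h_F(B_i)=\sum_i n_i h(\Phi_i)\leq g\,c_6(g)\rad(D_K)^{2g}$, and combining with the reduction of the first step — and absorbing all $g$-dependent constants and the Minkowski-bounded $[K:\QQ]$-contributions into $(3g)^{(5g)^2}$ and the slack between the exponents $2g$ and $5g$ — I obtain $h_F(A)\leq(3g)^{(5g)^2}\rad(D_K)^{5g}$. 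I expect the main obstacle to be the first step: replacing $A$ by a product of simple CM abelian schemes carrying maximal-order actions while controlling, purely in terms of $\rad(D_K)$ and $g$, both the degree of the connecting isogeny (i.e.\ the conductor of the endomorphism order) and the field extensions needed to make the decomposition and Lemma~\ref{lem:cmram} available — which is exactly where one must combine the Masser--W\"ustholz/Gaudron--R\'emond estimates and Colmez's theorem with Dedekind's and Minkowski's discriminant inequalities.
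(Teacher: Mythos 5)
Your proposal follows essentially the same route as the paper's proof: decompose $A$ up to $K$-isogeny into $\prod B_i^{n_i}$ with each $B_i$ carrying a maximal-order action of a CM field $E_i$ (saturating via \cite[Lem 5.3]{vkkr:hms}), control $D_{E_i}$ via Lemma~\ref{lem:cmram}(ii) and tame ramification, apply Colmez's relation \eqref{eq:colmezheight} and Lemma~\ref{lem:hphibound} with $\epsilon=1/2$ to each factor, and recombine with the Gaudron--R\'emond isogeny estimates. Two small points that would simplify your write-up: (a) the aside about enlarging $K$ inside a Hilbert class field is unnecessary, since the hypothesis that all endomorphisms of $A_{\bar K}$ are defined over $K$ passes via Poincar\'e reducibility to the simple factors $A_{i,\bar K}$, and Lemma~\ref{lem:cmram}(ii) then applies directly over $K$; and (b) the obstacle you flag at the end dissolves once you observe that you never need an a priori bound on $\deg(A\to A')$ --- it suffices to bound the variation $v_{A'}$ of $h_F$ within the isogeny class of the \emph{target} $A'=\prod B_i^{n_i}$, and \eqref{eq:isogvartransc} bounds $v_{A'}\leq\tfrac12\log\kappa(A'_K)$ purely in terms of $h_F(A')$, which Colmez and Lemma~\ref{lem:hphibound} already control; the inequality $h_F(A)\leq v_{A'}+h_F(A')$ then closes the argument without any bootstrap.
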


In the remaining of this section, we first review some known results and then we give the proofs of  Proposition~\ref{prop:esgl2}, Proposition~\ref{prop:cm} and Theorem~\ref{thm:es}. 

\subsubsection{Properties of the Faltings height and isogeny estimates}\label{sec:hfprop}

Let $K$ be a number field and let $A$ be an abelian variety over $K$ of dimension $g$. In this section we review properties of the stable Faltings height $h_F$ of $A$ used in our proofs. In particular, we collect effective bounds for the variation of $h_F$ under isogenies. 

We start with the following basic property: Any abelian variety $A'$ over $K$ satisfies $h_F(A\times_K A')=h_F(A)+h_F(A')$. Further, Faltings' work \cite{faltings:finiteness} gives that $h_F(A)\geq c_g$ for some constant $c_g$ depending only $g$; see \cite[1.22]{deligne:faltings}. In fact Bost~\cite{bost:lowerbound} obtained
\begin{equation}\label{eq:hflower}
h_F(A)\geq -\tfrac{1}{2}\log(2\pi^2)g.
\end{equation}
See for example \cite[Cor 8.4]{gare:periods} and notice that $h_F(A)=h_B(A)-\frac{g}{2}\log \pi$ where
$h_B$ denotes the height which appears in the statement of \cite[Cor 8.4]{gare:periods}. 

\paragraph{Variation under isogenies.} We now suppose that $A'$ is an abelian variety over $K$ which is isogenous to $A$. Faltings' formula \cite[Lem 5]{faltings:finiteness} implies that
\begin{equation}\label{eq:isogvardeg}
|h_F(A)-h_F(A')|\leq \tfrac{1}{2}\log \deg(\varphi)
\end{equation}
for any isogeny $\varphi:A\to A'$. Moreover, Faltings~\cite{faltings:finiteness} bounded $|h_F(A)-h_F(A')|$ in terms of $A/K$ alone. On using Faltings' arguments and some refinements of Par{\v{s}}in--Zarhin, Raynaud~\cite[Thm 4.4.9]{raynaud:abelianisogenies} worked out an effective bound for the variation under the assumption that $A$ has everywhere semi-stable reduction, that is
\begin{equation}\label{eq:isogvarfalt}
|h_F(A)-h_F(A')|\leq c_A
\end{equation}
with $c_A$ an effective constant (simplified in \cite[Prop 6.10]{rvk:gl2}) depending only on $g$, $D_K$, $[K:\QQ]$ and the bad reduction places of $A$. In fact the semi-stable assumption is not required here (\cite[Lem 3.1]{rvk:gl2}). The above bounds for the variation all do not use Diophantine approximation or transcendence techniques.
Masser--W\"ustholz \cite{mawu:periods,mawu:abelianisogenies,mawu:factorization} obtained bounds via a completely different approach using their isogeny estimates based on transcendence theory.  These isogeny estimates are fully effective (\cite[p.121]{bost:abelianisogenies}), and completely explicit constants were obtained by Gaudron--R\'emond~\cite{gare:isogenies}: Their most recent version~\cite[Thm 1.9]{gare:newisogenies} gives isogenies $A\to A'$ and $A'\to A$ of degree at most $\kappa(A)$ and this combined with  (\ref{eq:isogvardeg}) proves  
\begin{equation}\label{eq:isogvartransc}
\lvert h_F(A)-h_F(A')\rvert\leq \tfrac{1}{2}\log \kappa(A), \quad \kappa(A)=\bigl((7g)^{8g^2}d\max(h_F(A),\log d,1)\bigl)^{4g^2}
\end{equation}
for $d=[K:\QQ]$; one can replace here $\kappa(A)$ by $\kappa(A)^{1/2}$ if $\End(A)$ identifies with a maximal order of $\End^0(A)$.  We remark that  \eqref{eq:isogvarfalt} improves (\ref{eq:isogvartransc}) in some cases, and vice versa in other cases. However, in the situations considered in this paper, it turns out that \eqref{eq:isogvartransc} leads to exponentially better bounds for the same reasons as in \cite[Rem 5.2]{rvk:gl2}.

\subsubsection{Proof of Proposition~\ref{prop:esgl2}}\label{sec:proofesgl2}

We continue our notation. Let $K$ be a number field and let $S\subseteq \spec(\OK)$ be a nonempty open subscheme. As before we write $d=[K:\QQ]$ and $N_S=\prod_{v\in S^c}N_v$. Let $A$ be an abelian scheme over $S$ of relative dimension $g$ which has no CM. We suppose that $A$ is of $\gl2$-type with $G_\QQ$-isogenies and we  assume that condition $(*)$ is satisfied. 

\begin{proof}[Proof of Proposition~\ref{prop:esgl2}]
Let $U\subseteq S$ and $T\subseteq \spec(\ZZ)$ be the open subschemes defined in \eqref{def:UTGisog} with $k=\QQ$. Our $A$ satisfies all the assumptions in Proposition~\ref{prop:Gisog}. Thus this proposition gives a simple abelian scheme $C$ over $T$ of $\gl2$-type such that $A_U$ is isogenous to a power $B^m$ of a simple factor $B$ of $C_U$. As $B$ is a factor of $C_U$ and $U$ is a connected Dedekind scheme, Poincar\'e's reducibility theorem  and \eqref{eq:dedequivalence} provide an abelian scheme $D$ over $U$ such that $B\times_U D$ is isogenous to $C_U$.   Now, on using isogenies
$$A_U\to B^{m} \quad \textnormal{and} \quad B\times_U D\to C_U,$$ 
we can estimate the heights as in \cite[$\mathsection$5.2.2]{rvk:gl2}.  For any abelian scheme $X$ over an open $S'\subseteq S$, we denote by $v_X$ the maximal variation of $h_F$ in the isogeny class of $X$; that is $v_X=\sup \lvert h_F(X)-h_F(X')\rvert$ with the supremum taken over all abelian schemes $X'$ over $S'$ which are isogenous to $X$. The abelian scheme $A'=B^{m}$ over $U$ satisfies
\begin{equation}\label{eq:hfapespf}
h_F(A')=m h_F(B) \ \  \textnormal{ and } \ \ h_F(A)\leq v_{A'}+h_F(A')
\end{equation}
since $A_U$ is isogenous to $A'$. The stable height $h_F$ is invariant under the base change $U\to T$, 
and $B\times_U D$ is isogenous to $C_U$. Thus we obtain that $h_F(B)$ is at most $v_{C_U}+h_F(C)-h_F(D)$ and then we see that the lower bound for $h_F(D)$ in \eqref{eq:hflower} implies 
\begin{equation}\label{eq:hfbespf}
h_F(B)\leq v_{C_U}+h_F(C)+\tfrac{g}{2}\log(2\pi^2).
\end{equation}
Here we used that the relative dimension of $D$ over $U$ is at most $g$. The relative dimension of $B$ over $U$ is $g/m$. Thus the relative dimension $n$ of $C$ over $T$ satisfies $n\leq dg/m$ by \eqref{eq:reldimweilres}, and applying \cite[Thm A]{rvk:gl2} with the abelian scheme $C$ over $T$ of $\gl2$-type gives 
\begin{equation}\label{eq:esgl2}
h_F(C)\leq (3n)^{144n}N_T^{24}.
\end{equation}
As $N_T=\textnormal{rad}(D_KN_S)$  we then
 see that \eqref{eq:isogvartransc}, \eqref{eq:hfapespf} and \eqref{eq:hfbespf} lead to an upper bound for $h_F(A)$ as claimed in Proposition~\ref{prop:esgl2}. We now include some details of our computations described above. For example, one can use the inequalities
$$v_{C_U}\leq \tfrac{1}{4}c, \quad h_F(B)\leq \tfrac{3}{2}c, \quad h_F(A')\leq \tfrac{3}{2}gc \ \textnormal{ and } \ v_{A'}\leq \tfrac{1}{2}c$$
as intermediate steps to deduce that $h_F(A)\leq 2gc$ where $c=(3gd)^{144gd}N_T^{24}$.\end{proof}  

\subsubsection{Proof of Proposition~\ref{prop:cm} and Theorem~\ref{thm:es}}\label{sec:proofpropcm} 
 We continue our notation. Let $K$ be a number field of degree $d$ over $\QQ$, let $S\subseteq \spec(\OK)$ be a nonempty open subscheme with geometric generic point $\bar{s}=\spec(\bar{K})$
   and write again $N_S=\prod_{v\in S^c}N_v$. Let $A$ be an abelian scheme over $S$ of relative dimension $g$.

\begin{proof}[Proof of Proposition~\ref{prop:cm}]
 As in the statement of Proposition~\ref{prop:cm}, we assume that $A_{\bar{s}}$ has CM and that
 all endomorphisms of $A_{\bar{s}}$ are defined over $K\subset \bar{K}$.  Then \eqref{eq:dedequivalence} gives that $\End(A)\isomto \End(A_{\bar{s}})$. We claim that there exist simple factors $A_i$ of $A$ of relative dimension $g_i$,  CM fields $E_i$ of degree $2g_i$ with $\mathcal O_{E_i}\cong \End(A_i)$ and an isogeny 
\begin{equation}\label{eq:cmpropproof}
A'=\prod A_i^{e_i}\to A, \quad e_i\in\ZZ_{\geq 1}.
\end{equation}
To prove this claim, let $A_i$ be a simple factor of $A$ and write $g_i$ for the relative dimension of $A_i$ over $S$. On using that $\End(A)\isomto \End(A_{\bar{s}})$, we see that $\End(A_i)\isomto \End(A_{i,\bar{s}})$.    Then CM theory gives that  $A_i$ has CM since $A_{\bar{s}}$ has CM by assumption.   As $K$ has characteristic zero, this implies  that the $\QQ$-algebra $\End^0(A_i)$ is a CM field $E_i$ of degree $2g_i$.
   The order $\mathcal O=\End(A_i)$ of $E$ is contained in the maximal order of $E$ which is $\mathcal O'=\mathcal O_{E_i}$. Hence an application of \cite[Lem 5.3]{vkkr:hms}, which is based on \cite[$\mathsection$1.7.4]{chcooo:cmbook},   shows that after possibly replacing $A_i$ by an isogenous abelian scheme over $S$ we may and do assume that $\mathcal O_{E_i}\cong\End(A_i)$. Then the claim follows from  Poincar\'e's reducibility theorem and \eqref{eq:dedequivalence}.

To estimate the heights, we choose an embedding $K\hookrightarrow \CC$ and we let $\Phi_i$ be the CM type of $E_i$ associated to $(A_{i,\CC},j\otimes \QQ)$ where $j$ is the composition of the ring morphisms $\mathcal O_{E_i}\isomto\End(A_i)\isomto \End(A_{i,\CC})$.   An application of \eqref{eq:colmezheight} with $A_{i,K}$ gives that $h_F(A_i)=h_F(\Phi_i)$. This implies that  $h_F(A')=\sum e_ih(\Phi_i)$ and then \eqref{eq:cmpropproof} shows 
\begin{equation*}
h_F(A)\leq v_{A'}+\sum e_i h(\Phi_i), \quad g=\sum e_ig_i.
\end{equation*}
To bound $h(\Phi_i)$ in terms of $D_K$ and $g$, we first control $D_{E_i}$. As $\End(A_i)\cong \End(A_{i,\bar{s}})$ we see that $A_{i,\bar{s}}$ has the following properties: It has CM, it is a simple factor of $A_{\bar{s}}$, it is defined over $K\subset \bar{K}$ and  all its endomorphisms are defined over $K\subset \bar{K}$. Then an application of Lemma~\ref{lem:cmram}~(ii) with $B=A_{i,\bar{s}}$ gives that the CM field $E_i\cong \End^0(A_{i,\bar{s}})$ of degree $2g_i$ is unramified at each rational prime $p\nmid D_K$.   Hence \cite[Lem 6.2]{rvk:szpiro}, which is a direct consequence of Dedekind's discriminant theorem, implies $$D_{E_i}\leq \textnormal{rad}(D_K)^{5g}(2g)^{24g^2}.$$
Here we used the effective version of the prime number theorem in \cite[(3.12) and (3.16)]{rosc:formulas} which gives that the number $n$ of rational prime divisors of $D_K$ satisfies $n^n<\textnormal{rad}(D_K)^{3/2}$ in the case when $n> 12$.  Next, we recall that Lemma~\ref{lem:hphibound} and the subsequent discussion give an explicit upper bound for $h(\Phi_i)$ in terms of $\epsilon$, $D_{E_i}$ and $g_i$. We take $\epsilon=1/2$ in this bound and then the above displayed results together with the bound for $v_{A'}$ in \eqref{eq:isogvartransc} and $d\leq 3\max(1,\log D_K)$ lead to an estimate for $h_F(A)$ as claimed in Proposition~\ref{prop:cm}.  

We now include some details of the explicit computations. For example, to compute the final bound one can use the following inequalities as intermediate steps:
$$\zeta(\tfrac{3}{2})\leq 3, \quad a_i\leq \tfrac{w_i}{2}\leq 4g_i^2, \quad h_F(A_i)=h(\Phi_i)\leq (3g)^{24g^2}\rad(D_K)^{5g}.$$
Here $\zeta(s)$ is the Riemann zeta function and $a_i,\omega_i$ are as in Lemma~\ref{lem:hphibound} with $E=E_i$; a proof of the bound $w_i\leq 8g_i^2$ can be found in the discussion right after \eqref{eq:rootsbound}.  \end{proof}

\begin{proof}[Proof of Theorem~\ref{thm:es}~(ii)]
Assume that $A_{\bar{s}}$ has CM and let $L$ be the field of definition of the endomorphisms of $A_{\bar{s}}$. The height $h_F$ is invariant under the base change from $S$ to $S'=S\times_{\mathcal O_K}\mathcal O_L$.    Then, on replacing in the proof of Proposition~\ref{prop:cm} the application of part (ii) of Lemma~\ref{lem:cmram}  by part (i), we obtain the claimed bound for $h_F(A)=h_F(A_{S'})$.\end{proof}

\begin{proof}[Proof of Theorem~\ref{thm:es}~(i)]
Assume first that $A$ is of $\gl2$-type with $G_\QQ$-isogenies such that $A_{\bar{s}}$ has no CM. Then let $L\supseteq K$ be the number field constructed in the proof of Lemma~\ref{lem:cond*}  and denote by $S'$ the preimage of $S$ under the projection  $\spec(\mathcal O_{L})\to \spec(\OK)$. 

We now verify that $A_{S'}$ satisfies all assumptions in Proposition~\ref{prop:esgl2}.  Lemma~\ref{lem:cond*} provides that $(*)$ holds.  In particular all endomorphisms of $A_{\bar{s}}$ are defined over $L$ and then base change induces an isomorphism $\End(A_{S'})\isomto \End(A_{\bar{s}})$ by \eqref{eq:dedequivalence}. This shows that $A_{S'}$ has no CM and that $A_{S'}$ is of $\gl2$-type with $G_\QQ$-isogenies, 
since $A_{\bar{s}}$ has these two properties by assumption. Hence all assumptions in Proposition~\ref{prop:esgl2} are satisfied.

We next control $L$ and $S'$. Lemma~\ref{lem:cond*} gives that the extension $L/\QQ$ is unramified over $T=\spec(\ZZ[1/n])$ for $n=N_SD_K$ 
and that the field $L$ is contained in the Galois closure of $K(A_3)/\QQ$. Thus Dedekind's discriminant theorem and basic Galois theory lead to  \begin{equation}\label{eq:proofofeffshaf}
\textnormal{rad}(N_{S'}D_L)\mid \textnormal{rad}(N_SD_K) \quad \textnormal{and} \quad [L:\QQ]\leq l=(d3^{4g^2})!.
\end{equation}
It holds that $h_F(A)=h_F(A_{S'})$ since the generic fiber of $A_{S'}$ identifies with $A_K\times_{K}L$. 
Hence (the computation used in the proof) of Proposition~\ref{prop:esgl2} provides a  bound for $h_F(A)=h_F(A_{S'})$ in terms of $L$ and $S'$ which together with \eqref{eq:proofofeffshaf} implies
\begin{equation}\label{eq:esgl2nocm}
h_F(A)\leq 2g(3gl)^{144gl}\textnormal{rad}(N_SD_K)^{24}.
\end{equation}
This completes the proof of Theorem~\ref{thm:es}~(i) in the case when the abelian scheme $A$ over $S$ is of $\gl2$-type with $G_\QQ$-isogenies such that $A_{\bar{s}}$ has no CM.

More generally, we now assume  that $A$ is of product $\gl2$-type with $G_\QQ$-isogenies. Then $A$ is isogenous to a product $A'=\prod A_i$ of abelian schemes $A_i$ over $S$ of $\gl2$-type with $G_\QQ$-isogenies. It holds that $h_F(A)\leq v_{A'}+\sum h_F(A_i)$ and the relative dimensions $g_i$ of $A_i$ over $S$ satisfy $g=\sum g_i$. Hence, on combining the above established case and Theorem~\ref{thm:es}~(ii) with the bound for $v_{A'}$ provided by \eqref{eq:isogvartransc}, we deduce Theorem~\ref{thm:es}~(i) in general.  \end{proof}

\subsection{Bounding the number of isomorphism classes}\label{sec:numberisoclass}

Let $K$ be a number field, let $S\subseteq \spec(\OK)$ be nonempty open and let $g\in \ZZ_{\geq 1}$. In this section we use the strategy outlined in the introduction to  bound  the number of isomorphism classes of abelian schemes over $S$ of product $\gl2$-type with $G_\QQ$-isogenies. 

Before we give in $\mathsection$\ref{sec:thmesnumberproof} a proof of Theorem~B~(ii) in full generality, we consider a relevant special situation where we obtain a better bound which is used in Theorem~\ref{thm:mainint}.
\begin{proposition}\label{prop:esnumber} 
Suppose that $K/\QQ$ is normal and let $n\in \ZZ_{\geq 3}$. Then the number of isomorphism classes of abelian schemes $A$ over $S$ of relative dimension $g$ such that $A$ is of  $\gl2$-type with $G_\QQ$-isogenies and $K=K(A_n)$ is at most $
(9gd)^{(4g)^5d} \rad(D_KN_S)^{(4g)^4}.$
\end{proposition}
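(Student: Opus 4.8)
The proof will combine the height bound for each individual abelian scheme (Theorem~\ref{thm:es}~(i), in the sharpened form \eqref{eq:esgl2nocm} together with the CM case) with Faltings' classical two-step finiteness argument, exploiting the extra rigidity provided by the hypotheses $K/\QQ$ normal and $K=K(A_n)$ for a fixed $n\geq 3$. First I would reduce to the case where $A$ has no CM and where the geometric factors are accessible: since $A$ is of $\gl2$-type with $G_\QQ$-isogenies, Lemma~\ref{lem:gl2endostructure}~(i) (applied after base change, using \eqref{eq:dedequivalence}) shows that either $A_{\bar{K}}$ has CM --- a case one counts directly via CM theory as in the proof of Theorem~\ref{thm:es}~(ii), bounding the relevant CM fields via Lemma~\ref{lem:cmram} and then the number of order/ideal-class/polarization data --- or $A$ is isogenous to a power of a simple abelian scheme of $\gl2$-type, where the real work lies. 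The key point of the hypothesis $K=K(A_n)$ is that it pins down the field of definition of the torsion and hence, via \cite[Thm~4.2]{silverberg:fieldsofdef}, the field of definition of all endomorphisms and of the isogenies $\mu_\sigma$: that is, condition $(*)$ is automatically satisfied over $K$ itself, with no uncontrolled field extension needed. This is what makes the bound in Proposition~\ref{prop:esnumber} cleaner than in Theorem~B~(ii), where one must enlarge $K$ to $L$ of degree up to $l=(d3^{4g^2})!$.

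\textbf{Step 1: finitely many isogeny classes.} Having $(*)$ over $K$, I would invoke Proposition~\ref{prop:Gisog} with $k=\QQ$: there is a simple abelian scheme $C$ over a controlled open $T\subseteq\spec(\ZZ)$ of $\gl2$-type such that $A_U$ is isogenous to a power of a simple factor of $C_U$, with $N_T=\rad(D_KN_S)$ and $\dim C\leq dg$ by \eqref{eq:reldimweilres}. Now apply \cite[Thm~B]{rvk:gl2} over $T$: the number of isomorphism classes of abelian schemes over $T$ of $\gl2$-type of the relevant dimension is polynomially bounded in $N_T$, hence in $\rad(D_KN_S)$, with the stated shape of constant. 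Each such $C$ has finitely many simple factors up to isogeny (bounded in terms of $\dim C$), each such factor $B$ determines $A_U$ up to the exponent $m\leq g$ in the power $B^m$ and up to isogeny, and finally one recovers $A$ from $A_U$ by controlling the finitely many abelian schemes over $S$ isogenous to a given one over the smaller $U$ --- here the discrepancy between $S$ and $U$ is bounded since $U=S\setminus p^{-1}(T^c)$ and $[K:\QQ]=d$. This bounds the number of isogeny classes of such $A$ over $S$.

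\textbf{Step 2: finitely many classes in each isogeny class.} For a fixed isogeny class, I would follow the argument in the proof of \cite[Thm~B]{rvk:gl2}: combine the height bound $h_F(A)\leq 2g(3gl)^{144gl}\rad(N_SD_K)^{24}$ from \eqref{eq:esgl2nocm} (or the CM bound from Theorem~\ref{thm:es}~(ii)), together with $d\leq 3\max(1,\log D_K)$, with the Masser--W\"ustholz minimal-isogeny-degree estimate in the explicit Gaudron--R\'emond form \eqref{eq:isogvartransc}: every $A$ in the isogeny class admits an isogeny of degree at most $\kappa$ from a fixed representative, where $\kappa$ is polynomial in $d\max(h_F,\log d,1)$ and hence polynomial in $\rad(D_KN_S)$ (with the $g$-dependent constants). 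The number of abelian schemes $\kappa$-isogenous to a fixed one is bounded by the number of $\OK$-subgroup schemes of bounded order, which is polynomial in $\kappa$, and one then divides out by the finitely many twists --- here Lemma~\ref{lem:counttwists} (the Weil-restriction-plus-isogeny-estimates trick) controls the number of twists over the fixed field $K$, again polynomially. Since $K$ is fixed by hypothesis (unlike in Theorem~B, there is no Hermite--Minkowski count over varying $L$ to perform), multiplying the Step~1 and Step~2 bounds and absorbing all $g,d$-dependent factors into $(9gd)^{(4g)^5 d}$ and the exponent $\rad(D_KN_S)^{(4g)^4}$ yields the claim.

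\textbf{Main obstacle.} The genuinely delicate step is Step~1: carefully tracking that the passage $A\rightsquigarrow C$ via Weil restriction, and back via Poincar\'e reducibility over the Dedekind schemes $U$ and $T$, does not blow up the count beyond the stated polynomial --- in particular that the number of ways to reconstruct $(A, \text{its }\OL\text{- or }G_\QQ\text{-structure})$ from the pair (simple factor of $C$, exponent $m$) is bounded by a factor depending only on $g$ and $d$, not on $N_S$. One must also check that the CM branch, handled separately, is subsumed under the same bound; this requires the class-number and polarization counts of \cite[$\mathsection$9-10]{vkkr:hms} to be polynomial in $\rad(D_K)$ via the explicit prime-number-theorem input already used in the proof of Proposition~\ref{prop:cm}. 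I expect no new ideas are needed beyond those in the proofs of Theorem~\ref{thm:es} and \cite[Thm~B]{rvk:gl2}; the work is bookkeeping to confirm the sharper exponents $(4g)^4$ and $(4g)^5d$ in the normal, $K=K(A_n)$ setting.
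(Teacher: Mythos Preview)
Your overall structure---bound the number of isogeny classes, then bound the size of each isogeny class via height bounds plus Masser--W\"ustholz, then multiply---is exactly what the paper does. The paper, however, has already packaged these two steps into the three lemmas immediately preceding the proposition: Lemma~\ref{lem:isoclassesnumber} (size of an isogeny class in terms of $\kappa(A_K)$), Lemma~\ref{lem:numbisogclasses} (number of non-CM isogeny classes, via the injection $\mathcal C\hookrightarrow\mathcal D$ coming from Proposition~\ref{prop:Gisog} and then \cite[Thm~B]{rvk:gl2}), and Lemma~\ref{lem:numbisogclassescm} (number of CM isogeny classes, via CM types and the explicit Hermite bound \eqref{eq:countnf}). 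So the actual proof in the paper is three lines: observe $(*)$ holds over $K$ by Lemma~\ref{lem:cond*}, plug the height bounds of Propositions~\ref{prop:esgl2} and \ref{prop:cm} into Lemma~\ref{lem:isoclassesnumber}, and multiply by Lemmas~\ref{lem:numbisogclasses} and \ref{lem:numbisogclassescm}. Your writeup essentially re-derives the content of these lemmas inline.

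A few imprecisions in your proposal are worth correcting. First, in Step~2 you cite \eqref{eq:esgl2nocm}, which is the bound involving $l=(d3^{4g^2})!$ obtained \emph{after} enlarging $K$; but the whole point of the hypothesis $K=K(A_n)$ with $K/\QQ$ normal is that $(*)$ already holds over $K$, so you should invoke Proposition~\ref{prop:esgl2} directly and get $(4gd)^{144gd}$---using the $l$-bound would destroy the claimed exponents. Second, the twist count via Lemma~\ref{lem:counttwists} is not needed here at all: Lemma~\ref{lem:isoclassesnumber} already bounds \emph{isomorphism} classes within an isogeny class over the fixed $S$, with no field change involved (Lemma~\ref{lem:counttwists} enters only in the proof of Theorem~B~(ii), where one passes to a larger $L$). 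Third, your CM-branch sketch mentions ``order/ideal-class/polarization data'' and \cite[$\mathsection$9--10]{vkkr:hms}; those sections concern the Par{\v{s}}in construction (counting $\OL$-structures and polarizations) and are irrelevant here---the CM isogeny-class count is done purely via CM types, Lemma~\ref{lem:cmram}~(ii), and \eqref{eq:countnf}, as in Lemma~\ref{lem:numbisogclassescm}. Finally, your ``main obstacle'' about reconstructing $A$ from $A_U$ is a non-issue: the map $[A]\mapsto[B]$ on isogeny classes is injective because an isogeny $A_U\to A'_U$ extends over $S$ via \eqref{eq:dedequivalence}.
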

Here $d=[K:\QQ]$, $D_K$ and $N_S=\prod_{v\in S^c} N_v$ are as before.  We now prove some lemmas which we shall use in the proof of Proposition~\ref{prop:esnumber} given below. To control the size of each isogeny class, we apply minimal isogeny degree estimates for abelian schemes $A$ over $S$ which are based on transcendence ($\mathsection$\ref{sec:hfprop}) and we let $\kappa=\kappa(A_K)$ be as in \eqref{eq:isogvartransc}.
\begin{lemma}\label{lem:isoclassesnumber}
Let $A$ be an abelian scheme over $S$ of relative dimension $g$. Up to isomorphisms, there exist at most $\kappa^{2g}$ abelian schemes over $S$ which are isogenous to $A$.
\end{lemma}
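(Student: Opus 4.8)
\textbf{Proof proposal for Lemma~\ref{lem:isoclassesnumber}.}

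The plan is to bound the number of isomorphism classes within the isogeny class of $A$ by using a minimal isogeny degree estimate to constrain each such abelian scheme to be a quotient of a fixed $A$ by a subgroup scheme of bounded order, and then to count such subgroup schemes. First I would recall from $\mathsection$\ref{sec:hfprop} that the Gaudron--R\'emond bound \eqref{eq:isogvartransc}, applied to $A_K$, gives for every abelian variety $A'_K$ over $K$ isogenous to $A_K$ an isogeny $A_K\to A'_K$ of degree at most $\kappa=\kappa(A_K)$. By the equivalence \eqref{eq:dedequivalence} between morphisms of abelian schemes over the connected Dedekind scheme $S$ and morphisms of their generic fibers, every abelian scheme $A'$ over $S$ isogenous to $A$ corresponds to such an $A'_K$, and the isogeny $A_K\to A'_K$ of degree $\le\kappa$ extends (see \cite[$\mathsection$5.1]{vkkr:hms}) to an isogeny $\varphi:A\to A'$ of abelian schemes over $S$ of the same degree. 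Hence $A'$ is isomorphic to the quotient $A/\ker(\varphi)$, where $\ker(\varphi)$ is a finite flat closed subgroup scheme of $A$ of rank $d=\deg(\varphi)\le\kappa$.

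The next step is to bound the number of possibilities for $\ker(\varphi)$. Since $\ker(\varphi)$ is killed by its rank $d$, it is a closed subgroup scheme of the $d$-torsion $A_d$; as $d\le\kappa$, and since a subgroup scheme of $A_d$ is in particular a subgroup scheme of $A_{\kappa !}$, it suffices to bound the number of finite flat closed subgroup schemes of $A_m$ for $m=\kappa!$ (or, more economically, to bound, for each $d\le\kappa$, the subgroup schemes of $A_d$ of rank $d$). The group scheme $A_m$ is finite locally free over $S$ of rank $m^{2g}$. A closed subgroup scheme $H\subseteq A_m$ is determined by a quasi-coherent sheaf of ideals, but one can count more crudely: over the generic fiber $A_{m,K}$ is a finite étale group scheme (after enlarging $S$ away from $m$, or using that $S$ has characteristic zero at the generic point) of order $m^{2g}$, so its subgroups number at most $2^{m^{2g}}$, a bound which is wildly larger than what the Lemma claims and hence not good enough. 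Instead I would argue directly with ranks: a subgroup scheme of $A$ of rank exactly $d$ is contained in $A_d$, which has rank $d^{2g}$, and a finite flat subgroup scheme of rank $d$ inside a finite flat group scheme of rank $d^{2g}$ over a base where things are (generically) étale is constrained — over $\bar K$ the number of subgroups of order $d$ in $(\Z/d\Z)^{2g}\oplus\cdots$ (the abstract group $A_d(\bar K)\cong(\Z/d\Z)^{2g}$) is at most $d^{2g}$, since a subgroup of order $d$ is generated by at most... hmm, a subgroup of order $d$ of an abelian group of exponent dividing $d$ need not be cyclic. The cleaner count: the number of subgroups of $(\Z/d\Z)^{2g}$ of any order is at most $d^{(2g)^2}$; but the Lemma wants $\kappa^{2g}$. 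So the sharp form must exploit that we only need subgroups of order $d\le\kappa$, and that a subgroup of order $d$ of $(\Z/N\Z)^{r}$ is determined by a surjection $(\Z/N\Z)^r\twoheadrightarrow \Z/d\Z$-type data up to automorphisms, of which there are $O(d^{r})$ — giving, summing over $d\le\kappa$, a bound of roughly $\kappa\cdot\kappa^{2g}\le\kappa^{2g+1}$, which one absorbs. I would follow the counting argument used in the $K=\Q$ case in \cite{rvk:gl2} (or in \cite[$\mathsection$10]{vkkr:hms}), which yields exactly the exponent $2g$, and simply transplant it, since the generic-fiber group $A_{d,K}$ behaves identically for any number field $K$.

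The main obstacle I anticipate is purely the bookkeeping of the subgroup count: making sure the crude "number of subgroups of $(\Z/d\Z)^{2g}$ of order $d$ is $\le d^{2g}$" (or the appropriate variant) is stated and applied correctly, and that summing over the possible degrees $d\le\kappa$ and over the finitely many prime-to-characteristic issues at closed points of $S$ does not inflate the exponent beyond $2g$ or the base beyond $\kappa$. Everything else — the extension of isogenies from generic fibers to $S$ via \eqref{eq:dedequivalence} and \cite[$\mathsection$5.1]{vkkr:hms}, the invariance of the isogeny class under the correspondence $A'\leftrightarrow A/\ker$, and the input bound $\deg\le\kappa$ from \eqref{eq:isogvartransc} — is routine and already assembled in the excerpt. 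I would therefore structure the proof as: (1) reduce to generic fibers; (2) invoke \eqref{eq:isogvartransc} to get $\deg\le\kappa$; (3) identify each isogenous $A'$ with $A/H$, $H\subseteq A_d$ of rank $d\le\kappa$; (4) count the $H$, getting $\le\kappa^{2g}$; (5) conclude.
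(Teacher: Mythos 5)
Your proposal is the same strategy as the paper's proof: obtain an isogeny $\varphi\colon A\to A'$ of degree $\le\kappa$ via Gaudron--R\'emond and \eqref{eq:dedequivalence}, identify $A'$ with the fppf quotient $A/\ker(\varphi)$, and count the possible kernels. Two small clarifications. First, the paper sidesteps your worry about finite flat subgroup schemes of $A$ over $S$ entirely: since the kernel is determined by (and is the closure of) its generic fiber, it suffices to count finite subgroups of the abstract group $A(\bar{K})^{\textnormal{tor}}\cong(\QQ/\ZZ)^{2g}$ of cardinality at most $\kappa$, so there is no need to fret about behaviour at closed points. Second, your own back-of-envelope count of such subgroups (giving $\kappa^{2g+1}$) is not something one can ``absorb'' into $\kappa^{2g}$; the precise bound $\le\kappa^{2g}$ is exactly the content of Masser--W\"ustholz~\cite[Lem 6.1]{mawu:abelianisogenies}, which the paper cites and which is also the ingredient behind step 3 of~\cite[Thm~6.6]{rvk:gl2} that you propose to ``transplant.'' So your fallback is the right move; you should simply name the Masser--W\"ustholz lemma rather than leave the exponent dangling on an estimate that, as written, misses by a factor of $\kappa$.
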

\begin{proof}
The statement follows by using the same arguments as in step 3. of the proof of \cite[Thm 6.6]{rvk:gl2} where the exponent $2g$ comes from Masser--W\"ustholz~\cite[Lem 6.1]{mawu:abelianisogenies}. The details are as follows: If $A'$ is an abelian scheme over $S$ which is isogenous to $A$, then there exists an isogeny $\varphi:A\to A'$ of degree at most $\kappa$ by Gaudron--R\'emond~\cite[Thm 1.9]{gare:newisogenies} and \eqref{eq:dedequivalence}. As $A'$ represents the fppf quotient $A/\ker(\varphi)$,   
the number of such $A'$ up to isomorphisms is thus at most the number of subgroups of $A(\bar{K})^{\textnormal{tor}}$ of cardinality at most $\kappa$. Then \cite[Lem 6.1]{mawu:abelianisogenies} and $A(\bar{K})^{\textnormal{tor}}\cong (\QQ/\ZZ)^{2g}$ imply Lemma~\ref{lem:isoclassesnumber}.   \end{proof}
Next, we bound the number of isogeny classes. If $A$ is an abelian scheme over $S$ of $\gl2$-type with $G_\QQ$-isogenies satisfying condition $(*)$ as in  $\mathsection$\ref{sec:factorsGisogenies}, then any abelian scheme over $S$ which is isogenous to $A$ is again of $\gl2$-type with $G_\QQ$-isogenies satisfying $(*)$.   Proposition~\ref{prop:Gisog} and the proof of \cite[Thm 6.6]{rvk:gl2} lead to the following result.  

\begin{lemma}\label{lem:numbisogclasses}
Let $\mathcal C$ be the set of isogeny classes of abelian schemes $A$ over $S$ of relative dimension $g$ such that $A$ is of $\gl2$-type with $G_\QQ$-isogenies satisfying  $(*)$ and such that $A$ has no CM. It holds $|\mathcal C|\leq 9^{9dg}\textnormal{rad}(D_KN_S)^2$.
\end{lemma}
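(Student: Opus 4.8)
The plan is to reduce, via Weil restriction as in Proposition~\ref{prop:Gisog}, the counting of isogeny classes over $S\subseteq\spec(\OK)$ to the known counting over $\spec(\ZZ)$ established in \cite[Thm B]{rvk:gl2}. First I would recall the setup: given an abelian scheme $A$ over $S$ of relative dimension $g$ which is of $\gl2$-type with $G_\QQ$-isogenies satisfying $(*)$ and without CM, Lemmas~\ref{lem:Gisog1} and \ref{lem:Gisog2} attach to $A$ a simple abelian scheme $C$ over $T\subseteq\spec(\ZZ)$ of $\gl2$-type such that $A_U$ is isogenous to a power of a simple factor of $C_U$, where $U$ and $T$ are the controlled open subschemes of \eqref{def:UTGisog} with $k=\QQ$ (so $T=\spec(\ZZ[1/n])$ for $n=\rad(D_KN_S)$ up to inverting the finitely many primes under $S^c$ and dividing $D_K$). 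The key point is that the isogeny class of $A$ over $S$ is determined by the pair $(C,m)$ up to finitely many choices: the relative dimension $\dim(C)\le dg$ by \eqref{eq:reldimweilres}, and the power $m$ divides $g$. Moreover two abelian schemes $A,A'$ over $S$ in the same situation with isogenous associated $C,C'$ over $T$ and the same numerical data have $A_U$ isogenous to $A'_U$, hence (since $U$ is obtained from $S$ by removing a controlled finite set of places and good reduction propagates) $A$ isogenous to $A'$ over $S$. Thus $|\mathcal C|$ is bounded by the number of isogeny classes of simple abelian schemes of $\gl2$-type over $T$ of relative dimension at most $dg$, times a factor accounting for the finitely many $(m,\dim C)$.

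Next I would invoke \cite[Thm B]{rvk:gl2}: over $T=\spec(\ZZ[1/n])$ with $n=\rad(D_KN_S)$, the number of isomorphism classes, and a fortiori the number of isogeny classes, of abelian schemes of $\gl2$-type of relative dimension $\le dg$ is bounded polynomially in $N_T=\rad(D_KN_S)$ with an exponent that does not depend on the dimension (this dimension-independence of the exponent $2$ in the radical is exactly what \cite[p.18-19]{rvk:gl2} arranges). Following the structure of the proof of \cite[Thm 6.6]{rvk:gl2}, one partitions into isogeny classes and uses that each class of $\gl2$-type abelian schemes over $T$ corresponds to a newform of controlled level, the level being supported on the primes dividing $N_T$; counting such newforms gives the bound $\le c^{dg}\rad(D_KN_S)^2$ for a suitable absolute-type constant $c$. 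Combining with the bound $dg$ for the number of numerical pairs and with the constant swallowing the polynomial factors, one arranges the final bound $|\mathcal C|\le 9^{9dg}\rad(D_KN_S)^2$; I would simplify the constants according to Baker's philosophy rather than optimize, checking that $9^{9dg}$ comfortably absorbs the product of the constant from \cite[Thm B]{rvk:gl2}, the factor $dg$, and any loss from passing between isogeny classes over $S$, $U$, and $T$.

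The main obstacle I anticipate is bookkeeping the passage between the three base schemes $S$, $U\subseteq S$ and $T\subseteq\spec(\ZZ)$ and verifying that distinct isogeny classes over $S$ really do map to distinct (or controllably few) isogeny classes of the $C$'s over $T$, i.e.\ that the construction $A\mapsto C$ of Lemmas~\ref{lem:Gisog1}--\ref{lem:Gisog2} has controlled fibers. Concretely: if $A,A'$ over $S$ produce isogenous $C,C'$ over $T$ with the same $(m,\dim C)$, I must recover an isogeny $A\to A'$ over $S$, not merely over $U$ or over the generic fiber — this uses that an isogeny between abelian schemes over the generic point extends over $S$ as soon as both have good reduction everywhere on $S$ (cf.\ \cite[$\mathsection$5.1]{vkkr:hms}), together with the fact that $U$ differs from $S$ only by the finite controlled set $p^{-1}(T^c)$, and on that set good reduction of $A$ and $A'$ over all of $S$ forces the isogeny to extend. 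A secondary subtlety is that condition $(*)$ is assumed in the hypothesis of Lemma~\ref{lem:numbisogclasses}, so no extra field extension (and hence no extra discriminant inflation) is needed here; I would simply note that $(*)$ is preserved within an isogeny class, as already recorded before the lemma statement, so the reduction via Proposition~\ref{prop:Gisog} applies uniformly to each class in $\mathcal C$. Once these compatibilities are pinned down, the numerical bound is a direct combination of \eqref{eq:reldimweilres}, \cite[Thm B]{rvk:gl2}, and constant simplification.
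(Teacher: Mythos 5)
Your overall strategy---reduce via Proposition~\ref{prop:Gisog} to counting simple $\gl2$-type abelian schemes over $T\subseteq\spec(\ZZ)$, then quote the isogeny-class count extracted from the proof of \cite[Thm 6.6]{rvk:gl2}---matches the paper. The gap is in your injectivity claim. You assert that if $A, A'$ over $S$ yield isogenous $C, C'$ over $T$ with the same $m$, then $A_U\sim A'_U$. But $C_U$ can have several pairwise non-isogenous simple factors, and all you know is $A_U\sim B^m$ and $A'_U\sim (B')^m$ for some simple factors $B,B'$ of $C_U$; nothing in the data $(C,m)$ forces $B\sim B'$. (Moreover, the abelian scheme $C$ produced by Proposition~\ref{prop:Gisog} is not canonical, so $[A]\mapsto[C]$ is not even a well-defined map.) The paper instead sends $[A]$ directly to the isogeny class $[B]$ of the unique simple factor of $A_U$, landing in the set $\mathcal D$ of isogeny classes of simple factors of $C_U$ for all simple $\gl2$-type $C$ over $T$ of relative dimension $\le dg$; that map is manifestly injective since $[B]$ determines $[A_U]=[B^{g/\dim B}]$ and hence $[A]$. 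The resulting factor of at most $\dim C\le dg$ accounting for the number of simple factors per $C$ is what the paper absorbs into the constant $9^{9dg}$ together with the prime number theorem estimate.

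Your claim could in principle be repaired: condition $(*)$ gives $\sigma^*A_K\sim A_K$ over $K$, hence over $U=p^{-1}(T)$ which is Galois-stable, so the simple factor $B$ of $A_U$ satisfies $\sigma^*B\sim B$ for all $\sigma$; and since $C$ is simple over $T$, the group $\Aut(K/\QQ)$ acts transitively on the isogeny classes of simple factors of $C_U$, so there is a unique Galois-fixed one. But you do not make this argument, and the paper's route via $\mathcal D$ avoids needing it. A secondary point: the statement of \cite[Thm B]{rvk:gl2} (which counts isomorphism classes) has a much worse exponent in $\rad(D_KN_S)$ than $2$; what yields the exponent $2$ is, as you eventually say, going into steps 2 and 4 of the proof of \cite[Thm 6.6]{rvk:gl2}, which is exactly what the paper does.
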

\begin{proof}
Let $U\subseteq S$ and $T\subseteq \spec(\ZZ)$ be the open subschemes defined in \eqref{def:UTGisog}. For each isogeny class $[A]$ in $\mathcal C$, Proposition \ref{prop:Gisog}  provides  a simple abelian scheme $C$ over $T$ of $\gl2$-type such that $A_{U}$ is isogenous to a power of a simple factor $B$ of  $C_U$. Moreover inequality \eqref{eq:reldimweilres} assures that the abelian scheme $C$ over $T$ has relative dimension at most $dg$. Therefore we see that $[A]\mapsto [B]$ defines an injective map   $$\mathcal C\hookrightarrow \mathcal D$$ into the set $\mathcal D$ of isogeny classes of the simple factors of $C_{U}$ where $C$ is a simple abelian scheme over $T$  of relative dimension at most $dg$ which is of $\gl2$-type. For any $n\in \ZZ_{\geq 1}$  the arguments in steps 2. and 4. of the proof of \cite[Thm 6.6]{rvk:gl2} show the following: There are at most $\tfrac{1}{24}\nu$ distinct isogeny classes of simple abelian schemes over $T$ of relative dimension $n$ which are of $\gl2$-type, where $\nu\in \ZZ$ satisfies $\nu\leq N_T^2(2n+1)^{6\rho}$ for $\rho$ the number of rational primes at most $2n+1$. This implies that $|\mathcal D|\leq 9^{9dg}N_T^2$ since the explicit prime number theorem in \cite[(3.6)]{rosc:formulas} gives $\sum_{n\leq dg}n(2n+1)^{6\rho}\leq 9^{9dg}$.   Then the inclusion $\mathcal C\hookrightarrow \mathcal D$ together with $N_T=\textnormal{rad}(D_KN_S)$ proves Lemma~\ref{lem:numbisogclasses}.   \end{proof}

Next we deal with abelian varieties which have CM. To explicitly bound their isogeny classes, we reduce to the geometric situation over $\bar{K}$ and then we apply  CM theory which reduces the problem to count CM fields with bounded degree and discriminant.

\begin{lemma}\label{lem:numbisogclassescm}
If $n\in \ZZ_{\geq 3}$ then up to isogenies there exist at most $(2g)^{(4g)^3}\rad(D_K)^{10g^2}$ distinct abelian varieties $A$ over $K$ of dimension $g$ such that $A_{\bar{K}}$ has CM and $K=K(A_n)$.
\end{lemma}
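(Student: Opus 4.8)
\textbf{Proof plan for Lemma~\ref{lem:numbisogclassescm}.}

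The plan is to reduce everything to a count of simple CM abelian varieties, and then to a count of CM fields of bounded degree and discriminant, exactly in the spirit of the proof of Proposition~\ref{prop:cm}. First I would pass to the geometric situation. Let $A$ be as in the statement; since $A_{\bar K}$ has CM, Poincar\'e reducibility over $\bar K$ writes $A_{\bar K}$ up to isogeny as a product $\prod B_i^{e_i}$ of powers of simple CM abelian varieties $B_i$ over $\bar K$, with $\sum e_i\dim(B_i)=g$ and each $\End^0(B_i)$ a CM field $E_i$ of degree $2\dim(B_i)\le 2g$. The key point is that all the $B_i$ are already defined over $K$ together with all their endomorphisms: indeed, by Silverberg~\cite[Thm 4.1, Thm 4.2]{silverberg:fieldsofdef} all endomorphisms of $A_{\bar K}$, hence the idempotents cutting out the $B_i$ and all endomorphisms of each $B_i$, are defined over the field $K(A_m)$ for a suitable fixed $m$, and when $K=K(A_n)$ with $n\ge 3$ one gets (using $K(A_n)\supseteq K(A_m)$ after adjusting, or more simply applying Silverberg's criterion with the given $n$) that everything descends to $K$. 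So each $B_i$ is an abelian variety over $K$ of dimension $\le g$ with $\End^0(B_i)=E_i$ a CM field, and $\mathcal O_{E_i}\cong\End(B_i)$ after replacing $B_i$ by an isogenous one via \cite[Lem 5.3]{vkkr:hms}.

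Next I would bound the number of possibilities for the data $(E_i,\Phi_i)$. By Lemma~\ref{lem:cmram}~(ii) applied to $B=B_i$, the CM field $E_i$ is unramified at every rational prime $p\nmid D_K$, so by \cite[Lem 6.2]{rvk:szpiro} (Dedekind's discriminant theorem, with the effective prime number theorem of \cite{rosc:formulas} to absorb the $n^n$ factor) one gets $D_{E_i}\le \rad(D_K)^{5g}(2g)^{24g^2}$. The number of number fields of degree $\le 2g$ with discriminant bounded by a quantity $X$ is at most $X^{c(g)}$ for an explicit $c(g)$ by the standard Hermite--Minkowski estimate (e.g.\ $(2g)^{2g}X$ or similar), and a simple CM abelian variety over $K$ with $\mathcal O_{E_i}\cong\End(B_i)$ is determined up to isogeny by its CM type $\Phi_i$ together with the isomorphism class of $E_i$ (and there are at most $2^{g}$ CM types, actually $\le 2^{2g}$ choices of subset); finally a simple CM abelian variety over $\bar K$ with a given CM type is unique up to isogeny over $\bar K$, and descends to $K$ in only finitely many twisted forms — but since we are counting only up to isogeny over $K$, not up to $K$-isomorphism, the twisting issue disappears, since any $K$-form of $B_i$ is $K$-isogenous to $B_i$ by Lemma~\ref{lem:fullcomp}/\ref{lem:isotypfullcomp} (the CM action gives center-compatible, hence fully compatible, isogenies to the conjugates, and a form is $\bar K$-isogenous to $B_i$ via an isogeny that can be descended). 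So the number of isogeny classes of simple CM $B_i$ over $K$ of dimension $\le g$ with the required properties is at most $(2g)^{c(g)}\rad(D_K)^{c'(g)g}$ for explicit $c,c'$; tracking constants gives something comfortably below $(2g)^{(4g)^3}\rad(D_K)^{10g^2}$.

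Finally I would assemble the bound: an abelian variety $A$ over $K$ of dimension $g$ as in the statement is, up to $K$-isogeny, a product $\prod B_i^{e_i}$ with $\sum e_i\dim(B_i)=g$; the number of such multisets of (isogeny classes of) simple factors with multiplicities, subject to the dimension constraint, is at most the number of ways to choose $\le g$ factors from the list above (with repetition and an exponent vector summing to $\le g$), which contributes only a factor polynomial in $g$ times the per-factor count raised to the power $g$. The dominant term is $\big((2g)^{c(g)}\rad(D_K)^{c'(g)g}\big)^{g}$, and with the explicit constants from \cite[Lem 6.2]{rvk:szpiro} and Hermite--Minkowski this is bounded by $(2g)^{(4g)^3}\rad(D_K)^{10g^2}$ as claimed. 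The main obstacle I anticipate is the bookkeeping needed to justify that the finitely many $K$-twists of each simple CM factor all lie in one $K$-isogeny class — this is where one must be careful to use that the statement only asks for a count up to isogeny (over $K$), so that Galois-cohomological twisting contributes nothing, rather than trying to bound twists directly; everything else is a routine, if tedious, tracking of the explicit constants through the discriminant bound and the Hermite--Minkowski field count.
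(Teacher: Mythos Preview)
Your overall strategy matches the paper's: reduce to simple CM factors, bound the discriminants of the CM fields $E_i$ via Lemma~\ref{lem:cmram}(ii) and \cite[Lem~6.2]{rvk:szpiro}, count CM fields by Hermite--Minkowski and CM types by $2^{g_i}$, and assemble via the combinatorial bound on exponent vectors. The numerics are also in the right ballpark.

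The gap is in your treatment of the passage from $\bar K$-isogeny to $K$-isogeny. Your claim that ``any $K$-form of $B_i$ is $K$-isogenous to $B_i$ by Lemma~\ref{lem:fullcomp}/\ref{lem:isotypfullcomp}'' is not correct: those lemmas upgrade a center-compatible isogeny $\sigma^*A\to A$ (for a single automorphism $\sigma$ of the base field) to a fully compatible one; they say nothing about descending an isogeny between two distinct $K$-varieties that happen to be $\bar K$-isogenous. In general a $K$-form of a simple CM abelian variety need not be $K$-isogenous to it (think of quadratic twists of a CM elliptic curve), so this step genuinely fails without an extra hypothesis.

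The paper sidesteps this entirely and you should too: do not try to descend the individual $B_i$ or control their twists. Instead, carry out the whole count over $\bar K$ (so one is bounding $\bar K$-isogeny classes of $A_{\bar K}$, which CM theory determines by the isomorphism class of the CM type data), and then observe at the very end that for any two $A,B$ as in the lemma, the hypothesis $K=K(A_n)=K(B_n)$ with $n\ge 3$ together with Silverberg~\cite[Thm~2.4]{silverberg:fieldsofdef} gives $\Hom(A,B)\overset{\sim}{\to}\Hom(A_{\bar K},B_{\bar K})$, so $A$ and $B$ are $K$-isogenous if and only if $A_{\bar K}$ and $B_{\bar K}$ are $\bar K$-isogenous. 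This single application of Silverberg to the full $A$ (rather than to the simple pieces) is exactly what makes the twist issue disappear, and it is the step you were missing.
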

\begin{proof}
We take $n\in \ZZ_{\geq 3}$. Let $A$ be an abelian variety over $K$ of dimension $g$ such that  $K=K(A_n)$ and $A_{\bar{s}}$ has CM where $\bar{s}=\spec(\bar{K})$. Silverberg~\cite[Thm 2.4]{silverberg:fieldsofdef} gives that all endomorphisms of $A_{\bar{s}}$ are defined over $K\subset \bar{K}$ since $K=K(A_n)$.  Hence we can apply the arguments of the proof of Proposition~\ref{prop:cm}. They show that $A_{\bar{s}}$ is isogenous to a product $\prod A_i^{e_i}$ with $e_i\in \ZZ_{\geq 1}$ and $A_i$ simple pairwise non-isogenous abelian varieties  over $\bar{K}$ of dimension $g_i$ such that $E_i=\End^0(A_{i})$ is a CM field of degree $2g_i$ with $D_{E_i}\leq D$ for $D=\rad(D_K)^{5g}(2g)^{24g^2}$.  The number of isomorphism classes of CM fields $E_i$ of degree $2g_i$ and discriminant $D_{E_i}\leq D$ is bounded by the explicit version \eqref{eq:countnf} of Hermite's theorem, and each CM field $E_i$ has precisely $2^{g_i}$ distinct CM types; in this proof the CM types are $\bar{K}$-valued in the sense of \cite[p.66]{chcooo:cmbook}.  Further CM theory (\cite[1.5.4.1]{chcooo:cmbook}) gives that the abelian variety $A_{i}$ over $\bar{K}$ is determined up to an $E_i$-compatible isogeny by the isomorphism class of the CM type associated in \cite[p.66]{chcooo:cmbook} to $A_i$, where two CM types $\Phi$ and $\Phi'$, of CM fields $E$ and $E'$ respectively, are isomorphic if there exists an isomorphism $\sigma:E\isomto E'$ of fields with $\Phi=\Phi'\sigma$.  On combining these observations, we find that up to isogenies there exist at most $m=g2^{(2g+1)^2}(2D)^{2g}$ distinct abelian varieties $A_i$ over $\bar{K}$ as above. As $A_{\bar{s}}$ is isogenous to $\prod A_i^{e_i}$ and $g=\sum e_ig_i$, a combinatorial argument (\cite[p.21]{rvk:gl2}) shows that the number of isogeny classes of $A_{\bar{s}}$ with $A$ as above is bounded by $m^g$. Then the number of isogeny classes of $A$ as above is at most $m^g$ since  $K=K(A_n)$. Indeed, if $A$ and $B$ are abelian varieties over $K$ with $K=K(A_n)=K(B_n)$ then Silverberg~\cite[Thm 2.4]{silverberg:fieldsofdef} gives that $\Hom(A,B)\isomto \Hom(A_{\bar{s}},B_{\bar{s}})$ and hence $A$ is isogenous to $B$ if and only if $A_{\bar{s}}$ is isogenous to $B_{\bar{s}}$.  
This completes the proof of the lemma.  \end{proof}

We point out that Orr--Skorobogatov~\cite[Thm A]{orsk:cmfiniteness} obtained a uniform finiteness result for $\bar{\QQ}$-isomorphism classes of abelian varieties of CM type of given dimension which can be defined over a number field of given degree; their arguments use results of Tsimerman~\cite{tsimerman:ao} and Masser--W\"ustholz~\cite{mawu:abelianisogenies} and also Zarhin's quaternion trick. As a (polynomial) bound in terms of $\rad(D_K)$ is sufficient for our purpose, we shall use Lemma~\ref{lem:numbisogclassescm} which has the advantage of providing a dependence on $g$ and $K$.

We are now ready to prove Proposition~\ref{prop:esnumber} and we combine the above lemmas with the explicit height bounds obtained in Propositions~\ref{prop:esgl2} and \ref{prop:cm}.

\begin{proof}[Proof of Proposition~\ref{prop:esnumber}]
Let $A$ be an abelian scheme over $S$ of relative dimension $g$ which is of $\gl2$-type with $G_\QQ$-isogenies. Suppose  that $K/\QQ$ is normal and that $K=K(A_n)$ for some $n\in \ZZ_{\geq 3}$. Then Lemma~\ref{lem:cond*} assures that condition $(*)$ in $\mathsection$\ref{sec:factorsGisogenies} is satisfied.    In particular all endomorphisms of $A_{\bar{K}}$ are defined over $K\subset \bar{K}$. 
Now, we combine Lemma~\ref{lem:isoclassesnumber} with the upper bounds for $h_F(A)$ in Propositions~\ref{prop:esgl2} and \ref{prop:cm}. This shows that the number of isomorphism classes of abelian schemes over $S$ which are isogenous to our given $A$ is at most $(7gd)^{608g^5d} \rad(D_KN_S)^{96g^4}.$
On combining this with the estimates for the number of isogeny classes obtained in Lemmas~\ref{lem:numbisogclasses} and \ref{lem:numbisogclassescm}, we deduce an upper bound for the number of isomorphism classes as claimed in Proposition~\ref{prop:esnumber}.     
\end{proof}
\subsubsection{Proof of Theorem~B~(ii)}\label{sec:thmesnumberproof}

In this section we prove Theorem~B~(ii) by reducing to the situation treated in Proposition~\ref{prop:esnumber}. The reduction uses two results: An explicit bound for the number of twists of an abelian variety over a number field (Lemma~\ref{lem:counttwists}), and an explicit estimate \eqref{eq:countnf} for the number of number fields of given degree and bounded discriminant.

\paragraph{Twists.}Let $A$ be an abelian variety of dimension $g\geq 1$ defined over number field $K$. To bound the number of twists of the unpolarized $A$ over a finite $L/K$, it seems difficult to extend the standard approach via Galois cohomology which works in the polarized case; see Remark~\ref{rem:twists}. Instead we exploit that each twist  embeds into the Weil restriction $R_{L/K}(A_L)$ and then  we apply the isogeny estimates \eqref{eq:isogvartransc} based on transcendence.  This leads to the following result which holds moreover for twists up to isogenies.

\begin{lemma}\label{lem:counttwists}
Let $K\subseteq L$ be a finite field extension of relative degree $n$. There exist at most $(ng)^g$ isogeny classes, and at most $(ng)^g\kappa(A)^{2g}$ isomorphism classes, of abelian varieties $B$ over $K$ such that $B_L$ is isogenous to $A_L$ where $\kappa(A)$ is as in \eqref{eq:isogvartransc}.
\end{lemma}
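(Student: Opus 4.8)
The plan is to bound the number of twists of $A$ over $K$ by embedding every twist into a fixed abelian variety, namely the Weil restriction $W=\Res_{L/K}(A_L)$, and then counting the abelian subvarieties of $W$ of the correct dimension up to isogeny, followed by using isogeny estimates to pass from isogeny classes to isomorphism classes. So first I would recall the setup: $B$ is an abelian variety over $K$ such that $B_L$ is isogenous to $A_L$, and $B_L$ has dimension $g$. By the universal property of Weil restriction there is a canonical morphism $B\to \Res_{L/K}(B_L)$; composing with a choice of isogeny $B_L\to A_L$ (which induces a morphism $\Res_{L/K}(B_L)\to \Res_{L/K}(A_L)=W$ on Weil restrictions) gives a nonzero morphism $B\to W$ defined over $K$. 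Since $B$ is an abelian variety, the image is an abelian subvariety of $W$ defined over $K$, of dimension $g$ (the morphism $B\to W$ has finite kernel, because after base change to $L$ it factors through the embedding $A_L\hookrightarrow W_L=\prod_{\sigma}\sigma^*A_L$ which is universally injective, composed with an isogeny). Hence $B$ is isogenous over $K$ to an abelian subvariety of $W$ of dimension $g$.

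Next I would bound the number of such abelian subvarieties up to isogeny. Here $W$ has dimension $ng$ over $K$, and an abelian subvariety of $W$ of dimension $g$ is determined up to isogeny by which isotypic factors of $W$ it involves and with what multiplicities: decomposing $W$ up to isogeny into $\prod W_j^{m_j}$ with $W_j$ simple pairwise non-isogenous, an abelian subvariety of dimension $g$ is isogenous to $\prod W_j^{a_j}$ for integers $0\le a_j\le m_j$ with $\sum a_j\dim(W_j)=g$, so the number of isogeny classes of $g$-dimensional abelian subvarieties of $W$ is at most the number of such tuples $(a_j)$. A crude combinatorial count (as in \cite[p.21]{rvk:gl2}, or directly: each $a_j\le g$ and there are at most $ng$ simple factors counted with multiplicity) bounds this by $(ng)^g$. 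This gives the stated bound on the number of isogeny classes of $B$.

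Finally, to pass from isogeny classes to isomorphism classes I would invoke Lemma~\ref{lem:isoclassesnumber}: each isogeny class of abelian varieties over $K$ (or over a Dedekind base with function field $K$) contains at most $\kappa(A_K)^{2g}$ isomorphism classes, where $\kappa$ is as in \eqref{eq:isogvartransc}, the key point being that $\kappa$ is an isogeny invariant up to controlled factors — more precisely, if $B_L\sim A_L$ then $B$ lies in a bounded number of isomorphism classes once its isogeny class is fixed, and the relevant $\kappa$ can be taken to be $\kappa(A)$ since the Faltings height varies in a controlled way. Multiplying the two bounds yields at most $(ng)^g\kappa(A)^{2g}$ isomorphism classes of $B$.

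\textbf{Main obstacle.} The delicate point is the finite-kernel claim for the morphism $B\to W$: one must check that a nonzero morphism of abelian varieties obtained this way is an isogeny onto its image of the right dimension, which requires knowing that $A_L$ sits inside $W_L=\prod_\sigma \sigma^*A_L$ via a universally injective map (the unit of the Weil-restriction adjunction) and that the chosen isogeny $B_L\to A_L$ does not collapse dimension — so the composite $B_L\to W_L$ has finite kernel and image of dimension $g$, hence the image of $B\to W$ over $K$ is $g$-dimensional. A secondary subtlety, flagged in Remark~\ref{rem:twists}, is precisely that the naive Galois-cohomology count of twists fails for unpolarized abelian varieties because $\Aut(A_{\bar K})$ need not be finite, which is exactly why the Weil-restriction-plus-transcendence route is needed; getting the combinatorial subvariety count clean and the $\kappa$-bookkeeping consistent with \eqref{eq:isogvartransc} is then routine.
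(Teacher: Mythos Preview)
Your proposal is correct and follows essentially the same route as the paper: embed every such $B$ into the Weil restriction $W=\Res_{L/K}(A_L)$ via the adjunction unit composed with the isogeny on Weil restrictions, bound the isogeny classes by counting tuples $(e_j)$ with $\sum e_j\dim W_j=g$ in a simple decomposition of $W$ (giving $\leq (ng)^g$), and then pass to isomorphism classes via the Masser--W\"ustholz/Gaudron--R\'emond minimal-degree bound exactly as in Lemma~\ref{lem:isoclassesnumber}. One small correction to your finite-kernel justification: the composite $B_L\to W_L$ does \emph{not} in general factor through the diagonal $A_L\hookrightarrow W_L$, because the chosen isogeny $B_L\to A_L$ is defined only over $L$ and its conjugates under the different embeddings $L\hookrightarrow\bar K$ need not agree; the clean factorization (which the paper uses) is $B\hookrightarrow R(B_L)\to W$, where the first map is the adjunction unit for $B$ (a closed immersion, checked on $\bar K$-points via $\bar K\to\bar K\otimes_K L\to\prod_\varphi\bar K\to\bar K$) and the second is an isogeny (quasi-finite since $\bar K\otimes_K L\cong\bar K^{\,n}$).
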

\begin{proof}
We define $C=R(A_L)$ where $R=R_{L/K}$ denotes the Weil restriction from $L$ to $K$.  Let $B$ be an abelian variety over $K$ with an isogeny $B_L\to A_L$. Its image $R(B_L)\to C$ under the functor $R$ is a morphism  of abelian varieties over $K$.   If $S$ is a $K$-scheme then $ B(S\times_K L)\cong R(B_L)(S)$.   On using that $\bar{K}\otimes_K L\cong \bar{K}^n$, we see that $R(B_L)\to C$ is quasi-finite and hence an isogeny.  Further, the projection $S\times_K L\to S$ induces a morphism  $$
B\hookrightarrow R(B_L)\to C
$$ 
of abelian varieties over $K$. Here $B\hookrightarrow R(B_L)$ is a closed immersion  since the identity of $B(\bar{K})$ factors through $B(\bar{K})\to B(\bar{K}\otimes_K L)$ via the decomposition $\bar{K}\to \bar{K}\otimes_K L\to \prod_{\varphi} \bar{K}\to \bar{K}$ of the identity of $\bar{K}$, where $\prod_\varphi$ ranges over all $K$-morphisms $\varphi:L\to \bar{K}$.    Now,  we can apply the arguments of steps 2 and 3 of the proof of \cite[Thm 6.6]{rvk:gl2} to bound the number of isomorphism classes of $B$ as above. We first control the number of isogeny classes. Let $C_1,\dotsc, C_m$ be pairwise non-isogenous simple abelian varieties over $K$ such that $C$ is isogenous to $\prod C_i^{n_i}$ with $n_i\in \ZZ_{\geq 1}$. As $B\hookrightarrow R(B_L)\to C$ has finite kernel, Poincar\'e's reducibility theorem gives $e_i\in \ZZ_{\geq 0}$ such that $B$ is isogenous to $\prod C_i^{e_i}$. Here $\prod C_i^{e_i}$ has dimension $\sum e_ig_i=\dim(B)$ where $g_i=\dim(C_i)$, and $\dim(B)=g$ since $B$ is geometrically isomorphic to $A$. A combinatorial argument (\cite[p.21]{rvk:gl2}) shows that the number of $e\in \ZZ_{\geq 0}^m$ with $\sum e_i g_i=g$ is at most $\tbinom{a}{g}$ for $a=g+m-1$. Thus the number of distinct $\prod C_i^{e_i}$, and hence the number of isogeny classes of $B$ as above, is at most $$\tbinom{a}{g}\leq m^g.$$ Let $[B_0]$ be such an isogeny class. We now bound the number of isomorphism classes of abelian schemes $B$ as above which are inside $[B_0]$. There exists an isogeny $B_0\to B$ of degree at most $\kappa(B)$ by \eqref{eq:isogvartransc}, and $\kappa(B)$ equals $\kappa(A)$ since the geometrically isomorphic $A$ and $B$ have the same stable Faltings height $h_F$. It follows that $[B_0]$ contains at most $\kappa(A)^{2g}$ distinct isomorphism classes of $B$ as above. Here we used the arguments of step~3 in \cite[p.21]{rvk:gl2} which we applied already in the proof of Lemma~\ref{lem:isoclassesnumber} and which rely on the counting result \cite[Lem 6.1]{mawu:abelianisogenies} of Masser--W\"ustholz. Then we deduce that the number of isomorphism classes of abelian schemes $B$ as above is at most $\tbinom{a}{g}\kappa(A)^{2g}$ where $\tbinom{a}{g}\leq m^g$. This proves the lemma since $m\leq \dim(C)$ and $\dim(C)=ng$.  \end{proof}

\begin{remark}\label{rem:twists}(i) The above proof shows that one can refine  Lemma~\ref{lem:counttwists} as follows. One can replace $(ng)^g$ by the smaller quantity $\tbinom{a}{g}$, where $a=g+m-1$ and $m$ is the number of isogeny classes of simple factors of the Weil restriction $R(A_L)$. Also, if the minimal isogeny degree of abelian varieties of dimension $g$ over $K$ would be bounded by a constant depending only on $g$ and $K$, then one could replace $\kappa(A)$ by this constant. For example,  one can replace $\kappa(A)^{2g}$ by the number 8 if $K=\QQ$ and $g=1$. Indeed this follows by going into the step where we bound the number of isomorphism classes in $[B_0]$ and by replacing therein \eqref{eq:isogvartransc} by  Kenku's refinement \cite[Thm 2]{kenku:ellisogenies} of Mazur's~\cite[Thm 5]{mazur:qisogenies}.

(ii) Let $(A,\lambda)$ be a polarized abelian variety over $K$, let $L$ be a finite Galois extension of $K$ and write $G=\Aut(L/K)$. The set of isomorphism classes  of polarized abelian varieties $(A',\lambda')$ over $K$ such that $(A'_L,\lambda'_L)$ is isomorphic to $(A_L,\lambda_L)$  identifies with the first Galois cohomology group $H^1\bigl(G,\Aut(A_L,\lambda_L)\bigl)$.  
It should be possible to explicitly bound the cardinality of the finite group $\Aut(A_L,\lambda_L)$ and hence of $H^1\bigl(G,\Aut(A_L,\lambda_L)\bigl)$. However, the  automorphism group of an unpolarized abelian variety  over $K$ can be infinite and quite complicated. Thus we used a different approach in Lemma~\ref{lem:counttwists}. 
\end{remark}

\paragraph{Number fields.}Let $d$ and $D$ be in $\ZZ_{\geq 1}$. Hermite proved that the number $n$ of isomorphism classes of number fields $K$ of degree $[K:\QQ]=d$ and discriminant $D_{K}\leq D$ is finite. On making Minkowski's proof of this result explicit, one obtains for example
\begin{equation}\label{eq:countnf}
n\leq 2^{(d+1)^2}D^{d}.
\end{equation}
Stronger bounds in terms of $D$ are known, see for instance Schmidt~\cite{schmidt:countnf}, Ellenberg--Venkatesh~\cite{elve:countnf} and the recent results of Lemke-Oliver--Thorne~\cite{leth:countnf}, Bhargava--Shankar--Wang~\cite{bhshwa:countnf} and Anderson et al. \cite{anetal:countnf}. As a polynomial bound in terms of $D$ is sufficient for our purpose, we shall use \eqref{eq:countnf} which has the advantage of providing also a dependence on $d$. To compute the bound \eqref{eq:countnf}, we go into the proof of \cite[Thm 3.2.13]{neukirch:ant}. We find that $\textnormal{vol}(X)=\tfrac{4}{\pi}(2\pi)^scD^{1/2}$ and $c> \tfrac{\pi}{4}(\tfrac{2}{\pi})^s$.  Hence, if $K$ is totally complex, then it has an integral primitive element $\alpha$ such that each coefficient $a$ of its  minimal polynomial $\prod_\sigma(x-\sigma(\alpha))$  satisfies $|a|=|\sum \prod \sigma(\alpha)|\leq  2^dD$.  If $K$ has a real embedding $\tau_0$, then we take $X=\{(z_\tau)\in K_{\RR}; \, |z_{\tau_0}|< cD^{1/2},|z_\tau|< 1 \textnormal{ for }  \tau\neq \tau_0\}$ with $\textnormal{vol}(X)=(2\pi)^s2^rcD^{1/2}$ and $c>(\tfrac{2}{\pi})^s$  
which leads again to $|a|\leq 2^dD$.  This implies \eqref{eq:countnf}.

We are now ready to prove Theorem~B~(ii). Recall that therein $K$ is a number field of degree $d$,  $S\subseteq \spec(\OK)$ is a nonempty open subscheme, $N_S=\prod_{v\in S^c}N_v$ and $g\in \ZZ_{\geq 1}$.

\begin{proof}[Proof of Theorem~B~(ii)]
Let $A$ be an abelian scheme over $S$ of relative dimension $g$. To reduce to the situation treated in Proposition~\ref{prop:esnumber}, we make a base change to a controlled scheme $T$ which is defined as follows: Choose $\bar{\QQ}$ with $K\subset \bar{\QQ}$, let $L\subset \bar{\QQ}$ be the normal closure of $K(A_3)/\QQ$ and let $p:\spec(\OL_L)\to \spec(\OK)$ be the projection. Now we define $$T=p^{-1}(S).$$   It is a connected Dedekind scheme with function field $L$ of degree $[L:\QQ]\leq l$ where $l=(d3^{4g^2})!$. As in the proof of Lemma~\ref{lem:cond*}, we see that the criterion of N\'eron--Ogg--Shafarevich~\cite[Thm 1]{seta:goodreduction} implies that $K(A_3)$, and hence its normal closure $L$, is unramified over each rational prime $p\nmid 3N_SD_K$.  Thus $\textnormal{rad}(N_TD_L)$ divides $\rad(3N_SD_K)$ and \cite[Lem 6.2]{rvk:szpiro} leads to $D_{L}\leq D$ where  $D=\textnormal{rad}(3N_
SD_K)^{3l}l^{6l^2}$.
Here we used the effective version of the prime number theorem in \cite[(3.12) and (3.16)]{rosc:formulas} which gives that the number $s$ of rational prime divisors of $D_L$ satisfies $s^s<\textnormal{rad}(D_L)^{2}$ if $s> 12$.  

Let $F(S)$ be the set of isomorphism classes of abelian schemes over $S$ of relative dimension $g$ which are of $\gl2$-type with $G_\QQ$-isogenies, and let $F'(S)$ be the set of all $[A]\in F(S)$ with $K(A_3)=K$. The notion of $\gl2$-type with $G_\QQ$-isogenies is stable under the base change $T\to S$ for any $T$ as above.  Hence $[A]\mapsto [A_{T}]$ defines a map
$$
F(S)\to \cup_{T\in \mathcal T}F'(T).$$
Here $\mathcal T$ is the set of all schemes $T=p^{-1}(S)$ where $p:\spec(\OL_L)\to \spec(\OK)$ is the projection for some number field $K\subseteq L\subset \bar{\QQ}$ with  the following properties: $L/\QQ$ is normal, $[L:\QQ]\leq l$, $D_L\leq D$, and $\rad(N_TD_L)$ divides $\rad(3N_SD_K)$. 

To bound $|F(S)|$ we now control $|\mathcal T|$ and the degree of the map $F(S)\to \cup_{T\in \mathcal T} F'(T)$. It follows from \eqref{eq:dedequivalence} that any two abelian schemes over a connected Dedekind scheme are isomorphic if and only if their generic fibers are isomorphic. Thus Lemma~\ref{lem:counttwists} gives that the fibers of the map $F(S)\to \cup_{T\in \mathcal T} F'(T)$ have cardinality at most
$$(lg)^g\sup\kappa([A])^{2g}$$
with the supremum taken over all $[A]\in F(S)$. Here $\kappa([A])=\kappa(A_K)$ and $\kappa(A_K)$ is defined in \eqref{eq:isogvartransc}. Further, the construction of $\mathcal T$ shows that $|\mathcal T|$ is bounded by the number of subfields $L\subset\bar{\QQ}$ with $[L:\QQ]\leq l$ and $D_L\leq D$. Therefore \eqref{eq:countnf} implies  $$|\mathcal T|\leq l^22^{(l+1)^2}D^l.$$ 
If $T\in \mathcal T$ then its function field $L=k(T)$ is normal over $\QQ$ and the definition of $F'(T)$ assures that each $[A]\in F'(T)$ satisfies $L=L(A_3)$. Therefore we can apply Proposition~\ref{prop:esnumber} which bounds $|F'(T)|$ for all $T\in \mathcal T$, while Theorem~\ref{thm:es} bounds  $\sup\kappa([A])$. On combining these bounds with the above displayed results, we deduce
\begin{equation}\label{eq:esgl2numberbound}
|F(S)|\leq  (3gl)^{(6g)^5l^3} \rad(N_SD_K)^{4l^2}.
\end{equation}
We now deal with the general case and we consider $A$ with the following properties $(\dagger)$: $A$ is an abelian scheme over $S$ of relative dimension $g$ and $A$ is of product $\gl2$-type with $G_\QQ$-isogenies. Let $m$ be the number of isogeny classes of abelian schemes over $S$ of relative dimension at most $g$ which are of $\gl2$-type with $G_\QQ$-isogenies. Suppose that $A_1,\dotsc,A_m$ generate these isogeny classes,  
and write $g_i$ for the relative dimension of $A_i$ over $S$.  Then for any $A$ with $(\dagger)$ there exists $e\in \ZZ^{m}_{\geq 0}$ with $g=\sum e_ig_i$ such that $A$  is isogenous to $\prod A_i^{e_i}$. Thus a combinatorial argument, which we used already in  the proof of Lemma~\ref{lem:counttwists}, shows that the number of isogeny classes of abelian schemes $A$ with $(\dagger)$ is at most $m^g$.  
Then Lemma~\ref{lem:isoclassesnumber} gives that the number of isomorphism classes of abelian schemes $A$ with $(\dagger)$ is at most $\sup\kappa(A)^{2g}m^g$ with the supremum taken over all $A$ with $(\dagger)$ for $\kappa(A)=\kappa(A_K)$ as in \eqref{eq:isogvartransc}. Theorem~\ref{thm:es} bounds $\sup \kappa(A)$, and \eqref{eq:esgl2numberbound} leads to $$m\leq \sum c_k\leq \tfrac{g}{2}(g+1)c_g.$$ Here the sum is taken over all $k\in \ZZ_{\geq 1}$ with $k\leq g$, and $c_k$ denotes the expression obtained by replacing $g$ by $k$ in the upper bound of \eqref{eq:esgl2numberbound}. We combine the bounds and then we deduce the estimate for the number of isomorphism classes claimed in Theorem~B~(ii).  
\end{proof}

\newpage

\section{Polarizations}\label{sec:polarizations}

Let $L$ be a totally real number field of degree $g$ over $\QQ$ with ring of integers $\OL$, and let $I$ be a nonzero finitely generated $\OL$-submodule of $L$  with a positivity notion $I_+$. In this section we continue our study in \cite[$\mathsection$9]{vkkr:hms} of equivalence classes of $I$-polarizations on $\OL$-abelian schemes of relative dimension $g$. In particular, we explicitly bound the number of such equivalence classes by using and generalizing the arguments of \cite[$\mathsection$9]{vkkr:hms}. 

Let $S$ be a connected Dedekind scheme with function field of characteristic zero, let $\mathcal C$ be the category of $\OL$-abelian schemes over $S$ and let $(A,\iota)$ be an $\OL$-abelian scheme over $S$ of relative dimension $g$. We denote by $\Phi$ the set of $I$-polarizations on $A$.

\paragraph{Equivalence relation.}Recall from Section~\ref{sec:hilbmodstackdef} that an $I$-polarization  on $A$ is an $\OL$-module morphism from $I$ to the $\OL$-module $\Hom_\OL(A,A^\vee)^{\textnormal{sym}}$, which maps $I_+$ to polarizations  and which  induces an isomorphism $I \otimes_{\OL} A \isomto A^\vee$. We say that two $I$-polarizations  $\varphi,\varphi'\in\Phi$ are equivalent, and we write $\varphi\sim\varphi'$,  if there exists an automorphism $\sigma$ of the $\OL$-abelian scheme $A$ such that $\varphi(\lambda)=\sigma^\vee\varphi'(\lambda)\sigma$ for all $\lambda\in I$.

\paragraph{Equivalence classes.}To state our bound for the equivalence classes, we denote by $w$ and $w'$ the number of roots of unity in $\End^0_{\mathcal C}(A)$ and $\End_{\mathcal C}(A)$ respectively.

\begin{proposition}\label{prop:polbound}
The number of equivalence classes satisfies $$|\Phi/{\sim}|\leq \tfrac{w^2}{w'}\cdot 2^{g-1}.$$
\end{proposition}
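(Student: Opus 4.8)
The strategy is to set up a transitive group action on the set $\Phi$ of $I$-polarizations and count orbits of the equivalence relation via an orbit-stabilizer argument, exactly in the spirit of \cite[$\mathsection$9]{vkkr:hms} but keeping track of the relevant unit groups. The first step is to fix a base point $\varphi_0\in\Phi$ (nonempty since we may as well assume $\Phi\neq\emptyset$, otherwise the statement is trivial) and to describe all other elements of $\Phi$ in terms of $\varphi_0$. Using the isomorphism $I\otimes_\OL A\isomto A^\vee$ induced by $\varphi_0$, one identifies $\Hom_\OL(A,A^\vee)^{\textnormal{sym}}$ with a space of symmetric endomorphisms of $A$ in the $\OL$-linear category $\mathcal C$, and then any $\varphi\in\Phi$ has the form $\varphi=\varphi_0\circ u$ for a unit $u$ in $\End_{\mathcal C}(A)$ (the Rosati-type involution attached to $\varphi_0$ forces $u$ to be fixed, so $u$ lies in a totally real or CM subfield by Lemma~\ref{lem:divalg} and Lemma~\ref{lem:autofield}). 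Here the key input is that $\End^0_{\mathcal C}(A)$ is a division algebra (Lemma~\ref{lem:divalg}\,(ii)), hence commutative after adjoining $\iota(\OL)$, so the relevant unit groups are unit groups of orders in number fields and their torsion is finite of order $w$ resp. $w'$.

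Second, I would make precise the two actions in play. The group $\End_{\mathcal C}^0(A)^\times$ acts on $\Phi$ (via $\varphi\mapsto \varphi\circ(\text{scaling})$, restricted to the subset giving genuine $I$-polarizations), and $\Aut_{\mathcal C}(A)=\End_{\mathcal C}(A)^\times$ acts via $\varphi\mapsto \sigma^\vee\varphi(-)\sigma$; two $I$-polarizations are equivalent precisely when they lie in the same $\Aut_{\mathcal C}(A)$-orbit. Under the identification $\varphi\leftrightarrow u$ from the first step, the $\Aut_{\mathcal C}(A)$-action becomes $u\mapsto \bar\sigma\sigma u = N(\sigma)u$ where $\bar{\ }$ is the Rosati involution and $N$ is the associated norm; because $\End_{\mathcal C}^0(A)$ is a (totally real or CM) field the Rosati involution is either trivial or complex conjugation, so $N(\sigma)=\sigma^2$ or $\sigma\bar\sigma$. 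Thus $|\Phi/{\sim}|$ is the index of the subgroup $N(\End_{\mathcal C}(A)^\times)$ inside the group of those $u\in \End_{\mathcal C}^0(A)^\times$ that arise (the ``positivity'' and isomorphism constraints cut $\Phi$ down to a coset-like set, and the quotient is governed by a suitable unit group modulo norms). The bookkeeping here is what produces the factor $\tfrac{w^2}{w'}$: the torsion of the source contributes $w$, the image of squaring/norm on torsion contributes a $w/w'$-type correction, and the free parts of the unit groups match up by Dirichlet so as to contribute only a factor bounded by $2^{g-1}$ — this last bound is the statement that a totally real field of degree $g$, or a CM field with maximal totally real subfield of degree $g$, has $(\text{units})/(\text{squares or norms of units})$ of $2$-rank at most $g-1$, which is the analogue of the classical fact used in \cite[Prop 9.1]{vkkr:hms}.

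Third, I would carry out the explicit count. Let $F=\End^0_{\mathcal C}(A)$, which by Lemma~\ref{lem:autofield} (applied to automorphisms, equivalently to the structure of the division algebra in Lemma~\ref{lem:divalg}) is either $L$ itself or a CM field with maximal totally real subfield $L$; let $\Gamma=\End_{\mathcal C}(A)$, an order in $F$ containing $\iota(\OL)$. One checks that $\Phi$ is a torsor under the group of totally positive units (in the appropriate sense) of $F$, and that $\varphi\sim\varphi'$ iff the ratio lies in the subgroup generated by $\{\sigma\bar\sigma:\sigma\in\Gamma^\times\}$ (or $\{\sigma^2\}$ in the totally real case). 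So $|\Phi/{\sim}|$ equals the order of $F^{\times}_{+}/\langle\sigma\bar\sigma\rangle$ or a subquotient thereof. Using the exact sequences relating $F^\times$, its torsion $\mu_F$ (order $w$), $\Gamma^\times$, its torsion $\mu_\Gamma$ (order $w'$), and the Dirichlet free parts, a direct computation gives that this index divides $\tfrac{w^2}{w'}\cdot 2^{g-1}$: the $2^{g-1}$ comes from $(F^\times_{\text{free}}):(\text{norms or squares})$ having $2$-rank at most the free rank of the totally real part, which is $g-1$, and the $\tfrac{w^2}{w'}$ from the torsion computation $\#\big(\mu_F/\{\zeta\bar\zeta:\zeta\in\mu_\Gamma\}\big)\cdot\#\big(\text{contribution of }\mu_F\text{ to the coset}\big)$.

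\textbf{Main obstacle.} The hard part will be the torsion bookkeeping that yields exactly $\tfrac{w^2}{w'}$ rather than a cruder bound: one must carefully distinguish the totally real case (Rosati trivial, so $N=\text{squaring}$, and $w=w'=2$ typically, giving $2^{g-1}$ on the nose) from the CM case (Rosati is complex conjugation, $w,w'$ can be larger, and $\zeta\mapsto\zeta\bar\zeta$ on roots of unity has image of order $w/w'$ or so), and verify that the free-rank contribution is genuinely capped at $2^{g-1}$ using that the totally positive units of the totally real subfield $L$ modulo squares has $\mathbb F_2$-dimension $g-1$ (by Dirichlet, the unit rank of $L$ is $g-1$). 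I would also need to double-check that all the formal reductions of \cite[$\mathsection$9]{vkkr:hms} — identifying $\Phi$ with a unit-coset, and $\sim$ with the norm subgroup — go through verbatim here, since the excerpt's Proposition~\ref{prop:polbound} is advertised as a generalization with sharper constant of Proposition~C, so presumably the authors re-derive these identifications in Section~\ref{sec:polarizations}; I would cite \cite[Lem 9.x]{vkkr:hms} for the formal parts and supply only the unit-group estimate as the new content.
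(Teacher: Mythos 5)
Your overall strategy matches the paper's: identify $F=\End^0_{\mathcal C}(A)$, embed $\Phi/{\sim}$ into a quotient $\Gamma^\times/U$ of unit groups for an order $\Gamma$ of $F$, split into the totally real and CM cases, and finish with Dirichlet plus torsion bookkeeping. That is exactly what Lemma~\ref{lem:polembedding} and Lemmas~\ref{lem:diophantineanalysis}--\ref{lem:quotientgroupbound} do, and the first reduction is indeed borrowed from the formal machinery of \cite[$\mathsection$9]{vkkr:hms} as you say.

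Two places deserve correction or sharpening. First, you cite Lemma~\ref{lem:autofield} and Lemma~\ref{lem:divalg} for the dichotomy ``$F$ is $L$ or a CM field with maximal totally real subfield $L$.'' Those lemmas do not actually give this: Lemma~\ref{lem:divalg}(ii) only says $\End^0_{\mathcal C}(A)$ is a finite division algebra, and Lemma~\ref{lem:autofield} only analyzes the subalgebra $L(\sigma)$ generated by a single automorphism, not the full endomorphism algebra. The statement you want is Lemma~\ref{lem:divalgcompchar0}, whose proof is one of the genuinely new pieces: it embeds $D=\End^0_{\mathcal C}(A)$ into $\uM_2(L)$ via its action on $H_1(A(\CC),\QQ)$ (a rank-$2$ $L$-module), then rules out $\dim_L D\in\{3,4\}$ and deduces that $D$ is $L$ or CM. This replaces the Lie-algebra dimension count from the $K=\QQ$ case, which breaks down here because the function field of $S$ need not embed into $\RR$ and $\textnormal{Lie}(A)$ can be too large.

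Second, your claim that the free part contributes at most $2^{g-1}$ ``by Dirichlet'' is true but the honest proof is more delicate than squaring the free rank: the paper passes through the Costa--Friedman identity $|\OL_L^\times/\mu(L)N(\OL_F^\times)| = R_F/(R_{F/L}R_L)$ and then uses $R_{F/L}=1$ (because the relative units of a CM extension have free rank zero) together with the classical formula $|\OL_F^\times/\mu(F)\OL_L^\times|\cdot R_F/R_L = 2^{g-1}$. It is precisely this identity that cleanly isolates the factor $2^{g-1}$ and makes the torsion contribution come out as exactly $w^2/w'$ rather than a cruder multiple, and it also explains why the bound is uniform even though $D_{F/L}$ is unbounded when $K\neq\QQ$. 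Your ``torsion bookkeeping'' sketch is aimed at the right quantities, but without the snake-lemma argument of Lemma~\ref{lem:diophantineanalysis}(i) (comparing $\Gamma^\times/N(\Gamma^\times)$ to $\OL_F^\times/N(\OL_F^\times)$) and the computation $\ker(N)\cap\OL_F^\times=\mu(F)$ in the CM case, the numerology $\tfrac{w^2}{w'}$ is not pinned down.
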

The number $w$ is uniformly bounded: For example \eqref{eq:rootsbound} gives that $w\leq 4g^2$ and $\omega\ll_\epsilon g^{1+\epsilon}$ for all real $\epsilon>0$, while Lemma~\ref{lem:polembedding} shows that $w=w'=2$ if the abelian scheme $A$ over $S$ has no CM. In particular we obtain that $w=w'=2$ if the function field of $S$ embeds into $\RR$,  and hence \cite[Prop 9.1]{vkkr:hms} is a special case of Proposition~\ref{prop:polbound}. 
An important feature of Proposition~\ref{prop:polbound} is that the bound $\tfrac{w^2}{w'}\cdot 2^{g-1}$ can be controlled only in terms of the relative dimension $g$ of  $A$. This is crucial for deducing the next result which explicitly controls the degree over $S$ of the natural forgetful morphism $\phi_\varphi \colon \hilbmod \to \abomult$ defined in \cite[(6.2)]{vkkr:hms} with respect to the Hilbert moduli stack $\mathcal M^I$ associated to $(L,I,I_+)$.


\begin{corollary}\label{cor:forgetpol}
The degree of $\phi_\varphi(S)$ is at most $c\cdot 2^g$ for some constant $c\leq 4g^4$.
\end{corollary}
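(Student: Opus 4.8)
The plan is to deduce Corollary~\ref{cor:forgetpol} from Proposition~\ref{prop:polbound} by identifying the fiber of the forgetful morphism $\phi_\varphi(S)\colon \hilbmod(S)\to \abomult(S)$ over a point $(A,\iota)$ with the set $\Phi/{\sim}$ of equivalence classes of $I$-polarizations on $(A,\iota)$, and then by bounding the quantity $\tfrac{w^2}{w'}\cdot 2^{g-1}$ purely in terms of $g$ using the estimate $w\leq 4g^2$ from \eqref{eq:rootsbound}.

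First I would recall from \cite[$\mathsection$6]{vkkr:hms} that $\hilbmod(S)$ and $\abomult(S)$ are the sets of isomorphism classes of triples $(A,\iota,\varphi)$ and pairs $(A,\iota)$ respectively, where $\varphi$ is an $I$-polarization on $(A,\iota)$, and that $\phi_\varphi$ forgets $\varphi$. A routine bookkeeping argument shows that the fiber of $\phi_\varphi(S)$ over the isomorphism class of $(A,\iota)$ is in canonical bijection with the set of $I$-polarizations on $(A,\iota)$ modulo the equivalence relation $\sim$ defined right before Proposition~\ref{prop:polbound}: two triples $(A,\iota,\varphi)$ and $(A,\iota,\varphi')$ (over the same $(A,\iota)$) are isomorphic in $\hilbmod(S)$ precisely when there is an automorphism $\sigma$ of the $\OL$-abelian scheme $A$ with $\varphi(\lambda)=\sigma^\vee\varphi'(\lambda)\sigma$ for all $\lambda\in I$, which is exactly $\varphi\sim\varphi'$. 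Here I should note that a connected Dedekind scheme $S$ whose function field has characteristic zero satisfies the standing hypotheses of this section, so Proposition~\ref{prop:polbound} applies to every $(A,\iota)$ of relative dimension $g$ arising as the image of a point of $\hilbmod(S)$.

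With this identification, $|\phi_\varphi(S)^{-1}(A,\iota)|=|\Phi/{\sim}|\leq \tfrac{w^2}{w'}\cdot 2^{g-1}$ by Proposition~\ref{prop:polbound}, where $w,w'$ are the numbers of roots of unity in $\End^0_{\mathcal C}(A)$ and $\End_{\mathcal C}(A)$ respectively. Since $w'\geq 2$ (as $-1$ is always a root of unity) and $w\leq 4g^2$ by \eqref{eq:rootsbound}, we get $\tfrac{w^2}{w'}\cdot 2^{g-1}\leq \tfrac{(4g^2)^2}{2}\cdot 2^{g-1}=8g^4\cdot 2^{g-1}\leq 4g^4\cdot 2^{g}$. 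Taking the supremum over all $(A,\iota)$ in the image, $\deg(\phi_\varphi(S))\leq c\cdot 2^g$ with $c=4g^4$, as claimed.

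The only genuinely substantive input is Proposition~\ref{prop:polbound} itself, which is established separately; the work here is the elementary but careful verification that the fiber of $\phi_\varphi$ really is $\Phi/{\sim}$ (matching the isomorphism-of-triples relation with the polarization equivalence relation, and checking the duality conventions so that $\sigma^\vee\varphi'(\lambda)\sigma$ appears rather than some twisted variant) together with the trivial arithmetic $\tfrac{w^2}{w'}2^{g-1}\leq 4g^4 2^g$. I expect no real obstacle; the main point requiring attention is simply to make sure the equivalence relation used to form $\hilbmod(S)$ from triples agrees on the nose with the $\sim$ of Proposition~\ref{prop:polbound}, which is immediate from the definitions in Section~\ref{sec:hilbmodstackdef} and \cite[$\mathsection$6]{vkkr:hms}.
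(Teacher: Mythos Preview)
Your proposal is correct and follows essentially the same approach as the paper: the paper's proof simply says to go into the proof of \cite[Cor~9.2]{vkkr:hms} (which identifies the fiber of $\phi_\varphi(S)$ over $(A,\iota)$ with the set of equivalence classes of $I$-polarizations) and replace the application of \cite[Prop~9.1]{vkkr:hms} by the new Proposition~\ref{prop:polbound}. Your write-up makes explicit exactly these two steps together with the arithmetic $\tfrac{w^2}{w'}2^{g-1}\leq \tfrac{(4g^2)^2}{2}2^{g-1}=4g^4\cdot 2^g$ using $w'\geq 2$ and $w\leq 4g^2$ from~\eqref{eq:rootsbound}.
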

\begin{proof}
We go into the proof of \cite[Cor 9.2]{vkkr:hms} and we replace therein the application of \cite[Prop 9.1]{vkkr:hms} by Proposition~\ref{prop:polbound}. This proves the corollary. 
\end{proof}

The proof of Proposition~\ref{prop:polbound} will be given in the remaining of this section. The strategy is as follows: After applying the formal result \cite[Lem 9.3]{vkkr:hms} which computes the set $\Phi$ in terms of certain isomorphisms $A\isomto A^\vee\otimes_\OL I^{-1}$, we use and generalize the arguments of \cite[Lem 9.4]{vkkr:hms} to construct in Section~\ref{sec:polinjection} an embedding $$\Phi/{\sim}  \hookrightarrow \Gamma^\times/U.$$ Here $\Gamma$ is an order of a number field $F\supseteq L$ which comes from Lemma~\ref{lem:polembedding} and $U$ denotes a certain subgroup of the units $\Gamma^\times$ of $\Gamma$. Then in Section~\ref{sec:polda} we use algebraic number theory to relate the cardinality of $\Gamma^\times/U$ to a ratio $R_F/(R_{F/L}R_L)$ of regulators which can be computed by using classical results from CM theory. 


\subsection{Equivalence classes of polarizations and quotients of unit groups}\label{sec:polinjection}

We continue our notation introduced at the beginning of Section~\ref{sec:polarizations}. The goal of the current section is to view equivalence classes of $I$-polarizations on $\OL$-abelian schemes inside finite quotients of certain unit groups of orders of number fields containing $L$. 

To give a precise statement,  we recall that $\Phi$ denotes the set of $I$-polarizations on an arbitrary $\OL$-abelian scheme $(A,\iota)$ over $S$ of relative dimension $g=[L:\QQ]$ where $S$ is a connected Dedekind scheme with function field of characteristic zero.  Notice that $D=\End^0_{\mathcal C}(A)$ becomes an $L$-algebra via $\iota$, since $\iota(\OL)\subseteq \End_{\mathcal C}(A)$ and $L=\OL\otimes_\ZZ\QQ$.


\begin{lemma}\label{lem:polembedding}
The following statements hold.
\begin{itemize}
\item[(i)] Suppose that $\dim_L(D)=1$. Then the ring $\End_\mathcal C(A)$ identifies with an order $\Gamma$ of $L$  and there exists an embedding $\Phi/{\sim}  \hookrightarrow \Gamma^\times/(\Gamma^\times)^2.$
\item[(ii)] Suppose that $\dim_L(D)\neq 1$. Then $\End_\mathcal C(A)$ identifies with an order $\Gamma\supset \OL$ of a CM field $F$ with maximal totally real subfield $L$ and there exists an embedding $$\Phi/{\sim}  \hookrightarrow \Gamma^\times/N(\Gamma^\times).$$
Here the image $N(\Gamma^\times)$ of $\Gamma^\times$ under the field norm $N:F\to L$ is a subgroup of $\Gamma^{\times}$.
\end{itemize}
\end{lemma}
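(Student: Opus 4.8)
The plan is to follow the strategy of \cite[Lem 9.4]{vkkr:hms} for the case $K=\QQ$, replacing the Lie algebra argument (which fails here) by the computation of the endomorphism algebra $D=\End^0_{\mathcal C}(A)$ via first homology. First I would invoke Lemma~\ref{lem:divalg}~(ii): the $\QQ$-algebra $D$ is a finite-dimensional division algebra containing $L$ in its centre, so $D$ is an $L$-division algebra and $\dim_L(D)$ makes sense. I would then split into the two cases according to $\dim_L(D)$. In case (i), $\dim_L(D)=1$ forces $D=L$, so $\End_{\mathcal C}(A)$, being an order in $D$ containing $\OL$, is an order $\Gamma$ of the number field $L$ (necessarily $\Gamma\supseteq\OL$, and in fact $\Gamma=\OL$ is possible but not forced if $\OL$ need not be maximal relative to $\iota$ — more carefully, $\OL$ is already maximal so $\Gamma=\OL$; but the statement only claims ``an order of $L$'', which is what we get). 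In case (ii), $\dim_L(D)\neq 1$, so $D$ is a nontrivial division algebra over $L$; here I would use Lemma~\ref{lem:autofield} applied to a suitable automorphism, or more directly re-run its proof using $H_1(A(\CC),\QQ)$ in place of $\mathrm{Lie}(A)$: a finite subgroup of automorphisms generates a number field, and because $D$ contains $L$ which is totally real of degree $g=\dim A$, the basic result \cite[1.3.1]{lang:cm} combined with the fact that $D$ acts $\QQ$-linearly on the $2g$-dimensional space $H_1(A_\CC,\QQ)$ forces $[D:\QQ]\le 2g$, hence $\dim_L(D)\in\{1,2\}$. Thus $\dim_L(D)=2$, $D$ is a commutative field (being generated over its centre, which contains $L$, by elements commuting with $L$ — one checks commutativity as in the proof of Lemma~\ref{lem:autofield}, since $\iota(\OL)$ is central), so $D$ is a CM field $F$ with maximal totally real subfield $L$, and $\Gamma=\End_{\mathcal C}(A)$ is an order of $F$ containing $\OL$, hence containing $\OL$ properly iff $\Gamma\neq\OL$; in any case $\Gamma\supset\OL$ holds as $\Gamma\supseteq\iota(\OL)$ and $F\supsetneq L$.

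Next I would produce the embedding of $\Phi/{\sim}$. Applying the formal result \cite[Lem 9.3]{vkkr:hms}, one identifies $\Phi$ with a set of isomorphisms $\lambda:A\isomto A^\vee\otimes_\OL I^{-1}$ satisfying a symmetry condition, and the equivalence relation $\sim$ with the action of $\Aut_{\mathcal C}(A)=\Gamma^\times$ by $\lambda\mapsto \sigma^\vee\lambda\sigma$. Fixing one such $\lambda_0$ (which exists because $A$ is an $\OL$-abelian scheme carrying an $I$-polarization — this is part of the data or follows from $A$ admitting an $I$-polarization), every other element of $\Phi$ is of the form $\lambda_0\circ u$ for a unique $u\in\Gamma^\times$ with an appropriate ``totally positive/Rosati-fixed'' constraint, exactly as in \cite[Lem 9.4]{vkkr:hms}. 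The equivalence classes then biject with a quotient $\Gamma^\times/U$ where $U=\{\sigma^\vee\sigma:\sigma\in\Gamma^\times\}$ is the image of the ``norm'' map $\sigma\mapsto\sigma^\vee\sigma$ attached to the Rosati involution. In case (i) the Rosati involution on $\Gamma=\OL\subseteq L$ is trivial (as $L$ is totally real), so $\sigma^\vee\sigma=\sigma^2$ and $U=(\Gamma^\times)^2$, giving $\Phi/{\sim}\hookrightarrow\Gamma^\times/(\Gamma^\times)^2$. In case (ii) the Rosati involution on $F$ restricts to complex conjugation, i.e. the nontrivial element $\tau\in\Aut(F/L)$, so $\sigma^\vee\sigma=\sigma\bar\sigma=N_{F/L}(\sigma)$ and $U=N(\Gamma^\times)$, a subgroup of $\Gamma^\times$ since the field norm is multiplicative and $N(\Gamma^\times)\subseteq\Gamma^\times\cap L^\times\subseteq\Gamma^\times$ (using $N$ maps units of $\Gamma$, i.e. elements with inverse in $\Gamma$, to elements with inverse in $\Gamma$). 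This yields $\Phi/{\sim}\hookrightarrow\Gamma^\times/N(\Gamma^\times)$.

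The main obstacle I anticipate is the careful bookkeeping in the second case: verifying that the Rosati involution coming from an $I$-polarization restricts to complex conjugation on $F$ (rather than some other involution), and that it preserves the order $\Gamma=\End_{\mathcal C}(A)$ so that $N(\Gamma^\times)$ genuinely lands in $\Gamma^\times$. This requires knowing that the Rosati involution on $\End^0_{\mathcal C}(A)=F$ is a positive involution fixing $L$ pointwise — positivity forces it to be nontrivial on the CM field $F$ (a totally real field has no positive nontrivial involution), hence it equals $\tau$; and it preserves $\Gamma$ because $\Gamma$ is the full endomorphism ring, which is stable under $\sigma\mapsto\sigma^\vee$ for any polarization. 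I would also need to double-check the compatibility of $\lambda\mapsto\sigma^\vee\lambda\sigma$ with the identification in \cite[Lem 9.3]{vkkr:hms} — but this is exactly the content reused verbatim from \cite[Lem 9.4]{vkkr:hms}, so the novelty is confined to replacing the characteristic-zero input ``$\End^0_{\mathcal C}(A)$ is either $L$ or a CM field over $L$'' (which we get from Lemma~\ref{lem:autofield}/Lemma~\ref{lem:divalg} and the homology dimension count) for the real-embedding input used there. Everything else is a routine transcription.
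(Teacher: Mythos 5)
Your proposal follows essentially the same route as the paper: compute $D = \End^0_{\mathcal C}(A)$ using a first-homology argument in place of the Lie-algebra argument of \cite[Lem 9.4]{vkkr:hms}, then feed the resulting identification $D \cong L$ or $D \cong F$ (CM over $L$) into the formal framework of \cite[Lem 9.3]{vkkr:hms}, tracking the equivalence relation via the Rosati/star involution so that in case (i) it becomes $\sigma \mapsto \sigma^2$ and in case (ii) it becomes $N_{F/L}$. This is precisely what the paper does, with the homology computation packaged as its Lemma~\ref{lem:divalgcompchar0}.

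A few things in your write-up need tightening. First, the alternative you offer of ``using Lemma~\ref{lem:autofield} applied to a suitable automorphism'' does not do the job: that lemma only tells you that the $\QQ$-algebra $L(\sigma)$ generated by a \emph{single} automorphism $\sigma$ and $\iota(\OL)$ is $L$ or a CM field; it does not tell you that the whole division algebra $D=\End^0_{\mathcal C}(A)$ is such a field, and $\Aut_{\mathcal C}(A)$ need not generate $D$ over $\QQ$. Only your second alternative — re-running the argument with $V=H_1(A(\CC),\QQ)$ as the $L$-vector space of dimension $2$, embedding $D\hookrightarrow \uM_2(L)$, and then ruling out $\dim_L D\in\{3,4\}$ by the central-simple-algebra square constraint — actually works, and that is exactly the content of Lemma~\ref{lem:divalgcompchar0}. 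Your dimension bound $[D:\QQ]\mid 2g$ from the faithful $D$-module structure on $H_1$ is fine and a little cleaner, but the commutativity of $D$ in the $\dim_L D=2$ case still needs the square argument ($[D:Z(D)]$ divides $2$ and is a square, hence equals $1$), not the vague ``generated over its centre by elements commuting with $L$.'' Second, the assertion that the Rosati involution ``preserves $\Gamma$ because $\Gamma$ is the full endomorphism ring, which is stable under $\sigma\mapsto\sigma^\vee$ for any polarization'' is false as stated (for a non-principal polarization, the Rosati involution on $\End^0$ need not preserve $\End$); what saves you here is that the chosen $\psi_0\colon A\isomto I^{-1}\otimes A^\vee$ is an isomorphism, so the star involution $f^\star=\psi_0^{-1}(I^{-1}\otimes f^\vee)\psi_0$ is manifestly defined on $\End_{\mathcal C}(A)$. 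Finally, note that the paper verifies $N(\Gamma^\times)\subseteq\Gamma^\times$ not by Rosati-stability of $\Gamma$ but via $\Gamma\subseteq\OL_F$, $N(\OL_F^\times)\subseteq\OL_L^\times\subseteq\Gamma^\times$, which is the clean version of your parenthetical.
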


The assumption that the function field of $S$ has characteristic zero is crucial for (ii) which relies on the computation of the $L$-algebra $D$ in Lemma~\ref{lem:divalgcompchar0} below.

In what follows in this section we write $\otimes=\otimes_\OL$ for simplicity. Our proof of Lemma~\ref{lem:polembedding} uses the formal result in \cite[Lem 9.3]{vkkr:hms} which computes $\Phi/{\sim}$ in terms of equivalence classes of certain isomorphisms $A\isomto A^\vee\otimes I^{-1}$. To state \cite[Lem 9.3]{vkkr:hms}, we need to introduce some notation. Firstly, we observe that multiplication defines an isomorphism $\mu:I^{-1}\otimes I\isomto \OL$ of $\OL$-modules since $I$ is invertible. Secondly, in the category of $\OL$-modules it holds: If $X,Y,Z$ are $\OL$-modules, then mapping any morphism $\varphi:X\to \Hom(Y,Z)$  to the morphism $X\otimes Y\to Z$ defined by $x\otimes y\mapsto \varphi(x)(y)$ gives a bijection $\Hom(X,\Hom(Y,Z))\isomto \Hom(X\otimes Y,Z)$ which is functorial. 
Therefore, on using the isomorphism $I^{-1}\otimes I\otimes A\isomto A$ induced by $\mu$ and tensoring with $I^{-1}$, we obtain a bijection 
$$\tau:\Hom(I,\Hom_{\mathcal C}(A,A^\vee))\isomto \Hom_{\mathcal C}(A,I^{-1}\otimes A^\vee).$$
Further, any $\lambda\in I$ defines a morphism $m_\lambda: I^{-1}\otimes A^{\vee}\to  A^{\vee}$ of $\OL$-abelian schemes over $S$ as follows: If $T$ is an $S$-scheme, then $m_\lambda(T)$ sends $j\otimes x$ in $I^{-1}\otimes A^{\vee}(T)$ to the element $j\lambda \otimes x$ in $\OL\otimes A^{\vee}(T)\cong A^{\vee}(T)$. 
We denote by $\Psi$ the set of isomorphisms $\psi: A\isomto I^{-1}\otimes A^\vee$ of $\OL$-abelian schemes over $S$ such that for each $\lambda\in I$ the morphism $m_\lambda \psi:A\to A^\vee$ is symmetric and moreover a polarization if $\lambda\in I_+$.  
Now \cite[Lem 9.3]{vkkr:hms} says 
\begin{equation}\label{eq:formalpolcomp}
\tau(\Phi)=\Psi.
\end{equation} 
On using the canonical bijection $\tau:\Phi\isomto \Psi$ and the equivalence relation on $\Phi$, we obtain an equivalence relation on $\Psi$ by transport of structure. We write again $\psi\sim\psi'$ if two isomorphisms $\psi,\psi'\in\Psi$ are equivalent. We are now ready to prove Lemma~\ref{lem:polembedding}.

\begin{proof}[Proof of Lemma~\ref{lem:polembedding}]
To show (i) we suppose that $\dim_L(D)=1$. Then we are in the case when $\End^0_{\mathcal C}(A)\cong L$, and this case was treated in our proof of \cite[Lem 9.4]{vkkr:hms}. Therefore we deduce (i) by using exactly the same arguments as in the proof of \cite[(9.2)]{vkkr:hms}.

We next show (ii). Assume that  $\dim_L(D)\neq 1$ and notice that our abelian scheme $A$ over $S$ satisfies all the assumptions made in Lemma~\ref{lem:divalgcompchar0}. Therefore Lemma~\ref{lem:divalgcompchar0} gives an $L$-algebra isomorphism $D\isomto F$ from $D$ to a CM field $F$ with maximal totally real subfield $L$. Further, $\End_{\mathcal C}(A)$ is a subring of $\End(A)$ which is a finitely generated torsion-free abelian goup. Thus $\End_{\mathcal C}(A)$ identifies with an order of $D$. Let $\Gamma\subset F$ be the image of this order under the $L$-algebra isomorphism $D\isomto F$. Then $\Gamma$ is an order of $F$, and $\Gamma$ contains $\OL$ since $\iota(\OL)\subseteq\End_{\mathcal C}(A)$.  This proves the first part of (ii). 

It remains to construct the embedding of $\Phi/{\sim}$ appearing in the second part of (ii). As the equivalence relation on $\Psi$ is defined by transport of structure via \eqref{eq:formalpolcomp}, we see that $\tau:\Phi\isomto \Psi$ induces a bijection from $\Phi/{\sim}$ to the set $ \Psi/{\sim}$. To embed $\Psi$ into $\Gamma^\times$,  we may and do assume that there exists an isomorphism  $\psi_0:A\isomto I^{-1}\otimes A^\vee$ which lies in $\Psi$. We observe that $\psi \mapsto \psi_0^{-1}  \psi$ defines an embedding $\Psi\hookrightarrow \Aut_{\mathcal C}(A)$, and the isomorphism $D\isomto F$ identifies the image of $\Aut_\mathcal C(A)\hookrightarrow D$ with $\Gamma^\times$. Therefore the composition of the maps $\Psi\hookrightarrow \Aut_{\mathcal C}(A)\hookrightarrow D\isomto F$ gives an embedding
$$
j \colon \Psi \hookrightarrow \Gamma^\times.$$ 
Further, the image $N(\Gamma^{\times})$ of $\Gamma^\times$ under the field norm $N:F^\times\to L^\times$ is a subgroup of $\Gamma^\times$, since the order $\Gamma$ is contained in the maximal order $\OL_F$ of $F$ and since $N(\OL_F^\times)$ is contained in $\OL^\times\subseteq \Gamma^\times$. We now claim that $j:\Psi\hookrightarrow \Gamma^\times$ induces an embedding 
\begin{equation}\label{eq:InclusionModSqaures}
\Psi/{\sim} \hookrightarrow \Gamma^\times / N(\Gamma^{\times}).
\end{equation} 
To prove this claim, we suppose that $\psi,\psi'\in\Psi$ and we denote by $\varphi,\varphi'\in \Phi$ their preimages under $\tau$ respectively. Then $\psi\sim\psi'$  if and only if there exists  $\sigma \in \Aut_\mathcal C(A)$ with $
\varphi' =\sigma^\vee \varphi \sigma
$. Moreover, on applying the formal arguments of the proof of \cite[Lem 9.3]{vkkr:hms}, we compute that $
\varphi' =\sigma^\vee \varphi \sigma
$ if and only if  
$
\psi' = (I^{-1}\otimes\sigma^\vee) \psi \sigma
$.
The latter equality is equivalent to 
$
\psi_0^{-1} \psi' = \psi_0^{-1} ( I^{-1}\otimes \sigma^\vee) \psi_0 \psi_0^{-1}  \psi  \sigma,
$
which in turn is equivalent to
$
(\psi_0^{-1}\psi') = \sigma^\star (\psi_0^{-1}\psi) \sigma$. Here for any $f \in \End_{\mathcal C}(A)$ the involution $^\star$ is defined by $f^\star = \psi_0^{-1}  (I^{-1}\otimes f^\vee)  \psi_0.$ Thus we obtain that $\psi\sim \psi'$ if and only if there exists $\sigma\in \Aut_{\mathcal C}(A)$ with 
$$
(\psi_0^{-1}\psi') = \sigma^\star (\psi_0^{-1}\psi) \sigma.$$
To further compute this in terms of $\Gamma^\times$, let $\lambda\in I_+$ and denote by $^*$ the usual Rosati involution on $\End^0(A)$ associated to the polarization $\varphi(\lambda)$ of $A$, where $\varphi=\tau^{-1}(\psi_0)$ lies in $\Phi$ by \eqref{eq:formalpolcomp}. Then our $^\star$ coincides on $\Gamma$ with the restriction of the Rosati involution $^*$ to $D=\End_\mathcal C^0(A)$. Indeed, on using that any $f$ in $\End_\mathcal C(A)$ is a morphism of $\OL$-abelian schemes, we compute $f^\vee m_\lambda=m_\lambda (I^{-1}\otimes f^\vee)$ and then we deduce that $f^*=\varphi(\lambda)^{-1}f^\vee \varphi(\lambda)=f^\star$ inside $\End_\mathcal C^0(A)$ since $m_\lambda\psi_0=\varphi(\lambda)$ by \cite[(9.1)]{vkkr:hms}.  Moreover, as $\End^0(A)\isomto \End^0(A_{k})$ for $k$ the function field of the connected Dedekind scheme $S$, it follows for example from \cite[p.5]{lang:cm} that the positive involution
$^*$ on $D\subseteq\End^0(A)$ identifies via the $L$-algebra isomorphism $D\isomto F$ with the non-trivial automorphism $c$ of $F/L$.
Then the above computations and $^*= \, ^\star$ on $D$ show that $\psi\sim\psi'$ if and only if there exists $\gamma\in \Gamma^\times$ with $$j(\psi') = c(\gamma)j(\psi)\gamma=N(\gamma) j(\psi),$$ where the last equality exploits that $\Gamma$ is commutative. Hence our claim in \eqref{eq:InclusionModSqaures} follows and thus all statements of (ii) are proven. This completes the proof of Lemma~\ref{lem:polembedding}.\end{proof}

The above proof relies on description of the $L$-algebra $D=\End_{\mathcal C}^0(A)$ in the following result which further refines Lemma~\ref{lem:divalg}~(ii) in the case when the base is a connected scheme which has a point whose residue field is of characteristic zero.   
\begin{lemma}\label{lem:divalgcompchar0}
Let $L$ be a totally real number field with ring of integers $\OL$, let $S$ be a connected Dedekind scheme with $k(S)$ of characteristic zero, and let $A$ be an $\OL$-abelian scheme over $S$ of relative dimension $g=[L:\QQ]$. The $L$-algebra $D=\End_{\mathcal C}^0(A)$ either has dimension one or is isomorphic to a CM field with maximal totally real subfield $L$.
\end{lemma}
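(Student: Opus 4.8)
\textbf{Proof plan for Lemma~\ref{lem:divalgcompchar0}.}

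The plan is to reduce immediately to a field base and then exploit the existence of a complex embedding to use rational Hodge theory instead of Lie algebras. First I would pick a point $s$ of the connected Dedekind scheme $S$ whose residue field $k(s)$ has characteristic zero; such a point exists by hypothesis (e.g.\ the generic point). Base change from $S$ to $k(s)$ induces an injection $\End_{\mathcal C}^0(A)\hookrightarrow \End_{\mathcal C_s}^0(A_s)$ by rigidity, and in fact this is an isomorphism when $s$ is the generic point by \eqref{eq:dedequivalence}; in any case it is an injection of $L$-algebras, and by Lemma~\ref{lem:divalg}~(ii) the source $D=\End_{\mathcal C}^0(A)$ is a finite division algebra. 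Since $\iota(\OL)$ lies in the center of $\End_{\mathcal C}(A)$, the $L$-algebra $D$ is commutative, hence a field. So it only remains to identify this field: I would replace $S$ by $\spec(k)$ with $k=k(s)$ of characteristic zero, and further embed $k\hookrightarrow \CC$ (after enlarging $k$ if necessary, which only enlarges $D$ into $\End_{\mathcal C_\CC}^0(A_\CC)$, still a field containing $L$ — one must check the enlargement does not destroy the division property, but this follows again from Lemma~\ref{lem:divalg} applied over $\CC$, or more simply: a field stays a field, and the bound on its degree below is what matters).

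With $k\hookrightarrow\CC$ fixed, set $H=H_1(A_\CC,\QQ)$, a $\QQ$-vector space of dimension $2g$. The commutative field $D=\End_{\mathcal C}^0(A)$ acts $\QQ$-linearly and faithfully on $H$, and this action commutes with the $L$-action; thus $H$ is a module over the field $D$, so $\dim_\QQ D$ divides $2g$. Since $D\supseteq L$ and $[L:\QQ]=g$, we get $[D:L]\in\{1,2\}$. If $[D:L]=1$ we are done. If $[D:L]=2$, I must show $D$ is a CM field with maximal totally real subfield $L$. For this I would invoke the polarization: $A$ carries a polarization (it underlies an object of $\mathcal M^I$, or one simply uses that over a field any abelian variety is polarizable), giving a Rosati involution $^*$ on $\End^0(A)$. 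The argument of Lemma~\ref{lem:divalg}'s proof, or the standard reference \cite[1.3.1]{lang:cm} together with the fact that $L$ is totally real of degree $g=\dim A$, shows the following dichotomy from statement (1) used in that proof: applied to the quadratic field $D\subseteq\End^0(A_s)$ with $[D:\QQ]=2g=2\dim(A_s)$, it forces $D$ to be \emph{totally imaginary}. Being a totally imaginary quadratic extension of the totally real field $L$, the field $D$ is by definition a CM field with maximal totally real subfield $L$.

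The main obstacle is the step where $[D:L]=2$: one needs to rule out that $D$ is a totally real (or mixed-signature) quadratic extension of $L$, and the only leverage for this is positivity of the Rosati involution together with $[D:\QQ]=2\dim(A_\CC)$. Concretely, I would run exactly the citation \cite[1.3.1]{lang:cm} invoked as fact (1) in the proof of Lemma~\ref{lem:divalg}~(i): \emph{if $F\subseteq \End^0(C)$ is a number field with $[F:\QQ]>\dim C$, then $F$ is totally imaginary with $[F:\QQ]=2\dim C$}. Here $C=A_\CC$ (or $A_s$) and $F=D$ with $[D:\QQ]=2g>g=\dim C$, so $D$ is totally imaginary. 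Combined with $D\supseteq L$ totally real of index $2$, this pins down $D$ as a CM field over $L$ and completes the proof. I expect the write-up to be short: almost everything is bookkeeping reducing to the field case, and the one real input is the quoted structure result from CM theory, which is already cited and used verbatim in the preceding Lemma~\ref{lem:divalg}.
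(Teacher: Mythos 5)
Your overall route — base change to $\CC$, act on $H=H_1(A_\CC,\QQ)$, bound the degree of $D$, and then invoke \cite[1.3.1]{lang:cm} to force $D$ totally imaginary — is the same as the paper's, and the divisibility $\dim_\QQ D\mid 2g$ you extract from the free $D$-module structure of $H$ is in fact a cleaner way to bound the degree than the paper's embedding $D\hookrightarrow\uM_2(L)$ followed by ruling out $\dim_L D\in\{3,4\}$. But there is a genuine gap at the very start: the sentence ``Since $\iota(\OL)$ lies in the center of $\End_{\mathcal C}(A)$, the $L$-algebra $D$ is commutative, hence a field'' is a non sequitur. Having $L$ in the center of a division algebra $D$ does not make $D$ commutative — a quaternion division algebra over $L$ has $L$ in its center and is not commutative. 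Commutativity of $D$ is precisely part of what the lemma has to prove, and the paper proves it; you assert it with a wrong justification and then lean on it (``$H$ is a module over the \emph{field} $D$'').

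The fix is small but necessary, and it reorders your steps. First, $D$ is a finite division algebra by Lemma~\ref{lem:divalg}~(ii), no commutativity yet. Over a division ring, every module is free, so from the faithful action of $D$ on $H$ one still gets $\dim_\QQ D\mid\dim_\QQ H=2g$; since $g=\dim_\QQ L$ divides $\dim_\QQ D$ this forces $[D:L]\in\{1,2\}$. \emph{Now} deduce commutativity in the case $[D:L]=2$: let $Z$ be the center of $D$, a field containing $L$; $D$ is central simple over $Z$, so $\dim_Z D$ is a perfect square, and $[Z:L]\cdot\dim_Z D=[D:L]=2$. Since $2$ is not a square, $[Z:L]=2$ and $\dim_Z D=1$, i.e.\ $D=Z$ is commutative. (This is the same structural input the paper uses to rule out its dimensions $3$ and $4$.) Only after this is established may one treat $D$ as a number field of degree $2g=2\dim A_\CC$ and apply \cite[1.3.1]{lang:cm} to conclude $D$ is totally imaginary and hence a CM field with maximal totally real subfield $L$. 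Your closing remark correctly flags the Lang step as the key CM-theoretic input, but the commutativity step is an equally real obstacle and your proposal papers over it.
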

\begin{proof}
We first treat the case when the function field $k=k(S)$ embeds into $\CC$. Let $A_\CC$ be the base change of $A_k$ to $\CC$ via an embedding $k\hookrightarrow \CC$ and consider the $\QQ$-vector space $V$ given by its first homology $H_1(A_\CC(\CC),\QQ)$. We obtain a ring morphism
$\rho:D\to \End(V)$ by composing $D\subseteq \End^0(A)\to \End^0(A_\CC)$ with the morphism $\End^0(A_\CC)\to \End(V)$ induced by the functor $H_1(\,\ ,\QQ)$.  By assumption $A=(A,\iota)$ is an $\OL$-abelian scheme over $S$. Composing $\rho$ with $\iota\otimes_\ZZ\QQ:L\to D$ gives $V$ an $L$-vector space structure. The $L$-dimension of $V$ is two, since its $\QQ$-dimension is $2g$ and $[L:\QQ]=g$. 
As the image of $L\to D$ lies in the center of $D$,  
we then see that $\rho$ induces a morphism 
\begin{equation}\label{eq:dembedding}
D\to \uM_2(L)
\end{equation}
of $L$-algebras.  
This morphism is injective since $D$ is a division algebra by Lemma~\ref{lem:divalg}.  
This implies that $\dim_L D\leq 4$. We claim that $\dim_L D$ can not be $3$ or $4$. Indeed $\dim_L D=4$ would imply that  $D\isomto \uM_{2}(L)$ which is not possible since $\uM_{2}(L)$ is not a division algebra. Further, the division algebra $D$ is a central simple algebra over its center $Z$. Thus $\dim_Z D$ is a square,  while \eqref{eq:dembedding} and $\dim_L D\leq 4$ assure that $\dim_L Z$ lies in $\{1,2,4\}$.  Hence we deduce that $\dim_L D=3$ is not possible, proving our claim. Suppose now that  $\dim_L D=2$. Then the division $L$-algebra $D$ is commutative.  
In particular $D$ is a number field of degree $2\dim(A_\CC)$ which embeds into $\End^0(A_\CC)$. Thus $D$ is totally imaginary.  As $[L:\QQ]=g$ and $\dim_L D=2$, we then conclude that $D$ is $L$-isomorphic to a CM field with maximal totally real subfield $L$.  
This proves Lemma~\ref{lem:divalgcompchar0} in the case when $k$ embeds into $\CC$. 

The general situation can be reduced to the above case as follows. As $k$ is of characteristic zero, we may and do choose a subfield $k'\subseteq k$ with the following properties: It is finitely generated over $\QQ$, $A_k$ descends to an abelian variety $A_{k'}$ over $k'$ and there is a ring morphism $\End^0(A_k)\to \End^0(A_{\CC})$ where now $A_\CC$ is the base change of $A_{k'}$ to $\CC$ via an embedding $k'\hookrightarrow \CC$. Thus the arguments above \eqref{eq:dembedding} provide again a morphism $D\to \uM_2(L)$ of $L$-algebras and then the arguments  below \eqref{eq:dembedding} imply Lemma~\ref{lem:divalgcompchar0} in general.   
\end{proof}

In the next sections we bound the cardinality of $\Gamma^\times/(\Gamma^\times)^2$ and $\Gamma^\times/N(\Gamma^\times)$.

\subsection{Quotients of unit groups}\label{sec:polda}

In this section we prove two lemmas controlling the cardinality of quotient groups of the form $\Gamma^\times/N(\Gamma^\times)$. For this purpose we use algebraic number theory in order to relate and compute various quotients of unit groups in extensions of number fields.

We introduce some notation. Let $F$ be a number field and let $L\subseteq F$ be a subfield. We denote by $N:F^\times\to L^\times$ the group morphism given by the field norm, we write $R_{F/L}$ for the relative regulator (\cite[$\mathsection$3]{cofr:regulatorprocams}) and we put $R_F=R_{F/\QQ}$. Further for any ring $R$ we denote by $\mu(R)$ the group of roots of unity in $R$ and we  write $w_R=|\mu(R)|$. Our first goal is to prove the following lemma on quotients of unit groups in general number fields.  

\begin{lemma}\label{lem:diophantineanalysis}
The following statements hold. 
\begin{itemize}
\item[(i)] If $\Gamma$ is an order of $F$ with $\OL_L\subseteq \Gamma$, then $N(\Gamma^\times)$ is a subgroup of $\Gamma^\times$ and 
$$|\ker(N)\cap \Gamma^\times|\cdot|\Gamma^\times/N(\Gamma^\times)|\leq|\ker(N)\cap \OL_F^\times|\cdot|\mathcal O_F^\times/N(\mathcal O_F^\times)|.
$$ 
\item[(ii)] The cardinality of the quotient $\mathcal O_F^\times/N(\mathcal O_F^\times)$ satisfies 
$$|\mathcal O_F^\times/N(\mathcal O_F^\times)|\leq w_F\cdot |\mathcal O_F^\times/\mu(F)\mathcal O_L^\times|\cdot\tfrac{R_F}{R_{F/L}R_L}.$$
\end{itemize}
\end{lemma}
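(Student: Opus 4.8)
\textbf{Proof plan for Lemma~\ref{lem:diophantineanalysis}.}

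The plan is to treat the two statements essentially by linear algebra over $\ZZ$, using the Dirichlet unit theorem and the multiplicativity of indices in short exact sequences of finitely generated abelian groups. For (i), the first point — that $N(\Gamma^\times)$ is a subgroup of $\Gamma^\times$ — is handled exactly as in the proof of Lemma~\ref{lem:polembedding}: since $\OL_L\subseteq\Gamma\subseteq\OL_F$ we have $N(\Gamma^\times)\subseteq N(\OL_F^\times)\subseteq\OL_L^\times\subseteq\Gamma^\times$, and $N$ restricted to $F^\times$ is a group homomorphism, so its image is a subgroup. For the inequality I would consider the norm map $N$ as a homomorphism of abelian groups restricted first to $\Gamma^\times$ and then to $\OL_F^\times$, and compare the two short exact sequences
\[
1\to \ker(N)\cap\Gamma^\times\to \Gamma^\times\to N(\Gamma^\times)\to 1,\qquad
1\to \ker(N)\cap\OL_F^\times\to \OL_F^\times\to N(\OL_F^\times)\to 1.
\]
Since $\Gamma^\times$ has finite index in $\OL_F^\times$ (both are lattices of the same rank by Dirichlet, as $\Gamma$ and $\OL_F$ have the same fraction field), all the groups appearing are commensurable, and a diagram chase gives $[\OL_F^\times:\Gamma^\times]=[\ker(N)\cap\OL_F^\times:\ker(N)\cap\Gamma^\times]\cdot[N(\OL_F^\times):N(\Gamma^\times)]$. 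Combining this with $[\Gamma^\times:N(\Gamma^\times)]\cdot[N(\Gamma^\times):N(\OL_F^\times)\cap N(\Gamma^\times)]$-type bookkeeping — more cleanly, by applying the snake lemma to the inclusion of the first sequence into the second — yields $|\ker(N)\cap\Gamma^\times|\cdot|\Gamma^\times/N(\Gamma^\times)|\le |\ker(N)\cap\OL_F^\times|\cdot|\OL_F^\times/N(\OL_F^\times)|$, with $|\OL_F^\times/N(\OL_F^\times)|\le|\OL_F^\times/N(\Gamma^\times)|$ absorbed since $N(\Gamma^\times)\subseteq N(\OL_F^\times)$. (The cardinalities here are to be read as ratios of covolumes when the groups are infinite; the quotients $\Gamma^\times/N(\Gamma^\times)$ and $\OL_F^\times/N(\OL_F^\times)$ are in any case finite because both groups have the same free rank once one notes $N$ maps $\OL_F^\times$ to finite index in $\OL_L^\times$.)

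For (ii), I would stratify $\OL_F^\times/N(\OL_F^\times)$ through the intermediate subgroup $\mu(F)\OL_L^\times$. Write $\OL_F^\times/N(\OL_F^\times)$ and use the filtration $N(\OL_F^\times)\subseteq\mu(F)N(\OL_F^\times)\subseteq\mu(F)\OL_L^\times\subseteq\OL_F^\times$ — but actually the cleanest route is: first bound $[\OL_F^\times:\mu(F)N(\OL_F^\times)]$, then multiply by $[\mu(F)N(\OL_F^\times):N(\OL_F^\times)]\le w_F$. The index $[\OL_F^\times:\mu(F)N(\OL_F^\times)]$ I would split as $[\OL_F^\times:\mu(F)\OL_L^\times]\cdot[\mu(F)\OL_L^\times:\mu(F)N(\OL_F^\times)]$, and the second factor is bounded by $[\OL_L^\times:N(\OL_F^\times)]$ (quotienting by $\mu(F)$ can only decrease the index, or more precisely by the second isomorphism theorem). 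The key point is then the regulator computation: $[\OL_L^\times:N(\OL_F^\times)]$ compares two full-rank lattices inside the hyperplane $\{\sum x_i=0\}\subseteq\RR^{r_L+1}$, and its value as a ratio of covolumes is exactly $R_F/(R_{F/L}R_L)$ up to torsion corrections. I would cite \cite{cofr:regulatorprocams} for the precise relation among $R_F$, $R_{F/L}$ and $R_L$ and for the definition of the relative regulator, since that paper is exactly designed to express such ratios; the torsion contribution $\mu(F)$ versus $\mu(L)$ is what produces the extra $w_F$ and the $[\OL_F^\times:\mu(F)\OL_L^\times]$ factor.

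The main obstacle will be pinning down the exact form of the regulator identity relating $[\OL_L^\times:N(\OL_F^\times)]$ to $R_F/(R_{F/L}R_L)$, including which roots-of-unity factors appear and in which direction the inequality goes. In the CM case that is the application of interest ($F$ a CM field with $L$ its maximal totally real subfield, so $r_F=r_L$ and $N$ on units is injective modulo torsion), this is essentially classical — it is the relation underlying the Hasse unit index $Q\in\{1,2\}$ — but I want the statement in enough generality that it also covers non-CM intermediate situations, so I would either restrict the lemma's scope to what is actually used downstream (namely orders in CM fields) or carefully invoke the general relative-regulator formalism of \cite{cofr:regulatorprocams}. A secondary, purely bookkeeping nuisance is making sure every index I manipulate is genuinely finite before multiplying cardinalities; this is guaranteed throughout by Dirichlet's unit theorem applied to $\OL_F$, $\Gamma$ and $\OL_L$, together with the fact that $N\colon\OL_F^\times\to\OL_L^\times$ has image of finite index (its kernel has rank $r_F-r_L$ and its image spans $\OL_L^\times\otimes\QQ$ by Hilbert~90 / surjectivity of norm on the $\QQ$-vector spaces of units).
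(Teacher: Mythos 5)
For part (i) your plan is essentially the paper's proof: both use the snake lemma applied to the inclusion of the four-term exact sequence $1\to\ker(N)\cap\Gamma^\times\to\Gamma^\times\to^{N}\Gamma^\times\to\Gamma^\times/N(\Gamma^\times)\to 1$ into the corresponding one for $\OL_F^\times$, and read off the bound on the cardinality of the kernel of $\Gamma^\times/N(\Gamma^\times)\to\OL_F^\times/N(\OL_F^\times)$. The commensurability preamble and the partial index formula $[\OL_F^\times:\Gamma^\times]=[\ker(N)\cap\OL_F^\times:\ker(N)\cap\Gamma^\times]\cdot[N(\OL_F^\times):N(\Gamma^\times)]$ are unnecessary detours, but the snake-lemma argument you identify as the clean route is exactly what the paper does.

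For part (ii) there is a genuine gap precisely at the point you flag as the main obstacle, and as written the proposal would fail. Your decomposition passes through $\mu(F)N(\OL_F^\times)$ and reduces the problem to bounding $[\mu(F)\OL_L^\times:\mu(F)N(\OL_F^\times)]$ by $[\OL_L^\times:N(\OL_F^\times)]$, which you then want to identify with the regulator ratio $R_F/(R_{F/L}R_L)$ "up to torsion corrections." But the precise Costa--Friedman identity from \cite{cofr:regulatorprocams} is $|\OL_L^\times/\mu(L)N(\OL_F^\times)|=R_F/(R_{F/L}R_L)$, so in fact $[\OL_L^\times:N(\OL_F^\times)]=m\cdot R_F/(R_{F/L}R_L)$ with $m=[\mu(L)N(\OL_F^\times):N(\OL_F^\times)]$ possibly $>1$; replacing $[\OL_L^\times:N(\OL_F^\times)]$ by the regulator ratio is therefore an inequality in the wrong direction and either leaves a spurious factor of $w_L$ or yields a false bound. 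The repair is available inside your own setup: the second isomorphism theorem gives $\mu(F)\OL_L^\times/\mu(F)N(\OL_F^\times)\cong\OL_L^\times/\bigl(\OL_L^\times\cap\mu(F)N(\OL_F^\times)\bigr)$, and since $\mu(L)\subseteq\mu(F)$ one has $\mu(L)N(\OL_F^\times)\subseteq\OL_L^\times\cap\mu(F)N(\OL_F^\times)$, so this is a quotient of $\OL_L^\times/\mu(L)N(\OL_F^\times)$, whose order is exactly the regulator ratio. The paper avoids this subtlety by filtering through the intermediate subgroup $\OL_L^\times$ rather than $\mu(F)N(\OL_F^\times)$: it writes $|\OL_F^\times/N(\OL_F^\times)|=|\OL_F^\times/\OL_L^\times|\cdot|\OL_L^\times/N(\OL_F^\times)|$, expands $|\OL_F^\times/\OL_L^\times|=|\OL_F^\times/\mu(F)\OL_L^\times|\cdot n$ with $n=|\mu(F)\OL_L^\times/\OL_L^\times|\le w_F/w_L$, and $|\OL_L^\times/N(\OL_F^\times)|=R_F/(R_{F/L}R_L)\cdot m$ with $m\le w_L$, so $nm\le w_F$ gives the stated bound. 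The two filtrations buy the same conclusion, but the paper's makes the $\mu(L)$ versus $\mu(F)$ bookkeeping transparent rather than something one discovers only after querying exactly which index Costa--Friedman controls.
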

\begin{proof}
To show (i) we assume  that the order $\Gamma$ contains $\OL_L$. Any order of $F$ is contained in the maximal order $\OL_F$ of $F$,  
and $N$ sends $\OL_F^\times$ to $\OL_L^\times$ since the norm takes the form $x\mapsto \prod \sigma(x)$ with the product taken over all $\sigma\in \Hom_L(F,\bar{F})$. Thus our assumption that $\Gamma$ contains $\OL_L$ implies that the restriction of $N$ to $\Gamma^\times$ is a group morphism $N:\Gamma^\times\to \Gamma^\times$. In particular $N(\Gamma^\times)$ is a subgroup of $\Gamma^\times$ and  we obtain a diagram 
  $$
\xymatrix{
1 \ar[r]  & \ker(N)\cap \Gamma^\times\ar[r] \ar[d] & \Gamma^\times \ar[r]^N \ar[d] & \Gamma^\times \ar[r] \ar[d] & \Gamma^\times/N(\Gamma^\times) \ar[r]\ar[d] & 1 \\
1 \ar[r] &  \ker(N)\cap \OL_F^\times \ar[r] &  \OL_F^\times \ar[r]^N &  \OL_F^\times \ar[r] & \OL_F^\times/N(\OL_F^\times) \ar[r] & 1} 
$$
where $\Gamma^\times\to \mathcal O_F^\times$ is the inclusion. This is a commutative diagram of abelian groups, with exact rows and with the first three vertical morphisms injective. Therefore an application of the snake lemma shows that the kernel of $\Gamma^\times/N(\Gamma^\times)\to \OL_F^\times/N(\OL_F^\times) $ has cardinality at most $|\ker(N)\cap \OL_F^\times|/|\ker(N)\cap \Gamma^\times|$ when $\ker(N)\cap \OL_F^\times$ is finite. This implies the inequality in (i), which also holds when $\ker(N)\cap \OL_F^\times$ is infinite since $\infty\leq \infty$.   
 
We next prove (ii). Costa--Friedman observed that $|\OL_L^\times/\mu(L)N(\OL_F^\times)|$ can be written as the quotient of regulators $R_F/(R_{F/L}R_L)$, see \cite[(3.1)]{cofr:regulatorprocams}. Then, on using that the index of subgroups is multiplicative and that $N(\OL_F^\times)$ is a subgroup of $\OL_L^\times$, we  compute
$$|\OL_F^\times/N(\OL_F^\times)|=|\OL_F^\times/\OL_L^\times|\cdot |\OL_L^\times/N(\OL_F^\times)|=|\OL_F^\times/\mu(F)\OL_L^\times|\cdot n \cdot \tfrac{R_F}{R_{F/L}R_L} \cdot m.$$  
Here $n=|\mu(F)\OL_L^\times/\OL_L^\times|$  and  $m=|\mu(L)N(\OL_F^\times)/N(\OL_F^\times)|$. The indices $n$ and $m$ are both finite, and they satisfy $n\leq w_F/w_L$ and $m\leq w_L$. Therefore the displayed formula implies the upper bound for $|\OL_F^\times/N(\OL_F^\times)|$ stated in (ii). This completes the proof. 
\end{proof}
  
We now apply the above lemma in the case of CM fields. In this case CM theory allows to compute the quantities appearing in Lemma~\ref{lem:diophantineanalysis} leading to the following result.

\begin{lemma}\label{lem:quotientgroupbound}
Suppose that $F$ is a CM field with maximal totally real subfield $L$ and write $g=[L:\QQ]$. Then any order $\Gamma$ of $F$ with $\OL_L\subseteq \Gamma$ satisfies
$$|\Gamma^\times/N(\Gamma^\times)|\leq \tfrac{w_F^2}{w_\Gamma}\cdot 2^{g-1}.$$
\end{lemma}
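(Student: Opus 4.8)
The plan is to apply Lemma~\ref{lem:diophantineanalysis} with the CM field $F$ and its maximal totally real subfield $L$, and then to use classical facts from CM theory to bound each of the three factors appearing in the resulting estimate. First I would invoke Lemma~\ref{lem:diophantineanalysis}~(i) for the order $\Gamma$ satisfying $\OL_L\subseteq\Gamma$: since $F$ is CM with $F/L$ quadratic, the norm map $N\colon F^\times\to L^\times$ coincides with $x\mapsto x\,c(x)$ where $c$ is the non-trivial automorphism of $F/L$. Consequently any $u$ in $\ker(N)$ has absolute value one at every archimedean place, so $\ker(N)\cap\OL_F^\times$ is finite and contained in $\mu(F)$; in fact, by Dirichlet's unit theorem and the equality of unit ranks $\textnormal{rk}\,\OL_F^\times=\textnormal{rk}\,\OL_L^\times$ for CM fields, $\ker(N)\cap\OL_F^\times=\mu(F)$, so $|\ker(N)\cap\OL_F^\times|=w_F$. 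On the other hand $\mu(\Gamma)\subseteq\ker(N)\cap\Gamma^\times$ — actually one expects equality for the same rank reason applied to $\Gamma$ — giving $|\ker(N)\cap\Gamma^\times|=w_\Gamma$. Thus (i) yields
$$|\Gamma^\times/N(\Gamma^\times)|\leq \frac{w_F}{w_\Gamma}\cdot|\OL_F^\times/N(\OL_F^\times)|.$$

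\textbf{Bounding the maximal-order factor.} Next I would apply Lemma~\ref{lem:diophantineanalysis}~(ii), which gives
$$|\OL_F^\times/N(\OL_F^\times)|\leq w_F\cdot|\OL_F^\times/\mu(F)\OL_L^\times|\cdot\frac{R_F}{R_{F/L}R_L}.$$
The quantity $Q=[\OL_F^\times:\mu(F)\OL_L^\times]$ is precisely the Hasse unit index of $F/L$, which is classically known to equal $1$ or $2$ (see e.g. \cite[Thm 4.12]{washington:cyclotomic}); in particular $|\OL_F^\times/\mu(F)\OL_L^\times|\leq 2$. For the regulator ratio, the key input is the classical formula relating the regulators of a CM field and its maximal totally real subfield: for $F$ CM of degree $2g$ with maximal totally real subfield $L$ of degree $g$, one has $R_{F/L}=2^{g-1}$ (this is the relative regulator of the quadratic CM extension, computed from the fact that complex conjugation acts by $-1$ on the $g-1$-dimensional unit lattice modulo torsion), hence $R_F=R_{F/L}\,R_L\cdot Q^{-1}\cdot(\text{something})$; more directly, combining \cite[(3.1)]{cofr:regulatorprocams} with the Hasse index computation gives $\frac{R_F}{R_{F/L}R_L}=\frac{1}{Q}|\OL_L^\times/\mu(L)N(\OL_F^\times)|\cdot\frac{1}{|\OL_L^\times/\mu(L)N(\OL_F^\times)|}$ — here I need to be careful to track exactly which normalization of $R_{F/L}$ is used so that the product of the two middle factors in (ii) collapses. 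The upshot I am aiming for is $|\OL_F^\times/\mu(F)\OL_L^\times|\cdot\frac{R_F}{R_{F/L}R_L}\leq 2^{g-1}$, i.e. the combined factor is at most $2^{g-1}$.

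\textbf{Assembly.} Putting the two steps together gives
$$|\Gamma^\times/N(\Gamma^\times)|\leq\frac{w_F}{w_\Gamma}\cdot w_F\cdot 2^{g-1}=\frac{w_F^2}{w_\Gamma}\cdot 2^{g-1},$$
which is exactly the claimed bound. The main obstacle I anticipate is the bookkeeping around the relative regulator $R_{F/L}$ and the Hasse unit index: the factor $2^{g-1}$ must be teased out cleanly from the interplay between $|\OL_F^\times/\mu(F)\OL_L^\times|$, the regulator ratio $R_F/(R_{F/L}R_L)$, and the indices $n,m$ in the proof of Lemma~\ref{lem:diophantineanalysis}~(ii), using the Costa--Friedman identity \cite[(3.1)]{cofr:regulatorprocams}. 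Concretely, I would reconcile these by recalling that for a CM extension $F/L$ the group $\OL_L^\times/N(\OL_F^\times)$ has order $2^{g-1}$ or $2^g$ depending on $Q$, and that $\mu(F)\OL_L^\times/\OL_L^\times\cong\mu(F)/\mu(L)$ has order $w_F/w_L$; the two powers of $2$ and the two torsion contributions must be combined so that after cancellation only $w_F^2/w_\Gamma$ and $2^{g-1}$ survive. A secondary point to verify carefully is the claimed equalities $\ker(N)\cap\OL_F^\times=\mu(F)$ and $\ker(N)\cap\Gamma^\times=\mu(\Gamma)$, which follow from the fact that $F/L$ is a CM extension so $F^\times$ and $L^\times$ have the same unit rank and $N$ is injective on the free part of $\OL_F^\times$ modulo torsion; this is where the CM hypothesis is essential and where Lemma~\ref{lem:divalgcompchar0} (ensuring $F$ really is CM with maximal totally real subfield $L$) feeds in.
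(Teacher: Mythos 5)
Your overall approach matches the paper's: show $\ker(N)\cap\OL_F^\times=\mu(F)$ and $\ker(N)\cap\Gamma^\times=\mu(\Gamma)$ via complex absolute values (or the unit-rank argument), then plug into Lemma~\ref{lem:diophantineanalysis} and compute the combined middle factor as $2^{g-1}$. You correctly identify the target identity $|\OL_F^\times/\mu(F)\OL_L^\times|\cdot\tfrac{R_F}{R_{F/L}R_L}=2^{g-1}$ and you correctly flag the regulator bookkeeping as the delicate point.

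However, your specific assignment $R_{F/L}=2^{g-1}$ is wrong in the Costa--Friedman normalization that Lemma~\ref{lem:diophantineanalysis}~(ii) uses, and your parenthetical justification (``complex conjugation acts by $-1$ on the $(g-1)$-dimensional unit lattice modulo torsion'') in fact shows the opposite: if complex conjugation acts by $-1$, the relative unit group $\ker(N:\OL_F^\times\to\OL_L^\times)$ is pure torsion, hence the free part has rank zero, hence $R_{F/L}=1$. The power $2^{g-1}$ does not live in $R_{F/L}$; it comes from the regulator quotient $R_F/R_L$. Concretely, with $R_{F/L}=1$ one has
$$|\OL_F^\times/\mu(F)\OL_L^\times|\cdot\frac{R_F}{R_{F/L}R_L}=Q\cdot\frac{R_F}{R_L},$$
and the classical formula \cite[Prop~4.16]{washington:cyclotomic} for CM fields gives $R_F/R_L=2^{g-1}/Q$, so the product is exactly $2^{g-1}$. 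Your displayed ``identity'' $\frac{R_F}{R_{F/L}R_L}=\frac{1}{Q}|\OL_L^\times/\mu(L)N(\OL_F^\times)|\cdot\frac{1}{|\OL_L^\times/\mu(L)N(\OL_F^\times)|}$ is also not right: by \cite[(3.1)]{cofr:regulatorprocams} (which the paper cites inside the proof of Lemma~\ref{lem:diophantineanalysis}~(ii)), one has $\frac{R_F}{R_{F/L}R_L}=|\OL_L^\times/\mu(L)N(\OL_F^\times)|$ without the extra $1/Q$ and without a self-cancelling factor. Once you replace $R_{F/L}=2^{g-1}$ by $R_{F/L}=1$ and invoke Washington's formula for $R_F/R_L$, the rest of your argument goes through and reproduces the paper's proof.
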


\begin{proof}
We assume that $F$, $L$ and $\Gamma$ are as in the statement. Then the claimed upper bound for $|\Gamma^\times/N(\Gamma^\times)|$ is in fact contained in the statements of Lemma~\ref{lem:diophantineanalysis}. To see this we now compute the quantities appearing in Lemma~\ref{lem:diophantineanalysis}. 
We first show that $$\ker(N)\cap \OL_F^\times=\mu(F) \quad \textnormal{and}\quad\mu(\Gamma)= \ker(N)\cap \Gamma^\times.$$ For this purpose we may and do assume that the CM field $F\subset \CC$. Then the non-trivial automorphism of the quadratic field extension $F/L$ is given by complex conjugation $\bar{} \, $ and we obtain that $\ker(N)\cap \OL_F^\times=\{|x|=1\}$ where we write $\{|x|=1\}$ for $\{x\in \OL_F^\times; \ x\bar{x}=1\}.$ Further $\bar{} \, $ commutes with any embedding $F\to \CC$ since $F$ is a CM field. Thus, if $x\in \OL_F^\times$ satisfies $x\bar{x}=1$, then for each embedding $\sigma:F\to \CC$ the complex absolute value of $\sigma(x)$ is 1  and hence $x$ has to be a root of unity. This shows that $\{|x|=1\}$ is contained in $\mu(F)$ which in turn is contained in $\{|x|=1\}$. Thus we obtain that $\ker(N)\cap \OL_F^\times=\{|x|=1\}=\mu(F)$, which implies that $\ker(N)\cap \Gamma^\times=\mu(\Gamma)$ since $\Gamma\subseteq \OL_F$.  
Next, we compute that
$$|\mathcal O_F^\times/\mu(F)\mathcal O_L^\times|\cdot\tfrac{R_F}{R_{F/L}R_L}=2^{g-1}.$$
Indeed this is a consequence of $R_{F/L}=1$ and the standard formula for the regulator ratio $R_F/R_L$ which can be found for example in \cite[Prop 4.16]{washington:cyclotomic}. Here $R_{F/L}=1$  follows directly from the definition of the relative regulator (\cite[$\mathsection$3]{cofr:regulatorprocams}), since $\{|x|=1\}=\mu(F)$ assures that the relative units of $F/L$ have (free) rank zero. Now, we insert the displayed computations  into the inequalities of Lemma~\ref{lem:diophantineanalysis} and then we deduce Lemma~\ref{lem:quotientgroupbound}.\end{proof}

\subsection{Proof of Proposition~\ref{prop:polbound}}\label{sec:polpropproof}
In this section we prove Proposition~\ref{prop:polbound} by combining the results collected in previous sections. We continue the notation introduced at the beginning of Section~\ref{sec:polarizations}.

\begin{proof}[Proof of Proposition~\ref{prop:polbound}]
Recall that the set $\Phi$ in Proposition~\ref{prop:polbound}  denotes the set of $I$-polarizations on an arbitrary $\OL$-abelian scheme $(A,\iota)$ over $S$ of relative dimension $g=[L:\QQ]$, where $S$ is a connected Dedekind scheme with function field of characteristic zero. Hence we can apply Lemma~\ref{lem:polembedding} and we consider again the $L$-algebra $D=\End_{\mathcal C}^0(A)$.

We first assume that $\dim_L(D)=1$. Then Lemma~\ref{lem:polembedding}~(i) gives an order $\Gamma$ of $L$ with $\End_{\mathcal C}(A)\cong \Gamma$ and an embedding $\Phi/{\sim}\hookrightarrow \Gamma^\times/(\Gamma^\times)^2$. In particular it holds $$|\Phi/{\sim}|\leq |\Gamma^\times/(\Gamma^\times)^2|.$$
Further, Dirichlet's unit theorem~\cite[p.81]{neukirch:ant} provides that the free part of $\Gamma^\times$ has rank $g-1$, since $L$ is totally real. Thus $|\Gamma^\times / (\Gamma^{\times})^2|$ is at most $w_{L}\cdot 2^{g-1}$, where we recall that $w_R$ denotes the cardinality of the group $\mu(R)$ of roots of unity in a ring $R$. Notice that $\mu(L)=\{1,-1\}$ are the only roots of unity in the totally real field $L$, the ring $D$ is isomorphic to $L$ since $\dim_L(D)=1$, and the ring $\Gamma\cong\End_{\mathcal C}(A)$ contains $\{1,-1\}$. Hence we obtain that $w=w_L=w_\Gamma=w'$ and it follows that $|\Gamma^\times / (\Gamma^{\times })^2| \leq \tfrac{w^2}{w'}\cdot 2^{g-1}$. Therefore the displayed inequality proves Proposition~\ref{prop:polbound} in the case when $\dim_L(D)=1$.

We now assume that $\dim_L(D)\neq 1$. Then an application of Lemma~\ref{lem:polembedding}~(ii) gives the following: The ring $\End_{\mathcal C}(A)$ identifies with an order $\Gamma\supset \OL$ of a CM field $F$ with maximal totally real subfield $L$ and there exists an embedding $\Phi/{\sim}\hookrightarrow \Gamma^\times/N(\Gamma^\times)$ where we recall that $N:F^\times\to L^\times$ denotes the field norm. It follows that $$|\Phi/{\sim}|\leq |\Gamma^\times/N(\Gamma^\times)|.$$
Next, we observe that the field extension $F/L$ and the order $\Gamma\supset \OL$ of $F$ satisfy all assumptions of Lemma~\ref{lem:quotientgroupbound}. Hence this lemma provides that $|\Gamma^\times/N(\Gamma^\times)|\leq \tfrac{w_F^2}{w_\Gamma}\cdot 2^{g-1}$. Moreover, in this bound we can replace $w_{\Gamma}$ by $w'$ and $w_F$ by $w$, since $\Gamma\cong\End_{\mathcal C}(A)$ and thus $F=\Gamma\otimes_\ZZ\QQ\cong D$. Therefore the displayed inequality proves Proposition~\ref{prop:polbound} in the case when $\dim_L(D)\neq 1$. This completes the proof of Proposition~\ref{prop:polbound}.  
\end{proof}
The above proof shows that the number $w$ appearing in Proposition~\ref{prop:polbound} can be bounded only in terms of the relative dimension $g$ of $A$ over $S$. For example, one obtains an explicit bound and an inexplicit but asymptotically sharper bound for each real $\epsilon>0$,
\begin{equation}\label{eq:rootsbound}
w\leq 4g^2 \quad \textnormal{ and } \quad w\ll_\epsilon g^{1+\epsilon},
\end{equation}
by controlling $w_F$ as follows: Suppose that $\zeta\in F$ generates the group of roots of unity in $F$ and write $n=w_F$. Then we deduce \eqref{eq:rootsbound} by combining $\varphi(n)=[\QQ(\zeta):\QQ]\leq 2g$  
with the elementary bound $n^{1/2}\leq \varphi(n)$ which holds when $n>6$  and the asymptotically sharper bound $n^{1-\epsilon}\ll_\epsilon \varphi(n)$ which follows from \cite[Thm 15]{rosc:formulas} for $\varphi$ the Euler totient function.  
\newpage

\section{Endomorphism structures}\label{sec:endo}

Let $\order$ be an order of an arbitrary number field $L$ of degree $g = [L:\QQ]$, let $S$ be a connected Dedekind scheme whose function field is a number field $K$, and let $A$ be an abelian scheme over $S$ of relative dimension $g$. 
The goal of this section is to show the following result, which was proven in \cite[$\mathsection$10]{vkkr:hms} when $K = \QQ$.  
 

\begin{theorem}\label{thm:endobound} Suppose that $A$ has no CM or that $L$ is totally real. 
\begin{itemize}
\item[(i)] Then the number $n_A$ of isomorphism classes of $\order$-module structures on $A$ satisfies $$n_A \leq c(h(A)|\OL/\Gamma|)^{e}\Delta\log(3\Delta)^{2g-1}$$
where $h(A) = d\max(h_F(A),1)$, $e= (2g)^9$ and $c = (3g)^{(3g)^{11}}$.
\item[(ii)] Suppose that $S$ is an open subscheme of $\spec(\mathcal O_K)$, and define $k=3^{3^a}$ for $a=7^{4g^2}d^2$. If  $A$ is of product $\gl2$-type with $G_\QQ$-isogenies, then $$n_A \leq k|\OL/\Gamma|^e\textnormal{rad}(N_SD_K)^{24ge}\Delta\log(3\Delta)^{2g-1}.$$
\end{itemize} 
\end{theorem}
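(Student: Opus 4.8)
The strategy is to follow the two-part structure developed in \cite[$\mathsection$10]{vkkr:hms} for the case $K=\QQ$: a transcendence-based \emph{reduction step} that replaces $A$ by an abelian scheme whose endomorphism ring $R=\End(A)$ has one of the three special shapes (1), (2) or (3) listed in $\mathsection$\ref{sec:introdgeneralizations}, followed by an explicit \emph{counting step} for each special shape. Part (ii) will then follow formally by combining part (i) with the height bound of Theorem~\ref{thm:es}~(i): indeed, if $A$ is of product $\gl2$-type with $G_\QQ$-isogenies, then $h_F(A)\leq (4gl)^{144gl}\rad(N_SD_K)^{24g}$ with $l=(d3^{4g^2})!$, and substituting this into the bound of (i) — after crudely estimating $l$, $d$ and $\log$-terms by $k=3^{3^a}$ with $a=7^{4g^2}d^2$ — yields the stated inequality. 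So the real work is entirely in part (i), for the case of a totally real $L$ or an $A$ without CM.

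For part (i), first I would recall the reduction of \cite[$\mathsection$10]{vkkr:hms}: decomposing an $\Gamma$-module structure into compatible morphisms and applying the Masser--W\"ustholz isogeny theorem \cite{mawu:abelianisogenies} (in the explicit form of Gaudron--R\'emond \cite{gare:newisogenies}) to control the relevant isogeny degrees in terms of $h_F(A)$ and $d$, one reduces to the situation where $\Gamma$ is an order of $L$ and $R=\uM_n(\OL_D)$ with $\OL_D$ a maximal order of $D=\OL_D\otimes_\ZZ\QQ$, in one of the cases (1), (2), (3). The key point to check is that this reduction, which was carried out over $\QQ$, goes through over an arbitrary number field $K$ after the modifications indicated in the introduction — the only genuinely new input being that the bounds from \eqref{eq:isogvartransc} now carry a factor $d=[K:\QQ]$, which is already accounted for in the definition $h(A)=d\max(h_F(A),1)$. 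Case (1) was settled in \cite{vkkr:hms} and I would simply invoke it. For case (2), where $D$ is a non-split quaternion algebra with center a subfield $Z\subset L$ of relative degree $[L:Z]=2n$, I would decompose into compatible morphisms (as in \cite[Lem 10.4]{vkkr:hms}), reduce to bounding the set $X_0/{\sim}$ of isomorphism classes of $R\otimes\Gamma$-lattice structures on $R$, and then construct an explicit finite map from $X_0/{\sim}$ to the class group of $L$, whose cardinality is controlled by the classical bound $h_L\ll \Delta^{1/2}\log(3\Delta)^{g-1}$ (via Minkowski); the discriminant of $D$ is controlled using the Masser--W\"ustholz endomorphism estimates \cite{mawu:factorization}. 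For case (3), where $D$ is a CM field of degree $2g/n$, I would again decompose into compatible morphisms and then tensor with $\OL_D$ to double the degree, reducing to the situation of \cite[Prop 10.3]{vkkr:hms}, and finish by Diophantine analysis bounding everything in terms of the discriminant of $D$ and $\Gamma$, again using \cite{mawu:factorization} for the discriminant.

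Throughout I would keep track of the dependence on $|\OL/\Gamma|$, which enters because passing from a non-maximal order $\Gamma$ to $\OL$ (and back) costs a factor polynomial in the conductor index $|\OL/\Gamma|$ at each step; collecting these gives the exponent $e=(2g)^9$, and the numerical constant $c=(3g)^{(3g)^{11}}$ absorbs the many crude simplifications made along the way (in Baker's philosophy). The $\Delta\log(3\Delta)^{2g-1}$ factor comes exactly once, from the class number estimate for $L$ in case (2) combined with its appearance through the regulator/unit-group bounds of $\mathsection$\ref{sec:polda} (cf.\ \eqref{eq:rootsbound}); care is needed to see that it does not get squared.

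\textbf{Main obstacle.} The hardest part will be verifying that the transcendence-based reduction of \cite[$\mathsection$10]{vkkr:hms} truly survives base change to a general number field $K$ in cases (2) and (3), where — as flagged in the introduction — a crucial dimension argument available over $\QQ$ breaks down because $\dim(A)=g$ need no longer exceed the degree of the relevant subalgebra; the new idea is to replace that argument by the class-group map (in case (2)) and the degree-doubling trick $R\mapsto R\otimes\OL_D$ (in case (3)), and one must check that these interact correctly with the compatible-morphism decomposition and with the isogeny/endomorphism estimates of Gaudron--R\'emond so that the final bound remains polynomial in $h_F(A)$, $d$ and $|\OL/\Gamma|$.
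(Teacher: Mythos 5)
Your overall route is the same as the paper's: part~(ii) follows formally from part~(i) together with Theorem~\ref{thm:es}~(i); for part~(i) one first reduces, via Lemma~\ref{lem:gl2endostructure} and Yu's maximal-order result combined with the Masser--W\"ustholz/Gaudron--R\'emond isogeny estimates, to $R=\uM_n(\OL_D)$ in cases (1), (2), (3), and then handles (1) by citing~\cite{vkkr:hms}, (2) via the explicit map to the class group of $L$ (Proposition~\ref{prop:quatcase}), and (3) via the degree-doubling trick (Proposition~\ref{prop:BoundCase2}), controlling $\disc_Z(D)$ resp. $D_F$ by the Gaudron--R\'emond endomorphism bound. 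The one place where your bookkeeping goes wrong is the provenance of the factor $\Delta\log(3\Delta)^{2g-1}$: in cases (1) and (2) the class number that enters is $h_L$ for the degree-$g$ field $L$, which Lenstra's estimate \eqref{eq:lenstraclassnumber} bounds by roughly $\Delta^{1/2}\log(\Delta)^{g-1}$ — strictly smaller than the stated factor, and indeed the paper notes that in the no-CM case $\Delta\log(3\Delta)^{2g-1}$ can be replaced by $h_L$. The genuine source of the $\Delta\log(3\Delta)^{2g-1}$ is case~(3): there one must bound the class number of the degree-$2g$ field $M_\varphi=L\otimes_{\cO_{F^+}}\cO_F$, and Lemma~\ref{lem:discboundcmendo} shows $D_{M_\varphi}\mid D_L^2(D_F/D_{F^+}^2)^n$, so Lenstra's bound for $h_{M_\varphi}$ produces $\Delta$ to the first power and $\log$-exponent $2g-1$. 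Your references to $\mathsection$\ref{sec:polda} and \eqref{eq:rootsbound} are misplaced — those concern the unit-group quotients appearing in the polarization count of Proposition~C (Section~\ref{sec:polarizations}) and play no role in Theorem~\ref{thm:endobound}.
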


Here the quantities $d = [K:\QQ]$, $D_K$ and $N_S = \prod_{v\in S^c}N_v$ are as before, while $h_F$ denotes the stable Faltings height ($\mathsection$\ref{sec:heightdef}). Further we write $\OL = \OL_L$ and $\Delta = D_L$. 

In the case when $A$ has no CM, we obtain  in \eqref{eq:extraendoboundcommutative} and \eqref{eq:proofquatbound} more precise bounds which show that Theorem~\ref{thm:endobound} holds in this case with $\Delta\log(3\Delta)^{2g-1}$ replaced by the class number of $L$. If $A$ has CM then a slightly sharper version of (i) can be found in \eqref{eq:extraendothmboundcm}.

Notice that (i) and Theorem~\ref{thm:es} directly imply the bound in (ii) which is uniform in $K$, $g$ and $S$ for all abelian schemes $A$ of product $\gl2$-type with $G_\QQ$-isogenies. This uniformity is crucial for the proof of Theorem~\ref{thm:mainint}~(ii) given in Section~\ref{sec:proofmainresults}.




\subsection{Proof of Theorem~\ref{thm:endobound}}\label{sec:endoproof}
We continue our notation. To prove Theorem~\ref{thm:endobound} we use and generalize the arguments of \cite[$\mathsection$10]{vkkr:hms}. 
As already mentioned,  (ii) follows directly from  (i) and Theorem~\ref{thm:es}. 

We now prove (i). To bound the number $n_A$ of isomorphism classes of $\Gamma$-module structures on $A$, we assume that there exists a ring morphism $\iota:\order\to \End(A)$. 
Hence $A$ is of $\GL_2$-type. In steps 1-4 below we prove Theorem~\ref{thm:endobound} under the assumption that $A$ has no CM and we distinguish two subcases (a) and (b). In step 5 we deal with the CM case. 

\paragraph{1.} Suppose that $A$ has no CM. As $A$ is of $\GL_2$-type, Lemma~\ref{lem:gl2endostructure}~(i) and Yu~\cite[Thm 1.3]{yu:maximalorders} then imply that $A$ is isogenous to a power $A' = B^n$ of a simple abelian scheme $B$ over $S$ such that $\End(B)$ identifies with a maximal order of the division algebra $D = \End^0(B)$. Furthermore, Lemma~\ref{lem:gl2endostructure}~(iii) gives that the $\QQ$-algebra $D$ has the following properties:
\begin{itemize}
\item[(a)] Either $D$ identifies with a subfield $Z$ of $L$ of relative degree $[L:Z] = n$,
\item[(b)] or $D$ is a non-split quaternion algebra and the center $Z(D)$ of $D$ identifies with a subfield $Z$ of $L$ of relative degree $[L:Z] = 2n$. 
\end{itemize}
Let $m$ be the degree of an isogeny $A \to A'$ of minimal degree, that is any isogeny $A\to A'$ has degree at least $m$. 
Write $\mathfrak f$ for the conductor ideal of $\Gamma$. 
On using precisely the same arguments as in steps 2-4 of the proof of \cite[Thm 10.1]{vkkr:hms}, we see in case (a) that  
$$
n_A \leq m^{(2g+1)g}N(\mathfrak f)^{g+1}gh, \quad h=|\Pic(\OL_L)|.
$$
We have $|\OL/\Gamma| \in \mathfrak f$ and hence $N(\mathfrak f) \leq |\OL/\Gamma|^g$.  Then the displayed bound for $n_A$ and the (minimal isogeny degree) estimate $m^2 \leq \kappa(A_K)$ coming from \eqref{eq:isogvartransc} lead to 
\begin{equation}\label{eq:extraendoboundcommutative}
n_A\leq (7g)^{97g^6}|\OL/\Gamma|^{g(g+1)}(h(A)d)^{12g^4}h.
\end{equation} 
Thus the bound for $h$ in terms of $\Delta$ provided by \eqref{eq:lenstraclassnumber} proves statement (i) in case (a). In the following steps 2-4 of the proof we assume that we are in case (b).


\paragraph{2.} 
We are going to compare $n_A$ with the number of isomorphism classes $n_{A'}$ of $\order'$-module structures on $A'$, where $\order' = \ZZ[m\order]$ denotes the subring of $\order$ generated by $m\order$.
We write $R' = \End(A')$. On using precisely the same arguments as in  step 2 of the proof of \cite[Thm 10.1]{vkkr:hms}, we see that $n_A \leq |R'/mR'| n_{A'}$. To determine $|R'/mR'|$, we first recall that $D = \End^0(B)$ and $A' = B^n$. Hence the $\QQ$-algebras $\End^{0}(A')$ and $\uM_n(D)$ are isomorphic. 
Thus the $\QQ$-dimension of $\End^0(A')$ equals $n^2[D:\mathbb Q] = 2ng$, since we are in case (b) 
and since the order $R' = \End(A')$ of $\End^0(A')$ is a free $\ZZ$-module of rank $2ng$. Therefore $|R'/m R'|$ equals $m^{2ng}$. We conclude that  $n_A \leq m^{2ng} n_{A'}$. 

\paragraph{3.} We now bound $n_{A'}$ by applying the main result of $\mathsection$\ref{sec:quatcase} below. As we are in the case (b), the center $Z(D)$ of $D= \End^0(B)$ identifies with a subfield $Z$ of $L$ with $[L:Z] = 2n$. The ring $\End(B)$ identifies with a maximal order $\OL_D$ of $D$ and thus $\End(A') = \End(B^n)$ is isomorphic to $\uM_n(\OL_D)$. Then Proposition~\ref{prop:quatcase} implies 
$$
n_{A'} \leq \tfrac{g}{2n}\delta^{16n^3g^3} N(\disc_Z(D))^{16n^5g^2} h.
$$ 
Here $\delta$ is the index $|\OL/\Gamma'|$ of $\Gamma'$ in $\OL$ and $\disc_Z(D)$ denotes the finite Brauer discriminant (see $\mathsection$\ref{sec:quatcase}) of the quaternion algebra $D$. In fact we applied here Proposition~\ref{prop:quatcase} with the order $\rho(\Gamma')$ of $\rho(L)$ where $\rho:L\to \uM_n(D)$ is the composition of $\iota_\QQ:L\to \End^0(A)$ with $\End^0(A)\cong \End^0(A')\cong \uM_n(D)$; notice that $\rho(L)$ automatically contains $Z(D)$ by the proof of Lemma~\ref{lem:decomposition} below and $[\rho(L):Z(D)] = 2n$ since $Z(D)\cong Z$.  
\paragraph{4.} In this step we bound the index $\delta$ and the discriminant $N(\disc_Z(D))$ and then we complete the proof of (i) in the case when $A$ has no CM. We first relate $N(\disc_Z(D))$ to a covolume of $\End(B_K)$. Write $Z$ for $Z(D)$. 
It holds that $\disc_Z(D)^2 = d(\OL_D/\OL_Z)$ by \eqref{eq:discriminantcomparison}, and $N(d(\OL_D/\OL_Z))$ divides $d(\OL_D/\ZZ)$; note $\OL_Z\subset\OL_D$ (As $\OL_Z$ lies in the center of $D$, we see that $\OL_D\OL_Z$ is an order of $D$.  Hence the maximal order $\OL_D$ of $D$ contains the commutative ring $\OL_Z$ since maximal orders of $D$ are unique up to conjugation.).  Here $d(\cdot)$ denotes the discriminant ideal defined in \cite[p. 218]{reiner:orders} using the reduced trace $\textnormal{tr}$. Notice that $2\textnormal{tr}_{D/\QQ}$ equals the trace $T_{D/\QQ}$ of the representation induced by left multiplication, see \cite[(9.14) and (9.32)]{reiner:orders}.  As $B_K$ is simple and $\End(B_K)$ has $\ZZ$-rank $2g/n$, we find that $nT_{D/\QQ}$ coincides on $\OL_D = \End(B)\cong \End(B_K)$ with the trace $\textnormal{Tr}$ in \cite[Def 2.1]{gare:isogenies}. In the proof of \cite[Prop 2.9]{gare:isogenies}, Gaudron--R\'emond showed that $|\textnormal{det}(\textnormal{Tr}(\varphi_i\varphi_j))| = \textnormal{vol}(\End(B_K))^2$ where $\textnormal{vol}(\cdot)$ is the covolume defined in \cite[p. 2067]{gare:newisogenies} using the Rosati metric and $(\varphi_i)$ is a basis of the free $\ZZ$-module $\End(B_K)$. It follows that 
$$
N(\disc_Z(D)) \leq \textnormal{vol}(\End(B_K)) \leq \kappa(B_K)^{1/4}.
$$
Here $\kappa(B_K)$ is defined in \eqref{eq:isogvartransc} and the upper bound for $\textnormal{vol}(\End(B_K))$ is due to Gaudron--R\'emond~\cite[Thm 1.9\,(3)]{gare:newisogenies}. To estimate the index $\delta$ of $\Gamma' = \ZZ[m\Gamma]$ in $\OL$, we observe that $m\Gamma\subseteq \Gamma'\subseteq \Gamma$ and that $\Gamma/m\Gamma\cong (\ZZ/m\ZZ)^g$. Hence we obtain   
$$
\delta \leq |\OL/m\Gamma| = |\OL/\Gamma|m^{g}.
$$
Now we combine the results of steps 2 and 3 with the displayed bounds for $N(\disc_Z(D))$ and $\delta$ and with the estimate $m^2 \leq \kappa(A_K)$ provided by \eqref{eq:isogvartransc}. Then, after bounding $h_F(B)$ in terms of $h(A)$ and $g$ via \eqref{eq:hflower}, \eqref{eq:isogvartransc} and R\'emond's~\cite[Cor 1.2]{remond:fhprop}, we deduce  
\begin{equation}\label{eq:proofquatbound}
n_A \leq (3g)^{(2g)^{11}}|\OL/\Gamma|^{16g^6}(h(A)d)^{88g^9}h.
\end{equation}
Thus the bound for $h$ in \eqref{eq:lenstraclassnumber} proves (i) in the case when $A$ has no CM. We now include some details of our computations described above. 
To bound $h_F(B)$ in terms of $h(A)$, $g$ and $d$, we used the inequality $h_F(B) \leq h_F(A)+\tfrac{g}{2}\log(2\pi^2)+h_F(B)-h_F(C)$ which follows from \eqref{eq:hflower} and \cite[Cor 1.2]{remond:fhprop}, and then we bounded $h_F(B)-h_F(C)$ via \eqref{eq:isogvartransc} where $C$ is an abelian subvariety of $A_K$ which is isogenous to the factor $B_K$ of $A_K$.
This leads to
\begin{equation}\label{eq:boundetailsextraendo}
h_F(B) \leq \max(2h_F(A)+a,b,d)
\end{equation}
for $a=g\log(2\pi^2)+32g_0^4\log(7g_0)+4g_0^2\log d$, $b=\exp(4g_0^2)$ and $g_0=\dim B_K=g/n$. Here we exploited that any real $x\geq b$ satisfies $4g_0^2\log x\leq x$ and  we distinguished  whether $h_F(B)\geq b$ or not. Further, we bounded the right hand side of \eqref{eq:boundetailsextraendo} by $2abh(A)$.


\paragraph{5.} From now on we assume that $A$ has CM and $L$ is totally real. Then $A_K$ is isotypic since the field $L$ embeds into $\End^0(A_K)$ and $[L:\QQ] = \dim(A_K)$. Indeed, otherwise Wu~\cite[Prop 1.5]{wu:virtualgl2} would imply that the field $L$ embeds into $\End^0(C)$ for some factor $C = A_1^{r_1}$ of $A_K$ with $2\dim(C) = \dim(A_K) = [L:\QQ]$ and this is not possible since $L$ is not totally imaginary (\cite[Thm 3.1\,(ii)]{lang:cm}). It follows that $A$ is isogenous to a power $A' = B^n$ of a simple abelian scheme $B$ over $S$ such that $\End(B)$ identifies with the maximal order $\mathcal O_F$ of a CM field $F$ of degree $[F:\QQ] = 2g/n$.  
In particular the ring $R' = \End(A')$ is isomorphic to $\textnormal{M}_n(\mathcal O_F)$ which is a free $\ZZ$-module of rank $2ng$. Then, as in step 2, 
we have $n_A \leq m^{2ng} n_{A'}$ where $n_{A'}$ and $m$ are defined as before.
Proposition~\ref{prop:BoundCase2} implies that
$$
n_{A'} \leq (2g)^{15g^3}(\delta D_F)^{5g^3} \Delta \log(3\Delta)^{2g-1} .
$$
Here $\delta = |\OL/\Gamma'|$ where the order $\Gamma' = \ZZ[m\Gamma]$ of $L$ is defined as in step 2. To estimate $D_F$ we use that the abelian variety $B_K$ is simple and that the free $\ZZ$-module $\End(B_K)$ has rank $2g/n$. Thus, as in step 4, we find that $nT_{F/\QQ} = \textnormal{Tr}$ on $\OL_F\cong \End(B_K)$ and $$D_F \leq \textnormal{vol}(\End(B_K))^2 \leq \kappa(B_K)^{1/2}.$$
In step 4 we proved that $\delta \leq |\OL/\Gamma|m^g$, and \eqref{eq:isogvartransc} shows that the minimal isogeny degree $m$ satisfies $m^2 \leq \kappa(A_K)$. Further, in \eqref{eq:boundetailsextraendo} we estimated $h_F(B)$ in terms of $h_F(A)$, $g$ and $d$. Now we combine everything and then we see that  (i) also holds in the remaining case when $A$ has CM and $L$ is totally real. Here one can use for example the inequalities
\begin{equation}\label{eq:extraendothmboundcm}
D_F \leq (3g)^{9g^3}d(h(A)d)^{2g^2} \quad  \textnormal{and} \quad n_A\leq (3g)^{(3g)^8}  \delta_\Gamma^{5 g^{3}} (h(A)d)^{35  g^{6}}   \Delta \log(3\Delta)^{2g-1} 
\end{equation}
as intermediate steps. This completes the proof of Theorem~\ref{thm:endobound}.

In the above proof we used the following estimate for the class number: Any number field $K$ of degree $d$ and class number $h_K=|\Pic(\OL_K)|$ satisfies
\begin{equation}\label{eq:lenstraclassnumber}
h_K \leq a_dD_K^{1/2}\bigl(\max(1,\log(D_K)\bigl)^{d-1}, \quad a_d= \tfrac{d^{d-1}}{(d-1)!}.
\end{equation} 
For example, this estimate follows directly from Lenstra's bound~\cite[Thm 6.5]{lenstra:algorithms}.   

\subsection{The quaternion case}\label{sec:quatcase} In this section we prove the bound for the number of conjugation classes of ring morphisms used in the proof of Theorem~\ref{thm:endobound} in the quaternion case (b). 

Let $D$ be a non-split  quaternion algebra with center $Z$, let $n\in \Z_{\geq 1}$, and let $L$ be a number field contained in $\uM_n(D)$ with $Z\subseteq L$ and $[L:Z] = 2n$. Let $\Gamma$ be an order of $L$ and let $\mathcal O_D$ be a maximal order of $D$.
Write $g$ for the degree of $L/\QQ$, let $\delta$ be the index of $\Gamma\subseteq \OL_L$ and denote by $h_L$ the class number of $L$. The finite Brauer discriminant $\textnormal{disc}_Z(D)$ is the product  of all finite $Z$-places $\mathfrak p$ where $D_{\mathfrak p}$ is ramified. 

\begin{proposition}\label{prop:quatcase}
We have 
$$
|\Hom(\Gamma, \uM_n(\cO_D))/\GL_n(\cO_D)| \leq \tfrac{g}{2n}\delta^{16n^3g^3} N(\disc_Z(D))^{16n^5g^2} h_L.
$$
\end{proposition}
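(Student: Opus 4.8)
The plan is to follow and adapt the strategy of \cite[$\mathsection$10.4]{vkkr:hms}, which treats the analogous question over the base $\QQ$. The first step is to translate the conjugation count into a lattice classification. Put $R=\uM_n(\mathcal O_D)$; a ring morphism $\phi\in\Hom(\order,R)$ endows the abelian group $R$ with the structure of an $R\otimes_\ZZ\order$-module ($R$ acting by left multiplication, $\order$ by right multiplication via $\phi$), and two such morphisms are $\GL_n(\mathcal O_D)$-conjugate precisely when the resulting $R\otimes_\ZZ\order$-module structures are isomorphic. Hence $|\Hom(\order,R)/\GL_n(\mathcal O_D)|$ equals the number $|X_0/{\sim}|$ of isomorphism classes of such structures lying in the genus of the standard one, in the notation of \cite[$\mathsection$10.4]{vkkr:hms}.

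The second step is the decomposition into compatible morphisms, whose structural input is the analogue of \cite[Lem 10.4]{vkkr:hms} recorded as Lemma~\ref{lem:decomposition}: for any $\phi\in\Hom(\order,R)$ the $\QQ$-linear extension $\rho\colon L\to\uM_n(D)$ satisfies $Z(D)\subseteq\rho(L)$ --- a double-centralizer argument in the central simple $Z(D)$-algebra $\uM_n(D)$ forces this, since $\rho(L)$ would otherwise have too large a degree over its effective center for $\uM_n(D)$ to have degree $2n$. Consequently $Z_\phi:=\rho^{-1}(Z(D))$ is a subfield of $L$ of degree $[Z:\QQ]=\tfrac{g}{2n}$ with $[L:Z_\phi]=2n$, through which $\phi$ factors compatibly; the number of admissible $Z_\phi$ is bounded crudely by $\tfrac{g}{2n}$ by a count of the relevant central subfields. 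With $Z_\phi$ fixed the datum becomes a lattice structure relative to $Z_\phi$ over an order in the quaternion $L$-algebra $B=L\otimes_{Z_\phi}D$, whose ramification is controlled by $\disc_Z(D)$ via \eqref{eq:discriminantcomparison}, and whose index below a maximal order of $B$ is controlled by $\delta=|\OL_L/\order|$ and $N(\disc_Z(D))$.

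The third step is to bound $|X_0/{\sim}|$ with $Z_\phi$ fixed by localization together with a class-group argument. At each finite place of $L$ dividing neither $\delta$ nor $\disc_Z(D)$ the local module structure is rigid, so $X_0/{\sim}$ injects into the product of a \emph{bad-place factor} and a \emph{global factor}. The bad-place factor is finite, and I would bound it by a power of $\delta$ and a power of $N(\disc_Z(D))$ through an explicit Jordan--Zassenhaus-type count over the completions at the finitely many bad places, using the conductor inclusion $|\OL_L/\order|\in\mathfrak f$ and the discriminant identities of $\mathsection$\ref{sec:endo}. For the global factor I would construct, as indicated in the introduction, an explicit finite map $X_0/{\sim}\;\longrightarrow\;\Pic(\OL_L)$ --- built from Steinitz and reduced-norm invariants and the localization exact sequence relating the order to a maximal order of $B$ and to $\OL_L$ --- with fibres again bounded by powers of $\delta$ and $N(\disc_Z(D))$; this contributes the factor $h_L$. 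Multiplying the three contributions ($\tfrac{g}{2n}$ from the decomposition, the bad-place factor, and $h_L$) and keeping track of the exponents yields the bound of Proposition~\ref{prop:quatcase}, the exponents $16n^3g^3$ and $16n^5g^2$ emerging from the explicit local estimates after the usual simplifications.

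The main obstacle will be the third step: obtaining \emph{polynomial} exponents in $n$ and $g$ for the bad-place factor. One must simultaneously control the chain of indices between $\order$, its image $\rho(\order)$, the integral centre $\order\cap Z_\phi$ and $\mathcal O_D\cap Z$, and then bound the number of local lattice structures within a fixed genus over a quaternion order over $L$ that may be \emph{ramified} --- precisely the point where the naive dimension argument used in \cite{vkkr:hms} breaks down and is replaced by the explicit map to $\Pic(\OL_L)$ together with careful conductor bookkeeping. A secondary difficulty is verifying that this map to $\Pic(\OL_L)$ genuinely has fibres of the claimed size, which requires the non-commutative ideal-class input for the (possibly division) algebra $B$.
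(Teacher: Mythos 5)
Your steps 1 and 2 match the paper: the translation of $\GL_n(\cO_D)$-conjugacy into isomorphism of $R\otimes\order$-lattice structures, and the decomposition via Lemma~\ref{lem:decomposition} into $\varphi$-compatible morphisms for $\varphi\in\Hom(Z,L)$ (contributing the factor $\tfrac{g}{2n}$), are exactly right. But your step 3 misses the key structural observation that makes the whole argument work, and proposes instead a route the authors explicitly identified as difficult.

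The crucial point is that once $\Hom_\varphi(\Gamma,\uM_n(\cO_D))$ is nonempty, the field $L$ sits inside $\uM_n(D)$ as a \emph{maximal} subfield of the central simple $Z$-algebra $\uM_n(D)$ (its degree is $\sqrt{\dim_Z \uM_n(D)} = 2n = [L:Z]$). Therefore $L$ splits $\uM_n(D)$: by \cite[Thm 4.12]{jacobson:basicalgebra2}, $A=\uM_n(D)\otimes_{Z,\varphi}L\cong\uM_{2n}(L)$, and in particular $D\otimes_{Z,\varphi}L\cong\uM_2(L)$. So the quaternion $L$-algebra you call $B$ is never ramified and never division --- your worry about ``non-commutative ideal-class input for the (possibly division) algebra $B$'' is a red herring, and your plan to track ramification of $B$ via $\disc_Z(D)$ is aimed at a problem that does not arise. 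The paper exploits the splitting directly: $\rho$-twisted module structures $M_\rho$ on $R$ satisfy $M_\rho\otimes\QQ\cong L^{2n}$ as $A$-modules (the unique simple module, by a dimension count), which gives an honest injection $\psi$ into the set $X/L^\times$ of full $\Gamma_A$-lattices in $L^{2n}$ modulo homothety. One then passes to $\uM_{2n}(\cO_L)$-lattices via $\beta(\Lambda)=\uM_{2n}(\cO_L)\Lambda$ and identifies those with fractional $\cO_L$-ideals by Morita equivalence (via the first projection $p_1$ on $L^{2n}$), which is where $h_L$ comes from and why \emph{only} $h_L$ appears. The effective control is all in $\deg(\beta)$: if $m=|\uM_{2n}(\cO_L)/\phi'(\Gamma_A)|$ then any $\Lambda'$ with $\beta(\Lambda')=\beta(\Lambda)$ is squeezed between $m\Lambda$ and $m^{-1}\Lambda$, giving $\deg(\beta)\leq m^{8n^2g^2}$, and $m$ is bounded in terms of $\delta$ and $N(\disc_Z(D))$ by a direct index computation (Lemmas~\ref{lem:degbetabound} and \ref{lem:Prev}, using \eqref{eq:discriminantcomparison}). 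There is no local--global decomposition, no ``bad-place factor,'' and no Jordan--Zassenhaus-type count over completions. Your proposed JZ-flavored route is precisely what the authors say they wanted to avoid because it resists effectivity; the splitting of $A$ is what lets them sidestep it, and without that observation your ``main obstacle'' in step 3 remains unsolved rather than being a secondary issue.
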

Here $\GL_n(\cO_D) = \uM_n(\cO_D)^\times$ acts on the set of ring morphisms $\Hom(\Gamma,\uM_n(\cO_D))$ via conjugation, that is  $\tau \cdot \rho  = (x \mapsto \tau\rho(x) \tau^{-1})$ for $\tau \in \GL_n(\cO_D)$ and $\rho \in \Hom(\Gamma,\uM_n(\cO_D))$. In Proposition~\ref{prop:quatcase}, one can replace $\disc_Z(D)^2$ by the usual discriminant $d(\mathcal O_D/\OL_Z)$ defined for example in \cite[p.218]{reiner:orders}; see \eqref{eq:discriminantcomparison} and notice that $\OL_Z\subset \OL_D$. 

\paragraph{Strategy of proof.} We first decompose
$\Hom(\Gamma, \uM_n(\cO_D))$
 into sets of $\varphi$-compatible morphisms $\Hom_\varphi(\Gamma,\uM_n(\OL_D))$ where $\varphi$ ranges over $\Hom(Z, L)$. 
Let $A = \uM_n(D) \otimes_\varphi L$. 
 We embed each piece $\Hom_\varphi(\Gamma,\uM_n(\OL_D))$ (as in \cite[$\mathsection$10.4]{vkkr:hms}) into the set $X_0/{\sim}$ of isomorphism classes of $\Gamma_A$-module structures on $\uM_n(\OL_D)$ for $\Gamma_A = \uM_n(\OL_D) \otimes_{\varphi^{-1}(\Gamma)} \Gamma$:
$$
\Hom_\varphi(\Gamma,\uM_n(\OL_D))/\GL_n(\cO_D) \overset{\psi_1} \hookrightarrow X_0/{\sim}.
$$
The Jordan--Zassenhaus theorem (JZ) gives that $ X_0/{\sim}$ is finite. However, it seems difficult to make the proof (see \cite[$\mathsection$26]{reiner:orders}) of (JZ) effective for $ X_0/{\sim}$. Hence we use a different approach: 
We first show that $A \cong \uM_{2n}(L)$ is split and hence acts on $L^{2n}$.
We consider a certain set $X$ (resp. $Y$) of 
$\Gamma_A$-sublattices (resp. $\uM_2(\cO_L)$-sublattices) $\Lambda$ of $L^{2n}$ and  use $X,Y$ to construct an explicit finite map to the class group $\cI/L^\times$ of $L$,
$$X_0/{\sim} \overset{\psi_2}\hookrightarrow X/L^\times \overset {\beta} \to Y/L^\times \isomto \cI/L^\times.
$$
If $M\in X_0$ then $\psi_2(M) = g(M)\subset L^{2n}$ for some $A$-isomorphism $g:M\otimes \QQ\isomto L^{2n}$. Such a $g$ always exists by a dimension argument using that $A$ is a simple ring with simple module $L^{2n}$, see the construction of $\psi = \psi_2\psi_1$ in \eqref{def:psiembedding}. The map $\beta:X/L^\times\to Y/L^{\times}$ is induced by $\Lambda\mapsto \uM_{2n}(\OL_L)\Lambda$ and its degree can be controlled (Lemma~\ref{lem:degbetabound}). Finally the bijection $Y/L^\times \isomto \cI/L^\times$ is given by the first projection.

\subsubsection{Proof of Proposition~\ref{prop:quatcase}}
We continue our notation. Let $\varphi \colon Z \to L$ be a morphism. Write $\Hom_\varphi(\Gamma, \uM_n(\cO_D))$ for the set of ring morphisms $\rho \colon \Gamma \to \uM_n(\cO_D)$ such that $\rho_{\Q} \to \uM_n(D)$ is a $Z$-algebra morphism. Here $L$ and $\uM_n(D)$ are $Z$-algebras via $\varphi$ and $Z\subset D\subseteq\uM_n(D)$ respectively. 

 The following decomposition is an analogue of \cite[Lem 10.4]{vkkr:hms}:

\begin{lemma}\label{lem:decomposition}
We have 
$
\Hom(\Gamma, \uM_n(\cO_D)) = \bigcup_{\varphi \in \Hom(Z, L)} \Hom_\varphi(\Gamma, \uM_n(\cO_D)).
$
\end{lemma}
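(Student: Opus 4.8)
The plan is to establish the non-trivial inclusion, namely that every $\rho\in\Hom(\Gamma,\uM_n(\cO_D))$ lies in $\Hom_\varphi(\Gamma,\uM_n(\cO_D))$ for a suitable $\varphi\in\Hom(Z,L)$; the reverse inclusion holds by definition, since each element of $\Hom_\varphi(\Gamma,\uM_n(\cO_D))$ is in particular a ring morphism $\Gamma\to\uM_n(\cO_D)$. First I would pass to rational coefficients: as $\Gamma$ is an order of $L$ we have $\Gamma\otimes_\ZZ\QQ=L$ and $\uM_n(\cO_D)\otimes_\ZZ\QQ=\uM_n(D)$, so $\rho$ extends to a ring morphism $\rho_\QQ\colon L\to\uM_n(D)$ with $\rho_\QQ(1)=I_n$; since $L$ is a field, $\rho_\QQ$ is injective, whence $\rho_\QQ(L)$ is a subfield of $\uM_n(D)$ with $[\rho_\QQ(L):\QQ]=[L:\QQ]=2n[Z:\QQ]$.

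The heart of the argument, analogous to \cite[Lem 10.4]{vkkr:hms}, is to show that the center $Z\cdot I_n=Z(\uM_n(D))$ is contained in $\rho_\QQ(L)$. To this end I would consider the subring $E\subseteq\uM_n(D)$ generated by $\rho_\QQ(L)$ and $Z\cdot I_n$; since $Z\cdot I_n$ is central in $\uM_n(D)$, the ring $E$ is commutative, hence a field containing $Z\cdot I_n$. Now $\uM_n(D)$ is central simple over $Z$ with $\dim_Z\uM_n(D)=4n^2$, so the double centralizer theorem applied to the simple $Z$-subalgebra $E$ gives $\dim_Z E\cdot\dim_Z C_{\uM_n(D)}(E)=4n^2$; since $E$ is commutative one has $E\subseteq C_{\uM_n(D)}(E)$, and therefore $(\dim_Z E)^2\le 4n^2$, i.e.\ $\dim_\QQ E\le 2n[Z:\QQ]$. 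Combining this with $\rho_\QQ(L)\subseteq E$ and $\dim_\QQ\rho_\QQ(L)=2n[Z:\QQ]$ forces $E=\rho_\QQ(L)$, so $Z\cdot I_n\subseteq\rho_\QQ(L)$ as desired.

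Finally I would define $\varphi\colon Z\to L$ as the composite of $z\mapsto zI_n$ (a ring morphism $Z\to Z\cdot I_n\subseteq\rho_\QQ(L)$, which lands in $\rho_\QQ(L)$ by the previous step) with the inverse isomorphism $\rho_\QQ(L)\isomto L$; this is a nonzero ring morphism between number fields, hence lies in $\Hom(Z,L)$, and by construction $\rho_\QQ(\varphi(z))=zI_n$ for all $z\in Z$. Then for $z\in Z$ and $x\in L$ one computes $\rho_\QQ(\varphi(z)x)=\rho_\QQ(\varphi(z))\rho_\QQ(x)=(zI_n)\rho_\QQ(x)$, which says exactly that $\rho_\QQ$ is a morphism of $Z$-algebras when $L$ carries the $Z$-structure via $\varphi$ and $\uM_n(D)$ the one via $Z\subset D\subseteq\uM_n(D)$; thus $\rho\in\Hom_\varphi(\Gamma,\uM_n(\cO_D))$. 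The main obstacle I anticipate is the dimension bookkeeping in the double centralizer step: one must carefully distinguish $Z$-dimensions from $\QQ$-dimensions and must not assume a priori that $\rho_\QQ(L)$ contains the structural center $Z\cdot I_n$ --- establishing this containment is precisely the content of that step.
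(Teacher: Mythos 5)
Your strategy matches the paper's in its key move: pass to rational coefficients, bound the $\QQ$-dimension of the commutative subalgebra generated by $Z\cdot I_n$ and $\rho_\QQ(L)$ by $[L:\QQ]=2n[Z:\QQ]$, and deduce that this subalgebra equals $\rho_\QQ(L)$, forcing $Z\cdot I_n\subseteq\rho_\QQ(L)$. However, there is a genuine gap at the step ``the ring $E$ is commutative, hence a field.'' Commutativity does not make a finite-dimensional $\QQ$-subalgebra of $\uM_n(D)$ a field, even if it is generated by two fields one of which is central: $E$ is an image of the étale algebra $Z\otimes_\QQ\rho_\QQ(L)$, and such an image can have nontrivial idempotents inside a matrix ring. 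For a concrete illustration of the phenomenon, take the central copy of $\QQ(\sqrt2)$ and the non-central copy $\{\begin{smallmatrix}a&b\\2b&a\end{smallmatrix}\}$ in $\uM_2(\QQ(\sqrt2))$; with $M$ satisfying $M^2=2I$ one gets $(\sqrt2 I-M)(\sqrt2 I+M)=0$ in the subalgebra they generate. Since $E$ is therefore only known to be semisimple rather than simple, the double centralizer theorem cannot be applied to $E$ as you do, and your derivation of $\dim_Z E\le 2n$ is not justified.

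The dimension bound is correct, but the right justification is the one the paper uses. Because $Z$ and $L$ are number fields, $Z\otimes_\QQ\rho_\QQ(L)$ is finite étale over $\QQ$, hence its image $E$ is a commutative finite étale $Z$-subalgebra of the central simple $Z$-algebra $\uM_n(D)$ of $Z$-dimension $(2n)^2$. A commutative étale $Z$-subalgebra of such an algebra has $Z$-dimension at most $2n$ (extend scalars to $\bar Z$; a commutative étale subalgebra of $\uM_{2n}(\bar Z)$ is conjugate into the diagonal torus). This yields $\dim_\QQ E\le 2n[Z:\QQ]=\dim_\QQ\rho_\QQ(L)$, whence $E=\rho_\QQ(L)$ and $Z\cdot I_n\subseteq\rho_\QQ(L)$. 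Note that $E$ being a field is then a consequence, not a usable hypothesis. From this point on, your construction of $\varphi$ as $(\rho_\QQ|_{Z'})^{-1}$ (with $Z'=\rho_\QQ^{-1}(Z\cdot I_n)$) and the verification that $\rho$ is $\varphi$-compatible are correct and agree with the paper.
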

\begin{proof} Let $\rho \in \Hom(\Gamma, \uM_n(\cO_D))$. We claim that $\rho_{\Q}(L) \subset \uM_n(D)$ contains $Z$. Define $K = Z \rho_\QQ(L) \subset \uM_n(D)$. Then $K$ is a commutative  finite \'etale $Z$-algebra. Thus $\dim_Z(K) \leq 2n$  and  
$\dim_{\Q}(K) \leq 2n [Z:\Q] = g$. As $\rho_{\Q}(L) \subseteq K$ is also of dimension $g$ over $\Q$, we obtain $\rho_{\Q}(L) = K$, and hence $Z \subseteq \rho_{\Q}(L)$ as claimed. Thus $\rho_{\Q}$ induces some isomorphism of a subfield $Z' \subset L$ to $Z \subset \uM_n(D)$. Define $\varphi = (\rho_{\Q}|_{Z'})^{-1}$. Then $\rho \in \Hom_{\varphi}(\Gamma, \uM_n(\cO_D))$.
\end{proof}

As $|\Hom(Z, L)| \leq \frac {g}{2n}$, 
the proof of Proposition~\ref{prop:quatcase} is reduced to bounding the set 
\begin{equation}\label{eq:ReduceExtraEndosNonComm}
\Hom_\varphi(\Gamma, \uM_n(\cO_D)) / \GL_n(\cO_D)
\end{equation}
for a fixed morphism $\varphi \colon Z \to L$, where $\GL_n(\cO_D)$ acts by conjugation. In the rest of the argument we assume that  $\Hom_\varphi(\Gamma, \uM_n(\cO_D))$ is nonempty. Define $$\Gamma_Z = \varphi^{-1}(\Gamma), \quad A = \uM_n(D) \otimes_{Z, \varphi} L \quad \textnormal{and} \quad \Gamma_A = \uM_n(\cO_D) \otimes_{\Gamma_Z, \varphi} \Gamma$$ where $\Gamma_Z\subseteq\OL_Z\subset \OL_D$.   
We write $\otimes_{\varphi}$ as shorthand for $\otimes_{\Gamma_Z, \varphi}$ and $\otimes_{Z, \varphi}$. An application  of \cite[Thm 4.12]{jacobson:basicalgebra2} with $n=2$ and $r=n$ gives isomorphisms $\phi \colon D \otimes_{\varphi} L \isomto \uM_2(L)$ and $\uM_n(D) \otimes_{\varphi} L \isomto \uM_{2n}(L)$ of $L$-algebras. Then $\phi(\cO_D \otimes_{\varphi} \Gamma) \subset \uM_2(L)$ is an order of $\uM_2(L)$.  
We replace $\phi$ by a suitable conjugate so that $\phi (\cO_D \otimes_{\varphi} \Gamma) \subseteq \uM_2(\cO_L)$.   Write $\phi' \colon A = \uM_n(D) \otimes_{\varphi} L \isomto \uM_{2n}(L)$ for the isomorphism induced by $\phi$.


The unique (up to isomorphism) simple $A$-module $S$ is given by $L^{2n}$ on which $A$ acts by usual matrix multiplication (using $\phi'$).  Write $X$ for the set of $\Gamma_A$-submodules $\Lambda \subset S$ such that  $\Q\Lambda = S$ and $\Lambda$ is of finite type as $\ZZ$-module. 

We are now going to define a map
\begin{equation}\label{def:psiembedding}
\psi \colon \Hom_\varphi(\Gamma, \uM_n(\cO_D)) \to X / L^\times.
\end{equation}
Let $\rho \in \Hom_\varphi(\Gamma, \uM_n(\cO_D))$. Define the $\Gamma_A$-module $M_\rho = \uM_n(\cO_D)$ given by  $\Gamma_A \times M_\rho \to M_\rho$, $(d \otimes \gamma, m) \mapsto d m \rho(\gamma)$  where on the right hand side $d m \rho(\gamma)$ is the product in $\uM_n(\cO_D)$.
We claim that $M_\rho \otimes \Q \cong S$ as $A$-modules: As $A$ is a simple ring, we have $M_\rho \otimes \Q \cong S^r$ for some integer $r \geq 0$. We compute the dimensions
$$
\dim_{\Q} (M_\rho \otimes \Q) = 4n^2[Z:\QQ] = 2n g = \dim_{\Q}(S),
$$ 
and thus $r = 1$.  This proves the claim. Let $g \colon M_\rho \otimes \Q \isomto S$ be an $A$-isomorphism. We define $\psi(\rho) = g(M_\rho) \in X/L^\times.$  If we had chosen a different  $A$-isomorphism $g' \colon M_\rho \otimes \Q \isomto S$, then $ g'(M_\rho) = h g(M_\rho) $ for $h = g' g^{-1} \in \End_{A}(S)^\times = L^\times$.  In particular the construction does not depend on the choice of $g$ and $\psi$ is well-defined. This completes the definition of $\psi$.

\begin{lemma}
Let $\tau \in \GL_n(\cO_D)$ and $\rho \in \Hom_{\varphi}(\Gamma, \uM_n(\cO_D))$. Then $\psi(\tau \rho \tau^{-1}) = \psi(\rho)$. In particular we obtain a map $\psi \colon \Hom_\varphi(\Gamma, \uM_n(\cO_D))/\GL_n(\cO_D) \to X / L^\times$. 
\end{lemma}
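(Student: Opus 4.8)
The statement is that the map $\psi$ is conjugation-invariant, so that it descends to $\Hom_\varphi(\Gamma,\uM_n(\cO_D))/\GL_n(\cO_D)\to X/L^\times$. The plan is to track carefully how conjugating $\rho$ by $\tau$ changes the $\Gamma_A$-module $M_\rho$, and show that the change is an \emph{isomorphism} of $\Gamma_A$-modules — hence invisible after passing to $X/L^\times$, since $\psi$ was already shown to be independent of the choice of $A$-isomorphism $g\colon M_\rho\otimes\QQ\isomto S$.

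First I would fix $\tau\in\GL_n(\cO_D)=\uM_n(\cO_D)^\times$ and $\rho\in\Hom_\varphi(\Gamma,\uM_n(\cO_D))$, and set $\rho'=\tau\rho\tau^{-1}$, which again lies in $\Hom_\varphi(\Gamma,\uM_n(\cO_D))$ since conjugation by a unit of $\uM_n(\cO_D)$ is a ring automorphism fixing $Z\subset D\subseteq\uM_n(D)$ pointwise (so the $Z$-algebra structure via $\varphi$ is preserved). Now consider the two $\Gamma_A$-modules $M_\rho=\uM_n(\cO_D)$ and $M_{\rho'}=\uM_n(\cO_D)$, with actions $(d\otimes\gamma)\cdot m=dm\rho(\gamma)$ and $(d\otimes\gamma)\cdot m = dm\rho'(\gamma)=dm\tau\rho(\gamma)\tau^{-1}$ respectively. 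The natural candidate isomorphism is right multiplication by $\tau$,
$$
\theta\colon M_\rho\to M_{\rho'},\qquad m\mapsto m\tau.
$$
This is a bijection on the underlying $\ZZ$-module $\uM_n(\cO_D)$ since $\tau$ is a unit, and I would check $\Gamma_A$-linearity by the direct computation $\theta\bigl((d\otimes\gamma)\cdot_\rho m\bigr)=\theta(dm\rho(\gamma))=dm\rho(\gamma)\tau = dm\tau(\tau^{-1}\rho(\gamma)\tau)=d(m\tau)\rho'(\gamma) = (d\otimes\gamma)\cdot_{\rho'}\theta(m)$, using associativity in $\uM_n(\cO_D)$ and $\rho'(\gamma)=\tau^{-1}\rho(\gamma)\tau$ — wait, here one must be slightly careful with the direction: since $\rho'(\gamma)=\tau\rho(\gamma)\tau^{-1}$, the identity $\rho(\gamma)\tau=\tau(\tau^{-1}\rho(\gamma)\tau)=\tau\,(\tau^{-1}\rho(\gamma)\tau)$ should instead be read as $\rho(\gamma)\tau = \tau\rho'(\gamma)^{?}$; the clean way is: $\theta$ should be right multiplication by $\tau^{-1}$ or $\tau$ according to which side the conjugation sits, and one picks the sign that makes $m\mapsto m\tau^{-1}$ intertwine, namely $\theta((d\otimes\gamma)\cdot_\rho m)=dm\rho(\gamma)\tau^{-1}=dm\tau^{-1}(\tau\rho(\gamma)\tau^{-1})=d(m\tau^{-1})\rho'(\gamma)$. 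So $\theta\colon m\mapsto m\tau^{-1}$ is the desired $\Gamma_A$-module isomorphism $M_\rho\isomto M_{\rho'}$. (I would state it with whichever sign is correct and present the one-line verification.)

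Having produced $\theta\colon M_\rho\isomto M_{\rho'}$ of $\Gamma_A$-modules, tensoring with $\QQ$ gives an $A$-isomorphism $\theta_\QQ\colon M_\rho\otimes\QQ\isomto M_{\rho'}\otimes\QQ$. Now pick any $A$-isomorphism $g\colon M_\rho\otimes\QQ\isomto S$, which we may use to compute $\psi(\rho)=g(M_\rho)\in X/L^\times$. Then $g\circ\theta_\QQ^{-1}\colon M_{\rho'}\otimes\QQ\isomto S$ is an $A$-isomorphism, and by the well-definedness of $\psi$ (independence of the chosen $A$-isomorphism, proved just before in the excerpt) we may compute $\psi(\rho')$ using it: $\psi(\rho')=(g\circ\theta_\QQ^{-1})(M_{\rho'})=g(\theta_\QQ^{-1}(M_{\rho'}))=g(M_\rho)=\psi(\rho)$, where the middle equality uses $\theta_\QQ^{-1}(M_{\rho'})=\theta^{-1}(M_{\rho'})=M_\rho$ since $\theta$ restricts to a $\ZZ$-module isomorphism $M_\rho\to M_{\rho'}$ of the given lattices. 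Hence $\psi(\tau\rho\tau^{-1})=\psi(\rho)$, so $\psi$ is constant on $\GL_n(\cO_D)$-orbits and therefore factors through $\Hom_\varphi(\Gamma,\uM_n(\cO_D))/\GL_n(\cO_D)$ as claimed.

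I do not expect a serious obstacle here: the only point requiring care is getting the left/right and $\tau$ versus $\tau^{-1}$ conventions consistent between the definition of the $\Gamma_A$-action (which multiplies $\rho(\gamma)$ on the right) and the conjugation action of $\GL_n(\cO_D)$, and then invoking the already-established fact that $\psi$ does not depend on the auxiliary $A$-isomorphism $g$. The argument is purely formal given that $A$ is simple with unique simple module $S=L^{2n}$ and $\End_A(S)=L^\times$, all of which is in place from the preceding discussion.
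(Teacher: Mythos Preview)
Your proposal is correct and follows essentially the same argument as the paper: the paper defines $k\colon M_\rho\to M_{\tau\rho\tau^{-1}}$ by $m\mapsto m\tau^{-1}$ (exactly your corrected $\theta$), notes it is a $\Gamma_A$-isomorphism, and then computes $\psi(\tau\rho\tau^{-1})$ via $g'=g\circ k_\QQ^{-1}$ just as you do. Your care about the $\tau$ versus $\tau^{-1}$ sign is well placed and lands on the same choice as the paper.
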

\begin{proof}
The map $k \colon M_\rho \to M_{\tau \rho \tau^{-1}}$ defined by $m \mapsto m\tau^{-1}$ is an $\Gamma_A$-isomorphism.  Choose an isomorphism $g \colon M_\rho \otimes \Q \isomto S$. Let $g' \colon M_{\tau \rho \tau^{-1}} \otimes \Q \isomto S$ be the composition
$gk^{-1}_{\Q}$. By the definition of $\psi$ we have $\psi(\tau \rho \tau^{-1}) = g'(M_{\tau \rho \tau^{-1}}) = g k^{-1}_{\Q}(M_{\tau \rho \tau^{-1}}) = g (M_\rho) = \psi(\rho)$ inside $X/L^\times$, where we used that $k_{\Q}(M_\rho) = M_{\tau \rho \tau^{-1}}$.
\end{proof}

\begin{lemma}
The map $\psi \colon \Hom_\varphi(\Gamma, \uM_n(\cO_D))/\GL_n(\cO_D) \to X / L^\times$ is injective.
\end{lemma}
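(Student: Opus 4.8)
The plan is to prove injectivity of $\psi$ by reconstructing, from the class $\psi(\rho)\in X/L^\times$, the $\GL_n(\cO_D)$-conjugacy class of $\rho$. First I would unwind the definitions: given $\rho,\rho'\in \Hom_\varphi(\Gamma,\uM_n(\cO_D))$ with $\psi(\rho)=\psi(\rho')$, choose $A$-isomorphisms $g\colon M_\rho\otimes\QQ\isomto S$ and $g'\colon M_{\rho'}\otimes\QQ\isomto S$ with $g(M_\rho)=h\,g'(M_{\rho'})$ for some $h\in L^\times=\End_A(S)^\times$. Replacing $g'$ by $h g'$ (which is again an $A$-isomorphism since $h$ is $A$-linear), we may assume $g(M_\rho)=g'(M_{\rho'})$ as $\Gamma_A$-submodules of $S$. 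Then $f=(g')^{-1}\circ g\colon M_\rho\otimes\QQ\isomto M_{\rho'}\otimes\QQ$ is an $A$-isomorphism carrying the lattice $M_\rho$ onto the lattice $M_{\rho'}$, hence restricts to a $\Gamma_A$-module isomorphism $M_\rho\isomto M_{\rho'}$ of the underlying $\ZZ$-lattices.

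The key step is to translate this $\Gamma_A$-module isomorphism into an element of $\GL_n(\cO_D)$. Here I would use that, as a left $\uM_n(\cO_D)$-module, $M_\rho=\uM_n(\cO_D)=M_{\rho'}$ is free of rank one, so the $\uM_n(\cO_D)$-linear isomorphism $f$ is right multiplication by a unit $\tau^{-1}\in \uM_n(\cO_D)^\times=\GL_n(\cO_D)$ (the standard identification of $\uM_n(\cO_D)$-linear endomorphisms of the free rank-one module with right multiplications, combined with the fact that $f$ is an isomorphism forces $\tau$ invertible). Now I exploit that $f$ is moreover compatible with the right $\Gamma$-action twisted by $\rho$ on the source and by $\rho'$ on the target: for $\gamma\in\Gamma$ and $m\in M_\rho$ one has $f(m\rho(\gamma))=f(m)\rho'(\gamma)$, i.e. $m\rho(\gamma)\tau^{-1}=m\tau^{-1}\rho'(\gamma)$ for all $m\in\uM_n(\cO_D)$. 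Taking $m=1$ gives $\rho(\gamma)\tau^{-1}=\tau^{-1}\rho'(\gamma)$, that is $\rho'(\gamma)=\tau\rho(\gamma)\tau^{-1}$ for all $\gamma\in\Gamma$. Hence $\rho'=\tau\rho\tau^{-1}$ with $\tau\in\GL_n(\cO_D)$, which is exactly the statement that $\rho$ and $\rho'$ define the same class in $\Hom_\varphi(\Gamma,\uM_n(\cO_D))/\GL_n(\cO_D)$.

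Finally I would record the one point that needs care, which I expect to be the main (though minor) obstacle: the identification of $\uM_n(\cO_D)$-linear maps $\uM_n(\cO_D)\to\uM_n(\cO_D)$ with right multiplications must be set up with the correct handedness so that it interacts properly with both the left $\uM_n(\cO_D)$-action (used to get $\tau$) and the right $\rho(\Gamma)$-action (used to get the conjugation relation); this is where one checks that the two module structures on $M_\rho$ genuinely commute, which they do because in $\uM_n(\cO_D)$ the product is associative. Once the bookkeeping of left versus right modules is fixed consistently with the conventions established right before \eqref{def:psiembedding} and in the preceding lemma (where $k\colon m\mapsto m\tau^{-1}$ was already used), the argument above goes through verbatim. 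This completes the proof that $\psi$ is injective.
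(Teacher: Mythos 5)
Your proof is correct and follows essentially the same route as the paper's: both reduce to a $\Gamma_A$-module isomorphism $f=(g')^{-1}g\colon M_\rho\isomto M_{\rho'}$ after normalizing by the $L^\times$-ambiguity, then use left $\uM_n(\cO_D)$-linearity to write $f$ as right multiplication by a unit (you invoke that $\uM_n(\cO_D)$ is free of rank one over itself; the paper derives this directly by taking $\gamma=1$ and computing $h(d)=dh(1)$), and finally read off the conjugation relation from the right $\Gamma$-compatibility. Up to the naming convention $\tau=u^{-1}$ the two arguments coincide.
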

\begin{proof}
Let $\rho, \rho' \in \Hom_\varphi(\Gamma, \uM_n(\cO_D))$ and assume that $\psi(\rho) = \psi(\rho')$. Then there exist  $A$-isomorphisms $g \colon M_\rho \otimes \Q \isomto S$ and $g' \colon M_{\rho'} \otimes \Q \isomto S$, such that $g(M_\rho) = \lambda g'(M_{\rho'}) \subset S$ for some $\lambda \in L^\times$.  After replacing $g'$ by $\lambda g'$, we obtain $g(M_\rho) = g'(M_{\rho'}) \subset S$.  Define the $\Gamma_A$-isomorphism $h = (g^{\prime, -1} g) \colon M_\rho \isomto M_{\rho'}$.  Then $h$ is a morphism $\uM_n(\cO_D) \to \uM_n(\cO_D)$ such that for all $m \in \uM_n(\cO_D)$ and all $d \otimes \gamma \in \Gamma_A$ we have $h((d \otimes \gamma) m) = (d \otimes \gamma) h(m)$, so
\begin{equation}\label{eq:34r43}
h(d m \rho(\gamma)) = d h(m) \rho'(\gamma). 
\end{equation}
 Taking $\gamma = 1$, we get $h(dm) = d h(m)$ and thus $h(d) = dh(1)$. Define $u = h(1)$, then $h(m) = m h(1) = m u$ for all $m \in \uM_n(\cO_D)$. In particular \eqref{eq:34r43} becomes $d m \rho(\gamma) u = d m u \rho'(\gamma)$. Taking $d = m = 1$ we obtain $\rho(\gamma) u = u \rho'(\gamma)$ for all $\gamma\in \Gamma$, and hence $\rho' = u^{-1} \rho u$ (as $h$ is invertible, $u = h(1)$ is invertible as well). This completes the proof.
\end{proof}

Write $Y$ for the set of $\uM_{2n}(\cO_L)$-submodules $\Lambda$ of $S = L^{2n}$ such that $\Lambda$ is of finite type over $\ZZ$ and such that  $\QQ\Lambda = S$. Consider the mapping $\beta \colon X \to Y$, $\Lambda \mapsto \uM_{2n}(\cO_L)\Lambda$. Recall that $\disc_Z(D)$ is the finite Brauer discriminant and that $\delta = |\OL_L/\Gamma|$.

\begin{lemma}\label{lem:degbetabound}
We have $\textnormal{deg}(\beta) \leq \delta^{16n^3g^3} N(\disc_Z(D))^{16n^5g^2}$.
\end{lemma}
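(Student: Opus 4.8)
The map $\beta\colon X\to Y$ sends a $\Gamma_A$-lattice $\Lambda$ to the $\uM_{2n}(\OL_L)$-lattice it generates, so controlling $\deg(\beta)=\sup_{\Lambda'\in Y}|\beta^{-1}(\Lambda')|$ amounts to bounding, for a fixed $\uM_{2n}(\OL_L)$-lattice $\Lambda'$, the number of $\Gamma_A$-sublattices $\Lambda\subseteq\Lambda'$ with $\uM_{2n}(\OL_L)\Lambda=\Lambda'$. The standard reduction (as in \cite[$\mathsection$10.4]{vkkr:hms}) is to pin down a suitable ideal $\mathfrak a$ of $\OL_L$, depending only on $\Gamma_A$ and $\Lambda'$, such that every such $\Lambda$ satisfies $\mathfrak a\Lambda'\subseteq\Lambda\subseteq\Lambda'$; then $\beta^{-1}(\Lambda')$ injects into the set of $\Gamma_A$-submodules of the finite module $\Lambda'/\mathfrak a\Lambda'$, whose cardinality is at most $|\Lambda'/\mathfrak a\Lambda'|=N(\mathfrak a)^{2ng}$ (the $\ZZ$-rank of $\Lambda'$ being $2ng=\dim_\QQ S$). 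So the whole estimate comes down to: (a) producing $\mathfrak a$, and (b) bounding $N(\mathfrak a)$ in terms of $\delta$ and $N(\disc_Z(D))$.

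\textbf{Producing $\mathfrak a$.} First I would pass from the order $\Gamma_A=\uM_n(\OL_D)\otimes_\varphi\Gamma$ to the maximal order $\Lambda_{\max}=\uM_{2n}(\OL_L)$ of the split algebra $A\cong\uM_{2n}(L)$. The conductor-type ideal measuring how far $\Gamma_A$ is from $\Lambda_{\max}$ factors through two sources of non-maximality: the index $[\OL_D\otimes_\varphi\OL_L:\text{image}]$ coming from the localization/splitting isomorphism $\phi$, which is controlled by the different $d(\OL_D/\OL_Z)=\disc_Z(D)^2$ (using \eqref{eq:discriminantcomparison} and that $2\,\textnormal{tr}_{D/\QQ}=T_{D/\QQ}$ as noted before the statement), and the index $\delta=|\OL_L/\Gamma|$ coming from replacing $\OL_L$ by the order $\Gamma$ in the tensor factor. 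Since $\OL_L$ acts through the center, the two contributions are essentially independent, and one gets an $\OL_L$-ideal $\mathfrak f$ annihilating $\Lambda_{\max}/\Gamma_A$ with $N(\mathfrak f)$ a bounded power of $\delta N(\disc_Z(D))$ (here one uses $|\OL_L/\Gamma|^{g}\ge N(\mathfrak f_\Gamma)$ as in the proof of Theorem~\ref{thm:endobound}, and the analogous divisibility $N(d(\OL_D/\OL_Z))\mid d(\OL_D/\ZZ)$). Then for $\Lambda\in\beta^{-1}(\Lambda')$ one has $\Lambda\subseteq\Lambda'=\uM_{2n}(\OL_L)\Lambda=\Lambda_{\max}\Lambda$, hence $\mathfrak f\Lambda'=\mathfrak f\Lambda_{\max}\Lambda\subseteq\Gamma_A\Lambda\subseteq\Lambda$, so $\mathfrak a=\mathfrak f$ works.

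\textbf{Bookkeeping and the main obstacle.} With $\mathfrak a=\mathfrak f$ in hand, $\deg(\beta)\le N(\mathfrak f)^{2ng}$, and substituting the bound $N(\mathfrak f)\ll \delta^{\,c_1}N(\disc_Z(D))^{\,c_2}$ with explicit $c_1,c_2$ polynomial in $n$ and $g$ yields the claimed $\delta^{16n^3g^3}N(\disc_Z(D))^{16n^5g^2}$ after absorbing constants into the exponents (one has ample slack: $2ng\le g^2$ and the intermediate exponents from $d(\OL_D/\OL_Z)$ versus $\disc_Z(D)^2$ cost only a factor $2$). The routine part is the linear-algebra-over-$\OL_L$ juggling of the various indices; the genuinely delicate point I expect is \textbf{controlling $N(\mathfrak f)$ by the \emph{Brauer} discriminant rather than a larger quantity} — i.e.\ showing that the failure of $\phi(\OL_D\otimes_\varphi\OL_L)$ to be maximal in $\uM_2(\OL_L)$ is captured exactly by the ramified primes of $D$ and not, say, by all primes dividing $D_L$. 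This requires working locally at each $\OL_L$-prime $\mathfrak p$: where $D$ splits over the completion $Z_{\mathfrak p\cap Z}$ the order $\OL_D\otimes\OL_{L,\mathfrak p}$ is already maximal (a matrix algebra over an unramified extension), so only the ramified $\mathfrak p\mid\disc_Z(D)$ and the primes $\mathfrak p\mid\mathfrak f_\Gamma$ contribute, and at the ramified primes the local conductor is bounded by the local different of $\OL_D$, i.e.\ by $\disc_Z(D)^2$ locally. Assembling these local bounds via the product formula for ideals then gives the global estimate on $N(\mathfrak f)$, completing the proof.
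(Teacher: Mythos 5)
Your high-level strategy — pass from the non-maximal order to $\uM_{2n}(\OL_L)$ via a conductor, sandwich each fiber of $\beta$ between a lattice and a scaled copy of it, and count — is the same as the paper's (the paper uses the integer $m=|\uM_{2n}(\OL_L)/\phi'(\Gamma_A)|$ rather than an $\OL_L$-ideal $\mathfrak f$, but $(m)\subseteq\mathfrak f$, so your one-sided sandwich $\mathfrak f\Lambda'\subseteq\Lambda\subseteq\Lambda'$ is a harmless variant of the paper's two-sided $m\Lambda\subseteq\Lambda'\subseteq m^{-1}\Lambda$). The ``producing $\mathfrak a$'' step also tracks the paper's Lemma~\ref{lem:Prev}: both attribute the non-maximality of $\phi(\OL_D\otimes_\varphi\Gamma)$ to the two sources $\delta=|\OL_L/\Gamma|$ and $d(\OL_D/\OL_Z)=\disc_Z(D)^2$.

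The genuine gap is in your final counting step. You write that $\beta^{-1}(\Lambda')$ injects into the set of $\Gamma_A$-submodules of $\Lambda'/\mathfrak a\Lambda'$ and that this set has cardinality at most $|\Lambda'/\mathfrak a\Lambda'|$, concluding $\deg(\beta)\le N(\mathfrak f)^{2ng}$. But the number of submodules of a finite module is \emph{not} bounded by its cardinality: already for $(\ZZ/p\ZZ)^{2ng}$ the number of subgroups grows like $p^{(ng)^2}$, far beyond $p^{2ng}$. The restriction to $\Gamma_A$-submodules helps when $\Gamma_A$ is maximal (Morita equivalence collapses them to $\OL_L$-submodules of a rank-$2n$ lattice), but $\Gamma_A$ is precisely \emph{not} maximal here — that's the whole point — so no such collapse is available. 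The paper's fix is purely $\ZZ$-linear: every preimage $\Lambda$ is a $\ZZ$-lattice of rank $2ng=\dim_\QQ S$, so its image in the finite quotient is a subgroup generated by at most $2ng$ elements, and the number of such subgroups of a group $G$ is at most $|G|^{2ng}$. This gives $\deg(\beta)\le|\Lambda'/\mathfrak f\Lambda'|^{2ng}$ — note the extra exponent $2ng$ on the \emph{order of the quotient}, not on $N(\mathfrak f)$ directly. (There is also a small slip in the cardinality identity: if $\mathfrak a$ is an $\OL_L$-ideal and $\Lambda'$ has $\OL_L$-rank $2n$, then $|\Lambda'/\mathfrak a\Lambda'|=N(\mathfrak a)^{2n}$ with $N$ the absolute norm, not $N(\mathfrak a)^{2ng}$; your formula is the right one only if $\mathfrak a$ is taken to be a $\ZZ$-ideal.) Once you insert the ``generated by $2ng$ elements'' step, the corrected bound $\deg(\beta)\le N(\mathfrak f)^{4n^2g}$ (absolute norm) combined with $N(\mathfrak f)\le m^g\le\delta^{2ng^2}N(\disc_Z(D))^{2n^3g}$ does land inside the target $\delta^{16n^3g^3}N(\disc_Z(D))^{16n^5g^2}$, so the strategy is salvageable — but the exponent accounting as written is not correct, and the missing ``generated by $2ng$ elements'' observation is exactly what the paper's proof supplies.
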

\begin{proof} 
By definition $\Gamma_A = \uM_n(\OL_D)\otimes_\varphi \Gamma$ and $\phi' \colon A = \uM_n(D) \otimes_{\varphi} L \isomto \uM_{2n}(L)$ is induced by the $L$-algebra isomorphism $\phi:D\otimes_\varphi L\isomto \uM_2(L)$ with $\phi(\OL_D\otimes_\varphi \Gamma)\subseteq \uM_2(\OL_L)$. Define $R = \phi'(\Gamma_A)\subseteq \uM_{2n}(\cO_L)$ and $m = |\uM_{2n}(\cO_L)/R| = |\uM_2(\cO_L)/\phi(\cO_D \otimes_{\varphi} \Gamma)|^{n^2}$. We have $m\uM_{2n}(\cO_L) \subseteq R$ and thus $\uM_{2n}(\cO_L) \subseteq m^{-1} R$. If $\Lambda, \Lambda' \in X$ satisfy $\beta(\Lambda) = \beta(\Lambda')$, then  $\Lambda \subseteq \uM_{2n}(\cO_L)\Lambda' \subseteq m^{-1} R \Lambda' = m^{-1} \Lambda'$ and, in the same way, $\Lambda' \subseteq m^{-1} \Lambda$. Hence
$$
\beta^{-1}(\beta(\Lambda))\subseteq \{ \Lambda' \in X\ |\ m\Lambda \subseteq \Lambda' \subseteq m^{-1} \Lambda \} \hookrightarrow \{
\textnormal{subgroups of $\Lambda/m^2\Lambda$} 
\}
$$
for all $\Lambda\in X$, where the injection  is induced by $\Lambda'\mapsto m \Lambda'$ and the subgroups in the image are generated by $\textnormal{rank}_\ZZ(\Lambda') = \dim_\Q(S) = 2ng$ elements. Thus $\deg(\beta)$ is at most $\sup_{\Lambda\in X}|\Lambda/m^2\Lambda|^{2gn} = m^{8n^2g^2},$    where $m \leq \delta^{2gn}N(\disc_Z(D))^{2n^3}$ by Lemma~\ref{lem:Prev} below.  \end{proof}
\begin{lemma}\label{lem:Prev}
We have $|\uM_2(\cO_L)/\phi(\cO_D \otimes_{\varphi} \Gamma)| \leq \delta^{\frac {2g} {n}} N(\disc_Z(D))^{2n}$.
\end{lemma}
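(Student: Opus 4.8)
The plan is to estimate the index $|\uM_2(\cO_L)/\phi(\cO_D \otimes_{\varphi} \Gamma)|$ by comparing the order $\phi(\cO_D\otimes_\varphi\Gamma)$ with the chain of orders $\phi(\cO_D\otimes_\varphi\OL_L)\supseteq\phi(\cO_D\otimes_\varphi\Gamma)$ and $\uM_2(\OL_L)\supseteq\phi(\cO_D\otimes_\varphi\OL_L)$. Concretely, I would factor the index multiplicatively as
\begin{equation}\label{eq:prevfactor}
|\uM_2(\cO_L)/\phi(\cO_D \otimes_{\varphi} \Gamma)| = |\uM_2(\cO_L)/\phi(\cO_D \otimes_{\varphi} \OL_L)| \cdot |\phi(\cO_D \otimes_{\varphi} \OL_L)/\phi(\cO_D \otimes_{\varphi} \Gamma)|,
\end{equation}
which is legitimate since $\phi(\cO_D\otimes_\varphi\OL_L)$ is an intermediate order (it is contained in $\uM_2(\OL_L)$ by our normalization of $\phi$, and it contains $\phi(\cO_D\otimes_\varphi\Gamma)$ because $\OL_L\supseteq\Gamma$). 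I would then bound the two factors separately.

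For the second factor, $\cO_D\otimes_\varphi\OL_L$ is obtained from $\cO_D\otimes_\varphi\Gamma$ by extending scalars along the inclusion $\Gamma\hookrightarrow\OL_L$ of index $\delta$ over $\Gamma_Z$, so as $\ZZ$-modules the quotient $(\cO_D\otimes_\varphi\OL_L)/(\cO_D\otimes_\varphi\Gamma)$ is a subquotient of $\cO_D\otimes_{\Gamma_Z}(\OL_L/\Gamma)$; since $\cO_D$ is free of rank $4$ over $\OL_Z$ and $[\OL_Z:\Gamma_Z]\le \delta$ while $\OL_L/\Gamma$ has order $\delta$ and $\OL_L$ is free of rank $2n$ over $\OL_Z$, a rank count gives a bound of the shape $\delta^{O(g/n)}$; chasing the exponents (using $g=2n[Z:\QQ]$) yields $\le \delta^{2g/n}$ up to the contribution we allot to it. For the first factor, $\cO_D\otimes_\varphi\OL_L$ differs from the maximal order $\uM_2(\OL_L)$ of $\uM_2(L)$ only at primes where $\cO_D$ fails to be maximal over $\OL_Z$, i.e.\ at the ramified primes of $D$; the discriminant ideal $d(\cO_D/\OL_Z)$ equals $\disc_Z(D)^2$ by \eqref{eq:discriminantcomparison}, and the index of $\cO_D\otimes_\varphi\OL_L$ inside a maximal order is controlled by the $\OL_L$-norm of (a divisor of) this discriminant, giving a bound of the form $N(\disc_Z(D))^{O(n^2)}$; tracking constants produces $\le N(\disc_Z(D))^{2n}$. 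Multiplying the two estimates via \eqref{eq:prevfactor} then gives the claimed inequality.

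The main obstacle will be getting the local index computations and the exponent bookkeeping exactly right, in particular: (a) correctly relating $|\phi(\cO_D\otimes_\varphi\OL_L)/\phi(\cO_D\otimes_\varphi\Gamma)|$ to $\delta$ — one must be careful that the tensor product is over $\Gamma_Z$ (resp.\ $\OL_Z$) and that $\OL_L$ need not be free over $\Gamma_Z$, so the cleanest route is to work prime by prime and use that $\cO_D$ is a projective $\OL_Z$-module of rank $4$; and (b) bounding $|\uM_2(\OL_L)/\phi(\cO_D\otimes_\varphi\OL_L)|$ by passing to completions, where at each ramified $\OL_Z$-prime $\mathfrak p$ one has $v_{\mathfrak p}(d(\cO_D/\OL_Z))=2v_{\mathfrak p}(\disc_Z(D))$ and the local index of $\cO_{D,\mathfrak p}\otimes\OL_{L}$ in a maximal order of $\uM_2(L)_{\mathfrak p}$ is bounded in terms of this valuation and the local degree, then taking the product over all such $\mathfrak p$ and all $\OL_L$-primes above them. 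Once these two local bounds are in hand, assembling them with \eqref{eq:prevfactor} and the relation $g = 2n[Z:\QQ]$ is routine, and I would simply state the intermediate inequalities $|\uM_2(\cO_L)/\phi(\cO_D \otimes_{\varphi} \OL_L)| \le N(\disc_Z(D))^{2n}$ and $|\phi(\cO_D \otimes_{\varphi} \OL_L)/\phi(\cO_D \otimes_{\varphi} \Gamma)| \le \delta^{2g/n}$ as the two steps, deferring the (elementary but tedious) completion-wise verifications.
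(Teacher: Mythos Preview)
Your factorization through the intermediate order $\Lambda=\phi(\cO_D\otimes\cO_L)$ is exactly the paper's approach, and your two target inequalities are the paper's as well. The execution in the paper is cleaner on both points, and in particular dissolves the obstacles you flag.

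For the $\delta$-factor, the paper avoids your freeness worry over $\Gamma_Z$ by observing that $\cO_D\otimes_{\Gamma_Z}\Gamma$ and $\cO_D\otimes_{\cO_Z}(\varphi(\cO_Z)\Gamma)$ have the same image in $\uM_2(L)$; working over $\cO_Z$ instead, the quotient $\Lambda/\Lambda_0$ is just $\cO_D\otimes_{\cO_Z}(\cO_L/\varphi(\cO_Z)\Gamma)$, and surjecting from $\cO_D\otimes_\ZZ(\cO_L/\varphi(\cO_Z)\Gamma)\cong(\cO_L/\varphi(\cO_Z)\Gamma)^{2g/n}$ (using $\textnormal{rank}_\ZZ\cO_D=4[Z:\QQ]=2g/n$) gives $|\Lambda/\Lambda_0|\le\delta^{2g/n}$ with no localization needed. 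For the discriminant factor, the paper computes $|\uM_2(\cO_L)/\Lambda|$ \emph{exactly} via a global discriminant identity rather than prime by prime: with $I=[\uM_2(\cO_L):\Lambda]_{\cO_L}$ one has $I^2=d(\Lambda/\cO_L)=d(\cO_D/\cO_Z)\cO_L=\disc_Z(D)^2\cO_L$ by \eqref{eq:discriminantcomparison} and base change of reduced discriminants, hence $N(I)=N(\disc_Z(D))^{[L:Z]}=N(\disc_Z(D))^{2n}$. Your completion approach would get there too, but with more bookkeeping.
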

\begin{proof} 
Define $R = \cO_{Z,\varphi}$, $\Lambda_0 = \phi(\cO_D \otimes_{R} \varphi(\cO_Z) \Gamma)$ and $\Lambda = \phi(\cO_D \otimes_{R} \cO_L)$. The images of the orders $\cO_D \otimes_{\varphi} \Gamma$ and $\cO_D \otimes_{R} \varphi(\cO_Z) \Gamma$ in $D \otimes_\varphi L$ are the same   and thus  $$
|\uM_2(\cO_L)/\phi(\cO_D \otimes_{\varphi} \Gamma)| = |\uM_2(\cO_L)/\Lambda_0| = |\uM_2(\cO_L)/\Lambda|\cdot|\Lambda/\Lambda_0|.
$$
Tensoring the canonical morphisms $\varphi(\cO_Z) \Gamma \hookrightarrow \cO_L \to \cO_L/\varphi(\cO_Z) \Gamma$ with $\cO_D$ gives   $$
\frac{\cO_D \otimes_{R} \cO_L}{\cO_D \otimes_{R} \varphi(\cO_Z) \Gamma} \isomto \cO_D \otimes_{R} \lhk \cO_L / \varphi(\cO_Z) \Gamma \rhk.
$$
As $\cO_D \otimes_{\Z} \lhk \cO_L / \varphi(\cO_Z) \Gamma \rhk \cong \lhk \cO_L / \varphi(\cO_Z) \Gamma \rhk^{\frac{2g}{n}}$  surjects onto $\cO_D \otimes_{R} \lhk \cO_L / \varphi(\cO_Z) \Gamma \rhk$,  we get  $$
|\Lambda/\Lambda_0| \leq |\cO_L / \varphi(\cO_Z) \Gamma|^{\frac{2g}{n}} \leq \delta^{\frac{2g}{n}}.
$$
We compute $|\uM_2(\cO_L)/\Lambda|$. Write $I$ for the $\OL_L$-fractional ideal $[\uM_2(\cO_L) : \Lambda]_{\cO_L}$ (see e.g. \cite[$\mathsection$9.6]{voight:quatbook}). We have  $I^2 = d(\Lambda/\cO_L)$ by \cite[15.2.11,15.2.15]{voight:quatbook}. Moreover \cite[Ex 15.8]{voight:quatbook} implies that $d(\Lambda/\cO_L) = d(\cO_D/\cO_Z) \cO_L$ since $\Lambda\cong \OL_D\otimes_R\OL_L$, while \cite[15.4.7,15.5.5]{voight:quatbook} gives
\begin{equation}\label{eq:discriminantcomparison}
d(\cO_D/\cO_Z) = \textnormal{disc}_Z(D)^2.
\end{equation}
Thus $I^2 = \disc_{Z}(D)^2 \cO_L$. Taking norms we find  $ |\uM_2(\cO_L)/\Lambda| = N(I) = N(\disc_Z(D))^{[L:Z]}$ where $[L:Z] = 2n$ by assumption. This implies the lemma.
\end{proof}





Write $p_1 \colon L^{2n} \to L$ for the first projection and consider the map $\varsigma \colon Y \to \cI$, $\Lambda \mapsto p_1(\Lambda)$, where $\mathcal I$ is the set of non-zero $\cO_L$-fractional ideals, so that $h_L = |\mathcal I/L^\times|$. 
We use the following version of Morita equivalence:

\begin{lemma}
The map $\varsigma \colon Y \to \cI$ is a bijection with inverse given by $I \mapsto I^{2n}$.  
\end{lemma}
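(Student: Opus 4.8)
The statement is a concrete instance of Morita equivalence for the full matrix ring $\uM_{2n}(\cO_L)$ acting on its simple module $L^{2n}$, and the plan is to verify directly that $I\mapsto I^{2n}$ (the $\cO_L$-submodule of column vectors all of whose entries lie in $I$) and $\Lambda\mapsto p_1(\Lambda)$ are mutually inverse. First I would check that these maps are well-defined between $Y$ and $\cI$. For $I\in\cI$, the $\cO_L$-module $I^{2n}\subseteq L^{2n}$ is of finite type over $\ZZ$ and satisfies $\QQ I^{2n}=L^{2n}$ since $\QQ I=L$; it is $\uM_{2n}(\cO_L)$-stable because applying an integral matrix to a vector with entries in the ideal $I$ again produces entries in $I$ (each entry is an $\cO_L$-linear combination of elements of $I$). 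Conversely, for $\Lambda\in Y$ I would note that $p_1(\Lambda)$ is an $\cO_L$-submodule of $L$ which is finitely generated over $\ZZ$ and nonzero (since $\QQ\Lambda=L^{2n}$ forces $p_1(\Lambda)$ to span $L$ over $\QQ$), hence a nonzero fractional ideal; here one uses that $p_1$ is $\cO_L$-equivariant for the action of $\cO_L\hookrightarrow\uM_{2n}(\cO_L)$ via scalar matrices.

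The heart of the argument is the identity $\Lambda=p_1(\Lambda)^{2n}$ for every $\Lambda\in Y$. One inclusion is immediate: applying the elementary matrices $E_{i1}\in\uM_{2n}(\cO_L)$ (with a single $1$ in row $i$, column $1$) to a vector $v\in\Lambda$ with $p_1(v)=\lambda$ shows that $\lambda$ occurs as the $i$-th coordinate of some element of $\Lambda$ for each $i$; conversely applying $E_{1i}$ to any $w\in\Lambda$ shows that every coordinate of every element of $\Lambda$ lies in $p_1(\Lambda)$, so $\Lambda\subseteq p_1(\Lambda)^{2n}$. For the reverse inclusion I would fix $i$ and $\lambda\in p_1(\Lambda)$: choose $v\in\Lambda$ with first coordinate $\lambda$, then $E_{i1}v\in\Lambda$ is the vector $\lambda e_i$, and summing over $i$ gives every element of $p_1(\Lambda)^{2n}$ as an $\cO_L$-combination of such vectors, all lying in $\Lambda$ by $\uM_{2n}(\cO_L)$-stability and the fact that $\Lambda$ is an $\cO_L$-module. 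Hence $\Lambda=p_1(\Lambda)^{2n}$. Finally $p_1(I^{2n})=I$ is obvious since the first coordinate ranges over all of $I$, which shows $\varsigma$ is a bijection with the stated inverse.

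It remains to observe compatibility with the $L^\times$-actions so that $\varsigma$ descends to the claimed bijection $Y/L^\times\isomto\cI/L^\times$ used in the surrounding argument: scaling $\Lambda$ by $\mu\in L^\times$ scales $p_1(\Lambda)$ by $\mu$, and scaling $I$ by $\mu$ scales $I^{2n}$ by $\mu$, so both maps are $L^\times$-equivariant. I do not expect a genuine obstacle here; the only point requiring a little care is making sure that ``$\cO_L$-submodule'' is used consistently — elements of $Y$ are a priori only $\uM_{2n}(\cO_L)$-modules, but scalar matrices embed $\cO_L$ into $\uM_{2n}(\cO_L)$, so every $\Lambda\in Y$ is automatically an $\cO_L$-module, which is what licenses the manipulations above.
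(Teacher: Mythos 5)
Your proof is correct and takes essentially the same approach as the paper: both verify directly that $\Lambda = p_1(\Lambda)^{2n}$ by moving coordinates around with matrices in $\uM_{2n}(\cO_L)$. The only cosmetic difference is that the paper uses permutation matrices $S_{1i}$ together with projections $P_i$, whereas you use the single elementary matrices $E_{1i}$ and $E_{i1}$; since $E_{i1} = P_i S_{1i}$, the computations are identical once unwound.
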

\begin{proof}
If $I \in \mathcal I$ and $\Lambda = I^{2n}$ then $p_1(\Lambda) = I$. Conversely, let $\Lambda\in Y$ and define $I = p_1(\Lambda)$. We claim $I^{2n} = \Lambda$. We first check $\Lambda \subseteq I^{2n}$. Let $v \in \Lambda$. Write $S_{1 i} \in \End(\cO_L^{2n}) = \uM_{2n}(\cO_L)$ for the endomorphism $(x_j) \mapsto (x_{\sigma(j)})$ where $\sigma$ is the permutation $(1 i) \in \mathfrak S_{2n}$. Then $S_{1 i} \Lambda = \Lambda$. In particular $v_i = p_1(S_{1 i} v) \in p_1(\Lambda) = I$ for all $i$. Consequently, $v\in I^{2n}$ as well and indeed $\Lambda \subseteq I^{2n}$. Conversely, let $v \in I^{2n}$. For each $i$ we have $v_i \in I$, and so $v_i = p_1(w_i)$ for some $w_i \in \Lambda$. Let $P_i \in \End(\cO_L^{2n})$ be the endomorphism $(x_1, \ldots, x_{2n}) \mapsto (0, \ldots, 0, x_i, 0, \ldots, 0)$. Define $w_i' = P_i S_{1i} w_i$. Then $w_i' = (0, \ldots, 0, v_i, 0, \ldots, 0) \in \Lambda$, and $v = \sum_{i = 1}^{2n} w_i' \in \Lambda$. This proves $\Lambda = I^{2n}$.
\end{proof}

For each $\lambda \in L^\times$ and $\Lambda \in Y$ we have $\varsigma(\lambda \Lambda) = p_1(\lambda \Lambda) = \lambda p_1(\Lambda) = \lambda \varsigma(\Lambda)$, and in particular we obtain an induced bijection  $\varsigma : Y/L^\times \isomto \cI/L^\times$. In conclusion we have    $$
\Hom_\varphi(\cO_L, \uM_n(\cO_D))/\GL_n(\cO_D) \overset {\psi} \hookrightarrow X/L^\times \overset{\beta}\to Y/L^\times \isomto \cI/L^\times.
$$
So Proposition~\ref{prop:quatcase} follows from \eqref{eq:ReduceExtraEndosNonComm} and the above lemmas.

\subsection{The CM case}\label{sec:FirstCMcase}

In this section we prove the bound for the number of conjugation classes of ring morphisms which we used in the proof of Theorem~\ref{thm:endobound} in the CM case. 

Let $L$ be a totally real field of degree $g$, let $\Gamma$ be an order of $L$ of index $\delta$ in $\OL_L$, let $n\in \ZZ_{\geq 1}$, let $F$ be a CM field of degree $2g/n$, and let $F^+ \subset F$ be the  totally real field of degree $g/n$. The following result is an analogue of Proposition~\ref{prop:quatcase} in the CM case.

\begin{proposition}\label{prop:BoundCase2}
We have 
$
|\Hom(\Gamma, \uM_n(\cO_F))/\GL_n(\cO_F)| \leq c(\delta D_F)^{e}\Delta\log(3\Delta)^{2g-1}.
$
\end{proposition}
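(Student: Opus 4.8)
The plan is to follow the template of the quaternion case (Lemma~\ref{lem:decomposition}, Proposition~\ref{prop:quatcase}), with one new ingredient: a \emph{doubling} trick that reduces the estimate to the commutative case already settled in \cite[Prop 10.3]{vkkr:hms}. As there, $\GL_n(\OL_F)$ acts by conjugation on $\Hom(\Gamma,\uM_n(\OL_F))$, and we may assume this set is nonempty.

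First I would prove the analogue of Lemma~\ref{lem:decomposition}: for every $\rho\in\Hom(\Gamma,\uM_n(\OL_F))$ the subfield $\rho_\QQ(L)\subset\uM_n(F)$ contains a copy of $F^+$. The argument is a dimension count exploiting that $F$ is CM while $L$ is totally real. Let $K=F\rho_\QQ(L)\subset\uM_n(F)$, a commutative étale $F$-algebra, so $\dim_F K\le n$ and $\dim_\QQ K\le 2g$; the involution of $F/F^+$ extends to $K$, and its fixed algebra $K^+$ is totally real of dimension $\tfrac12\dim_\QQ K\le g$. Since $\rho_\QQ(L)\subseteq K^+$ has dimension $g$, we get $\rho_\QQ(L)=K^+$ and hence $F^+\subseteq\rho_\QQ(L)$, with $[\rho_\QQ(L):F^+]=n$. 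Setting $L_0=\rho_\QQ^{-1}(F^+)$ and $\varphi=(\rho_\QQ|_{L_0})^{-1}\in\Hom(F^+,L)$, we conclude $\rho\in\Hom_\varphi(\Gamma,\uM_n(\OL_F))$, where $\Hom_\varphi$ collects those $\rho$ for which $\rho_\QQ$ is a morphism of $F^+$-algebras ($L$ an $F^+$-algebra via $\varphi$, and $\uM_n(F)$ via $F^+\subset F$). Thus $\Hom(\Gamma,\uM_n(\OL_F))=\bigcup_{\varphi\in\Hom(F^+,L)}\Hom_\varphi(\Gamma,\uM_n(\OL_F))$, a union of at most $[F^+:\QQ]=g/n$ pieces, and it suffices to bound each piece modulo $\GL_n(\OL_F)$.

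Fix $\varphi$ and let $\Gamma_{F^+}$ be the order of $F^+$ corresponding to $\Gamma\cap\varphi(F^+)$. Form $\widetilde L=L\otimes_{F^+,\varphi}F$, a CM field of degree $2g$ with maximal totally real subfield $L$ and $[\widetilde L:F]=n$, and let $\widetilde\Gamma\subset\widetilde L$ be the order image of $\Gamma\otimes_{\Gamma_{F^+},\varphi}\OL_F$. Since the target $\uM_n(\OL_F)$ is an $\OL_F$-algebra, $\rho\mapsto\rho\otimes\mathrm{id}_{\OL_F}$ defines a map from $\Hom_\varphi(\Gamma,\uM_n(\OL_F))$ to the set of $F$-compatible ring morphisms $\widetilde\Gamma\to\uM_n(\OL_F)$; it is $\GL_n(\OL_F)$-equivariant, and injective on conjugacy classes because restricting a conjugating unit back to $\Gamma\otimes1$ recovers a conjugation of the original $\rho$. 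Now $(\widetilde L,F,\widetilde\Gamma,n)$ is precisely the situation of case~(1), i.e.\ of \cite[Prop 10.3]{vkkr:hms} (with ``$Z$'' $=F$, ``$L$'' $=\widetilde L$, the matrix size $n$ unchanged). Applying it gives $|\Hom_\varphi(\Gamma,\uM_n(\OL_F))/\GL_n(\OL_F)|\le c'\,\widetilde\delta^{\,e'}h_{\widetilde L}$, where $\widetilde\delta=[\OL_{\widetilde L}:\widetilde\Gamma]$, $h_{\widetilde L}$ is the class number of $\widetilde L$, and $c',e'$ depend only on $g$.

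It remains to translate this back into $\delta$, $D_F$, and $\Delta=D_L$. Standard ramification theory for the compositum $\widetilde L=LF$ (a prime of $L$ ramifies in $\widetilde L$ only above a prime of $F^+$ ramifying in $F$) gives $N_{L/\QQ}(\mathfrak d_{\widetilde L/L})\mid (D_F/D_{F^+}^2)^n$, hence $D_{\widetilde L}=\Delta^2 N_{L/\QQ}(\mathfrak d_{\widetilde L/L})\le\Delta^2 D_F^n$, while $\widetilde\Gamma\supseteq\Gamma\OL_F$ yields $\widetilde\delta\le[\OL_{\widetilde L}:\OL_L\OL_F]\,\delta^{2g/n}$, bounded by a fixed power of $\delta D_F\Delta$. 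Then \eqref{eq:lenstraclassnumber} in degree $2g$ gives $h_{\widetilde L}\le a_{2g}D_{\widetilde L}^{1/2}\max(1,\log D_{\widetilde L})^{2g-1}\le a_{2g}\,\Delta\,D_F^{n/2}\bigl(\log(3\Delta)+\log(3D_F)\bigr)^{2g-1}$ up to degree-only factors; expanding the last power and absorbing every $\log(3D_F)$-term into a power of $D_F$ (since $\log(3D_F)^{2g-1}\ll_g D_F$ and $\log(3\Delta)^{2g-1}\ge(\log 3)^{2g-1}>1$), together with $n\le 2g$, produces a bound of the shape $c(\delta D_F)^{e}\Delta\log(3\Delta)^{2g-1}$. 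Summing over the at most $g/n$ choices of $\varphi$ proves Proposition~\ref{prop:BoundCase2}.

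The main obstacle I expect is making the doubling reduction clean and accounting for the constants: one must check that $\rho\otimes\mathrm{id}_{\OL_F}$ genuinely satisfies the hypotheses of \cite[Prop 10.3]{vkkr:hms} (torsion in the tensor product, $F$-compatibility, control of the new order $\widetilde\Gamma$), and that Prop 10.3's estimate, once combined with the compositum-discriminant bounds, collapses to exactly the asserted form with no residual $\log(3D_F)$ factor and without an inflated exponent on $\Delta$. The CM-versus-totally-real dimension count in the decomposition lemma is the other place requiring care.
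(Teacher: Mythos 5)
This is essentially the paper's proof: the decomposition of $\Hom(\Gamma,\uM_n(\cO_F))$ by $\varphi\in\Hom(F^+,L)$, the reduction $\rho\mapsto\rho\otimes\mathrm{id}_{\cO_F}$ to the commutative field $\widetilde L=L\otimes_{F^+,\varphi}F$ of degree $2g$ so that \cite[Prop 10.3]{vkkr:hms} applies, and the combination of the conductor/discriminant bounds for the compositum with Lenstra's class-number estimate \eqref{eq:lenstraclassnumber} all match Lemma~\ref{lem:DoubleCentralizerAgain} and Lemmas~\ref{lem:discboundcmendo}, \ref{lem:conductorbound}. The only (cosmetic) divergence is in the decomposition lemma, where you invoke the CM involution on $K=F\rho_\QQ(L)$ and its totally real fixed subalgebra (which needs the standard fact that a compositum of a CM field and a totally real field is CM, so the involution does extend), whereas the paper instead sets $K^+=\rho(L)F^+$ and argues directly from $\dim_L K\le 2$ together with $\alpha\notin K^+$ for a generator $\alpha$ of $F/F^+$; both are correct, the paper's being slightly more self-contained.
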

Here $c=(2g)^{15g^3}$, $e=5g^3$ and $\Delta=D_L$. In the proof of Proposition~\ref{prop:BoundCase2} we obtain the sharper (but more complicated) bound $\tfrac{2g}{n} N^{g+1} h$ with $N$ and $h$ defined in \eqref{def:cmprecise}. 

\paragraph{Strategy of proof.} 
We first decompose $\Hom(\Gamma, \uM_n(\cO_F))/\GL_n(\cO_F)$  into $\varphi$-compatible morphisms for varying $\varphi \in \Hom(F^+, L)$. Then, after applying  $\otimes_{\varphi^{-1}(\Gamma)}\cO_F$, the degree doubles and we can apply \cite[Prop 10.3]{vkkr:hms}. This gives a bound in terms of the class number of the number field $M_\varphi = L\otimes_{\varphi,F^+}F$ and the conductor of the order $\Gamma\otimes_{\varphi^{-1}(\Gamma)} \cO_F$ of $M_\varphi$. Finally we use Diophantine analysis to bound everything in terms of $F$ and $\Gamma$.

\subsubsection{Proof of Proposition~\ref{prop:BoundCase2}}
We continue our notation. For any $\varphi \in \Hom(F^+, L)$ we define $\Hom_{\varphi}(\Gamma, \uM_n(\cO_F))$ to be the set of morphisms $\rho:\Gamma\to \uM_n(\cO_F)$ such that $\rho_\QQ(\varphi(x)) = x$ for all $x \in F^+$. 
\begin{lemma}\label{lem:DoubleCentralizerAgain} 
We have $\Hom(\Gamma, \uM_n(\cO_F)) = \bigcup_{\varphi\in \Hom(F^+, L)}^{} \Hom_{\varphi}(\Gamma, \uM_n(\cO_F)).$
\end{lemma}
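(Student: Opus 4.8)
The statement is the direct analogue of Lemma~\ref{lem:decomposition} (the quaternion case) and of \cite[Lem 10.4]{vkkr:hms}, and the same strategy applies. First I would fix a morphism $\rho \in \Hom(\Gamma, \uM_n(\cO_F))$ and study the image $\rho_\QQ(L) \subset \uM_n(F)$ of the induced $\QQ$-algebra morphism. The key point is to show that $\rho_\QQ(L)$ contains the center $Z(\uM_n(F)) = F^+$ (identified with the scalar copy of $F^+$ inside $\uM_n(F)$), or more precisely that $\rho_\QQ(L)$ contains a subfield isomorphic to $F^+$ onto which $\rho_\QQ$ restricts to an isomorphism.

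To prove this, consider the commutative $\QQ$-subalgebra $K = F^+ \cdot \rho_\QQ(L) \subset \uM_n(F)$ generated by $F^+$ (the scalars) and $\rho_\QQ(L)$. Since $L$ is a field and $F^+$ is central, $K$ is a commutative finite $\QQ$-algebra, hence $\etale$ over $\QQ$, and one bounds its dimension: $K$ is a quotient of $F^+ \otimes_\QQ L$, and since $\rho_\QQ(L)$ is a field of dimension $g$ over $\QQ$ containing the central subfield $F^+$ of dimension $g/n$, a dimension count inside $\uM_n(F)$ (whose simple module $F^n$ has $\QQ$-dimension $2g$, so any commutative subfield has degree at most $2g$ over $\QQ$; in fact, using that $\rho_\QQ(L)$ already has degree $g = [L:\QQ]$ and $K \supseteq \rho_\QQ(L)$ with $[K:\QQ] \leq [F^+:\QQ]\cdot[L:\QQ]/[\rho_\QQ(L)\cap F^+ : \QQ]$ when $K$ is a field) forces $K = \rho_\QQ(L)$. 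Hence $F^+ \subseteq \rho_\QQ(L)$, so $\rho_\QQ$ restricts on the subfield $Z' = (\rho_\QQ)^{-1}(F^+) \subseteq L$ to an isomorphism $Z' \isomto F^+$. Setting $\varphi = (\rho_\QQ|_{Z'})^{-1} \in \Hom(F^+, L)$, we obtain $\rho_\QQ(\varphi(x)) = x$ for all $x \in F^+$, i.e. $\rho \in \Hom_\varphi(\Gamma, \uM_n(\cO_F))$. The reverse inclusion is trivial since every $\Hom_\varphi(\Gamma, \uM_n(\cO_F))$ is by definition a subset of $\Hom(\Gamma, \uM_n(\cO_F))$.

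The main (and really the only) obstacle is the dimension argument establishing $F^+ \subseteq \rho_\QQ(L)$: one must be careful that $K = F^+ \cdot \rho_\QQ(L)$ need not a priori be a field, only an \'etale $\QQ$-algebra, so the cleanest route is to argue that $\rho_\QQ(L)$, being a field of $\QQ$-dimension $g$, is a maximal commutative subfield of $\uM_n(F)$ among those of degree $g$ — or simply to observe that $\dim_\QQ K \leq \dim_\QQ(F^+ \otimes_\QQ L) = g^2/n$ together with $K \supseteq \rho_\QQ(L)$ of dimension $g$ does not immediately give equality, so instead I would localize: after base change $K \otimes_\QQ \QQ \to K$, pick a maximal ideal, and compare with $\rho_\QQ(L)$ which is already a field. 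In practice the short argument of Lemma~\ref{lem:decomposition}, namely that $K$ is commutative finite \'etale over the central $F^+$ with $\dim_{F^+}(K) \leq \dim_{F^+}(\uM_n(F)\text{-centralizer of }\rho_\QQ(L))$ bounded by $n$, hence $\dim_\QQ(K) \leq n[F^+:\QQ] = g = \dim_\QQ \rho_\QQ(L)$, forces $\rho_\QQ(L) = K$ and thus $F^+ \subseteq \rho_\QQ(L)$, transfers verbatim with $Z$ replaced by $F^+$. This completes the proof.
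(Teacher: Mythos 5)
There is a genuine gap in the step you describe as ``transfers verbatim with $Z$ replaced by $F^+$.'' In Lemma~\ref{lem:decomposition}, $Z$ is the \emph{center} of the quaternion algebra $D$, hence also the center of $\uM_n(D)$, and $\uM_n(D)$ is central simple over $Z$ of degree $2n$; this is exactly what makes the bound $\dim_Z(K)\leq 2n$, and hence $\dim_\QQ(K)\leq g$, available. Here, however, $F^+$ is \emph{not} the center of $\uM_n(F)$ --- the center is $F$. Consequently the central-simple-algebra bound does not say $\dim_{F^+}(K)\leq n$; the correct étale-subalgebra bound only gives $\dim_F\bigl(\rho_\QQ(L)F\bigr)\leq n$, i.e.\ $\dim_\QQ\bigl(\rho_\QQ(L)F\bigr)\leq n[F:\QQ]=2g$, equivalently $\dim_L\bigl(\rho_\QQ(L)F\bigr)\leq 2$. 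In particular $\dim_L\bigl(\rho_\QQ(L)F^+\bigr)\leq 2$ as well, and a priori this could equal $2$, in which case nothing forces $F^+\subseteq\rho_\QQ(L)$. (Indeed, $\uM_n(F)$ does contain commutative étale subalgebras with $F^+$-dimension as large as $2n$, e.g.\ the diagonal copy of $F^n$, so the bound $\dim_{F^+}(K)\leq n$ is simply false in general.)

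The missing idea --- and what the paper actually uses --- is to exploit that $L$ and $F^+$ are totally real while $F$ is CM. Let $\alpha\in F$ generate $F/F^+$. The algebra $K^+ = \rho_\QQ(L)F^+$ is a quotient of $L\otimes_\QQ F^+$, hence a product of composita of totally real fields and thus itself totally real, so $\alpha\notin K^+$; but $\alpha\in K:=\rho_\QQ(L)F$. Hence $K^+\subsetneq K$. Since $1\leq\dim_L K^+\leq\dim_L K\leq 2$ and the inclusion is strict, this forces $\dim_L K^+=1$, i.e.\ $K^+=\rho_\QQ(L)$, whence $F^+\subseteq\rho_\QQ(L)$. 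Your dimension count by itself does not close the gap; the totally-real observation is what cuts the possible $L$-dimension of $K^+$ from $2$ down to $1$.
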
 
  
\begin{proof}
As in Lemma~\ref{lem:decomposition}, the statement follows from the following claim:  If $\rho \colon L \to \uM_n(F)$ is a morphism, then $F^+_{} \subseteq \rho(L)$ in $\uM_n(F)$. Let $\alpha \in F$ be a generator of $F/F^+$. Define the subalgebras $K^+ = \rho(L)F^+$ and $K = \rho(L)F$ of $\uM_n(F)$. As $\rho(L)$ is totally real, we have $\alpha \in K$ but $\alpha \notin K^+$  and thus $K^+ \neq K$. Note that $K$ is a semisimple commutative $F$-subalgebra of $\uM_n(F)$.  Thus  $\dim_F(K) \leq n$ and this implies $[K:\Q] \leq n [F:\Q]$ and $[K : L] \leq n [F:\Q]/[L:\Q] = 2$. Note that $\dim_L(K^+) \leq \dim_L(K) \leq 2$, as well. We have $K^+\subset K$ and both algebras have dimension at most 2 over $L$. Since $K^+\neq K$ the dimension of $K^+$ over $L$ must be 1. Thus $K^+ = \rho(L)$, and hence $F^+ \subseteq \rho(L)$ as claimed.
\end{proof}

Let $\varphi\in \Hom(F^+,L)$. Define $\Gamma_\varphi^+ = \varphi^{-1}(\Gamma) \subseteq \cO_{F^+} $ and
$
\Gamma_{\varphi, F} = \Gamma \otimes_{\Gamma_\varphi^+} \cO_F.
$
Sending $\rho:\Gamma\to\uM_n(\cO_F)$ to $\rho_F:\Gamma_{\varphi,F}\to \uM_n(\cO_F)$ with $\rho_F(a \otimes b)=\rho(a)b$ induces an injection
    
  $$
\Hom_\varphi(\Gamma, \uM_n(\cO_F)) / \GL_n(\cO_F) \hookrightarrow \Hom(\Gamma_{\varphi,F}, \uM_n(\cO_F)) / \GL_n(\cO_F), \quad \rho\mapsto \rho_F.
 $$ 
 Define $M_\varphi = L\otimes_{\cO_{F^+}} \cO_F$. Then\DarkGreenempty{We have canonical isomorphisms $\Q\otimes_\ZZ\Gamma_{\varphi,F}\cong L \otimes_{\Gamma_\varphi^+} \cO_F\cong L \otimes_{\Gamma_\varphi^+} \cO_{F^+} \otimes_{\cO_{F^+}} \cO_F$. As $\Gamma_\varphi^+ \subseteq \cO_{F^+}$ is of finite index, [Is order by pull back of orders onenote (Note aug 12 2021, in paper2 / checks] we have the isomorphism [See Note 8 Aug 2021 check isom]
$L \otimes_{\Gamma_\varphi^+} \cO_{F^+} \isomto L,$ $a \otimes b \mapsto a \varphi(b).$ By combining the isomorphisms, we obtain.} 
$\Q\otimes_\ZZ\Gamma_{\varphi,F} \isomto M_\varphi$, and $\Gamma_{\varphi,F}$ identifies with an order of $M_\varphi$. 
Note $M_\varphi$ is a field, since $L$  is totally real while $F$ is totally imaginary.  We have $[M_\varphi : \Q] = 2g$.
 Then \cite[Prop 10.3, (10.4)]{vkkr:hms} give 
$$
|\Hom_{\varphi}(\Gamma, \uM_n(\cO_F)) / \GL_n(\cO_F)| \leq 2N( \mathfrak f_{\varphi,F})^{g+1} h_{M_\varphi}
 $$ 
for $\mathfrak {f}_{\varphi,F}$ the conductor of $\Gamma_{\varphi, F}$. (Here we obtain the factor $2$, since (the proof of) \cite[Prop 10.3]{vkkr:hms} bounds compatible morphisms without the factor $t$ and $\rho_F$ is compatible with a morphism $F\to M_\varphi$ that extends $\varphi \colon F^+ \to L$.)  Lenstra's bound~\cite[Thm 6.5]{lenstra:algorithms} and Lemma~\ref{lem:discboundcmendo} show that $h_{M_\varphi} \leq h$, while Lemma~\ref{lem:conductorbound} provides $N(\Gamma_{\varphi,F}) \leq N$, where  \begin{equation}\label{def:cmprecise}
N = (2\delta g)^{2g^2}\lhk\tfrac{D_F}{D^{1/2}_{F^+}}\rhk^{ng}, \quad
h = d\tfrac{(2g-1+\log d)^{2g-1}}{(2g-1)!}, \quad d = (\tfrac{2}{\pi})^{g}D_L\lhk\tfrac{D_F}{D_{F^+}^{2}}\rhk^{\frac{n}{2}}.
\end{equation}
Combining the above results gives $|\Hom(\Gamma, \uM_n(\cO_F))/\GL_n(\cO_F)| \leq \tfrac{2g}{n} N^{g+1} h$, which leads to the bound claimed in Proposition~\ref{prop:BoundCase2}. It remains to prove Lemmas~\ref{lem:discboundcmendo} and \ref{lem:conductorbound}.
 
\begin{lemma}\label{lem:discboundcmendo}
The discriminant $D_{M_\varphi}$ divides 
$D_L^{2}\lhk\frac{D_F}{D_{F^+}^{2}}\rhk^{n}$. 
\end{lemma}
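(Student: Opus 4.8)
The field $M_\varphi=L\otimes_{\mathcal O_{F^+}}\mathcal O_F$ is the compositum, inside a fixed algebraic closure, of $L$ and $F$ over the common subfield $F^+$ (embedded in $L$ via $\varphi$), so $M_\varphi=LF$ with $[M_\varphi:L]=[F:F^+]=2$ and $[M_\varphi:\QQ]=2g$. The plan is to bound the relative discriminant $\mathfrak d(M_\varphi/L)$ and then apply the conductor--discriminant / tower formula for discriminants. First I would recall the tower formula
\begin{equation*}
D_{M_\varphi}=N_{L/\QQ}(\mathfrak d(M_\varphi/L))\cdot D_L^{[M_\varphi:L]}=N_{L/\QQ}(\mathfrak d(M_\varphi/L))\cdot D_L^{2},
\end{equation*}
so that it suffices to show $N_{L/\QQ}(\mathfrak d(M_\varphi/L))$ divides $\bigl(D_F/D_{F^+}^2\bigr)^{n}$.

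For the relative discriminant I would use the general fact that discriminants behave well under base change: since $M_\varphi=L\otimes_{F^+}F$, the relative different (hence the relative discriminant ideal) of $M_\varphi/L$ divides the extension to $\mathcal O_L$ of the relative discriminant of $F/F^+$, i.e. $\mathfrak d(M_\varphi/L)\mid \mathfrak d(F/F^+)\mathcal O_L$. Concretely, one may pick a primitive element $\alpha$ of $F/F^+$ lying in $\mathcal O_F$; then $\alpha$ is a primitive element for $M_\varphi/L$, its minimal polynomial over $L$ is the pushforward under $\varphi$ of its minimal polynomial over $F^+$, and the polynomial discriminant generates an ideal dividing $\mathfrak d(F/F^+)\mathcal O_L$ up to the usual index-squared factor $[\mathcal O_F:\mathcal O_{F^+}[\alpha]]^2$; a cleaner route that avoids the index term is to localize and argue place by place, using that $\mathcal O_{M_\varphi,\mathfrak P}=\mathcal O_{L,\mathfrak p}\otimes_{\mathcal O_{F^+,\mathfrak q}}\mathcal O_{F,\mathfrak q}$ and that the different of a tensor-product extension is the extension of the different. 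Either way, $\mathfrak d(M_\varphi/L)\mid\mathfrak d(F/F^+)\mathcal O_L$.

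Taking norms down to $\QQ$ then gives $N_{L/\QQ}(\mathfrak d(M_\varphi/L))\mid N_{L/\QQ}(\mathfrak d(F/F^+)\mathcal O_L)=N_{F^+/\QQ}(\mathfrak d(F/F^+))^{[L:F^+]}=N_{F^+/\QQ}(\mathfrak d(F/F^+))^{n}$, using $[L:F^+]=g/(g/n)=n$ and that the norm of an extended ideal $I\mathcal O_L$ from $L$ equals $N_{F^+/\QQ}(I)^{[L:F^+]}$. Finally, the tower formula for $F/F^+/\QQ$ reads $D_F=N_{F^+/\QQ}(\mathfrak d(F/F^+))\,D_{F^+}^{[F:F^+]}=N_{F^+/\QQ}(\mathfrak d(F/F^+))\,D_{F^+}^{2}$, hence $N_{F^+/\QQ}(\mathfrak d(F/F^+))=D_F/D_{F^+}^{2}$ (an integer). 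Substituting back yields $N_{L/\QQ}(\mathfrak d(M_\varphi/L))\mid (D_F/D_{F^+}^2)^{n}$ and therefore $D_{M_\varphi}\mid D_L^{2}(D_F/D_{F^+}^2)^{n}$, as claimed.

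The main obstacle I expect is the bookkeeping in the step $\mathfrak d(M_\varphi/L)\mid\mathfrak d(F/F^+)\mathcal O_L$: one must be careful that $M_\varphi$ is indeed a field (true here, since $L$ is totally real and $F$ is totally imaginary, so $L$ and $F$ are linearly disjoint over $F^+$) and that $\mathcal O_{M_\varphi}$ agrees with $\mathcal O_L\otimes_{\mathcal O_{F^+}}\mathcal O_F$ after localization at each prime, which is what legitimizes the place-by-place different computation; globally the orders may differ, but only at primes dividing $D_L$ or $D_F$, and since we only need a divisibility statement this is harmless. No transcendence or analytic input is needed — this is pure algebraic number theory.
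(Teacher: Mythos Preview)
Your proof is correct and uses the same two ingredients as the paper: the tower formula for absolute discriminants and the compositum bound for relative discriminants. The only difference is the choice of intermediate field. You run the tower $\QQ\subseteq L\subseteq M_\varphi$ and invoke the base-change bound $\mathfrak d(M_\varphi/L)\mid\mathfrak d(F/F^+)\mathcal O_L$, whereas the paper runs the tower $\QQ\subseteq F^+\subseteq M_\varphi$ and uses the full compositum bound $\Delta_{M_\varphi/F^+}\mid\Delta_{L/F^+}^{2}\,\Delta_{F/F^+}^{n}$ (with exponents $[M_\varphi:L]=2$ and $[M_\varphi:F]=n$), then plugs in $N(\Delta_{L/F^+})=D_L D_{F^+}^{-n}$ and $N(\Delta_{F/F^+})=D_F D_{F^+}^{-2}$. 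Both routes collapse to the same identity; yours is marginally more direct since it only needs one side of the compositum bound, while the paper's version makes the symmetry in $L$ and $F$ over $F^+$ visible.
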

\begin{proof} 
   Write $\Delta_{k'/k}$ for the discriminant ideal of an extension of fields $k'/k$. From $\Q \subseteq F^+\hookrightarrow M_\varphi$ we get $ D_{M_\varphi} = D_{F^+}^{2n} N(\Delta_{M_\varphi/F^+}) 
$, and  $\Delta_{M_\varphi/F^+}$ divides $\Delta_{L/F^+}^2 \Delta_{F/F^+}^n$ since $M_\varphi$ is the  
composite  
of $L$ and $F$ over $F^+$. As  $N(\Delta_{L/F^+}) = D_{L} D_{F^+}^{-n}$ and $N(\Delta_{F/F^+}) = D_{F} D_{F^+}^{-2},$ we find that $D_{M_\varphi}$ divides $D^{2n}_{F^+} N(\Delta_{L/F^+})^2 N(\Delta_{F/F^+})^n = D_{F^+}^{2n} D_{L}^2 D_{F^+}^{-2n} D_{F}^n D_{F^+}^{-2n}$.       
\end{proof}
\begin{lemma}\label{lem:conductorbound}
We have
$
N(\mathfrak {f}_{\varphi,F}) \leq (2g\delta)^{2g^2}\lhk\tfrac{D_F}{D^{1/2}_{F^+}}\rhk^{ng}.
$
\end{lemma}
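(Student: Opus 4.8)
The plan is to bound $N(\mathfrak f_{\varphi,F})$ by interposing, inside the number field $M_\varphi$, the intermediate order
\[
\Lambda:=\cO_L\otimes_{\cO_{F^+}}\cO_F
\]
between $\Gamma_{\varphi,F}$ and the maximal order $\cO_{M_\varphi}$. I view $\Gamma_{\varphi,F}$ as its image in $M_\varphi$, which is the subring $\Gamma\cdot\cO_F$ generated by $\Gamma$ and $\cO_F$; and $\Lambda$ is an order of $M_\varphi$ because $\cO_L$ is flat over the Dedekind ring $\cO_{F^+}$, so $\cO_L\otimes_{\cO_{F^+}}\cO_F\to\cO_L\otimes_{\cO_{F^+}}F=M_\varphi$ is injective with image the subring generated by $\cO_L$ and $\cO_F$. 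Since the index $\delta=[\cO_L:\Gamma]$ annihilates $\cO_L/\Gamma$ we have $\delta\cO_L\subseteq\Gamma$, hence $\delta\Lambda\subseteq\Gamma\cdot\cO_F=\Gamma_{\varphi,F}$; and writing $m=[\cO_{M_\varphi}:\Lambda]$ we have $m\cO_{M_\varphi}\subseteq\Lambda$. Thus $\delta m\cO_{M_\varphi}$ is an $\cO_{M_\varphi}$-ideal contained in $\Gamma_{\varphi,F}$, so it lies in the conductor $\mathfrak f_{\varphi,F}$, which gives
\[
N(\mathfrak f_{\varphi,F})\ \big|\ (\delta m)^{[M_\varphi:\QQ]}=(\delta m)^{2g}.
\]

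The second step is to bound $m=[\cO_{M_\varphi}:\Lambda]$ by a discriminant computation in which the contribution of $D_L$ cancels. From $\disc_{\ZZ}(\Lambda)=m^2D_{M_\varphi}$, the formula for discriminants of tensor products ($\disc_{\cO_{F^+}}(\cO_L\otimes_{\cO_{F^+}}\cO_F)=\Delta_{L/F^+}^{[F:F^+]}\Delta_{F/F^+}^{[L:F^+]}=\Delta_{L/F^+}^{2}\Delta_{F/F^+}^{n}$, using that $\cO_F$ has rank $2$ and $\cO_L$ rank $n$ over $\cO_{F^+}$), the tower formula $\disc_\ZZ(\Lambda)=N(\disc_{\cO_{F^+}}(\Lambda))\,D_{F^+}^{[M_\varphi:F^+]}$ with $[M_\varphi:F^+]=2n$, and $N(\Delta_{L/F^+})=D_L/D_{F^+}^{n}$, $N(\Delta_{F/F^+})=D_F/D_{F^+}^{2}$, one finds
\[
\disc_{\ZZ}(\Lambda)=N(\Delta_{L/F^+})^{2}N(\Delta_{F/F^+})^{n}D_{F^+}^{2n}=D_L^{2}\bigl(D_F/D_{F^+}^{2}\bigr)^{n}.
\]
On the other hand $L\subseteq M_\varphi$ with $[M_\varphi:L]=[F:F^+]=2$, so the tower formula yields $D_L^{2}\mid D_{M_\varphi}$, in fact $D_{M_\varphi}=D_L^2N(\Delta_{M_\varphi/L})$. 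Hence $m^{2}=\bigl(D_F/D_{F^+}^{2}\bigr)^{n}/N(\Delta_{M_\varphi/L})\leq\bigl(D_F/D_{F^+}^{2}\bigr)^{n}$, that is $m\leq\bigl(D_F/D_{F^+}^{2}\bigr)^{n/2}$.

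Combining the two steps gives $N(\mathfrak f_{\varphi,F})\leq(\delta m)^{2g}\leq\delta^{2g}\bigl(D_F/D_{F^+}^{2}\bigr)^{ng}$, and it remains to verify the elementary inequality
\[
\delta^{2g}\bigl(D_F/D_{F^+}^{2}\bigr)^{ng}\leq(2g\delta)^{2g^{2}}\bigl(D_F/D_{F^+}^{1/2}\bigr)^{ng}.
\]
Dividing through and collecting powers, this is equivalent to $\delta^{\,2g-2g^{2}}D_{F^+}^{-3ng/2}\leq(2g)^{2g^{2}}$, which holds because the left-hand side is at most $1$ (as $g,n\geq 1$ and $\delta,D_{F^+}\geq 1$, so all exponents on the left are $\leq 0$) while the right-hand side is at least $1$.

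I expect the only genuine point to get right is the discriminant bookkeeping, the key observation being that $D_L^{2}$ appears both in $\disc_{\ZZ}(\Lambda)$ and, via the tower $L\subseteq M_\varphi$, in $D_{M_\varphi}$, so it cancels in $m^{2}$; this is exactly why the target bound involves only $D_F$ and $D_{F^+}$. As a consistency check I would also look at the edge case $n=1$, where $\varphi\colon F^{+}\to L$ is an isomorphism, $M_\varphi\cong F$, $\Lambda=\cO_{M_\varphi}$ and $m=1$, so the above specializes to $N(\mathfrak f_{\varphi,F})\leq\delta^{2g}$, consistent with $\Gamma_{\varphi,F}$ being an order of $F$ of index dividing $\delta^{g}$ in $\cO_F$.
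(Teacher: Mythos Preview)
Your proof is correct and takes a genuinely different route from the paper's.

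Both arguments start the same way: produce an integer $N$ with $N\cO_{M_\varphi}\subseteq\Gamma_{\varphi,F}$, so that $N(\mathfrak f_{\varphi,F})\le N^{2g}$; and both interpose the order $\Lambda$ (the paper calls it $R$) generated by $\cO_L$ and $\cO_F$ inside $\cO_{M_\varphi}$. The divergence is in bounding $m=[\cO_{M_\varphi}:\Lambda]$. The paper invokes a separately stated Minkowski-type lemma (Lemma~\ref{lem:QuadraticMinkowskiLemma}) to find a generator $\alpha$ of $F/F^+$ with controlled polynomial discriminant, and then bounds $m$ via $[\cO_{M_\varphi}:\cO_L[\alpha]]$ using $\cO_L[\alpha]\subseteq\Lambda$. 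You instead compute $\disc_{\ZZ}(\Lambda)$ directly from the tensor-product discriminant formula and observe that the factor $D_L^2$ cancels against the tower formula $D_{M_\varphi}=D_L^2N(\Delta_{M_\varphi/L})$, yielding $m^2\le(D_F/D_{F^+}^2)^n$.

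Your approach is cleaner: it dispenses with the auxiliary Minkowski lemma entirely and in fact produces the sharper intermediate bound $N(\mathfrak f_{\varphi,F})\le\delta^{2g}(D_F/D_{F^+}^2)^{ng}$, from which the stated inequality follows with room to spare. The paper's route has the minor advantage of being more hands-on (an explicit $\alpha$), but for the purpose at hand your discriminant bookkeeping is both simpler and stronger.
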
 
\begin{proof}
It suffices to bound $i = |\cO_{M_\varphi}/\Gamma_{\varphi, F}|$ since $(i) \subseteq \mathfrak {f}_{\varphi,F} = \{a \in \cO_{M_\varphi} \ | \ a \cO_{M_\varphi} \subseteq \Gamma_{\varphi, F}\}$.  Let $\alpha \in \cO_F$ be a generator of $F/F^+$ from Lemma \ref{lem:QuadraticMinkowskiLemma}. 
Write $f \in F^+[x]$ for the minimal polynomial of $\alpha$ over $F^+$. Then $\varphi(f)$ is irreducible over $L$  and \cite[15.2.15]{voight:quatbook} gives  
\begin{equation*}
(\Delta_{\varphi(f)}) = [\cO_{M_\varphi} : \cO_L[\alpha]]^2_{\cO_L}\cdot \Delta_{M_\varphi/L}, 
\end{equation*}
where $\Delta_g$ is the discriminant of a polynomial $g$ and $\OL_L[\alpha]$ is the subring in $\OL_{M_\varphi}$ generated by $1\otimes \alpha$ and $\OL_L$. 
Lemma~\ref{lem:QuadraticMinkowskiLemma} provides
$
N(\Delta_f) \leq 4^{\frac{g}{n}}(\tfrac{g}{n}D_{F^+})^{\frac{3}{2}} N(\Delta_{F/F^+}),
$
and thus   $$
|\cO_{M_\varphi}/\cO_L[\alpha]|^2 \leq N(\Delta_{\varphi(f)}) \leq \lhk 4^{\frac{g}{n}}(\tfrac{g}{n} D_{F^+})^{\frac{3}{2}} N(\Delta_{F/F^+}) \rhk^{[L:\varphi(F^+)]}.
$$
Define $R = \cO_L \otimes_{\Gamma_\varphi^+}\cO_F$. As  $\cO_L[\alpha]\subseteq R \subseteq \cO_{M_\varphi}$, we get
$
|\cO_{M_\varphi}/R| \leq |\cO_{M_\varphi}/\cO_L[\alpha]| 
$ and hence $i \leq |\OL_{M_\varphi}/\OL_L[\alpha]|\cdot |R/\Gamma_{\varphi,F}|$. The index of
$\Gamma_{\varphi,F} = \Gamma \otimes_{\Gamma_\varphi^+} \cO_{F^+}$ inside $R$ 
is bounded by  $
|(\cO_L/\Gamma) \otimes_{\Gamma_\varphi^+} \cO_{F^+}| \leq |(\cO_L/\Gamma) \otimes_{\Z} \cO_{F^+}| = \delta^{g/n}.$ This implies the lemma. 
\end{proof}

The proof of Lemma~\ref{lem:conductorbound} uses a result on generators of quadratic field extensions which can be deduced quite directly from the proof of Minkowski's theorem. We include here the result since we could not find a reference.

\begin{lemma}\label{lem:QuadraticMinkowskiLemma}
Let $k$ be a number field of degree $n$ 
and let $k'/k$ be a quadratic extension with discriminant ideal $\Delta_{k'/k}$. There exists $\alpha \in \OL_{k'}$ such that $k' = k(\alpha)$ and such that
$$
N(\Delta_f) \leq 4^n m_k^3 N(\Delta_{k'/k}) \quad \textnormal{ and } \quad |\cO_{k'}/\cO_k[\alpha]| \leq 2^{n} m_k^{3/2} 
$$
for $\Delta_f\in \OL_k$ the discriminant of the minimal polynomial $f$  of $\alpha$ over $k$, $m_k = \frac {n!}{n^n} \lhk \frac {4} {\pi} \rhk^s D_k^{1/2}$ and $s$ the number of complex places of $k$. In particular $m_k \leq (nD_k)^{1/2}$ if $s = 0$. 
\end{lemma}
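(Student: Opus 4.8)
The plan is to produce the generator $\alpha$ by a direct application of Minkowski's lattice-point theorem to a suitable convex symmetric body in $k'\otimes_\QQ\RR$, chosen so that the resulting element is both small at all infinite places and primitive over $k$. First I would reduce to the case where $k'/k$ is \emph{not} contained in a smaller extension in a trivial way: since $[k':k]=2$, any $\alpha\in\OL_{k'}\setminus\OL_k$ automatically satisfies $k'=k(\alpha)$, so the only real constraint is size control on the conjugates and a bound on the index $|\OL_{k'}/\OL_k[\alpha]|$. I would work with the $\RR$-algebra $V=k'\otimes_\QQ\RR\cong\prod_{v}k'_v$, where $v$ runs over the archimedean places of $k'$, equipped with the usual Haar measure normalized so that $\OL_{k'}$ is a lattice of covolume $2^{-s'}D_{k'}^{1/2}$ with $s'$ the number of complex places of $k'$. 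The key input is that $D_{k'}=D_k^2\,N(\Delta_{k'/k})$ by the conductor--discriminant relation (or the tower formula for discriminants), so covolumes of $\OL_{k'}$ are controlled by $m_k$ and $N(\Delta_{k'/k})$.

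The main step is to apply Minkowski to the box $X=\{x\in V:\ |x_v|\le c_v\}$ with the constants $c_v$ chosen to make $\textnormal{vol}(X)$ just barely exceed $2^{n'}$ times the covolume, where $n'=[k':\QQ]=2n$; concretely I would take the $c_v$ equal at all places except inflate one distinguished real (or complex) place $v_0$ of $k'$ so that the box has the right volume, exactly mirroring the two cases in the proof of $\eqref{eq:countnf}$ above. This yields a nonzero $\alpha\in\OL_{k'}$ all of whose conjugates are bounded: $|\sigma(\alpha)|\le c_{v(\sigma)}$ for every embedding $\sigma:k'\to\CC$, with the product $\prod_\sigma c_{v(\sigma)}$ of size $O(2^{n'}m_k)$ up to the $N(\Delta_{k'/k})^{1/2}$ factor. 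From such bounds on the conjugates, the discriminant of the minimal polynomial $f=\prod_{\sigma}(x-\sigma(\alpha))\in k[x]$ (here $\sigma$ running over the two embeddings of $k'$ extending a fixed embedding of $k$, for each archimedean place of $k$) satisfies $N_{k/\QQ}(\Delta_f)=\prod_{v|\infty}|\Delta_{f_v}|$ where each local factor is $(\sigma_1(\alpha)-\sigma_2(\alpha))^2$, bounded by $4\max(|\sigma_1(\alpha)|,|\sigma_2(\alpha)|)^2$; multiplying over the $n$ archimedean places of $k$ gives a factor $4^n$ times the square of the size bound on $\alpha$, i.e.\ $\le 4^n m_k^3 N(\Delta_{k'/k})$ after tracking the $m_k$ contributions. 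I must be careful to ensure $\alpha\notin\OL_k$; this I would guarantee by shrinking the box at $v_0$ if necessary, or by noting that if $\alpha$ were in $\OL_k$ then its nontrivial conjugate over $k$ would coincide with it at infinitely many... more simply, I can translate $X$ slightly or add a fixed primitive generator times a large integer, but the cleanest route is to observe that among the Minkowski points one can always choose a primitive one because $\OL_k$ is a sublattice of $\OL_{k'}$ of index $\gg D_{k'}^{1/2}/D_k$, so a box of the stated volume cannot be contained in $\OL_k\otimes\RR$.

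The index bound $|\OL_{k'}/\OL_k[\alpha]|\le 2^n m_k^{3/2}$ then follows from the standard relation $N(\Delta_f)=[\OL_{k'}:\OL_k[\alpha]]^2_{\OL_k}\cdot N(\Delta_{k'/k})$ (used already via \cite[15.2.15]{voight:quatbook} in the proof of Lemma~\ref{lem:conductorbound}): taking norms and dividing gives $|\OL_{k'}/\OL_k[\alpha]|^2\le N(\Delta_f)/N(\Delta_{k'/k})\le 4^n m_k^3$, whence the square-root bound. The final clause $m_k\le(nD_k)^{1/2}$ when $s=0$ is immediate from $\tfrac{n!}{n^n}\le 1$ and $(4/\pi)^0=1$. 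I expect the main obstacle to be the bookkeeping in the two archimedean cases — totally real $k$ versus $k$ with complex places — since the shape of the Minkowski box and the precise power of $2$ and of $4/\pi$ differ, exactly as in the two displayed computations for $\eqref{eq:countnf}$; the underlying idea is routine but one must choose the inflation at $v_0$ carefully so that \emph{both} the discriminant bound and the index bound come out with the clean constants $4^n m_k^3$ and $2^n m_k^{3/2}$. A secondary subtlety is confirming primitivity of the chosen lattice point uniformly, which I would handle by the index argument sketched above rather than by an ad hoc perturbation.
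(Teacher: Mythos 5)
Your approach is genuinely different from the paper's and, as written, has two gaps that prevent it from delivering the stated constants.

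The paper does \emph{not} apply Minkowski's convex-body theorem to $\OL_{k'}$. It instead fixes $\beta\in\OL_{k'}$ with $\beta^2\in\OL_k$, so that $\alpha$ will be a scalar multiple $\beta x^{-1}$ and the minimal polynomial is always $X^2-\alpha^2$ with $\Delta_f=4\alpha^2$. The only Minkowski input is the bound $N(J_i)\le m_k$ on ideal-class representatives in $k$; the scalar $x$ is chosen so that the square-free part $M$ of $(\alpha^2)=J_i^2M$ consists of primes dividing $\Delta_{k'/k}$ (because a prime appearing in $(\alpha^2)$ to odd multiplicity must ramify in $k'/k$), whence $(\alpha^2)\mid J_i^3\Delta_{k'/k}$ and the exponent $3$ on $m_k$ drops out. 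This is an ideal-theoretic, non-archimedean argument tailored to the quadratic situation.

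Two concrete issues with your sketch. First, the bookkeeping on the Minkowski box does not close: Minkowski on $\OL_{k'}$ forces the product of the $c_v$ (with the usual complex-place conventions) to exceed a constant times $\textnormal{covol}(\OL_{k'})\approx D_{k'}^{1/2}=D_k\,N(\Delta_{k'/k})^{1/2}$, whereas you assert a product of size $O(m_kN(\Delta_{k'/k})^{1/2})\approx D_k^{1/2}N(\Delta_{k'/k})^{1/2}$ — a discrepancy of $D_k^{1/2}$ that is not accounted for, and which propagates into the discriminant estimate. Second, the primitivity issue is real and your proposed fix is not sound: the nonzero lattice point produced by Minkowski can lie in $\OL_k$, in which case $\Delta_f=0$ and $|\OL_{k'}/\OL_k[\alpha]|$ is infinite; the remark that ``a box of the stated volume cannot be contained in $\OL_k\otimes\RR$'' does not prevent all its nonzero $\OL_{k'}$-points from lying in the proper subspace $\OL_k\otimes\RR$, and the alternatives (perturbation, large translates) change the volume computation. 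One also needs the constraint $N(\Delta_f)\geq N(\Delta_{k'/k})$, forced by the index formula, to be compatible with whatever archimedean bound one extracts, which it is not in your stated form. The paper's choice of $\beta$ with $\beta^2\in\OL_k$ sidesteps primitivity entirely (any nonzero scalar multiple still generates $k'/k$) and replaces the archimedean estimate by a clean factorization-and-ramification argument.
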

\begin{proof} 
Take $\beta\in \mathcal O_{k'}$ with $\beta^2\in \cO_k$ and $k' = k(\beta)$. By \cite[15.2.15]{voight:quatbook} we have  
\begin{equation}\label{eq:index_square_bound}
(4d) = [\cO_{k'}:\cO_k[\beta]]^2_{\cO_k}\cdot\Delta_{k'/k}, \quad d = \beta^2.
\end{equation}
The proof of Minkowski's theorem  gives representatives $J_i\subseteq \OL_k$, $i = 1, \ldots, r$, of the class group of $k$ with $N(J_i) \leq m_k$. Choose $I\subseteq \cO_k$ such that $dI^{-2}$ is a square-free ideal of $\cO_k$. Write $M = dI^{-2}$.  For some $i\in \{1,\dotsc,r\}$ and some $x \in k$ we have $I = J_i (x)\subseteq \cO_k$, so  
$
(dx^{-2}) = J_i^2 M\subseteq \OL_k$ and thus $dx^{-2} \in \cO_k$.  

 We define $\alpha = \beta x^{-1} \in \cO_{k'}$. Thus $k' = k(\alpha)$ and $\alpha^2 = dx^{-2}$. In particular if $\mathfrak p \subset \cO_k$ is a prime ideal such that $\textnormal{ord}_{\mathfrak p}(dx^{-2}) = 1$, then $\mathfrak p$ ramifies in $k'/k$ and $\mathfrak p | \Delta_{k'/k}$.  Write $dI^{-2}$ as a product $AB$ of ideals $A, B \subseteq \cO_k$ where $A$ is a product of prime ideal divisors of $J_i$ and $B$ is coprime to $J_i$. As $AB = M$ is square free, $B$ is square free as well and any prime $\mathfrak p$ dividing $B$ divides $(dx^{-2}) = J_i^2M$ with multiplicity $1$, hence is ramified in $k'/k$. Thus $B|\Delta_{k'/k}$ and $M = AB $ divides $J_i \Delta_{k'/k}$. Write $J_i\Delta_{k'/k} = MC$ with $C \subseteq \cO_k$ an ideal. Using $(dx^{-2}) = J_i^2 M$ we find $J_i^3 \Delta_{k'/k} = C(dx^{-2})$. Thus $(dx^{-2})|J_i^3\Delta_{k'/k}$ and hence $N(dx^{-2}) \leq m_k^3 N(\Delta_{k'/k})$. Then \eqref{eq:index_square_bound} gives $|\cO_{k'}/\cO_{k}[\alpha]|^2 = N(4dx^{-2})/N(\Delta_{k'/k}) \leq 4^{n}m_k^3.$  
\end{proof}

\newpage

\section{Proof of the main results for integral points}\label{sec:proofmainresults}


In this section we combine the results obtained in the previous sections to prove Theorem~\ref{thm:mainint} on integral points on coarse moduli schemes. Let $Y$ be a variety over $\ZZ$, let $S\subseteq \spec(\ZZ)$ be nonempty open, and let $\mathcal M$ be a Hilbert moduli stack. Suppose that $\mathcal M\cong \mathcal M^I$ is associated to some $(L,I,I_+)$, and write $g=[L:\QQ]$ and $\Delta=D_L$. We shall see in Section~\ref{sec:proofmainthm} below that Theorem~\ref{thm:mainint} is a consequence of the following result.

\begin{theorem}\label{thm:specialmain} Let $n\in\ZZ_{\geq 3}$ and suppose that $Y$ is a coarse moduli scheme over $\ZZ[1/n]$ of some arithmetic moduli problem $\mathcal P$ on $\mathcal M_{\ZZ[1/n]}$ with branch locus $B=B_{\mathcal P}$.
\begin{itemize}
\item[(i)] Any point $P\in (Y\setminus B)(S)$ satisfies $h_\phi(P)\leq c_1N_S^{e_1}$.
\item[(ii)] The cardinality of $(Y\setminus B)(S)$ is at most $c_2|\mathcal P|_{\bar{\QQ}}N_S^{e_2}\Delta\log(3\Delta)^{2g-1}$.
\end{itemize}
\end{theorem}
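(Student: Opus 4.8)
The plan is to carry out the Par\v{s}in construction outlined in \S\ref{sec:proofintro} and then invoke the effective Shafarevich results of Section~\ref{sec:es} together with the degree bounds from Sections~\ref{sec:polarizations} and \ref{sec:endo}. Write $U = Y\setminus B$. First I would enlarge the base: choose a nonempty open $T\subseteq\spec(\ZZ[1/n])$ over which things behave well and, after possibly replacing $S$ by $S\cap T$ in a controlled way (tracking the effect on $N_S$ via $N_{S\cap T}=\rad(N_SN_{T^c})$ and bounding $N_{T^c}$ in terms of $n$ and $\Delta$), assume $Y$ is a coarse moduli scheme over a ring $\ZZ[1/m]$ with $m$ explicitly bounded. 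Then I apply Section~\ref{sec:coverconstruction} with $Z=\mathcal M_{\mathcal P(n)}$ (or a suitable $\mathcal P(n')$ with $n'\geq 3$ chosen coprime to the characteristics in play) to form the finite cover $Y'=\cmp\times_{\mathcal M}\mathcal M_{\mathcal P(n')}$. By \eqref{eq:branchlocuscompgeneral} the branch locus of $Y'\to Y$ is exactly $B_{\mathcal P}$, so $Y'_U\to U$ is finite \'etale, and by \eqref{eq:productequivalence} together with Lemma~\ref{lem:y'rep}, $Y'$ is a Hilbert moduli scheme of $\mathcal P'=\mathcal P\times\mathcal P(n')$ with height $h_{\phi'}=f^*h_\phi$ by \eqref{eq:heightcomp}.

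\textbf{Reduction to abelian schemes over a controlled base.} For $P\in U(S)$, pull back $Y'\to Y$ along $P$ to get a finite \'etale cover $S'=S\times_Y Y'$ of $S$; by Lemma~\ref{lem:etcover} each connected component $T_0$ is a connected Dedekind scheme with $[k(T_0):\QQ]=[k(T_0):k(S)]\leq |\mathcal P(n')|_{\bar{\QQ}}$, and $T_0$ is unramified outside the primes dividing $n'N_S$, so its discriminant is controlled. Composing with the forgetful morphism $\phi':Y'(T_0)\to\absg(T_0)$ of the Hilbert moduli scheme $Y'$ produces an abelian scheme $A$ over $T_0$ of relative dimension $g$ with $\OL$-multiplication. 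The key structural input — which is a formal consequence of $U(S)\subseteq Y(\QQ)\cong[\cmp(\bar{\QQ})]^{G_\QQ}$ and the description of Galois-fixed isomorphism classes above \eqref{eq:ginvarianceisoclasses} — is that for each $\sigma\in G_\QQ$ there is an $\OL$-compatible isomorphism $\sigma^*A_{\bar{\QQ}}\isomto A_{\bar{\QQ}}$; taking $F=L$ and the number field inside $\End^0(A)$ supplied by $\iota$, this shows $A$ is of $\gl2$-type with $G_\QQ$-isogenies in the sense of \S\ref{sec:Gisogdef}. Moreover $h_\phi(P)=h_{\phi'}(\text{lift of }P)=h_F(A)$ by the functorial definition of the height in \S\ref{sec:heightdef} and \eqref{eq:heightcomp}.

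\textbf{Applying the effective bounds.} Now Theorem~\ref{thm:es}~(i) applied to $A$ over $T_0$ — whose discriminant $D_{k(T_0)}$ and bad-reduction norm $N_{T_0^c}$ are bounded polynomially in $n'N_S$ and $\Delta$ by the controlled field extension in Lemma~\ref{lem:cond*} — gives $h_F(A)\leq c_1 N_S^{e_1}$ with $e_1=\max(24,5g)$ after collecting constants; this yields (i). For (ii), I would bound $|U(S)|$ by $|U(S)|\leq |\phi(U(S))|\cdot\deg(\phi)$, where $\phi:U(S)\to\absg(T)$ is the resulting Par\v{s}in map. The fiber degree $\deg(\phi)$ decomposes as $\deg(\phi_\alpha)\deg(\phi_\varphi)\deg(\phi_\iota)$ along $Y'(T)\to \hilbmod(T)\to\abomult(T)\to\absg(T)$: by \cite[Lem 8.2]{vkkr:hms} $\deg(\phi_\alpha)\leq|\mathcal P\times\mathcal P(n')|_{\bar{\QQ}}$, by Corollary~\ref{cor:forgetpol} $\deg(\phi_\varphi)\leq 4g^4\cdot 2^g$, and by Theorem~\ref{thm:endobound}~(ii) $\deg(\phi_\iota)\leq k\,\rad(N_SD_{k(T_0)})^{24ge}\Delta\log(3\Delta)^{2g-1}$; combined with the bound on $|\phi(U(S))|$ coming from Theorem~B~(ii) / Proposition~\ref{prop:esnumber} (with the number field replaced by $k(T_0)$, whose degree and discriminant are controlled), and the fact that one works with finitely many controlled $T_0$ whose number is itself bounded via the Hermite estimate \eqref{eq:countnf}, everything assembles into the stated bound with $e_2=6\cdot 3^{8g}$ and $c_2=9^{9^{9g}}$. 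The main obstacle is bookkeeping: keeping the field extensions (and hence the implied discriminants and degrees) uniformly polynomial in $N_S$ while only raising the constant $c$ in a controlled way — in particular avoiding a single huge number field $K$ by working with many small ones and counting them, exactly as flagged in \S\ref{sec:shapeofbounds}.
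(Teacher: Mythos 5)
Your proposal follows essentially the same route as the paper's proof of Theorem~\ref{thm:specialmain}: form $Y'=\cmp\times_{\mathcal M}\mathcal M_{\mathcal Q}$ with $\mathcal Q=\mathcal P(n)$, pull back along $P$ to get a finite \'etale cover $T\to S$ via Lemma~\ref{lem:etcover}, observe that the resulting abelian scheme is of $\gl2$-type with $G_\QQ$-isogenies because $P\in Y(\QQ)\cong[\cmp(\bar{\QQ})]^{G_\QQ}$, apply Theorem~\ref{thm:es}/Proposition~\ref{prop:esgl2} and the degree bounds of Sections~\ref{sec:polarizations}--\ref{sec:endo}, and sum over the controlled family of covers counted via \eqref{eq:countnf}. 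Two small points worth tightening: the initial base-enlargement step (replacing $S$ by $S\cap T$ and passing to $\ZZ[1/m]$) belongs to the deduction of Theorem~\ref{thm:mainint} from Theorem~\ref{thm:specialmain}, not to the proof of Theorem~\ref{thm:specialmain} itself, where $Y$ is already a coarse moduli scheme over $\ZZ[1/n]$ and $\mathcal Q=\mathcal P(n)$ is \'etale outright; and to invoke Proposition~\ref{prop:esgl2} with its sharper constants you must actually verify condition $(*)$ — in particular that $k(T)/\QQ$ is normal, which the paper establishes by showing $T\to S$ is a Galois cover via the transitive $\GL_2(\OL/n\OL)$-action on the fibers, and that $k(T)=k(T)(A_n)$ so that the field $L$ of Lemma~\ref{lem:cond*} coincides with $k(T)$ — rather than merely citing Lemma~\ref{lem:cond*} as providing a "controlled extension."
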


Here $e_1=\max(24,5g)$ and $c_1=(4n)^{(5n)^{4g}}$, while  $e_2=5\cdot n^{8g}$ and $c_2=n^{4^9gn^{12g}}$. Further $|\mathcal P|_{\bar{\QQ}}$ is the maximal number  of $\mathcal P$-level structures  over $\bar{\QQ}$, and $h_\phi$ is the height on $Y(S)$ defined in Section~\ref{sec:overt}. We also recall that for any nonempty open subscheme $T\subseteq\spec(\ZZ)$ we denote by $N_T$ the product of all rational primes $p$ not in $T$. 

\subsection{Proof of Theorem~\ref{thm:specialmain}}\label{sec:proofspecialmain}

We continue the notation introduce above. An outline of the principal ideas of the following proof of Theorem~\ref{thm:specialmain} can be found in the introduction.

\begin{proof}[Proof of Theorem~\ref{thm:specialmain}]
We suppose that $Y$, $\mathcal P$, $B$ and $n$ are as in Theorem~\ref{thm:specialmain}. To prove the theorem, we may and do assume that  $U(S)$ is not empty for $U=Y\setminus B$.  Then we take $P\in U(S)$ and we notice that $S$ is a scheme over $\ZZ[1/n]$ since the coarse moduli scheme $Y$ of $\mathcal P$ on $\mathcal M_{\ZZ[1/n]}$ is a $\ZZ[1/n]$-scheme. In what follows in this proof we work  (in the category of stacks) over $\ZZ[1/n]$, and to simplify notation we write $\mathcal M$ for $\mathcal M_{\ZZ[1/n]}$.  

Furthermore, to prove the statements of Theorem~\ref{thm:specialmain}, we may and do assume  that $\mathcal M$ equals $\mathcal M^I$. Indeed this follows from a formal computation using that $Y$ is a coarse moduli scheme of the arithmetic moduli problem $\mathcal P'=\mathcal P\tau$ on $\mathcal M^I$  where  $\tau:\mathcal M^I\isomto \mathcal M$  is an inverse of the equivalence involved in the definition of the height $h_\phi$.   
\paragraph{1.} In this first step we reduce via the covers constructed  in Section~\ref{sec:coverconstruction} to the situation in which the moduli problem is representable.  Let $\mathcal Q$ be the moduli problem on $\mathcal M$ of principal level $n$-structures. As we work over $\ZZ[1/n]$, the forgetful morphism $\mathcal M_{\mathcal Q}\to \mathcal M$ is finite \'etale and surjective. Furthermore the moduli problem $\mathcal Q$ is representable by a scheme $Z$ since $n\geq 3$ by assumption. Therefore we may apply the constructions and results in Section~\ref{sec:coverconstruction} with $B=\spec(\ZZ[1/n])$, the scheme $Z$ and the morphism $Z\to \mathcal M$ obtained by composing an equivalence $Z\isomto \mathcal M_{\mathcal Q}$ with the forgetful morphism $\mathcal M_{\mathcal Q}\to \mathcal M$. Then Lemmas~\ref{lem:y'rep} and \ref{lem:etcover} together with the diagram displayed right before Lemma~\ref{lem:etcover} give a connected Dedekind scheme $T$ with a finite \'etale morphism $T\to S$ and a morphism $P':T\to Y'$ fitting into a commutative diagram of noetherian schemes  \begin{equation}\label{diag:pp'}
\xymatrix{
T \ar[r]^{P'} \ar[d] & Y' \ar[d]\\
S \ar[r]^P & Y. 
}
\end{equation}
Here we recall that $Y'=\cmp\times_{\mathcal M}\mathcal M_{\mathcal Q}$ and that the morphism $Y'\to Y$ factors as $Y'\to \cmp\to^\pi Y$ where $\pi:\cmp\to Y$ is the initial morphism of the coarse moduli scheme $Y$ of $\mathcal P$ used in the definition of the height $h_\phi$ on $Y(S)$.  Let $h_{\phi'}$ be the height on $Y'(T)$ defined right before \eqref{eq:heightcomp}. Evaluating both functions in \eqref{eq:heightcomp} at $P'\in Y'(T)$ leads to 
\begin{equation}\label{eq:hpp}
h_\phi(P)=h_{\phi'}(P')
\end{equation}
since the height $h_\phi$ is compatible with the dominant base change $T\to S$ and the diagram \eqref{diag:pp'} is commutative.
We write $(x,\alpha,\beta)$ for the image of $P'\in Y'(T)$ under the equivalence $Y'\isomto \mathcal M_{\mathcal P\times \mathcal Q}$  in \eqref{eq:productequivalence}. In particular $x=(A,\iota,\varphi)$ is an object of $\mathcal M(T)$ and $(\alpha,\beta)$ lies in $\mathcal P(x)\times \mathcal Q(x)$. Then it follows from the definition of $h_{\phi'}$ that 
\begin{equation}\label{eq:hphf}
h_{\phi'}(P')=h_F(A)
\end{equation}
and we now proceed to bound the stable Faltings height $h_F(A)$ of $A$. For this purpose we denote by $k$ and $K$ the function fields  of $S$ and $T$ respectively. It holds that $k=\QQ$ since $S$ is a nonempty open subscheme of $\spec(\ZZ)$. Further, on using that $T$ and $S$ are connected Dedekind schemes with $T\to S$ finite, we deduce that $K$ is a number field and that $T$ is an open subscheme of the spectrum of the ring of integers of $K$. Furthermore, the construction of $T$ shows that we may and do assume that $K\subset\bar{\QQ}$.

\paragraph{2.}In the following step we show that the abelian scheme $A$ over $T$ is of $\gl2$-type with $G$-isogenies where $G=\Aut(\bar{k}/k)$. As $x=(A,\iota,\varphi)$ is an object in $\mathcal M(T)$, the relative dimension of $A$ over $T$ equals the degree  $g=[L:\QQ]$  and hence $A$ is of $\gl2$-type since $\iota\otimes_\ZZ \QQ$ embeds $L=\OL\times_\ZZ \QQ$ into $\End^0(A)$ where we denote by $\OL$ the ring of integers of $L$. The reason why $A$ has $G$-isogenies is as follows: The $\OL$-abelian scheme $(A,\iota)$ `comes' from a point in $Y(S)\subseteq Y(k)$ and hence the isomorphism class of $(A,\iota)_{\bar{k}}$ is stable under the action of the Galois group $G$ assuring that there exist isomorphisms $\mu_\sigma:\sigma^*A_{\bar{k}}\isomto A_{\bar{k}}$ with the desired properties. We now work out the details. As $K$ is a finite extension of $k$, we may and do choose $\bar{k}$ with $K\subset \bar{k}$. In particular this defines a morphism $\spec(\bar{k})\to T$ which induces the following commutative diagram:   \begin{equation}\label{diag:ginvariantcheck}
\xymatrix{
Y'(T) \ar[r] \ar[d] & Y'(\bar{k})\ar[r] \ar[d] & Y(\bar{k})\ar[d] \\
[\mathcal M_{\mathcal P\times \mathcal Q}(T)] \ar[r] &  [\mathcal M_{\mathcal P\times \mathcal Q}(\bar{k})] \ar[r] &  [\cmp(\bar{k})].} 
\end{equation} 
Here the commutative diagram on the left hand side comes from the equivalence $Y'\isomto \mathcal M_{\mathcal P\times\mathcal Q}$ in \eqref{eq:productequivalence}. The diagram on the right hand side is commutative, since $Y'\to Y$ factors as $Y'\to\cmp\to^\pi Y$ and since $Y'\isomto \mathcal M_{\mathcal P\times\mathcal Q}$ is a morphism of categories over $\cmp$. Notice that $P'\in Y'(T)$ maps to the class of $(x,\alpha,\beta)$ in $[\mathcal M_{\mathcal P\times\mathcal Q}(T)]$ which in turn maps to $[(x,\alpha)]$ in $[\cmp(\bar{k})]$ where we view $(x,\alpha)$ inside $\cmp(\bar{k})$ via $k(T)=K\subset \bar{k}$. 
We next show that the class $[(x,\alpha)]$  is invariant under the $G$-action on $[\cmp(\bar{k})]$ defined right before \eqref{eq:ginvarianceisoclasses}, that is $$[(x,\alpha)]\in [\cmp(\bar{k})]^G.$$ Denote by  $P'_{\bar{k}}$ and $P_{\bar{k}}$ the images of $P'$ and $P$ under $Y'(T)\to Y'(\bar{k})$ and $Y(S)\to Y(\bar{k})$ respectively. On using that the diagram \eqref{diag:pp'} commutes, we see that  $P'_{\bar{k}}$ maps to the point $P_{\bar{k}}$ in $Y(\bar{k})$. 
In fact $P_{\bar{k}}$ lies in $Y(\bar{k})^G$ since $P$ lies in $Y(S)\subseteq Y(k)$, and \eqref{eq:ginvarianceisoclasses} gives that $Y(\bar{k})^G$ identifies with $[\cmp(\bar{k})]^G$ via $\pi^{-1}$. Hence in view of the commutative diagram \eqref{diag:ginvariantcheck} we see that the image $[(x,\alpha)]$ of $P'$ in $[\cmp(\bar{k})]$ actually lies in $[\cmp(\bar{k})]^G$. In other words $[(x,\alpha)]$ is  indeed invariant under the $G$-action. Then the description of $[\cmp(\bar{k})]^G$ given right after \eqref{eq:ginvarianceisoclasses} shows that for each $\sigma\in G$ there exists an isomorphism $$\mu_\sigma:\sigma^*(x,\alpha)\isomto (x,\alpha)$$ in the category $\cmp(\bar{k})$. In particular $\mu_\sigma$ is an isomorphism between the $\OL$-abelian schemes $(\sigma^*A_{\bar{k}},\sigma^*(\iota_{\bar{k}}))$ and $(A_{\bar{k}},\iota_{\bar{k}})$ over $\bar{k}$, which implies that $\mu_\sigma\circ \sigma^*(l)=l\circ \mu_\sigma$ for each $l$ in $L=\OL\times_\ZZ\QQ$ viewed inside $\End^0(A_{\bar{k}})$ via $\iota_{\bar{k}}\otimes_\ZZ \QQ$. Therefore we conclude that the abelian scheme $A$ over $T$ is of $\gl2$-type with $G$-isogenies as desired.

\paragraph{3.}In this step we apply the effective Shafarevich conjecture for abelian varieties of $\gl2$-type with $G$-isogenies established in Theorem~\ref{thm:es}. To obtain better bounds, we would like to apply here the version of Conjecture (ES) in Proposition~\ref{prop:esgl2}. This requires to check that the following condition $(*)$ is satisfied: The field extension $K/k$ is normal, and the endomorphisms of $A_{\bar{k}}$ and the isomorphisms $\mu_\sigma$ are all defined over  $K\subset\bar{k}$.  

We first verify that $K/k$ is a normal field extension. The reason why $K/k$ is normal is as follows: It is the function field extension induced by $T\to S$ which is essentially a base change of the Galois cover $\mathcal M_{\mathcal P\times \mathcal Q}\to\cmp$ defined by forgetting principal level $n$-structures. In what follows in this paragraph we work out the details. As $\spec(K)\to\spec(k)$ is the generic fiber of $T\to S$, we observe that by Galois theory it suffices to show that the automorphism group $\Aut_S(T)$ of the finite \'etale morphism $T\to S$ acts transitively on  $F_{s}(T)$ where $F_{s}$ is the fundamental functor associated to $s:\spec(\bar{k})\to S$. Recall from (the proof of) Lemma~\ref{lem:etcover} that $T$ is a connected component of the noetherian scheme $S'=S\times_Y Y'$ which is finite \'etale over $S$. Then, on using that any automorphism of a scheme permutes its connected components, we see that we are in fact reduced to show that $\Aut_S(S')$ acts transitively on  $F_{s}(S')$. To this end we consider group homomorphisms   \begin{equation}\label{eq:transitivegroupmorphisms}
\gl2(\OL/n\OL)\to \Aut_Y(Y')\to \Aut_S(S').
\end{equation} 
  The first morphism is obtained as follows: The group $\gl2(\OL/n\OL)$ acts on the product presheaf $\mathcal P\times\mathcal Q$ via \eqref{def:gactionpn} and then it acts on the $Y$-scheme $Y'\to Y$  by \eqref{def:gactionmp} and transport of structure using the equivalence $Y'\isomto \mathcal M_{\mathcal P\times\mathcal Q}$ in \eqref{eq:productequivalence} and the inverse $\mathcal M_{\mathcal P\times\mathcal Q}\isomto Y' $ defined right after \eqref{eq:productequivalence}. The second morphism is induced by the base change $S\to^P U\subseteq Y$. In view of \eqref{eq:transitivegroupmorphisms} and the above reductions, we then see that we are reduced to show that $\gl2(\OL/n\OL)$ acts transitively on $Y'_{u}(\bar{k})$ where we denote by $Y'_{u}(\bar{k})$ the set of $\bar{k}$-rational points of the fiber $Y'_{u}$ of the morphism $Y'\to Y$ over the geometric point $u:\spec(\bar{k})\to^{s} S\to^P U\subseteq Y$. To compute $Y'_{u}(\bar{k})$, we view again our $(x,\alpha)\in\cmp(T)$ inside $\cmp(\bar{k})$ via $k(T)=K\subset \bar{k}$. On using that $Y'\to Y$ factors as $Y'\to\cmp\to^\pi Y$ and \eqref{eq:productequivalence} is a morphism of categories over $\cmp$, we deduce from \eqref{diag:pp'} that $\pi(x,\alpha)=u$. Then the arguments surrounding \eqref{diag:fibercomp} show that the equivalence $Y'\isomto \mathcal M_{\mathcal P\times \mathcal Q}$ in \eqref{eq:productequivalence} induces  \begin{equation}\label{eq:fibercomputation-action}
Y'_{u}(\bar{k})\isomto \{[(x,\alpha,\beta')],\beta'\in \mathcal Q(x)\}.
\end{equation}
Moreover, we observe that  this identification is by construction (via transport of structure) compatible with the $\gl2(\OL/n\OL)$-actions. Now, the key point is that the action of $\gl2(\OL/n\OL)$ on the set $\mathcal Q(x)$ is transitive. Indeed if $\beta$ and $\beta'$ are in $\mathcal Q(x)$, then the automorphism $\beta^{-1}\beta'$ of the constant group scheme $(\OL/n\OL)^2_{\bar{k}}$ identifies with some $g\in \gl2(\OL/n\OL)$ and thus we obtain that $g\cdot \beta'=\beta'g^{-1}=\beta$. It follows that $\gl2(\OL/n\OL)$ acts transitively on the right hand side of \eqref{eq:fibercomputation-action} and hence on $Y'_{u}(\bar{k})$ since the identification in \eqref{eq:fibercomputation-action} is compatible with the $\gl2(\OL/n\OL)$-actions. Then in view of the above reductions we conclude that $K/k$ is indeed normal. Thus the first part of condition $(*)$ is satisfied.

To verify the second part of condition $(*)$, we recall that $(x,\alpha,\beta)$ lies in $\mathcal M_{\mathcal P\times \mathcal Q}(T)$. The existence of the principal level $n$-structure $\beta\in \mathcal Q(x)$ for $x=(A,\iota,\varphi)$ implies that $K$ equals the field of definition $K(A_n)$ of the $n$-torsion points of $A_K$.  Then, on using that $n\geq 3$ and that $K/k$ is a normal extension, we see that the field $L$ appearing in Lemma~\ref{lem:cond*} has to coincide with our $K$. Thus Lemma~\ref{lem:cond*} proves that condition $(*)$ is satisfied. 

Now, we may and do apply Proposition~\ref{prop:esgl2} with  our abelian scheme $A$ over $T$ of relative dimension $g$ which is of $\gl2$-type with $G$-isogenies by step 2.  After recalling that $d=[K:\QQ]$, we see that Propositions~\ref{prop:esgl2} and \ref{prop:cm} lead to the bound
\begin{equation}\label{eq:proofshfbound}
h_F(A)\leq (4gd)^{144g^2d}\rad(N_SD_K)^e, \quad e=\max(24,5g).
\end{equation}
Here we used that the morphism  $T\to S$ is finite and thus integral, which implies that $T$ has to be the spectrum of the integral closure of $\ZZ[1/N_S]$ in $K$ and hence for any finite place $v$ of $K$ with $v\notin T$ the residue characteristic of $v$  divides $N_S$.

\paragraph{4.}In the next step we control the discriminant $D_K$ and degree $d$ in terms of $S$, $g$ and $n$. As $K$ and $k=\QQ$ are the function fields of $T$ and $S$ respectively, Lemma~\ref{lem:etcover}~(iii) gives that $d\leq|\mathcal Q|_{\bar{k}}$. The number of principal level $n$-structures on an $\OL$-abelian scheme over the algebraically closed field $\bar{k}$ is bounded from above by the number of $\OL$-module isomorphisms of $(\OL/n\OL)^2$. Further the cardinality of $\gl2(\OL/n\OL)$ is at most $n^{4g}$ since $\OL$ is a free $\ZZ$-module of rank $g$. Combining these observations leads to
\begin{equation}\label{eq:proofsdegbound}
d\leq |\mathcal Q|_{\bar{k}}\leq n^{4g}.
\end{equation}
To bound the discriminant $D_K$, we use that the finite morphism $T\to S$ is \'etale. This implies that the (corresponding function) field extension $K/\QQ$ is unramified over each rational prime in $S$. Therefore the divisibility statement
\begin{equation}\label{eq:proofsdiscdiv}
\textnormal{rad}(D_K)\mid N_S
\end{equation}
follows from Dedekind's discriminant theorem. We are now ready to prove  the height bound in statement (i), and this will be done in the following step.

\paragraph{5.} To prove statement (i), we combine \eqref{eq:hpp} with \eqref{eq:hphf} and we deduce that our point $P\in U(S)$ satisfies $h_\phi(P)=h_F(A)$. Then the estimate for $h_F(A)$ in \eqref{eq:proofshfbound} together with \eqref{eq:proofsdegbound} and \eqref{eq:proofsdiscdiv} proves the upper bound for $h_{\phi}(P)$ claimed in (i).   

\paragraph{6.} To bound the cardinality of $U(S)$ we reduce again to the situation in which the moduli problem is representable. In steps 1-4 we showed that for each $P\in U(S)$ there exists a morphism $P'\in Y'(T)$ fitting into a commutative diagram \eqref{diag:pp'} where $T$ is the spectrum of the integral closure of $\ZZ[1/N_S]$ in a normal extension $K/\QQ$  with $[K:\QQ]\leq n^{4g}$ and $\textnormal{rad}(D_K)\mid N_S$.  Moreover, we showed that $P'$  lies in the subset $$Y^*(T)\subseteq Y'(T)$$ which consists of those points $Q\in Y'(T)$ such that the equivalence $Y'\isomto \mathcal M_{\mathcal P\times \mathcal Q}$ in \eqref{eq:productequivalence} sends $Q$ to some object $(x,\alpha,\beta)$ of $\mathcal M_{\mathcal P\times \mathcal Q}$ where $x=(A,\iota,\varphi)$ with $A$ an abelian scheme over $T$ of relative dimension $g$ such that $A$ is of $\gl2$-type with $G$-isogenies satisfying condition $(*)$ and $K=K(A_n)$. Then we deduce 
$$|U(S)|\leq \sum_{T\in\mathcal T}|Y^*(T)|$$
where $\mathcal T$ is the set of schemes  $T=\spec(\OK[1/N_S])$ with $K/\QQ$ a normal extension of degree at most $n^{4g}$ and  $\textnormal{rad}(D_K)\mid N_S$.  Here we used that $\OK[1/N_S]$ is the integral closure of $\ZZ[1/N_S]$ in $K$  and we applied the following:  Given $T\to S$, $P':T\to Y'$ and $Y'\to Y$, there is at most one $P:S\to Y$ such that the diagram \eqref{diag:pp'} commutes; indeed the uniqueness of  $P:S\to Y$ follows for example from our assumption that $Y$ is separated since $T\to S$ sends the generic point to the generic point.   As $\rad(D_K)$ divides $N_S$, we see as in the proof of Theorem~B~(ii) that $D_K\leq N_S^{3l}l^{6l^2}$ for $l=n^{4g}$ and then \eqref{eq:countnf} leads to  $$|\mathcal T|\leq l^{7l^3}N_S^{3l^2}.$$
Next, we take $T\in \mathcal T$ and we bound $|Y^*(T)|$. Write $\mathcal P'=\mathcal P\times\mathcal Q$ and $B=\ZZ[1/n]$. The equivalence $Y'\isomto \mathcal M_{\mathcal P'}$ induces an isomorphism of presheaves $Y'\isomto [\mathcal M_{\mathcal P'}] $ on $(\textnormal{Sch}/B)$, 
where $[\mathcal M_{\mathcal P'}]$ sends a $B$-scheme $S$ to the set of isomorphism classes of objects in $\mathcal M_{\mathcal P'}(S)$. Notice that $[\mathcal M_{\mathcal P'}]$ is the presheaf in \cite[Lem 3.1]{vkkr:hms} with $\sch$ replaced by $(\textnormal{Sch}/B)$.  Then, as explained in \cite[$\mathsection$3.2 and $\mathsection$6]{vkkr:hms}, composing  $Y'\isomto [\mathcal M_{\mathcal P'}]$ with the forgetful morphism gives a morphism $\phi':Y'\to \absg$ of presheaves on  $(\textnormal{Sch}/B)$ which factors as 
$$\phi':Y'\to^{\phi_\alpha}\hilbmod\to^{\phi_{\varphi}} \abomult\to^{\phi_\iota} \absg.$$
Here the presheaves $\hilbmod$, $\abomult$, and $\absg$ on $(\textnormal{Sch}/B)$ are as in \cite[$\mathsection$6]{vkkr:hms} and the morphisms $\phi_\alpha$, $\phi_\varphi$ and $\phi_\iota$ are induced by forgetting the $\mathcal P'$-level structure, the $I$-polarization and the $\OL$-module structure respectively. Write $d_\alpha$, $d_\varphi$ and $d_\iota$ for the (set-theoretical) degrees of the maps $\phi_\alpha(T)$, $\phi_\varphi(T)$ and $\phi_\iota(T)$ respectively. The factorization of $\phi'$ shows
\begin{equation}\label{eq:y*bound}
|Y^*(T)|\leq d_\alpha d_\varphi d_\iota\cdot |\phi'(Y^*(T))|.
\end{equation}
To compute the set $\phi'(Y^*(T))$ we take $Q\in Y^*(T)$. Let $(x,\alpha,\beta)$ in $\mathcal M_{\mathcal P'}(T)$ be the image of $Q$ under the equivalence $Y'\isomto \mathcal M_{\mathcal P'}$ and write $x=(A,\iota,\varphi)$. The isomorphism $Y'\isomto [\mathcal M_{\mathcal P'}]$ sends $Q$ to the isomorphism class $[(x,\alpha,\beta)]$ and hence $\phi'(Q)=[A]$.  Thus the definition of the subset $Y^*(T)\subseteq Y'(T)$ assures that the isomorphism classes in $\phi'(Y^*(T))$ are generated by abelian schemes $A$ over $T$ of relative dimension $g$ such that $A$ is of $\gl2$-type with $G_\QQ$-isogenies satisfying $K(A_n)=K$. Moreover, our assumption $T\in \mathcal T$ assures that $K/\QQ$ is normal with $[K:\QQ]\leq n^{4g}=l$ and that $\rad(D_K)$ divides $N_S$. Thus Proposition~\ref{prop:esnumber} gives
$$|\phi'(Y^*(T))|\leq 
(9gl)^{(4g)^5l} N_S^{(4g)^4}.$$
To bound the set-theoretical degree $d_\alpha$ of $\phi_\alpha(T)$ we use that the Hilbert moduli schemes $Z\isomto\mathcal M_{\mathcal Q}$ and $Y'\isomto \mathcal M_{\mathcal P'}$ are both varieties over $\ZZ[1/n]$. Hence, as in the proof of \cite[Cor 4.2]{vkkr:hms}, we see that an application of \cite[Lem 8.2]{vkkr:hms} with the (naive) extension  of the presheaf $\mathcal P'$ to the Hilbert moduli stack over $\ZZ$ gives $d_\alpha\leq |\mathcal P'|_T\leq |\mathcal P'|_{\bar{\QQ}}$. Then the inequalities $|\mathcal P'|_{\bar{\QQ}}\leq |\mathcal Q|_{\bar{\QQ}}|\mathcal P|_{\bar{\QQ}}$ and $|\mathcal Q|_{\bar{\QQ}}\leq l$ show
$$
d_\alpha\leq l|\mathcal P|_{\bar{\QQ}}.
$$
It seems difficult to explicitly bound $d_\iota$ only in terms of $N_S$, $g$ and $L$. However  \eqref{eq:y*bound} holds with $d_\iota$ replaced by  $d'_{\iota}=\sup |\phi_\iota^{-1}([A])|$ with the supremum taken over all $[A]\in \phi'(Y^*(T))$, and $d'_\iota\leq d_\iota$ can be bounded as follows: For each $[A]\in\phi'(Y^*(T))$ we see as in the proof of \cite[Cor 10.2]{vkkr:hms} that $|\phi_\iota^{-1}([A])|$ coincides with the number of isomorphism classes of $\OL$-module structures on $A$. Therefore Theorem~\ref{thm:endobound}~(i) together with the bounds for $h_F(A)$ provided by Propositions~\ref{prop:esgl2} and \ref{prop:cm} leads to
$$d'_\iota\leq  aN_S^{b}   \Delta\log(3\Delta)^{2g-1}$$
for $a=(3g)^{(3g)^{11}}  (4gl)^{37(2g)^{11} l}\leq (4n)^{(4g)^{12} n^{4g}}$ and $b=12(2g)^{10}$. Here we used $T\in \mathcal T$ which assures in particular that we can apply Propositions~\ref{prop:esgl2} and \ref{prop:cm}. Indeed the isomorphism classes in $\phi'(Y^*(T))$ are generated by abelian schemes $A$ over $T$ with $K(A_n)=K$ normal over $\QQ$ since $T\in \mathcal T$, and hence condition $(*)$ is satisfied by Lemma~\ref{lem:cond*}. 

We now put everything together. The above arguments give
$$|U(S)|\leq |\mathcal T|\cdot \sup_{T\in \mathcal T}d_\alpha d_\varphi d'_\iota \cdot |\phi'(Y^*(T))|.$$  
Then, on combining the displayed bounds with the estimate for $d_\varphi$ in Corollary~\ref{cor:forgetpol}, we deduce an upper bound for $|(Y\setminus B)(S)|=|U(S)|$ as claimed in Theorem~\ref{thm:specialmain}~(ii). In the case when $g=1$ and $n=3$ we can take $c_2=2^{9^{9}}$ in Theorem~\ref{thm:specialmain}~(ii). Indeed this follows from the above arguments by computing the constant $a$ instead of estimating it.  \end{proof}

\subsection{Proof of Theorem~\ref{thm:mainint}}\label{sec:proofmainthm}

In this section we deduce our main result for integral points by applying the above established Theorem~\ref{thm:specialmain}. We continue our notation and we recall that $\mathcal M $ is a Hilbert moduli stack, $Y$ is a variety over $\ZZ$ and $S\subseteq \spec(\ZZ)$ is a nonempty open subscheme.
  \begin{proof}[Proof of Theorem~\ref{thm:mainint}]
As in Theorem~\ref{thm:mainint}, we let $Z\subseteq Y$ be a closed subscheme and we suppose that there exists a nonempty open $T\subseteq\spec(\ZZ)$ such that $Y_T$ is a coarse moduli scheme of some arithmetic moduli problem $\mathcal P$ on  $\mathcal M$ with branch locus $B=B_{\mathcal P}$ contained in $Z_T$. To reduce to the situation treated in Theorem~\ref{thm:specialmain}, we take $n\in\ZZ_{\geq 3}$ and we define $S'=S\cap T\cap \spec(\ZZ[1/n])$.  Then Theorem~\ref{thm:mainint} follows from an application of Theorem~\ref{thm:specialmain} with $n=3$ and the open subscheme $Y_{S'}$ of the coarse moduli scheme $Y_T\subseteq Y$, since all involved constructions are compatible with flat base change. 

In what follows in this proof we give the details showing that indeed everything is compatible with the base change $S'\hookrightarrow \spec(\ZZ)$. In a first step we prove that 
\begin{equation}\label{eq:proofsreductionstep}
(Y\setminus Z)(S)\subseteq (Y_{S'}\setminus B_{S'})(S').
\end{equation}
The schemes $S'$ and $S$ are nonempty open subschemes of the irreducible scheme $\spec(\ZZ)$. Thus $S'\subseteq S$ is a dense open subscheme, which implies that $(Y\setminus Z) (S)\subseteq (Y\setminus Z)(S')$ since $Y\setminus Z$ is (an open subscheme of $Y$ which is) separated over $\ZZ$. Further it holds that $(Y\setminus Z)(S')\cong (Y\setminus Z)_{S'}(S')$, since $S'\subseteq S$ are both open subschemes of the terminal object $\spec(\ZZ)$ in $\sch$. Then we deduce \eqref{eq:proofsreductionstep} by using our assumption $B\subseteq Z_T$ which assures that $(Y\setminus Z)_{S'}=Y_{S'}\setminus Z_{S'}$ is an open subscheme of $Y_{S'}\setminus B_{S'}$. 


Next, we check that $Y_{S'}$ satisfies the assumptions of Theorem~\ref{thm:specialmain}.  Notice that $Y_{S'}\cong(Y_T)_{S'}$ and that $S'\subseteq\spec(\ZZ[1/n])$ is open. Then the discussions surrounding \eqref{eq:heightrestriction} and \eqref{eq:naiveextension} imply that $Y_{S'}$ is a coarse moduli scheme over $\ZZ[1/n]$ of some arithmetic moduli problem $\mathcal P'$ on $\mathcal M_{\ZZ[1/n]}$ which is essentially the restriction of $\mathcal P$ to the open substack $\mathcal M_{S'}\subseteq \mathcal M$; see \eqref{def:presheafp'} for explicit constructions.  
In particular the variety $Y_{S'}$ over $\ZZ$, the open $S'\subseteq \spec(\ZZ)$, the arithmetic moduli problem $\mathcal P'$ on $\mathcal M_{\ZZ[1/n]}$ and the integer $n\geq 3$ indeed satisfy all assumptions of Theorem~\ref{thm:specialmain}.  Therefore an application of Theorem~\ref{thm:specialmain} with $Y=Y_{S'}$, $\mathcal P=\mathcal P'$ and $S=S'$ gives for each point $P\in (Y_{S'}\setminus B_{S'})(S')$ that
\begin{equation}\label{eq:proofsfirstbound}
h_{\phi'}(P)\leq k_1N_{S'}^{e_1} \quad \textnormal{ and } \quad |(Y_{S'}\setminus B_{S'})(S')|\leq k_2|\mathcal P'|_{\bar{\QQ}}N_{S'}^{e_2}\Delta\log(3\Delta)^{2g-1}
\end{equation}
  \noindent with the constants $e_1=\max(24,5g)$, $k_1=(4n)^{(5n)^{4g}}$ and $e_2=5\cdot n^{8g}$, $k_2=n^{4^9gn^{12g}}$. Here we used  that $B_{S'}=B_{\mathcal P'}$ which follows from \eqref{def:presheafp'} and \eqref{eq:branchlocusrestriction}, and $h_{\phi'}$ denotes the height on $Y_{S'}$ defined with respect to the initial morphism $\pi'$ in \eqref{def:presheafp'}.  

We claim that $h_{\phi'}$ restricts to the height $h_\phi$ on $Y(S)$ appearing in statement (i). To prove the claim, we consider the inclusion $Y(S)\subseteq Y_T(S')$ obtained by using \eqref{eq:proofsreductionstep} with $Z$ empty and $Y_{S'}\subseteq Y_T$. This inclusion factors as $Y(S)\subseteq Y_T(U)\subseteq Y_T(S')$ where $U=S\cap T$, and the height $h_{\phi}$ on the coarse moduli scheme $Y_T=M_{\mathcal P}$ is stable under the base change $S'\to U$. Thus \eqref{eq:heightrestriction} together with \eqref{def:presheafp'} implies that $h_\phi(P)=h_{\phi'}(P)$ for each point $P\in Y(S)$ viewed inside $Y_{S'}(S')$ using \eqref{eq:proofsreductionstep} with $Z$ empty. This proves our claim.

Now we deduce Theorem~\ref{thm:mainint}: After taking  $n=3$, we see that \eqref{eq:proofsreductionstep} and \eqref{eq:proofsfirstbound} lead to the bound in (i)  and also to the bound in (ii) since \eqref{def:presheafp'} assures that $|\mathcal P'|_{\bar{\QQ}}=|\mathcal P|_{\bar{\QQ}}$. More precisely, we obtain here that Theorem~\ref{thm:mainint} holds with $c_1=2^{7^{7g}}$, $c_2=3^{9^{9g}}$ and $e_2=5\cdot 3^{8g}$.
To verify this in the case when $g=1$, we used that we can take $k_2=2^{9^9}$ in  \eqref{eq:proofsfirstbound} for $g=1$ and $n=3$ as explained at the end of the proof of Theorem~\ref{thm:specialmain}~(ii).

To complete the proof, we give the construction of $\mathcal P'$ and $\pi'$ combining \eqref{eq:heightrestriction} with \eqref{eq:naiveextension}. The presheaf $\mathcal P'$ on $\mathcal M_{\ZZ[1/n]}$ is defined by setting  $\mathcal P'=\mathcal P$ over the open substack $\mathcal M_{S'}\subseteq \mathcal M_{\ZZ[1/n]}$  and by setting  $\mathcal P'(x)=\emptyset$ for all objects $x$ of $\mathcal M_{\ZZ[1/n]}$ not in $\mathcal M_{S'}$. Then we define the initial morphism $\pi':(\mathcal M_{\ZZ[1/n]})_{\mathcal P'}\to Y_{S'}$ as the composition of the morphisms  \begin{equation}\label{def:presheafp'}
(\mathcal M_{\ZZ[1/n]})_{\mathcal P'}\isomto (\mathcal M_{S'})_{\mathcal P'}\to^{\pi_{S'}} Y_{S'}.
\end{equation}
Here $\pi_{S'}:(\mathcal M_{S'})_{\mathcal P'}\to Y_{S'}$ is a base change (see \eqref{eq:heightrestriction}) of the initial morphism $\pi:\cmp\to Y_T$ used in the definition of $h_\phi$, and  $(\mathcal M_{\ZZ[1/n]})_{\mathcal P'}\isomto (\mathcal M_{S'})_{\mathcal P'}$ is the isomorphism of categories over $(\textnormal{Sch}/\ZZ[1/n])$ given by the identity functor where we also write $\mathcal P'$ for the restriction of $\mathcal P'$ to the open substack $\mathcal M_{S'}\subseteq \mathcal M_{\ZZ[1/n]}$. In fact $\mathcal P'$ is automatically an arithmetic moduli problem on $\mathcal M_{\ZZ[1/n]}$, since $(\mathcal M_{S'})_{\mathcal P'}$ identifies with $\cmp\times_\ZZ S'$ as explained right after \eqref{eq:heightrestriction} and $\cmp$ is a separated finite type DM stack over $\ZZ$ by assumption.  
\end{proof}

We now explain that the finiteness statement provided by Theorem~\ref{thm:mainint} is essentially due to Faltings~\cite{faltings:finiteness}. Recall that $Z\subseteq Y$ is closed and  $T\subseteq\spec(\ZZ)$ is  nonemtpy open such that $Y_T$ is a coarse moduli scheme of some arithmetic moduli problem $\mathcal P$ on  $\mathcal M$ with $B_{\mathcal P}\subseteq Z_T$. Suppose that $|\mathcal P|_{\bar{\QQ}}<\infty$. Then the finiteness result 
\begin{equation}\label{eq:finitenessviafalt}
|(Y\setminus Z)(S)|<\infty
\end{equation}
can be proved as follows. As in the proof of Theorem~\ref{thm:mainint}, we reduce to the situation treated in Theorem~\ref{thm:specialmain}. Then the arguments of Theorem~\ref{thm:specialmain} give a finite finite set $\mathcal T$ of schemes $T$, with $T$ an open subscheme of $\spec(\OK)$ for some number field $K$,  such that 
$$|(Y\setminus Z)(S)|\leq  \sum_{T\in \mathcal T}|Y'(T)|.$$  
Here $Y'$ is a moduli scheme of a moduli problem $\mathcal P'$ on $\mathcal M$, which satisfies $|\mathcal P'|_{\bar{\QQ}}<\infty$ since $|\mathcal P|_{\bar{\QQ}}<\infty$ by assumption. Now, we see that \eqref{eq:finitenessviafalt} follows directly from the finiteness of $Y'(T)$ obtained in \cite[Prop 12.1]{vkkr:hms} which ultimately relies on Faltings' finiteness result \cite[Thm~6]{faltings:finiteness}. Furthermore, as the latter two results hold for any number field $F$, we expect that the same strategy allows to deduce the finiteness statement \eqref{eq:finitenessviafalt} more generally for any open subscheme $S\subseteq \spec(\mathcal O_F)$; we leave this for the future.

\subsection{Proof of the corollaries}\label{sec:proofcorollaries}
In this section we prove the corollaries. We continue our notation. Recall that $\mathcal M$ is a Hilbert moduli stack and that $S\subseteq \spec(\ZZ)$ is nonempty open. Let $Y(2)$ be the coarse moduli scheme over $\ZZ[1/2]$ of the moduli problem $\mathcal P(2)$ of principal level 2-structures on $\mathcal M_{\ZZ[1/2]}$. We first deduce Corollary~\ref{cor:y2} from Theorem~\ref{thm:mainint} and Corollary~\ref{cor:y2emptybranchlocus}.
\begin{proof}[Proof of Corollary~\ref{cor:y2}]
Let $Y$ be a variety over $\ZZ$. Suppose that there exists a nonempty open $T\subseteq\spec(\ZZ)$ such that $Y_T\cong Y(2)_T$. We write $\mathcal P$ for $\mathcal P(2)$. As in \eqref{eq:proofsdegbound}, we find that $|\mathcal P|_{\bar{\QQ}}\leq 2^{4g}$. Further, Corollary~\ref{cor:y2emptybranchlocus} gives that the branch locus $B_{\mathcal P}\subseteq Y(2)$ is empty. Thus an application of Theorem~\ref{thm:mainint} with $Y_T$ and $Z=\emptyset$ implies Corollary~\ref{cor:y2}. 

We now give some details explaining how to formally apply here Theorem~\ref{thm:mainint}. After identifying $Y_T$ with the coarse moduli scheme $Y(2)_T$ over $T$ using \eqref{eq:heightrestriction}, we extend $Y_T=Y(2)_T$ to a coarse moduli scheme over $\ZZ$ via $\mathcal P'$ defined  in  \eqref{eq:naiveextension}. Then $Y_T$ is a coarse moduli scheme of the arithmetic moduli problem $\mathcal P'$ on $\mathcal M$ such that on $Y_T(S\cap T)$ the height $h_\phi$ defined with respect to an initial morphism $\pi:(\mathcal M_T)_{\mathcal P}\to Y(2)_T$ equals the height defined with respect to the initial morphism $\pi':\mathcal M_{\mathcal P'}\to Y_T$ in \eqref{eq:naiveextension}. Further it holds $|\mathcal P'|_{\bar{\QQ}}=|\mathcal P|_{\bar{\QQ}}$, while \eqref{eq:branchlocusrestriction} implies $B_{\mathcal P'}= (B_{\mathcal P})_T$ and hence $B_{\mathcal P'}\subset Y_T$ is empty.

Finally, to see that we can indeed take in Corollary~\ref{cor:y2} the same constants as in Theorem~\ref{thm:mainint}, we used that Theorem~\ref{thm:mainint} holds in fact with $c_1=2^{7^{7g}}$, $c_2=3^{9^{9g}}$ and $e_2=5\cdot 3^{8g}$ as explained in its proof. Therefore we can also take the same constants in the remark  below Corollary~\ref{cor:y2} concerning the case $T=\spec(\ZZ[\tfrac{1}{2\Delta}])$.  \end{proof}

\newpage

\section{Integral points on the Clebsch--Klein surfaces}\label{sec:ck}

In this section we study integral points on the Clebsch--Klein surfaces. After discussing various aspects of the motivating Diophantine equations, we show that the Clebsch--Klein surfaces are coarse Hilbert moduli schemes over $\ZZ[\tfrac{1}{30}]$ and we compare the height $h_\phi$ with the Weil height. Then we deduce from Theorem~\ref{thm:mainint}  explicit bounds for the Weil height and the number of integral points on the Clebsch--Klein surfaces.

\subsection{Main results}\label{sec:ckmainresults}

To explain the main results of this section, we continue the notation of $\mathsection$\ref{sec:introck}. Recall that $S$ is an arbitrary finite set of rational primes, that $\ZZ_S=\ZZ[1/N_S]$ for $N_S=\prod_{p\in S}p$ and that $h$ denotes the usual absolute logarithmic Weil height (\cite[p.16]{bogu:diophantinegeometry}).

\subsubsection{Diophantine equations}\label{sec:ckdiopheqresults}
The study of integral points on the Clebsch--Klein surfaces is motivated by the Diophantine equations \eqref{eq:ck} and \eqref{eq:sigma24}. We now discuss various aspects of these equations; see also $\mathsection$\ref{sec:degeneration+rational} for a discussion of two Diophantine results based on completely different methods.

\paragraph{Icosahedron equation.} We first consider \eqref{eq:sigma24}. Let $\Sigma$ be the set of $x\in \ZZ^5$ such that $\gcd(x_i)=1$ and such that for all rational primes $p\notin S$ the image of $x$ in $(\ZZ/p\ZZ)^5$ is not a scalar multiple of a standard basis vector of $(\ZZ/p\ZZ)^5$. Recall that \eqref{eq:sigma24} is given by
\begin{equation}
\sigma_4(x)=0=\sigma_2(x), \quad x\in \Sigma.\tag{\ref{eq:sigma24}}
\end{equation}
Here $\sigma_n$ denotes the $n$-th elementary symmetric polynomial for $n\in \ZZ_{\geq 1}$. 
Notice that \eqref{eq:sigma24} has infinitely many solutions if one removes in the definition of $\Sigma$ any of the two extra assumptions on $x$. Indeed the equations $\sigma_2=0$ and $\sigma_4=0$ are homogeneous, and they define the projective scheme $X\subset\mathbb P^4_\ZZ$ which has infinitely many $\QQ$-points by \eqref{eq:rationalpointslowerbound}. 
 
For any solution $x$ of \eqref{eq:sigma24}, we define the logarithmic Weil height $h(x)=\max_i\log |x_i|.$ A main goal of this section is to deduce from Theorem~\ref{thm:mainint} the following result. 
\begin{corollary}\label{cor:sigma24}
Any solution $x$ of \eqref{eq:sigma24} satisfies $h(x)\leq cN_S^{24}$ and the number of solutions of \eqref{eq:sigma24} is at most $(cN_S)^e$, where $e=10^{12}$ and $c=10^e$.
\end{corollary}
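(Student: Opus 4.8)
\textbf{Proof proposal for Corollary~\ref{cor:sigma24}.}

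The plan is to deduce the corollary from Theorem~\ref{thm:mainint} applied to the surface $Y=X\setminus Z$ of \eqref{def:sigma24surface}, via the moduli-theoretic description promised in Theorem~E (and Proposition~F). First I would translate the problem about solutions $x$ of \eqref{eq:sigma24} into a problem about $\ZZ_S$-points of $Y$. By the homogeneity of $\sigma_2$ and $\sigma_4$, a solution $x\in\Sigma$ (taken modulo $\pm1$) determines a point of $X(\ZZ_S)$ whose coordinates are coprime integers, and the two extra congruence conditions in the definition of $\Sigma$ are precisely what forces this point to avoid the five bad sections $e_i$; this is the content of Lemma~\ref{lem:solutionssigma24}, which I would invoke to get a bijection between (classes of) solutions of \eqref{eq:sigma24} and $Y(\ZZ_S)$. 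Here I must be a little careful that $Y$ as defined over $\ZZ$ has $Y(\ZZ_S)$ matching solutions over $\ZZ_S^\times$-coprime tuples rather than over $\ZZ_S$ naively; this bookkeeping (the passage between $\ZZ$-coprime primitive vectors and $\ZZ_S$-points of a projective scheme minus a divisor) is standard but needs to be spelled out once.

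Next I would feed $Y$ into Theorem~\ref{thm:mainint}. By Theorem~E, over $\ZZ[\tfrac1{30}]$ the surface $Y$ is a coarse moduli scheme of an arithmetic moduli problem $\mathcal P$ on the Hilbert moduli stack $\mathcal M$ associated to $L=\QQ(\sqrt5)$, and the branch locus $B_{\mathcal P}$ is empty; moreover $|\mathcal P|_{\bar\QQ}\leq 16$. So in the notation of Theorem~\ref{thm:mainint} I take $T=\spec(\ZZ[\tfrac1{30}])$, $Z$ empty (or any closed subscheme of $Z_T$, which is empty), $g=2$, $\Delta=5$. Then part~(i) gives $h_\phi(P)\leq c_1N_U^{e_1}$ and part~(ii) gives $|Y(\ZZ_S)|\leq c_2\cdot 16\cdot N_U^{e_2}\cdot 5\log(15)^{3}$, where $U=T\cap S$, so $N_U=\textnormal{rad}(30 N_S)$, and $e_1=\max(24,5g)=24$, $e_2=6\cdot3^{16}$, $c_1=7^{7^{14}}$, $c_2=9^{9^{18}}$. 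To turn the $h_\phi$-bound into a Weil-height bound I would apply the height comparison of Theorem~E~(ii): $h(P)\leq 2h_\phi(P)+8^8\log(h_\phi(P)+8)$, and then observe that $h(P)=\max_i\log|x_i|$ for the corresponding solution $x$ (the coordinates being coprime integers, so all local heights at finite places vanish and the height is just the archimedean contribution). Combining, $h(x)\ll h_\phi(P)\ll N_U^{24}$ up to the explicit constants.

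The remaining work is purely arithmetic of the constants: I need to absorb the factors of $30$ (i.e.\ replace $N_U=\textnormal{rad}(30N_S)$ by something like $30N_S$, hence by $N_S$ at the cost of enlarging the constant), absorb the $2$, the $8^8\log(\cdot)$ term, the $16$, the $5\log(15)^3$, and the various $c_1,c_2$, and check that everything fits under the stated clean bounds $h(x)\leq cN_S^{24}$ and $\#\{\text{solutions}\}\leq (cN_S)^e$ with $e=10^{12}$ and $c=10^e$. Since $7^{7^{14}}$ and $9^{9^{18}}$ are both comfortably below $10^{10^{12}}$, and $e_2=6\cdot 3^{16}<10^{12}$, this is just a matter of crude estimates (e.g.\ $N_U\leq 30N_S\leq N_S^{?}$ needs $N_S\geq$ a small bound, and the finitely many small cases $N_S$ below that threshold can be checked to contribute nothing or absorbed into $c$). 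I expect \emph{no} conceptual obstacle here; the only mildly delicate point is the very first step — making precise, via Lemma~\ref{lem:solutionssigma24}, the identification of solutions of \eqref{eq:sigma24} with $Y(\ZZ_S)$, including the normalization by $\pm1$ and the matching of the logarithmic height $\max_i\log|x_i|$ with the height $h$ on $Y\subset\mathbb P^4_\ZZ$ — and the purely mechanical (but lengthy) verification that the accumulated constants stay within the advertised $c$ and $e$.
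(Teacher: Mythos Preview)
Your proposal is correct and follows essentially the same route as the paper: the paper first proves the ``Corollary'' from the introduction by applying Theorem~\ref{thm:mainint} to $Y$ over $T=\spec(\ZZ[\tfrac{1}{30}])$ via Theorem~\ref{thm:cksurfaces} (exactly your steps with the same constants $c_1,c_2,e_1,e_2$ and the height comparison), and then deduces Corollary~\ref{cor:sigma24} from this by invoking Lemma~\ref{lem:solutionssigma24} and Lemma~\ref{lem:identificationsheightcompatible} for the identification of solutions with $Y(\ZZ_S)$ and the height compatibility. Your only slight overcaution is the ``small $N_S$'' remark: since $N_U=\textnormal{rad}(30N_S)\leq 30N_S$, the factor $30$ is absorbed directly into $c$ without any case distinction.
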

The explicit height bound in Corollary~\ref{cor:sigma24} implies that for any given set $S$ one can in principle determine all solutions of \eqref{eq:sigma24} as follows: For each of the finitely many $x\in \ZZ^5$ with $h(x)\leq cN_S^{24}$, check whether $x$ lies in $\Sigma$ and satisfies \eqref{eq:sigma24}.  Furthermore, for small sets $S$ this allows to completely solve \eqref{eq:sigma24} in practice when combined with an efficient sieve for the $x\in \Sigma$ which satisfy \eqref{eq:sigma24}. Such efficient sieves which can deal with initial height bounds of the size of Corollary~\ref{cor:sigma24} were developed for many Diophantine equations via Diophantine approximation techniques, see for example Baker--Davenport~\cite{bada:diophapp}, de Weger~\cite{deweger:lllred}, K.-Matschke~\cite{vkma:computation}, Gherga--Siksek~\cite{ghsi:tm} and the references therein. However, new ideas are required to construct an efficient sieve for \eqref{eq:sigma24}.


\paragraph{Cubic diagonal equation.}We next discuss equation \eqref{eq:ck}. This equation is contained in \eqref{eq:sigma24} from a geometric point of view, see Lemma~\ref{lem:openimmersion}. If one replaces $\ZZ^\times_S$ by $\QQ^{\times}$, then 
\begin{equation}
x_1^3+x_2^3+x_3^3+x_4^3=1=x_1+x_2+x_3+x_4,  \quad x_i\in \ZZ_S^\times,\tag{\ref{eq:ck}}
\end{equation}
has infinitely many solutions. Indeed the solutions of  \eqref{eq:ck} with $x_i\in \QQ^\times$ correspond to the $\QQ$-points of the variety $U$ over $\ZZ$ defined in  \eqref{def:cubicsurface}  and $U$ has infinitely many $\QQ$-points, see Section~\ref{sec:diophaeq}.  For each solution $x=(x_i)$ of \eqref{eq:ck}, we denote by $h(x)$  the logarithmic Weil height of $x\in \mathbb A_\ZZ^4(\bar{\QQ})$. We deduce from Theorem~\ref{thm:mainint} the  following result.
\begin{corollary}\label{cor:ck}
Any solution $x=(x_i)$ of \eqref{eq:ck} satisfies $h(x)\leq cN_S^{24}$ and the number of solutions of \eqref{eq:ck} is at most $(cN_S)^e$, where $e=10^{12}$ and $c=10^e$.
\end{corollary}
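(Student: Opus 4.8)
\textbf{Proof proposal for Corollary~\ref{cor:ck}.}
The plan is to deduce Corollary~\ref{cor:ck} from Corollary~\ref{cor:sigma24} together with the geometric relation between the cubic diagonal equation \eqref{eq:ck} and the icosahedron equation \eqref{eq:sigma24}, which is encoded by the open immersion $U\hookrightarrow Y$ over $\ZZ[1/3]$ of Lemma~\ref{lem:openimmersion} and the identification of $Y(\ZZ_S)$ with the solutions of \eqref{eq:sigma24} in Lemma~\ref{lem:solutionssigma24}. First I would recall the dictionary of $\mathsection$\ref{sec:diophaeq}: a solution $x=(x_i)$ of \eqref{eq:ck} with $x_i\in\ZZ_S^\times$ corresponds to a $\ZZ_S$-point of the scheme $U$ defined in \eqref{def:cubicsurface}, namely the complement in the cubic surface $V\subset\mathbb P^4_\ZZ$ of the five coordinate hyperplanes; the condition $x_i\in\ZZ_S^\times$ (not merely $x_i\in\QQ^\times$) is exactly what forces the point to avoid $\cup V_+(z_i)$ and to extend to a $\ZZ_S$-point rather than just a $\QQ$-point. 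Via the affine chart $\sum z_i=1$ one identifies such an $x$ with its image, so $h(x)$ (the Weil height of $x\in\mathbb A^4_\ZZ(\bar\QQ)$) is comparable to the projective Weil height of the corresponding point of $V\subset\mathbb P^4$.

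Next I would transport the problem to $Y$. By Lemma~\ref{lem:openimmersion} there is an open immersion $U\hookrightarrow Y$ over $\ZZ[1/3]$, and by Lemma~\ref{lem:solutionssigma24} the $\ZZ_S$-points of $Y$ correspond (modulo $\pm1$) to the solutions of \eqref{eq:sigma24}. Composing, every solution of \eqref{eq:ck} produces, after possibly enlarging $S$ to contain $3$ (which only enlarges $N_S$ by a bounded factor and is harmless for the stated constants), a solution of \eqref{eq:sigma24}; moreover this assignment is injective up to the sign ambiguity, and in fact the original affine solution $x$ of \eqref{eq:ck} is recovered from its image in $Y$ together with the normalization $\sum z_i=1$, so the map on solution sets is genuinely injective. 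Under this correspondence the homogeneous coordinates of the point of $X\subset\mathbb P^4$ can be taken to be the integers $x=(x_i)$ themselves (they are coprime after clearing denominators, but for solutions of \eqref{eq:ck} with $x_i\in\ZZ_S^\times$ one already has an integral representative supported on primes of $S$), so the height $h(x)$ in the sense of Corollary~\ref{cor:ck} is bounded by the height of the associated solution of \eqref{eq:sigma24} in the sense of Corollary~\ref{cor:sigma24}, up to an additive term of size $O(\log N_S)$ coming from the sign/coprimality normalization. Feeding in the bounds $h\le cN_S^{24}$ and (number of solutions)$\le (cN_S)^e$ from Corollary~\ref{cor:sigma24}, with $e=10^{12}$ and $c=10^e$, and absorbing the bounded loss into the (very generous) constant $c=10^e$, yields exactly the assertions of Corollary~\ref{cor:ck}.

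The main obstacle I anticipate is bookkeeping rather than anything deep: one must be careful that the passage from the affine solution $x=(x_i)$ of \eqref{eq:ck} to the projective point of $X$, and then to the $\Sigma$-normalized integer vector of \eqref{eq:sigma24}, does not distort the height by more than an additive $O(\log N_S)$, and that the condition $x_i\in\ZZ_S^\times$ is correctly matched with membership in $\Sigma$ and with the $\ZZ_S$-integrality required in Lemma~\ref{lem:solutionssigma24}. In particular one must check that clearing denominators and dividing out $\gcd(x_i)$ only introduces prime factors already in $S$, so that the height change is controlled, and that the finitely many low-characteristic primes $\{2,3,5\}$ — where the moduli interpretation of Theorem~\ref{thm:cksurfaces} / Theorem~E is unavailable — can be harmlessly adjoined to $S$. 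Once these compatibilities are verified the corollary follows immediately from Corollary~\ref{cor:sigma24}; I would present the argument as a short reduction, deferring the height-comparison and integrality-matching lemmas to $\mathsection$\ref{sec:diophaeq}.
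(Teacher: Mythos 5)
Your high-level reduction — pass from a solution of \eqref{eq:ck} to a $\ZZ_S$-point of $U$ via Lemma~\ref{lem:solutionsck}, then to a $\ZZ_T$-point of $Y$ via the open immersion of Lemma~\ref{lem:openimmersion} after adjoining the prime $3$ — is exactly the route the paper takes. However there is a genuine gap in the quantitative bookkeeping, and it is not cosmetic.

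First, the height distortion under $\varphi:U\hookrightarrow Y$ is \emph{multiplicative}, not additive: Lemma~\ref{lem:openimmersion} gives $h(P)\leq 4h(\varphi(P))$, so the correct conclusion is $h(x)\leq 4h(\varphi(P))$, not $h(x)\leq h(\varphi(P))+O(\log N_S)$. Second, and more seriously, you cannot deduce Corollary~\ref{cor:ck} with the stated constants from Corollary~\ref{cor:sigma24} as a black box. Applying Corollary~\ref{cor:sigma24} with $T=S\cup\{3\}$ in place of $S$ gives $h(\varphi(P))\leq cN_T^{24}$ and a count at most $(cN_T)^e$; since $N_T\leq 3N_S$, these become $4cN_T^{24}\leq 4\cdot 3^{24}\,cN_S^{24}$ for the height and $(cN_T)^e\leq 3^e(cN_S)^e$ for the count. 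The factor $3^e=3^{10^{12}}$ is not ``a bounded loss you can absorb into $c$'' — the constant $c=10^e$ is the same in both corollaries, so there is no slack to absorb it. The paper's actual proof sidesteps this by invoking the stronger intermediate estimate \eqref{eq:statecorbounds}, which asserts $4h(P)\leq c\bigl(\tfrac{1}{3}N_S\bigr)^{24}$ and $|Y(\ZZ_S)|\leq\bigl(\tfrac{c}{3}N_S\bigr)^e$; the factor $4$ on the left and the factor $1/3$ in front of $N_S$ are built in precisely so that they cancel against the loss of $4$ from Lemma~\ref{lem:openimmersion} and the loss of $3$ from $N_T\leq 3N_S$. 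The stated Corollary~\ref{cor:sigma24} has already spent this headroom, so you must go back to \eqref{eq:statecorbounds} (applied directly to $Y(\ZZ_T)$, without the detour through solutions of \eqref{eq:sigma24}) to get the clean constants of Corollary~\ref{cor:ck}.
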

Here again, the explicit height bound implies that for any given $S$ one can in principle determine all solutions of \eqref{eq:ck} and that for small $S$ one can completely solve \eqref{eq:ck} in practice when combining with an efficient sieve. We are currently trying to develop such an efficient sieve for \eqref{eq:ck} via Diophantine approximation techniques. In fact Diophantine approximation techniques give that the number $n$ of solutions of \eqref{eq:ck} satisfies  
$$
n\leq b, \quad b=\exp\bigl(24^{12}(4|S|+1)\bigl).
$$
This is a very special case of the result of Evertse--Schlickewei--Schmidt~\cite{evscsc:uniteq} for the generalized unit equation.  We point out that $b$ does not depend on the size of the primes in $S$ and that $n\leq b$ is substantially stronger than our estimate for $n$ in Corollary~\ref{cor:ck}; indeed $b\leq (3N_S)^{4\cdot 24^{12}}$ and for each real $\epsilon>0$ the prime number theorem gives $b\ll_\epsilon N_S^{\epsilon}$.


\subsubsection{Clebsch--Klein surfaces and coarse Hilbert moduli schemes}\label{sec:cksurfaces+chms}

The solutions of the Diophantine equations \eqref{eq:ck} and \eqref{eq:sigma24} correspond to $\ZZ_S$-points on the Clebsch--Klein surfaces. We next  discuss these surfaces and we state Theorem~\ref{thm:cksurfaces} which gives that these surfaces are coarse Hilbert moduli schemes over $\ZZ[\tfrac{1}{30}]$.

\paragraph{Icosahedron surface.}  To relate the solutions of  \eqref{eq:sigma24} to points on a coarse Hilbert moduli scheme,  let $Z\subset\mathbb P^4_\ZZ$ be the union of (the images of) the five $\ZZ$-points obtained by permuting the coordinates of $(1,0,\dotsc,0)$ and consider the relative surface $Y$ over $\ZZ$:    
\begin{equation}
Y=X\setminus Z, \quad X\subset \mathbb P^4_\ZZ: \, \sigma_2=0=\sigma_4.\tag{\ref{def:sigma24surface}}
\end{equation}
The rational surfaces $Y_\CC\subset X_\CC$ can be obtained via Klein's construction involving the Icosahedron; see \cite{klein:cubicsurfaces,hirzebruch:ck}.  
The solutions of the Diophantine equation \eqref{eq:sigma24} correspond (modulo $\pm 1$) to $\ZZ_S$-points of $Y$ by Lemma~\ref{lem:solutionssigma24}.
Hirzebruch~\cite{hirzebruch:ck} showed that $Y_\CC$ is the Hilbert modular variety of the principal level 2 subgroup of the Hilbert modular group of the field $L=\QQ(\sqrt{5})$. Moreover, building on the work of Hirzebruch~\cite{hirzebruch:ck}, we shall obtain the following theorem which is the main result of this section: Let $\mathcal M$ be the Hilbert moduli stack associated to $L=\QQ(\sqrt{5})$ and define $B=\ZZ[\tfrac{1}{30}]$.
\begin{theorem}\label{thm:cksurfaces}
The scheme $Y_B$ is a coarse moduli scheme of an arithmetic moduli problem $\mathcal P$ on $\mathcal M$ with the following properties. 
\begin{itemize}
\item[(i)] The branch locus $B_{\mathcal P}$ is empty and $|\mathcal P|_{\bar{\QQ}}\leq 16$.
\item[(ii)] Any $P\in Y(\bar{\QQ})$ satisfies $h(P)\leq 2h_\phi(P)+8^8\log(h_\phi(P)+8)$.
\end{itemize}
\end{theorem}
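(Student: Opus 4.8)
\textbf{Proof proposal for Theorem~\ref{thm:cksurfaces}.}

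The plan is to split the statement into two essentially independent parts: the moduli-theoretic assertion together with (i), and the height comparison (ii). For the moduli part, I would first establish Proposition~F, i.e.\ produce an isomorphism $\minm \isomto X_B$ over $B = \ZZ[\tfrac{1}{30}]$ restricting to $M_2 \isomto Y_B$, where $M_2$ is the Faltings--Chai coarse space of the Rapoport stack $\mathcal M_2 = \cmp$ over $B$. Following $\mathsection$\ref{sec:outlineproofofmoduliint}, the key input is Proposition~G: the sheaf $\omega^{\otimes 2}$ on the toroidal compactification $\torm$ is globally generated by the five sections $s_i$ attached to the Eisenstein series $E_i$. Granting Proposition~G, the Stein factorization \eqref{eq:introsteinfact} of the induced $\torm \to \mathbb P^4_B$ yields $\torm \to^{\bar\pi} \minm \to^f \mathbb P^4_B$ with $f$ finite, and $f$ extends the generic-fiber isomorphism \eqref{eq:introgenfiberiso} of Hirzebruch/Shepherd-Barron--Taylor; Zariski's main theorem then forces $f\colon \minm \isomto X_B$ since $X_B$ is normal (it is the projective scheme cut out by $\sigma_2 = \sigma_4 = 0$, whose normality over $B$ one checks directly, e.g.\ via the Jacobian criterion away from $2,3,5$) and $f$ is birational and finite. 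Restricting to the open part gives $M_2 \isomto Y_B$, so $Y_B$ is a coarse moduli scheme of the moduli problem $\mathcal P$ on $\mathcal M$ obtained by taking $\mathcal P = \mathcal P(2)$-type structures (the symplectic level $2$ structures of \eqref{def:symplecticlvl}) over $B$ and extending by the empty set elsewhere as in \eqref{eq:naiveextension}; this $\mathcal P$ is arithmetic since $\mathcal M_2$ is a separated finite type DM stack over $B$. For (i), the bound $|\mathcal P|_{\bar\QQ} \le 16$ follows as in \eqref{eq:proofsdegbound} by counting level structures (here $g=2$, but the relevant count for the symplectic level $2$ problem gives at most $16$), and emptiness of $B_{\mathcal P}$ follows from Corollary~\ref{cor:y2emptybranchlocus} applied over $B$ — more precisely from Proposition~\ref{prop:auto}, since every geometric automorphism group of $\cmp$ over $B$ has order two (generated by $[-1]$, by Lemma~\ref{lem:autofield} and Serre's lemma as in the proof of Corollary~\ref{cor:y2emptybranchlocus}), so no point can lie in the branch locus.

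For Proposition~G itself, the plan is the one outlined in $\mathsection$\ref{sec:outlinefingenproof}: reduce via the $q$-expansion principle and Lemma~\ref{lem:descendingomega} (descent of $\omega^{\otimes 2}$ to an invertible sheaf $\pi_*\omega^{\otimes 2}$ on $M_2$, using Olsson's result on $G$-torsors after computing the automorphism groups of $\mathcal M_2$) to showing $\cap D_i = \emptyset$, where $D_i$ is the divisor of the section $s_i$. Here one uses Lemma~\ref{lem:eisensteinfourierexpansion} on the Fourier expansion \eqref{eq:introfourierexpansion} of $E_i$ — in particular that the constant term at the cusp $i$ is $3$ (via $\zeta_L(-1) = \tfrac{1}{30}$ of Siegel--Zagier) and that $3$ must be inverted, which is why the base is $\ZZ[\tfrac1{30}]$ and not $\ZZ[\tfrac1{10}]$. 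Then one identifies each $D_i$, using Lemma~\ref{lem:divhorizontal} (no vertical components, via the $q$-expansion principle on the irreducible-fibered normal scheme $\minm$) and Lemma~\ref{lem:divisorintersection}, with a union of four of the disjoint modular curves $gC \subset M_2$ ($g \in \sl2(\OL/2\OL)$, $C$ the image of $\mathcal E_2 \to \mathcal M_2$), matching them to Hirzebruch's computation of the connected components of the Hirzebruch--Zagier divisor $F_1 \subset \mathbb H^2/\Gamma(2)$ and using the intersection behaviour with cusp resolutions to pin down which four occur. Disjointness of the $gC$ then gives $\cap D_i = \emptyset$.

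For the height comparison (ii), the plan follows the two-part strategy at the end of $\mathsection$\ref{sec:introckcoarse}: work over $\bar\QQ$, write $h = h_W$ for the Weil height on $Y \subset \mathbb P^4$. First, via Lemma~\ref{lem:thetaheight}, bound $h(P) \le h_\theta(P)$, where $h_\theta$ is pulled back from the theta height on $A_{2,2}$ given by the fourth powers of the ten even theta nullwerte; the point is the identity $E_i^4 = \pm\prod_{j\ne i}\theta_{ij}^4$ relating the coordinate functions $E_i$ to G\"otzky's theta functions $\theta_{ij}$ (using Hirzebruch's work and G\"otzky's transformation formula for $\theta_{ij}^8$), together with the results of Gundlach and Lauter--Naehrig--Yang identifying each $\theta_{ij}$ up to sign with the pullback of an even $\theta_m$. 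Second, relate $h_\theta$ to a theta height $h_\Theta$ on $A_2$, and then to $h_\phi$ via Faltings' comparison of $h_\Theta$ with $h_F$ and Pazuki's explicit comparison \cite{pazuki:heights} of $h_F$ and $h_\Theta$ on $A_2(\bar\QQ)$; chasing the constants through Pazuki's inequality (which is of the shape $h_\Theta \le c_1 h_F + c_2\log(h_F + c_3)$ with explicit dimension-$2$ constants) yields the stated bound $h(P) \le 2h_\phi(P) + 8^8\log(h_\phi(P)+8)$ after simplifying the constants.

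The main obstacle will be Proposition~G, specifically the precise computation of the divisors $D_i$ in terms of the curves $gC$ and the verification that exactly four mutually disjoint ones occur in each $D_i$ so that the five divisors have empty common intersection. This requires carefully transporting Hirzebruch's complex-analytic intersection calculations on the resolved Hilbert modular surface into the arithmetic setting over $B$, controlling the integral models $gC$ of the Hirzebruch--Zagier divisor components (via Bruinier--Burgos--K\"uhn and Yang), and combining these with the exact constant term $3$ of $E_i$ at its cusp — a numerical coincidence that is exactly what makes $\cap D_i = \emptyset$ work and forces the primes $2,3,5$ to be inverted.
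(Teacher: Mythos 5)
Your proposal is correct and reproduces the paper's argument step for step: Proposition~\ref{prop:moduliinter} via Proposition~\ref{prop:fingen}, Stein factorization and Zariski's main theorem; Proposition~\ref{prop:auto} together with Serre's lemma and Lemma~\ref{lem:autofield} for the empty branch locus; and Lemma~\ref{lem:thetaheight} plus Pazuki's explicit $h_\Theta$--$h_F$ comparison for part (ii), with the final extension of $\mathcal P$ from $\mathcal M_B$ to $\mathcal M$ via \eqref{eq:naiveextension} handled exactly as you describe. The only cosmetic divergence is that the paper checks normality of $X$ by a direct (Macaulay2) computation with the defining equations rather than the Jacobian criterion, and it otherwise organizes the proof of Proposition~\ref{prop:fingen} through Lemmas~\ref{lem:descendingomega}, \ref{lem:eisensteinfourierexpansion}, \ref{lem:divhorizontal} and \ref{lem:divisorintersection} precisely as you anticipate.
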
  
Here the restriction of $\mathcal P$ to $\mathcal M_B\subset \mathcal M$ is the moduli problem of principal level 2-structures as in \cite[1.21]{rapoport:hilbertmodular}; we shall recall in \eqref{def:symplecticlvl} the definition of $\mathcal P$ on $\mathcal M_B$.  Further $|\mathcal P|_{\bar{\QQ}}$ is the maximal number of $\mathcal P$-level structures over $\bar{\QQ}$ as in \eqref{def:maxlvl}, $h$ is the logarithmic Weil height on $Y\subset \mathbb P^4_\ZZ$, and $h_\phi$ is the height on $Y(\bar{\QQ})\cong Y_B(\bar{\QQ})$ defined in \eqref{def:cheight} with respect to the initial morphism $\pi:\cmp\to Y_B$  given in \eqref{def:morphismckthm}. 

We now discuss some aspects of Theorem~\ref{thm:cksurfaces}. For any algebraically closed field $k$ of characteristic not in $\{2,3,5\}$, it provides a natural moduli interpretation of $Y(k)=Y_B(k)$:
\begin{equation}\label{eq:moduliintY}
\pi^{-1}:Y(k)\isomto [\cmp(k)].
\end{equation}
Concerning the shape of the height bound in (ii): It is linear in $h_\phi$ which is best possible  and the factor $2$ in front of $h_\phi$ might be optimal, but the constants $8^8$ and $8$ can be improved up to a certain extent; see the discussions at the end of Section~\ref{sec:heightcomparison}.

\paragraph{Cubic diagonal surface.}The Icosahedron surface is related to the cubic diagonal surface which was first studied by Clebsch~\cite{clebsch:clebschsurface}. Recall that $V$ is the closed subscheme
\begin{equation}
V\subset\mathbb P^4_\ZZ: \sum z_i=0=\sum z_i^3, \quad \textnormal{and}\quad U=V\setminus \cup V_+(z_i)\tag{\ref{def:cubicsurface}}
\end{equation}
with the union taken over the five coordinate functions $z_i$ of $\mathbb P^4_\ZZ=\textnormal{Proj}\bigl(\ZZ[z_i]\bigl)$. The solutions of \eqref{eq:ck} correspond to $\ZZ_S$-points of $U$ by Lemma~\ref{lem:solutionsck}. Thus the following lemma shows that solving \eqref{eq:ck} is equivalent to solving a special case of  \eqref{eq:sigma24}. 
\begin{lemma}\label{lem:openimmersion}
There exists an open immersion $\varphi:U\hookrightarrow Y$ over $\ZZ[1/3]$ such that
$$\tfrac{1}{4}h(P)\leq h(\varphi(P))\leq 4h(P), \quad P\in U(\QQ).$$
\end{lemma}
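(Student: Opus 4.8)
The statement compares the Clebsch cubic surface $U$ (defined by $\sum z_i = 0 = \sum z_i^3$, with coordinate hyperplanes removed) with the Icosahedron surface $Y = X \setminus Z$ (defined by $\sigma_2 = 0 = \sigma_4$). The plan is to write down an explicit linear isomorphism of $\mathbb{P}^4$ that carries $V$ into $X$ away from characteristic $3$, and then to check that it identifies the two open subschemes $U$ and $Y$ and that it distorts the Weil height by at most a bounded factor. The starting observation is elementary symmetric-function algebra: for $z = (z_i) \in \mathbb{P}^4$ with $\sigma_1(z) = 0$, Newton's identities give $\sum z_i^2 = \sigma_1^2 - 2\sigma_2 = -2\sigma_2(z)$ and $\sum z_i^3 = \sigma_1^3 - 3\sigma_1\sigma_2 + 3\sigma_3 = 3\sigma_3(z)$. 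So on the hyperplane $\sigma_1 = 0$ the Clebsch conditions $\sum z_i = 0 = \sum z_i^3$ are simply $\sigma_1(z) = 0 = \sigma_3(z)$, whereas $X$ is cut out by $\sigma_2 = 0 = \sigma_4$. Thus $V$ and $X$ are two diagonal-type subvarieties of $\mathbb{P}^4$ and one expects a coordinate change (over $\ZZ[1/3]$, since a factor of $3$ appears in the Newton relations and in the relevant linear algebra) interchanging the roles of the relevant symmetric functions; this is exactly the kind of identification Hirzebruch~\cite{hirzebruch:ck} and Klein~\cite{klein:cubicsurfaces} exploit over $\CC$, and I would extract the explicit integral matrix from their construction (or from a direct search) and record it.

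\textbf{Key steps, in order.} First, I would fix an explicit $\GL_5$-matrix $M$ with entries in $\ZZ[1/3]$, together with its inverse also defined over $\ZZ[1/3]$, inducing an automorphism $\psi_0 \colon \mathbb{P}^4_{\ZZ[1/3]} \isomto \mathbb{P}^4_{\ZZ[1/3]}$, and verify by direct substitution into the defining equations that $\psi_0$ restricts to a closed-immersion-level isomorphism $V_{\ZZ[1/3]} \isomto X_{\ZZ[1/3]}$. Second, I would track the five coordinate hyperplanes: $U$ is obtained from $V$ by deleting $\cup V_+(z_i)$, and $Y$ is obtained from $X$ by deleting the five points $Z$; the point here is that under $\psi_0$ the locus $\cup V_+(z_i) \cap V$ maps onto (a scheme-theoretic neighbourhood structure giving) $Z \cap X$, so that $\psi_0$ carries $U$ isomorphically onto $Y$ over $\ZZ[1/3]$ — this is where one must be a little careful, checking that the complement of $U$ inside $V$ is mapped exactly to the complement of $Y$ inside $X$, not merely to something larger or smaller. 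This defines the desired open immersion $\varphi = \psi_0|_U \colon U \hookrightarrow Y$ over $\ZZ[1/3]$ (it is even an isomorphism onto $Y_{\ZZ[1/3]}$, hence in particular an open immersion over $\ZZ[1/3]$). Third, for the height comparison, I would use the standard fact (see \cite[Thm~2.3.8 or similar]{bogu:diophantinegeometry}) that a linear automorphism $\psi_0$ of $\mathbb{P}^4$ given by an integer matrix $M$ with $\det$ a unit away from finitely many primes changes the logarithmic Weil height by a bounded additive error: $|h(\psi_0(P)) - h(P)| \le \log(C)$ where $C$ depends only on the entries of $M$ and $M^{-1}$. Then, since the entries are explicit small integers (up to the denominator dividing a power of $3$), one gets a multiplicative bound; the claimed constant $4$ should follow by choosing $M$ with small enough coefficients and using $h(P) \ge 0$ to absorb the additive constant into the factor. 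For $P \in U(\QQ)$ this gives $\tfrac14 h(P) \le h(\varphi(P)) \le 4 h(P)$.

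\textbf{Main obstacle.} The conceptual content is light; the real work is bookkeeping. The step I expect to be most delicate is the \emph{exact} matching of the removed loci in Step 2 — i.e. verifying that $\psi_0(\cup V_+(z_i) \cap V) = Z$ as subsets (and with compatible scheme structure where needed), rather than, say, $\psi_0$ sending the coordinate hyperplanes to some other configuration of lines and points on $X$. Klein's and Hirzebruch's descriptions identify the relevant $15$ lines / $6$ points combinatorics on the cubic surface, so I would lean on \cite{hirzebruch:ck} to pin down which linear change realises the ``coordinate hyperplanes $\leftrightarrow$ five special points'' dictionary, and then confirm it by an explicit (machine-checkable) computation over $\ZZ[1/3]$. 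A secondary nuisance is making the height constant come out as precisely $4$: this may require either a judicious choice among several valid matrices $M$, or absorbing a genuinely additive error term using $h(P) \ge \log 2$ for $P \in U(\QQ)$ (no coordinate of a point of $U$ can vanish, so the height is bounded below away from $0$), and I would present whichever of these is cleanest. None of this uses anything beyond elementary projective geometry and the functoriality of the Weil height, so no deep input is needed here — all the arithmetic-geometric weight of the section sits in Theorem~\ref{thm:cksurfaces}, not in this lemma.
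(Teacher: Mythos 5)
Your approach cannot work, and the obstruction is elementary: the two surfaces have different degrees as subvarieties of $\mathbb{P}^4$. The variety $V$ is cut out by a linear form $\sigma_1 = 0$ and a cubic $\sum z_i^3 = 0$, so it is a cubic surface (degree $3$); the variety $X$ is the complete intersection of a quadric $\sigma_2 = 0$ and a quartic $\sigma_4 = 0$, so it has degree $8$. A linear automorphism $\psi_0 \in \mathrm{PGL}_5$ preserves degree, so there is no $\psi_0$ carrying $V$ to $X$, over $\ZZ[1/3]$ or any other base. The superficial resemblance you note — $V$ given by ``$\sigma_1 = 0 = \sigma_3$'' after Newton, $X$ by ``$\sigma_2 = 0 = \sigma_4$'' — is exactly the trap: the defining polynomials have different degrees, so no change of basis can interchange them. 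Your Step~1 can never get off the ground, and Steps~2 and~3 inherit the gap.

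The actual map in the paper is not linear: it is the standard degree-$4$ Cremona involution of $\mathbb{P}^4$ given in coordinates by $z_i \mapsto \prod_{j\ne i} z_j$ (equivalently $z_i \mapsto 1/z_i$), which is only defined on the open locus where all coordinates are nonzero. Concretely, the paper works in the affine open $D_+(\prod z_i)$ with coordinates $t_i = z_i/z_0$, sets $s_i = 1/t_i$, and shows (using the Newton identity $3\sigma_3 = \sigma_1\sigma_2 - \sigma_1 p_2 + p_3$, whence the factor of $3$ and the base $\ZZ[1/3]$) that this ring map takes the ideal $(\sigma_2(s), \sigma_4(s))$ into $(\sum t_i, \sum t_i^3)$ and vice versa. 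This gives an isomorphism $U_{\ZZ[1/3]} \isomto Y^0_{\ZZ[1/3]}$ with $Y^0 = Y \setminus \cup V_+(x_i)$, and $\varphi$ is its composition with the open immersion $Y^0 \hookrightarrow Y$. Your height discussion also reflects the same confusion: a linear automorphism of $\mathbb{P}^4$ shifts the Weil height by a bounded \emph{additive} constant, and the awkwardness you encounter in trying to promote an additive error to the stated \emph{multiplicative} factor $4$ is a symptom that you have the wrong kind of map. The factor $4$ in the lemma is simply the degree of the Cremona transformation: each new coordinate $\prod_{j\ne i} z_j$ is a monomial of degree $4$ in the old ones, which directly gives $h(\varphi(P)) \le 4\,h(P)$, and the reverse inequality comes from applying the same bound to the inverse map $t_i \mapsto 1/s_i$.
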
 
Here $h$ denotes the logarithmic Weil height on $U\subset\mathbb P^4_\ZZ$ and $Y\subset\mathbb P^4_\ZZ$. Now, as the open immersion $\varphi:U\hookrightarrow Y$ is flat,  we can view $U$ as a coarse moduli scheme via Theorem~\ref{thm:cksurfaces} which gives that the open substack $\mathcal M_{\mathcal P'}=U\times_Y\cmp$ of $\cmp$ admits a coarse moduli space $\pi_{U}:\mathcal M_{\mathcal P'}\to U$. In particular we obtain a natural moduli interpretation of $U(k)$:
\begin{equation}\label{eq:moduliintU}
\pi_U^{-1}:U(k)\isomto [\mathcal M_{\mathcal P'}(k)]
\end{equation}
for any algebraically closed field $k$ whose characteristic does  not lie in $\{2,3,5\}$. As $\pi$ is \'etale by Theorem~\ref{thm:cksurfaces}~(i), its base change $\pi_U $ remains \'etale and  hence the branch locus of $\pi_U:\mathcal M_{\mathcal P'}\to U$ is again empty. In the proof of Lemma~\ref{lem:openimmersion} given in Section~\ref{sec:diophaeq}, we shall explicitly construct the open immersion $\varphi:U\hookrightarrow Y$ over $\ZZ[1/3]$.
 
\paragraph{Terminology.}The literature contains various names for (surfaces which are birational to) the surfaces defined in \eqref{def:sigma24surface} and \eqref{def:cubicsurface} such as for example Clebsch diagonal (cubic) surface, Clebsch surface, Klein's icosahedron surface or Clebsch--Klein surface. In what follows we shall refer by Clebsch--Klein surfaces to the surfaces defined in \eqref{def:sigma24surface} and \eqref{def:cubicsurface}.

\subsubsection{Rational points and degeneration of $S$-integral points}\label{sec:degeneration+rational}

To complement our discussion of Diophantine aspects of the Clebsch--Klein surfaces, we now mention in addition two important Diophantine results. They both directly follow from general theorems for smooth cubic surfaces based on completely different methods.

\paragraph{Rational points.}The varieties $Y$ and $U$ both have infinitely many $\QQ$-points. To give a stronger statement, we define $Y^0=Y\setminus \cup_i V_+(x_i)$ for $x_i$ the $i$-th coordinate function on $\mathbb P^4_\ZZ=\textnormal{Proj}(\ZZ[x_i])$ and we take $W\in \{U,Y^0\}$. Then for any $b\in \RR$ with $b\to \infty$ it holds
\begin{equation}\label{eq:rationalpointslowerbound}
|\{P\in W(\QQ), h(P)\leq \log b\}|\gg b^\nu(\log b)^{\rho-1},\quad \nu=\begin{cases}
1  & \textnormal{if } W=U,\\
1/4 & \textnormal{if } W=Y^0.
\end{cases}
\end{equation}
Here $\rho$ is the Picard rank of the smooth projective surface $V_\QQ$ and the implied constant depends only on $V_\QQ$. In $\mathsection$\ref{sec:rationalpoints} we shall deduce \eqref{eq:rationalpointslowerbound} by combining Lemma~\ref{lem:openimmersion} with a result of Slater--Swinnerton-Dyer~\cite{slsw:cubicmanin} in which they established for certain smooth cubic surfaces the lower bound predicted by Manin's conjecture  (\cite{frmats:maninconjecture}). In fact the lower bound in \eqref{eq:rationalpointslowerbound}  is optimal for $W=U$ if  Manin's conjecture holds, see $\mathsection$\ref{sec:rationalpoints}.

\paragraph{Degeneration of $S$-integral points.} Let $K$ be a number field and let $S$ be a finite set of places of $K$. On using Diophantine approximation techniques, Corvaja, Levin and Zannier~(\cite{coza:intpointssurfaces,zannier:ramificationdivisor,coza:intpointscertainsurfaces,levin:siegelpicard,coleza:integralpoints,coza:intpointsdivisibility}) established fundamental Diophantine results proving in particular the degeneration of $S$-integral points for large classes of varieties over $K$. For example, consider the divisor $D=V_+(z_i)\cup V_+(z_j)$ of $V_\QQ$ with $i\neq j$. Then, as explained in $\mathsection$\ref{sec:degeneration}, the general result of Corvaja--Zannier~\cite[Thm 1]{coza:intpointsdivisibility} for smooth cubic surfaces proves that the set of $S$-integral points of 
\begin{equation}\label{degeneration}
V_\QQ\setminus D
\end{equation}
is not Zariski dense. Furthermore, it follows from Corvaja--Zannier~\cite[Thm 9]{coza:intpointsdivisibility} that this degeneration result is essentially optimal; see $\mathsection$\ref{sec:degeneration}. We point out that even in the special case when $K=\QQ$, proving the degeneration of $S$-integral points of $V_\QQ\setminus D$ is out of reach for our approach which can only deal with the much smaller variety $U_\QQ\subset V_\QQ\setminus D$.  Here the main issue is that the moduli interpretation of the points in $V\setminus U$ is substantially more complicated and it is not clear to us how to exploit it. 

One could try to prove new degeneration results for $S$-integral points on surfaces by combining the moduli formalism with the powerful techniques developed by Corvaja, Levin and Zannier.  To this end we now discuss an open problem for  Hilbert modular surfaces.
Let $L$ be a totally real quadratic field, let $\Gamma$ be a congruence subgroup of $\sl2(\OL_L)$, and let  $Y$ be the canonical model over $\bar{\QQ}$ of the complex Hilbert modular surface $\mathbb H^2/\Gamma$. We denote by $X$  the Baily--Borel compactification of $Y$.  The singular locus  $X^s$ of $X$ is finite and $X\setminus Y\subseteq X^s$.   Let  $\pi:\tilde{X}\to X$ be a minimal desingularization.  We write $E$ for the exceptional divisor of $\pi$, and we let $D$ be a divisor of $\tilde{X}$ with $\textnormal{supp}(D)\subseteq E$. 
 
\vspace{0.3cm}
\noindent{\bf Problem.}
\emph{Let $S$ be a finite set of places of a number field $K$ over which everything is defined. Prove or disprove that the set of $S$-integral points of $\tilde{X}\setminus D$ is Zariski dense.}
\vspace{0.3cm}

In the simplest case when $\textnormal{supp}(D)=E$,  we expect that the set of $S$-integral points of $\tilde{X}\setminus D$ is finite and thus degenerate. This follows for example from Theorem~\ref{thm:mainint}~(ii) for $K=\QQ$  and  it should follow from the discussion below \eqref{eq:finitenessviafalt} for all $K$, since $\tilde{X}\setminus E\cong Y\setminus Y^s$  and $Y$ is a coarse Hilbert moduli scheme whose branch locus is precisely the singular locus $Y^s$ of $Y$. 
On the other hand, the problem seems to be more complicated when $D\neq E$. 
A difficulty arises from the fact that the divisor $D$ is never big, since $\textnormal{codim}(X,\pi(D))\geq 2$  and $\pi:\tilde{X}\to X$ is birational.  However $\tilde{X}$ and $D$ have other useful geometric properties which one could try to combine with the  techniques of Corvaja, Levin and Zannier in order to study the problem. For instance Hirzebruch~\cite{hirzebruch:hmsenseignement} explicitly determined the divisor $E$, while Hirzebruch--van de Ven--Zagier~\cite{hirzebruch:hmsenseignement,hiva:hmsclass,hiza:hmsclass} and others classified the surfaces $\tilde{X}$ in the sense of Enriques--Kodaira; see van der Geer~\cite[VII]{vandergeer:hilbertmodular}.







\subsubsection{Outline of the section} In Section~\ref{sec:moduliinterpretation} we introduce some notation and we give in Proposition~\ref{prop:moduliinter} a moduli interpretation of $X$ and $Y$ over $\ZZ[\tfrac{1}{30}]$. Then we prove Proposition~\ref{prop:moduliinter} in Sections~\ref{sec:hodgebundlepositive} and \ref{sec:proofmoduliinter}. More precisely, in Section~\ref{sec:hodgebundlepositive} we study the Hodge bundle $\omega$ on the toroidal compactification of Rapoport~\cite{rapoport:hilbertmodular}  and we show in Proposition~\ref{prop:fingen} that $\omega^{\otimes 2}$ is generated by its global sections. For this we first collect various results: We prove in $\mathsection$\ref{sec:descendomega2} that $\omega^{\otimes 2}$ descends to a line bundle on the coarse moduli space, we define and study in $\mathsection$\ref{sec:modularcurves} integral models of modular curves, and we determine in $\mathsection$\ref{sec:eisenstein} the Fourier expansion of the Eisenstein series $E_i$ corresponding to the cusps. This allows us to compute in $\mathsection$\ref{sec:eisensteindivisor} the divisors of the Eisenstein series $E_i$ in terms of integral models of modular curves and then we complete in $\mathsection$\ref{sec:proofpropfingen} the proof of Proposition~\ref{prop:fingen}. In Section~\ref{sec:proofmoduliinter} we deduce Proposition~\ref{prop:moduliinter}  by combining Proposition~\ref{prop:fingen} with Hirzebruch's work~\cite{hirzebruch:ck} and the construction of the minimal compactification due to Faltings--Chai~\cite{fach:deg,chai:hilbmod}.

In Section~\ref{sec:heightcomparison} we study heights and we prove the height bound in Theorem~\ref{thm:cksurfaces}~(ii). Here an important part of the proof  is the construction of a Theta height on $Y$ which we relate in Lemma~\ref{lem:thetaheight} to the Weil height on $Y\subset \mathbb P^4_\ZZ$. In Section~\ref{sec:proofofthmck} we complete the proof of Theorem~\ref{thm:cksurfaces} and we deduce the Corollary from the introduction.

In Section~\ref{sec:diophaeq} we first collect basic results for the Clebsch--Klein surfaces and the Diophantine equations \eqref{eq:ck} and \eqref{eq:sigma24}. Then we deduce in $\mathsection$\ref{sec:proofofdiopheqcorollaries} the Corollaries~\ref{cor:sigma24} and~\ref{cor:ck} which explicitly bound the height and the number of solutions of \eqref{eq:ck} and \eqref{eq:sigma24}. We conclude by discussing a lower bound for the rational points of bounded height ($\mathsection$\ref{sec:rationalpoints}) and a degeneration result for $S$-integral points ($\mathsection$\ref{sec:degeneration}).

\subsection{Moduli interpretations for $X$ and $Y$}\label{sec:moduliinterpretation}

Let $X$ and $Y$ be as in \eqref{def:sigma24surface}, and let $\mathcal M$ be the Hilbert moduli stack associated to the real quadratic field $L=\QQ(\sqrt{5})$ with discriminant $\Delta=5$. We denote by $\OL$ the ring of integers of $L$. Throughout Sections~\ref{sec:moduliinterpretation}, \ref{sec:hodgebundlepositive} and \ref{sec:proofmoduliinter}, we shall work over $B=\ZZ[\tfrac{1}{30}]$ and we write $\mathcal M=\mathcal M_B$, $X=X_B$ and $Y=Y_B$ to simplify notation. 

\paragraph{The stack $\mathcal M_2$.} We define $\mathcal M_2=\cmp$ where $\mathcal P$ is the moduli problem on $\mathcal M$ of principal level $2$-structures satisfying the condition \eqref{def:symplecticlvl} of \cite[1.21]{rapoport:hilbertmodular}. Then $\mathcal M_2$ is a separated finite type DM-stack over $B$  
with a coarse moduli space $\pi:\mathcal M_2\to M_2$. In fact $M_2$ is a scheme (\cite[Thm 3.1]{conrad:coarse}) since $\mathcal M_2$ admits a finite \'etale scheme cover.  

\paragraph{Moduli interpretation.}Faltings--Chai~\cite{fach:deg,chai:hilbmod} constructed  the minimal compactification $\minm$ of $M_2$. In particular $\minm$ is a normal scheme containing $M_2$ as an open subscheme. The following result gives a moduli interpretation for the points of $X$ and $Y.$

\begin{proposition}\label{prop:moduliinter}
There is an isomorphism $M_2^*\isomto X$ which restricts to $M_2\isomto Y.$
\end{proposition}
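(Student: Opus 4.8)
The strategy is to produce a finite morphism $f:\minm\to X_B$ extending the known generic-fibre isomorphism $M_{2,\QQ}^*\isomto X_\QQ$ of Hirzebruch (as corrected by Shepherd-Barron--Taylor), and then to invoke Zariski's main theorem to upgrade $f$ to an isomorphism. The input that makes the extension possible over $B=\ZZ[\tfrac1{30}]$ (rather than over an uncontrolled open of $\spec(\ZZ)$) is Proposition~\ref{prop:fingen}: the invertible sheaf $\omega^{\otimes 2}$ on the smooth toroidal compactification $\torm$ is globally generated by the five sections $s_i$ corresponding to the Eisenstein series $E_i$ via the $q$-expansion principle. Granting this, I would proceed as follows.

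\textbf{Step 1: descend and construct $f$.} The five sections $s_i$ of $\omega^{\otimes 2}$ on $\torm$ define a $B$-morphism $\torm\to\mathbb P^4_B$. Feeding this into the Stein factorization \eqref{eq:introsteinfact} of the morphism $\torm\to\mathbb P^n_B$ attached to a globally generated power of $\omega$ (which exists by \cite[Thm~(iii)]{chai:hilbmod}), I obtain $\torm\to^{\bar\pi}\minm\to^f\mathbb P^4_B$ for a finite morphism $f$; here one uses that the $s_i$ are pulled back from $\minm$, which is exactly the content of the descent lemma for $\omega^{\otimes 2}$ to $M_2$ (via Olsson's result on $G$-torsors after computing the automorphism groups of $\mathcal M_2$) combined with the behaviour of $\bar\pi$ at the cusps, so that the composite $\torm\to\mathbb P^4_B$ factors through $\bar\pi$. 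Since $\minm$ is normal and proper over $B$ with irreducible fibres, and the $q$-expansions \eqref{eq:introfourierexpansion} show no $s_i$ vanishes on a whole fibre, the image of $f$ lands in $X_B$; thus $f:\minm\to X_B$ is a finite $B$-morphism.

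\textbf{Step 2: identify $f$ with Hirzebruch's isomorphism on the generic fibre and conclude.} On the generic fibre, the $E_i$ are the five Eisenstein series used by Hirzebruch, so $f_\QQ$ coincides with the isomorphism $M_{2,\QQ}^*\isomto X_\QQ$ of \eqref{eq:introgenfiberiso}; in particular $f_\QQ$ is birational, hence $f$ is a finite birational morphism onto the integral scheme $X_B$ (here one checks $X_B$ is integral and normal in the relevant codimension, or rather applies Zariski's main theorem in the form: a finite birational morphism to a normal noetherian scheme is an isomorphism — and $X_B$ is normal away from bad fibres which are excluded over $B$). Since $\minm$ is normal and $X_B$ is the target, Zariski's main theorem gives that $f:\minm\isomto X_B$ is an isomorphism. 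Restricting to the locus where the boundary (cusps) is removed, and matching $M_2^*\setminus(\text{cusps})=M_2$ with $X_B\setminus Z_B=Y_B$ via Hirzebruch's identification of the five cusps with the five points of $Z$, yields $M_2\isomto Y_B$. This also exhibits $Y_B$ as a coarse moduli scheme with initial morphism $\pi=f\bar\pi|_{\mathcal M_2}:\mathcal M_2\to Y_B$.

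\textbf{Main obstacle.} The genuinely hard part is Proposition~\ref{prop:fingen} — equivalently, the divisor identity $\cap_i D_i=\emptyset$ on $M_2$ in \eqref{eq:introddivisorintersection}. This requires: (a) the precise $q$-expansion of each $E_i$ at every cusp, in particular the constant term $a_0\in\{0,3\}$, which rests on $\zeta_L(-1)=\tfrac1{30}$ and forces the inversion of $3$ (hence the base $\ZZ[\tfrac1{30}]$ rather than $\ZZ[\tfrac1{10}]$); (b) showing each prime divisor of $D_i$ is horizontal, using the $q$-expansion principle on the normal proper $\minm$ with irreducible fibres; (c) computing $D_{i,\QQ}$ as a union of four disjoint integral modular curves $gC\subset M_2$ by combining the disjointness and moduli interpretation of the curves $gC$ with Hirzebruch's computation of $F_1\subset\mathbb H^2/\Gamma(2)$ and the intersection behaviour with the cusp resolutions. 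Once $\cap_i D_i=\emptyset$, global generation of $\pi_*\omega^{\otimes2}$ on $M_2$ follows, and then Proposition~\ref{prop:fingen} on $\torm$ follows from the Rapoport and Faltings--Chai results together with the $q$-expansions. Everything downstream — Steps 1 and 2 above — is then comparatively formal, modulo the careful but routine verification that $\minm$ and $X_B$ are normal and that the Stein factorization and Zariski's main theorem apply verbatim over the base $B$.
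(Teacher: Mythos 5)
Your proposal matches the paper's proof essentially step by step: starting from Proposition~\ref{prop:fingen}, construct $f:\minm\to\mathbb P^4_B$ via the sections $s_i$ and Stein factorization, identify $f_\QQ$ with the Hirzebruch/Shepherd-Barron--Taylor isomorphism onto $X_\QQ$, apply Zariski's main theorem to upgrade the finite birational morphism to an isomorphism, and restrict by a closure argument on the boundary to get $M_2\isomto Y_B$. The one point where you are looser than the paper is the integrality and normality of $X_B$: the paper verifies these directly (by computing with the defining equations $\sigma_2=\sigma_4=0$ over $\ZZ[\tfrac1{30}]$), rather than by arguing that the singular fibres are ``excluded over $B$'' — normality over $\QQ$ does not by itself propagate to the integral model, so this check genuinely has to be done.
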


Hirzebruch~\cite{hirzebruch:ck} constructed the isomorphism $M_2^*\isomto X$ over $\CC$ by  using the five Eisenstein series $E_i$ corresponding to the five cusps of the Hilbert modular variety $M_2(\CC)$. Proposition~\ref{prop:moduliinter} shows in particular that $Y$ is a coarse moduli scheme over $B$ of the arithmetic moduli problem $\mathcal P$ on $\mathcal M$; we shall explicitly construct in \eqref{def:initialmorphismoverB} an initial morphism $\pi:\mathcal M_2\to Y$ using the five Eisenstein series $E_i$.  In the following Sections~\ref{sec:hodgebundlepositive} and \ref{sec:proofmoduliinter}, we give a proof of Proposition~\ref{prop:moduliinter} via the strategy outlined in $\mathsection$\ref{sec:outlineproofofmoduliint}.

\subsection{The square of the Hodge bundle}\label{sec:hodgebundlepositive}
We continue our notation and terminology. Rapoport~\cite{rapoport:hilbertmodular} constructed a smooth toroidal compactification $\torm$ of $\mathcal M_2$.  In this section we study the square $\omega^{\otimes 2}$ of the Hodge bundle $\omega$ on $\torm$ defined by Chai in~\cite[4.1]{chai:hilbmod}. We denote by $s_i$ the five global sections of $\omega^2$ corresponding to the five Eisenstein series $E_i$ defined in $\mathsection$\ref{sec:eisenstein}. 

\begin{proposition}\label{prop:fingen}
The invertible sheaf $\omega^{\otimes 2}$ on $\torm$ is globally generated by the $s_i$.
\end{proposition}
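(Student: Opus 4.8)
The statement to prove is that $\omega^{\otimes 2}$ on $\torm$ is globally generated by the five sections $s_i$ corresponding to the Eisenstein series $E_i$; as explained in the outline, the key point is the divisor computation $\cap D_i=\emptyset$, where $D_i$ is the divisor of $s_i$. The plan is to reduce step by step to this intersection statement and then to establish it via the modular curves $gC\subset M_2$.

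First I would reduce from $\torm$ to the coarse moduli space $M_2$. Using Chai's results \cite{chai:hilbmod} that a positive power of $\omega$ is globally generated and the explicit description of $\torm\setminus\mathcal M_2$ as a union of toroidal boundary components, together with the Fourier expansion of $E_i$ computed in Lemma~\ref{lem:eisensteinfourierexpansion} and the $q$-expansion principle of Rapoport~\cite{rapoport:hilbertmodular}, I would show that it suffices to prove that $\pi_*\omega^{\otimes 2}$ on $M_2$ is globally generated by the images of the $s_i$. Here one needs that $\pi_*\omega^{\otimes 2}$ is invertible; this follows from a descent result of Olsson~\cite{olsson:gtorsors} once one knows the automorphism groups of $\mathcal M_2$, which by Lemma~\ref{lem:autofield} and the level-$2$ rigidity (as in the proof of Corollary~\ref{cor:y2emptybranchlocus}) are all $\{\pm 1\}$, acting on $\omega$ through $\pm 1$ and hence trivially on $\omega^{\otimes 2}$. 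Because the $s_i$ have no common zero on the boundary (by the nonvanishing of the constant terms $a_0$ in \eqref{eq:introfourierexpansion} at the appropriate cusps and the $q$-expansion principle), the problem becomes: the divisors $D_i$ of the sections $s_i$ on $M_2$ satisfy $\cap_i D_i=\emptyset$.

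Next I would compute each $D_i$. In Lemma~\ref{lem:divhorizontal} one shows, using that $\minm$ is normal and proper over $B$ with irreducible fibers and again invoking the $q$-expansion principle applied to the Fourier expansion \eqref{eq:introfourierexpansion} (which has content $1$), that $s_i$ cannot vanish on any vertical divisor; hence every prime component of $D_i$ is horizontal, i.e. the closure of its generic fiber. Then I would identify the generic fiber $D_{i,\QQ}$: by Hirzebruch's work~\cite{hirzebruch:ck} it is a union of connected components of the Hirzebruch--Zagier curve $F_1\subset \mathbb H^2/\Gamma(2)$, and by the results of $\mathsection$\ref{sec:modularcurves} each such component is of the form $gC(\CC)$ for some $g\in G=\sl2(\OL/2\OL)$, with the curves $gC\subset M_2$ pairwise disjoint whenever distinct. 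So $D_i$ is a disjoint union of certain $gC$'s, and by tracking the intersection behaviour of the components of $F_1$ with the cusp resolutions in the minimal desingularization of $M_2^*(\CC)$ — exactly as in Hirzebruch~\cite{hirzebruch:ck} — I would determine precisely which (four) $gC$ occur in each $D_i$. The combinatorics then gives that for any point there is some index $i$ whose four curves $gC$ avoid it, whence $\cap_i D_i=\emptyset$ and Proposition~\ref{prop:fingen} follows.

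\textbf{Main obstacle.} The technical heart, and the step I expect to be hardest, is the precise computation of $D_{i,\QQ}$ and the bookkeeping of which four $gC$ lie in each $D_i$: this requires faithfully transporting Hirzebruch's complex-analytic intersection computations on the minimal desingularization of $M_2^*(\CC)$ into statements about the integral modular curves $gC$, and checking that the $G$-action permutes the $D_i$ and the $gC$ compatibly so that the five configurations of four curves have empty common intersection. The descent of $\omega^{\otimes 2}$ to an invertible sheaf on $M_2$ and the vertical-divisor argument are comparatively routine once the automorphism groups and the Fourier expansions are in hand, so I would treat those first and then concentrate the effort on the divisor identification.
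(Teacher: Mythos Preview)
Your proposal is correct and follows essentially the same approach as the paper: handle the boundary via the constant term of $E_i$ at cusp $i$ together with the $q$-expansion principle, descend $\omega^{\otimes 2}$ to an invertible sheaf on $M_2$ via Olsson using that all geometric automorphism groups of $\mathcal M_2$ are $\{\pm1\}$, and then reduce to $\cap_i D_i=\emptyset$ which is established by showing each $D_i$ is horizontal and is a disjoint union of four modular curves $gC$ determined by Hirzebruch's cusp combinatorics. The paper's argument for $\cap_i D_i=\emptyset$ is perhaps slightly cleaner than the bookkeeping you anticipate: it uses that any two curves $F^{ij}$ and $F^{kl}$ are either equal or disjoint (via Lemma~\ref{lem:geompropimmersion}(iii)), so if $\cap_i D_i$ were nonempty there would be five coincident $F^{ij_i}$, contradicting the known count of components of $F_1(\CC)$.
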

Hirzebruch~\cite{hirzebruch:ck} essentially proved this statement over the generic fiber, while the method of Faltings--Chai~\cite{fach:deg,chai:hilbmod} gives that a positive power of $\omega$ is generated by its global sections. The proof of Proposition~\ref{prop:fingen} given in $\mathsection$\ref{sec:proofpropfingen} crucially relies on the results and arguments of Hirzebruch and Faltings--Chai, see $\mathsection$\ref{sec:outlinefingenproof} for the principal ideas.  

\paragraph{Geometric properties.}We now introduce some notation and we review geometric properties of $\mathcal M_2$, $M_2$, $\torm$ and $\minm$ which we shall use in the proofs below. The stack $\mathcal M_2$ is smooth over $B$ since the smooth stack in \cite[1.22]{rapoport:hilbertmodular} identifies over $B$ with $\mathcal M_2$.  Then Corollary~\ref{cor:y2emptybranchlocus} implies that $M_2$ is also smooth over $B$.    
The following statements are (direct consequences of) fundamental results in \cite[$\mathsection$3.6 and $\mathsection$4.3]{chai:hilbmod}: The minimal compactification $\minm$ is a normal scheme which is projective and finite type over $B$, $\mathcal M_2\subset \torm$ and $M_2\subset \minm$ are dense open,  there exists a canonical surjective morphism
\begin{equation}\label{def:pibar}
\bar{\pi}:\torm\to\minm
\end{equation}
induced by the fact that a positive power of the Hodge bundle $\omega$ on $\torm$ is generated by its global sections, the restriction of $\bar{\pi}$ to $\mathcal M_2\subset \torm$ is the initial morphism $\pi:\mathcal M_2\to M_2$, and $\bar{\pi}(\torm\setminus\mathcal M_2)=\minm\setminus M_2$.     In particular $M_2$ is fiberwise dense in $\minm$ since $\mathcal M_2$ is fiberwise dense in $\torm$ by construction.  As $\pi:\mathcal M_2\to M_2$ is a universal homeomorphism and $\mathcal M_2$ has irreducible fibers by \cite[6.2]{rapoport:hilbertmodular}, the fibers of $M_2$, and thus of $\minm$, are irreducible.  This implies that $M_2$, and hence $\minm$, is irreducible since $M_2$ is flat and of finite presentation over $B$.  Further $M_2$ and $\minm$ have both dimension 3 with all fibers of dimension 2, since the generic fiber of $M_2$ has dimension 2 by \cite[p.269]{rapoport:hilbertmodular}.  We also mention that Lan's book~\cite{lan:compactifications} on compactifications of PEL-type Shimura varieties contains a detailed construction of the toroidal and minimal compactifications of $\mathcal M_2$.


\paragraph{The moduli problem $\mathcal P$.} For several computations in this section it will be useful to recall the definition of the moduli problem $\mathcal P$ of principal level 2-structures satisfying the condition in \cite[1.21]{rapoport:hilbertmodular}. Let $x=(A,\iota,\varphi)$ be in $\mathcal M(S)$ for some $B$-scheme $S$. Then $\mathcal P(x)$ is the set of all principal level 2-structures $\alpha:(\OL/2\OL)^2_S\to A_2$  such that the diagram 
\begin{equation}\label{def:symplecticlvl}
\xymatrix@R=4em@C=4em{
\Hom_\OL(A,A^\vee)^\textnormal{sym}\otimes_{\OL} \wedge_{\OL/2\OL}^2A_2 \ar[r] \ar[d]^{\varphi^{-1}\otimes \alpha^{-1}} & \mathfrak d^{-1}\otimes_\ZZ \mu_2 \ar[d] \\
\mathfrak d^{-1}\otimes_\OL \wedge_{\OL/2\OL}^2(\OL/2\OL)_S^2 \ar[r] &  \mathfrak d^{-1}\otimes_\ZZ (\ZZ/2\ZZ)_S} 
\end{equation}
commutes. Here $\mathfrak d^{-1}$ is the $\Z$-dual of $\OL$ and we view $\Hom_\OL(A,A^\vee)^\textnormal{sym}$ as a sheaf for the \'etale topology on $(\textnormal{Sch}/S)$. As explained in \cite[p.266]{rapoport:hilbertmodular} the top horizontal morphism is an isomorphism induced by the Weil pairing of $A$ which is $\OL$-compatible. The bottom horizontal morphism comes from the isomorphism $\wedge_{\OL/2\OL}^2(\OL/2\OL)^2\isomto \OL/2\OL$ given by the determinant   and the right vertical morphism comes from $\mu_2\isomto (\ZZ/2\ZZ)_S$.

\subsubsection{Descending powers of $\omega$}\label{sec:descendomega2}
We continue our notation. In what follows we also denote by $\omega$ the restriction of $\omega$ to $\mathcal M_2\subset \torm$. The method of Faltings--Chai~\cite{fach:deg}  gives that a positive power $\omega^{\otimes k}$ of $\omega$  descends along the initial morphism $\pi:\mathcal M_2\to M_2$ to an invertible sheaf on $M_2$, see \cite[Main Thm]{chai:hilbmod}. To show that one can take here $k=2$, we use a different strategy which follows Olsson's proof of~\cite[Prop 6.1]{olsson:gtorsors} and which exploits that we can explicitly compute all the geometric automorphism groups of $\mathcal M_2$.

\begin{lemma}\label{lem:descendingomega}
The canonical (adjunction) morphism $\pi^*\pi_* \omega^{\otimes 2}\isomto \omega^{\otimes 2}$ is an isomorphism of sheaves on $\mathcal M_2$, and $\pi_*\omega^{\otimes 2}$ is an invertible sheaf on $M_2$.
\end{lemma}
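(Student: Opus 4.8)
The plan is to follow Olsson's descent criterion for sheaves on Deligne--Mumford stacks, reducing everything to a purely local statement about the action of the geometric automorphism groups of $\mathcal M_2$ on the fibers of $\omega^{\otimes 2}$. First I would recall the relevant descent result: for a separated finite type DM stack $\mathcal X$ over $B$ with coarse space $\pi:\mathcal X\to X$, an invertible sheaf $\mathcal L$ on $\mathcal X$ descends to an invertible sheaf $\pi_*\mathcal L$ on $X$ with $\pi^*\pi_*\mathcal L\isomto\mathcal L$ if and only if, for every geometric point $x:\spec(k)\to\mathcal X$, the automorphism group $\Aut(x)$ acts trivially on the fiber $\mathcal L\otimes k$ (this is exactly the mechanism in Olsson~\cite[Prop 6.1]{olsson:gtorsors}; see also \cite[$\mathsection 11$]{olsson:stacks} and \cite{conrad:coarse}). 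Since $\mathcal M_2$ admits a finite \'etale scheme cover and has finite inertia, $\pi_*\omega^{\otimes 2}$ is automatically coherent and the question is \'etale-local on $M_2$, so it suffices to check the triviality of the action fiberwise.

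The main work is then to compute, for a geometric point $x=(A,\iota,\varphi,\alpha)$ of $\mathcal M_2$ over an algebraically closed field $k$ of characteristic not in $\{2,3,5\}$, the induced action of $\Aut(x)$ on the fiber of $\omega^{\otimes 2}$ at $x$. By the analysis already carried out in the proof of Corollary~\ref{cor:y2emptybranchlocus} (which applies verbatim here since $\mathcal P$ refines $\mathcal P(2)$), every automorphism $\sigma$ of $x$ restricts to the identity on $A_2$, so Serre's lemma \cite[Lem 4.7.1]{grra:neronmodels} together with Lemma~\ref{lem:autofield} forces $\sigma\in\{[\pm1]\}$; thus $\Aut(x)=\{1,[-1]\}$. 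Now $\omega$ is (a power of) the determinant of the Hodge bundle, i.e. built from $\textnormal{Lie}(A)^\vee$ or equivalently $\pi_*\Omega^1_{A/k}$, and $[-1]:A\to A$ acts on $\textnormal{Lie}(A)$, hence on $\omega$ near $x$, by multiplication by $-1$ on each of the $g=2$ eigencomponents. I would make this precise using Chai's description \cite[4.1]{chai:hilbmod} of $\omega$ on $\torm$: locally $\omega^{\otimes 2}$ is $(\wedge^g\textnormal{Lie}(A)^\vee)^{\otimes 2}$ or the appropriate $\OL$-twisted analogue, on which $[-1]$ acts by $(-1)^{2g}=(-1)^4=1$. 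Hence $[-1]$ acts trivially on the fiber of $\omega^{\otimes 2}$, so the descent criterion applies and gives the isomorphism $\pi^*\pi_*\omega^{\otimes 2}\isomto\omega^{\otimes 2}$ together with invertibility of $\pi_*\omega^{\otimes 2}$ on $M_2$ (invertibility follows because $\pi^*\pi_*\omega^{\otimes 2}$ is invertible and $\pi$ is faithfully flat after passing to the \'etale cover, or directly from Olsson's statement that the descended sheaf is a line bundle).

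The step I expect to be the main obstacle is the bookkeeping needed to identify precisely how $[-1]$ acts on $\omega$ as opposed to on $\omega^{\otimes 2}$: one must be careful that $\omega$ itself is the line bundle on which $[-1]$ acts by $-1$ (so that $\omega$ does \emph{not} descend, only its square does), and this requires pinning down the normalization of the Hodge bundle in \cite[4.1]{chai:hilbmod} and checking the $\OL$-equivariant structure --- in particular that the two real embeddings of $L$ contribute two factors each scaled by $-1$, giving the overall sign $(-1)^2=1$ on $\omega^{\otimes 2}$. A secondary technical point is to confirm that the descent criterion is being applied on all of $\mathcal M_2$ and not just at a generic point, which is why the fiberwise computation of $\Aut(x)=\{[\pm1]\}$ from Corollary~\ref{cor:y2emptybranchlocus} is essential; once that is in hand the argument is uniform over $M_2$. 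Finally I would remark that this also reproves, in this special case, the Faltings--Chai statement that a positive power of $\omega$ descends, now with the optimal exponent $2$.
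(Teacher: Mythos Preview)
Your approach is essentially the same as the paper's: compute the geometric automorphism groups of $\mathcal M_2$ via Corollary~\ref{cor:y2emptybranchlocus} to be $\{\pm 1\}$, then apply Olsson's descent criterion \cite[Prop 6.1]{olsson:gtorsors}. The paper additionally notes that tameness (the order $2$ being invertible in $B=\ZZ[\tfrac{1}{30}]$) is the hypothesis that makes Olsson's result applicable; you should record this explicitly.

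One point where the paper is cleaner: to verify that $[-1]$ acts trivially on the fiber of $\omega^{\otimes 2}$, the paper does not compute with $\textnormal{Lie}(A)$ at all. It simply observes that $x^*\omega$ is a rank~$1$ representation of the order-$2$ group $G_x$, so any $g\in G_x$ acts by a scalar with $g^2=1$, whence $gv\otimes gw=g^2v\otimes w=v\otimes w$ on $x^*\omega^{\otimes 2}$. This avoids your sign bookkeeping entirely. In fact your parenthetical that ``$\omega$ does not descend, only its square does'' is not correct here: with $\omega=\det(\textnormal{Lie}(A)^\vee)$ and $g=2$, the action of $[-1]$ on $\omega$ is $(-1)^g=+1$, so $\omega$ itself already satisfies the criterion. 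This does not affect your conclusion for $\omega^{\otimes 2}$, but it shows why the paper's abstract argument is preferable to tracking signs through the Hodge bundle.
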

\begin{proof}
On using the arguments in the proof of Corollary~\ref{cor:y2emptybranchlocus} which combine Serre's lemma with Lemma~\ref{lem:autofield},  we compute that the geometric automorphism groups of $\mathcal M_2$ are all given by $\{\pm 1\}$. In particular the order of each geometric automorphism group of $\mathcal M_2$ is invertible in our base $B=\spec(\ZZ[\tfrac{1}{30}])$. Hence $\mathcal M_2$ is a tame stack. Then \cite[Prop 6.1]{olsson:gtorsors} gives that the pullback functor $\pi^*$ induces an equivalence of categories between invertible sheaves on $M_2$ and invertible sheaves $\mathcal L$ on $\mathcal M_2$ with the following property: For every geometric point $x:\spec(k)\to \mathcal M_2$ the action of the automorphism group scheme $G_x$ on $x^*\mathcal L$ is trivial. We claim that $\omega^{\otimes 2}$ has this property. Any non-trivial automorphism  $g$ in  $G_x$ has order two,  
and $x^*\omega$ is a rank 1 representation of $G_x$.  
Thus we obtain that $gv\otimes gw=g^2v\otimes w=v\otimes w$ for sections $v,w$ of  $x^*\omega$.  This shows that $G_x$ acts trivially on $x^*\omega\otimes x^*\omega\cong x^*\omega^{\otimes 2}$,  proving the claim.  Now we can apply with $\mathcal L=\omega^{\otimes 2}$ the arguments of the part of the proof of \cite[Prop 6.1]{olsson:gtorsors} showing that $\pi^*$ is essentially surjective. These arguments directly give that $\pi_*\omega^{\otimes 2}$ is an invertible sheaf on $M_2$ and that the adjunction morphism $\pi^*\pi_* \omega^{\otimes 2}\to \omega^{\otimes 2}$ is an isomorphism. This completes the proof of Lemma~\ref{lem:descendingomega}.
\end{proof}

The method of Faltings--Chai also gives that a positive power of  $\omega$ on the toroidal compactification $\torm$ descends along the morphism $\bar{\pi}$ in \eqref{def:pibar} to an invertible sheaf on the minimal compactification $\minm$. More precisely, for any $k\in \ZZ_{\geq 1}$ such that $\omega^{\otimes k}$ is generated by its global sections we obtain that $\bar{\pi}_*\omega^{\otimes k}$ is an invertible sheaf on $\minm$ with 
\begin{equation}\label{eq:adjunctionisopibar}
\bar{\pi}^*\bar{\pi}_*\omega^{\otimes k}\cong \omega^{\otimes k}.
\end{equation}
To see this one can proceed as follows: By construction $\bar{\pi}:\torm\to \minm$ is  the canonical morphism which comes from $\torm\to \mathbb P^m_B$ via Stein factorization $\torm\to^{\bar{\pi}}\minm\to^{\varphi}\mathbb P^m_B$, where $\torm\to \mathbb P^m_B$ is induced by the assumption that $\omega^{\otimes k}$ is generated by its global sections. Then \eqref{eq:adjunctionisopibar} follows from  the isomorphism $\mathcal O_{\minm}\cong\bar{\pi}_*\mathcal O_{\torm}$ and the projection formula which identifies $\mathcal L\cong (\mathcal L\otimes\bar{\pi}_*\OL_{\torm})$ with $\bar{\pi}_*\bar{\pi}^*\mathcal L\cong \bar{\pi}_*\omega^{\otimes k}$ for $\mathcal L=\varphi^*\mathcal O(1)$.  

As the line bundle $\omega^{\otimes k}$ is generated by its global sections for some $k\in \ZZ_{\geq 1}$ by \cite[Main Thm]{chai:hilbmod}, we obtain \eqref{eq:adjunctionisopibar} for this $k$. This is used in the proof of Proposition~\ref{prop:fingen} given below. After completing the proof of Proposition~\ref{prop:fingen}, we can come back and deduce that \eqref{eq:adjunctionisopibar}  holds in fact with the explicit $k=2$ by Proposition~\ref{prop:fingen}. 

\subsubsection{Modular curves inside $M_2$}\label{sec:modularcurves}
We continue our notation. In this section we define and study modular curves inside $M_2$. After giving the construction, we establish some geometric properties which we will use in $\mathsection$\ref{sec:eisensteindivisor} below to compute the divisors associated to certain  Eisenstein series.

\paragraph{Modular morphism.} To define the modular curves $gC\subset M_2$, we consider the separated finite type DM-stack  $\mathcal E_2$ over $B$ whose objects $(E,\alpha)$ over any $B$-scheme $S$ are elliptic curves $E$ over $S$ with a symplectic level 2-structure $\alpha:(\ZZ/2\ZZ)^2_S\isomto E_2$ as in \cite[p.121]{fach:deg}.  By tensoring with $\OL$, we then define a morphism of stacks over $B$,
\begin{equation}\label{def:stackimmersion}
\phi:\mathcal E_2\to \mathcal M_2, \quad (E,\alpha)\mapsto (E,\alpha)\otimes \mathcal O=(A,\iota,\varphi,\beta),
\end{equation}
which lies over the modular morphism $\mathcal E\to\mathcal M$, $E\mapsto (A,\iota,\varphi)$, constructed by Bruinier--Burgos--K\"uhn~\cite[Prop 5.12]{brbuku:hilbmod} for the stack $\mathcal E$ over $B$ of elliptic curves. In \eqref{def:stackimmersion} the abelian scheme $A=E\otimes \OL$ over $S$ represents the presheaf $T\mapsto E(T)\otimes_\ZZ \OL$ on $(\textnormal{Sch}/S)$,  and $\iota:\OL\hookrightarrow \End(A)$ is induced by $\OL$-multiplication on $\OL$.  
Further, as $A^{\vee}=E\otimes \mathfrak d^{-1}$ represents the presheaf $T\mapsto E(T)\otimes \mathfrak d^{-1}$ on $(\textnormal{Sch}/S)$,  the polarization $\varphi$ is given by:  $$\varphi:\mathfrak d^{-1}\to \Hom_{\OL}(A,A^{\vee})^{\textnormal{sym}}, \quad \varphi(\lambda)(e\otimes \alpha)=e\otimes \lambda \alpha,$$  for $e\otimes \alpha$ in $A(T)=E(T)\otimes_\ZZ \OL$ and $\mathfrak d^{-1}$ the $\ZZ$-dual of $\OL$. 
The symplectic $\alpha:(\ZZ/2\ZZ)^2_S\isomto E_2$  is  compatible with the determinant $(\ZZ/2\ZZ)^2\times (\ZZ/2\ZZ)^2\to \ZZ/2\ZZ$ and the Weil pairing composed with $\mu_2\isomto (\ZZ/2\ZZ)_S$. Thus, on using results in \cite[$\mathsection$1]{amir:serretensor}, we  compute that  $$\beta=\alpha\otimes \OL:(\OL/2\OL)^2_S\cong (\ZZ/2\ZZ)^2_S\otimes\OL\isomto E_2\otimes\OL=A_2$$ is a principal level 2-structure on $(A,\iota,\varphi)$ with \eqref{def:symplecticlvl}. Furthermore $(E,\alpha)\mapsto (A,\iota,\varphi,\beta)$ is functorial and $\phi:\mathcal E_2\to\mathcal M_2$ is a morphism of stacks over $B$. Indeed $E\mapsto (A,\iota,\varphi)$ is functorial since it defines the functor $\mathcal E\to\mathcal M$, and if $f:(E,\alpha)\to (E',\alpha')$ is a morphism in $\mathcal E_2(S)$ then $\alpha'=f|_{E_2}\alpha$ and thus $\phi(f)=f\otimes\OL$ satisfies $\beta'=\phi(f)|_{A_2}\beta$.  

\paragraph{Modular curves $gC$.}Let $g\in \textnormal{GL}_2(\OL/2\OL)$. We denote by $gC\subset M_2$ the scheme-theoretic image of the morphism $\pi\tau_g\phi:\mathcal E_2\to M_2$, where the automorphism $\tau_g$ of $\mathcal M_2$ is as in \eqref{def:gactionmp} and $\pi:\mathcal M_2\to M_2$ is an initial morphism. Write $C$ for $gC$ if $g=\textnormal{id}$. We next use results of Bruinier--Burgos--K\"uhn~\cite{brbuku:hilbmod} and Yang~\cite{yang:hmsurfaceamerican} to establish some geometric properties of $gC\subset M_2$ which are applied in Lemma~\ref{lem:divisorintersection} below.

\begin{lemma}\label{lem:geompropimmersion}
The following statements hold.
\begin{itemize}
\item[(i)] The morphism $\phi: \mathcal E_2\to \mathcal M_2$ is proper.
\item[(ii)] The curve $C(\CC)$ is an irreducible component of $F_1(\CC)$.  
\item[(iii)]Let $g,h$ in $\gl2(\OL/2\OL)$. If $gC\cap hC$ is nonempty, then $h^{-1}g$ lies in $\gl2(\ZZ/2\ZZ)$. 
\end{itemize}
\end{lemma}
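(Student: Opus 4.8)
The statement to prove is Lemma~\ref{lem:geompropimmersion}, consisting of three assertions about the modular morphism $\phi:\mathcal E_2\to\mathcal M_2$ of \eqref{def:stackimmersion} and the induced curves $gC\subset M_2$. I would organise the proof around the commutative triangle $\phi:\mathcal E_2\to\mathcal M_2$ lying over the Bruinier--Burgos--K\"uhn modular morphism $\mathcal E\to\mathcal M$ together with the $\textnormal{GL}_2(\OL/2\OL)$-action $\tau_g$ on $\mathcal M_2$. The key background facts are: $\mathcal E_2$ and $\mathcal M_2$ are separated finite type DM stacks over $B$; $\pi:\mathcal M_2\to M_2$ is a proper universal homeomorphism; and the analytic identification of $\mathbb H^2/\Gamma(2)$ with $M_2(\CC)$ together with Hirzebruch's description of the Hirzebruch--Zagier divisor $F_1\subset\mathbb H^2/\Gamma(2)$ and its connected components.

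For (i), the plan is to show $\phi$ is proper by the valuative criterion combined with finiteness properties. First I would observe that $\phi$ factors (on underlying abelian schemes) through the functor $E\mapsto E\otimes\OL$, which is representable and finite-type; the point is to show it is proper and quasi-finite, hence finite, onto its image. The cleanest route: $\mathcal E_2\to\mathcal M_2$ is the base change along $\mathcal M_2\to\mathcal M$ of the morphism $\mathcal E\to\mathcal M$ studied in \cite[Prop 5.12]{brbuku:hilbmod} (after identifying level-$2$ data on both sides via $\alpha\mapsto\alpha\otimes\OL$, which is compatible with the symplectic/determinant conditions as checked right after \eqref{def:stackimmersion}), so properness of $\phi$ follows from properness of $\mathcal E\to\mathcal M$, which in turn follows from the valuative criterion: an elliptic curve over the fraction field of a DVR extending to an $\OL$-abelian scheme of the relevant type over the DVR has potentially good, hence (after the level structure) good, reduction, so $E$ itself extends. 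I would also need to verify the morphism is of finite type and separated, which is immediate from finite-typeness and separatedness of $\mathcal E_2,\mathcal M_2$ over $B$.

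For (ii), I would pass to $\CC$ and use the analytic uniformisation. The curve $C(\CC)$ is the image of $\mathcal E_2(\CC)\to M_2(\CC)$, which on the level of coarse spaces is the map $\mathbb H/\Gamma_1(2)\to\mathbb H^2/\Gamma(2)$ sending $\tau$ to the diagonal-type embedding $\tau\mapsto(\tau,\tau)$ twisted by $\sqrt5$, i.e. exactly the modular curve $T_1=F_1$ of Hirzebruch--Zagier--van der Geer restricted to level $2$. I would cite \cite{brbuku:hilbmod} and \cite{yang:hmsurfaceamerican} for the identification of the image of the modular morphism with (a component of) the Hirzebruch--Zagier divisor, and then use irreducibility of $\mathcal E_2(\CC)$ — equivalently connectedness of $\mathbb H/\Gamma_1(2)$ — to conclude $C(\CC)$ is a single irreducible component of $F_1(\CC)$, noting that $F_1(\CC)$ has several components permuted by $G=\sl2(\OL/2\OL)$.

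For (iii), the plan is: if $gC\cap hC\neq\emptyset$ then the components $gC(\CC)$ and $hC(\CC)$ of $F_1(\CC)$ meet, but distinct components of the (disjoint) divisor $F_1$ cannot meet, so $gC(\CC)=hC(\CC)$; then I would translate equality of these components back into a group-theoretic condition. Concretely, $gC$ and $hC$ coincide iff $h^{-1}g$ stabilises $C$ up to the action, and the stabiliser of the component $C$ inside $\textnormal{GL}_2(\OL/2\OL)$ is exactly the image of $\textnormal{GL}_2(\ZZ/2\ZZ)$ (the level-$2$ structures coming from $\ZZ$ via $\otimes\OL$), because the modular curve $C$ parametrises $\OL$-abelian schemes of the form $E\otimes\OL$ and its level structures are precisely those induced from $\ZZ/2\ZZ$-level structures on $E$. \textbf{The main obstacle} I anticipate is (iii): one must carefully set up the disjointness of the components $gC\subset M_2$ at the \emph{integral} level rather than just over $\CC$ — here it suffices to argue over $\CC$ for the non-emptiness of the intersection since $M_2$ is flat over $B$ with irreducible fibres, but identifying the stabiliser of $C$ with $\textnormal{GL}_2(\ZZ/2\ZZ)$ requires a clean moduli-theoretic argument that the locus $C$ is stable under $\tau_g$ if and only if $g$ is (conjugate to) a $\ZZ$-level structure, using that $\End^0$ of the generic point of $C$ is $\uM_2(L)$ rather than $L$, which pins down when an automorphism can move the level structure back into $C$.
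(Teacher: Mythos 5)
Your plan for part (ii) matches the paper's argument in substance. However, parts (i) and (iii) both have genuine gaps.

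For (i), your ``cleanest route'' claims that $\phi:\mathcal E_2\to\mathcal M_2$ is the base change of $\mathcal E\to\mathcal M$ along $\mathcal M_2\to\mathcal M$. This is false: an object of $\mathcal E\times_{\mathcal M}\mathcal M_2$ is an elliptic curve $E$ together with a principal level $2$-structure $\beta$ on the $\OL$-abelian scheme $E\otimes\OL$, and such a $\beta$ need \emph{not} be of the form $\alpha\otimes\OL$ for a symplectic level $2$-structure $\alpha$ on $E$ itself. So $\mathcal E_2$ is a proper closed substack of the fiber product, not the whole thing, and properness of $\mathcal E\to\mathcal M$ does not pull back in the way you claim. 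The argument that actually works (and the one the paper uses) is a cancellation argument in the commutative square you drew: the forgetful morphism $\mathcal E_2\to\mathcal E$ is finite, $\mathcal E\to\mathcal M$ is proper by Bruinier--Burgos--K\"uhn, so the composite $\mathcal E_2\to\mathcal M$ is proper; since $\mathcal M_2\to\mathcal M$ is separated, properness of $\phi$ follows from the cancellation property for proper morphisms. You could also run the valuative criterion directly as you briefly suggest, but that is more work.

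For (iii), your plan is to pass to $\CC$: ``if $gC\cap hC\neq\emptyset$ then the components $gC(\CC)$ and $hC(\CC)$ of $F_1(\CC)$ meet.'' This step does not follow. The schemes $gC$ and $hC$ are horizontal curves in the arithmetic threefold $M_2$ over $B$, and two such curves can very well meet only in a closed fiber of positive residue characteristic while having disjoint generic fibers. Your appeal to flatness of $M_2$ over $B$ with irreducible fibers does not rescue this: those properties concern $M_2$ itself, not the intersection behaviour of horizontal subschemes inside it. The correct setup, and the one the paper uses, is to work over an \emph{arbitrary} algebraically closed field $k$ in which $gC(k)\cap hC(k)\neq\emptyset$, produce from a common point objects $(E,\alpha)$ and $(E',\alpha')$ of $\mathcal E_2(k)$ whose images under $\tau_g\phi$ and $\phi$ are isomorphic, and then invoke Yang's rigidity result (that $\mathcal E\to\mathcal M$ induces a bijection on Hom sets) to transfer the isomorphism to an automorphism of $E$ and deduce $g\cdot(\alpha\otimes\OL)=\alpha^*\otimes\OL$ for some symplectic $\alpha^*$ on $E$, whence $g\in\gl2(\ZZ/2\ZZ)$. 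This runs uniformly in any characteristic and sidesteps your stabilizer identification, which you correctly flagged as the hard point but left unproved: you would still need to show that the set-theoretic stabiliser of $C$ in $\gl2(\OL/2\OL)$ is \emph{exactly} $\gl2(\ZZ/2\ZZ)$, and the endomorphism-algebra heuristic you gesture at (``$\End^0$ of the generic point of $C$ is $\uM_2(L)$'') is not a proof.
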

Here we view $\gl2(\ZZ/2\ZZ)$ as a subgroup of $\gl2(\OL/2\OL)$ via $\ZZ/2\ZZ\otimes \OL\cong \OL/2\OL$,  and $F_1(\CC)$ is the usual modular curve inside $M_{2}(\CC)$. Such modular curves were studied among others by Hirzebruch, Zagier and van der Geer (see \cite{hirzebruch:hmsenseignement,hiza:hmsintersection,geza:hms13}); notice that $F_1(\CC)$ can be  defined analogously as in \cite[Def V.1.3]{vandergeer:hilbertmodular} with $M_2(\CC)$ in place of $X_\Gamma$. In what follows we shall make identifications via GAGA without mentioning it explicitly.

\begin{proof}[Proof of Lemma~\ref{lem:geompropimmersion}]
We first prove (i). As already observed by Yang in his proof of \cite[Lem 2.2]{yang:hmsurfaceamerican}, it follows from Bruinier--Burgos--K\"uhn~\cite[Prop 5.14]{brbuku:hilbmod} that the modular morphism $\mathcal E\to\mathcal M$ is proper. This morphism fits into the commutative diagram
$$
\xymatrix@R=4em@C=4em{
\mathcal E_2 \ar[r]^{\phi} \ar[d] & \mathcal M_2 \ar[d] \\
\mathcal E \ar[r] &  \mathcal M} 
$$
of algebraic stacks in which the vertical morphisms are the forgetful morphisms. As the forgetful morphisms are finite  and $\mathcal E\to\mathcal M$ is proper, the composition $\mathcal E_2\to \mathcal E\to \mathcal M$ is proper and $\mathcal M_2\to \mathcal M$ is separated. Thus $\phi:\mathcal E_2\to \mathcal M_2$ is proper as claimed in (i).

We next show (ii). To relate $C(\CC)$ with $F_1(\CC)$, choose a coarse moduli space $\mathcal M\to M$ and  consider the image $V\subset M(\CC)$ of the curve $F_1(\CC)\subset M_2(\CC)$ under the quotient map 
\begin{equation}\label{eq:quotientmapdef}
p_\CC:M_2(\CC)\cong\mathbb H^2/\Gamma(2)\to \mathbb H^2/\sl2(\OL)\cong M(\CC)
\end{equation}
where $\Gamma(2)$ denotes the kernel of the natural projection $\sl2(\OL)\to\sl2(\OL/2\OL)$.  Here the algebraic space $M$ is in fact a scheme over $B$ and the quotient map $p_\CC$ comes from the base change to $\CC$ of a morphism $p:M_2\to M$ of $B$-schemes,  which is induced by the initial property of $\pi$ and which fits into a commutative diagram
$$
\xymatrix@R=4em@C=4em{
\mathcal M_2 \ar[r]^{\pi} \ar[d] & M_2 \ar[d]^{p} \\
\mathcal M \ar[r] &  M.}
$$
We now combine the displayed diagrams: Bruinier--Burgos--K\"uhn~\cite[Prop 5.12]{brbuku:hilbmod} showed that $V$ is the image of $\mathcal E(\CC)$ under the composition $\mathcal E(\CC)\to \mathcal M(\CC)\to M(\CC)$ and hence $C=\pi\phi(\mathcal E_2)$ satisfies $C(\CC)\subset p_\CC^{-1}(V)$. On using that $F_1(\CC)\cong W/\Gamma(2)$ and $V\cong W/\sl2(\OL)$ for some $W\subset \mathbb H^2$ which is stable under the $\sl2(\OL)$-action, we obtain 
\begin{equation}\label{eq:f1cpreimage}
p_\CC^{-1}(V)=F_1(\CC).
\end{equation}
As the morphism $\pi\phi:\mathcal E_2\to M_2$ is universally closed by (i) and $\mathcal E_2(\CC)$ is irreducible,  we see that $C(\CC)$ is irreducible and closed inside $F_1(\CC)$. Notice that $C(\CC)$ can not be of dimension zero. Thus $C(\CC)$ is an irreducible component of the curve $F_1(\CC)$ as claimed in (ii).  

To prove (iii) we take $g,h$ in $\gl2(\OL/2\OL)$ and we suppose that $gC\cap hC$ is nonempty. After possibly replacing $g$ by $h^{-1}g$, we may and do assume that $h=\textnormal{id}$.  Then $gC(k)\cap C(k)$ is nonempty for some algebraically closed field $k$. We next construct an elliptic curve $E$ over $k$ with symplectic level 2-structures $\alpha,\alpha^*$ such that
\begin{equation}\label{eq:relation-lvl-str}
g\cdot (\alpha\otimes \OL)=\alpha^*\otimes\OL.
\end{equation}
As $gC(k)\cap C(k)$ is nonempty and $gC$ is the scheme theoretic image of the morphism $\pi\tau_g\phi:\mathcal E_2\to M_2$ which is closed by (i),   we  obtain $(E,\alpha)$ and $(E',\alpha')$ in $\mathcal E_2(k)$ such that $\tau_g\phi(E,\alpha)$ and $\phi(E',\alpha')$ are isomorphic in $\mathcal M_2(k)$. Write $(x,g\cdot(\alpha\otimes \OL))$ for $\tau_g\phi(E,\alpha)$ and $(x',\alpha'\otimes \OL)$ for $\phi(E',\alpha')$.  Yang showed in \cite[Lem 2.2]{yang:hmsurfaceamerican} that $\mathcal E\to \mathcal M$ induces a bijection $$\Hom_{\mathcal E(k)}(E,E')\isomto \Hom_{\mathcal M(k)}(x,x').$$
Thus, after applying isomorphisms in $\mathcal E_2(k)$ and $\mathcal M_2(k)$, we obtain a symplectic level 2-structure $\alpha^*$ of $E$ satisfying \eqref{eq:relation-lvl-str}.  Now, it follows from \eqref{eq:relation-lvl-str} that $g=u\otimes \OL$ with $u=(\alpha^*)^{-1}\alpha$. Hence $g$ lies in the image of $\gl2(\ZZ/2\ZZ)$ inside $\gl2(\ZZ/2\ZZ\otimes \OL)\cong \gl2(\OL/2\OL)$ as claimed in (iii). This completes the proof of the lemma.\end{proof}

In view of \eqref{eq:f1cpreimage}, the group $\sl2(\OL/2\OL)\cong \sl2(\OL)/\Gamma(2)$ acts on $F_1(\CC)$  and then on the set of irreducible components of $F_1(\CC)$. In what follows, the action of $\sl2(\OL/2\OL)$ on (the set of irreducible components of) $F_1(\CC)$ will refer to this action.

\paragraph{Components of $F_1(\CC)$.}We showed in Lemma~\ref{lem:geompropimmersion}~(ii) that $C(\CC)$ is an irreducible component of $F_1(\CC)$. To describe all irreducible components of $F_1(\CC)$ in terms of $C$,  write $G$ for $\sl2(\OL/2\OL)$ and let $H\subset G$ be the stabilizer of $C(\CC)$ under the action of $G$  on the set of irreducible components of $F_1(\CC)$.  It is known (\cite[p.189]{vandergeer:hilbertmodular}) that  
\begin{equation}\label{def:f1ccomponents}
F_1(\CC)=\cup F^{ij}_\CC
\end{equation}
has 10 irreducible components $F^{ij}_\CC$ which are labelled so that the closure of $F^{ij}_\CC$ in $\minm(\CC)$ does not intersect after desingularization the two cusps $i$ and $j$ of the minimal desingularization of $\minm(\CC)$. Here we label the five cusps of $\minm(\CC)$ by $0,\dotsc,4$ following Hirzebruch~\cite{hirzebruch:ck}. The next result computes $F^{ij}_\CC$ in terms of $C$. 

\begin{lemma}\label{lem:fijcomputation}
For each $ij$ there exists $g_{ij}\in G$ such that $F^{ij}_\CC=(g_{ij}C)_\CC$. In particular, $F_1(\CC)=\cup (gC)_\CC$ with the disjoint union taken over all left cosets $g\in G/H$.
\end{lemma}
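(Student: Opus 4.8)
\textbf{Proof plan for Lemma~\ref{lem:fijcomputation}.} The plan is to show that the group $G=\sl2(\OL/2\OL)$ acts transitively on the set of irreducible components $\{F^{ij}_\CC\}$ of $F_1(\CC)$, and then combine this with Lemma~\ref{lem:geompropimmersion}~(ii) which identifies $C(\CC)$ with one such component. Once transitivity is established, every $F^{ij}_\CC$ is of the form $g\cdot C(\CC)$ for some $g\in G$; since $G$ acts on $\mathcal M_2$ via the automorphisms $\tau_g$ of \eqref{def:gactionmp} and $gC$ was defined as the scheme-theoretic image of $\pi\tau_g\phi$, the complex points of $gC$ are exactly $g\cdot C(\CC)$ (using that $\pi\tau_g\phi$ is closed by Lemma~\ref{lem:geompropimmersion}~(i) and that $\pi$ and $\tau_g$ are compatible with the $G$-action on $F_1(\CC)\cong p_\CC^{-1}(V)$ coming from \eqref{eq:f1cpreimage}). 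This gives the first assertion, with $g_{ij}$ chosen so that $g_{ij}\cdot C(\CC)=F^{ij}_\CC$.

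For the second assertion, I would argue as follows. Since $H\subset G$ is by definition the stabilizer of the component $C(\CC)$, transitivity of the $G$-action means that $g\mapsto g\cdot C(\CC)$ induces a bijection from $G/H$ to the set of components of $F_1(\CC)$. Therefore $F_1(\CC)=\bigcup_{g\in G/H}(gC)_\CC$, and the union is set-theoretically disjoint because distinct components of the curve $F_1(\CC)$ meet (if at all) only in finitely many points — but more precisely I would invoke Lemma~\ref{lem:geompropimmersion}~(iii): if $gC\cap g'C$ were nonempty with $gC\neq g'C$ then $g'^{-1}g\in \gl2(\ZZ/2\ZZ)$, and one checks that $\gl2(\ZZ/2\ZZ)\cap\sl2(\OL/2\OL)$ already stabilizes $C(\CC)$ (the level-$2$ structures differing by an element of $\gl2(\ZZ/2\ZZ)$ come from a common elliptic curve, so give the same modular curve), hence $g'^{-1}g\in H$ and $gC=g'C$, a contradiction. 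This yields the claimed disjointness over cosets $g\in G/H$.

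The main obstacle is proving that $G$ acts transitively on the ten components $F^{ij}_\CC$. I would deduce this from Hirzebruch's analysis in \cite{hirzebruch:ck} (see also \cite[V]{vandergeer:hilbertmodular}): the labelling of the $F^{ij}_\CC$ by unordered pairs $ij$ of cusps of the minimal desingularization, together with the fact that $\sl2(\OL/2\OL)\cong\mathfrak S_5$ acts on the five cusps $0,\dots,4$ as the full symmetric group (this is classical for the level-$2$ Hilbert modular surface of $\QQ(\sqrt5)$, reflecting the $\mathfrak S_5$-action on the Clebsch--Klein configuration), and this induced action on unordered pairs is transitive. Concretely, I would: (a) recall from \cite{hirzebruch:ck} that $\#(\sl2(\OL/2\OL))=120$ and that the permutation action on the cusp set is $\mathfrak S_5$; (b) recall that the intersection behaviour of $\overline{F^{ij}_\CC}$ with the cusp resolutions is $G$-equivariant, so $g\cdot F^{ij}_\CC=F^{\sigma(i)\sigma(j)}_\CC$ where $\sigma\in\mathfrak S_5$ is the permutation of cusps attached to $g$; (c) conclude transitivity on $\{ij\}$ from $2$-transitivity of $\mathfrak S_5$ on $\{0,\dots,4\}$. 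A secondary point requiring care is matching the scheme-theoretic image $gC$ with the analytic component $g\cdot C(\CC)$; this follows since $\mathcal E_2$ is reduced and $\pi\tau_g\phi$ is proper, so the scheme-theoretic image is the reduced closed subscheme on the closed set $\pi\tau_g\phi(\mathcal E_2)$, whose $\CC$-points are $g\cdot C(\CC)$ by the $G$-equivariance of $p_\CC$ in \eqref{eq:quotientmapdef}.
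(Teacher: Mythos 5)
Your plan is correct in outline and reaches the right conclusion, but the route you take for transitivity is genuinely different from the paper's, and it contains a factual slip.

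The paper proves transitivity of the $G$-action on the irreducible components of $F_1(\CC)$ in one stroke from the structure of the quotient map $p_\CC:M_2(\CC)\to M(\CC)$: the Hirzebruch--Zagier divisor $V=p_\CC(F_1(\CC))$ is irreducible, $C(\CC)$ surjects onto $V$, $F_1(\CC)=p_\CC^{-1}(V)$, and $p_\CC$ is the quotient by $G$; since $G$ permutes the fibers of $p_\CC$ and every component of $p_\CC^{-1}(V)$ dominates the irreducible $V$, any two components are conjugate. This is self-contained and does not require identifying the abstract group $G$ or its cusp action. Your argument instead passes through the $G$-equivariance of the intersection behaviour of $\overline{F^{ij}_\CC}$ with the cusp resolutions and the 2-transitivity of the cusp action, which imports more of Hirzebruch's analysis than the paper needs at this point; it is valid, but less direct. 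Along the way you assert $\#\sl2(\OL/2\OL)=120$ and that the cusp action is $\mathfrak S_5$: since $2$ is inert in $\QQ(\sqrt5)$ (as $5\equiv 5\bmod 8$) we have $\OL/2\OL\cong\mathbb F_4$ and $\sl2(\mathbb F_4)\cong A_5$ of order $60$, not $\mathfrak S_5$ of order $120$; the extra order-$2$ symmetry of Clebsch's surface lies outside $\sl2(\OL)/\Gamma(2)$. Your conclusion survives, because $A_5$ is still $2$-transitive on five points, but the stated group is wrong. Finally, for the "disjoint union" over $G/H$, the paper deduces this purely from orbit-stabilizer once $H$ is the stabilizer of $C(\CC)$ (the content being only that the $gC$ over distinct cosets are \emph{distinct} components); your extra argument that $gC\cap g'C\neq\emptyset$ forces $g'^{-1}g\in H$ via Lemma~\ref{lem:geompropimmersion}~(iii) is correct and matches what the paper actually does later in Lemma~\ref{lem:divisorintersection}~(i), but it is more than Lemma~\ref{lem:fijcomputation} needs.
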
 
\begin{proof}
Let $p_\CC:M_2(\CC)\to M(\CC)$ be the quotient map induced by  $\Gamma(2)\subset \sl2(\OL)$ as in \eqref{eq:quotientmapdef}.  The Hirzebruch--Zagier divisor $V=p_\CC(F_1(\CC))$ in $M(\CC)$ is irreducible, and the arguments before \eqref{eq:f1cpreimage} show that the irreducible component $C(\CC)$ of $F_1(\CC)$ satisfies $V=p_\CC(C(\CC))$. Then we deduce that $G$ acts transitively on the set of irreducible components of $F_1(\CC)$.  Hence for each $ij$ there exists $g_{ij}\in G$ such that $$F^{ij}_\CC=g_{ij}(C(\CC))=(g_{ij}C)_\CC.$$ Here the second equality uses that the identification $M_2(\CC)\cong \mathbb H^2/\Gamma(2)$ is compatible with the actions of $G\cong \sl2(\OL)/\Gamma(2)$, where $G\subset\gl2(\OL/2\OL)$ acts on $M_2(\CC)$ via $\pi:[\mathcal M_2(\CC)]\isomto M_2(\CC)$ and \eqref{def:gactionpn}  while $\sl2(\OL)/\Gamma(2)$ acts on $\mathbb H^2/\Gamma(2)$ as usual. This proves the first statement, which implies the second statement since $H$ is the stabilizer of $C(\CC)$.  \end{proof}
This description of the connected components of $F_1(\CC)$ is crucial for our computation of the divisors of certain Eisenstein series in Lemma~\ref{lem:divisorintersection} below.

\subsubsection{Eisenstein series}\label{sec:eisenstein}

We continue our notation. Let $\Gamma(2)$ be the kernel of the projection $\sl2(\OL)\to\sl2(\OL/2\OL)$. In this section we discuss the Fourier expansion of the Eisenstein series of weight two for $\Gamma(2)$ corresponding to the cusps of $\mathbb H^2/\Gamma(2)$. In particular we compute the constant terms.

\paragraph{Definition of $E_i$.}Following Hirzebruch~\cite{hirzebruch:ck} we label the five cusps of $\mathbb H^2/\Gamma(2)$ by $0,\dotsc,4$  with the cusp $\infty$ labelled by 0. The group $\sl2(\OL)$ acts transitively on these five cusps. For any $i\in\{0,\dotsc,4\}$ choose $A_i\in \sl2(\OL)$ which sends the cusp $i$ to $\infty$. Hirzebruch works in \cite[p.164]{hirzebruch:ck} with five Eisenstein series $E_i$ of weight two for $\Gamma(2)$, defined by  \begin{equation}\label{def:eisenstein}
E_0=\sum\phantom{}^{'}\tfrac{1}{N(m\tau+n)^2} , \quad E_i=E_0|_{A_i},
\end{equation}
such that $E_i$ vanishes at all cusps except the $i$-th.  
Here the restricted sum is taken via Hecke's summation over all coprime $(m,n)\in \OL^2$ reducing to $(0,1)$ in $(\OL/2\OL)^2$, where the restriction $\phantom{}^{'}$ means we sum  modulo the subgroup of $\OL^\times$ reducing to  $1$ in $\OL/2\OL$. Further we denote by $|_{A_i}$ the usual slash operator and by $N(\cdot)$ the usual norm, see \eqref{def:slashnorm}. 

\paragraph{Fourier expansion.}Klingen~\cite{klingen:eisensteinfourier} used analytic methods to establish formulas for the Fourier coefficients of Eisenstein series of weight $k\geq 2$ for $\Gamma(2)$. Therein the constant term $a_0$ is determined by the value at $k$ of certain zeta functions, and for $E_i$ one can reduce the computation of $a_0$ to the result $\zeta_{\QQ(\sqrt{5})}(-1)=\tfrac{1}{30}$ of Siegel~\cite{siegel:zetavalues} and Zagier~\cite{zagier:zetavalues}.  Then computing the quantities in Klingen's formulas leads to the following:

\begin{lemma}\label{lem:eisensteinfourierexpansion}
The coefficients of the Fourier expansion of $E_i$ at the cusp $j$  lie in $\Z$ and are coprime. The constant term $a_0$ satisfies $a_0=3$ when $j=i$ and $a_0=0$ otherwise.
\end{lemma}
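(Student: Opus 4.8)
\textbf{Proof proposal for Lemma~\ref{lem:eisensteinfourierexpansion}.}
The plan is to follow Klingen's analytic formulas~\cite{klingen:eisensteinfourier} for the Fourier coefficients of weight-two Eisenstein series for $\Gamma(2)\subset \sl2(\OL)$ with $L=\QQ(\sqrt5)$, specialise them to the five series $E_i$ defined in \eqref{def:eisenstein}, and simplify the resulting expressions explicitly. First I would recall that $\sl2(\OL)$ permutes the five cusps transitively, so it suffices to compute the Fourier expansion of $E_0=\sum{}'N(m\tau+n)^{-2}$ at each cusp $j$; the expansion of $E_i=E_0|_{A_i}$ at $j$ is then obtained by transport along $A_j A_i^{-1}\in\sl2(\OL)$, and in particular the expansion of $E_i$ at the cusp $i$ is the expansion of $E_0$ at $\infty$. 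Klingen's formulas express the coefficient $a_\nu$ (for $\nu$ in the relevant dual lattice) as a finite sum of products of a divisor-type factor, a Gauss-sum-type factor coming from the level-$2$ condition, and the constant term $a_0$ is given by a special value at $k=2$ of a partial (Hecke) zeta function attached to the cusp.

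Next I would carry out the three concrete computations. (a) \emph{Integrality and coprimality.} Klingen's coefficients $a_\nu$ for $\nu\neq 0$ are, up to an overall normalisation one can pin down by comparing with the $E_0$ series at $\infty$, sums over ideals of $\OL$ of rational-integer contributions divided by $\zeta_L(-1)=\tfrac{1}{30}$; since $\zeta_L(-1)=\tfrac1{30}$ one must check that the $\tfrac1{30}$ in the denominator is exactly cancelled, so that all $a_\nu\in\ZZ$, and that the resulting integers have overall gcd one. For coprimality the cleanest route is to exhibit one coefficient equal to $\pm1$ (for instance the first nonzero $a_\nu$, whose divisor sum is a single term), or alternatively to use the $q$-expansion principle together with the fact that $E_i$ is a nonzero section of $\omega^{\otimes 2}$ on the integral model, forcing the gcd of its $q$-expansion to be a unit. (b) \emph{The constant term.} Here I would identify Klingen's partial-zeta special value at the cusp with (a rational multiple of) $\zeta_L(-1)$ via the functional equation / Hecke's computation, and use $\zeta_L(-1)=\tfrac1{30}$ of Siegel~\cite{siegel:zetavalues} and Zagier~\cite{zagier:zetavalues} to get the numerical value. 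The normalisation is fixed by the same comparison used in (a); one checks it produces $a_0=3$ at the cusp $i$ where $E_i$ does not vanish. (c) \emph{Vanishing at the other cusps.} By the very definition \eqref{def:eisenstein}, $E_i$ is constructed to vanish at every cusp $j\neq i$, so its constant term there is $0$; the point is only that the \emph{other} coefficients of its expansion at $j$ are still integral and coprime, which follows from (a) applied after the $\sl2(\OL)$-translation, since the level-$2$ Gauss-sum factors remain rational integers under such translations.

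The main obstacle I expect is bookkeeping of normalisations: Klingen works with a particular analytic normalisation of Eisenstein series (and Hecke summation), while \eqref{def:eisenstein} uses the naive $N(m\tau+n)^{-2}$ sum over $(m,n)\equiv(0,1)\bmod 2$ modulo the relevant unit subgroup, and the $q$-expansion principle of Rapoport~\cite{rapoport:hilbertmodular} attaches to $E_i$ a section of $\omega^{\otimes2}$ whose normalisation must be matched. Getting the constant term to be exactly $3$ (rather than $3$ times some spurious factor of $2$, $5$, or a unit index) requires care with: the index of the unit subgroup used in Hecke's summation, the discriminant factor $\sqrt5$ appearing in the different $\mathfrak d$ and hence in the dual lattice indexing the $a_\nu$, and the factor of $2$ coming from the principal level-$2$ structure. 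I would settle this by a single careful comparison at one cusp — evaluating $E_0$ at $\infty$ directly from \eqref{def:eisenstein} via the standard Hecke trick and matching term by term with Klingen's formula — and then propagate the normalisation to all cusps by $\sl2(\OL)$-equivariance. Once the normalisation is fixed, integrality, coprimality, and the values of $a_0$ all follow from $\zeta_L(-1)=\tfrac1{30}$ and elementary estimates on the divisor and Gauss sums.
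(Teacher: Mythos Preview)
Your proposal follows essentially the same route as the paper: both reduce via $E_i=E_0|_{A_i}$ and the transitive $\sl2(\OL)$-action on cusps, both pin down the normalisation by matching $E_0$ against Klingen's series (the paper writes $wE_0=G_2^*=\zeta_2(2)^{-1}G_2$ with the unit index $w=4$), and both extract $a_0=3$ from $\zeta_L(-1)=\tfrac1{30}$ via the functional equation and Euler factor at $2$.

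Two places in your sketch are looser than what the paper actually does. First, for integrality of the higher coefficients, Klingen's formula for $\Sigma_2$ involves the phase $e^{2\pi i(a_2\nu+a_2'\nu')}$, which for level $2$ is only a $4$th root of unity; a priori this puts the $a_\nu$ in $\ZZ[\sqrt{-1}]$, not $\ZZ$. The paper supplies the missing observation that the $\sqrt{-1}$-terms cancel by a symmetry in the sum --- a step your ``sums of rational-integer contributions'' description elides. Second, neither of your proposed coprimality arguments closes as stated: there is no reason to expect a coefficient equal to $\pm1$ (the paper's explicit computation of the smallest nontrivial coefficient gives $a_{\epsilon/\sqrt5}=-8$, not $\pm1$), and the $q$-expansion principle over the available integral model, which lives over $B=\ZZ[\tfrac1{30}]$, only forces the gcd to be a unit in $B$, not equal to $1$ in $\ZZ$. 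The paper's route is more direct: since $a_0=3$, it suffices to exhibit one $a_\nu$ with $3\nmid a_\nu$, and the explicit value $a_{\epsilon/\sqrt5}=-8$ does this.
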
 
The computations required to deduce this lemma from \cite{klingen:eisensteinfourier} are all basic and straight forward. However, we include them here since we are not aware of precise references. We begin by recalling the slash operator $|_A$ and the norm $N(\cdot)$. They are defined by  \begin{equation}\label{def:slashnorm}
f|_{A}(\tau)=\tfrac{1}{N(c\tau+d)^2}f(A\tau) \quad \textnormal{and} \quad N(u\tau+v)=(u\tau_1+v)(u'\tau_2+v')
\end{equation}
for any Hilbert modular form $f$ of weight two for $\Gamma(2)$, any $A\in \sl2(\OL)$ given by {\tiny$\begin{pmatrix}
a \ b\\
c \ d
\end{pmatrix}$}, any $(u,v)\in \OL^2$ and any $\tau\in \mathbb H^2$. Here $x'$ denotes the conjugate of $x\in \QQ(\sqrt{5})$.

\paragraph{Relation to $G_2$ and $G_2^*$.}To apply Klingen's results, we first relate $E_0$ to the Eisenstein series $G_2$ and $G_2^*$ defined in \cite[(12) and $\mathsection$3]{klingen:eisensteinfourier} with $k=2$, $\mathfrak a=\OL$, $\mathfrak n=2\OL$ and $(a_1,a_2)=(0,1)$. The ray class group  modulo $2\OL$ is trivial.  Hence \cite[(20)]{klingen:eisensteinfourier} leads to
\begin{equation}\label{eq:relationEig2}
wE_0=G_2^*=\tfrac{1}{\zeta_2(2)}G_2, \quad w=4.
\end{equation} 
Here $\zeta_2(2)=\sum \tfrac{1}{|\OL/I|^2}$ with the sum taken over all ideals $I\subseteq \OL$ coprime to $2\OL$, and  $w$ is the index of $\{x\in \OL^{\times}_{+}; \, \bar{x}=1\}$ inside   $\{x\in \OL^{\times}; \, \bar{x}=1\}$ where we denote by $\OL^{\times}_+$ the totally positive units of $\OL$ and we write $\bar{x}$ for the image of $x\in \OL$ inside $\OL/2\OL$.  
\paragraph{Constant term and $\zeta_2(2)$.}We next compute the constant term $a_0$ of the Fourier expansion of $E_0$ at $\infty$ and the value $\zeta_2(2)$. The index of $\{x\in \OL^\times_+;\, \bar{x}=1\}$ inside $\OL^\times$ is given by $|(\OL/2\OL)^\times|w$.  Then, on using the formula \cite[(14)]{klingen:eisensteinfourier} for the constant term $\Sigma_1$ of the Fourier expansion of $G_2$ at $\infty$, we find that $\Sigma_1=3w\zeta_2(2)$.  Thus \eqref{eq:relationEig2} proves
\begin{equation}\label{eq:compuconstant+zetavalue}
a_0=3, \quad \textnormal{while} \quad \zeta_2(2)=\tfrac{\pi^4}{\sqrt{5}}\cdot \tfrac{1}{40}
\end{equation}
follows from $\zeta_L(-1)=\tfrac{1}{30}$ in \cite{siegel:zetavalues,zagier:zetavalues} where $\zeta_L$ is the Dedekind zeta function of $L=\QQ(\sqrt{5})$.  Here we used the Euler product formula which gives $\zeta_2(2)=\tfrac{15}{16}\zeta_L(2)$ and we applied the functional equation which relates $\zeta_L(-1)$ with $\zeta_L(2)$.  
\begin{proof}[Proof of Lemma~\ref{lem:eisensteinfourierexpansion}]
Let $i\in\{0,\dotsc,4\}$ be a cusp. By construction the Eisenstein series $E_i$ is of the form $E_i=E_0|_{A_i}$ where $A_i\in \sl2(\OL)$ sends the cusp $i$ to $\infty$. Thus the constant term $a_0$ of the Fourier expansion of $E_i$ at the cusp $j$ has the following properties: It equals the constant term of the Fourier expansion of $E_0$ at $\infty$ if $j=i$, and it satisfies $a_0=0$ otherwise. Hence \eqref{eq:compuconstant+zetavalue} proves the statements about the constant term. 

We now deal with the higher Fourier coefficients of $E_i$. Denote by $G_{2,i}$ the Eisenstein series defined by Klingen in \cite[(12)]{klingen:eisensteinfourier} with $k=2$, $\mathfrak a=\OL$, $\mathfrak n=2\OL$ and $(a_1,a_2)=A_i^t(0,1)$ where $A_i^t$ is the transpose of $A_i$. On using \eqref{eq:relationEig2} and \eqref{eq:compuconstant+zetavalue} we compute  
\begin{equation}\label{eq:comp-Ei-G2i}
E_i=E_0|_{A_i}=\tfrac{\sqrt{5}}{\pi^4}\cdot 10G_{2,i}.
\end{equation}
In \cite[p.182]{klingen:eisensteinfourier}, Klingen obtains a formula for the higher Fourier expansion $\Sigma_2$ of $G_{2,i}$ at the cusp $\infty$ and he observes that $e^{2\pi i(a_2\nu+a'_2\nu')}$ is a 4-th root of unity for all $\nu\in \tfrac{1}{2\sqrt{5}}\OL$. These results and \eqref{eq:comp-Ei-G2i} imply that all higher Fourier coefficients of $E_i$ lie in $\ZZ[\sqrt{-1}]$, and then they all lie in $\ZZ$ since  the terms involving $\sqrt{-1}$ cancel out by symmetry. 
 
Finally, we show that the Fourier coefficients are coprime. For this it suffices to find a Fourier coefficient $a_\nu$ with $3\nmid a_\nu$, since $a_0=3$ and $E_i=E_0|_{A_i}$.  A set of representatives of $\OL^\times$ modulo $\{x\in \OL^\times_+; \, \bar{x}=1\}$ is given by $\{\pm \epsilon^{i}; \, i=0,\dotsc,5\}$.  Then,  on using again Klingen's formula for $\Sigma_2$, we compute that the totally positive element $\nu=\epsilon/\sqrt{5}$ of the module $\tfrac{1}{2}\mathfrak d^{-1}$ satisfies $a_{\nu}=-8$ which is not divisible by 3.  This completes the proof.
\end{proof}
In particular, Lemma~\ref{lem:eisensteinfourierexpansion} assures that the Hilbert modular form $E_i$ of weight two for $\Gamma(2)$ has all Fourier coefficients in $B=\ZZ[\tfrac{1}{30}]$. Thus the $q$-expansion principle of Rapoport~\cite{rapoport:hilbertmodular} and Faltings--Chai~\cite{fach:deg,chai:hilbmod} gives that  $E_i$ corresponds to a global section of the square of the Hodge bundle $\omega^{\otimes 2}$ on the toroidal compactification $\torm$.

\subsubsection{Divisors of the Eisenstein series}\label{sec:eisensteindivisor} 
We continue our notation. Lemma~\ref{lem:descendingomega} gives that $\pi_*\omega^{\otimes 2}$ is an invertible sheaf on the regular scheme $M_2$, where $\pi:\mathcal M_2\to M_2$ is an initial morphism and $\omega$ is the Hodge bundle.  In this section we study the divisors $D_i$ on $M_2$ associated to the five global sections of $\pi_*\omega^{\otimes 2}$  corresponding to the five Eisenstein series $E_i$ defined in $\mathsection$\ref{sec:eisenstein}.  

To compute the divisors $D_i$, we first show that they are horizontal. The following lemma relies on the computation of the constant term of $E_i$ in Lemma~\ref{lem:eisensteinfourierexpansion} and fundamental results for $\torm$ and $\minm$ proven by Rapoport~\cite{rapoport:hilbertmodular} and Faltings--Chai~\cite{fach:deg,chai:hilbmod}.

\begin{lemma}\label{lem:divhorizontal}
Each prime divisor $Z$ of $D_i$ is the closure of its generic fiber $Z_\QQ$.
\end{lemma}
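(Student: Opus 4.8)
The plan is to show that no prime divisor $Z$ of $D_i$ can be vertical, i.e. supported in a fiber $M_{2,p}$ over a prime $p\in B=\spec(\ZZ[\tfrac{1}{30}])$; since $M_2$ is regular (hence its prime divisors are the integral closed subschemes of codimension one) and $D_i$ is a Cartier divisor attached to a global section of the invertible sheaf $\pi_*\omega^{\otimes 2}$, ruling out vertical components forces each prime divisor $Z$ of $D_i$ to be flat over $B$, and a flat integral closed subscheme of codimension one in $M_2$ is the closure of its generic fiber $Z_\QQ$. So everything reduces to the claim: the section $s_i$ of $\pi_*\omega^{\otimes 2}$ corresponding to $E_i$ does not vanish identically on any fiber $M_{2,p}$.

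First I would transfer the question to the minimal compactification $\minm$, which by the recollections in $\mathsection$\ref{sec:hodgebundlepositive} is a \emph{normal} scheme, projective over $B$, with \emph{irreducible} fibers $\minm_p$ of dimension $2$, and with $M_2\subset\minm$ a fiberwise dense open subscheme. A vertical prime divisor $Z$ of $D_i$ in $M_2$ would have closure $\bar Z$ in $\minm$ of codimension one, again supported in $\minm_p$; since $\minm_p$ is irreducible of dimension $2$, necessarily $\bar Z=\minm_p$ as sets, i.e. $\bar Z$ is (the reduced structure on) the whole fiber. Thus it suffices to prove that $E_i$, viewed as a section of the relevant power of the Hodge bundle on $\minm$ via $\bar\pi_*\omega^{\otimes 2}$ and \eqref{eq:adjunctionisopibar}, cannot vanish on any full fiber $\minm_p$.

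Here I would invoke the $q$-expansion principle of Rapoport and Faltings--Chai, combined with the computation of the constant term in Lemma~\ref{lem:eisensteinfourierexpansion}. Concretely: $E_i$ has a Fourier expansion at each of the five cusps with all coefficients in $B$ and, at the cusp $i$, constant term $a_0=3$, which is a \emph{unit} in $B=\ZZ[\tfrac{1}{30}]$. If $s_i$ vanished on the fiber $\minm_p$, then its $q$-expansion at the cusp $i$ would reduce to zero modulo $p$; but the constant term $3$ is nonzero in the residue field $\mathbb F_p$ for every $p\in B$ (since $p\nmid 30$, in particular $p\neq 3$). This contradiction shows $s_i$ vanishes on no fiber, so $D_i$ has no vertical component; hence each prime divisor $Z$ of $D_i$ is flat over $B$ and equals the scheme-theoretic closure of $Z_\QQ$ in $M_2$.

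The main obstacle is bookkeeping rather than mathematics: one must be careful that the global section of $\pi_*\omega^{\otimes 2}$ on $M_2$ attached to $E_i$ really pulls back to the section of $\omega^{\otimes 2}$ on $\torm$ whose $q$-expansion at each cusp is the Fourier expansion of $E_i$ (using Lemma~\ref{lem:descendingomega} for the descent along $\pi$, and \eqref{eq:adjunctionisopibar} for the relation between $\torm$ and $\minm$), and that ``vanishing on a fiber'' on the scheme side corresponds to the reduction mod $p$ of the $q$-expansion being zero on the stack side. Once these identifications are set up cleanly, the argument is immediate: the unit constant term $3\in B^\times$ and the normality plus fiberwise-irreducibility of $\minm$ do all the work, and the ``need to invert $3$'' remarked upon just after Proposition~F is exactly what makes $a_0=3$ a unit.
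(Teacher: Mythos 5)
Your core strategy --- a vertical prime divisor would force $s_i$ to vanish on an entire fiber, which is ruled out by the unit constant term $a_0=3$ from Lemma~\ref{lem:eisensteinfourierexpansion} and the $q$-expansion principle --- is exactly the paper's. But there is a genuine circularity in the bookkeeping you deferred. You view $E_i$ as a section of an invertible sheaf on $\minm$ via $\bar\pi_*\omega^{\otimes 2}$ and \eqref{eq:adjunctionisopibar}; however, \eqref{eq:adjunctionisopibar} at exponent $2$ is derived in $\mathsection$\ref{sec:descendomega2} \emph{from} the statement that $\omega^{\otimes 2}$ is globally generated, which is Proposition~\ref{prop:fingen} --- and Proposition~\ref{prop:fingen} depends on the present lemma through Lemma~\ref{lem:divisorintersection}. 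Without knowing $\bar\pi_*\omega^{\otimes 2}$ is invertible, the step ``$s_i$ vanishes on a dense subset of $\minm_p$, hence on all of $\minm_p$'' is not available for a general coherent pushforward. The paper addresses exactly this at the very start of its proof: it observes that $kD_i$ is the divisor of $s_i^{\otimes k}$, replaces $s_i$ by $s_i^{\otimes k}$ and $D_i$ by $kD_i$ for a $k$ large enough that $\omega^{\otimes 2k}$ is globally generated by \cite[Main Thm]{chai:hilbmod}; for such a $k$ the sheaf $\bar\pi_*\omega^{\otimes 2k}$ is honestly invertible with $\bar\pi^*\bar\pi_*\omega^{\otimes 2k}\cong\omega^{\otimes 2k}$, and the constant term $3^k$ is still a unit in $B$.

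Alternatively you can sidestep $\minm$ entirely and never invoke \eqref{eq:adjunctionisopibar}. Since $M_2$ is smooth over $B$ of relative dimension $2$ with irreducible fibers, a vertical prime divisor $Z$ of $D_i$ is all of $M_{2,p}$. Lemma~\ref{lem:descendingomega} identifies $s_i$ on $\mathcal M_2$ with $\pi^*t_i$, where $t_i$ is the section of $\pi_*\omega^{\otimes 2}$ cutting out $D_i$; so $s_i$ vanishes on $\mathcal M_{2,p}=\pi^{-1}(M_{2,p})$, which is dense in the reduced irreducible fiber $\torm_p$ (here $\torm$ smooth and $\mathcal M_2$ fiberwise dense). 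Hence $s_i$ vanishes on all of $\torm_p$, and the $q$-expansion principle plus $a_0=3$ give the contradiction directly on $\torm$. Either way, the part you put aside as ``bookkeeping rather than mathematics'' is precisely where the proof lives: without the power trick or the direct route on $\torm$, the appeal to \eqref{eq:adjunctionisopibar} at exponent $2$ is circular.
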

\begin{proof}
Let $Z$ be a prime divisor of $D_i$. The idea of the proof is as follows: We first  reduce the problem to showing that the closure $Z^*$ of $Z$ in $\minm$ is a horizontal divisor. Then we prove that $Z^*$ is a horizontal divisor by combining the $q$-expansion principle with the computation of the constant term of the Eisenstein series $E_i$ in Lemma~\ref{lem:eisensteinfourierexpansion}. 

We first introduce some notation and make a reduction. Let $s_i$ be the global section of $\omega^{\otimes 2}$ on $\torm$ corresponding to the Eisenstein series $E_i$. We also denote by $s_i$ its restriction to $\mathcal M_2\subset \torm$. Write $s=s_i$ and $D=D_i$. For each $k\in \ZZ_{\geq 1}$ we obtain that $kD$ is the divisor on $M_2$ associated to the global section of $\pi_*\omega^{\otimes k}$ defined by $s^{\otimes k}$; indeed  $\pi_*\omega^{\otimes 2k}\cong (\pi_*\omega^{\otimes 2})^{\otimes k}$ by Lemma~\ref{lem:descendingomega} and the projection formula.   
To prove Lemma~\ref{lem:divhorizontal}, we thus may replace $s$ by $s^{\otimes k}$ and $D$ by $kD$ for any $k\in \ZZ_{\geq 1}$. Moreover \cite[Main Thm~(iii)]{chai:hilbmod} and \eqref{eq:adjunctionisopibar} show that we may and do this for some $k$ such that $\bar{\pi}_*\omega^{\otimes 2k}$ is an invertible sheaf on $\minm$ with $\bar{\pi}^*\bar{\pi}_*\omega^{\otimes 2k}\cong\omega^{\otimes 2k}$ for $\bar{\pi}:\torm\to\minm$ as in \eqref{def:pibar}. We now can consider the divisor $D^*$ on the normal scheme $\minm$ associated to the global section of $\bar{\pi}_*\omega^{\otimes 2k}$ defined by $s$.

Let $Z^*$ be the closure in $\minm$ of our given prime divisor $Z$ of $D$. As $\pi$ is the restriction of $\bar{\pi}$ to $\mathcal M_2\subset \torm$ and $\bar{\pi}(\torm\setminus \mathcal M_2)=\minm\setminus M_2$, we see that $\pi_*\omega^{\otimes 2k}$ is the restriction of $\bar{\pi}_*\omega^{\otimes 2k}$ to $M_2\subset\minm$.  It follows that $D$ is the restriction of $D^*$ to $M_2\subset \minm$. In particular $Z^*$ is a prime divisor of $D^*$.  Moreover the claim (1) below gives that $Z^*$ is a horizontal divisor. Thus the generic point $z$ of $Z^*$ lies in the generic fiber $Z^*_\QQ$ of $Z^*$ and $Z^*$ is the closure of $Z^*_\QQ$ in $\minm$.  In fact $z$ also lies in $Z$ since $Z=M_2\cap Z^*$ is nonempty open in the irreducible $Z^*$.  Hence $Z_\QQ=Z\cap Z^*_\QQ$ contains $z$ and thus the closure of $Z_\QQ$ in $M_2$ is $Z^*\cap M_2=Z$.  This proves that $Z$ is the closure of its generic fiber  as claimed in Lemma~\ref{lem:divhorizontal}.

It remains to show the claim (1) that any prime divisor of $D^*$ is horizontal. Each prime divisor of $D^*$ is either horizontal or vertical, since $\minm\to B$ is closed and $B$ is a connected Dedekind scheme.  To show that $D^*$ has no vertical prime divisors, we take a closed point $p\in B$. The fiber $M^*_{2,p}$ of $\minm$ over $p$ is irreducible. Hence any vertical prime divisor $Z^*$ of $D^*$ over $p$  equals $M^*_{2,p}$, since $Z^*\subseteq M^*_{2,p}$ is irreducible closed with $\dim Z^*=\dim \minm-1$ equal to $2=\dim M^*_{2,p}.$
This implies that the global section of $\bar{\pi}_*\omega^{\otimes 2k}$ defining $D^*$ vanishes on $Z^*=M^*_{2,p}$. As $s$ identifies with the pullback of this section under $\bar{\pi}$, we deduce that $s$ vanishes on the whole fiber $\overline{\mathcal M}_{2,p}$ of $\torm$ over $p$.  Then the $q$-expansion principle assures that the $q$-expansion of $s$ at each cusp of $\mathcal M_2$ is zero modulo $p$.  But at the cusp $i$ of $\mathcal M_2$ corresponding to $E_i$, Lemma~\ref{lem:eisensteinfourierexpansion} gives that the constant term of the $q$-expansion of $s=s_i^{\otimes k}$ is $3^k$ which is nonzero modulo $p$.  This shows that  $D^*$ can indeed not have vertical prime divisors, which completes the proof of the lemma.
\end{proof}

Next, we combine the above result with Lemmas~\ref{lem:geompropimmersion}, \ref{lem:fijcomputation} and \ref{lem:eisensteinfourierexpansion} to compute the divisors $D_i$ in terms of the modular curves $gC$ constructed in $\mathsection$\ref{sec:modularcurves}.

\begin{lemma}\label{lem:divisorintersection}
The following statements hold.
\begin{itemize}
\item [(i)] The support of $D_i$ is the union of four disjoint $gC$ with $g\in \sl2(\OL/2\OL)$.

\item[(ii)] The intersection $\cap_i D_i$ is empty.
\end{itemize}
\end{lemma}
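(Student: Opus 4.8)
\textbf{Proof plan for Lemma~\ref{lem:divisorintersection}.}
The strategy is to first pin down the generic fiber $D_{i,\QQ}$ of $D_i$ using Hirzebruch's work over $\CC$, then use Lemma~\ref{lem:divhorizontal} to conclude that $D_i$ is the closure of $D_{i,\QQ}$, and finally assemble the five divisors $D_i$ to deduce disjointness and empty intersection from the geometric properties of the curves $gC$ established in $\mathsection$\ref{sec:modularcurves}. First I would recall that by Lemma~\ref{lem:divhorizontal} each prime divisor $Z$ of $D_i$ is the closure in $M_2$ of its generic fiber $Z_\QQ$, so it suffices to compute the closure in $M_2$ of $D_{i,\QQ}$, an effective divisor on the smooth surface $M_{2,\QQ}\cong Y_\QQ$. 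Over $\CC$, the section $s_i$ of $\pi_*\omega^{\otimes 2}$ corresponds to the Eisenstein series $E_i$, whose vanishing divisor on $\mathbb H^2/\Gamma(2)$ was determined by Hirzebruch~\cite{hirzebruch:ck}: the divisor of $E_i$ is a sum of four of the ten irreducible components $F^{jk}_\CC$ of $F_1(\CC)$, namely exactly those components that do not, after desingularization of $\minm(\CC)$, meet the cusp resolution over the cusp $i$. (Here I would cite Hirzebruch's explicit intersection computations with the cusp resolutions in the minimal desingularization of $M_2^*(\CC)$, which is how one reads off which four $F^{jk}_\CC$ occur in $\mathrm{div}(E_i)$; I expect this bookkeeping — matching the labels $jk$ to the cusp $i$ — to be the most delicate part of the argument.)

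Next I would invoke Lemma~\ref{lem:fijcomputation}, which gives that each $F^{jk}_\CC$ equals $(g_{jk}C)_\CC$ for a suitable $g_{jk}\in G=\sl2(\OL/2\OL)$, and that these ten components are pairwise disjoint (being distinct left cosets modulo the stabilizer $H$ of $C(\CC)$, with disjointness coming from Lemma~\ref{lem:geompropimmersion}~(iii)). Since $gC\subset M_2$ is by construction the scheme-theoretic image of the proper morphism $\pi\tau_g\phi$ (Lemma~\ref{lem:geompropimmersion}~(i)), it is a closed integral curve flat over $B$, hence equal to the closure of its generic fiber $(gC)_\QQ$. Comparing over $\CC$, Hirzebruch's computation shows that the generic fiber $D_{i,\QQ}$, as a reduced divisor, is the union of the four curves $(g_1C)_\QQ,\dots,(g_4C)_\QQ$ corresponding to the four $F^{jk}$ not meeting the cusp $i$; taking closures in $M_2$ and using Lemma~\ref{lem:divhorizontal} gives that the support of $D_i$ is the union of these four $g C$. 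Their pairwise disjointness inside $M_2$ follows from Lemma~\ref{lem:geompropimmersion}~(iii): if $gC\cap g'C\neq\emptyset$ with $gC\neq g'C$ then $g^{-1}g'\in\gl2(\ZZ/2\ZZ)$, and one checks (using that these four cosets are distinct modulo $H$ and that $\gl2(\ZZ/2\ZZ)\cap G$-translates of $C$ are accounted for correctly in the labelling) that this cannot occur for the four curves arising in a single $D_i$. This proves (i).

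For (ii), I would argue combinatorially. Each $D_i$ has support a union of four of the curves $gC$ appearing among the ten $F^{jk}$, indexed by the four pairs $jk$ with $i\notin\{j,k\}$. An intersection point of $\cap_{i=0}^4 D_i$ would lie on a curve $gC$ that occurs in every $D_i$; but a given component $F^{jk}$ occurs in $D_i$ precisely when $i\notin\{j,k\}$, so no single $F^{jk}$ lies in all five $D_i$ (each $F^{jk}$ is missing from $D_j$ and $D_k$). Hence a point of $\cap_i D_i$ would have to lie on the intersection of two distinct components $gC\neq g'C$ — but by part (i) and Lemma~\ref{lem:geompropimmersion}~(iii) combined with Lemma~\ref{lem:fijcomputation}, distinct $gC$ among the ten are disjoint. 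Therefore $\cap_i D_i=\emptyset$, which is \eqref{eq:introddivisorintersection} and completes the proof. The main obstacle, as noted, is the precise translation of Hirzebruch's $\CC$-theoretic divisor computation — in particular the intersection behaviour with cusp resolutions — into the statement that exactly the four $F^{jk}$ with $i\notin\{j,k\}$ appear in $D_i$; once that is in hand, the disjointness and the counting argument for (ii) are formal consequences of the results already proved in $\mathsection$\ref{sec:modularcurves}.
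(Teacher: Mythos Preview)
Your approach is the same as the paper's: compute $D_{i,\CC}$ via Hirzebruch, identify the components with the curves $g_{ij}C$ via Lemma~\ref{lem:fijcomputation}, take closures using Lemma~\ref{lem:divhorizontal}, and deduce disjointness from Lemma~\ref{lem:geompropimmersion}~(iii). Two points to tighten.

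First, your indexing in~(ii) contradicts~(i). In~(i) you correctly say the four components of $D_i$ are those $F^{jk}_\CC$ \emph{not} meeting cusp~$i$; with the paper's labelling ($F^{jk}$ avoids cusps $j$ and $k$) this means $i\in\{j,k\}$, giving the four curves $F^{ij}$, $j\neq i$. In~(ii) you write the condition as $i\notin\{j,k\}$, which yields $\binom{4}{2}=6$ components, not four. The combinatorial conclusion (no single $F^{jk}$ lies in all five $D_i$) happens to survive, but the statement should be fixed.

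Second, in the disjointness step you invoke Lemma~\ref{lem:geompropimmersion}~(iii) and then say ``one checks\dots that this cannot occur''. The paper makes the missing step explicit: if $gC\cap g'C\neq\emptyset$ then $g^{-1}g'\in\gl2(\ZZ/2\ZZ)$, and since $\phi$ is defined by tensoring with $\OL$ one has $hC=C$ for every $h\in\gl2(\ZZ/2\ZZ)$, whence $gC=g'C$. This gives the clean dichotomy $F^{ij}\cap F^{kl}=\emptyset$ or $F^{ij}=F^{kl}$ for \emph{all} pairs, from which both the disjointness in~(i) and the argument in~(ii) follow immediately.
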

\begin{proof}
To prove (i)  we first determine the support of $D_i$ over $\QQ$. In Lemma~\ref{lem:eisensteinfourierexpansion} we computed the constant term of the Fourier expansion of our Eisenstein series $E_i$ in each cusp $j$ of $M_2^*(\CC)$: It is nonzero if and only if $j=i$. Thus our $E_i$ is up to a constant factor the Eisenstein series $E_{i+1}$ in \cite[p.190]{vandergeer:hilbertmodular}.  Then \cite[p.190]{vandergeer:hilbertmodular} implies  \begin{equation}\label{eq:divisorcompoverC}
\textnormal{supp} D_{i,\CC}=\cup F^{ij}_{\CC}
\end{equation}
since by construction the Eisenstein series $E_i$ corresponds to the global section of $\pi_*\omega^{\otimes 2}$ defining the divisor $D_i$. Here the union is taken over the four cusps $j\neq i$ and we recall from \eqref{def:f1ccomponents} that the curve 
$F_1(\CC)=\cup F^{ij}_\CC$
has 10 irreducible components $F^{ij}_\CC$. Lemma~\ref{lem:fijcomputation} gives $g_{ij}\in \sl2(\OL/2\OL)$ such that $F^{ij}_\CC=(g_{ij}C)_\CC$. Then \eqref{eq:divisorcompoverC} leads to  
\begin{equation}\label{eq:divisorcompoverQ}
\textnormal{supp} D_{i,\QQ}=\cup F^{ij}_{\QQ}, \quad F^{ij}=g_{ij}C
\end{equation}
with the union  taken over the four cusps $j\neq i$. We now compute the closure inside $M_2$. As the moduli stack $\mathcal E_2$ of elliptic curves with symplectic level 2-structures is irreducible  and $F^{ij}=g_{ij}C$ is the scheme-theoretic image of $\pi\tau_{g_{ij}}\phi:\mathcal E_2\to M_2$, we obtain that $F^{ij}=g_{ij}C$ is an integral closed subscheme of $M_2$ with generic fiber $F^{ij}_\QQ$ nonempty. Hence $F^{ij}$ is the closure of $F^{ij}_\QQ$ in $M_2$.   Further, the closure of $\textnormal{supp} D_{i,\QQ}$ in $M_2$ is given by $\textnormal{supp} D_i$ since each prime divisor of $D_i$ is the closure of its generic fiber by Lemma~\ref{lem:divhorizontal}.  Thus \eqref{eq:divisorcompoverQ} shows
\begin{equation}\label{eq:suppcompdi}
\textnormal{supp} D_i=\cup F^{ij}
\end{equation} 
with the union taken over the four cusps $j\neq i$. Next, we determine the intersection $F^{ij}\cap F^{kl}$ for arbitrary cusps $i,j,k,l$. We claim that either $F^{ij}\cap F^{kl}$ is empty or $F^{ij}=F^{kl}$. To prove this claim, we assume that $F^{ij}\cap F^{kl}$ is nonempty.  Then Lemma~\ref{lem:geompropimmersion} gives that $g=g_{kl}^{-1}g_{ij}$ lies in the image of $\gl2(\ZZ/2\ZZ)$ in $\gl2(\OL/2\OL)$ via $\ZZ/2\ZZ\otimes \OL\cong \OL/2\OL$. It follows that $gC=C$,  since $C$ is the scheme-theoretic image of the morphism $\pi\phi:\mathcal E_2\to M_2$ and $\phi$ is induced by tensoring with $\OL$. This implies that $F^{ij}=g_{ij}C=g_{kl}C=F^{kl}$ as claimed. We now consider the four subschemes $F^{ij}\subset M_2$ appearing in \eqref{eq:suppcompdi}. They are pairwise distinct since their base changes $F^{ij}_\CC$ are pairwise distinct by \eqref{def:f1ccomponents}, and thus the claim below \eqref{eq:suppcompdi} assures that these four $F^{ij}$ are disjoint. This completes the proof of (i).


It remains to prove (ii). Suppose that (ii) does not hold, that is $\cap D_i$ is nonempty. Then \eqref{eq:suppcompdi} gives that $\cap_i \cup F^{ij}$ is nonempty and hence there are $j_i\neq i$ such that $\cap_i F^{ij_i}$ is nonempty. But, as the five $F^{ij_i}$ can not all be equal by \eqref{def:f1ccomponents}, the claim below \eqref{eq:suppcompdi} shows that there exist at least two disjoint $F^{ij_i}\subset M_2$ which implies that $\cap_i F^{ij_i}$ is empty. This contradiction proves (ii) and completes the proof of Lemma~\ref{lem:divisorintersection}.
\end{proof}
In the proof of Lemma~\ref{lem:divisorintersection}~(i) we obtain in addition a description in \eqref{eq:suppcompdi} of those four $g\in \sl2(\OL/2\OL)$ with $gC$ a prime divisor of $D_i$. The description is in terms of the intersection behaviour of the cusp resolutions in the minimal desingularization of $M_2^*(\CC)$. This is crucial for the proof of (ii) and might be of independent interest.

\subsubsection{Proof of Proposition~\ref{prop:fingen}}\label{sec:proofpropfingen}
We continue our notation. Recall that $\bar{\pi}:\torm\to\minm$ is the canonical morphism from \eqref{def:pibar} and that $s_i$ is the global section of $\omega^{\otimes 2}$ corresponding to the Eisenstein series $E_i$ defined in $\mathsection$\ref{sec:eisenstein} where $\omega$ denotes the Hodge bundle on $\torm$. In the following proof of Proposition~\ref{prop:fingen}, we combine Lemmas~\ref{lem:eisensteinfourierexpansion} and \ref{lem:divisorintersection} with fundamental results for $\minm$ obtained by Rapoport~\cite{rapoport:hilbertmodular} and Faltings--Chai~\cite{fach:deg,chai:hilbmod}.

\begin{proof}[Proof of Proposition~\ref{prop:fingen}]
We first show that the sections $s_i$ have no common zeroes on the boundary $\torm\setminus \mathcal M_2$. It follows for example from \cite[Main Thm]{chai:hilbmod} that $\torm\setminus \mathcal M_2$ has five connected components which we label such that $\bar{\pi}:\torm\to\minm$ sends the component $i$ to the cusp $i$ of $M_2^*$,  where the five cusps of $\minm$ are labelled by $0,\dotsc, 4$.  The section $s_i$  corresponds to the Eisenstein series $E_i$, and Lemma~\ref{lem:eisensteinfourierexpansion} gives that the constant term of the Fourier expansion of $E_i$ at the cusp $i$ of $M_2^*$ is invertible in $B$. Then the $q$-expansion principle  implies that $s_i$ does not vanish in the connected component $i$ of $\torm\setminus \mathcal M_2$.  Thus the sections $s_i$ have no common zeroes on the boundary $\torm\setminus \mathcal M_2$ as desired.

Further, it is a rather direct consequence of Lemma~\ref{lem:divisorintersection}~(ii) that the sections $s_i$ have no common zeroes in $\mathcal M_2$. The details are as follows. Let $p:X\to \mathcal M_2$ be an \'etale scheme cover of $\mathcal M_2$. 
To show that the sections $s_i$ generate $\omega^{\otimes 2}$ over $\mathcal M_2$, it suffices to prove that the intersection
$
\cap_i \textnormal{div}(p^*s_i)
$ is empty.  Corollary~\ref{cor:y2emptybranchlocus} implies that the initial morphism $\pi:\mathcal M_2\to M_2$ is \'etale and hence $q=p\pi$ is \'etale. Then we compute  $$\textnormal{div}(p^*s_i)=\textnormal{div}(q^*t_i)=q^*D_i$$ where $D_i$ is the divisor associated to the global section $t_i$ of $\pi_*\omega^{\otimes 2}$ defined by $s_i$. Here we identified $\pi^*t_i$ with $s_i$ via the adjunction isomorphism $\pi^*\pi_* \omega^{\otimes 2}\isomto \omega^{\otimes 2}$ in Lemma~\ref{lem:descendingomega}. Now, as the intersection $\cap_i D_i$ is empty by Lemma~\ref{lem:divisorintersection}~(ii), we deduce that $
\cap_i \textnormal{div}(p^*s_i)=\cap_i q^*D_i
$ is empty as desired.  This completes the proof of Proposition~\ref{prop:fingen}.
\end{proof}

\subsection{Proof of Proposition~\ref{prop:moduliinter}}\label{sec:proofmoduliinter}
We continue the notation and terminology of $\mathsection$\ref{sec:moduliinterpretation} and $\mathsection$\ref{sec:hodgebundlepositive}. Recall that $\minm$ denotes the minimal compactification of the coarse moduli scheme $M_2$. In this section we use the above results to construct the isomorphisms $\minm\isomto X$ and $M_2\isomto Y$ in Proposition~\ref{prop:moduliinter}.

\begin{proof}[Proof of Proposition~\ref{prop:moduliinter}]
As in the statement of Proposition~\ref{prop:moduliinter}, we consider the objects over the base scheme $B=\spec(\ZZ[\tfrac{1}{30}])$. The following proof consists of four steps.  

\paragraph{1.}We first construct a morphism $\minm\to \mathbb P^4_B$ using the Hodge bundle $\omega$ on the toroidal compactification $\torm$ of $\mathcal M_2$. Proposition~\ref{prop:fingen} gives that $\omega^{\otimes 2}$ is globally generated by the five global sections $s_i$ corresponding to the five Eisenstein series $E_i$ defined in $\mathsection$\ref{sec:eisenstein}. Hence these sections $s_i$ define a morphism $\torm\to \mathbb P^4_B$  
which is automatically proper since $\torm$ is proper over $B$.  Then Stein factorization decomposes $\torm\to \mathbb P^4_B$ into two morphisms: 
$$\torm \to^{\bar{\pi}} \minm\to^f \mathbb P^4_B,$$
where the canonical morphism $\mathcal O_{\minm}\isomto\bar{\pi}_*\mathcal O_{\torm}$ is an isomorphism and the morphism $f$ is finite.  Here we used the construction of $\minm$ which gives moreover that the restriction of $\bar{\pi}$ to $\mathcal M_2$ is an initial morphism $\mathcal M_2\to M_2$, see \cite[Main Thm (vi)]{chai:hilbmod}.

\paragraph{2.}Next, we prove that the scheme theoretic image $X'$ of $f$ equals $X$. The projection formula together with $\omega^{\otimes 2}\cong \bar{\pi}^*f^*\mathcal O(1)$ and $\bar{\pi}_*\mathcal O_{\torm}\cong \mathcal O_{\minm}$ implies that $\bar{\pi}_*\omega^{\otimes 2}\cong f^*\mathcal O(1)$.  In particular $\bar{\pi}_*\omega^{\otimes 2}$ is an invertible sheaf on $\minm$ which is globally generated by the five sections defined by the $s_i$.  The section $s_i$  corresponds to the Eisenstein series $E_i$ which coincides up to a constant factor with the Eisenstein series $E_{i+1}$ in van der Geer~\cite[p.191]{vandergeer:hilbertmodular}. Thus the arguments in \cite[p.190-191]{vandergeer:hilbertmodular}, which can be modified so that they work over $\QQ$ as explained by  Shepherd-Barron--Taylor~\cite[p.289]{sbta:ck},  
show that $$f_\QQ:\minmq\hookrightarrow \mathbb P^4_\QQ$$ is a closed immersion with scheme theoretic image $X_\QQ\subset \mathbb P^4_\QQ$.  Hence $X'_\QQ$ equals $X_\QQ$ inside $\mathbb P^4_\QQ$.  The schemes $X'$ and $X$ are both integral closed subschemes of $\mathbb P^4_B$. Indeed the scheme-theoretic image $X'$ of the finite morphism $f:\minm\to \mathbb P^4_B$ has this property since $\minm$ is integral,  
while a computation (using Macaulay2) with the defining equations of $X$ gives that $X$ is integral as well. Furthermore  $X'\subset \mathbb P^4_B$ and $X\subset \mathbb P^4_B$ have the same generic point, since $X'_\QQ=X_\QQ$ inside $\mathbb P^4_B$ and since the generic points of $X'$ and $X$ are the generic points of $X'_\QQ$ and $X_\QQ$ respectively.  It follows that $X'=X$ as desired.  

\paragraph{3.}We now apply Zariski's main theorem to deduce that $f:\minm\to\mathbb P^4_B$ induces an isomorphism on its image. The noetherian schemes $\minm$ and $X$ are both integral, and a computation (using Macaulay2) with the defining equations of $X$ shows that $X$ is normal. As $X$ is the scheme-theoretic image of $f$ by step 2, we obtain that $f$ induces a morphism   
$$f:\minm\to X.$$
This is a birational morphism of integral schemes, since it extends $f_\QQ:M^*_{2,\QQ}\to X_\QQ$ which is an isomorphism of $\QQ$-schemes as explained in step 2.  Then, as $X$ is normal, we can apply Zariski's main theorem to deduce that $\mathcal O_X\isomto f_*\mathcal O_{\minm}$.  This proves that the finite morphism $f:\minm\to X$ has to be an isomorphism.    

\paragraph{4.}It remains to show that $f:\minm\isomto X$ restricts to an isomorphism $M_2\isomto Y$. Write $V$ for the boundary $\minm\setminus M_2$, and recall that $Z$ is the union of the images of the five $B$-points of $X$ obtained by permuting the coordinates of $(1,0,0,0,0)$. We now prove that
$$f(V)=Z.$$
It follows from \cite[p.190-191]{vandergeer:hilbertmodular} and \cite[p.289]{sbta:ck} that $V_\QQ$ is given by five $\QQ$-points of $\minm$ and $f(V_\QQ)=Z_\QQ$.  
Further, $Z$ is the closure of $Z_\QQ$ in $X$ and \cite[Main Thm (vi)]{chai:hilbmod} implies that $V$ is the closure of $V_\QQ$ in $\minm$.  
As $f$ is a homeomorphism, we then deduce that $f(V)=\overline{f(V_\QQ)}=Z$.  
Thus the isomorphism $f:\minm\isomto X$ restricts to an isomorphism $M_2=\minm\setminus V\isomto X\setminus Z=Y$  as desired. This completes the proof of Proposition~\ref{prop:moduliinter}. 
\end{proof}

Proposition \ref{prop:moduliinter} implies that $Y$ is a coarse Hilbert moduli scheme over $B$ of the moduli problem $\mathcal P$ on $\mathcal M_2$ defined in \eqref{def:symplecticlvl}.  For what follows it will be crucial to specify an initial morphism $\mathcal M_2\to Y$. In the above proof of Proposition~\ref{prop:moduliinter}, we constructed in step~1 the morphisms $\torm\to^{\bar{\pi}} \minm\to^{f} \mathbb P^4_B$ and we showed in step~4  that the restriction to $\mathcal M_2\subset \torm$ of the composition $f\bar{\pi}:\torm\to \mathbb P^4_B$  defines an initial morphism
\begin{equation}\label{def:initialmorphismoverB}
\pi:\mathcal M_2\to Y.
\end{equation}
Indeed $f$ restricts to an isomorphism $M_2\isomto Y$ by step 4 and the restriction of $\bar{\pi}$ to $\mathcal M_2\subset \torm$ is an initial morphism $\mathcal M_2\to M_2$ by \cite[Main Thm (vi)]{chai:hilbmod}. By construction $\pi:\mathcal M_2\to Y$ is given by the five global sections of $\omega^{\otimes 2}$ on $\mathcal M_2$ corresponding via the $q$-expansion principle to the five Eisenstein series $E_i$ for  $\Gamma(2)$ of weight 2 defined in $\mathsection$\ref{sec:eisenstein}.

\subsection{Heights}\label{sec:heightcomparison}

Let $Y$ be as in \eqref{def:sigma24surface} and define $B=\ZZ[\tfrac{1}{30}]$. Proposition~\ref{prop:moduliinter} gives that $Y_B$ is a coarse moduli scheme over $B$ of the arithmetic moduli problem $\mathcal P$ on $\mathcal M_B$ of principal level 2-structures with \eqref{def:symplecticlvl}.  The goal of this section is to prove the following result which explicitly bounds the logarithmic Weil height $h$ on $Y\subset \mathbb P^4_\ZZ$ in terms of the height $h_\phi$.

\begin{proposition}\label{prop:heightcomp}
Any $P\in Y(\bar{\QQ})$ satisfies $h(P)\leq 2h_\phi(P)+8^8\log(h_\phi+8)$.
\end{proposition}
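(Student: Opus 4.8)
\textbf{Proof plan for Proposition~\ref{prop:heightcomp}.} The plan is to factor the comparison through a Theta height on $Y$, following the two-part strategy indicated in the introduction. Throughout we work over $\bar{\QQ}$ and write $Y$ for $Y_{\bar{\QQ}}$, $\mathbb P^4$ for $\mathbb P^4_{\bar{\QQ}}$. First I would identify, via the initial morphism $\pi:\mathcal M_2\to Y_B$ constructed in \eqref{def:initialmorphismoverB}, the five coordinate functions of $Y\subset\mathbb P^4$ with the five Eisenstein series $E_i$ of weight two for $\Gamma(2)$ from $\mathsection$\ref{sec:eisenstein}. The key algebraic input is G\"otzky's transformation formula for $\theta_{ij}^8$, which (combined with Hirzebruch's identification $Y(\CC)\cong\mathbb H^2/\Gamma(2)$ and his explicit description of the cusps) yields the identity
\begin{equation*}
E_i^4=\pm\prod_{j\neq i}\theta_{ij}^4
\end{equation*}
for the classical Hilbert theta functions $\theta_{ij}$. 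Next I would invoke the results of Gundlach and Lauter--Naehrig--Yang to see that each $\theta_{ij}$ is, up to sign, the pullback along the forgetful morphism $Y\to A_{2,2}$ of one of the $10$ even Theta nullwerte $\theta_m$ on the coarse Siegel moduli space $A_{2,2}$ of principally polarized abelian surfaces with symplectic level two structure. Defining $h_\theta$ on $Y$ as the pullback of the Theta height on $A_{2,2}$ given by the fourth powers $\theta_m^4$, the above identity and the standard properties of heights under morphisms to projective space give
\begin{equation*}
h(P)\leq h_\theta(P),\qquad P\in Y(\bar{\QQ}).
\end{equation*}

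For the second part I would relate $h_\theta$ on $Y$ to a Theta height $h_\Theta$ on the coarse moduli space $A_2$ of principally polarized abelian varieties of dimension two: since the forgetful morphism $Y\to A_{2,2}\to A_2$ only forgets the level-two structure and the $\OL$-action, $h_\theta$ and the pullback of $h_\Theta$ differ by a bounded comparison (the fourth powers of the even $\theta_m$ and the standard Siegel modular forms defining $h_\Theta$ differ multiplicatively by finitely many explicit modular forms of level one, whose heights on $A_2(\bar{\QQ})$ are absolutely bounded; this is where one keeps track of the constant $8^8$). Then I would apply Pazuki's explicit height comparison \cite{pazuki:heights} (building on Bost--David and Faltings' work \cite{faltings:finiteness}) between the stable Faltings height $h_F$ and the Theta height $h_\Theta$ on $A_2(\bar{\QQ})$: it is of the shape $|h_\Theta - c\, h_F|\ll \log(\max(1,h_F)+2)$ with an explicit constant $c$ (here $c=2$ accounts for the normalization $h_\phi=\phi^*h_F$). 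Combining with $h_\phi=\phi^*h_F$ and tracking the constants through the two bounded comparisons yields
\begin{equation*}
h_\theta(P)\leq 2h_\phi(P)+8^8\log(h_\phi(P)+8),
\end{equation*}
and together with $h(P)\leq h_\theta(P)$ this gives the proposition.

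\textbf{Main obstacles.} The delicate point is not the Faltings--Theta comparison, which is quotable from Pazuki, but rather (a) pinning down the precise identity $E_i^4=\pm\prod_{j\neq i}\theta_{ij}^4$: this requires correctly matching Hirzebruch's labelling of the five cusps of $\mathbb H^2/\Gamma(2)$ with the indexing of the $\theta_{ij}$ and using G\"otzky's eighth-power transformation law to control signs and the choice of square roots, and (b) verifying that each $\theta_{ij}$ descends (up to $\pm1$) through $Y\to A_{2,2}$ to an even Theta nullwerte $\theta_m$ --- i.e., matching the Hilbert theta functions of G\"otzky/Gundlach with the Siegel theta constants, which is exactly the content extracted from Lauter--Naehrig--Yang and Gundlach. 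Both are essentially bookkeeping with classical theta formulas, but the bookkeeping is where all the effort lies; in particular one must be careful that the morphism $Y\to A_{2,2}$ used here is the one induced by forgetting the $\OL$-structure and is compatible with the identification of $Y_B$ as a coarse moduli scheme in Proposition~\ref{prop:moduliinter}. The remaining step, propagating the explicit constants $2$, $8^8$, $8$ through the chain $h\leq h_\theta\ll h_\Theta\ll h_F=h_\phi$, is routine once the comparisons are in place, and the (possible) optimality of the factor $2$ and the improvability of $8^8$ and $8$ can be discussed afterwards as indicated in the introduction.
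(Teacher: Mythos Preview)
Your overall approach matches the paper's: first prove $h\leq h_\theta$ via the identity $E_i^4=\pm\prod\theta_{ij}^4$ and the Gundlach/Lauter--Naehrig--Yang identification of Hilbert thetas with even Siegel theta nullwerte (this is exactly Lemma~\ref{lem:thetaheight}), then pass to $A_2$ and invoke Pazuki's explicit Theta--Faltings comparison.

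However, your second part contains a concrete error in the constant tracking. The relation between $h_\theta$ (defined via the fourth powers $\theta_m^4$) and the pullback $h'_\theta=f^*h_\Theta$ of Pazuki's Theta height on $A_2$ is \emph{not} an additive bounded comparison: since Pazuki's $h_\Theta$ is built from the $\theta_m$ themselves (with $r=2$), one has in fact $h_\theta\leq 4h'_\theta\leq 4h_\theta+2\log 10$, the $\log 10$ coming from the $l^2$ versus $l^\infty$ normalisation as in \eqref{eq:normalizations}. Correspondingly, Pazuki's bound \cite[Cor~1.3.2]{pazuki:heights} controls $|h_\Theta-\tfrac{1}{2}h_F'|$, so the constant $c$ in your statement of his theorem is $\tfrac{1}{2}$, not $2$. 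The leading factor $2$ in the proposition thus arises as $4\times\tfrac{1}{2}$, not directly from Pazuki's comparison; your two errors happen to cancel, but the reasoning as written would not execute correctly. There are also no ``other modular forms of level one'' mediating the comparison of $h_\theta$ with $h'_\theta$: the same ten even $\theta_m$ define both heights, just taken to different powers.
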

Here $h_\phi$ is the height on $Y(\bar{\QQ})\cong Y_B(\bar{\QQ})$ defined with respect to the initial morphism $\pi:~\cmp\to Y_B$ in \eqref{def:initialmorphismoverB}, and recall that the logarithmic Weil height $h$ on projective space is normalized as in \cite[p.16]{bogu:diophantinegeometry}. We shall discuss the bound at the end of Section~\ref{sec:heightcomparison}. 

In the remaining of this section we prove Proposition~\ref{prop:heightcomp} as follows. We first relate the height $h$ to a Theta height $h_\theta$ on $Y$ by using the work of Hirzebruch~\cite{hirzebruch:ck} and computations for Hilbert theta functions due to G\"otzky~\cite{gotzki:theta}, Gundlach~\cite{gundlach:theta} and Lauter--Naehrig--Yang~\cite{lanaya:theta}. Then we  bound $h_\theta$ in terms of $h_\phi$ via the explicit height comparison of Pazuki~\cite{pazuki:heights} which is based on ideas of Bost--David. Throughout this section we work over $\bar{\QQ}$ and we write $Y$ for $Y_{\bar{\QQ}}$ and $\mathbb P^n$ for $\mathbb P^n_{\bar{\QQ}}$ where $n\in \ZZ_{\geq 1}$.


\paragraph{Theta height on $Y$.}We can define a Theta height on $Y$ by pulling back a Theta height on the coarse moduli space $A_{2,2}$ over $\bar{\QQ}$ of principally polarized abelian varieties with a symplectic level two structure \cite[p.19]{fach:deg}. More precisely, we define the Theta height $$h_\theta=\theta^*h: \ Y(\bar{\QQ})\to\mathbb R$$
where $\theta:Y\to \mathbb P^9$ is the composition of the following morphisms: The morphism $Y\to A_{2,2}$ induced on coarse moduli schemes by forgetting $\iota:\OL\to \End(A)$ and the morphism $A_{2,2}\to \mathbb P^9$ defined by the fourth powers of the even Theta functions $\theta_m=\theta_m(\tau,0)$ as in \cite[p.217]{vandergeer:hilbertmodular}.  The heights $h_\theta$ and $h$ on $Y$ are closely related. 
\begin{lemma}\label{lem:thetaheight}
Any $P\in Y(\bar{\QQ})$ satisfies $h(P)\leq h_\theta(P).$
\end{lemma}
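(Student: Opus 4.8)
\textbf{Proof plan for Lemma~\ref{lem:thetaheight}.}
The inequality $h(P)\leq h_\theta(P)$ compares the Weil height on $Y\subset\mathbb P^4$ coming from the five coordinate functions, which by Proposition~\ref{prop:moduliinter} and the discussion after \eqref{def:initialmorphismoverB} are the five Eisenstein series $E_i$, with the Theta height $h_\theta=\theta^*h$ pulled back via $\theta\colon Y\to\mathbb P^9$ from the fourth powers of the ten even Theta nullwerte $\theta_m$ on $A_{2,2}$. The plan is to exhibit, over $\bar{\QQ}$, an explicit relation expressing each $E_i^4$ (up to sign) as a product $\prod_{j\neq i}\theta_{ij}^4$ of G\"otzky's classical Hilbert theta functions $\theta_{ij}$, and then to identify each $\theta_{ij}$ (up to $\pm1$) with the pullback along $Y\to A_{2,2}$ of one of the even $\theta_m$. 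Once both facts are in place, the map $\mathbb P^4\dashrightarrow\mathbb P^9$ sending $(E_0^4:\dotsb:E_4^4)$ to the ten monomials $\prod_{j\neq i}\theta_{ij}^4$ (a linear embedding of $\mathbb P^4$ into $\mathbb P^9$ after clearing the sign ambiguity, realized via Segre-type coordinate identifications) exhibits $\theta\colon Y\to\mathbb P^9$ as a composition of the inclusion $Y\hookrightarrow\mathbb P^4$ with such a monomial embedding, and the elementary fact that for a monomial embedding $\mathbb P^4\hookrightarrow\mathbb P^N$ given by (a subset of) the degree-one-in-each-factor monomials in the coordinates one has $h(P)\leq h(\text{image of }P)$ finishes the proof.

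In detail, the first step is to pin down \eqref{eq:introeisensteinthetacomp}, namely $E_i^4=\pm\prod_{j\neq i}\theta_{ij}^4$. Here I would invoke Hirzebruch's computations in \cite{hirzebruch:ck}, which over $\CC$ express the five Eisenstein series of weight two for $\Gamma(2)$ in terms of products of Hilbert theta constants, together with a transformation formula for $\theta_{ij}^8$ due to G\"otzky~\cite{gotzki:theta} that controls the sign/automorphy ambiguity; since both sides are algebraic (the $E_i$ have $B$-rational Fourier expansions by Lemma~\ref{lem:eisensteinfourierexpansion} and the $\theta_{ij}^8$ transform by roots of unity, so their appropriate products are honest modular forms), the identity descends from $\CC$ to $\bar{\QQ}$ by the $q$-expansion principle. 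The second step is to match $\theta_{ij}$ with the even Theta nullwerte $\theta_m$: here the forgetful morphism $Y\to A_{2,2}$ corresponds on the level of modular forms to restricting Siegel theta constants of genus $2$ to the Hilbert modular embedding, and the results of Gundlach~\cite{gundlach:theta} and Lauter--Naehrig--Yang~\cite{lanaya:theta} give exactly that each Hilbert theta constant $\theta_{ij}$ is, up to $\pm1$, the restriction of some even $\theta_m$ (and conversely the ten $\theta_{ij}$ account for the ten even $\theta_m$). Combining the two matchings, the ten functions $\prod_{j\neq i}\theta_{ij}^4$ (as $i$ ranges over $\{0,\dots,4\}$ and, more precisely, over the ten unordered pairs) are, up to sign, the ten pullbacks $\theta^*(\theta_m^4)$.

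The final step is purely formal. Choosing homogeneous coordinates on $\mathbb P^4$ so that $[E_0^4:\dots:E_4^4]$ is the embedding $Y\hookrightarrow\mathbb P^4$ from Proposition~\ref{prop:moduliinter}, the composite $\theta\colon Y\to\mathbb P^9$ is $P\mapsto[\,\pm\prod_{j\neq i}E_{\sigma(i,j)}^4\,]$ for suitable index functions, i.e.\ it factors through the monomial map $\mu\colon\mathbb P^4\to\mathbb P^9$ whose coordinates are (signed) products of pairs among the five coordinates $u_0,\dots,u_4$. For any point $Q=[u_0:\dots:u_4]\in\mathbb P^4(\bar{\QQ})$ and any place $v$, one has $\max_{i<j}|u_iu_j|_v\geq(\max_i|u_i|_v)^2/(\text{second largest})\geq$ enough to give, after taking the product over places, $h(\mu(Q))\geq h(Q)$; the sign factors $\pm1$ are units at every place and do not affect heights. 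Applying this with $Q=(E_0^4(P):\dots:E_4^4(P))$ and noting $h(Q)=4h(P)\geq h(P)$ while $h(\mu(Q))=h_\theta(P)$ yields $h(P)\leq h_\theta(P)$. I expect the main obstacle to be Step~1: carefully extracting from Hirzebruch's classical analytic arguments the precise identity $E_i^4=\pm\prod_{j\neq i}\theta_{ij}^4$ with the correct normalization of both the $E_i$ (to agree with the $B$-integral Fourier expansions fixed in Lemma~\ref{lem:eisensteinfourierexpansion}) and the $\theta_{ij}$, and controlling the sign via G\"otzky's transformation law for $\theta_{ij}^8$; the Segre/monomial height inequality in Step~3 is routine, and the matching in Step~2 is a direct citation.
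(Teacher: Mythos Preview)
Your Steps~1 and~2 are essentially the paper's argument: the paper establishes the identity $E_i^4=\pm\prod\theta_{a,b}^4$ over four even pairs $(a,b)$ by combining Hirzebruch's formula $E_0=\theta_{01}\theta_{02}\theta_{03}\theta_{04}$ with G\"otzky's transformation law for $\theta_{ij}^8$, and then identifies each $\theta_{a,b}$ up to sign with the pullback of an even Siegel theta nullwert $\theta_m$ via Lauter--Naehrig--Yang. So far, so good.

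The gap is in Step~3, where you have the direction of the monomial relation backwards. The identity you correctly set up in Steps~1--2 says that each $x_i^4$ is (up to sign) a product of four of the ten $z_j$'s, i.e.\ $x_i^4=\pm\prod_{j\in J_i}z_j$ with $|J_i|=4$. This expresses the $\mathbb P^4$-coordinates (to the fourth power) as monomials in the $\mathbb P^9$-coordinates, not the other way around. You instead assert that $\theta:Y\to\mathbb P^9$ factors through $Y\hookrightarrow\mathbb P^4$ followed by a monomial map $\mu:\mathbb P^4\to\mathbb P^9$; that would require writing each $z_j$ (i.e.\ each individual $\theta_{ij}^4$) as a monomial in the five $E_i$, which the relation $E_i^4=\pm\prod_{j\neq i}\theta_{ij}^4$ does not give (five equations, ten unknowns). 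Relatedly, your sentence ``the ten functions $\prod_{j\neq i}\theta_{ij}^4$ \dots\ are, up to sign, the ten pullbacks $\theta^*(\theta_m^4)$'' conflates two different collections: there are only five products $\prod_{j\neq i}\theta_{ij}^4$ (one per $i$), and they equal the five $E_i^4$; the ten pullbacks $\theta^*(\theta_m^4)$ are the ten individual $\theta_{ij}^4$ themselves. Finally, even if a factorization $\theta=\mu\circ(Y\hookrightarrow\mathbb P^4)$ existed with $\mu$ given by products of pairs, the inequality $h(\mu(Q))\geq h(Q)$ is not routine and in fact fails at points where all but one coordinate is small.

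The fix is that Step~3 becomes trivial once you use the relation in the correct direction, exactly as the paper does: from $x_i^4=\pm\prod_{j\in J_i}z_j$ with $|J_i|=4$ one gets at every place $v$ that $|x_i|_v^4=\prod_{j\in J_i}|z_j|_v\leq(\max_j|z_j|_v)^4$, hence $\max_i|x_i|_v\leq\max_j|z_j|_v$, and summing over places yields $h(P)\leq h_\theta(P)$ immediately. No Segre embedding or monomial-map height comparison is needed.
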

\begin{proof}Let $x_i$ be the pullback along $Y\hookrightarrow \mathbb P^4$ of the $i$-th coordinate function on $\mathbb P^4$, and let $z_j$ be the pullback along $\theta:Y\to \mathbb P^9$ of the $j$-th coordinate function on $\mathbb P^9$.  Lemma~\ref{lem:thetaheight} is a direct consequence of the following claim: On $Y(\bar{\QQ})$ it holds 
\begin{equation}\label{claim:coordinatefunctions}
x_i^4=\pm\prod_{j\in J_i} z_j
\end{equation}
where $J_i\subset \{0,\dotsc,9\}$ with $|J_i|=4$. To prove this claim, we need to introduce some notation. Let $\mathbb H_2$ be the Siegel upper half space of genus 2 on which the symplectic group $\textnormal{Sp}_2(\ZZ)$ acts as usual and write $\mathbb H=\{z\in \CC, \textnormal{im}(z)>0\}$. We denote by $\Gamma_2$  the kernel of the projection $\textnormal{Sp}_2(\ZZ)\to \textnormal{Sp}_2(\ZZ/2\ZZ)$ and we denote by  $\Gamma(2)$ the kernel of the projection $\textnormal{SL}_2(\OL)\to \textnormal{SL}_2(\OL/2\OL)$. Then we consider the commutative diagram  \begin{equation}\label{diag:thetacomp}
\xymatrix@R=4em@C=4em{
Y(\bar{\QQ}) \ar@{^{(}->}[r] \ar[d] & Y(\CC)\ar^{\sim}[r] \ar[d] & [\mathcal M_2(\bar{\CC})]\ar^{\sim}[r] \ar[d] & \mathbb H^2/\Gamma(2)\ar^{\phi_\epsilon}[d] \\
A_{2,2}(\bar{\QQ}) \ar@{^{(}->}[r] &  A_{2,2}(\CC)\ar^{\sim}[r]& [\mathcal A_{2,2}(\bar{\CC})]\ar^{\sim}[r] &\mathbb H_2/\Gamma_2} 
\end{equation}
which is obtained as follows: 
The map $\phi_\epsilon$ is the modular morphism of Lauter--Yang~\cite[p.153]{lanaya:theta} with $\epsilon=\tfrac{1+\sqrt{5}}{2}$. The two bijections in the middle come from the coarse moduli spaces $\pi_{\bar{\QQ}}:\mathcal M_2\to Y$ and $\mathcal A_{2,2}\to A_{2,2}$,  where  $\mathcal M_2=(\mathcal M_{\mathcal P})_{\bar{\QQ}}$  and $\mathcal A_{2,2}$ is the moduli stack over $\bar{\QQ}$ of principally polarized abelian schemes with a symplectic level two structure as in \cite[p.19]{fach:deg}. The two bijections on the right come from the usual uniformization  of complex abelian varieties (with real multiplications),  where $[\mathcal M_2(\CC)]\isomto \mathbb H^2/\Gamma(2)$ factors through the inverse of the map $\phi_0:\mathbb H^2\to \mathbb H^2$ defined in \cite[p.153]{lanaya:theta}. 

We now use the diagram to compute $x_i$ in terms of the $z_j$. The discussion right after \eqref{def:initialmorphismoverB} provides that $\pi_{\bar{\QQ}}:\mathcal M_2\to Y$ is given by the five global sections of the square $\omega^{\otimes 2}$ of the Hodge bundle on $\mathcal M_2$ corresponding via the $q$-expansion principle to the five Eisenstein series $E_i$ for  $\Gamma(2)$ of weight 2 which are defined in $\mathsection$\ref{sec:eisenstein}. Then the inverse $\mathbb H^2/\Gamma(2)\isomto Y(\CC)$ of the composition of the isomorphisms $Y(\CC)\isomto [\mathcal M_2(\CC)]\isomto \mathbb H^2/\Gamma(2)$ in the diagram \eqref{diag:thetacomp} is given by the five Eisenstein series $E_i$. Moreover, it holds
\begin{equation}\label{eq:hirzebrucheisensteincomp}
Y_\CC\cong \proj\bigl(\CC[E_i]/(\sigma_2,\sigma_4)\bigl)\setminus Z_\CC, \quad{\textnormal{and} } \quad E_0=\theta_{01}\theta_{02}\theta_{03}\theta_{04}
\end{equation}
by Hirzebruch's computation in \cite[p.166]{hirzebruch:ck}. Here $Z_\CC$ is given by the five permutations of $(1,0,\dotsc,0)$ and $\theta_{ij}$ is the theta function (viewed on $\mathbb H^2$) of G\"otzky~\cite[(19)]{gotzki:theta} and Gundlach~\cite[p.233]{gundlach:theta}.  We next express the $\theta_{ij}$, and then all Eisenstein series $E_i$, in terms of the Hilbert theta functions $\theta_{a,b}$ for some $(a,b)\in (\OL/2\OL)^2$, where we write $\theta_{a,b}$ for the function $\theta_{a,b}^\epsilon$ of Lauter--Naehrig--Yang~\cite[(2.5)]{lanaya:theta}   defined with respect to the fundamental unit $\epsilon=\tfrac{1+\sqrt{5}}{2}$ and where $\OL$ denotes the ring of integers of $\QQ(\sqrt{5})$. In the table displayed in \cite[p.232]{gundlach:theta}, Gundlach computed all  pairs $(\alpha,\beta)$ which are relevant for the definition of the functions $\theta_{ij}$. For any pair $(\alpha,\beta)$ in this table, it holds that $\alpha\beta\in \{0,1,-1\}$ and thus the field trace $t=\textnormal{Tr}(\tfrac{\epsilon}{\sqrt{5}}\tfrac{ab}{4})$ is zero for $(a,b)=(\alpha,\beta/\epsilon)$.
  Then unwinding the definitions of the functions $\theta_{ij}$ and $\theta_{a,b}$ on $\mathbb H^2$ leads to  \begin{equation}\label{eq:thetaijcomp}
\theta_{ij}=e^{-2\pi i t}\theta_{a,b}=\theta_{a,b}
\end{equation}
for some $(a,b)\in (\OL/2\OL)^2$. The group $\sl2(\OL/2\OL)$ acts transitively on the set of cusps of $\mathbb H^2/\Gamma(2)$ and recall from \eqref{def:eisenstein} that the Eisenstein series $E_i=E_0|_{A_i}$ is obtained from $E_0$ via the slash operator for some $A_i\in \sl2(\OL)$. Thus, on combining \eqref{eq:hirzebrucheisensteincomp} and \eqref{eq:thetaijcomp} with the transformation formula for $\theta_{ij}^8$ established by G\"otzky~\cite[Lem 6]{gotzki:theta}, we obtain
\begin{equation}\label{eq:eisensteinthetacomp}
E_i^4=\pm  \prod \theta_{a,b}^{4}.
\end{equation}
Here the product is taken over four distinct pairs $(a,b)\in (\OL/2\OL)^2$. These four pairs are all even in the sense of \cite[p.154]{lanaya:theta} since $E_i\neq 0$.  For any even $(a,b)\in (\OL/2\OL)^2$,  Lauter--Naehrig--Yang~\cite[$\mathsection$2.5,\,$\mathsection$3]{lanaya:theta} showed that each Hilbert theta function  $\theta_{a,b}$ is (up to a sign) the pullback along $\phi_\epsilon:\mathbb H^2/\Gamma(2)\to \mathbb H_2/\Gamma_2$ of an even Theta function $\theta_m$.  Thus the commutative diagram \eqref{diag:thetacomp} gives that $\theta_{a,b}^4$ identifies on $ Y(\CC)\cong\mathbb H^2/\Gamma(2)$ with the pullback $z_j=\theta^*t_j$ of a coordinate function $t_j$ on $\mathbb P^9$ along the composition $\theta:Y\to A_{2,2}\to \mathbb P^9$.  
Then \eqref{diag:thetacomp} and \eqref{eq:eisensteinthetacomp} imply the claim \eqref{claim:coordinatefunctions} since $x_i$ identifies on $Y(\CC)\cong \mathbb H^2/\Gamma(2)$ with $E_i$ by \eqref{eq:hirzebrucheisensteincomp}. This completes the proof of the lemma.
\end{proof}

One can determine the sign of $E_i^4=\pm\prod \theta_{a,b}^4$ in \eqref{eq:eisensteinthetacomp}, and thus of $x_i^4=\pm \prod_{j\in J_i} z_j$ in \eqref{claim:coordinatefunctions}, by going into the proof of \cite[Lem 6]{gotzki:theta} and computing therein the sign in the transformation formula for five explicit $A_i\in \SL_2(\OL)$ defining the five $E_i=E_0|_{A_i}$.

\paragraph{Proof of Proposition~\ref{prop:heightcomp}.}To prove an explicit height bound, we need to compare our normalizations of the heights with other normalizations. Recall from Section~\ref{sec:heightdef} that $h_\phi:Y(\bar{\QQ})\to \RR$ is the pullback of $h_F:\underline{A}_2(\bar{\QQ})\to \mathbb R$ along the forgetful map
\begin{equation}\label{eq:phifactorheight}
\phi_{\bar{\QQ}}:Y(\bar{\QQ})\isomto^{\pi^{-1}} [\cmp(\bar{\QQ})]\to  \underline{A}_2(\bar{\QQ})
\end{equation}
of the coarse Hilbert moduli scheme $Y$  defined right after \eqref{def:cheight}, where $h_F$ is the stable Faltings height with the metric normalized as in Faltings~\cite[p.354]{faltings:finiteness}. Further, we recall that the logarithmic Weil height $h$ on  $\mathbb P^n$ is defined as usual in Diophantine approximation  with respect to the infinity norm at the infinite places (\cite[p.16]{bogu:diophantinegeometry}). It holds  \begin{equation}\label{eq:normalizations}
h_F=h'_F-\log \pi, \quad h\leq h'\leq h+\tfrac{1}{2}\log (n+1),
\end{equation} 
where $h_F'$ denotes the stable Faltings height with the metric normalized as in Deligne~\cite[p.27]{deligne:faltings} and where $h'$ denotes the logarithmic Weil height on $\mathbb P^n$ defined with respect to the usual Euclidean or Hermitian $l^2$ norm at the infinite places. 

We now deduce Proposition~\ref{prop:heightcomp} by combining Lemma~\ref{lem:thetaheight} with the explicit height comparison of Pazuki~\cite[Cor 1.3.2]{pazuki:heights} which is based on ideas of Bost--David. 

\begin{proof}[Proof of Proposition~\ref{prop:heightcomp}]
To prove the statement, it suffices to bound the Theta height $h_\theta$ on $Y$ since $h_\theta\geq h$ by Lemma~\ref{lem:thetaheight}. We first relate $h_\theta$ to another Theta height $h'_\theta$ on $Y$ which is the pullback of the Theta height defined in Pazuki~\cite{pazuki:heights}. More precisely, applying \cite[Def 2.6]{pazuki:heights} with $g=2$ and $r=2$ gives a height $h_\Theta:A_2(\bar{\QQ})\to \RR$ on the coarse moduli scheme $A_2$ over $\bar{\QQ}$ of principally polarized abelian surfaces.  Then we define
$$h'_{\theta}=f^*h_\Theta: Y(\bar{\QQ})\to \mathbb R$$
where $f:Y\to A_2$ is the composition of the morphism $Y\to A_{2,2}$ induced on coarse moduli schemes by forgetting $\iota:\mathcal O\to \End(A)$  and the morphism  $A_{2,2}\to A_2$ induced on coarse moduli schemes by forgetting the symplectic level 2-structure. The Theta height $h_\Theta$ is determined by the 10 even Theta functions $\theta_m$, since any Theta function $\theta_m$ is nonzero if and only if the Theta characteristic $m$ is even.  Thus (the proof of) \eqref{eq:normalizations} leads to  \begin{equation}\label{eq:thetaheightsrelation}
h_\theta\leq 4h'_{\theta}\leq 4h_\theta+2\log 10.
\end{equation}
We next compare the heights $h'_{\theta}$ and $h_\phi$ on $Y$. It follows from \eqref{eq:phifactorheight} that the map $\phi_{\bar{\QQ}}:Y(\bar{\QQ})\to \underline{A}_2(\bar{\QQ})$ factors as $Y(\bar{\QQ})\to A_{2}(\bar{\QQ})\to \underline{A}_2(\bar{\QQ})$ where $A_{2}(\bar{\QQ})\to \underline{A}_2(\bar{\QQ})$ is induced by forgetting the polarization.  Then, on combining \eqref{eq:normalizations} and \eqref{eq:thetaheightsrelation} with the bound for $|h_\Theta-\tfrac{1}{2}h_F'|$ in Pazuki~\cite[Cor 1.3.2]{pazuki:heights},  we deduce 
\begin{equation}\label{eq:thetaheightprimebound}
4h'_\theta\leq 2h_\phi+8^8\log(h_\phi+8).
\end{equation}
To simplify the shape of this estimate, we used here in addition Bost's explicit lower bound $h'_F\geq -\log(2\pi)$ in \cite{bost:lowerbound}. Finally for any $P\in Y(\bar{\QQ})$ the inequalities  \eqref{eq:thetaheightsrelation} and \eqref{eq:thetaheightprimebound} give an upper bound for $h_\theta(P)\geq h(P)$ as claimed in Proposition~\ref{prop:heightcomp}. 
\end{proof}
We now discuss the bounds. The constants $8^8$ and $8$ appearing in \eqref{eq:thetaheightprimebound} and Proposition~\ref{prop:heightcomp} crucially depend on the value at $g=2$ and $r=2$ of the function $C_2(g,r)$ in Pazuki's general result~\cite[Cor 1.3]{pazuki:heights}. We simplified both constants $8^8$ and $8$ and thus they can be improved up to a certain extent.  However, the factor 2 in front of $h_\phi$ is optimal in \eqref{eq:thetaheightprimebound} by the height comparison \cite[Cor 1.3]{pazuki:heights} and hence the factor 2 might also be optimal in Proposition~\ref{prop:heightcomp} in view of the relation $x_i^4=\pm\prod z_j$ in \eqref{claim:coordinatefunctions}.  

The bound in Proposition~\ref{prop:heightcomp} is linear in $h_\phi$ which is best possible in the sense that for any exponent $e<1$ the bound $h\ll h^{e}_\phi$ can not hold on $Y(\bar{\QQ})$. Indeed on combining \cite[Cor 1.3]{pazuki:heights} and \eqref{eq:thetaheightsrelation} with the relation $x_i^4=\pm\prod z_j$ in \eqref{claim:coordinatefunctions}, we see that such a bound already fails on the subset $Y^0(\QQ)\subset Y(\bar{\QQ})$  which is an infinite set by \eqref{eq:rationalpointslowerbound}, where $Y^0=Y\setminus \cup V_+(x_i)$ for $x_i$ the $i$-th coordinate function on $\mathbb P^4_\ZZ=\textnormal{Proj}(\ZZ[x_i])$.  
\subsection{Proof of Theorem~\ref{thm:cksurfaces} and the Corollary}\label{sec:proofofthmck}

In this section we first prove Theorem~\ref{thm:cksurfaces} by combining Propositions \ref{prop:moduliinter} and \ref{prop:heightcomp}. Then we deduce from Theorems~\ref{thm:mainint} and \ref{thm:cksurfaces} the Corollary  stated in the introduction. 

We continue the notation of Section~\ref{sec:ckmainresults}. Let $Y$ be the variety over $\ZZ$ defined in \eqref{def:sigma24surface}. We denote by $h$ the logarithmic Weil height on $Y\subset\mathbb P^4_\ZZ$, we write $\mathcal M$ for the Hilbert moduli stack associated to $L=\QQ(\sqrt{5})$ and we define $B=\ZZ[\tfrac{1}{30}]$.

\begin{proof}[Proof of Theorem~\ref{thm:cksurfaces}]
Let $\mathcal P$ be the moduli problem on $\mathcal M_B$ of principal level 2-structures satisfying the condition \eqref{def:symplecticlvl}.  Proposition~\ref{prop:moduliinter} gives that $Y_B$ is a coarse moduli scheme over $B$ of $\mathcal P$. Let  $\pi:(\mathcal M_B)_{\mathcal P}\to Y_B$ be the initial morphism given in \eqref{def:initialmorphismoverB} and let $h_\phi$ be the height on $Y(\bar{\QQ})\cong Y_B(\bar{\QQ})$ defined with respect to $\pi$. We claim that
$$B_{\mathcal P}=\emptyset,  \quad |\mathcal P|_{\bar{\QQ}}\leq 16 \ \textnormal{ and } \ h(P)\leq 2h_\phi(P)+8^8\log (h_\phi(P)+8), \, P\in Y(\bar{\QQ}).$$ 
Indeed on using exactly the same arguments as in the proof of Corollary~\ref{cor:y2emptybranchlocus},  we deduce from Proposition~\ref{prop:auto} that the branch locus $B_{\mathcal P}$ is empty. Further, as in \eqref{eq:proofsdegbound} we find that the maximal number of level $\mathcal P$-structures over $\bar{\QQ}$ is bounded by $|\mathcal P|_{\bar{\QQ}}\leq 16$, while the height bound is provided by Proposition~\ref{prop:heightcomp}. Now, the claim implies Theorem~\ref{thm:cksurfaces}  after extending $Y_B$ to a coarse moduli scheme over $\ZZ$. The details are as follows: We extend $Y_B$ to a coarse moduli scheme over $\ZZ$ via the naive extension $\mathcal P'$ on $\mathcal M$ defined right above \eqref{eq:naiveextension} and we use the
initial morphism $\pi':\mathcal M_{\mathcal P'}\to Y_B$ given by the composition 
\begin{equation}\label{def:morphismckthm}
\mathcal M_{\mathcal P'}\isomto (\mathcal M_B)_{\mathcal P}\to^{\pi} Y_B.
\end{equation} Here $\mathcal M_{\mathcal P'}\isomto (\mathcal M_B)_{\mathcal P}$ is induced by the identity. Thus the branch locus $B_{\mathcal P'}$ equals $B_{\mathcal P}$, the height defined with respect to $\pi':\mathcal M_{\mathcal P'}\to Y_B$ coincides on $Y(\bar{\QQ})\cong Y_B(\bar{\QQ})$ with the height $h_\phi$ defined with respect to $\pi:(\mathcal M_B)_{\mathcal P}\to Y_B$, and  $\mathcal P'$ is an arithmetic moduli problem on $\mathcal M$ since $\mathcal P$ is an arithmetic moduli problem on $\mathcal M_B$.  We also notice that $|\mathcal P'|_{\bar{\QQ}}=|\mathcal P|_{\bar{\QQ}}$.  Hence the claim proves that Theorem~\ref{thm:cksurfaces} holds with $\mathcal P=\mathcal P'$ and $\pi=\pi'$. 
\end{proof}
We now deduce from Theorems~\ref{thm:mainint} and \ref{thm:cksurfaces} the Corollary stated in the introduction.
\begin{proof}[Proof of Corollary]
Let $S$ be a finite set of rational primes. Theorem~\ref{thm:cksurfaces} gives that $Y_B$ is a coarse moduli scheme of an arithmetic moduli problem $\mathcal P$ on $\mathcal M$ with $|\mathcal P|_{\bar{\QQ}}\leq 16$ such that the branch locus $B_{\mathcal P}$ is empty and such that each $P\in Y(\bar{\QQ})$ satisfies $$h(P)\leq 2h_\phi(P)+8^8\log (h_\phi(P)+8).$$
Here $h_\phi$ is the height on $Y(\bar{\QQ})\cong Y_B(\bar{\QQ})$ defined in \eqref{def:cheight} with respect to the initial morphism $\pi:\cmp\to Y_B$ given in \eqref{def:initialmorphismoverB}. Base change properties of the height $h_\phi$ assure that $h_\phi$ coincides on $Y(\ZZ_S)\subset Y(\bar{\QQ})$ with the height $h_\phi$ on $Y(\ZZ_S)$ obtained by applying the definition at the end of Section~\ref{sec:heightdef} with $\pi:\cmp\to Y_B$; notice that $\spec(\ZZ)-S=\spec(\ZZ_S)$ is a nonempty open subscheme of $\spec(\ZZ)$.   Then an application of Theorem~\ref{thm:mainint} with $T=B$, $S=\spec(\ZZ_S)$ and $Z=\emptyset$ gives that any $P\in Y(\ZZ_S)$ satisfies
$$h_\phi(P)\leq c_1N_{U}^{24}, \quad \textnormal{and} \quad |Y(\ZZ_S)|\leq c_2|\mathcal P|_{\bar{\QQ}} N_{U}^{e_2}\Delta\log(3\Delta)^3.$$
Here the constants are $c_1=7^{7^{14}}$, $c_2=9^{9^{18}}$ and $e_2=4^{16}$, while $\Delta=5$ and $N_U=\prod_{p\in S'} p$ for $S'=S\cup\{2,3,5\}$. As $|\mathcal P|_{\bar{\QQ}}\leq 16$ and $N_U\leq 30N_S$, the displayed bounds show that the Corollary holds with  $e=10^{12}$ and $c=10^e$ as claimed. This completes the proof.  \end{proof}

\subsection{Diophantine equations and the Clebsch--Klein surfaces}\label{sec:diophaeq}

In this section we first collect basic results for the Clebsch--Klein surfaces and the Diophantine equations \eqref{eq:ck} and \eqref{eq:sigma24}. Then  we combine these results with the Corollary stated in the introduction to deduce Corollaries~\ref{cor:sigma24} and \ref{cor:ck} giving explicit bounds for the solutions of \eqref{eq:ck} and \eqref{eq:sigma24}. Finally, we discuss lower bounds for rational points of bounded height ($\mathsection$\ref{sec:rationalpoints}) and the degeneration of $S$-integral points ($\mathsection$\ref{sec:degeneration}).  

We continue the notation introduced in Section~\ref{sec:ckmainresults}. Let $Y$ be the variety over $\ZZ$ defined in \eqref{def:sigma24surface} and let $U$ be the variety over $\ZZ$ defined in \eqref{def:cubicsurface}. For any finite set $S$ of rational primes, we recall that $\ZZ_S=\ZZ[1/N_S]$ where $N_S=\prod_{p\in S} p$. We first state basic lemmas relating the solutions of \eqref{eq:ck} and \eqref{eq:sigma24} with the $\ZZ_S$-points of $Y$ and $U$ respectively. Sending $x$ to $-x$ defines an action of the group $\{\pm 1\}$ on the set of solutions of \eqref{eq:sigma24}.  

\begin{lemma}\label{lem:solutionssigma24}
The set of solutions of \eqref{eq:sigma24} modulo $\{\pm 1\}$ identifies  with $Y(\ZZ_S)$.
\end{lemma}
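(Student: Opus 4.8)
The plan is to unwind the definition of both sides and exhibit an explicit bijection. On the analytic/geometric side, recall that $X\subset\mathbb P^4_\ZZ$ is the projective scheme cut out by $\sigma_2=0=\sigma_4$ in homogeneous coordinates $x_0,\dotsc,x_4$, that $Z\subset X$ is the union of the five sections $e_i$ obtained by permuting $(1,0,\dotsc,0)$, and that $Y=X\setminus Z$. A $\ZZ_S$-point of $Y$ is a section $\spec(\ZZ_S)\to X$ whose image avoids $Z$. First I would use that $X$ is a closed subscheme of $\mathbb P^4_\ZZ$ and that $\ZZ_S$ is a PID with unit group $\ZZ_S^\times$: by the usual description of $\mathbb P^n$-points over a PID (clearing denominators and dividing by the gcd), any $P\in \mathbb P^4_\ZZ(\ZZ_S)$ is represented by a tuple $x=(x_0,\dotsc,x_4)\in\ZZ^5$ with $\gcd(x_i)=1$, unique up to multiplication by a unit of $\ZZ_S$; and the condition $P\in X(\ZZ_S)$ is exactly $\sigma_2(x)=0=\sigma_4(x)$, since these are the homogeneous equations defining $X$ and $\ZZ_S$ is reduced.

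Next I would translate the condition ``image avoids $Z$'' into the membership condition $x\in\Sigma$ from the statement. The point $e_i\in X(\ZZ)$ is the vanishing locus of all coordinates except the $i$-th; a $\ZZ_S$-point $P$ with representative $x$ meets $e_i$ at some prime $p\notin S$ if and only if the reduction $\bar x\in(\ZZ/p\ZZ)^5$ lies on the line $(\ZZ/p\ZZ)e_i$. Since $P$ and $Z$ are both $\ZZ_S$-flat (indeed $Z$ is a disjoint union of copies of $\spec\ZZ_S$), $P$ avoids $Z$ as a morphism of $\ZZ_S$-schemes precisely when it avoids each $e_i$ at every prime of $\spec\ZZ_S$, i.e. when for all primes $p\notin S$ the image of $x$ in $(\ZZ/p\ZZ)^5$ lies on no line $(\ZZ/p\ZZ)e_i$. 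This is exactly the defining condition of $\Sigma$. Hence $P\in Y(\ZZ_S)$ iff $x\in\Sigma$ and $\sigma_2(x)=0=\sigma_4(x)$, i.e. iff $x$ is a solution of \eqref{eq:sigma24}.

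Finally I would pin down the remaining ambiguity. Two representatives $x,x'\in\ZZ^5$ with coprime coordinates define the same point of $\mathbb P^4_\ZZ(\ZZ_S)$ iff $x'=ux$ for some $u\in\ZZ_S^\times$; but coprimality of both $x$ and $x'$ forces $u$ to be a unit of $\ZZ$, i.e. $u=\pm1$ (the primes inverted in $\ZZ_S$ would otherwise divide all coordinates of one of the two). Therefore the assignment $x\mapsto[x]$ induces a well-defined injection from the set of solutions of \eqref{eq:sigma24} modulo $\{\pm1\}$ into $Y(\ZZ_S)$, and the previous paragraph shows it is surjective. I expect the main obstacle to be the careful bookkeeping in the ``avoids $Z$'' step: one must argue at the level of $\ZZ_S$-scheme morphisms (not just on $\bar\QQ$-points or closed fibers) that disjointness of the section $P$ from the closed subscheme $Z$ is detected fiberwise over every prime of $\spec\ZZ_S$, which uses properness/flatness of $Z$ over $\ZZ_S$ and the fact that a section hits a closed subscheme iff its pullback is nonempty. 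The homogeneous-coordinate normalization and the $\{\pm1\}$-ambiguity are then routine consequences of $\ZZ_S$ being a PID with $\ZZ_S^\times\cap\ZZ=\{\pm1\}$.
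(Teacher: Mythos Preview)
Your proposal is correct and follows essentially the same approach as the paper: both use the standard description of $\mathbb P^4(\ZZ_S)$ via primitive integer tuples, translate the closed-subscheme condition $P\cap Z=\emptyset$ fiberwise into the $\Sigma$-condition, and reduce the $\ZZ_S^\times$-ambiguity to $\{\pm1\}$ via coprimality. The only cosmetic difference is that the paper first passes through an intermediate set $\Sigma_Y\subset\ZZ_S^5$ and then normalizes to $\Sigma/\{\pm1\}$, whereas you normalize to coprime integer tuples from the outset.
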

Here the identification comes from the usual description of $\ZZ_S$-points on projective space.  The identification in the following lemma also comes from this description.

\begin{lemma}\label{lem:solutionsck}
The set of solutions of \eqref{eq:ck} identifies with $U(\ZZ_S)$.
\end{lemma}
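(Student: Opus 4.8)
The plan is to unwind the definition of $U$ as $V\setminus\cup V_+(z_i)$ inside $\mathbb P^4_\ZZ=\proj(\ZZ[z_i])$ and to match it with the usual description of $\ZZ_S$-points on projective space. Recall that $V\subset\mathbb P^4_\ZZ$ is cut out by $\sum z_i=0$ and $\sum z_i^3=0$. First I would recall that a point $P\in\mathbb P^4_\ZZ(\ZZ_S)$ is given by a line $\ZZ_S\cdot v\subseteq\ZZ_S^5$ which is a direct summand, equivalently by a tuple $v=(v_1,\dots,v_5)\in\ZZ_S^5$ whose coordinates generate the unit ideal of $\ZZ_S$, taken up to multiplication by $\ZZ_S^\times$; and since $\ZZ_S$ is a PID, such a $v$ can always be rescaled by a unit so that $v\in\ZZ^5$ with $\gcd(v_i)$ a unit in $\ZZ_S$, i.e.\ $\gcd(v_i)$ is a product of primes in $S$, and after dividing out that gcd we may take $\gcd(v_i)=1$; the remaining ambiguity is then multiplication by $\pm1$, which is exactly the content needed if one wants a genuine bijection rather than a bijection modulo $\pm1$. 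Here the coprimality version matches the set $\Sigma$-type description used elsewhere in the paper, but for equation \eqref{eq:ck} the point is that the $z_i$ are actual $\ZZ_S^\times$-valued functions.

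Next I would translate the two conditions. A $\ZZ_S$-point $P$ with representative $v\in\ZZ_S^5$ lies in $V$ iff $\sum v_i=0$ and $\sum v_i^3=0$ in $\ZZ_S$; this is homogeneous of degrees $1$ and $3$ in $v$, so it is independent of the choice of representative. The point $P$ lies in $U=V\setminus\cup V_+(z_i)$ iff moreover none of the coordinate functions $z_i$ vanishes at $P$, i.e.\ each $v_i\in\ZZ_S^\times$. Now given a solution $x=(x_i)$ of \eqref{eq:ck}, all $x_i\in\ZZ_S^\times$ and $\sum x_i=1$, $\sum x_i^3=1$. Put $z_i=x_i$ for $i=1,\dots,4$ and $z_5=-1$; then $\sum_{i=1}^5 z_i=\sum x_i-1=0$ and $\sum_{i=1}^5 z_i^3=\sum x_i^3-1=0$, and every $z_i\in\ZZ_S^\times$. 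Conversely, given $P\in U(\ZZ_S)$ choose the unique representative $v$ with $v_5=-1$ (possible since $v_5\in\ZZ_S^\times$, after rescaling by $v_5^{-1}$, and this representative is then unique in its $\ZZ_S^\times$-orbit) and set $x_i=v_i$ for $i=1,\dots,4$; the two defining equations of $V$ become $\sum x_i=1$ and $\sum x_i^3=1$, and all $x_i\in\ZZ_S^\times$, so $x$ is a solution of \eqref{eq:ck}. These two assignments are mutually inverse, giving the claimed bijection.

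The one point requiring a little care, and the only real obstacle, is the normalisation of the representative: one must check that for $P\in U(\ZZ_S)$ the condition $v_5\in\ZZ_S^\times$ lets us pin down a representative canonically (dividing the chosen $v$ by $v_5$), and that this is compatible with the homogeneity of the equations defining $V$ — i.e.\ that ``$P\in V$'' is genuinely a condition on the $\ZZ_S$-point and not on the representative. This is routine given that $\ZZ_S$ is a PID with unit group $\ZZ_S^\times$, but it is exactly the place where the bookkeeping differs from Lemma~\ref{lem:solutionssigma24}: there, because the relevant coordinate can vanish, one only gets an identification modulo $\pm1$ and must pass to the $\gcd=1$ description, whereas here the invertibility of all $z_i$ removes that ambiguity and produces an honest identification. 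I would close by remarking that the map is manifestly functorial in $S$ (enlarging $S$ corresponds to the inclusion of solution sets and of $\ZZ_S$-points), which is what is used in $\mathsection\ref{sec:proofofdiopheqcorollaries}$ when deducing Corollary~\ref{cor:ck} from the Corollary in the introduction together with Lemma~\ref{lem:openimmersion}.
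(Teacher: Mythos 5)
Your argument is correct and is the same as the paper's: both identify $U(\ZZ_S)$ with $\{x\in(\ZZ_S^\times)^5:\sum x_i=\sum x_i^3=0\}/\ZZ_S^\times$ via the standard description of $\mathbb P^4(\ZZ_S)$, and then normalize each class by scaling so that one coordinate equals $-1$ (the paper writes this as $y_i=-x_i/x_4$, you write it as choosing $v$ with $v_5=-1$; same normalization up to relabelling). Your extra remark that invertibility of all $z_i$ is what eliminates the $\pm1$ ambiguity present in Lemma~\ref{lem:solutionssigma24} is accurate and matches the spirit, though not the letter, of the paper's exposition.
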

We write $\mathbb P^4$ for $\mathbb P^4_\ZZ$. In Section~\ref{sec:ckmainresults} we defined the logarithmic Weil height $h$ on the sets of solutions of \eqref{eq:ck} and \eqref{eq:sigma24} and also on the subsets $Y(\ZZ_S)$ and $U(\ZZ_S)$ of $\mathbb P^4(\bar{\QQ})$. 

\begin{lemma}\label{lem:identificationsheightcompatible}
The identifications in Lemmas~\ref{lem:solutionssigma24} and \ref{lem:solutionsck} are compatible with $h$.
\end{lemma}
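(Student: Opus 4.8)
The plan is to unwind the bijections of Lemmas~\ref{lem:solutionssigma24} and \ref{lem:solutionsck} down to the level of explicit representatives and then to evaluate the logarithmic Weil height on those representatives by the product formula. Recall that, since $\ZZ_S$ is a principal ideal domain, a $\ZZ_S$-point of $\mathbb{P}^4_\ZZ$ is the same datum as a unimodular row $(y_0,\dots,y_4)\in\ZZ_S^5$ (that is, $\gcd(y_i)\in\ZZ_S^\times$) taken modulo the free action of $\ZZ_S^\times$, and that for the associated point $P\in\mathbb{P}^4(\QQ)$ one has $h(P)=\sum_v\log\max_i|y_i|_v$ for any such representative, the sum running over all places $v$ of $\QQ$ with $|\cdot|_\infty$ the usual absolute value; this is the normalization of \cite[p.16]{bogu:diophantinegeometry} used throughout.

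First I would treat \eqref{eq:sigma24}. A solution $x$ lies in $\Sigma$, so $x\in\ZZ^5$ with $\gcd(x_i)=1$, and by Lemma~\ref{lem:solutionssigma24} it represents the $\ZZ_S$-point $[x_0:\cdots:x_4]$ of $Y$, two solutions giving the same point precisely when they differ by $\pm1$. Because $\gcd(x_i)=1$, we have $\max_i|x_i|_p=1$ for every prime $p$, so every finite place contributes $0$ to $\sum_v\log\max_i|x_i|_v$, and hence $h([x])=\log\max_i|x_i|=\max_i\log|x_i|$, which is exactly the quantity $h(x)$ attached in $\mathsection$\ref{sec:ckdiopheqresults} to solutions of \eqref{eq:sigma24}; here one uses that both definitions take the sup-norm at the infinite place.

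Next I would treat \eqref{eq:ck}. A solution $x=(x_1,\dots,x_4)\in(\ZZ_S^\times)^4$ with $\sum_{i=1}^4 x_i=1=\sum_{i=1}^4 x_i^3$ corresponds under Lemma~\ref{lem:solutionsck} to the $\ZZ_S$-point of $U$ represented by $(z_0,z_1,z_2,z_3,z_4)=(-1,x_1,x_2,x_3,x_4)$: indeed $\sum_{i=0}^4 z_i=0=\sum_{i=0}^4 z_i^3$, all $z_i$ lie in $\ZZ_S^\times$ (so the row is automatically unimodular and avoids every $V_+(z_i)$), and normalising the first coordinate of an arbitrary representative to $-1$ — possible and unique by freeness of the $\ZZ_S^\times$-action — recovers $x$. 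Evaluating $\sum_v\log\max(|z_0|_v,\dots,|z_4|_v)$ on this representative and using $|-1|_v=1$ at every place gives $h([z_0:\cdots:z_4])=\sum_v\log\max(1,|x_1|_v,\dots,|x_4|_v)$, which is precisely the logarithmic Weil height of the affine point $(x_1,\dots,x_4)\in\mathbb{A}^4(\QQ)$, i.e. the $h(x)$ attached to solutions of \eqref{eq:ck}.

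The argument is essentially bookkeeping and I do not expect a genuine obstacle. The only points that need a little care are (i) checking that the representatives on which the two elementary heights are defined (primitive integral for \eqref{eq:sigma24}, first coordinate $-1$ for \eqref{eq:ck}) are the ones implicitly used to present the bijections in Lemmas~\ref{lem:solutionssigma24} and \ref{lem:solutionsck}, and (ii) confirming that the paper's normalization of $h$ on $\mathbb{P}^4_\ZZ$ agrees with the naive formulas $h(x)=\max_i\log|x_i|$ and the affine Weil height, both of which follow from the product formula once the finite-place contributions are seen to vanish.
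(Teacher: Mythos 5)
Your argument is correct and follows essentially the same route as the paper's proof: identify explicit representatives underlying the bijections of Lemmas~\ref{lem:solutionssigma24} and \ref{lem:solutionsck}, then check via the product formula that the projective Weil height on those representatives matches the elementary heights $\max_i\log|x_i|$ and the affine Weil height respectively. The only cosmetic difference is in the \eqref{eq:ck} case: the paper's bijection in Lemma~\ref{lem:solutionsck} divides by the last coordinate $z_4$, so the inverse naturally sets $z_4=1$ rather than $z_0=-1$ as you do; the two representatives differ by a coordinate permutation and a sign, neither of which affects $h$, so the conclusion is unchanged.
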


In the remaining of this section we prove inter alia the above lemmas and we explicitly construct the open immersion $U\hookrightarrow Y$ over $\ZZ[1/3]$ appearing in Lemma~\ref{lem:openimmersion}. The proofs use standard computations. In particular we shall use the usual identification 
\begin{equation}\label{eq:pointsp4}
\mathbb P^4(\ZZ_S)\isomto\Sigma_{\mathbb P^4}/\ZZ_S^\times.
\end{equation}
Here the group $\ZZ_S^\times$ acts diagonally on the set $\Sigma_{\mathbb P^4}$  of $x\in \ZZ_S^5$ with $\gcd(x_i)\in \ZZ_S^\times$  
 and the bijection \eqref{eq:pointsp4} comes from the standard description of $\mathbb P^4(\ZZ_S)$ in terms of isomorphism classes of surjections $\ZZ_S^5\to \ZZ_S$ using that $\textnormal{Pic}(\ZZ_S)=0$.  We first prove Lemma~\ref{lem:solutionssigma24}.

\begin{proof}[Proof of Lemma~\ref{lem:solutionssigma24}]
Let $P$ a $\ZZ_S$-point of $\mathbb P^4$ corresponding via \eqref{eq:pointsp4} to some $[x]$ in $\Sigma_{\mathbb P^4}/\ZZ_S^\times$, and let $Q$ be the $\ZZ_S$-point of $\mathbb P^4$ given by $e_1=(1,0,\dotsc,0)$. For any closed point $p\in \spec(\ZZ_S)$, the images in $\mathbb P^4$ of the sections $P$ and $Q$  have empty intersection in the fiber $\mathbb P^4_p$ if and only if the image of $x$ in $\mathbb F_p^5$ does not lie in the line $L_p=\mathbb F_p\cdot e_1$ where $\mathbb F_p=\ZZ_S/p\ZZ_S$.  It follows that the identification \eqref{eq:pointsp4} restricts to a bijection  $$
Y(\ZZ_S)\isomto \Sigma_Y/\ZZ_S^\times
$$
for $\Sigma_{Y}$ the set of $x\in \ZZ_S^{5}$ with $\gcd(x_i)\in \ZZ_S^\times$ such that $\sigma_2(x)=0=\sigma_4(x)$  and such that for all rational primes $p\notin S$ the image of $x$ in $\mathbb F_p^5$ does not lie in any of the five permutations of the line $L_p$. Let $\Sigma_\sigma$ be the set of $x\in \Sigma$ with $\sigma_2(x)=0=\sigma_4(x)$ for $\Sigma$ as in \eqref{eq:sigma24}. Multiplying with the denominators of the $x_i$ and dividing by the $\gcd$ defines a bijection
\begin{equation}\label{eq:sigmabijection}
\Sigma_Y/\ZZ_S^\times\isomto \Sigma_\sigma/\{\pm 1\}
\end{equation}
which composed  with $Y(\ZZ_S)\isomto \Sigma_Y/\ZZ_S^\times$  gives an identification $Y(\ZZ_S)\isomto\Sigma_\sigma/\{\pm 1\}$ as desired. The remaining of this proof contains the details of the definition of the bijection \eqref{eq:sigmabijection}. Suppose that $x=(x_i)$ lies in $\Sigma_Y$. As the $x_i$ are in $\ZZ_S$ with $\gcd(x_i)\in \ZZ_S^\times$,  there exists $u\in \ZZ_S^\times$ which is unique up to $\pm 1$ such that $y=ux$ lies in $\ZZ^5$ with $\gcd(y_i)=1$.  The scalar multiple $y=ux$ of $x$ satisfies again the homogeneous equations $\sigma_2=0=\sigma_4$. Moreover, if $p\notin S$ is a rational prime then $\textnormal{ord}_p(u)=0$ and thus our assumption $x\in \Sigma_Y$ assures that the image of $y$ in $(\ZZ/p\ZZ)^5\cong\mathbb F_p^5$ is not a scalar multiple of a standard basis vector of $(\ZZ/p\ZZ)^5$.  Hence $y$ lies in $\Sigma_\sigma$. On using that $u$ is unique up to $\pm 1$, we see that $x\mapsto y$ defines an injective map $\Sigma_Y/\ZZ_S^\times\to \Sigma_\sigma/\{\pm 1\}$ which is surjective since $\Sigma_\sigma\subset \Sigma_Y$.  Now, we define \eqref{eq:sigmabijection} by $x\mapsto y$. This completes the proof of Lemma~\ref{lem:solutionssigma24}.
\end{proof} 

We now deduce Lemma~\ref{lem:solutionsck} from the identification $\mathbb P^4(\ZZ_S)\isomto\Sigma_{\mathbb P^4}/\ZZ_S^\times$ in \eqref{eq:pointsp4}.

\begin{proof}[Proof of Lemma~\ref{lem:solutionsck}]Recall that $U=V\setminus \cup V_+(z_i)$ for $V\subset\mathbb P^4$ as in \eqref{def:cubicsurface}.
We first compute the image of $U(\ZZ_S)\subset \mathbb P^4(\ZZ_S)$ under the identification \eqref{eq:pointsp4}.  Let $P$ be a $\ZZ_S$-point of $\mathbb P^4=\textnormal{Proj}(\ZZ[z_i])$ corresponding via \eqref{eq:pointsp4} to $[x]$ in $\Sigma_{\mathbb P^4}/\ZZ_S^\times$. Then $P$ is a $\ZZ_S$-point of $\mathbb P^4\setminus V_+(z_i)$ if and only if $x_i$ lies in $\ZZ_S^\times$. This implies that \eqref{eq:pointsp4} restricts to a bijection  $$U(\ZZ_S)\isomto \Sigma_U/\ZZ_S^\times$$
where $\ZZ_S^\times$ acts diagonally on the set $\Sigma_U$ of $x\in (\ZZ^\times_S)^5$ satisfying $\sum x_i=0=\sum x_i^3$.
Let $\Sigma'$ be the set of solutions of \eqref{eq:ck}. To identify $\Sigma_U/\ZZ_S^\times$ with $\Sigma'$, we take $x\in \Sigma_U$ and we define $y_{i}=-x_i/x_4$ for $i\leq 3$. As $\sum y_i=1=\sum y_i^3$, we see that $y=(y_i)$ lies in $\Sigma'$. It follows that $x\mapsto y$ defines a bijection $\Sigma_U/\ZZ_S^\times\isomto \Sigma'$  which composed with $U(\ZZ_S)\isomto \Sigma_U/\ZZ_S^\times$ gives an identification $U(\ZZ_S)\isomto \Sigma'$ as desired.  This completes the proof of Lemma~\ref{lem:solutionsck}.
\end{proof}
Next, we show that the two identifications constructed in the above proofs are compatible with $h$. For each $[x]\in \Sigma_{\mathbb P^4}/\ZZ_S^\times$ we define $h([x])$ as the logarithmic Weil height of $x\in\mathbb P^4(\bar{\QQ})$. Then the identification $\mathbb P^4(\ZZ_S)\isomto\Sigma_{\mathbb P^4}/\ZZ_S^\times$ in \eqref{eq:pointsp4} is compatible with $h$.
\begin{proof}[Proof of Lemma~\ref{lem:identificationsheightcompatible}]
In the proof of Lemma~\ref{lem:solutionssigma24} we identified (modulo $\pm 1$) the set of solutions $\Sigma_\sigma$ of \eqref{eq:sigma24} with $Y(\ZZ_S)$ via the inverse $\Sigma_\sigma/\{\pm 1\}\isomto Y(\ZZ_S)$ of the composition  $$Y(\ZZ_S)\isomto \Sigma_Y/\ZZ_S^\times\isomto \Sigma_\sigma/\{\pm 1\}.$$  Here $Y(\ZZ_S)\isomto \Sigma_Y/\ZZ_S^\times$ is compatible with $h$ since it is the restriction of \eqref{eq:pointsp4}, while $\Sigma_Y/\ZZ_S^\times\isomto \Sigma_\sigma/\{\pm 1\}$ is induced by $x\mapsto y$ which is compatible with $h$ since $x$ and $y=ux$ are equal in $\mathbb P^4(\bar{\QQ})$ and since $h(x)=\max_i\log |x_i|$ for any $x\in \mathbb P^4(\bar{\QQ})$ with $x_i\in \ZZ$ and $\gcd(x_i)=1$. Thus the identification $\Sigma_\sigma/\{\pm 1\}\isomto Y(\ZZ_S)$ is compatible with $h$ as desired.

We now go into the proof of Lemma~\ref{lem:solutionsck}. Therein we identified the set $\Sigma'$ of solutions of \eqref{eq:ck} with $U(\ZZ_S)$ via the inverse $\Sigma'\isomto U(\ZZ_S)$ of the composition  $$U(\ZZ_S)\isomto \Sigma_U/\ZZ_S^\times\isomto \Sigma'.$$ Here again, the bijection $U(\ZZ_S)\isomto \Sigma_U/\ZZ_S^\times$ is compatible with $h$ since it is the restriction of \eqref{eq:pointsp4}. The bijection $\Sigma_U/\ZZ_S^\times\isomto \Sigma'$ is induced by $x\mapsto y$ which is compatible with $h$ by the following computation: As $x$ and $y'=(y,-1)$ are equal in $\mathbb P^4(\bar{\QQ})$, we obtain that $h(x)=h(y')=h((y,1))=h(y)$.  Hence the identification $\Sigma'\isomto U(\ZZ_S)$ is compatible with $h$ as desired. This completes the proof of Lemma~\ref{lem:identificationsheightcompatible}.
\end{proof}

Finally we construct the open immersion $\varphi:U\hookrightarrow Y$ over $\ZZ[1/3]$ of Lemma~\ref{lem:openimmersion}.

\begin{proof}[Proof of Lemma~\ref{lem:openimmersion}]
Consider the open subscheme $Y^0=Y\setminus \cup V_+(x_i)$ of $Y$. Any isomorphism $U\isomto Y^0$ induces an open immersion $U\hookrightarrow Y$. To construct an isomorphism $U\isomto Y^0$ over $\ZZ[1/3]$, we first explicitly describe the schemes $U=V\setminus \cup V_+(z_i)$ and $Y^0$ inside $\mathbb A^4_\ZZ$. On using that  $U=D_+(\prod z_i)\subset V$,  
we see that $t_i=z_i/z_0$ satisfies   
\begin{equation}\label{eq:Ucomp}
U=\spec\bigl(\ZZ[t_i,t_i^{-1}]/(u,v)\bigl)\subset \mathbb A^4_\ZZ.
\end{equation}
Here $u=\sigma_1(t_i)=\sum t_i$ and $v=\sum t_i^3$ with both sums taken over all $i\in \{0,\dotsc,4\}$. Analogously, on using that $Y^0=D_+(\prod x_i)\subset Y$, we deduce that $s_i=x_i/x_0$ satisfies
$$Y^0=\spec\left(\ZZ[s_i,s_i^{-1}]/\bigl(\sigma_2(s_i),\sigma_4(s_i)\bigl)\right)\subset \mathbb A^4_\ZZ.$$
 In the remaining of this proof, we work over $R=\ZZ[1/3]$ and we use the descriptions of $U\subset \mathbb A^4_\ZZ$ and $Y^0\subset \mathbb A^4_\ZZ$ to show that $s_i\mapsto 1/t_i$ defines an isomorphism of $R$-schemes:
$$
\varphi:U_R\isomto Y^0_R.
$$ 
Let $I$ be the ideal of $A=R[s_i,s_i^{-1}]$ generated by $\sigma_2(s_i),\sigma_4(s_i)$ and let $J$ be the ideal of $B=R[t_i,t_i^{-1}]$ generated by $u,v$. To see that $s_i\mapsto 1/t_i$ defines an isomorphism $A/I\isomto B/J$ of $R$-algebras, we recall the Newton identity $3\sigma_3=\sigma_1\sigma_2-\sigma_1p_2+p_3$ for $p_n=\sum y_i^n$ and we observe that $\tau \sigma_2(s_i)=\sigma_3(t_i)$ and $\tau \sigma_4(s_i)=u$ where $s_i=1/t_i$ and $\tau=\prod t_i$.  Thus $s_i\mapsto 1/t_i$ sends $I$ inside $J$ since $3$ is invertible in $R$, and $t_i\mapsto 1/s_i$ sends $J$ inside $I$.  It follows that $s_i\mapsto 1/t_i$ indeed defines an isomorphism $A/I\isomto B/J$ of $R$-algebras, and hence an isomorphism $\varphi:U_R\isomto Y^0_R$ of $R$-schemes, whose inverse is given by $t_i\mapsto 1/s_i$.

It remains to prove the height inequalities for any point $P\in U(\QQ)$. We view  $P$ inside $\mathbb P^4(\QQ)$ via $U\hookrightarrow V\hookrightarrow \mathbb P^4$. Then the description of $U$ in \eqref{eq:Ucomp} shows that $P\in \mathbb P^4(\QQ)$ is defined by some $x\in \QQ^5$ with $x_1=1$ and $x_i\neq 0$.  There exists $a\in \ZZ^5$ with $\gcd(a_i)=1$ and $a_i\neq 0$ such that $x_i=a_i/a_0$. It holds that $h(P)=\log \max_i |a_i|$ and then we compute  $$h(\varphi(P))\leq \max_i \log |a_i^{-1}\prod a_j|\leq 4h(P).$$ To verify the converse height inequality, we now use the displayed description of $Y^0$. Then an application of the above arguments with $\varphi^{-1}:Y^0_R\isomto U_R$ and $Q=\varphi(P)$ shows that $h(P)=h(\varphi^{-1}(Q))$ is at most $4h(Q)=4h(\varphi(P))$. Thus the height inequalities hold for any $P\in U(\QQ)$ as desired. This completes the proof of Lemma~\ref{lem:openimmersion}.
\end{proof}

\subsubsection{Proof of Corollaries~\ref{cor:sigma24} and \ref{cor:ck}}\label{sec:proofofdiopheqcorollaries}

We continue the notation of the previous section.  For any finite set $S$ of rational primes, the (proof of the) Corollary stated in the introduction gives that each $P\in Y(\ZZ_S)$ satisfies  
\begin{equation}\label{eq:statecorbounds}
4h(P)\leq c(\tfrac{1}{3}N_{S})^{24} \quad \textnormal{ and } \quad |Y(\ZZ_S)|\leq (\tfrac{c}{3}N_S)^e
\end{equation}
for  $e=10^{12}$ and $c=10^e$. We now deduce Corollaries~\ref{cor:sigma24} and \ref{cor:ck} by combining the results in \eqref{eq:statecorbounds} with the basic lemmas collected in the previous section.

\begin{proof}[Proof of Corollary~\ref{cor:sigma24}]
In Lemma~\ref{lem:solutionssigma24} we identified (modulo $\pm 1$) the set of  solutions of \eqref{eq:sigma24} with the set $Y(\ZZ_S)$, and Lemma~\ref{lem:identificationsheightcompatible} assures that this identification is compatible with the Weil height $h$. Thus \eqref{eq:statecorbounds} proves Corollary~\ref{cor:sigma24}.
\end{proof}

\begin{proof}[Proof of Corollary~\ref{cor:ck}]
Recall that $\Sigma'$ denotes the set of solutions of \eqref{eq:ck} and  define $T=S\cup \{3\}$. Lemma~\ref{lem:identificationsheightcompatible} gives that the identification $\Sigma'\isomto U(\ZZ_S)$ constructed in Lemma~\ref{lem:solutionsck}  is compatible with the Weil height  $h$. Then we deduce
$$|\Sigma'|=|U(\ZZ_S)|\leq |Y(\ZZ_{T})| \quad  \textnormal{and} \quad \sup_{x\in \Sigma'}h(x)=\sup_{P\in U(\ZZ_S)}h(P)\leq 4\sup_{Q\in Y(\ZZ_{T})}h(Q).$$
Here we exploited that $U(\ZZ_S)\subseteq U(\ZZ_T)=U_{\ZZ[1/3]}(\ZZ_T)$ which follows since $U$ is separated over the terminal object $\ZZ$ and $3$ is invertible in the dense open $\spec(\ZZ_T)\subseteq\spec(\ZZ_S)$,  and  we used the open immersion $\varphi:U_{\ZZ[1/3]}\to Y_{\ZZ[1/3]}$ of Lemma~\ref{lem:openimmersion} with $h(P)\leq 4h(\varphi(P))$ for all $P\in U(\QQ)$.  An application of \eqref{eq:statecorbounds} with $S=T$ gives explicit bounds for $|Y(\ZZ_T)|$ and $\sup_{Q\in Y(\ZZ_T)}h(Q)$ in terms of $N_T$. As $N_T\leq 3N_S$, these explicit bounds combined with the displayed inequalities show that Corollary~\ref{cor:ck} holds with $e=10^{12}$ and $c=10^e$. 
\end{proof}

\subsubsection{Lower bounds for the number of rational points of bounded height}\label{sec:rationalpoints}

On using a secant-tangent process, Slater--Swinnerton-Dyer~\cite{slsw:cubicmanin} established for certain smooth cubic surfaces the lower bound predicted by Manin's conjecture (\cite{frmats:maninconjecture}) on rational points of bounded height. In this section we give the basic computations which show that their result directly implies the lower bounds in \eqref{eq:rationalpointslowerbound}. 

We continue our notation. Let $U\subset V$ be as in \eqref{def:cubicsurface} and  let $\rho$ be the Picard rank of the smooth projective surface $V_\QQ$. We first consider the $\QQ$-points of $U$ of bounded height.

\vspace{0.3cm}
\noindent\emph{\ \, Claim: For any $b\in \RR$ with $b\to \infty$ it holds} $|\{P\in U(\QQ), h(P)\leq \log b\}|\gg b(\log b)^{\rho-1}.$
\vspace{0.3cm}

\noindent Here the implied constant depends only on $V_\QQ$. The claim has the following two consequences. Firstly, it proves the lower bounds in \eqref{eq:rationalpointslowerbound} when combined with the isomorphism $U_\QQ\isomto Y^0_\QQ$ constructed in the proof of Lemma~\ref{lem:openimmersion} for $Y^0=Y\setminus \cup_iV_+(x_i)$.  
Secondly, the claim and (the proofs of) Lemmas~\ref{lem:solutionsck} and \ref{lem:identificationsheightcompatible} show for each $b\in \RR$ with $b\to \infty$ that
\begin{equation}\label{eq:rationallowerboundequation}
|\{x\in \mathcal S, h(x)\leq \log b\}|\gg b(\log b)^{\rho-1}, \ \mathcal S=\{x\in (\QQ^\times)^4, \sum x_i^3=1=\sum x_i\}.
\end{equation}
Here for each $x\in \mathcal S$ we denote by $h(x)$ the logarithmic Weil height of $x\in \mathbb A^4_\ZZ(\bar{\QQ})$ and the implied constant depends again only on $V_\QQ$. The following proof shows that the lower bounds in the claim and in \eqref{eq:rationallowerboundequation} are both optimal if  Manin's conjecture holds for the $\QQ$-points of the smooth cubic surface defined by $\sum x_i^3-(\sum x_i)^3$ inside $\mathbb P^3_\QQ$.

\begin{proof}[Proof of the Claim]
To identify $V_\QQ$ with a smooth cubic surface in $\mathbb P^3_\QQ$, we consider $f=\sum x_i^3-(\sum x_i)^3$ in $A=\ZZ[x_1,\dotsc,x_4]$ and $p_3=\sum z_i^3$ in $B=\ZZ[z_0,\dotsc,z_4]$. Then $x_i\mapsto z_i$ defines an isomorphism of rings $A/(f)\isomto B/(\sigma_1,p_3)$ with inverse given by $z_i\mapsto x_i$ for $i\geq 1$ and $z_0\mapsto -\sum x_i$.   This induces an isomorphism of varieties over $\ZZ$:
\begin{equation}\label{def:v'}
\tau: V'=\textnormal{Proj}\bigl(B/(f)\bigl)\isomto V, \quad h(P)\leq h(\tau(P))\leq h(P)+\log 8,\ P\in V'(\bar{\QQ}),
\end{equation}
where $h$ denotes the logarithmic Weil height on $V'\subset \mathbb P^3_\ZZ$ and $V\subset \mathbb P^4_\ZZ$. We now work over $\QQ$. To simplify notation we write $V$ for $V_\QQ$ and $V'$ for $V'_\QQ$.  As $V'\subset \mathbb P^3_\QQ$ is a smooth cubic surface with two rational skew lines,  it satisfies all assumptions in \cite[Thm 1]{slsw:cubicmanin} of Slater--Swinnerton-Dyer. Thus for each $b\in \RR$ with $b\to \infty$ their result gives
\begin{equation}\label{eq:lowerboundclaimproof}
|\{P\in U'(\QQ), h(P)\leq \log b\}|\gg b(\log b)^{\rho-1},
\end{equation}
where $U'\subset V'$ is obtained by removing all the lines from $V'$ and where the implied constant depends only on $V\cong V'$. To relate $U'$ with $U$, we consider the 15 lines $\Delta_n$ on $V$ given by the permutations of the line $z_0+z_1=0$, $z_2+z_3=0$, $z_4=0$ and we consider for $\epsilon=\tfrac{1+\sqrt{5}}{2}$ the 12 lines $L_i$ on $V$ given by the permutations (fixing $z_0$) of the line $$z_1+\epsilon z_2+z_3=0, \ \epsilon z_1+z_2+z_4=0, \ -\epsilon(z_1+z_2)+z_0=0.$$
Notice that $\tau^{-1}(L_i)$ and $\tau^{-1}(\Delta_n)$ are both lines inside $V'$,  the lines $L_i$ have no $\QQ$-points since $\epsilon$ does not lie in $\QQ$, and the variety $V'$ contains exactly 27 lines since it is a smooth cubic surface. Then, on using the Newton identity for $p_3$, we compute  \begin{equation}\label{comp:uv}
U=V\setminus (\cup \Delta_n)\quad \textnormal{and}\quad \tau: U'(\QQ)\isomto (U\setminus \cup L_i)(\QQ)=U(\QQ).
\end{equation}
Thus \eqref{def:v'} and \eqref{eq:lowerboundclaimproof} prove the claim. Furthermore, if Manin's conjecture holds for $V'(\QQ)$, then the lower bound \eqref{eq:lowerboundclaimproof} is optimal and hence the claim is optimal as well; indeed \eqref{def:v'} assures that the bijection $\tau^{-1}:U(\QQ)=(U\setminus \cup L_i)(\QQ)\isomto U'(\QQ)$ injects the set $\{P\in U(\QQ), h(P)\leq \log b\}$  into $\{P\in U'(\QQ), h(P)\leq \log b\}$ for all $b\in \RR_{>0}$.
\end{proof}

\subsubsection{Degeneration of $S$-integral points}\label{sec:degeneration}

Corvaja--Zannier~\cite{coza:intpointssurfaces,coza:intpointscertainsurfaces,coza:intpointsdivisibility} proved in particular the degeneration of $S$-integral points on smooth cubic surfaces minus two completely reducible hyperplane sections.  In this section we give the basic computations which show that their result directly implies the degeneration of $S$-integral points on the affine variety defined in \eqref{degeneration}.  

We continue our notation. Denote by $V=V_\QQ$ the projective variety over $\QQ$ defined by \eqref{def:cubicsurface}, and for  $i\neq j$ consider the divisor $D=V_+(z_i)\cup V_+(z_j)$ of $V$. We again  identify  $V$ with the smooth cubic surface $V'\subset\mathbb P^3_\QQ$ defined by \eqref{def:v'}. After restricting the inverse   $\tau^{-1}:V\isomto V'$ of the isomorphism in \eqref{def:v'} to the open subvariety $V\setminus D$, we obtain
$$\tau^{-1}: V\setminus D\isomto V'\setminus D',  \quad D'=\tau^{-1}(D).$$
To determine the divisor $D'$ of $V'$, we denote by $\{\Delta_n\}$ the 15 lines defined in $\mathsection$\ref{sec:rationalpoints}.  On using the same arguments as in \eqref{comp:uv}, we compute that $V_+(z_i)$ is the union of the three lines in $\{\Delta_n\}$ with $z_i=0$.  Then $D=\cup \Delta_n$ is the union of six lines $\Delta_n$ and we find that  $$D'=\cup \tau^{-1}(\Delta_n)$$ consists of six lines $\tau^{-1}(\Delta_n)$ lying in two planes such that no three of them intersect.  Hence an application of Corvaja--Zannier~\cite[Thm 1]{coza:intpointsdivisibility} with  $V'$ and $D'$ gives the following:  For any number field $K$ and each finite set $S$ of places of $K$, the set of $S$-integral points of $V'\setminus D'$, and thus of $V\setminus D$, is not Zariski dense. Moreover \cite[Thm 9]{coza:intpointsdivisibility} of Corvaja--Zannier  shows inter alia that this statement is optimal in the following sense: For any line $\Delta_n$  inside $D$ define $D_0=D\setminus \Delta_n$ and $D'_0=D'\setminus \tau^{-1}(\Delta_n)$. Then there exist $K$ and $S$ such that the set of $S$-integral points of $V'\setminus D'_0$, and thus of $V\setminus D_0$, is in fact Zariski dense.

\newpage

{\scriptsize
\bibliographystyle{amsalpha}
\bibliography{../literature}

\newcommand{\etalchar}[1]{$^{#1}$}
\def\cprime{$'$}
\providecommand{\bysame}{\leavevmode\hbox to3em{\hrulefill}\thinspace}
\providecommand{\MR}{\relax\ifhmode\unskip\space\fi MR }
\providecommand{\MRhref}[2]{%
  \href{http://www.ams.org/mathscinet-getitem?mr=#1}{#2}
}
\providecommand{\href}[2]{#2}
\begin{thebibliography}{AGHMP18}

\bibitem[ACC{\etalchar{+}}23]{aletal:cmmodularity}
P.~Allen, F.~Calegari, A.~Caraiani, T.~Gee, D.~Helm, B.~Le~Hung, J.~Newton,
  P.~Scholze, R.~Taylor, and J.~Thorne, \emph{Potential automorphy over {CM}
  fields}, Ann. of Math. (2) \textbf{197} (2023), no.~3, 897--1113.

\bibitem[AGH{\etalchar{+}}22]{anetal:countnf}
T.~Anderson, A.~Gafni, K.~Hughes, R.~Lemke-Oliver, D.~Lowry-Duda, F.~Thorne,
  J.~Wang, and R.~Zhang, \emph{Improved bounds on number fields of small
  degree}, Preprint, arXiv: 2204.01651 (2022), 17 pages.

\bibitem[AGHMP18]{aghm:avcolmez}
F.~Andreatta, E.~Z. Goren, B.~Howard, and K.~Madapusi~Pera, \emph{Faltings
  heights of abelian varieties with complex multiplication}, Ann. of Math. (2)
  \textbf{187} (2018), no.~2, 391--531.

\bibitem[AK18]{amir:serretensor}
Z.~Amir-Khosravi, \emph{Serre's tensor construction and moduli of abelian
  schemes}, Manuscripta Math. \textbf{156} (2018), no.~3-4, 409--456.

\bibitem[Alp20]{alpoge:thesis}
L.~Alp\"oge, \emph{Points on curves}, PhD Thesis, Princeton University, 2020,
  pp.~xiii+263.

\bibitem[Alp21a]{alpoge:modularitymordell}
\bysame, \emph{Modularity and effective {M}ordell {I}}, Preprint,
  arXiv:2109.07917 (2021), 27pages.

\bibitem[Alp21b]{alpoge:unpeu}
\bysame, \emph{Un peu d'effectivit\'e pour les vari\'et\'es modulaires de
  {H}ilbert--{B}lumenthal}, Preprint, arXiv:2111.12466 (2021), 39pages.

\bibitem[BBK07]{brbuku:hilbmod}
J.~H. Bruinier, J.~Burgos, and U.~K\"{u}hn, \emph{Borcherds products and
  arithmetic intersection theory on {H}ilbert modular surfaces}, Duke Math. J.
  \textbf{139} (2007), no.~1, 1--88.

\bibitem[BCGP21]{boetal:absurfacesmodular}
G.~Boxer, F.~Calegari, T.~Gee, and V.~Pilloni, \emph{Abelian surfaces over
  totally real fields are potentially modular}, Inst. Hautes \'Etudes Sci.
  Publ. Math. (2021), no.~134, 153--501.

\bibitem[BcV16]{beseve:torsiongrowth}
N.~Bergeron, M.~\c{S}eng\"{u}n, and A.~Venkatesh, \emph{Torsion homology growth
  and cycle complexity of arithmetic manifolds}, Duke Math. J. \textbf{165}
  (2016), no.~9, 1629--1693.

\bibitem[BD69]{bada:diophapp}
A.~Baker and H.~Davenport, \emph{The equations {$3x^{2}-2=y^{2}$} and
  {$8x^{2}-7=z^{2}$}}, Quart. J. Math. Oxford Ser. (2) \textbf{20} (1969),
  129--137.

\bibitem[BG06]{bogu:diophantinegeometry}
E.~Bombieri and W.~Gubler, \emph{Heights in {D}iophantine geometry}, New
  Mathematical Monographs, vol.~4, Cambridge University Press, Cambridge, 2006.

\bibitem[BG21]{bagr:obstructionhmv}
G.~Baldi and G.~Grossi, \emph{Finite descent obstruction for {H}ilbert modular
  varieties}, Canad. Math. Bull. \textbf{64} (2021), no.~2, 452--473.

\bibitem[Bil95]{bilu:israel}
Y.~Bilu, \emph{Effective analysis of integral points on algebraic curves},
  Israel J. Math. \textbf{90} (1995), no.~1-3, 235--252.

\bibitem[Bil02]{bilu:modular}
Y.~F. Bilu, \emph{Baker's method and modular curves}, A panorama of number
  theory or the view from {B}aker's garden ({Z}\"urich, 1999), Cambridge Univ.
  Press, Cambridge, 2002, pp.~73--88.

\bibitem[BLF20]{bofo:siegelmodulisintegral}
J.~Box and S.~Le~Fourn, \emph{Bounding integral points on the siegel modular
  variety ${A}_2(2)$}, Preprint, arXiv:2007.14422 (2020), 38 pages.

\bibitem[BLR90]{bolura:neronmodels}
S.~Bosch, W.~L{\"u}tkebohmert, and M.~Raynaud, \emph{N\'eron models},
  Ergebnisse der Mathematik und ihrer Grenzgebiete (3) [Results in Mathematics
  and Related Areas (3)], vol.~21, Springer-Verlag, Berlin, 1990.

\bibitem[Bos96a]{bost:lowerbound}
J.-B. Bost, \emph{Arakelov geometry of abelian varieties}, Conference on
  {A}rithmetical {G}eometry, Max {P}lanck {I}nstitut f\"ur {M}athematik {B}onn,
  vol. 96-51, 1996, pp.~1--6.

\bibitem[Bos96b]{bost:abelianisogenies}
\bysame, \emph{P\'eriodes et isogenies des vari\'et\'es ab\'eliennes sur les
  corps de nombres (d'apr\`es {D}. {M}asser et {G}. {W}\"ustholz)},
  Ast\'erisque (1996), no.~237, Exp.\ No.\ 795, 4, 115--161, S{\'e}minaire
  Bourbaki, Vol. 1994/95.

\bibitem[BSW22]{bhshwa:countnf}
M.~Bhargava, A.~Shankar, and X.~Wang, \emph{An improvement on {S}chmidt’s
  bound on the number of number fields of bounded discriminant and small
  degree}, Forum of Mathematics, Sigma \textbf{10} (2022), 1--13.

\bibitem[BW07]{bawu:logarithmicforms}
A.~Baker and G.~W{\"u}stholz, \emph{Logarithmic forms and {D}iophantine
  geometry}, New Mathematical Monographs, vol.~9, Cambridge University Press,
  Cambridge, 2007.

\bibitem[Cai21]{cai:intx0p}
Y.~Cai, \emph{Integral points on the modular curves {$X_0(p)$}}, J. Number
  Theory \textbf{221} (2021), 211--221.

\bibitem[Cai22]{cai:int}
\bysame, \emph{An explicit bound of integral points on modular curves}, Commun.
  Math. \textbf{30} (2022), no.~1, 161--174.

\bibitem[CCO14]{chcooo:cmbook}
C.-L. Chai, B.~Conrad, and F.~Oort, \emph{Complex multiplication and lifting
  problems}, Mathematical Surveys and Monographs, vol. 195, American
  Mathematical Society, Providence, RI, 2014.

\bibitem[CF93]{cofr:regulatorprocams}
A.~Costa and E.~Friedman, \emph{Ratios of regulators in extensions of number
  fields}, Proc. Amer. Math. Soc. \textbf{119} (1993), no.~2, 381--390.

\bibitem[Cha90]{chai:hilbmod}
C.-L. Chai, \emph{Arithmetic minimal compactification of the
  {H}ilbert-{B}lumenthal moduli spaces}, Ann. of Math. (2) \textbf{131} (1990),
  no.~3, 541--554.

\bibitem[Cle71]{clebsch:clebschsurface}
A.~Clebsch, \emph{Ueber die {A}nwendung der quadratischen {S}ubstitution auf
  die {G}leichungen 5 {$^{ten}$} {G}rades und die geometrische {T}heorie des
  ebenen {F}\"{u}nfseits}, Math. Ann. \textbf{4} (1871), no.~2, 284--345.

\bibitem[CLZ09]{coleza:integralpoints}
P.~Corvaja, A.~Levin, and U.~Zannier, \emph{Integral points on threefolds and
  other varieties}, Tohoku Math. J. (2) \textbf{61} (2009), no.~4, 589--601.

\bibitem[Coa70]{coates:shafarevich}
J.~Coates, \emph{An effective {$p$}-adic analogue of a theorem of {T}hue.
  {III}. {T}he diophantine equation {$y^{2}=x^{3}+k$}}, Acta Arith. \textbf{16}
  (1969/1970), 425--435.

\bibitem[Col93]{colmez:conjecture}
P.~Colmez, \emph{P\'{e}riodes des vari\'{e}t\'{e}s ab\'{e}liennes \`a
  multiplication complexe}, Ann. of Math. (2) \textbf{138} (1993), no.~3,
  625--683.

\bibitem[Col98]{colmez:conjecturecompositio}
\bysame, \emph{Sur la hauteur de {F}altings des vari\'{e}t\'{e}s ab\'{e}liennes
  \`a multiplication complexe}, Compositio Math. \textbf{111} (1998), no.~3,
  359--368.

\bibitem[Con05]{conrad:coarse}
B.~Conrad, \emph{The {K}eel--{M}ori theorem via stacks}, Preprint (2005),
  12pages.

\bibitem[CZ04]{coza:intpointssurfaces}
P.~Corvaja and U.~Zannier, \emph{On integral points on surfaces}, Ann. of Math.
  (2) \textbf{160} (2004), no.~2, 705--726.

\bibitem[CZ06]{coza:intpointscertainsurfaces}
\bysame, \emph{On the integral points on certain surfaces}, Int. Math. Res.
  Not. (2006), Art. ID 98623, 20.

\bibitem[CZ10]{coza:intpointsdivisibility}
\bysame, \emph{Integral points, divisibility between values of polynomials and
  entire curves on surfaces}, Adv. Math. \textbf{225} (2010), no.~2,
  1095--1118.

\bibitem[Del85]{deligne:faltings}
P.~Deligne, \emph{Preuve des conjectures de {T}ate et de {S}hafarevitch
  (d'apr\`es {G}. {F}altings)}, Ast\'erisque (1985), no.~121-122, 25--41,
  Seminar Bourbaki, Vol. 1983/84.

\bibitem[DP94]{depa:hilbertmodular}
P.~Deligne and G.~Pappas, \emph{Singularit\'es des espaces de modules de
  {H}ilbert, en les caract\'eristiques divisant le discriminant}, Compositio
  Math. \textbf{90} (1994), no.~1, 59--79.

\bibitem[DR15]{dira:mordellic}
M.~Dimitrov and D.~Ramakrishnan, \emph{Arithmetic quotients of the complex ball
  and a conjecture of {L}ang}, Doc. Math. \textbf{20} (2015), 1185--1205.

\bibitem[EG15]{evgy:bookuniteq}
J.~H. Evertse and K.~Gy{\H{o}}ry, \emph{Unit {E}quations in {D}iophantine
  {N}umber {T}heory}, Cambridge Studies in Advanced Mathematics, Cambridge
  University Press, Cambridge, 2015.

\bibitem[EG16]{evgy:bookdiscreq}
\bysame, \emph{Discriminant {E}quations in {D}iophantine {N}umber {T}heory},
  New Mathematical Monographs, Cambridge University Press, Cambridge, 2016.

\bibitem[Elk04]{elkies:kcurves}
N.~D. Elkies, \emph{On elliptic {$K$}-curves}, Modular curves and abelian
  varieties, Progr. Math., vol. 224, Birkh\"{a}user, Basel, 2004, pp.~81--91.

\bibitem[ESS02]{evscsc:uniteq}
J.-H. Evertse, H.~P. Schlickewei, and W.~M. Schmidt, \emph{Linear equations in
  variables which lie in a multiplicative group}, Ann. of Math. (2)
  \textbf{155} (2002), no.~3, 807--836.

\bibitem[EV06]{elve:countnf}
J.~S. Ellenberg and A.~Venkatesh, \emph{The number of extensions of a number
  field with fixed degree and bounded discriminant}, Ann. of Math. (2)
  \textbf{163} (2006), no.~2, 723--741.

\bibitem[Fal83]{faltings:finiteness}
G.~Faltings, \emph{Endlichkeitss\"atze f\"ur abelsche {V}ariet\"aten \"uber
  {Z}ahlk\"orpern}, Invent. Math. \textbf{73} (1983), no.~3, 349--366.

\bibitem[Fal94]{faltings:ml}
\bysame, \emph{The general case of {S}. {L}ang's conjecture}, Barsotti
  {S}ymposium in {A}lgebraic {G}eometry ({A}bano {T}erme, 1991), Perspect.
  Math., vol.~15, Academic Press, San Diego, CA, 1994, pp.~175--182.

\bibitem[FC90]{fach:deg}
G.~Faltings and C.-L. Chai, \emph{Degeneration of abelian varieties},
  Ergebnisse der Mathematik und ihrer Grenzgebiete (3) [Results in Mathematics
  and Related Areas (3)], vol.~22, Springer-Verlag, Berlin, 1990, With an
  appendix by David Mumford.

\bibitem[FMT89]{frmats:maninconjecture}
J.~Franke, Y.~Manin, and Y.~Tschinkel, \emph{Rational points of bounded height
  on {F}ano varieties}, Invent. Math. \textbf{95} (1989), no.~2, 421--435.

\bibitem[Fre89]{frey:linksulm}
G.~Frey, \emph{Links between solutions of {$A-B=C$} and elliptic curves},
  Number theory ({U}lm, 1987), Lecture Notes in Math., vol. 1380, Springer, New
  York, 1989, pp.~31--62.

\bibitem[Fre97]{frey:ternary}
\bysame, \emph{On ternary equations of {F}ermat type and relations with
  elliptic curves}, Modular forms and {F}ermat's last theorem ({B}oston, {MA},
  1995), Springer, New York, 1997, pp.~527--548.

\bibitem[FvKW11]{fuvkwu:elliptic}
C.~Fuchs, R.~von K{\"a}nel, and G.~W{\"u}stholz, \emph{An effective
  {S}hafarevich theorem for elliptic curves}, Acta Arith. \textbf{148} (2011),
  no.~2, 189--203.

\bibitem[G\"28]{gotzki:theta}
F.~G\"{o}tzky, \emph{\"{U}ber eine zahlentheoretische {A}nwendung von
  {M}odulfunktionen zweier {V}er\"{a}nderlicher}, Math. Ann. \textbf{100}
  (1928), no.~1, 411--437.

\bibitem[GR72]{grra:neronmodels}
A.~Grothendieck and M.~Raynaud, \emph{Mod\'eles de {N}\'eron et monodromie},
  Groupes de monodromie en G\'eometrie Algebrique, I, II, Springer, 1972,
  pp.~313--523. Lecture Notes in Math., Vol. 288.

\bibitem[GR14a]{gare:isogenies}
\'{E}. Gaudron and G.~R\'{e}mond, \emph{Polarisations et isog\'{e}nies}, Duke
  Math. J. \textbf{163} (2014), no.~11, 2057--2108.

\bibitem[GR14b]{gare:periods}
\bysame, \emph{Th\'{e}or\`eme des p\'{e}riodes et degr\'{e}s minimaux
  d'isog\'{e}nies}, Comment. Math. Helv. \textbf{89} (2014), no.~2, 343--403.

\bibitem[GR23]{gare:newisogenies}
\bysame, \emph{Nouveaux th\'{e}or\`{e}mes d'isog\'{e}nie}, M\'em. Soc. Math.
  France (N.S.) (2023), no.~176, 136.

\bibitem[Gro80]{gross:cmell}
B.~H. Gross, \emph{Arithmetic on elliptic curves with complex multiplication},
  Lecture Notes in Mathematics, vol. 776, Springer, Berlin, 1980, With an
  appendix by B. Mazur.

\bibitem[GS22]{ghsi:tm}
A.~Gherga and S.~Siksek, \emph{Efficient resolution of {T}hue--{M}ahler
  equations}, Preprint, arXiv: 2207.14492 (2022), 46 pages.

\bibitem[Gun63]{gundlach:theta}
K.~Gundlach, \emph{Die {B}estimmung der {F}unktionen zur {H}ilbertschen
  {M}odulgruppe des {Z}ahlk\"{o}rpers {$Q(\surd 5)$}}, Math. Ann. \textbf{152}
  (1963), 226--256.

\bibitem[GV18]{gave:deriveddeform}
S.~Galatius and A.~Venkatesh, \emph{Derived {G}alois deformation rings}, Adv.
  Math. \textbf{327} (2018), 470--623.

\bibitem[GvKM19]{gakamo:alpbach}
Z.~Gao, R.~von K\"{a}nel, and L.~Mocz, \emph{Faltings heights and
  {$L$}-functions: minicourse given by {S}hou-{W}u {Z}hang}, Arithmetic and
  geometry---ten years in {A}lpbach, Ann. of Math. Stud., vol. 202, Princeton
  Univ. Press, Princeton, NJ, 2019, pp.~102--174.

\bibitem[vdG88]{vandergeer:hilbertmodular}
G.~van~der Geer, \emph{Hilbert modular surfaces}, Ergebnisse der Mathematik und
  ihrer Grenzgebiete (3) [Results in Mathematics and Related Areas (3)],
  vol.~16, Springer-Verlag, Berlin, 1988.

\bibitem[vdGZ77]{geza:hms13}
G.~van~der Geer and D.~Zagier, \emph{The {H}ilbert modular group for the field
  {${\bf Q}(\surd 13)$}}, Invent. Math. \textbf{42} (1977), 93--133.

\bibitem[Hir73]{hirzebruch:hmsenseignement}
F.~Hirzebruch, \emph{Hilbert modular surfaces}, Enseign. Math. (2) \textbf{19}
  (1973), 183--281.

\bibitem[Hir76]{hirzebruch:ck}
\bysame, \emph{The {H}ilbert modular group for the field {${\bf Q}(\surd 5)$},
  and the cubic diagonal surface of {C}lebsch and {K}lein}, Uspehi Mat. Nauk
  \textbf{31} (1976), no.~5(191), 153--166, Translated from the German by Ju.
  I. Manin.

\bibitem[HVdV74]{hiva:hmsclass}
F.~Hirzebruch and A.~Van~de Ven, \emph{Hilbert modular surfaces and the
  classification of algebraic surfaces}, Invent. Math. \textbf{23} (1974),
  1--29.

\bibitem[HZ76]{hiza:hmsintersection}
F.~Hirzebruch and D.~Zagier, \emph{Intersection numbers of curves on {H}ilbert
  modular surfaces and modular forms of {N}ebentypus}, Invent. Math.
  \textbf{36} (1976), 57--113.

\bibitem[HZ77]{hiza:hmsclass}
\bysame, \emph{Classification of {H}ilbert modular surfaces}, Complex analysis
  and algebraic geometry, Iwanami Shoten, Tokyo, 1977, pp.~43--77.

\bibitem[Jac89]{jacobson:basicalgebra2}
N.~Jacobson, \emph{Basic algebra. {II}}, second ed., W. H. Freeman and Company,
  New York, 1989.


\bibitem[dJR11]{jore:shafarevich}
R.~de~Jong and G.~R{\'e}mond, \emph{Conjecture de {S}hafarevitch effective pour
  les rev\^{e}tements cycliques}, Algebra Number Theory \textbf{5} (2011),
  no.~8, 1133--1143.

\bibitem[vK13a]{rvk:modular}
R.~von K\"anel, \emph{Modularity and integral points on moduli schemes},
  Preprint, arXiv:1310.7263 (2013), 75 pages, Sections 1-7 are published in
  \cite{rvk:intpointsmodell}, the part on abelian varieties of $\gl2$-type is
  published in \cite{rvk:gl2}.

\bibitem[vK13b]{rvk:szpiro}
\bysame, \emph{On {S}zpiro's {D}iscriminant {C}onjecture}, Int. Math. Res. Not.
  IMRN (2013), 35 pages, Published online: doi:10.1093/imrn/rnt079.

\bibitem[vK14]{rvk:intpointsmodell}
\bysame, \emph{Integral points on moduli schemes of elliptic curves}, Trans.
  London Math. Soc. \textbf{1} (2014), no.~1, 85--115, Sections 1-7 of
  \cite{rvk:modular}.

\bibitem[vK21]{rvk:gl2}
\bysame, \emph{The effective {S}hafarevich conjecture for abelian varieties of
  {GL}2-type}, Forum Math. Sigma \textbf{9} (2021), Paper No. e39, 1--39.

\bibitem[vKK19]{vkkr:hms}
R.~von K\"anel and A.~Kret, \emph{Integral points on {H}ilbert moduli schemes},
  Preprint: arXiv:1904.03503 (2019), 53 pages.

\bibitem[vKM16]{vkma:computation}
R.~von K\"anel and B.~Matschke, \emph{Solving {$S$}-unit, {M}ordell, {T}hue,
  {T}hue--{M}ahler and generalized {R}amanujan--{N}agell equations via
  {S}himura-{T}aniyama conjecture}, Preprint, arXiv:1605.06079 (2016), 161
  pages. Accepted (2020) at Memoirs of AMS.

\bibitem[Kat14]{kato:heights}
K.~Kato, \emph{Heights of motives}, Proc. Japan Acad. Ser. A Math. Sci.
  \textbf{90} (2014), no.~3, 49--53.

\bibitem[Ken82]{kenku:ellisogenies}
M.~A. Kenku, \emph{On the number of {${\bf Q}$}-isomorphism classes of elliptic
  curves in each {${\bf Q}$}-isogeny class}, J. Number Theory \textbf{15}
  (1982), no.~2, 199--202.

\bibitem[Kle73]{klein:cubicsurfaces}
F.~Klein, \emph{Ueber {F}l\"{a}chen dritter {O}rdnung}, Math. Ann. \textbf{6}
  (1873), no.~4, 551--581.

\bibitem[Kli62]{klingen:eisensteinfourier}
H.~Klingen, \emph{\"{U}ber den arithmetischen {C}harakter der
  {F}ourierkoeffizienten von {M}odulformen}, Math. Ann. \textbf{147} (1962),
  176--188.

\bibitem[KM85]{kama:moduli}
N.~M. Katz and B.~Mazur, \emph{Arithmetic moduli of elliptic curves}, Annals of
  Mathematics Studies, vol. 108, Princeton University Press, Princeton, NJ,
  1985.

\bibitem[KM97]{kemo:coarse}
S.~Keel and S.~Mori, \emph{Quotients by groupoids}, Ann. of Math. (2)
  \textbf{145} (1997), no.~1, 193--213.

\bibitem[KW09]{khwi:serre}
C.~Khare and J.-P. Wintenberger, \emph{Serre's modularity conjecture}, Invent.
  Math. \textbf{178} (2009), no.~3, 485--586.

\bibitem[Lan83]{lang:cm}
S.~Lang, \emph{Complex multiplication}, Grundlehren der Mathematischen
  Wissenschaften [Fundamental Principles of Mathematical Sciences], vol. 255,
  Springer-Verlag, New York, 1983.

\bibitem[Lan13]{lan:compactifications}
K.~Lan, \emph{Arithmetic compactifications of {PEL}-type {S}himura varieties},
  London {M}athematical {S}ociety {M}onographs, vol.~36, Princeton University
  Press, Princeton, 2013.

\bibitem[Len92]{lenstra:algorithms}
H.~W. Lenstra, Jr., \emph{Algorithms in algebraic number theory}, Bull. Amer.
  Math. Soc. (N.S.) \textbf{26} (1992), no.~2, 211--244.

\bibitem[Lev08]{levin:intpointsrunge}
A.~Levin, \emph{Variations on a theme of {R}unge: effective determination of
  integral points on certain varieties}, J. Th\'{e}or. Nombres Bordeaux
  \textbf{20} (2008), no.~2, 385--417.

\bibitem[Lev09]{levin:siegelpicard}
\bysame, \emph{Generalizations of {S}iegel's and {P}icard's theorems}, Ann. of
  Math. (2) \textbf{170} (2009), no.~2, 609--655.

\bibitem[Lev14]{levin:intpointslogforms}
\bysame, \emph{Linear forms in logarithms and integral points on
  higher-dimensional varieties}, Algebra Number Theory \textbf{8} (2014),
  no.~3, 647--687.

\bibitem[Lev18]{levin:extendrunge}
\bysame, \emph{Extending {R}unge's method for integral points}, Higher genus
  curves in mathematical physics and arithmetic geometry, Contemp. Math., vol.
  703, Amer. Math. Soc., [Providence], RI, 2018, pp.~171--188.

\bibitem[LF17]{lefourn:A2rungeparis}
S.~Le~Fourn, \emph{Sur la m\'{e}thode de {R}unge et les points entiers de
  certaines vari\'{e}t\'{e}s modulaires de {S}iegel}, C. R. Math. Acad. Sci.
  Paris \textbf{355} (2017), no.~8, 847--852.

\bibitem[LF19a]{lefourn:tubularbaker}
\bysame, \emph{Tubular approaches to {B}aker's method for curves and
  varieties}, Preprint: arXiv:1812.06306v2 (2019), 11 pages.

\bibitem[LF19b]{lefourn:A2runge}
\bysame, \emph{A tubular variant of {R}unge's method in all dimensions, with
  applications to integral points on {S}iegel modular varieties}, Algebra
  Number Theory \textbf{13} (2019), no.~1, 159--209.

\bibitem[LNY16]{lanaya:theta}
K.~Lauter, M.~Naehrig, and T.~Yang, \emph{Hilbert theta series and invariants
  of genus 2 curves}, J. Number Theory \textbf{161} (2016), 146--174.

\bibitem[LOT22]{leth:countnf}
R.~Lemke-Oliver and F.~Thorne, \emph{{Upper bounds on number fields of given
  degree and bounded discriminant}}, Duke Math. J. \textbf{171} (2022), no.~15,
  3077 -- 3087.

\bibitem[Maz78]{mazur:qisogenies}
B.~Mazur, \emph{Rational isogenies of prime degree}, Invent. Math. \textbf{44}
  (1978), no.~2, 129--162.

\bibitem[Mil72]{milne:arithmetic}
J.~S. Milne, \emph{On the arithmetic of abelian varieties}, Invent. Math.
  \textbf{17} (1972), 177--190.

\bibitem[MP13]{mupa:modular}
M.~R. Murty and H.~Pasten, \emph{Modular forms and effective {D}iophantine
  approximation}, J. Number Theory \textbf{133} (2013), no.~11, 3739--3754.

\bibitem[Mum65]{mumford:picardmoduli}
D.~Mumford, \emph{Picard groups of moduli problems}, Arithmetical {A}lgebraic
  {G}eometry ({P}roc. {C}onf. {P}urdue {U}niv., 1963), Harper \& Row, New York,
  1965, pp.~33--81.

\bibitem[MW93a]{mawu:abelianisogenies}
D.~W. Masser and G.~W{\"u}stholz, \emph{Isogeny estimates for abelian
  varieties, and finiteness theorems}, Ann. of Math. (2) \textbf{137} (1993),
  no.~3, 459--472.

\bibitem[MW93b]{mawu:periods}
\bysame, \emph{Periods and minimal abelian subvarieties}, Ann. of Math. (2)
  \textbf{137} (1993), no.~2, 407--458.

\bibitem[MW95]{mawu:factorization}
\bysame, \emph{Factorization estimates for abelian varieties}, Inst. Hautes
  \'Etudes Sci. Publ. Math. (1995), no.~81, 5--24.

\bibitem[Neu99]{neukirch:ant}
J.~Neukirch, \emph{Algebraic number theory}, Grundlehren der Mathematischen
  Wissenschaften [Fundamental Principles of Mathematical Sciences], vol. 322,
  Springer-Verlag, Berlin, 1999, Translated from the 1992 German original and
  with a note by Norbert Schappacher, With a foreword by G. Harder.

\bibitem[NT21a]{neth:ihes1}
J.~Newton and J.~Thorne, \emph{Symmetric power functoriality for holomorphic
  modular forms}, Inst. Hautes \'Etudes Sci. Publ. Math. (2021), no.~134,
  1--116.

\bibitem[NT21b]{neth:ihes2}
\bysame, \emph{Symmetric power functoriality for holomorphic modular forms,
  {II}}, Inst. Hautes \'Etudes Sci. Publ. Math. (2021), no.~134, 117--152.

\bibitem[Obu13]{obus:colmez}
A.~Obus, \emph{On {C}olmez's product formula for periods of {CM}-abelian
  varieties}, Math. Ann. \textbf{356} (2013), no.~2, 401--418.

\bibitem[Ols12]{olsson:gtorsors}
M.~Olsson, \emph{Integral models for moduli spaces of {$G$}-torsors}, Ann.
  Inst. Fourier (Grenoble) \textbf{62} (2012), no.~4, 1483--1549.

\bibitem[Ols16]{olsson:stacks}
\bysame, \emph{Algebraic spaces and stacks}, American Mathematical Society
  Colloquium Publications, vol.~62, American Mathematical Society, Providence,
  RI, 2016.

\bibitem[OS18]{orsk:cmfiniteness}
M.~Orr and A.~Skorobogatov, \emph{Finiteness theorems for {K}3 surfaces and
  abelian varieties of {CM} type}, Compos. Math. \textbf{154} (2018), no.~8,
  1571--1592.

\bibitem[Pas18]{pasten:shimabc}
H.~Pasten, \emph{Shimura curves and the abc conjecture}, Preprint,
  arXiv:1705.09251v4 (2018), 88 pages.

\bibitem[Paz12]{pazuki:heights}
F.~Pazuki, \emph{Theta height and {F}altings height}, Bull. Soc. Math. France
  \textbf{140} (2012), no.~1, 19--49.

\bibitem[PV21]{prve:asterisque}
K.~Prasanna and A.~Venkatesh, \emph{Automorphic cohomology, motivic cohomology,
  and the adjoint {$L$}-function}, Ast\'{e}risque (2021), no.~428, viii+132.

\bibitem[Pyl04]{pyle:gl2}
E.~E. Pyle, \emph{Abelian varieties over {$\Bbb Q$} with large endomorphism
  algebras and their simple components over {$\overline{\Bbb Q}$}}, Modular
  curves and abelian varieties, Progr. Math., vol. 224, Birkh\"{a}user, Basel,
  2004, pp.~189--239.

\bibitem[Rad60]{rademacher:lindeloef}
H.~Rademacher, \emph{On the {P}hragm\'en-{L}indel\"of theorem and some
  applications}, Math. Z \textbf{72} (1959/1960), 192--204.

\bibitem[Rap78]{rapoport:hilbertmodular}
M.~Rapoport, \emph{Compactifications de l'espace de modules de
  {H}ilbert-{B}lumenthal}, Compositio Math. \textbf{36} (1978), no.~3,
  255--335.

\bibitem[Ray85]{raynaud:abelianisogenies}
M.~Raynaud, \emph{Hauteurs et isog\'enies}, Ast\'erisque (1985), no.~127,
  199--234, Seminar on arithmetic bundles: the Mordell conjecture (Paris,
  1983/84).

\bibitem[Rei75]{reiner:orders}
I.~Reiner, \emph{Maximal orders}, Academic Press [A subsidiary of Harcourt
  Brace Jovanovich, Publishers], London-New York, 1975, London Mathematical
  Society Monographs, No. 5.

\bibitem[R{\'e}m99]{remond:construction}
G.~R{\'e}mond, \emph{Hauteurs th\^eta et construction de {K}odaira}, J. Number
  Theory \textbf{78} (1999), no.~2, 287--311.

\bibitem[R{\'e}m19]{remond:fhprop}
\bysame, \emph{Propri\'{e}t\'{e}s de la hauteur de {F}altings}, Preprint,
  version September 2019 (2019), 6pages.

\bibitem[Rib75]{ribet:semistable}
K.~A. Ribet, \emph{Endomorphisms of semi-stable abelian varieties over number
  fields}, Ann. of Math. (2) \textbf{101} (1975), 555--562.

\bibitem[Rib92]{ribet:gl2}
\bysame, \emph{Abelian varieties over {${\bf Q}$} and modular forms}, Algebra
  and topology 1992 ({T}aej\u on), Korea Adv. Inst. Sci. Tech., Taej\u on,
  1992, pp.~53--79.

\bibitem[Rib94]{ribet:fieldsofdef}
\bysame, \emph{Fields of definition of abelian varieties with real
  multiplication}, Arithmetic geometry ({T}empe, {AZ}, 1993), Contemp. Math.,
  vol. 174, Amer. Math. Soc., Providence, RI, 1994, pp.~107--118.

\bibitem[RS62]{rosc:formulas}
J.~B. Rosser and L.~Schoenfeld, \emph{Approximate formulas for some functions
  of prime numbers}, Illinois J. Math. \textbf{6} (1962), no.~1, 64--94.

\bibitem[SBT97]{sbta:ck}
N.~Shepherd-Barron and R.~Taylor, \emph{{${\rm mod}\ 2$} and {${\rm mod}\ 5$}
  icosahedral representations}, J. Amer. Math. Soc. \textbf{10} (1997), no.~2,
  283--298.

\bibitem[Sch95]{schmidt:countnf}
W.~M. Schmidt, \emph{Number fields of given degree and bounded discriminant},
  no. 228, 1995, Columbia University Number Theory Seminar (New York, 1992),
  pp.~4, 189--195.

\bibitem[Sha14a]{sha:intmod2}
M.~Sha, \emph{Bounding the {$j$}-invariant of integral points on certain
  modular curves}, Int. J. Number Theory \textbf{10} (2014), no.~6, 1545--1551.

\bibitem[Sha14b]{sha:intmodimrn}
\bysame, \emph{Bounding the {$j$}-invariant of integral points on modular
  curves}, Int. Math. Res. Not. IMRN (2014), no.~16, 4492--4520.

\bibitem[Sie69]{siegel:zetavalues}
C.~L. Siegel, \emph{Berechnung von {Z}etafunktionen an ganzzahligen {S}tellen},
  Nachr. Akad. Wiss. G\"{o}ttingen Math.-Phys. Kl. II \textbf{1969} (1969),
  87--102.

\bibitem[Sil92]{silverberg:fieldsofdef}
A.~Silverberg, \emph{Fields of definition for homomorphisms of abelian
  varieties}, J. Pure Appl. Algebra \textbf{77} (1992), no.~3, 253--262.

\bibitem[SSD98]{slsw:cubicmanin}
J.~Slater and P.~Swinnerton-Dyer, \emph{Counting points on cubic surfaces.
  {I}}, no. 251, 1998, Nombre et r\'{e}partition de points de hauteur
  born\'{e}e (Paris, 1996), pp.~1--12.

\bibitem[ST68]{seta:goodreduction}
J.-P. Serre and J.~Tate, \emph{Good reduction of abelian varieties}, Ann. of
  Math. (2) \textbf{88} (1968), 492--517.

\bibitem[{Sta}]{sp}
The {Stacks Project Authors}, \emph{\textit{Stacks Project}},
  \url{https://stacks.math.columbia.edu}.

\bibitem[Szp85]{szpiro:faltings}
L.~Szpiro, \emph{La conjecture de {M}ordell (d'apr\`es {G}. {F}altings)},
  Ast\'erisque (1985), no.~121-122, 83--103, Seminar Bourbaki, Vol. 1983/84.

\bibitem[Szp90]{szpiro:seminaire88}
\bysame, \emph{Discriminant et conducteur des courbes elliptiques},
  Ast\'erisque (1990), no.~183, 7--18, S{\'e}minaire sur les Pinceaux de
  Courbes Elliptiques (Paris, 1988).

\bibitem[Tsi18]{tsimerman:ao}
J.~Tsimerman, \emph{The {A}ndr\'{e}-{O}ort conjecture for {$A_g$}}, Ann. of
  Math. (2) \textbf{187} (2018), no.~2, 379--390.

\bibitem[Ull04]{ullmo:ratpoints}
E.~Ullmo, \emph{Points rationnels des vari\'{e}t\'{e}s de {S}himura}, Int.
  Math. Res. Not. (2004), no.~76, 4109--4125.



\bibitem[Ven]{venkatesh:heightconjecture}
A.~Venkatesh, \emph{Heights of automorphic forms and motives}, Preprint, 1--24.

\bibitem[Ven18]{venkatesh:icm}
\bysame, \emph{Cohomology of arithmetic groups---{F}ields {M}edal lecture},
  Proceedings of the {I}nternational {C}ongress of {M}athematicians---{R}io de
  {J}aneiro 2018. {V}ol. {I}. {P}lenary lectures, World Sci. Publ., Hackensack,
  NJ, 2018, pp.~267--300.

\bibitem[Ven19]{venkatesh:derivedhecke}
\bysame, \emph{Derived {H}ecke algebra and cohomology of arithmetic groups},
  Forum Math. Pi \textbf{7} (2019), e7, 119.

\bibitem[Via05]{viada:abeliansubvariety}
E.~Viada, \emph{Slopes and abelian subvariety theorem}, J. Number Theory
  \textbf{112} (2005), no.~1, 67--115.

\bibitem[Voi21]{voight:quatbook}
J.~Voight, \emph{Quaternion algebras}, Online Book, version May 4, 2021 (2021),
  883 pages.

\bibitem[Was82]{washington:cyclotomic}
L.~C. Washington, \emph{Introduction to cyclotomic fields}, Graduate Texts in
  Mathematics, vol.~83, Springer-Verlag, New York, 1982.

\bibitem[dW87]{deweger:lllred}
B.~M.~M. de~Weger, \emph{Solving exponential {D}iophantine equations using
  lattice basis reduction algorithms}, J. Number Theory \textbf{26} (1987),
  no.~3, 325--367.

\bibitem[Wei82]{weil:alggroups}
A.~Weil, \emph{Adeles and algebraic groups}, Progress in Mathematics, vol.~23,
  Birkh\"{a}user, Boston, Mass., 1982, With appendices by M. Demazure and
  Takashi Ono.

\bibitem[Wu17]{wu:virtualgl2arxiv}
C.~Wu, \emph{Virtual {A}belian varieties of {$\rm GL_2$}-type}, Preprint,
  arXiv:1507.03069 (2017), 40pages.

\bibitem[Wu20]{wu:virtualgl2}
\bysame, \emph{Virtual {A}belian varieties of {$\rm GL_2$}-type}, Math. Res.
  Lett. \textbf{27} (2020), no.~3, 903--944.

\bibitem[Yan10]{yang:hmsurfaceamerican}
T.~Yang, \emph{An arithmetic intersection formula on {H}ilbert modular
  surfaces}, Amer. J. Math. \textbf{132} (2010), no.~5, 1275--1309.

\bibitem[Yan13]{yang:hmsurfaceasian}
\bysame, \emph{Arithmetic intersection on a {H}ilbert modular surface and the
  {F}altings height}, Asian J. Math. \textbf{17} (2013), no.~2, 335--381.

\bibitem[Yu11]{yu:maximalorders}
C.-F. Yu, \emph{On the existence of maximal orders}, Int. J. Number Theory
  \textbf{7} (2011), no.~8, 2091--2114.

\bibitem[YZ18]{yuzh:avcolmez}
X.~Yuan and S.~Zhang, \emph{On the {A}veraged {C}olmez {C}onjecture}, Ann. of
  Math. \textbf{187} (2018), no.~2, 533–--638.

\bibitem[YZZ13]{yuzhzh:gzbook}
X.~Yuan, S.-W. Zhang, and W.~Zhang, \emph{The {G}ross-{Z}agier formula on
  {S}himura curves}, Annals of Mathematics Studies, vol. 184, Princeton
  University Press, Princeton, NJ, 2013.

\bibitem[Zag76]{zagier:zetavalues}
D.~Zagier, \emph{On the values at negative integers of the zeta-function of a
  real quadratic field}, Enseign. Math. (2) \textbf{22} (1976), no.~1-2,
  55--95.

\bibitem[Zan05]{zannier:ramificationdivisor}
U.~Zannier, \emph{On the integral points on the complement of
  ramification-divisors}, J. Inst. Math. Jussieu \textbf{4} (2005), no.~2,
  317--330.

\end{thebibliography}
}

\vspace{5cm}

\newpage

\vspace{0.1cm}
\noindent Rafael von K\"anel, IAS Tsinghua University, Beijing

\noindent E-mail address: {\sf rafaelvonkanel@gmail.com}

\vspace{1cm}

\noindent Arno Kret, University of Amsterdam, Amsterdam

\noindent E-mail address: {\sf  arnokret@gmail.com}

\end{document}